\newcommand\red{\color{red}}
\newtheorem{property}[equation]{Property}
\newtheorem{conjecture}[equation]{Conjecture}
\newtheorem{corollary}[equation]{Corollary}
\newtheorem{definition}[equation]{Definition}
\newtheorem{definitions}[equation]{Definition}
\newtheorem{lemma}[equation]{Lemma}
\newtheorem{proposition}[equation]{Proposition}
\newtheorem{theorem}[equation]{Theorem}
\newtheorem{theodef}[equation]{Theorem--Definition}
\newtheorem{lemmadef}[equation]{Lemma--Definition}
\theoremstyle{remark}
\newtheorem{examples}[equation]{Examples}
\newtheorem{example}[equation]{Example}
  \newtheorem{remark}[equation]{Remark}
  \newtheorem{remarks}[equation]{Remarks}
  \newtheorem{notation}[equation]{Notation}
\numberwithin{equation}{section}
  \newcommand\BC{{\mathbb C}}
  \newcommand\BF{{\mathbb F}}
  \newcommand\BN{{\mathbb N}}
  \newcommand\BQ{{\mathbb Q}}
  \newcommand\BR{{\mathbb R}}
  \newcommand\BZ{{\mathbb Z}}
  \newcommand\fa{{\mathfrak a}}
  \newcommand\fb{{\mathfrak b}}
  \newcommand\fF{{\mathfrak F}}
  \newcommand\fI{{\mathfrak I}}
  \newcommand\fj{{\mathfrak j}}
  \newcommand\fn{{\mathfrak n}}
  \newcommand\fp{{\mathfrak p}}
  \newcommand\fq{{\mathfrak q}}
  \newcommand\frr{{\mathfrak r}}  
  \newcommand\fR{{\mathfrak R}}
  \newcommand\fS{{\mathfrak S}}
  \newcommand\CF{{\mathcal F}}
  \newcommand\CL{{\mathcal L}}
  \newcommand\CO{{\mathcal O}}
  \newcommand\CP{{\mathcal P}}
  \newcommand\CS{{\mathcal S}}
  \newcommand\al{\alpha}
  \newcommand\be{\beta}
  \newcommand\la{\lambda}
  \newcommand\vp{\varphi}
  \newcommand\si{\sigma}
  \newcommand\Si{\Sigma}
  \newcommand\bmu{{\boldsymbol\mu}}
  \newcommand\ra{\rightarrow}
  \newcommand\iso{{\,\overset{\sim}{\longrightarrow}\,}\,}
  \newcommand\lexp[2]{\kern\scriptspace\vphantom{#2}^{#1}\kern-\scriptspace#2}
  \newcommand\inv{^{-1}}
  \newcommand\tinv{^{-*}}
  \newcommand\scal[2]{{\langle{#1},{#2}\rangle}}
  \newcommand\genby[1]{{\mathopen\langle#1\mathclose\rangle}}
  \newcommand\ab{{\operatorname{ab}}}
  \newcommand\Arr{{\operatorname{Arr}}}
  \newcommand\Aut{{\operatorname{Aut}}}
  \newcommand\Bad{{\operatorname {Bad}}}
  \newcommand\Car{{\operatorname{Car}}}
  \newcommand\Ch{{\operatorname {Ch}}}
  \newcommand\diag{{\operatorname{diag}}}
  \newcommand\GL{{\operatorname{GL}}}
  \newcommand\Hom{{\operatorname{Hom}}}
  \newcommand\Id{{\operatorname{Id}}}
  \newcommand\im{{\operatorname{im}\,}}
  \newcommand\Rf{{\operatorname{Ref}}}
  \newcommand\bul{$\bullet\,\,\,$}
  \newcommand\apriori{{\it a priori\/}}
  \newcommand\ibidem{{\it ibidem\/}}
  \newcommand\ie{{\it i.e.\/},}
\newcommand\zroot{{$\BZ_k$-root}}
\newcommand\habel[1]{{\label{#1}\hfill}}
\newcommand\ideal[1]{#1 \BZ_k}          
\newcommand\frn[2]{\fn(#1,#2)}
\newcommand\pd{\mbox{-}}
\newcommand\unit[1]{{\mathbf{#1}}}
  \newcommand\CHEVIE{{\tt CHEVIE}}
  \newcommand\GAP{{\tt GAP3}}
\title{Cyclotomic root systems and bad primes}
\author{Michel Brou\'e, Ruth Corran, Jean Michel}
\date{\today}
\address{Michel Brou\'e, Universit\'e Paris-Diderot, Paris}
 \email{broue@math.univ-paris-diderot.fr}
 \urladdr{http://webusers.imj-prg.fr/~michel.broue/}
\address{Ruth Corran, The American University of Paris, Paris}
 \email{ruth.corran@aup.fr}
 \urladdr{https://www.aup.edu/profile/rcorran}
\address{Jean Michel, CNRS, Institut de Math\'ematiques de Jussieu, Paris}
 \email{jean.michel@imj-prg.fr}
 \urladdr{http://webusers.imj-prg.fr/~jean.michel//}
\thanks{We thank Gabriele Nebe for her previous seminal work on this topic}
\subjclass{Primary : 20F55 ; Secondary: 20C08}
\keywords{Complex reflection groups, root systems, cyclotomic Hecke algebras,
spetses}
\begin{document}
\begin{abstract}
We generalize the definition and properties of root systems to complex
reflection groups --- roots become rank one projective modules over the ring of
integers of a number field $k$.

In the irreducible case, we provide a classification of root systems
over the field of definition $k$ of the reflection representation.

In the case of spetsial reflection groups, we generalize as well the
definition and properties of bad primes.
\end{abstract}
\maketitle
\tableofcontents

\section{\color{red} Introduction}


        The spirit of the Spetses program (\cite{spe}, \cite{spe2})
         is to consider (at least some of) the complex
        reflections groups as Weyl groups for some mysterious object which
        looks like a ``generic
        finite reductive group'' --- and which is yet unknown.
        
        Some of the data attached to finite reductive groups, 
        such as the parameterization of unipotent characters, 
        their generic degrees, Frobenius eigenvalues,
        and also the families and their Fourier matrices, turn out to
        depend only on the $\BQ$-representation of the Weyl group.
        But supplementary data,
        such as the parameterization of unipotent classes, the values of unipotent characters on 
        unipotent elements, depend on the entire root datum. 
        
        For a complex reflection group,
        not necessarily defined over $\BQ$, but defined over a number field $k$, it 
        thus seems both necessary and
        natural to study ``\zroot\ data'', where $\BZ_k$ denotes the ring of integers of $k$.
        
        Some related work has already been done in that direction, in particular
        by Nebe \cite{nebe}, who classified $\BZ_k$-lattices invariant under the reflection
        group -- and whose work inspired us. 
        More recently, motivated by some work on $p$-compact groups,
        Grodal and others
        \cite{angr}, \cite{gr} defined root data for 
        reflection groups defined over finite fields.
        
        Here we define and classify \zroot\ 
        systems\footnote{By analogy with the well established terminology 
        \emph{``cyclotomic Hecke algebras''} --- a crucial notion in the Spetses program ---, 
        we propose to call
        these generalised root systems \emph{``cyclotomic root systems''}.}, as well as
        root lattices and coroot lattices, for all complex reflection groups. 
        Of course most of the rings $\BZ_k$ are not principal ideal domains (although
        --- quite a remarkable fact --- they are P.I.D. for the 34 exceptional irreducible complex
        reflection groups) and one has naturally to replace \textsl{elements of $\BZ_k$\/} by
        \textsl{ideals of $\BZ_k$\/}. Taking this into account, our definition of root system
        mimics Bourbaki's definition \cite{bou} and of course working with
        ``ideal numbers'' is much more appropriate, on general Dedekind domains, than
        working with ``numbers'', as suggested by what follows.
\begin{itemize}
        \item   
                A  complex reflection  group may  occur as  a parabolic subgroup of another
                reflection  group whose field of definition is  larger.  
                A first problem with considering vectors (as in the usual approach of root systems)
                instead of one-dimensional $\BZ_k$-modules (as we do here)
                is that we would have ``too many'' of them when restricting to a parabolic subgroup.
        \item
                In  the  case  of  the  group of type $B_2$, 
                over a field  where  the  ideal generated  by $2$ has a  square root, 
                such  as  $\BQ(\sqrt{2})$  or  $\BQ(\sqrt{-1})$,
                not only  do we find the usual system of
                type $B_2$ but  we also find a
                system which
                affords the exterior automorphism $\lexp2B_2$. 
                If we were considering
                numbers instead of ideals, this automorphism would exist only if the
                number $2$ (rather than the ideal generated by 2) has a square root.
                
                The exceptional group denoted by
                $G_{29}$, defined  over $\BQ(\sqrt{-1})$, has  a subgroup of type
                $B_2$  and the normaliser of this subgroup induces the
                automorphism $\lexp2B_2$. Our corresponding root system has the same
                automorphism since $1+i$ and $1-i$ generate the same ideal and
                $(1+i)(1-i)=2$, thus 2 has an ``ideal square root'' 
                in the ring $\BZ[\sqrt{-1}]$.
\end{itemize}
        
        Perhaps the most interesting and intriguing fact which comes out of the classification
        concerns the generalisation of the notions of \emph{connection index\/} and 
        \emph{bad primes\/}:     
        in the case of spetsial reflection groups, the order of the group is
        divisible by the factorial of the rank times the connection index, and the
        bad primes for the corresponding Spets make up the remainder, just as in the case 
        of finite reductive groups and Weyl groups.\footnote{Notice though that, as shown by
        Nebe \cite{nebe}, the bad primes for spetsial groups (see Section 8)
        do \emph{not\/} occur as divisors
        of the orders of the quotient of the root lattice by the root lattice of maximal reflection
        subgroups.}

\section{\red{Complex reflection groups}}

        We denote $\lambda \mapsto \lambda^*$ the complex conjugation and we 
        denote by
        $k$ a subfield of $\BC$ stable by complex conjugation. 
        
\subsection{Preliminary material about reflections}\habel{preliminary}

        Let $(V,W)$ be a pair of finite dimensional $k$-vector spaces with a given
        Hermitian pairing $V\times W\to k: (v,w)\mapsto \scal vw$; that is:
\begin{itemize}
        \item 
                $\scal\pd\pd$ is linear in $V$ and semi-linear in $W$:
                for $\lambda, \mu\in k$, $v\in V$ and $w\in W$ we have 
                $\scal{\lambda v}{\mu w} = \lambda\mu^*\scal vw.$
         \item 
                $\scal vw=0$ for all $w \in W$ implies $v=0$.
         \item 
                $\scal vw=0$ for all $v \in V$ implies $w=0$.
\end{itemize}
        Similarly $(w,v)\mapsto{\scal vw}^*$ defines a Hermitian pairing
        $W\times V\to k$ that we will also denote $(w,v)\mapsto\scal wv$ 
        when its meaning is clear from the context.
                
        Any vector space can be naturally endowed with a Hermitian pairing with its twisted dual: 
        
\begin{definition}\label{twistedddual}
        The \emph{twisted dual} \index{Twisted dual} of a 
        $k$-vector space $V$, denoted $\lexp *V$, is the $k$-vector space which is 
        the conjugate under $*$ of the dual $V^*$ of $V$. In other words,
\begin{itemize}
        \item 
                as an abelian group, $\lexp *V =V^*$,
         \item 
                an element $\la \in k$ acts on $\lexp *V$ as $\la^*$ acts on $V^*$. 
\end{itemize}   
        The pairing $V\times \lexp *V \to k: (v,\phi)\mapsto \phi(v)$ is a 
        Hermitian pairing, called the \emph{canonical pairing associated with $V$}.
        \index{Canonical pairing associated with $V$}
\end{definition}
                
        When $V$ is a real vector space, the twisted dual is the usual dual.

        Let $G_{(V,W)}$ be the subgroup of $\GL(V)\times\GL(W)$ which preserves the pairing.
        The first (resp. second) projection gives an isomorphism $G_{(V,W)}\iso\GL(V)$ (resp.
        $G_{(V,W)}\iso\GL(W)$). Composing the second isomorphism with the inverse of the
        first we get an isomorphism $g\mapsto g^\vee:\GL(V)\iso\GL(W)$. The inverse
        morphism has the same definition reversing the roles of $V$ and $W$, and
        we will still denote it $g\mapsto g^\vee$ so that $(g^\vee)^\vee=g$.
        
        In the case of the canonical pairing associated with $V$,
        the isomorphism $g\mapsto g^\vee$ is just the contragredient $\lexp tg\inv$.

\begin{definition}\label{reflectingspaces}
        A \emph{reflection} \index{Reflection} is an element $s\in\GL(V)$ of finite order such that
        $\ker(s-1)$ is an hyperplane. 
        Define the
\begin{itemize}
        \item 
                 {\em reflecting hyperplane} \index{Reflecting hyperplane} of $s$ as $H_s:=\ker(s-1)$,
        \item 
                 {\em reflecting line} \index{Reflecting line} of $s$ as $L_s:=\im(s-1)$,
        \item 
                 {\em dual reflecting line} \index{Dual reflecting line} $M_s$ of $s$ as the orthogonal (in $W$) of $H_s$,
        \item   
                 {\em dual reflecting hyperplane} \index{Dual reflecting hyperplane} $K_s$ of $s$ as the orthogonal (in $W$) of $L_s$.
\end{itemize}
Denote by $\zeta_s$ the determinant of $s$, which is a root of unity.

\end{definition}

        It is clear that a reflection $s$ is determined by $H_s$, $L_s$ and
        $\zeta_s$. In turn, $H_s$ is determined by the dual reflecting line
        $M_s$. Note that $M_s$ is not orthogonal
        to $L_s$ since $H_s$ does not contain $L_s$.
        Thus giving a reflection is equivalent to giving the following data:
        
\begin{definition}
        A {\em reflection triple} \index{Reflection triple} is a triple $(L,M,\zeta)$ where
\begin{itemize}
        \item
                $L$ is a line in $V$ and $M$ a line in $W$ which are not orthogonal.
        \item
                $\zeta\in k^\times$ is a root of unity.
\end{itemize}
\end{definition}

Formulae for the reflections defined by a reflection triple are symmetric in $V$ and $W$:

\begin{proposition}\label{computereflection}
        A reflection triple $(L,M,\zeta)$ defines a pair of reflections 
        $(s,s^\vee) \in\GL(V)\times\GL(W)$ (which preserve the pairing)
        by the formulae:
        $$
        \begin{aligned}
                s(v)&=v-\frac{\scal vy}{\scal xy}(1-\zeta)x,\\
                s^\vee(w)&=w-\frac{\scal wx}{\scal yx}(1-\zeta)y,\\
        \end{aligned}
        $$
        for any non-zero $x\in L$ and $y\in M$.
\end{proposition}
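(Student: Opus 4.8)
The plan is to verify the three assertions directly: that $s$ and $s^\vee$ are reflections in the sense of Definition \ref{reflectingspaces} with the prescribed data $(L,M,\zeta)$, that they preserve the pairing, and that the formulae are independent of the choices of $x\in L$ and $y\in M$. I would handle independence of the choice first, since it is immediate: replacing $x$ by $\mu x$ and $y$ by $\nu y$ multiplies $\scal vy$ by $\nu^*$, divides $\scal xy$ by $\mu\nu^*$, and multiplies the trailing $x$ by $\mu$, so the net effect on the correction term $\frac{\scal vy}{\scal xy}(1-\zeta)x$ is trivial; the same computation works for $s^\vee$. Note the hypothesis $\scal xy\neq 0$ (equivalently $L$ and $M$ not orthogonal) is exactly what makes the formula well-defined.

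Next I would check that $s\in\GL(V)$ is a reflection with $H_s=\ker(s-1)$ equal to the hyperplane $\{v:\scal vy=0\}$ (the orthogonal of $M$, which has dimension $\dim V-1$ since the pairing is nondegenerate), that $L_s=\im(s-1)=L$, and that $\det s=\zeta_s=\zeta$. The first two are read off the formula: $(s-1)(v)=-\frac{\scal vy}{\scal xy}(1-\zeta)x$, so the image is contained in $kx=L$ and the kernel is $\{\scal vy=0\}$ provided $\zeta\neq 1$ (the case $\zeta=1$ giving $s=\Id$ must be noted, or one restricts to $\zeta\neq1$). For the determinant, evaluate $s$ on a basis adapted to the decomposition $V=H_s\oplus L$: on $H_s$ it is the identity, and $s(x)=x-\frac{\scal xy}{\scal xy}(1-\zeta)x=\zeta x$, so $\det s=\zeta$ and $s$ has finite order since $\zeta$ is a root of unity. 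The argument for $s^\vee$ is the mirror image, with the roles of $V$ and $W$, $L$ and $M$, and $x$ and $y$ interchanged, using the pairing $W\times V\to k$; one checks $H_{s^\vee}$ is the orthogonal of $L$ in $W$, that is, the dual reflecting hyperplane $K_s$, and $\det s^\vee=\zeta$ as well.

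Finally I would show $(s,s^\vee)\in G_{(V,W)}$, i.e. $\scal{s(v)}{s^\vee(w)}=\scal vw$ for all $v\in V$, $w\in W$. Expanding the left side using the two formulae and bilinearity/semilinearity gives $\scal vw$ plus three correction terms involving $\scal xw$, $\scal vy$, and $\scal xy$; using $\scal{yx}=\scal xy^*$ to rewrite the denominators consistently, the three terms combine with coefficients $-(1-\zeta)$, $-(1-\zeta)^*$, and $+(1-\zeta)(1-\zeta)^*\frac{\scal xy}{\scal xy}$ times $\frac{\scal vy\scal xw}{\scal xy\scal xy^*}$... and here is the one subtlety I would watch: the cross terms only cancel cleanly when $\zeta\zeta^*=1$, which holds precisely because $\zeta$ is a root of unity, so $(1-\zeta)+(1-\zeta)^*-(1-\zeta)(1-\zeta)^*=1-\zeta\zeta^*=0$. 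That identity is the crux of the computation, and it is the step I expect to require the most care to present cleanly; everything else is bookkeeping. This simultaneously shows the pair $(s,s^\vee)$ is compatible with the map $g\mapsto g^\vee$ described before the proposition, matching the claim that $s^\vee$ is indeed the $\vee$-image of $s$.
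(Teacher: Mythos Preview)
Your proposal is correct and follows the same direct-verification approach as the paper, which simply writes ``An easy computation shows that the pair $(s,s^\vee)$ of reflections preserves the pairing'' without giving any details. You have supplied precisely the computation the paper elides, including the key identity $(1-\zeta)+(1-\zeta)^*-(1-\zeta)(1-\zeta)^*=1-\zeta\zeta^*=0$ coming from $|\zeta|=1$; the only cosmetic issue is that in your sketch the common scalar factor simplifies to $\frac{\scal vy\scal xw}{\scal xy}$ rather than having $\scal xy\scal xy^*$ in the denominator, but this does not affect the argument.
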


\begin{proof}
        An easy computation shows that the pair $(s,s^\vee)$ of reflections preserves the pairing 
        $\scal\pd\pd$.
        Furthermore, the reflection $s$ defined by this formula determines the triple $(L,M,\zeta)$,
        and the reflection $s^\vee$ (in $W$) determines the triple $(M,L,\zeta)$.
\end{proof}

        As long as $\zeta^m\ne 1$, the pair $(s^m,s^{\vee m})$ is precisely the pair of
        reflections defined by the triple $(L,M,\zeta^m)$: the order of $s$ is the
        order of the element $\zeta\in k^\times$.

        To summarize, we have:

\begin{proposition}\label{reflectiontriples}
        Reflection triples are in bijection with
\begin{itemize}
        \item 
                reflections in $\GL(V)$,
        \item 
                reflections in $\GL(W)$,
        \item 
                pairs of reflections $(s,s^\vee)$ in $\GL(V)\times\GL(W)$
                which preserve the pairing $\scal\pd\pd$.
\end{itemize}
\end{proposition}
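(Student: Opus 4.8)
The plan is to bootstrap everything from two facts already in hand: the remark following Definition \ref{reflectingspaces}, which says that a reflection $s\in\GL(V)$ is completely determined by the triple $(L_s,M_s,\zeta_s)$ and that this triple is a reflection triple; and Proposition \ref{computereflection}, which builds from any reflection triple $(L,M,\zeta)$ a pairing-preserving pair $(s,s^\vee)$, with $s$ having $(L,M,\zeta)$ as associated data and $s^\vee$ having $(M,L,\zeta)$.

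First I would establish the bijection between reflection triples and reflections in $\GL(V)$. In one direction, send $(L,M,\zeta)$ to the $s$ of Proposition \ref{computereflection}; in the other, send $s$ to $(L_s,M_s,\zeta_s)$. Both assignments land where they should (the second by the cited remark), so it only remains to see that they are mutually inverse. Starting from a triple and returning is handled by the ``$s$ determines $(L,M,\zeta)$'' clause of Proposition \ref{computereflection}; concretely one reads off the displayed formula that $\im(s-1)=L$, that $\ker(s-1)$ is the orthogonal of $M$ in $V$, and that $\det s=\zeta$. Starting from a reflection $s$, forming $(L_s,M_s,\zeta_s)$ and then the associated reflection $s'$, one has $L_{s'}=L_s$, $H_{s'}=H_s$ (since $M_{s'}=M_s$) and $\zeta_{s'}=\zeta_s$, so $s'=s$ because a reflection is determined by these data.

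Next I would get the bijection with reflections in $\GL(W)$ for free, by running the previous paragraph with $V$ and $W$, and the Hermitian pairing $W\times V\to k$, interchanged: this identifies reflections in $\GL(W)$ with triples $(L',M',\zeta)$, where $L'$ is a line in $W$ and $M'$ a line in $V$ not orthogonal to it --- which is precisely the datum of a reflection triple $(M',L',\zeta)$ in our sense. Under this identification the triple $(L,M,\zeta)$ corresponds to the reflection of $\GL(W)$ with reflecting line $M$ and dual reflecting line $L$, and by the symmetry of the formulae in Proposition \ref{computereflection} this reflection is exactly $s^\vee$.

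Finally, for pairs: the preliminaries identify $G_{(V,W)}$ with the graph of $g\mapsto g^\vee$, so a pairing-preserving pair in $\GL(V)\times\GL(W)$ is always of the form $(g,g^\vee)$; hence a pairing-preserving pair of reflections is $(s,s^\vee)$ with $s$ a reflection, and $s^\vee$ is automatically a reflection because, writing $s$ via its triple, Proposition \ref{computereflection} exhibits $s^\vee$ as the reflection attached to $(M_s,L_s,\zeta_s)$. Thus $s\mapsto(s,s^\vee)$ is a bijection between reflections in $\GL(V)$ and pairing-preserving pairs of reflections, and composing with the first bijection completes the proof. No step is a serious obstacle; the only point needing genuine care is the verification in the second paragraph that the data extracted from the explicit formula for $s$ is the triple one started with.
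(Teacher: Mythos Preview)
Your proposal is correct and follows essentially the same approach as the paper: the proposition is presented there as a summary (``To summarize, we have:'') of the discussion surrounding Definition \ref{reflectingspaces} and Proposition \ref{computereflection}, and you have simply unpacked that discussion into explicit mutually inverse maps. Your argument is in fact more detailed than the paper's, which relies on the reader assembling the pieces from the remarks preceding the proposition.
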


        An element $g\in\GL(V)$ acts naturally on reflection triples $(L,M,\zeta)$
        through the action of $(g,g^\vee)$ on pairs $(L,M)$. It follows from the previous
        proposition that $g$ commutes with a reflection $s$ if and only
        if $(g,g^\vee)$ stabilizes $(L_s,M_s)$.

\begin{notation}
        Let $t=(L,M,\zeta)$ be a reflection triple;   denote by
        $s_t$ the corresponding reflection, and write $s^\vee_t$ for 
        the reflection corresponding to $(M,L,\zeta)$. 
        We will also write $L_t, H_t, M_t, K_t$
        for $L_{s_t}, H_{s_t}, M_{s_t}, K_{s_t}$.
\end{notation}

\subsubsection*{Stable subspaces}\hfill
\smallskip

 A reflection is diagonalisable, hence so is its restriction to a stable 
 subspace. The next lemma follows directly.
 
\begin{lemma}\habel{stablebyref}
 Let $V_1$ be a subspace of $V$ stable by a reflection $s$. Then
 \begin{itemize}
  \item
   either $V_1$ is fixed by $s$ (\ie\ $V_1\subseteq H_s$),
  \item
   or $V_1$ contains $L_s$, and then $V_1 = L_s\oplus (H_s\cap V_1)$, in
   which case the restriction of $s$ to $V_1$ is a reflection.
 \end{itemize}
 In particular, the restriction of a reflection to a stable subspace is either trivial or a reflection.
\end{lemma}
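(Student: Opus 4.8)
The plan is to use that the reflection $s$ is diagonalisable (as just recalled), so that $V = H_s \oplus L_s$ is precisely the decomposition of $V$ into the eigenspace $H_s = \ker(s-1)$ for the eigenvalue $1$ and the eigenline $L_s = \im(s-1)$ for the eigenvalue $\zeta_s \ne 1$ (see Definition~\ref{reflectingspaces}). Then I would show that any stable subspace $V_1$ splits compatibly with this decomposition, and read off the two alternatives from the fact that $L_s$ is a line.

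First I would prove that $V_1 = (V_1 \cap H_s) \oplus (V_1 \cap L_s)$. For $v \in V_1$, write $v = v_H + v_L$ according to $V = H_s \oplus L_s$; then $s(v) = v_H + \zeta_s v_L$ lies in $V_1$, hence $\zeta_s v - s(v) = (\zeta_s - 1) v_H \in V_1$, and since $\zeta_s \ne 1$ this forces $v_H \in V_1$, and therefore also $v_L = v - v_H \in V_1$. This yields the claimed decomposition, the sum being direct since it is so already inside $V$.

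Now $V_1 \cap L_s$ is a subspace of the line $L_s$, so it equals $0$ or $L_s$. In the first case $V_1 = V_1 \cap H_s \subseteq H_s$, so $s$ fixes $V_1$ pointwise. In the second case $L_s \subseteq V_1$ and $V_1 = L_s \oplus (H_s \cap V_1)$; the restriction $s|_{V_1}$ still has finite order, and $\ker(s|_{V_1} - 1)$ contains $H_s \cap V_1$ while meeting $L_s$ only in $0$ (the eigenvalue of $s$ on $L_s$ being $\zeta_s \ne 1$), so $\ker(s|_{V_1} - 1) = H_s \cap V_1$, which is a hyperplane of $V_1$; thus $s|_{V_1}$ is a reflection, with reflecting hyperplane $H_s \cap V_1$ and reflecting line $L_s$. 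The final assertion of the statement is just the conjunction of these two cases.

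There is no real obstacle here: the only step requiring an argument rather than bookkeeping is the compatibility of the eigenspace decomposition with restriction to $V_1$, \ie\ the short computation of the second paragraph; the rest is dimension counting, using repeatedly that $L_s$ is one-dimensional and that $\zeta_s \ne 1$.
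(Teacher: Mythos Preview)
Your proof is correct and follows exactly the approach the paper indicates: the paper simply remarks that a reflection is diagonalisable, hence so is its restriction to a stable subspace, and says the lemma follows directly. Your explicit computation showing $V_1 = (V_1\cap H_s)\oplus(V_1\cap L_s)$ is precisely the content of that remark, spelled out in detail.
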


\subsubsection*{Commuting reflections}

\begin{lemma}\label{commutingreflections}
        Let $t_1=(L_1,M_1,\zeta_1)$ and $t_2=(L_2,M_2,\zeta_2)$ be two reflection
         triples.
        We have the following three sets of
        equivalent assertions.
\smallskip

\begin{enumerate}
        \item[(I)]
         \begin{enumerate}
                \item[(i)]
                         $(s_{t_1},s^\vee_{t_1})$ acts trivially on $(L_2,M_2)$.
                \item[(ii)]
                        $(s_{t_2},s^\vee_{t_2})$ acts trivially on $(L_1,M_1)$.
                \item[(iii)]
                        $ L_{1} \subseteq H_{t_2}$ and $L_{2} \subseteq H_{t_1}$,
                        in which case we say that $t_1$ and $t_2$ are 
                        \emph{orthogonal}. \index{Orthogonal reflecting triples}
        \end{enumerate}
\smallskip 

        \item[(II)]
        \begin{enumerate}
                \item[(i)]
                        $(s_{t_1},s^\vee_{t_1})$ acts by $\zeta_1$ on $(L_2,M_2)$.
                \item[(ii)]
                        $(s_{t_2},s^\vee_{t_2})$ acts by $\zeta_2$ on $(L_1,M_1)$.
                \item[(iii)]
                        $ L_{1} = L_{2}$ and $H_{t_1} = H_{t_2}$,
                        in which case we say that $t_1$ and $t_2$ are 
                        \emph{parallel.} \index{Parallel reflecting triples}
        \end{enumerate}
\smallskip
 
        \item[(III)]
        \begin{enumerate}
                \item[(i)]
                        $s_{t_1}s_{t_2} = s_{t_2}s_{t_1}\,,$
                \item[(ii)]
                        $s_{t_1}$ stabilizes $t_2$.
                 \item[(iii)]
                        $s_{t_2}$ stabilizes $t_1$.
                \item[(iv)]
                        $t_1$ and $t_2$ are either orthogonal or parallel.
        \end{enumerate} 
\end{enumerate}
\end{lemma}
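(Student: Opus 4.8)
The plan is to reduce every statement to incidence relations among the lines $L_i\subseteq V$ and $M_i\subseteq W$, using three facts. First, a reflection $s$ has only the eigenvalues $1$ (on $H_s$) and $\zeta_s\ne1$ (on $L_s$), so $L_s$ is exactly the $\zeta_s$-eigenspace of $s$; likewise $M_s$ is the $\zeta_s$-eigenspace of $s^\vee$, which by Proposition \ref{reflectiontriples} is the reflection of $W$ with reflecting line $M_s$ and reflecting hyperplane $K_s$ (so Lemma \ref{stablebyref} applies to it with the roles of $V$ and $W$ exchanged). Second, perfectness of the pairing gives $H_s=M_s^\perp$ (orthogonal in $V$) and $K_s=L_s^\perp$ (orthogonal in $W$), so that for a line $\ell\subseteq V$ one has $\ell\subseteq H_s$ exactly when $\ell\perp M_s$, and for a line $m\subseteq W$ one has $m\subseteq K_s$ exactly when $m\perp L_s$. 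Third, $L_s\not\subseteq H_s$, \ie\ $L_s\not\perp M_s$ (and dually $M_s\not\subseteq K_s$).

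For (I), I would observe that ``$s_{t_1}$ acts trivially on $L_2$'' means $L_2\subseteq H_{t_1}$, \ie\ $L_2\perp M_1$, while ``$s^\vee_{t_1}$ acts trivially on $M_2$'' means $M_2\subseteq K_{t_1}$, \ie\ $L_1\perp M_2$; thus (i) amounts to the symmetric pair ``$L_1\perp M_2$ and $L_2\perp M_1$''. The same rewriting applied to $t_2$ shows that (ii) amounts to the same pair, and (iii) unwinds directly to $L_1\subseteq H_{t_2}$ (\ie\ $L_1\perp M_2$) together with $L_2\subseteq H_{t_1}$ (\ie\ $L_2\perp M_1$). For (II), since the $\zeta_1$-eigenspace of $s_{t_1}$ is the line $L_1$ and that of $s^\vee_{t_1}$ is the line $M_1$ (and $\zeta_1\ne1$ because $s_{t_1}$ is a reflection), condition (i) forces $L_2=L_1$ and $M_2=M_1$, and conversely these equalities give (i); since $H_{t_i}=M_i^\perp$, the equality $M_1=M_2$ is the same condition as $H_{t_1}=H_{t_2}$, whence (i) and (iii) are equivalent, and the symmetric argument yields the equivalence with (ii).

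For (III), the equivalences of (i), (ii), (iii) are immediate from the remark following Proposition \ref{reflectiontriples}, namely that $g$ commutes with a reflection $s$ if and only if $(g,g^\vee)$ stabilizes $(L_s,M_s)$: taking $g=s_{t_1}$ and $s=s_{t_2}$, this says that $s_{t_1}s_{t_2}=s_{t_2}s_{t_1}$ if and only if $(s_{t_1},s^\vee_{t_1})$ stabilizes $(L_2,M_2)$, which is exactly the assertion that $s_{t_1}$ stabilizes $t_2$ (the scalar $\zeta_2$ being unaffected), and symmetrically with the roles of $t_1$ and $t_2$ exchanged. The implication (iv)$\Rightarrow$(ii) follows from parts (I) and (II): if $t_1$ and $t_2$ are orthogonal then $(s_{t_1},s^\vee_{t_1})$ fixes $(L_2,M_2)$ pointwise, and if they are parallel it acts on $(L_2,M_2)$ by the scalar $\zeta_1$; in either case $s_{t_1}$ stabilizes $t_2$.

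The substantive implication is (ii)$\Rightarrow$(iv), and here I would apply Lemma \ref{stablebyref} twice. Since the line $L_2$ is stable under the reflection $s_{t_1}$, either $L_2\subseteq H_{t_1}$ or $L_2=L_1$; since the line $M_2$ is stable under $s^\vee_{t_1}$, either $M_2\subseteq K_{t_1}$ or $M_2=M_1$. If $L_2\subseteq H_{t_1}$ and $M_2\subseteq K_{t_1}$, then $L_2\perp M_1$ and $L_1\perp M_2$, which by (I) means $t_1$ and $t_2$ are orthogonal; if $L_2=L_1$ and $M_2=M_1$, then $H_{t_1}=M_1^\perp=M_2^\perp=H_{t_2}$, which by (II) means they are parallel. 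The two mixed cases are ruled out by the third fact: if $L_2\subseteq H_{t_1}$ and $M_2=M_1$, then $H_{t_2}=M_2^\perp=M_1^\perp=H_{t_1}$ would force $L_2\subseteq H_{t_2}$, contradicting $L_2\not\subseteq H_{t_2}$; if $L_2=L_1$ and $M_2\subseteq K_{t_1}=L_1^\perp=L_2^\perp$, then $L_2\perp M_2$, contradicting $L_2\not\perp M_2$. This closes the chain of equivalences. The only place demanding any care is the bookkeeping in this final four-way case split; everything else is a direct substitution.
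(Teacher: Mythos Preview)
Your proof is correct and follows essentially the same strategy as the paper's: for (I) and (II) you establish the equivalence of (i) and (iii) directly (with (ii)$\Leftrightarrow$(iii) by symmetry), and then deduce (III) from (I), (II), and the characterization of commutation via stabilization of reflection triples. The paper's own proof is a one-sentence sketch of precisely this route; you have simply filled in the details, including the four-way case split for (ii)$\Rightarrow$(iv) in (III), which the paper leaves implicit under ``follows from the definitions.''
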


\begin{proof}
        The proof of both (I) and (II) proceeds by showing the equivalence of (i) and (iii), from which the     equivalence between (ii) and (iii) follows by symmetry. (III) then follows from (I), (II), and the definitions.
\end{proof}

        From now on and until the end of this section, we shall work ``on the $V$'' side.
        Nevertheless, we shall go on using notions previously introduced in connection
        with an Hermitian pairing $V\times W \ra k$.
\smallskip

\subsection{Reflection groups}\hfill
\smallskip

\begin{definition}\label{deffrg}
        A \emph{reflection group} on $k$ is a pair $(V,G)$, where
        $V$ is a finite dimensional $k$-vector space and $G$ is
        a  subgroup of $\GL(V)$ generated by reflections.
        A reflection group is said to be \emph{finite} if $G$ is finite, and \emph{complex} if $k\subseteq \BC$.
\end{definition}

Throughout this subsection, $(V,G)$ denotes a reflection group.

        Whenever a set of reflections $S$  generates $G$, then 
 $ \bigcap_{s\in S}H_s = V^G$, 
  the set of elements fixed by $G$.

\begin{definition}\label{def:essential}
        A reflection group $(V,G)$ is  \index{Essential}\emph{essential} if $V^G =\{ 0\}$.
\end{definition}

\begin{definition}
A set of reflections  is \emph{saturated} \index{Saturated set of reflections} if it is closed under conjugation by the group it generates.
\end{definition}

        When $S$ is saturated,
        $G$ is a (normal) subgroup of the subgroup of $\GL(V)$
        which stabilizes $S$, and the subspace $V_S$, defined by
        $$
                V_S := \sum_{s\in S} L_s
        \,,
        $$
         is stable by the action of $G$.
    
\subsubsection*{Orthogonal decomposition}
\habel{sec:ortho decomp}

\begin{definition}\habel{sim}
\begin{enumerate}
        \item
                Define  an  equivalence  relation  $\sim$  on  $S$  as  the
                transitive  closure of:  $s_t \sim s_{t'}$ whenever $t$ is
                not orthogonal to $t'$.
        \item\label{def:irredRS}
                The set of reflections $S$ is said to be \emph{irreducible}
                \index{Irreducible  set of reflections} if it consists of a
                unique $\sim$-equivalence class.
\end{enumerate}
\end{definition}

\begin{lemma}\label{equivstable}
        Let  $S$  be  a  saturated  set  of reflections which generates the
        reflection  group $(V,G)$.  Then the  $\sim$-equivalence classes of
        $S$ are stable under $G$-conjugacy.
\end{lemma}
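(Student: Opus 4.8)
The plan is to exploit that $S$ generates $G$ in order to reduce to the behaviour of conjugation on the non-orthogonality relation. Since $S$ is saturated, conjugation by any $g\in G$ restricts to a bijection $S\to S$, $s\mapsto \lexp{g}{s}:=gsg\inv$; so it suffices to show that this bijection preserves the relation $\sim$, as it will then permute the $\sim$-equivalence classes. Now $\sim$ is by definition the transitive closure of the relation ``$s_t$ is not orthogonal to $s_{t'}$'' on $S$, and any bijection preserving a relation preserves its transitive closure; hence it is enough to prove that, for any two reflection triples $t_1,t_2$ with $s_{t_1},s_{t_2}\in S$ and any $g\in G$, the triples $t_1,t_2$ are orthogonal if and only if the triples underlying $\lexp{g}{s_{t_1}}$ and $\lexp{g}{s_{t_2}}$ are orthogonal.

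For this, I would first note that for $g\in\GL(V)$ and any reflection $s$, the element $gsg\inv$ is again a reflection, with $H_{gsg\inv}=\ker(gsg\inv-1)=g(H_s)$, $L_{gsg\inv}=\im(gsg\inv-1)=g(L_s)$, and the same determinant; by Proposition \ref{reflectiontriples} it corresponds to a unique reflection triple, which I denote $\lexp{g}{t}$ when $s=s_t$. (A direct computation with the formula of Proposition \ref{computereflection} identifies $\lexp{g}{t}$ with $(g L_t,\,g^\vee M_t,\,\zeta_t)$, but only the $V$-side data $g(L_t)$ and $g(H_t)$ will enter the argument.) By Lemma \ref{commutingreflections}(I)(iii), $t_1$ and $t_2$ are orthogonal exactly when $L_{t_1}\subseteq H_{t_2}$ and $L_{t_2}\subseteq H_{t_1}$; applying the invertible linear map $g$ turns this equivalently into $L_{\lexp{g}{t_1}}\subseteq H_{\lexp{g}{t_2}}$ and $L_{\lexp{g}{t_2}}\subseteq H_{\lexp{g}{t_1}}$, \ie\ into the orthogonality of $\lexp{g}{t_1}$ and $\lexp{g}{t_2}$. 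This is precisely the equivalence required, and the lemma follows.

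There is no genuine difficulty here; the only point deserving care is the recognition that orthogonality of reflection triples, in the form given by Lemma \ref{commutingreflections}(I)(iii), is a purely incidence-theoretic condition between a line and a hyperplane, hence is transported by any element of $\GL(V)$ — which, together with the transitive-closure observation, settles the statement.
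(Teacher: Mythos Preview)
Your argument is correct and follows the same approach as the paper: the saturation of $S$ gives stability under $G$-conjugacy, and orthogonality of reflection triples is preserved under conjugation by elements of $\GL(V)$, hence so is the transitive closure $\sim$. The paper's own proof is a one-line version of exactly this reasoning.
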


\begin{proof}
        This results from the stability under $G$-conjugacy of $S$ and of the relation 
        ``{\sl being orthogonal\/}''.
\end{proof}

When  a group $G$ acts on a set $V$ (which could be $G$ itself on which $G$
acts by conjugation), for $X\subset V$ we denote by $N_G(X)$ the normalizer
(stabilizer) of $X$ in $G$, \ie\ the set of $g\in G$ such that $g(X) = X$.
\label{StabiliserFixatorNotation} 
We  denote by $C_G(X)$  the centralizer (fixator)  of $X$, \ie\  the set of
$g\in G$ such that, for all $x \in X$, $g(x) = x$.
        Notice that $C_G(X) \triangleleft N_G(X)$.

\begin{lemma}\label{Rcentralizer} Let $S$ be a set of reflections on $V$.

\begin{enumerate} 
        \item The action of $N_{\GL(V)}(S)$ on $S$ induces an injection:
                $$N_{\GL(V)}(S)/C_{\GL(V)}(S)\hookrightarrow \fS(S),$$
                into the symmetric group on $S$.
        \item 
                If  $S$ is  saturated and  irreducible, then $C_{\GL(V)}(S)$
                acts by scalars on $V_S$.
\end{enumerate}
\end{lemma}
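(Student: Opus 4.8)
For part (1) the argument is formal. Since $N_{\GL(V)}(S)$ stabilizes $S$ setwise, conjugation defines an action of $N_{\GL(V)}(S)$ on $S$, and hence a homomorphism $N_{\GL(V)}(S)\to\fS(S)$. Its kernel consists of the $g\in N_{\GL(V)}(S)$ with $gsg\inv=s$ for every $s\in S$; but any $g\in\GL(V)$ satisfying this automatically normalizes $S$, so the kernel is exactly $C_{\GL(V)}(S)$. The first isomorphism theorem then yields the claimed injection.

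For part (2), let $g\in C_{\GL(V)}(S)$, so $g$ commutes with every $s\in S$. First I would note that $g$ stabilizes each reflecting line: since $g$ commutes with $s$ it commutes with $s-1$, and, $g$ being invertible, $g(L_s)=g(\im(s-1))=\im(s-1)=L_s$ (alternatively, invoke the remark following Proposition~\ref{reflectiontriples}). As $L_s$ is a line, $g$ acts on it by a scalar $\chi(s)\in k^\times$. Since $V_S=\sum_{s\in S}L_s$, it then suffices to show that $\chi$ is constant on $S$; and as $S$ is irreducible, \ie\ a single $\sim$-equivalence class, it is enough to prove $\chi(s)=\chi(s')$ whenever $s,s'\in S$ have reflection triples $t=(L,M,\zeta)$ and $t'=(L',M',\zeta')$ that are not orthogonal.

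The heart of the matter is this last claim. If $L=L'$ it is trivial, so assume $L\ne L'$ and pick nonzero $x\in L$, $x'\in L'$ and $y'\in M'$; then $x$ and $x'$ are linearly independent. Applying the relation $gs_{t'}=s_{t'}g$ to $x$ and expanding $s_{t'}$ by the formula of Proposition~\ref{computereflection}, one gets $g(s_{t'}(x))=\chi(s)x-\frac{\scal x{y'}}{\scal{x'}{y'}}(1-\zeta')\chi(s')x'$ on one side and $s_{t'}(g(x))=\chi(s)x-\chi(s)\frac{\scal x{y'}}{\scal{x'}{y'}}(1-\zeta')x'$ on the other; comparing the $x'$-components (legitimate since $x,x'$ are independent) gives $\chi(s)=\chi(s')$ as soon as the common factor $\frac{\scal x{y'}}{\scal{x'}{y'}}(1-\zeta')$ is nonzero. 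Here $\scal{x'}{y'}\ne 0$ because the two lines of a reflection triple are never orthogonal, $1-\zeta'\ne 0$ because $\zeta'$ is the determinant of a reflection and hence $\ne 1$, and the same formula shows $H_{t'}=\{v:\scal v{y'}=0\}$, so $\scal x{y'}\ne 0$ is equivalent to $L\not\subseteq H_{t'}$. The symmetric computation, applying $gs_t=s_tg$ to $x'$, delivers instead $\chi(s)=\chi(s')$ under the hypothesis $L'\not\subseteq H_t$.

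The point needing the most care, and the one I would flag as the main obstacle, is how these two alternatives fit together. By Lemma~\ref{commutingreflections}(I), the triples $t,t'$ are orthogonal precisely when $L\subseteq H_{t'}$ and $L'\subseteq H_t$, so ``not orthogonal'' means $L\not\subseteq H_{t'}$ \emph{or} $L'\not\subseteq H_t$ --- not necessarily both --- and one must check, as above, that either alternative on its own forces $\chi(s)=\chi(s')$. Granting that, non-orthogonality always yields the equality, transitivity of $\sim$ together with irreducibility propagates it to all of $S$, and so $g$ acts by a single scalar on $V_S=\sum_{s\in S}L_s$; everything else is routine bookkeeping with the explicit reflection formulae and the nondegeneracy of the pairing. (Notice that this argument uses only irreducibility of $S$, not saturation.)
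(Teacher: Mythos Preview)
Your proof is correct, and in fact slightly sharper than the paper's. The paper proves part (2) by invoking saturation: given non-orthogonal $s,s'\in S$ with reflecting lines $L,L'$, it uses that $ss's\inv\in S$ (by saturation) to deduce that $g$ acts by a scalar on the line $s(L')$, then computes $g(s(v))$ for $v\in L'$ in two ways to extract $\lambda=\mu$. You instead exploit the commutation $gs_{t'}=s_{t'}g$ directly, applied to a vector in $L$, which bypasses any need to know that conjugates of elements of $S$ lie in $S$. This is why your argument, as you observe, never uses saturation; the paper's route genuinely does. Your handling of the ``either $L\not\subseteq H_{t'}$ or $L'\not\subseteq H_t$'' dichotomy via the two symmetric computations is also more explicit than what the paper writes down. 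The upshot is that your proof establishes a marginally stronger statement (irreducibility alone suffices for (2)), at no extra cost.
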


\begin{proof} 
        Item (1) is trivial. Let us prove (2).

        Take any $g \in C_{\GL(V)}(S)$. The pair $(g,g^\vee)$ stabilizes the
        reflection   triples   of   all   reflections   in  $S$.  Take  two
        non-orthogonal  reflections  $s,  s'$  in  $S$  with  corresponding
        reflection   triples  $(L,M,\zeta)$,  $(L',M',\zeta')$  and  choose
        non-zero  elements $x\in L,y\in M, v\in  L', w\in M'$. Then $g(x) =
        \lambda  x$ and $g(v) = \mu  v$ for some non-zero scalars $\lambda$
        and $\mu$.

        Since   $S$  is   saturated,  it   also  contains   the  reflection
        corresponding  to the  triple $s\cdot  (L',M',\zeta')$. But $s(v) =
        v-\frac{\scal  vy}{\scal  xy}(1-\zeta)  x$,  so  $g(s(v))  =  \mu v
        -\frac{\scal  vy}{\scal  xy}(1-\zeta)\lambda  x  = \alpha s(v)$ for
        some  non-zero scalar $\alpha$. In  particular, this implies $\mu =
        \lambda$, and $g$ acts by scalar multiplication on $L_s+L_{s'}$.

        Since $S$ is irreducible, this shows that $g$ acts by scalar
        multiplication on $L_s+L_t$ for any pair $s,t \in S$, hence acts
        by scalar multiplication on $V_S$.
\end{proof}

\begin{lemma}\label{whatwhat}
        The number of $\sim$-equivalence classes of reflections in $S$
        is bounded by the dimension of $V$. In particular, it is finite.
\end{lemma}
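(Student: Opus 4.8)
The plan is to produce, for each $\sim$-equivalence class of $S$, a single reflecting line, and then to prove that these lines are linearly independent in $V$; this bounds the number of classes by $\dim V$, which is finite. (The case $S=\emptyset$ is trivial, so assume $S\ne\emptyset$.)

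First I would isolate the key structural fact: two reflections of $S$ lying in \emph{different} $\sim$-classes have orthogonal reflection triples. Indeed, by Definition \ref{sim}(1) a pair of non-orthogonal triples $t,t'$ already forces $s_t\sim s_{t'}$; so reflections in distinct classes must be orthogonal, which by Lemma \ref{commutingreflections}(I)(iii) means precisely $L_s\subseteq H_{s'}$ and $L_{s'}\subseteq H_s$. Next, enumerate the classes as $S_1,\dots,S_r$, choose a reflection $s_i\in S_i$ for each $i$, and set $L_i:=L_{s_i}=\im(s_i-1)$ — a line, since $H_{s_i}=\ker(s_i-1)$ is a hyperplane — and fix a nonzero $x_i\in L_i$. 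By the fact just noted, $L_i\subseteq H_{s_j}=\ker(s_j-1)$ whenever $i\ne j$, so $(s_j-1)x_i=0$ for all $i\ne j$; on the other hand $x_j$ lies in the eigenspace $L_j$ of $s_j$ for its unique non-trivial eigenvalue $\zeta_{s_j}\ne1$ ($s_j$ being diagonalisable), so $(s_j-1)x_j=(\zeta_{s_j}-1)x_j\ne0$. Applying $s_j-1$ to a relation $\sum_i c_i x_i=0$ then gives $c_j(\zeta_{s_j}-1)x_j=0$, whence $c_j=0$; letting $j$ range over $\{1,\dots,r\}$ shows $x_1,\dots,x_r$ are linearly independent, so $r\le\dim V$.

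The argument is short, and the only step needing real care is the first one: being precise that membership in \emph{distinct} $\sim$-classes forces genuine orthogonality of the triples — rather than merely some weaker incidence condition — so that Lemma \ref{commutingreflections}(I)(iii) applies and each chosen reflecting line genuinely sits inside the fixed hyperplanes of the other chosen reflections. Once that is nailed down, the rest is elementary linear algebra exploiting the diagonalisability of reflections, and no finiteness or saturation hypothesis on $S$ is needed.
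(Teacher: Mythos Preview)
Your proof is correct and follows essentially the same strategy as the paper's: pick a reflection from each $\sim$-class, observe that distinct classes force orthogonal triples, and deduce that the chosen reflecting lines are linearly independent, bounding the number of classes by $\dim V$. The only cosmetic difference is in the independence step---the paper pairs a putative dependence relation against a nonzero $y_i\in M_i$ via the Hermitian pairing (using $\scal{x_j}{y_i}=0$ for $j\ne i$ and $\scal{x_i}{y_i}\ne0$), whereas you apply the operator $s_j-1$; both arguments exploit exactly the same orthogonality data.
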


\begin{proof} 
        Assume that 
         $t_1=(L_1,M_1,\zeta_1)\,,
         \dots$, $t_m=(L_m,M_m,\zeta_m)$ 
         correspond to reflections which
         belong to distinct $\sim$-equivalence classes. 
         So in particular all the $t_i$ are mutually orthogonal.
         This implies that the 
         $L_i$ are linearly independent:  for let
         $x_i\in L_i,y_i\in M_i$ be non-zero elements and assume
        $\lambda_1x_1+\lambda_2x_2+\dots+\lambda_mx_m = 0$. 
        The Hermitian product
        with $y_i$ yields
         $\lambda_i\scal{x_i}{y_i} = 0$, hence $\lambda_i = 0$.
\end{proof}

        The following lemma is straightforward to verify. 
  
\begin{lemma}\label{orthodecom} 
        Let $S$ be a set of reflections which generate the reflection group $(V,G)$. 
        Let $S = S_1\sqcup S_2\sqcup\dots\sqcup S_m$ be the
        decomposition of $S$ into $\sim$-equivalence classes.
         Denote by $G_i$ the subgroup of $G$ generated by the reflections $s$
         for $s\in S_{i}$
        and by $V_i$ the subspace of $V$ generated by the lines $L_s$
         for $s\in S_i$. 

\begin{enumerate}
  \item
   The group $G_i$ acts trivially on $\sum_{j\neq i} V_j$.
  \item
   For $1\leq i\neq j\leq m$, the groups $G_i$ and $G_j$ commute.
  \item
   $G = G_1G_2\dots G_m\,.$
\end{enumerate}
\end{lemma}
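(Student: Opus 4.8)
The plan is to reduce all three assertions to the single remark that two reflections lying in distinct $\sim$-equivalence classes are necessarily orthogonal. This is immediate from Definition \ref{sim}: since $\sim$ is the transitive closure of the relation ``$s_t \sim s_{t'}$ whenever $t$ is not orthogonal to $t'$'', and a relation is always contained in its transitive closure, the negation yields that $s \in S_i$ and $t \in S_j$ with $i \neq j$ forces the triples of $s$ and $t$ to be orthogonal in the sense of Lemma \ref{commutingreflections}(I), so in particular $L_t \subseteq H_s$ and $L_s \subseteq H_t$.

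Granting this, I would argue as follows. For (1), fix $i$ and a reflection $s \in S_i$; for every $j \neq i$ and every $t \in S_j$ we have $L_t \subseteq H_s = \ker(s-1)$, so $s$ fixes $L_t$ pointwise, hence fixes $V_j = \sum_{t \in S_j} L_t$ pointwise, hence fixes $\sum_{j \neq i} V_j$ pointwise; as these $s$ generate $G_i$, the whole group $G_i$ acts trivially on $\sum_{j \neq i} V_j$. For (2), take $i \neq j$, $s \in S_i$, $t \in S_j$; their triples are orthogonal by the remark, so $s$ and $t$ commute by part (III) of Lemma \ref{commutingreflections}; since every generator of $G_i$ commutes with every generator of $G_j$, the subgroups $G_i$ and $G_j$ commute. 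For (3), since $G_1, \dots, G_m$ pairwise commute, the product set $G_1 G_2 \cdots G_m$ is a subgroup of $G$; it contains every reflection of $S = S_1 \sqcup \cdots \sqcup S_m$, and $S$ generates $G$, so $G_1 G_2 \cdots G_m = G$.

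I do not expect a genuine obstacle — the authors rightly call the statement straightforward. The only two points worth making explicit are the passage from ``not $\sim$-equivalent'' to ``orthogonal'' via the transitive-closure definition, and the elementary fact that a product of pairwise commuting subgroups is again a subgroup, which is what legitimizes writing $G_1 G_2 \cdots G_m$ before checking it exhausts $G$.
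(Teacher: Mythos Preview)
Your argument is correct and is exactly the intended one; the paper itself offers no proof beyond the remark that the lemma ``is straightforward to verify.'' Your explicit unpacking of the contrapositive of Definition~\ref{sim} and the use of Lemma~\ref{commutingreflections}(III) are precisely the two ingredients needed, and your handling of~(3) via the fact that a product of pairwise commuting subgroups is a subgroup is the standard way to close the argument.
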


        The above result can be refined in the case where $G$ acts 
        completely reducibly on $V$ -- then the decomposition is direct (see  \ref{ifss2} below).

        Lemma~\ref{orthodecom} can be applied to obtain a criterion for finiteness of $G$.

\begin{proposition}\label{Gfinite}
        Let $G$ be the group generated by a saturated set of reflections $S$. If $S$ is finite, then $G$ is finite.
\end{proposition}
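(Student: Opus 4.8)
The plan is to use the conjugation action of $G$ on the finite set $S$, together with a classical finiteness theorem for groups whose center has finite index.

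Since $S$ is saturated it is stable under conjugation by $G=\langle S\rangle$, so conjugation gives a homomorphism $\varphi\colon G\to\fS(S)$ into a finite group. Its kernel is $C_G(S)$, the set of elements of $G$ commuting with every member of $S$; because $S$ generates $G$, such an element commutes with all of $G$, so $\ker\varphi=C_G(S)=Z(G)$. Hence $G/Z(G)\cong\im\varphi$ is finite.

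I would then invoke Schur's theorem: if the center of a group has finite index, the derived subgroup is finite; thus $[G,G]$ is finite. It remains to bound $G^{\ab}=G/[G,G]$. This abelian group is generated by the images of the finitely many elements of $S$, and each $s\in S$, being a reflection (Definition~\ref{reflectingspaces}), has finite order; a finitely generated abelian group generated by torsion elements is finite, so $G^{\ab}$ is finite. Therefore $|G|=|[G,G]|\cdot|G^{\ab}|<\infty$.

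The one substantive ingredient is Schur's theorem, and the step I expect to be the real obstacle is precisely the passage from ``$G/Z(G)$ finite'' to ``$G$ finite'': this is false in general (take $G=\BZ$), so one genuinely needs that $G$ is generated by finitely many elements of finite order. A more hands-on alternative would reduce to the case where $S$ is irreducible via Lemma~\ref{orthodecom} and then use Lemma~\ref{Rcentralizer}(2), so that $Z(G)=C_G(S)$ acts on $V_S$ by scalars, which lie in a fixed finite group of roots of unity because $\det$ carries $G$ into such a group; but one is then still left having to exclude central unipotent elements of $G$ acting trivially on $V_S$, and that exclusion is exactly where finiteness of $S$ must enter — a point Schur's theorem handles uniformly.
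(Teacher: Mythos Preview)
Your proof is correct and takes a genuinely different route from the paper's.

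The paper argues geometrically: it first reduces to the case where $S$ is irreducible via Lemma~\ref{orthodecom}, then invokes Lemma~\ref{Rcentralizer}(2) to see that $G\cap C_{\GL(V)}(S)$ acts by scalars on $V_S$, and bounds those scalars using that $\det$ maps $G$ into the finite group generated by the $\det(s)$, $s\in S$; finiteness of $G$ then follows from the extension by a subgroup of $\fS(S)$. Your argument, by contrast, is purely group-theoretic: once $G/Z(G)$ embeds in $\fS(S)$, Schur's theorem gives $[G,G]$ finite, and the abelianisation is finite because it is generated by finitely many torsion elements. Your proof never touches the linear structure of $V$ and in fact shows the stronger statement that \emph{any} group generated by a finite conjugation-closed set of torsion elements is finite. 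The paper's approach is more self-contained (no appeal to Schur) and stays inside the reflection-group framework, but it requires the irreducibility reduction and the scalar-action lemma; it is exactly the ``hands-on alternative'' you sketch in your last paragraph, with the determinant argument disposing of the point you flagged about bounding the central scalars.
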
 

\begin{proof}
        Consider the case where $S$ is irreducible. By the second part of
        Lemma~\ref{Rcentralizer}, we know that the centralizer of $S$ is contained in
        $k^\times$, so in particular, $G \cap C_{\GL(V)}(S)$ is contained in
        $k^\times$. Moreover, each reflection has finite order, so in fact 
        $G \cap C_{\GL(V)}(S) \subset \bmu(k)$, the set of roots of unity of $k$. 
        Now the determinants of the elements of $G$
        belong to the finite subgroup of $\bmu(k)$ generated by the determinants of the
        elements of $S$. Thus $G\cap C_{\GL(V)}(S)$ is finite.

        As $G$ is contained in $N_{\GL(V)}(S)$, by the first part of Lemma~\ref{Rcentralizer}, 
        it is an extension of $G \cap C_{\GL(V)}(S)$ by a subgroup of $\fS(S)$. 
        By finiteness of $S$, this is a finite extension. Hence $G$ is finite.

        Finally, consider the general case $G = G_1  \cdots G_m$, where the groups $G_i$ 
        are generated by        reflections in distinct equivalence classes, as in Lemma~\ref{orthodecom}. 
        By the preceding comments, each $G_i$ is finite;  so 
        $G$ must also be finite.
\end{proof}
\smallskip

\subsubsection*{The case when $G$ is completely reducible}\hfill
\smallskip

\begin{proposition}\label{ifss}
        Let $S$ be a set of reflections generating the reflection group $(V,G)$, 
        and suppose that the action of $G$ on $V$ is completely reducible. Then
 \begin{enumerate}
  \item
   $
    V = V_S \oplus V^G
    \,,
   $ and
  \item
   the restriction from $V$ to $V_S$ induces an isomorphism from
   $G$ onto its image in $\GL(V_S)$, an essential reflection group on $V_S$.
 \end{enumerate}
\end{proposition}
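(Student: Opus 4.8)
The plan is to identify $V_S=\sum_{s\in S}L_s$ as a $G$-submodule, to split $V$ along it using complete reducibility — which gives (1) — and then to read off (2) formally. First I would record two elementary facts about $V_S$, valid for an arbitrary generating set $S$ and not only for a saturated one (so the earlier remark that $V_S$ is $G$-stable cannot be quoted verbatim). For a reflection $s$ and any $v$ we have $s(v)-v=(s-1)v\in\im(s-1)=L_s$, so $s$ stabilizes every subspace containing $L_s$; in particular every $s\in S$ stabilizes $V_S$, and since $S$ generates $G$ the subspace $V_S$ is $G$-stable. Moreover each $s\in S$ acts trivially on $V/V_S$ (because $(s-1)V=L_s\subseteq V_S$), hence so does all of $G$; that is, $(g-1)V\subseteq V_S$ for every $g\in G$, equivalently $V_S=\sum_{g\in G}\im(g-1)$.

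For (1), I would use complete reducibility to write $V=\bigoplus_i V_i$ with the $V_i$ irreducible, and separate the trivial summands $V_1,\dots,V_r$ from the rest; then $V^G=V_1\oplus\cdots\oplus V_r$. Since $s-1$ annihilates each trivial summand, each $L_s=\im(s-1)$ lies in $V_{r+1}\oplus\cdots$, so $V_S\subseteq V_{r+1}\oplus\cdots$. Conversely, for $j>r$ the space $V_j\cap V_S$ is a $G$-submodule of the irreducible $V_j$, so it equals $0$ or $V_j$; it cannot be $0$, for then, using $(g-1)V\subseteq V_S$, we would get $(g-1)V_j\subseteq V_S\cap V_j=0$, making $V_j$ a trivial module. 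Hence $V_j\subseteq V_S$ for all $j>r$, so $V_S=V_{r+1}\oplus\cdots$ and $V=V_S\oplus V^G$.

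For (2), I would observe that restriction to the $G$-stable subspace $V_S$ gives a homomorphism $\rho\colon G\to\GL(V_S)$. It is injective: an element acting trivially on $V_S$ also acts trivially on $V^G$, hence on $V=V_S\oplus V^G$. So $\rho$ is an isomorphism from $G$ onto $\rho(G)\subseteq\GL(V_S)$. For $s\in S$ the subspace $V_S$ is $s$-stable and not fixed by $s$ (it contains the nonzero line $L_s$, on which $s$ acts by $\zeta_s\ne1$), so by Lemma~\ref{stablebyref} the restriction $s|_{V_S}$ is again a reflection; these restrictions generate $\rho(G)$, so $(V_S,\rho(G))$ is a reflection group. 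Finally its fixed space is $(V_S)^{\rho(G)}=V_S\cap V^G=0$ by (1), so it is essential.

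The step that needs care is the first one — that $V_S$ is $G$-stable and coincides with $\sum_{g\in G}\im(g-1)$ — exactly because $S$ is only assumed to generate $G$ rather than to be saturated; granting that, part (1) is routine bookkeeping with isotypic components and part (2) is purely formal.
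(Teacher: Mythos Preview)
Your proof is correct. The paper's argument is close in spirit but organized differently: rather than decomposing $V$ into irreducibles, it takes a $G$-stable complement $V'$ of $V_S$, observes via Lemma~\ref{stablebyref} that $V'\subseteq H_s$ for each $s\in S$ (since $V'$ cannot contain $L_s\subset V_S$), hence $V'\subseteq V^G$; then it takes a $G$-stable complement $V''$ of $V^G$ and shows by the same lemma that each $L_s\subseteq V''$, whence $V_S\cap V^G=0$. Your approach replaces these two invocations of Lemma~\ref{stablebyref} by the single observation that $G$ acts trivially on $V/V_S$ (equivalently $V_S=\sum_{g\in G}\im(g-1)$), which identifies $V_S$ directly with the sum of the nontrivial irreducible summands. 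Your route has the minor advantage of making explicit why $V_S$ is $G$-stable even when $S$ is not assumed saturated, a point the paper takes for granted; the paper's route keeps Lemma~\ref{stablebyref} at center stage. The paper leaves part (2) to the reader; your treatment of it is fine.
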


\begin{proof}

        The subspace $V_S$ is $G$-stable, hence since $G$ is completely
        reducible there is a complementary
        subspace $V'$ which is $G$-stable. We have $V_S\supset L_s$ for all
        $s\in S$, thus (by Lemma \ref{stablebyref}) $V'$
        is contained in $H_s$; it follows that 
        $V' \subseteq \bigcap_{s\in S} H_s \subseteq V^G$.
        So it suffices to prove that $V_S \cap V^G = 0$.
   
        Since $V^G$ is stable by $G$, there exists a complementary subspace $V''$
        which is stable by $G$. Whenever $s\in S$, we have $L_s \subseteq V''$
        (otherwise, by Lemma \ref{stablebyref}, we have $V'' \subseteq H_r$, which
         implies that $s$ is trivial since $V = V^G\oplus V''$, a contradiction). This shows
         that $V_S \subseteq V''$, and in particular that $V_S\cap V^G = 0$.
\end{proof}

\begin{proposition}\label{ifss2} 
         Let $S$ be a set of reflections generating the reflection group $(V,G)$, 
         and suppose that the action of $G$ on $V$ is completely reducible. 
        Denote by $\{V_i\}_i$ the subspaces associated to an orthogonal decomposition
        as in Lemma~\ref{orthodecom}.
\begin{enumerate}
  \item
                For $1\leq i\leq m$, the action of $G_i$ on $V_i$ is irreducible.
  \item
                $
         V_S = \bigoplus_{i=1}^{i=m} V_i
          \,.
         $
  \item 
        $G = G_1\times G_2\times\cdots\times G_m$
\end{enumerate}
\end{proposition}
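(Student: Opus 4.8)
The plan is to prove the three assertions in the order (2), (3), (1), using only Lemma~\ref{orthodecom} and Proposition~\ref{ifss}; the saturation of $S$ is not needed. Write $W_i:=\sum_{j\neq i}V_j$ and let $G'_i$ be the subgroup generated by the $G_j$ with $j\neq i$, which equals their product since they pairwise commute by Lemma~\ref{orthodecom}(2). Two facts will be used throughout: by Lemma~\ref{orthodecom}(1), $G_i$ acts trivially on $W_i$; and since $V_i\subseteq W_j$ for every $j\neq i$, the same part of the lemma shows that each such $G_j$, hence all of $G'_i$, acts trivially on $V_i$. Note also $V_S=V_i+W_i$.

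For (2): if $x\in V_i\cap W_i$ then $G_i$ fixes $x$ (as $x\in W_i$) and $G'_i$ fixes $x$ (as $x\in V_i$), so $G=G_iG'_i$ fixes $x$; hence $x\in V^G\cap V_S=0$ by Proposition~\ref{ifss}(1), so $x=0$ and $\sum_iV_i$ is direct. For (3): we already have $G=G_1\cdots G_m$ with pairwise commuting factors, so it remains only to see $G_i\cap G'_i=1$; an element of this intersection acts trivially on $W_i$ and on $V_i$, hence on $V_i+W_i=V_S$, and since $G\to\GL(V_S)$ is injective by Proposition~\ref{ifss}(2) it is the identity.

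For (1): the first step is to descend complete reducibility to $(V_i,G_i)$. By (2) and (3), $V=V^G\oplus\bigoplus_jV_j$ is a direct sum of $G$-stable subspaces on which the factors $G_k$ with $k\neq i$ act trivially, so the $G$-submodules of $V_i$ coincide with its $G_i$-submodules; and complete reducibility of $G$ on $V$ restricts (by the modular law) to complete reducibility of $G$, hence of $G_i$, on the summand $V_i$. Now suppose $0\neq U\subsetneq V_i$ is $G_i$-stable and choose a $G_i$-stable complement $U^c$ in $V_i$. For each $s\in S_i$ both $U$ and $U^c$ are $s$-stable, so by Lemma~\ref{stablebyref} we have, for $U$, either $U\subseteq H_s$ or $L_s\subseteq U$, and likewise for $U^c$; the possibility $L_s\subseteq U$ and $L_s\subseteq U^c$ is ruled out by $U\cap U^c=0$, and the possibility $U\subseteq H_s$ and $U^c\subseteq H_s$ is ruled out since it would give $V_i\subseteq H_s$ and hence $L_s\subseteq H_s$ (recall $L_s\subseteq V_i$). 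Therefore $S_i=S_i^U\sqcup S_i^{U^c}$, where $S_i^U=\{s:L_s\subseteq U,\ U^c\subseteq H_s\}$ and $S_i^{U^c}=\{s:L_s\subseteq U^c,\ U\subseteq H_s\}$; moreover any $s\in S_i^U$ and $s'\in S_i^{U^c}$ satisfy $L_s\subseteq U\subseteq H_{s'}$ and $L_{s'}\subseteq U^c\subseteq H_s$, so $s$ and $s'$ are orthogonal. Hence $S_i^U$ and $S_i^{U^c}$ are each a union of $\sim$-classes, and since $S_i$ is a single $\sim$-class one of them is empty; but $S_i^{U^c}=\emptyset$ gives $V_i=\sum_{s\in S_i}L_s\subseteq U$, while $S_i^U=\emptyset$ gives $U=0$, both excluded. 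So $V_i$ has no nonzero proper $G_i$-stable subspace.

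The main obstacle is (1). On the one hand one has to arrange that the restricted system $(V_i,G_i)$ is again completely reducible --- which is precisely why (2) and (3) are proved first. On the other hand, the crux is the combinatorial observation, made through Lemma~\ref{stablebyref}, that a $G_i$-invariant subspace of $V_i$ would split $S_i$ into two mutually orthogonal pieces, contradicting irreducibility of the $\sim$-class. By comparison (2) and (3) are short formal consequences of Lemma~\ref{orthodecom}, the decomposition $V=V_S\oplus V^G$, and the faithfulness of $G\to\GL(V_S)$.
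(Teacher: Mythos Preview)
Your proof is correct, and the core irreducibility argument in (1) via Lemma~\ref{stablebyref} is essentially the paper's argument, only spelled out in more detail. The main difference is the order: the paper proves (1) first and then deduces (2) and (3) from it, whereas you prove (2) and (3) directly and only then attack (1).

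For (2), the paper uses the irreducibility of $G_i$ on $V_i$ (from (1)) together with Proposition~\ref{ifss} applied to $(V_i,G_i)$ to conclude $V_i\cap V^{G_i}=0$, hence $V_i\cap\sum_{j\neq i}V_j=0$. Your argument is different and arguably more elementary: an element of $V_i\cap W_i$ is fixed by both $G_i$ and $G'_i$, hence by $G$, so it lies in $V^G\cap V_S=0$. This avoids the irreducibility input entirely. Similarly for (3), the paper uses faithfulness of $G_i$ on $V_i$ (a consequence of (1) and Proposition~\ref{ifss} for $(V_i,G_i)$), while you use faithfulness of $G$ on $V_S$ directly.

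Conversely, the paper's route to complete reducibility of $G_i$ on $V_i$ is shorter than yours: it simply observes that $V_i$ is $G$-stable (since $G_j$ acts trivially on it for $j\neq i$, and $G_i$ preserves it), so the completely reducible $G$-action restricts, and the images of $G$ and $G_i$ in $\GL(V_i)$ coincide. You instead route this through the global decomposition $V=V^G\oplus\bigoplus_jV_j$ obtained from (2). Both work; the paper's path avoids having to establish (2) and (3) beforehand.
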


\begin{proof}

        (1)
        The subspace $V_i$ is stable under $G$, and the action of $G$ on a stable
        subspace is completely reducible. But the image of $G$ in $\GL(V_i)$ is the
        same as the image of $G_i$. So the action of $G_i$ on $V_i$ is completely
        reducible.
 
        So we write $V_i = V_i' \oplus V_i''$ where $V_i'$ and $V_i''$ are stable by $G_i$. 
        Define:
        $$
                S_i' := \{s\in S_i\,|\,L_s \subseteq V_i'\}
                \quad\text{and}\quad 
                S_i'':= \{s\in S_i\,|\,L_s \subseteq V_i''\}
                 \,.
        $$
        By Lemma \ref{stablebyref},  if $s\in S_i'$, then
        $V_i'' \subseteq H_s$, and if $s\in S_i''$, then $V_i' \subseteq H_s$.
        Hence any two elements of $S_i'$ and $S_i''$ are mutually
        orthogonal. Thus one of them has to be all of $S_i$.
        
        (2)
        By Lemma~\ref{orthodecom}, we have $\sum_{j\neq i} V_j \subset V^{G_i}$.
        By (1), and by Proposition~\ref{ifss}, we then get 
        $
                V_i \cap \sum_{j\neq i} V_j = 0
                 \,.
        $
 
        (3)
        An element $g\in G_i$ which also belongs to $\prod_{j\neq i} G_j$ acts
        trivially on $V_i$. Since (by (1) and by Proposition~\ref{ifss}) the representation of
        $G_i$ on $V_i$ is faithful, we see that $g=1$.
\end{proof}

\subsubsection*{Reflecting pairs}\hfill
\smallskip
 
        For $H$ a reflecting hyperplane, notice that
        $$
          C_G(H) = \{1\} \cup \{g\in G\,\mid\, \ker\,(g-1) = H\}
          \,.
        $$
        For $L$ a reflecting line, we have:
        $$
                 C_G(V/L) = \{1\} \cup \{g\in G\,\mid\, \im(g-1) = L\}
                 \,.
        $$ 
         \noindent
         So $C_G(V/L)$  is the group of all elements of $G$
        which stabilize $L$ and
        which act trivially on $V/L$; a normal subgroup of $N_G(L)$.

        Similarly, if $M$ is a dual reflecting line in $W$, $C_G(W/M) \triangleleft N_G(M)$.

\begin{remark} 
        Recall that orthogonality between $V$ and $W$ induces a bijection between
        reflecting hyperplanes and dual reflecting lines, as well as between
        reflecting lines and dual reflecting hyperplanes.
 
        Then if $M$ is the orthogonal (in $W$) of the reflecting hyperplane $H$, 
        we have
        $$
                C_G(H) = C_G(W/M)
                \,.
        $$
\end{remark} 
 
 We will be considering the following property.  

\begin{property}\label{CR} 
         The reflection group $(V,G)$ is such that the representations of $G$ and all its proper 
         subgroups on $V$ are completely reducible.
\end{property}
        
        Note that this is the case in particular when $G$ is finite.

\begin{proposition}\label{reflectingpairs}
Let $(V,G)$ be a reflection group with Property~\ref{CR}.

\begin{enumerate}
  \item
                Let $H$ be a reflecting hyperplane for $G$. There exists a unique reflecting line $L$ such that
                $C_G(V/L) = C_G(H)$.
   
                In other words:
   
\noindent  
                Let $M$ be a dual reflecting line for $G$. There exists a unique reflecting line $L$ such that
                $C_G(V/L) = C_G(W/M)$.
  \item
                Let $L$ be a reflecting line for $G$. There exists a unique reflecting hyperplane $H$ such that
                $C_G(H) = C_G(V/L)$.
   
                In other words:
\noindent 
                Let $K$ be a dual reflecting hyperplane for $G$. 
                There exists a unique reflecting hyperplane $H$ such that
        $C_G(K) = C_G(H)$.
  \item
                If $(L,H)$ (or $(L,M)$, or $(H,K)$) is a pair as above, then
                \begin{enumerate}
                        \item
                $C_G(H)$ consists of the identity and of reflections
                $s$ where $H_s = H$ and $L_s = L$,
                \item
                $C_G(H)$ is isomorphic to a subgroup of $k^\times$, 
                and so is cyclic if $G$ is finite, and
                \item
                $N_G(H) = N_G(L) = N_G(M) = N_G(K)$.
                \end{enumerate}
\end{enumerate}
\end{proposition}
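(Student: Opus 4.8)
The plan is to extract two consequences of Property~\ref{CR} and of the reflection‑group structure, use them to describe the groups $C_G(H)$ and $C_G(V/L)$ precisely, and then read off all three assertions. \emph{Preliminaries.} First I would show that $G$ has no nontrivial unipotent element: if $u\neq 1$ were unipotent then $\langle u\rangle$ would act non‑semisimply, hence not completely reducibly, on $V$; by Property~\ref{CR} this forces $\langle u\rangle=G$, but then every reflection of $G$, being a power of $u$, would be unipotent and of finite order, hence trivial, contradicting that $G$ is generated by reflections. Second, $\det g\in\bmu(k)$ for every $g\in G$, because the determinants of the generating reflections are roots of unity and a subgroup of the abelian group $k^\times$ generated by torsion elements is torsion. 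Together these give: if $g\in G$, $g\neq 1$, and $\mathrm{rank}(g-1)\leq 1$, then $g$ is a reflection with $L_g=\im(g-1)$ and $H_g=\ker(g-1)$ — indeed $\ker(g-1)$ is then a hyperplane; $g$ cannot act trivially on the $g$‑stable line $\im(g-1)$ (otherwise $(g-1)^2=0$ and $g$ is a nontrivial unipotent), so $V=\im(g-1)\oplus\ker(g-1)$ and $g$ is semisimple with eigenvalue $\det g\in\bmu(k)$ on $\im(g-1)$, hence of finite order.

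\emph{Structure of the centralizers.} Since $C_G(H)=\{1\}\cup\{g\mid\ker(g-1)=H\}$ and $C_G(V/L)=\{1\}\cup\{g\mid\im(g-1)=L\}$, the preliminary step shows every nontrivial element of $C_G(H)$ (resp. of $C_G(V/L)$) is a reflection with reflecting hyperplane $H$ (resp. reflecting line $L$). The determinant embeds each of these groups into $\bmu(k)\subseteq k^\times$, with trivial kernel since reflections have determinant $\neq 1$; this already proves (3)(b) and shows both groups are abelian. Now $C_G(H)$ is abelian and consists of reflections with common hyperplane $H$, so by Lemma~\ref{commutingreflections} any two of them are orthogonal or parallel; orthogonality would force $L_s\subseteq H_{s'}=H=H_s$, impossible for a reflection; hence they are all parallel and share a single reflecting line, call it $L_H$. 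Symmetrically the reflections in $C_G(V/L)$ share a single reflecting hyperplane $H_L$.

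\emph{The matching and uniqueness.} For (1), given a reflecting hyperplane $H$ set $L:=L_H$. Each reflection in $C_G(H)$ has reflecting line $L$, so $C_G(H)\subseteq C_G(V/L)$. Conversely, let $g\in C_G(V/L)$, $g\neq 1$; it is a reflection with $L_g=L$. Pick a reflection $s\in C_G(H)$ (one exists as $H$ is a reflecting hyperplane); then $H_s=H$ and $L_s=L_H=L$. As $g$ and $s$ both lie in the abelian group $C_G(V/L)$ they commute, so by Lemma~\ref{commutingreflections} they are parallel or orthogonal; orthogonality is excluded since $L_g=L=L_s\not\subseteq H_s$, so they are parallel and $H_g=H_s=H$, i.e. $g\in C_G(H)$. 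Thus $C_G(V/L)=C_G(H)$, and any reflecting line $L'$ with $C_G(V/L')=C_G(H)$ must be the common reflecting line of the reflections in that group, which is $L$; this gives uniqueness. Statement (2) is the same argument with the roles of hyperplane and line interchanged, taking $H:=H_L$, and the ``in other words'' reformulations follow from the inclusion‑reversing, $G$‑equivariant bijection $U\mapsto U^\perp$ between subspaces of $V$ and of $W$, together with the identities $C_G(H)=C_G(W/M)$ for $M=H^\perp$ and $C_G(V/L)=C_G(K)$ for $K=L^\perp$.

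\emph{Assertion (3) and the main difficulty.} For a matched pair $(L,H)$, part (a) is exactly what the second step gives (the nontrivial elements of $C_G(H)$ are the reflections with $H_s=H$, and these all have $L_s=L_H=L$; conversely any reflection with $H_s=H$ fixes $H$ pointwise), and (b) was already noted, finite subgroups of $k^\times$ being cyclic. For (c), $H$ determines $C_G(H)$ and, conversely, $C_G(H)$ determines $H$ as the common hyperplane of its nontrivial elements, so $N_G(H)=N_G(C_G(H))$; likewise $N_G(L)=N_G(C_G(V/L))$; since $C_G(H)=C_G(V/L)$ these coincide, and $N_G(H)=N_G(M)$, $N_G(L)=N_G(K)$ because $g(H)=H\iff g^\vee(H^\perp)=H^\perp$. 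I expect the real work to be the reverse inclusion $C_G(V/L)\subseteq C_G(H)$ in the third step — and, beneath it, the ``no nontrivial unipotents'' observation, which is where Property~\ref{CR} is genuinely used: without it the commutator subgroup of $C_G(V/L)$, which lands among the transvections with line $L$, need not vanish, $C_G(V/L)$ need not be abelian, and the matching collapses.
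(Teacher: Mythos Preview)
Your argument is correct. It is, however, a genuinely different route from the paper's.

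The paper applies Property~\ref{CR} directly to the subgroup $C_G(H)$ (resp.\ $C_G(V/L)$): complete reducibility of its action on $V$ yields a $C_G(H)$-stable complement $L$ to $H$, and since every nontrivial element of $C_G(H)$ has $\ker(g-1)=H$, that element must act by a nontrivial scalar on $L$; so $L$ is the common non-trivial eigenspace, hence unique, and $C_G(H)\subseteq C_G(V/L)$. The reverse inclusion is the symmetric argument applied to $C_G(V/L)$, producing a stable complementary hyperplane. No appeal to Lemma~\ref{commutingreflections} is needed, and the finite-order/determinant considerations never enter.

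Your approach instead uses Property~\ref{CR} only on \emph{cyclic} subgroups, to rule out nontrivial unipotents; combined with $\det(G)\subseteq\bmu(k)$ this forces every rank-one element of $G$ to be a reflection, after which the parallel/orthogonal dichotomy of Lemma~\ref{commutingreflections} does the matching. This is longer but has two virtues: it makes the ``no unipotents'' consequence explicit, and it actually proves the proposition under the weaker hypothesis that only cyclic subgroups of $G$ act completely reducibly. The trade-off is the dependence on Lemma~\ref{commutingreflections}, which the paper's direct argument avoids.
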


\begin{proof}[Proof of \ref{reflectingpairs}]\hfill

\bul
        Assume $C_G(H) \neq \{1\}$. Since the action of $C_G(H)$ on $V$ is completely
        reducible, there is a line $L$ which is stable by $C_G(H)$ and such that
        $H\oplus L = V$. Such a line is obviously the eigenspace (corresponding
        to an eigenvalue different from 1) for any non-trivial element of $C_G(H)$.
        This shows that $L$ is uniquely determined, and that $C_G(H)$ consists
        of 1 and of reflections with hyperplane $H$ and line $L$. It follows also
        that $C_G(H) \subseteq G(V/L)$.
        Notice that $H$ and $L$ are the isotypic components of $V$ under the
        action of $C_G(H)$.

\bul
        Assume $C_G(V/L)\neq \{1\}$. Since the action of $C_G(V/L)$ on $V$ is completely
        reducible, there is a hyperplane $H$ which is stable by $C_G(V/L)$ and such that
        $L\oplus H = V$. Such a hyperplane is clearly the kernel of any nontrivial
        element of $C_G(V/L)$. This shows that $H$ is uniquely determined, and that
        $C_G(V/L) \subseteq C_G(H)$.
 
        We let the reader conclude the proof. 
\end{proof}

        Notice the following improvement to Lemma \ref{commutingreflections} due to
        complete reducibility.

\begin{proposition}\label{commuting2} 
        Let $t$, $t'$ be two reflection triples such that $s_t$ and $s_{t'}$ belong to a group satisfying Property~\ref{CR}. Then
\begin{enumerate}
        \item 
                $t$ and $t'$ are orthogonal if $L_t\subseteq H_{t'}$ 
                or $L_{t'}\subseteq H_t$.
        \item 
                $t$ and $t'$ are parallel if $ L_t = L_{t'}$ or $H_t = H_{t'}$,
\end{enumerate}
\end{proposition}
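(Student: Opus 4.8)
The plan is to reduce to Lemma~\ref{commutingreflections}, whose equivalences (I) and (II) already show that $t,t'$ orthogonal is equivalent to ``$L_t\subseteq H_{t'}$ \emph{and} $L_{t'}\subseteq H_t$'', and that $t,t'$ parallel is equivalent to ``$L_t=L_{t'}$ \emph{and} $H_t=H_{t'}$''. So the content of the proposition is that, under Property~\ref{CR}, each of these symmetric conjunctions follows from just one of its two halves. For (1), I would assume $L_t\subseteq H_{t'}$ and aim to show $L_{t'}\subseteq H_t$; for (2), assume $L_t=L_{t'}$ and show $H_t=H_{t'}$ (and separately, $H_t=H_{t'}\Rightarrow L_t=L_{t'}$). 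The case $L_{t'}\subseteq H_t$ in (1), and $H_t=H_{t'}$ in (2), then follow by the symmetry $V\leftrightarrow W$, $t\leftrightarrow t'$.

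For part (1): suppose $L_t\subseteq H_{t'}$. If $t=t'$ there is nothing to prove, so assume $s_t\ne s_{t'}$; let $G=\langle s_t,s_{t'}\rangle$. The key move is to look at the $G$-stable subspace $U:=L_t+L_{t'}$ and apply Property~\ref{CR} to the action of $G$ on $U$ (a subspace, hence $G$ acts completely reducibly on it). Since $L_t\subseteq H_{t'}$, $s_{t'}$ fixes $L_t$; also $s_t$ fixes $L_t$ acting by $\zeta_t$, and by Lemma~\ref{stablebyref} the restriction of $s_{t'}$ to $U$ is a reflection (as $L_{t'}\subseteq U$), with line $L_{t'}$. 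Restricting to $U$ we get a reflection group on a space of dimension $\le 2$ in which $L_t$ is a common eigenline. Now I decompose $U$ under $G$: the complement to $L_t$ is a $G$-stable line $L'$, and because $s_{t'}$ restricted to $U$ is a reflection with line $L_{t'}$, one checks $L'=L_{t'}$ and that $s_t$ fixes $L'$, i.e. $L_{t'}\subseteq H_t$. This is the step I expect to be the main obstacle: pinning down that the complementary $G$-stable line is exactly $L_{t'}$ and that $s_t$ acts trivially on it, rather than some other configuration — one has to use that $s_{t'}|_U$ genuinely is a reflection (not the identity) and that $L_t\ne L_{t'}$ would force the two reflections to act on a $2$-dimensional $U$ with distinct eigenlines, whence complete reducibility gives the splitting $U=L_t\oplus L_{t'}$ on which both act diagonally, forcing $L_t\subseteq H_{t'}$ and $L_{t'}\subseteq H_t$; while $L_t=L_{t'}$ together with $L_t\subseteq H_{t'}$ is impossible since a reflection line is never contained in its own hyperplane.

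For part (2): if $L_t=L_{t'}$, consider again $G=\langle s_t,s_{t'}\rangle$ acting on $V$. By Property~\ref{CR}, $C_G(V/L_t)$ contains both $s_t$ and $s_{t'}$ (each stabilizes $L_t=L_{t'}$ and acts trivially on $V/L_t$, being a reflection with that line); by Proposition~\ref{reflectingpairs}(2) there is a \emph{unique} reflecting hyperplane $H$ with $C_G(H)=C_G(V/L_t)$, and by Proposition~\ref{reflectingpairs}(3)(a) every non-identity element of $C_G(H)$ — in particular $s_t$ and $s_{t'}$ — has hyperplane $H$. Hence $H_t=H=H_{t'}$. The implication $H_t=H_{t'}\Rightarrow L_t=L_{t'}$ is dual: apply Proposition~\ref{reflectingpairs}(1) to $H=H_t=H_{t'}$ to get the unique reflecting line $L$ with $C_G(V/L)=C_G(H)$, note $s_t,s_{t'}\in C_G(H)$, and read off $L_t=L=L_{t'}$ from Proposition~\ref{reflectingpairs}(3)(a). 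Combining with Lemma~\ref{commutingreflections}(II) this gives that $t,t'$ are parallel, completing the proof. $\qed$
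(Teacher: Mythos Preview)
Your proof is correct. The paper itself omits the proof entirely, merely announcing the proposition as an ``improvement to Lemma~\ref{commutingreflections} due to complete reducibility'' immediately after Proposition~\ref{reflectingpairs}; your argument is precisely the kind of verification the reader is expected to supply. Part~(2) is exactly the intended route: it is a direct corollary of the just-proved Proposition~\ref{reflectingpairs}, and your use of parts (1), (2), (3)(a) there is clean. For part~(1), your direct complete-reducibility argument on the $2$-dimensional subspace $U=L_t+L_{t'}$ is the natural approach; the key identifications $L'=L_{t'}$ (because the only $s_{t'}$-stable lines in $U$ are its eigenlines $L_t$ and $L_{t'}$) and then $L_{t'}\subseteq H_t$ (because $s_t|_U$ is a reflection with line $L_t$, so any other $s_t$-stable line lies in $H_t\cap U$) are exactly right.

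Two minor points. First, you assert that $U$ is $G$-stable without checking it; this is where the hypothesis $L_t\subseteq H_{t'}$ does real work, since it gives $s_{t'}(L_t)=L_t$, while $s_t(L_{t'})\subseteq L_{t'}+L_t$ follows from the reflection formula regardless --- worth one line. Second, the reduction ``by the symmetry $V\leftrightarrow W$, $t\leftrightarrow t'$'' is more than you need: the statements in both (1) and (2) are already symmetric in $t$ and $t'$, so simply swapping $t\leftrightarrow t'$ suffices; no passage to the dual space is required.
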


\subsection{The Shephard--Todd classification}\hfill

\begin{definition}\label{isomRefGp}
                Given $(V,G)$ and $(V',G')$ finite reflection groups on $k$,
                an isomorphism from $(V,G)$ to $(V',G')$ is a $k$-linear
                isomorphism $f : V \iso V'$ which conjugates the group $G$
                onto the group $G'$.
\end{definition}

        From now on in this subsection we assume that $k \subseteq \BC$.
\smallskip

\subsubsection*{The family of finite complex reflection groups denoted
$G(de,e,r)$ \index{Gde@$G(de,e,r)$}}\hfill
\smallskip
 
        Let $d$, $e$ and $r$ be three positive integers.

  Let $D_r(de)$ be the set of diagonal complex matrices with
  diagonal entries in the group $\bmu_{de}$ of all $de$--th
  roots of unity. 
  The $d$--th power of the determinant defines a surjective
  morphism
  $$
   {\det}^d : D_r(de) \twoheadrightarrow \bmu_e
   \,.
  $$
  Let $A(de,e,r)$ be the kernel of the above morphism. In
  particular we have
  $|A(de,e,r)| = (de)^r/e\,.$  
  Identifying the symmetric group $\fS_r$ with the usual
  $r\times r$ permutation matrices,
  we define
  $$
   G(de,e,r) := A(de,e,r) \rtimes \fS_r
   \,.
  $$

        We have
        $
                |G(de,e,r)| = (de)^r r!/e
                \,,
        $
        and $G(de,e,r)$ is the group of all monomial $r\times r$
        matrices, with entries in $\bmu_{de}$, and 
        product of all non-zero entries in $\bmu_d$.

\begin{examples}\hfill

\begin{itemize}
  \item
        $G(e,e,2)$ is the dihedral group of order $2e$.
        \item
         $G(d,1,r)$ is isomorphic to the wreath product
        $\bmu_d\wr\fS_r$. For $d= 2$, it is isomorphic to the
        Weyl group of type $B_r$ (or $C_r$).
  \item
        $G(2,2,r)$ is isomorphic to the Weyl group of type $D_r$.
\end{itemize}
\end{examples}

        The following theorem, stated in terms of abstract groups, is
        the main result of \cite{shto}.
        It is explicitly proved in \cite[2.4, 3.4 and 5.12]{co}.
        
\begin{theorem}[Shephard--Todd)]\label{classi} 
        Let $(V,G)$ be a finite irreducible complex reflection group. Then one of 
        the following assertions is true:
\begin{itemize}
  \item
         $(V,G) \simeq (\BC^r,G(de,e,r))$ for some integers $d,e,r$, with $de\geq 2$, $r\geq 1$
  \item
                $(V,G) \simeq (\BC^{r-1},\fS_r)$ for some an integer $r\geq 1$
                
  \item
        $(V,G)$ is isomorphic to one of 34 exceptional reflection groups.
\end{itemize}
\end{theorem}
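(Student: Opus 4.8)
The plan is to follow the architecture of Shephard and Todd's original classification, in the streamlined form of \cite{co}. One implication is a direct verification: the groups $G(de,e,r)$, the symmetric group $\fS_r$ acting on its $(r-1)$-dimensional reflection representation, and the thirty-four sporadic groups are each visibly generated by reflections, so only the converse needs argument. Assume $G\neq 1$. Since $G$ is finite it satisfies Property~\ref{CR}, so by Proposition~\ref{ifss} we may pass to the essentialisation and assume $V^G=0$; then, in view of Proposition~\ref{ifss2}, ``irreducible'' amounts to $G$ acting irreducibly on $V$; set $r=\dim V$. The decisive dichotomy is now whether the irreducible $\BC G$-module $V$ is \emph{imprimitive} --- \ie\ $V=V_1\oplus\cdots\oplus V_t$ with $t\geq 2$ and the summands permuted transitively by $G$ --- or \emph{primitive}.

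If $G$ is imprimitive, one first checks, using that $G$ is generated by reflections, that a suitable minimal system of imprimitivity consists of lines: a reflection of $G$ either fixes each block or interchanges exactly two of them, and inspecting the eigenvalues of a monomial matrix forces the blocks to be one-dimensional. Hence, in an adapted basis, $G$ is a group of monomial matrices with roots of unity as non-zero entries. Let $A\triangleleft G$ be its diagonal subgroup and $\bar G=G/A\leq\fS_r$ the group of underlying permutations. Irreducibility makes $\bar G$ transitive; and a monomial reflection is either diagonal (so maps to $1$ in $\bar G$) or has a transposition as underlying permutation, so $\bar G$, being generated by the images of the reflections of $G$, is generated by transpositions, whence $\bar G=\fS_r$ for $r\geq 2$ (a purely diagonal $G$ would be reducible). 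A short computation with the diagonal reflections of $G$ and with the determinant of a non-diagonal one then pins down $A=A(de,e,r)$ for suitable $d,e$, so that $G=A(de,e,r)\rtimes\fS_r=G(de,e,r)$ with $de\geq 2$; the rank-one case $r=1$ (vacuously primitive) gives the cyclic groups $G(de,e,1)$, which are also on the list.

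The primitive case carries essentially all the difficulty, and here I would invoke \cite{co} rather than reprove it; its shape is as follows. First one recognises the symmetric groups: $\fS_{r+1}$ acting on $\BC^{r}$ is primitive precisely for $r\geq 4$, it must be isolated by hand, and it accounts exactly for the second bullet of the statement. For every other primitive reflection group the crux is an \apriori\ bound $r\leq 8$ on the rank. This rests on the classical structure theory of finite primitive linear groups --- Jordan's theorem bounding the index of a normal abelian subgroup, Blichfeldt's enumeration in low dimension --- together with the strong restriction that $G$ is generated by reflections; equivalently, via Shephard--Todd invariant theory, that $\BC[V]^G$ is a polynomial algebra whose fundamental degrees $d_1\leq\cdots\leq d_r$ satisfy $\prod_i d_i=|G|$ and $\sum_i(d_i-1)=|\Rf(G)|$, which sharply limits the candidates. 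One then runs through the finitely many surviving possibilities dimension by dimension, classifying in each the primitive reflection subgroups of $\GL(V)$ up to conjugacy; this enumeration, performed in \cite[2.4, 3.4 and 5.12]{co}, produces exactly the thirty-four exceptional groups.

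The main obstacle is thus wholly inside the primitive case: establishing the rank bound cleanly and then completing the --- finite but substantial --- enumeration of primitive reflection groups in ranks $2$ through $8$. No genuinely short argument for this is known; it is the technical heart of \cite{shto} and \cite{co}, and what the present work needs from it is the list of the $34$ groups together with their degrees and reflection data, not any sharpening of the proof.
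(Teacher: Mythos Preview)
The paper does not give its own proof of this theorem: it simply records that the result is the main theorem of \cite{shto} and is explicitly proved in \cite[2.4, 3.4 and 5.12]{co}. Your proposal does the same --- it invokes exactly those references for the hard primitive case --- but supplies in addition a correct outline of the overall architecture (the imprimitive/primitive dichotomy, the monomial analysis pinning down $G(de,e,r)$, the rank bound and enumeration in the primitive case). So you are not diverging from the paper; you are expanding the citation into a sketch, and the sketch is faithful to Cohen's argument.

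Two small remarks on the sketch itself. First, the rank-one groups are the cyclic groups $\bmu_d$, and in the paper's notation these are written $G(d,1,1)$; calling the rank-one case ``vacuously primitive'' is harmless but slightly at odds with your placing it under the imprimitive discussion. Second, your handling of $\fS_r$ is right in spirit but note that $\fS_3\simeq G(3,3,2)$ and $\fS_4\simeq G(2,2,3)$ already occur among the $G(de,e,r)$, so the second bullet of the theorem overlaps with the first for small $r$; the theorem is a disjunction, not a partition. Neither point affects the validity of your outline.
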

   The exceptional groups are traditionally denoted $G_4,\ldots,G_{37}$.
\begin{remark}
        Conversely, any group $G(de,e,r)$ is irreducible on $\BC^r$
        except for $d=e=1$ and $d=e=r=2$.
\end{remark}

\begin{remark}\label{rem:galois&shephardtodd}
        Theorem \ref{classi} has the following consequence.
        
        Assume that $(V,G)$ is a complex finite reflection group where $V$ is
        $r$-dimensional.
        Choose a
        basis of $V$ so that $G$ is identified with a subgroup of $\GL_r(\BC)$.
        Now, given an automorphism $\si$ of the field $\BC$, applying $\si$ to all
        entries of the matrices of $G$ defines another group $\lexp\si G$ and
        so another complex finite reflection group $(V,\lexp\si G)$.
        
        Then it follows from Theorem \ref{classi} that there exists $\phi \in \GL(V)$
        and $a \in \Aut(G)$ such that, for all $g\in G$,
        $$
                \si(g) = \phi a(g) \phi\inv
                \,.
        $$
\end{remark}

\begin{definition}\label{def:wellgenerated}
        A finite reflection group $(V,G)$ is said to be \index{Well-generated} 
        \emph{well-generated} if $G$ may be generated by $r$ reflections, where $r=\dim(V)$.
\end{definition}
    The well-generated irreducible groups are $G(d,1,r), G(e,e,r)$ and all
    the exceptional groups excepted
    $G_7,G_{11},G_{12},G_{13},G_{15},G_{19},G_{22},G_{31}$.
\smallskip

\subsubsection*{Field of definition}\hfill
\smallskip

        The following theorem has been proved (using a case
        by case analysis) by Benard \cite{ben} (see also
        \cite{field}), and generalizes a well known result
        on Weyl groups.
 
\begin{theodef}\label{bessis}
        Let $(V,G)$ be a finite complex reflection group. Let
        $\BQ_G$ \index{QG@$\BQ_G$} be the field generated by the traces
        on $V$ of all elements of $G$. Then all
        irreducible 
        $\BQ_G G$--representations are absolutely irreducible.
 
        The field $\BQ_G$ is called the \emph{field of definition}
        of the reflection group $(V,G)$.
\end{theodef}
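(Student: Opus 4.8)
The statement is equivalent to the assertion that $\BQ_G$ is a splitting field for $G$, \ie\ that every absolutely irreducible representation of $G$ can be realised over $\BQ_G$. Indeed, over a field $K$ of characteristic zero the simple $KG$-modules are, up to isomorphism, in bijection with the $\operatorname{Gal}(\overline K/K)$-orbits of absolutely irreducible characters of $G$, the module attached to the orbit of $\chi$ affording the character $m_K(\chi)\,\sum_\sigma\chi^\sigma$ (sum over the orbit, $m_K(\chi)$ the Schur index over $K$); such a module is absolutely irreducible exactly when the orbit is a singleton, \ie\ $\BQ(\chi)\subseteq K$, and $m_K(\chi)=1$. So the plan is to show, for every irreducible complex character $\chi$ of $G$, both $\BQ(\chi)\subseteq\BQ_G$ and $m_{\BQ_G}(\chi)=1$.

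First I would reduce to the irreducible case. Since $G$ is finite it acts completely reducibly, so by Propositions~\ref{ifss} and~\ref{ifss2} we may replace $V$ by $V_S$ — which leaves $\BQ_G$ unchanged, as traces on $V$ differ from traces on $V_S$ by the constant $\dim V^G$ — and write $(V,G)=\prod_{i=1}^{m}(V_i,G_i)$ with each $(V_i,G_i)$ irreducible. Setting all but the $i$-th coordinate equal to the identity shows $\BQ_{G_i}\subseteq\BQ_G$ for each $i$. Since a characteristic-zero field splits a direct product of finite groups if and only if it splits each factor, it is enough to know that $\BQ_{G_i}$ is a splitting field for $G_i$: then $\BQ_G\supseteq\BQ_{G_i}$ also splits $G_i$, hence splits $G$. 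So one may assume $(V,G)$ irreducible and appeal to the Shephard--Todd classification (Theorem~\ref{classi}): $G\simeq\fS_r$, or $G\simeq G(de,e,r)$, or $G$ is one of the $34$ exceptional groups.

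For $G\simeq\fS_r$ on $\BC^{r-1}$ the reflection character is $\BZ$-valued, so $\BQ_G=\BQ$, and it is classical (Specht modules, Young symmetrisers) that $\BQ$ is a splitting field for symmetric groups. For $G\simeq G(de,e,r)$ I would argue in two stages. First, for the wreath product $G(d,1,r)=\bmu_d\wr\fS_r$: Clifford theory over the abelian base $\bmu_d^{\,r}$, whose inertia subgroups in $G(d,1,r)$ have the shape $\bmu_d^{\,m_j}\rtimes\fS_{m_j}$ (to which the relevant linear character extends), combined with the case of $\fS_m$, shows that the irreducible characters are parametrised by $d$-tuples of partitions of total size $r$ and are all realisable over $\BQ(\zeta_d)$; since $\diag(\zeta_d,1,\dots,1)$ has trace $\zeta_d+(r-1)$, one gets $\BQ_{G(d,1,r)}=\BQ(\zeta_d)$, which is therefore a splitting field. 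Second, $G(de,e,r)$ is a normal subgroup of index $e$, with cyclic quotient, in $G(de,1,r)$, so by Clifford theory its irreducible characters are precisely the constituents of restrictions of irreducible characters of $G(de,1,r)$; one computes $\BQ_{G(de,e,r)}$ from the reflection character (it equals $\BQ(\zeta_{de})$ except in degenerate situations such as the dihedral group $G(e,e,2)$, where it is the real subfield $\BQ(\zeta_e+\zeta_e\inv)$), and checks, by keeping track of the stabiliser of each constituent under $\operatorname{Gal}$ and of the associated $2$-cocycle, that these constituents take values in $\BQ_{G(de,e,r)}$ and have Schur index $1$ over it.

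Finally, for each of the $34$ exceptional groups one verifies directly from its character table (in practice using \CHEVIE\ or \GAP) that the field generated by all its character values coincides with $\BQ_G$ and that every Schur index over $\BQ_G$ equals $1$; this is a finite computation, and is how Benard \cite{ben} originally settled the exceptional cases. The main obstacle is the $G(de,e,r)$ step: one must combine two successive applications of Clifford theory (the base $\bmu_{de}^{\,r}\triangleleft G(de,1,r)$, then $G(de,e,r)\triangleleft G(de,1,r)$) with the exact determination of $\BQ_{G(de,e,r)}$, and rule out at once a Galois obstruction (a constituent whose value field is too large) and a Schur-index obstruction, the small members of the family and the coincidences with the Weyl groups of types $B_r$ and $D_r$ needing separate bookkeeping. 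Everything else is either classical ($\fS_r$) or a bounded verification (exceptional types); no proof avoiding a case analysis is known.
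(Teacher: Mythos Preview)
The paper gives no proof of its own for this statement; it simply records that the result was proved by Benard \cite{ben} (see also \cite{field}) via a case-by-case analysis through the Shephard--Todd classification. Your outline is exactly that strategy---reduce to the irreducible case, treat $\fS_r$, the infinite family $G(de,e,r)$, and the $34$ exceptional groups separately---and is correct in its broad shape; in particular your reformulation (that $\BQ_G$ is a splitting field, \ie\ $\BQ(\chi)\subseteq\BQ_G$ and $m_{\BQ_G}(\chi)=1$ for every irreducible $\chi$) is the standard one, and you rightly flag the Clifford-theoretic descent from $G(de,1,r)$ to $G(de,e,r)$ as the place requiring the most care.
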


\bul
        If $\BQ_G \subseteq \BR$, then $(V,G)$ is a (finite)
        {\sl Coxeter group\/}. 

\bul
        If $\BQ_G = \BQ$, then $(V,G)$ is a Weyl group.

\subsection{Parabolic subgroups}\hfill

        Throughout this subsection we assume only that 
        $V$ is a $k$-vector space of finite dimension, and 
        that $G$ is a finite subgroup of $\GL(V)$.
 
        We denote by $\Rf(G)$\index{Ref(G)@$\Rf(G)$} 
        the set of all reflections of $G$, and by $\Arr(G)$ 
        \index{Arr(G)@$\Arr(G)$}
        the set of reflecting hyperplanes of elements of $\Rf(G)$.
 
        Notice that, since $G$ is finite and $k$ of characteristic zero,
        the $kG$-module $V$ is completely reducible.
\smallskip
 
\begin{definition}
        We denote by $\Arr_X(G)$\index{ArrXG@$\Arr_X(G)$}
        the set of reflecting hyperplanes containing $X$, 
        and by
        $F_X$\index{FX@$F_X$}
        the \emph{flat of $X$ in $\Arr(G)$}:\index{Flat}
        $$
                F_X := \bigcap_{H\in\Arr_X(G)} H
                \,.
        $$
\end{definition}
  
        The assertion (1) of the following theorem has first been proved by Steinberg \cite{st}. 
        A short proof may now be found in \cite{lehrer}.

\begin{theorem}\label{para}
        Let $X$ be a subset of $V$.
\begin{enumerate}
        \item
                The fixator $C_G(X)$ of $X$ is generated by those reflections
    whose reflecting hyperplane contains $X$.
        \item
    The flat $F_X$ is the set of fixed points of $C_G(X)$ and there exists a unique
    $C_G(X)$--stable subspace $V_X$ of $V$ such that
    $
     V = F_X \oplus V_X
     \,.
    $
        \item 
                $C_G(X)=C_G(F_X)$ and $N_G(X)/C_G(X)$ is 
                naturally isomorphic to a subgroup of $\GL(F_X)$.
\end{enumerate}
\end{theorem}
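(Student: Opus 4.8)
The plan is to treat assertion (1) --- Steinberg's fixed-point theorem --- as the one serious input, and then to read off (2) and (3) from it using only the complete reducibility of the $kG$-module $V$ together with the elementary facts already collected above.

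First I would record the translation that makes (1) usable: a reflection $s\in\Rf(G)$ lies in $C_G(X)$ if and only if it fixes $X$ pointwise, i.e. $X\subseteq\ker(s-1)=H_s$, equivalently $H_s\in\Arr_X(G)$. Thus (1) says exactly that $C_G(X)$ is generated by the reflections it contains. Since $C_G(X)=C_G(\langle X\rangle_k)$ with $\langle X\rangle_k$ the $k$-span of $X$, I may assume $X$ is a subspace, and then invoke Steinberg's theorem \cite{st} (a short proof is in \cite{lehrer}); the mechanism of such a proof is an induction that reduces to the fixator of a single vector and shows that the reflections through it already carve out a proper flat. I would not reprove this here.

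Given (1), assertion (2) is a formality. The group $C_G(X)$ is generated by the set $R_X$ of reflections $s$ with $H_s\in\Arr_X(G)$, and $\{H_s : s\in R_X\}=\Arr_X(G)$; by the general fact noted just after Definition~\ref{deffrg}, the fixed space of a group generated by reflections is the intersection of their hyperplanes, so
$$
V^{C_G(X)}=\bigcap_{s\in R_X}H_s=\bigcap_{H\in\Arr_X(G)}H=F_X .
$$
Hence $F_X$ is the fixed-point set of $C_G(X)$. As $G$ is finite and $k$ has characteristic zero, $V$ is a completely reducible $kC_G(X)$-module; the trivial isotypic component is canonical and equals $V^{C_G(X)}=F_X$, and the sum $V_X$ of the remaining isotypic components is then the unique $C_G(X)$-stable complement, giving $V=F_X\oplus V_X$.

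For (3): every $H\in\Arr_X(G)$ contains $X$, so $X\subseteq F_X$ and hence $C_G(F_X)\subseteq C_G(X)$; conversely $C_G(X)$ fixes $F_X=V^{C_G(X)}$ pointwise by (2), so $C_G(X)=C_G(F_X)$. If $g\in N_G(X)$, conjugation by $g$ permutes $\Rf(G)$ and carries a reflection $s$ with $X\subseteq H_s$ to one with $X=g(X)\subseteq g(H_s)=H_{gsg\inv}$, so $g$ permutes $\Arr_X(G)$ and stabilizes $F_X$ setwise. This produces a homomorphism $N_G(X)\to\GL(F_X)$ whose kernel is $N_G(X)\cap C_G(F_X)=C_G(X)$ (note $C_G(X)\subseteq N_G(X)$), giving the asserted natural embedding $N_G(X)/C_G(X)\hookrightarrow\GL(F_X)$. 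The hard part is genuinely (1); the remainder is bookkeeping with complete reducibility.
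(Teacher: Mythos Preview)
Your proof is correct and follows essentially the same approach as the paper: invoke Steinberg's theorem for (1), then deduce (2) from the isotypic decomposition under the finite group $C_G(X)$, identifying $F_X$ as the trivial isotypic component and $V_X$ as the sum of the others. You actually go further than the paper, which proves only (2) explicitly and leaves (3) to the reader; your argument for (3) is clean and correct.
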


\begin{proof}[Proof of (2)]
        Since $C_G(X)$ is generated by reflections whose reflecting hyperplanes contain $F_X$,
        we see that the flat $F_X$ is fixed by $C_G(X)$. Conversely, if $x\in V$ is fixed
        under $C_G(X)$, it
        is fixed by all the reflections of $C_G(X)$, hence belongs to $F_X$.
 
        If $F_X = 0$, the assertion (2) is obvious. Assume $F_X \neq 0$. Then $C_G(X) \neq 1$.
        Since $F_X$ is the trivial isotypic component of $C_G(X)$, the space $V_X$ is the sum of
        all nontrivial isotypic components. \end{proof}

\begin{definition}
        The fixators of subsets of $G$ in $V$ are called 
        \index{Parabolic subgroup} \emph{parabolic subgroups}
        of $G$.
\end{definition}

        By Theorem \ref{para} above, a parabolic subgroup $C_G(X)$ acts faithfully as
        an essential reflection group on the uniquely defined subspace $V_X$.
  
\begin{corollary}\label{orderreversing}
        The map
        $F \mapsto C_G(F)$
        is an order reversing bijection
        from the set of all flats of $\Arr(G)$ onto
        the set of parabolic subgroups of $G$
        (where both sets are ordered by inclusion).
\end{corollary}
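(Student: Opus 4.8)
The plan is to derive everything from Theorem~\ref{para}, the only genuine inputs being its parts (2) and (3). For a subgroup $P$ of $G$ write $V^P$ for the subspace of $V$ fixed pointwise by $P$; the candidate inverse of $F\mapsto C_G(F)$ will be $P\mapsto V^P$. First I would check that $F\mapsto C_G(F)$ is well defined with image exactly the set of parabolic subgroups, then that it is injective, and finally that it (as well as its inverse) reverses inclusion.

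For surjectivity onto the parabolic subgroups: a flat is by definition of the form $F_X$ for some $X\subseteq V$, so $C_G(F)$ has the form $C_G(F_X)$, which equals $C_G(X)$ by Theorem~\ref{para}(3) and is therefore parabolic. Conversely every parabolic subgroup is $C_G(X)$ for some $X\subseteq V$, and Theorem~\ref{para}(3) rewrites it as $C_G(F_X)$ with $F_X$ a flat. This step is immediate once part (3) is granted.

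The key point is that a flat equals its own flat: if $F=F_X$ then every $H\in\Arr_X(G)$ contains $F$, so $\Arr_X(G)\subseteq\Arr_F(G)$, giving $F_F\subseteq F_X=F$, while $F\subseteq F_F$ is trivial; hence $F_F=F$. Feeding $X=F$ into Theorem~\ref{para}(2), the set of fixed points of $C_G(F)$ is $F_F=F$, that is, $V^{C_G(F)}=F$. This closure identity is the crux of the argument: it exhibits $P\mapsto V^P$ as a left inverse of $F\mapsto C_G(F)$ on flats, so the latter is injective, hence a bijection onto the parabolic subgroups.

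For order reversal, one implication is trivial: $F\subseteq F'$ forces $C_G(F')\subseteq C_G(F)$, since anything fixing $F'$ pointwise fixes $F$ pointwise. For the converse, apply the operation $P\mapsto V^P$ to the inclusion $C_G(F')\subseteq C_G(F)$ and use $V^{C_G(F)}=F$ and $V^{C_G(F')}=F'$ to obtain $F\subseteq F'$. Hence $F\subseteq F'\iff C_G(F')\subseteq C_G(F)$, which is exactly the assertion that $F\mapsto C_G(F)$ is an order-reversing bijection. I do not expect a real obstacle here: the statement is a formal consequence of Theorem~\ref{para}, and the only part demanding an argument beyond bookkeeping is the closure identity $F_F=F$ for flats used above.
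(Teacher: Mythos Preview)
Your proof is correct. The paper states this result as a corollary of Theorem~\ref{para} without giving a proof, treating it as immediate; you have correctly supplied the details, and your argument---establishing the closure identity $F_F=F$ for flats and using $P\mapsto V^P$ as the inverse---is precisely the natural route from Theorem~\ref{para} to the statement.
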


\subsection{Linear characters of a finite reflection group}\hfill
\smallskip

        Let $(V,G)$ be a finite reflection group.

        The following description of the linear characters of a reflection group,
        inspired by the results of \cite{co},
        may be found, for example, in \cite[Theorem 3.9]{berkeley}.
 
        Denote by $G^\ab$ the quotient of $G$ by its derived group, so that
        $
                \Hom(G,\BC^\times) = \Hom(G^\ab,\BC^\times) 
                \,.
        $
        Recall that $\Arr(G)$ denote the collection of reflecting hyperplanes
        of the reflections $s$ for $s \in G$.

        In what follows, the notation $H\in\Arr(G)/G$ means that $H$ runs over a complete
        set of representatives of the orbits of $G$ on the set $\Arr(G)$ 
        of its reflecting hyperplanes.

\begin{theorem}\habel{linear}

 \begin{enumerate}
  \item
   The restrictions from $G$ to $C_G(H)$ define an isomorphism
   $$
    \Hom(G,\BC^\times) \iso \prod_{H\in\Arr(G)/G} \Hom(C_G(H),\BC^\times)
    \,.
   $$
  \item
   The composition $i_H:C_G(H)\to G\to G^\ab$ is injective, and
   $$
     \prod_{H\in\Arr(G)/G}i_H:
     \prod_{H\in\Arr(G)/G}C_G(H)\to G^\ab
   $$
   is an isomorphism.
 \end{enumerate}
\end{theorem}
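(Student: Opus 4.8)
The plan is to establish both statements simultaneously by a counting-and-compatibility argument, exploiting the structure theory of $\Arr(G)$ and the reflecting pairs from Proposition~\ref{reflectingpairs}. First I would reduce to the essential case: by Proposition~\ref{ifss} we may assume $V^G = 0$, since passing to $V_S$ (with $S = \Rf(G)$) changes neither $G$ nor $\Arr(G)$ nor the $C_G(H)$. Then I would treat the two parts in tandem, since (1) and (2) are formally dual: (1) says a linear character is determined by, and can be prescribed arbitrarily on, each of the subgroups $C_G(H)$ (one per $G$-orbit of hyperplanes), while (2) says the $C_G(H)$ generate $G^\ab$ freely in the appropriate sense. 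The key local input is Proposition~\ref{reflectingpairs}(3): each $C_G(H)$ is cyclic, generated by a reflection $s$ with $H_s = H$, and $N_G(H)$ permutes the nontrivial reflections with hyperplane $H$; combined with the fact that for a linear character $\chi$ and $g \in G$ one has $\chi(gsg\inv) = \chi(s)$, this forces $\chi|_{C_G(H)}$ to be constant on a $G$-orbit, so the product map in (1) is well-defined.

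Next I would prove injectivity of the product map in (1). Suppose $\chi \in \Hom(G,\BC^\times)$ restricts trivially to every $C_G(H)$; then $\chi$ kills every reflection in $G$, but the reflections generate $G$, so $\chi = 1$. For surjectivity in (1), equivalently for the statement that $\prod i_H$ in (2) is an isomorphism, the cleanest route is a dimension/order count: one shows
$$
|G^\ab| = \prod_{H\in\Arr(G)/G} |C_G(H)|.
$$
Granting this equality, injectivity of each $i_H$ (proved below) plus the fact that the images of distinct $i_H$ commute (they land in the abelian group $G^\ab$) reduces the claim to showing the images of the $i_H$ generate $G^\ab$ with no multiplicative relations — and since $G^\ab$ is generated by images of reflections and each reflection lies in some $C_G(H)$, the images do generate; the order count then forces the product map to be an isomorphism, and dualising gives surjectivity in (1).

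So the two remaining points are: injectivity of $i_H : C_G(H) \to G^\ab$, and the order formula. For injectivity, I would argue that if a nontrivial $s \in C_G(H)$ mapped to $1$ in $G^\ab$, then $s$ would lie in the derived subgroup, hence in the kernel of every linear character; but one can exhibit a linear character nontrivial on $s$ — for instance a suitable power of $\det$, or more robustly the character built orbit-by-orbit using Proposition~\ref{reflectingpairs}(3)(b), which identifies $C_G(H)$ with a finite subgroup of $k^\times$ via an embedding that does \emph{not} factor through the derived group because $C_G(H) \cap [G,G]$ can be computed locally. The order formula is where I expect the real work to lie: it should be extracted from the known presentation/structure of reflection groups, and the natural reference is the description of $G^\ab$ in \cite{co} (or \cite[Theorem 3.9]{berkeley}), from which one reads off that $G^\ab$ decomposes exactly as $\prod_{H\in\Arr(G)/G} C_G(H)$ — so in fact the honest proof may simply \emph{be} a pointer to that structure result, with the present theorem a repackaging of it; the contribution here is the clean intrinsic formulation via the fixators $C_G(H)$ and the reflecting-pair bijection. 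The main obstacle, then, is not logical but bookkeeping: matching the abstract-group description of $G^\ab$ from the Shephard–Todd/Cohen analysis against the geometric orbit decomposition of $\Arr(G)$, uniformly across the infinite family $G(de,e,r)$ and the $34$ exceptional groups.
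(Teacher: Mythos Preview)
The paper gives no proof of this theorem; it simply cites \cite{co} and \cite[Theorem~3.9]{berkeley}. Your suspicion that ``the honest proof may simply be a pointer to that structure result'' is therefore exactly what happens here.

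Your outline is architecturally sound, and two pieces are complete. Injectivity of the restriction map in (1) is correct as you state it: a character trivial on every $C_G(H)$ kills every reflection, hence all of $G$. Injectivity of each $i_H$ is also cleanly handled by $\det$ alone, and you need not hedge with ``a suitable power'' or the vaguer alternative you offer: if $s$ is the distinguished generator of $C_G(H)$ of order $d$, then $\det(s)$ is a primitive $d$-th root of unity, so the image of $s$ in $G^\ab$ already has order $d$.

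Where your proposal falls short is surjectivity in (1), equivalently the order formula $|G^\ab|=\prod_{H\in\Arr(G)/G}|C_G(H)|$. You resign yourself to extracting this from the Shephard--Todd/Cohen case analysis, but the proofs in the cited references are uniform and classification-free, via \emph{relative invariants}. For each $G$-orbit $\mathcal O\subset\Arr(G)$, the product of linear forms $j_{\mathcal O}=\prod_{H\in\mathcal O}\ell_H$ is a semi-invariant polynomial, i.e.\ $g\cdot j_{\mathcal O}=\theta_{\mathcal O}(g)\,j_{\mathcal O}$ for some $\theta_{\mathcal O}\in\Hom(G,\BC^\times)$; one checks that $\theta_{\mathcal O}$ restricts to a faithful character of $C_G(H)$ for $H\in\mathcal O$ and to the trivial character of $C_G(H')$ for $H'\notin\mathcal O$. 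The characters $\theta_{\mathcal O}^a$ for $0\le a<|C_G(H)|$, as $\mathcal O$ ranges over orbits, then witness surjectivity of the restriction map directly. So you have the skeleton of the argument but are missing the one construction---the orbit semi-invariants---that turns it into a proof without appeal to the classification.
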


\begin{corollary}\label{conj gens C_G(H)}
        Let $S$ be a generating set of reflections for 
        $G$ and let $\CO$ be the set of $G$-conjugates of the elements of $S$. 
        Then for any $H\in\Arr(G)$ the set $\CO\cap C_G(H)$ generates $C_G(H)$.
\end{corollary}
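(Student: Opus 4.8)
The plan is to use Theorem \ref{linear}(2), which tells us that $G^\ab$ decomposes as the direct product $\prod_{H'\in\Arr(G)/G} i_{H'}(C_G(H'))$, with each $i_{H'}$ injective. First I would fix a reflecting hyperplane $H$ and consider the subgroup $C_G(H)$. Since $\CO$ consists of reflections that generate $G$ (being the $G$-conjugates of the generating set $S$, hence in particular containing $S$), the image of $\CO$ in $G^\ab$ generates $G^\ab$. Projecting onto the factor $i_H(C_G(H))$ in the direct product decomposition of Theorem \ref{linear}(2), I would observe that a conjugate $g s g^{-1}$ of a reflection $s\in S$ lands in the factor indexed by the $G$-orbit of its reflecting hyperplane $H_{gsg^{-1}} = g(H_s)$; so only those conjugates whose hyperplane is $G$-conjugate to $H$ contribute to the $H$-factor, and the others map to the identity there.

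The next step is to pass from ``hyperplane $G$-conjugate to $H$'' to ``hyperplane equal to $H$''. If $gsg^{-1}\in\CO$ has $g(H_s)=h(H)$ for some $h\in G$, then $h^{-1}gsg^{-1}h\in\CO$ (since $\CO$ is closed under $G$-conjugation by construction) and has reflecting hyperplane exactly $H$, hence lies in $C_G(H)$ by the description of $C_G(H)$ recalled before Proposition \ref{reflectingpairs} (namely $C_G(H)=\{1\}\cup\{g\in G : \ker(g-1)=H\}$). Therefore $\CO\cap C_G(H)$ surjects onto the factor $i_H(C_G(H))$ of $G^\ab$ under the projection; combined with the fact that the images of the other conjugates (those with hyperplane not $G$-conjugate to $H$, and a fortiori not equal to $H$) die in this factor, we conclude that $i_H(\langle \CO\cap C_G(H)\rangle) = i_H(C_G(H))$. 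Since $i_H$ is injective, $\langle\CO\cap C_G(H)\rangle = C_G(H)$, which is the claim.

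The main obstacle, such as it is, lies in the bookkeeping of the second paragraph: one must be careful that conjugating an element of $\CO$ whose hyperplane is merely $G$-conjugate to $H$ does produce an element of $\CO$ (this is exactly where saturation of $\CO$ under $G$-conjugacy is used) and that the resulting element genuinely has $H$ as its reflecting hyperplane and not some other hyperplane fixing a larger flat. There is also a small point to check — that $\CO$ generates $G$ and not just a subgroup — but this is immediate since $S\subseteq\CO$ and $S$ generates $G$. Everything else is a direct application of Theorem \ref{linear}(2) and the explicit description of $C_G(H)$, so no real computation is needed.
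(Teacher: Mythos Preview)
Your proposal is correct and follows essentially the same approach as the paper's proof: both use the direct product decomposition of $G^\ab$ from Theorem \ref{linear}(2), observe that only reflections with hyperplane in the $G$-orbit of $H$ contribute to the $i_H(C_G(H))$-factor, and then use injectivity of $i_H$ to pull the conclusion back to $C_G(H)$. The paper's version is more terse---it works directly with the subset $S_H\subseteq S$ of generators conjugate into $C_G(H)$ rather than all of $\CO$---but the substance is identical.
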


\begin{proof}
        The set $\CO\cap C_G(H)$ has the same image in $G^\ab$ as the
        set $S_H$ of elements of $S$ which are conjugate to an element of $C_G(H)$.
        If we denote $x\mapsto x^\ab$ the quotient map $G\to G^\ab$, we
        have $S^\ab=\coprod_{H\in\Arr(G)/G}S_H^\ab$, where $S_H^\ab$ lies
        in the component $C_G(H)$ of $G^\ab$.
        Since $S$ generates $G$, $S^\ab$ generates $G^\ab$, thus
        $S_H^\ab$ generates $C_G(H)$.
\end{proof}

\begin{definition}\label{def:distinguished}
        Let $G$ be a finite subgroup of $\GL(V)$ generated by reflections.
        A reflection $s\in G$ is said to be \index{Distinguished reflection} 
        \emph{distinguished} with respect to $G$ if 
        $\det(s) = \exp\left(\frac{2 \pi i}{d} \right)$ 
        where $d=|C_G(H_s)|$. 
\end{definition}

        In particular, if $H$ is a reflecting hyperplane for a reflection of $G$,
        every $C_G(H)$ is generated by a single distinguished reflection.
        
        The next property has been noticed by Nebe (\cite{nebe}, \S 5), as a consequence
        of \cite[(1.8) \& (1.9)]{co}.

\begin{corollary}\label{distinguishedgenerators}
        Let $S$ be a generating set of distinguished reflections for
        $G$. 
        Then any distinguished reflection of $G$ is conjugate to an element of $S$.
\end{corollary}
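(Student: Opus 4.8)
The plan is to deduce the statement directly from Corollary~\ref{conj gens C_G(H)}, the extra input being that a given fixator $C_G(H)$ contains at most one distinguished reflection.

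First I would fix an arbitrary distinguished reflection $t$ of $G$, put $H:=H_t$ and $d:=|C_G(H)|$, and let $\CO$ be the set of $G$-conjugates of the elements of $S$. The first observation is that every element of $\CO$ is again a distinguished reflection. Indeed, conjugation by $g\in G$ sends a reflection with reflecting hyperplane $H'$ and determinant $\zeta$ to the reflection with reflecting hyperplane $g(H')$ and the same determinant $\zeta$, and it maps $C_G(H')$ isomorphically onto $C_G(g(H'))$; hence $|C_G(g(H'))|=|C_G(H')|$ and the condition of Definition~\ref{def:distinguished} is preserved under conjugation. Since the elements of $S$ are distinguished by hypothesis, so are all elements of $\CO$.

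Next I would establish the uniqueness: $C_G(H)$ contains exactly one distinguished reflection, and it is $t$. By Proposition~\ref{reflectingpairs}, $C_G(H)$ is cyclic, and it consists of the identity together with reflections, all with reflecting hyperplane $H$ and a common reflecting line $L$ for which $V=H\oplus L$. Restriction of the determinant to $C_G(H)$ sends such an element to its eigenvalue on $L$, so $\det$ is an injective homomorphism $C_G(H)\to k^\times$; as $|C_G(H)|=d$, its image is exactly the group $\bmu_d$ of $d$-th roots of unity. Therefore $\exp(2\pi i/d)$ has a single preimage in $C_G(H)$, and that element is the unique distinguished reflection with reflecting hyperplane $H$ --- which is precisely $t$.

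Finally, I would apply Corollary~\ref{conj gens C_G(H)} to the generating set of reflections $S$: the set $\CO\cap C_G(H)$ generates $C_G(H)$, and is in particular non-empty (note $H\in\Arr(G)$ since $t\in\Rf(G)$, and $C_G(H)\neq\{1\}$). By the first observation its elements are distinguished reflections lying in $C_G(H)$, so by uniqueness $\CO\cap C_G(H)=\{t\}$. Hence $t\in\CO$, that is, $t$ is $G$-conjugate to an element of $S$. The only step that is not pure bookkeeping is the uniqueness of a distinguished reflection in a fixed $C_G(H)$; I expect this (quite modest) point to be the crux, everything else being an immediate consequence of Corollary~\ref{conj gens C_G(H)}.
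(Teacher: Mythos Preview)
Your proposal is correct and follows essentially the same route as the paper: the paper's proof is the one-liner ``It follows from \ref{conj gens C_G(H)} and from the fact that the conjugate of a distinguished reflection is still distinguished,'' and you have simply unpacked this, making explicit the (implicit) point that $C_G(H)$ contains a unique distinguished reflection. The only minor remark is that you do not actually need $\CO\cap C_G(H)=\{t\}$ in full---nonemptiness plus the uniqueness observation already gives $t\in\CO$---but this is cosmetic.
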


\begin{proof}
        It follows from \ref{conj gens C_G(H)} and from the fact that the conjugate
        of a distinguished reflection is still distinguished.
\end{proof}

\section{\red{Root Systems}}

\subsubsection*{Notation and conventions}\hfill
\smallskip
 
        From now on, the following notation will be in force.

        The field $k$ is a number field, stable by the complex conjugation
        denoted $\lambda \mapsto \lambda^*$. Its ring of integers is $\BZ_k$, a
        Dedekind domain. A 
        \index{Fractional ideal}\emph{fractional ideal}
        is a finitely generated $\BZ_k$-submodule of $k$. 
        Denote by $\ideal\lambda$ the (principal) fractional ideal generated by $\lambda \in k$.
 \smallskip
  
        For $\fa$ a fractional ideal, we set\index{a@$\fa \inv$}
        $$
                \fa\inv := \{ b\in k\mid b\fa \subset \BZ_k\}, \text{ and } 
                \fa\tinv := {(\fa\inv)}^*
                \,.
        $$
        Since $\BZ_k$ is Dedekind, 
        $\fa\fa\inv = \ideal 1$ and $(\ideal\lambda)\inv = \ideal{\lambda\inv}$
        for $\lambda\in k$.
\smallskip

        Throughout, $(V,W)$ is a pair of finite dimensional $k$-vector spaces 
        with a given Hermitian pairing (see Subsection~\ref{preliminary})
        $$
                V\times W\to k: (v,w)\mapsto \scal vw
                \,.
        $$

        For $I$ a finitely generated $\BZ_k$-submodule of $V$ and $J$ a finitely
        generated $\BZ_k$-submodule of $W$, 
        we denote by $\scal IJ$\index{I,J@$\scal{I}{J}$}
        the fractional ideal generated by all $\scal \al\be$ for $\al\in I$
        and $\be \in J$.
\medskip

        Let $I$ be a rank one finitely generated $\BZ_k$-submodule of $V$,
        generating the line $kI$ in $V$. Then whenever $v$ is a nonzero
        element of $kI$, there is a fractional ideal $\fa$ of $\BZ_k$ such that 
        $I = \fa v$.
        If, similarly, $J = \fb w$ for some fractional ideal $\fb$ and
        some $w \in kJ$, then
        $$
                \scal IJ = \fa\fb^*\scal vw
                \,.
        $$

\subsection{\zroot s}\hfill

\begin{definitions}\hfill

\begin{enumerate}
        \item
                A \index{Zk@\zroot} \emph{\zroot} 
                (for $(V,W)$) is a triple
                $\frr = (I,J,\zeta)$ where
                \begin{itemize}
                        \item 
                        $I$ is a rank one finitely generated $\BZ_k$-submodule of $V$, 
                         \item 
                        $J$ is a rank one finitely generated $\BZ_k$-submodule of $W$, 
                \item 
                        $\zeta$ is a nontrivial root of unity in $k$,
                \end{itemize}
                such that $\scal IJ=\ideal{(1-\zeta)}$, the principal ideal generated by $1-\zeta$.

                A \zroot\ $\frr = (I,J,\zeta)$ is called a
                \index{zeta@$(\zeta,\BZ_k)$-root} \emph{$(\zeta,\BZ_k)$-root}.
        \item
                If $\frr = (I,J,\zeta)$ is a \zroot, and $\fa$ is a fractional ideal, we set
                $$
                        \fa\cdot\frr := (\fa I,\fa\tinv J,\zeta)
                        \,.
                $$
                Two \zroot s $\frr_1$ and $\frr_2$
                are said to be of the same \emph{genus} \index{Genus of \zroot s} 
                if there exists a fractional ideal $\fa$ such that
                $$
                        \frr_2 = \fa\cdot\frr_1
                        \,.
                $$
\end{enumerate}
\end{definitions}
 
        The group $\GL(V)$ acts on left on the set of \zroot s, as follows:
        for $g\in\GL(V)$ and $\frr = (I,J,\zeta)$ a \zroot, set
        $$
                g\cdot\frr := (g(I),g ^\vee (J),\zeta) .
        $$
        In particular $\lambda\in k^\times\subset Z\GL(V)$ acts by
        $\lambda\cdot\frr=(\lambda I,\lambda\tinv J,\zeta)$.
        The action of $k^\times\Id_V = Z\GL(V)$ preserves genera.

\begin{remark}
        The pair $(I,J)$ does not determine $\zeta$.
 
        Indeed one may have an equality of ideals
        $\ideal{(1-\zeta)}=\ideal{(1-\xi)}$ without $\zeta$ and $\xi$ having even
        the same order. For example, as soon as $\zeta$ has a composite order,
        $1-\zeta$ is invertible
        and so $\ideal{(1-\zeta)}= \ideal{}$
        (see Lemma~\ref{lem:mcompositeornot} in Appendix~\ref{arithmetic}).
\end{remark}

        Given a \zroot\ $\frr = (I,J,\zeta)$, choose
        $v\in kI$ and $w\in kJ$ such that
        $\scal v{w} = 1-\zeta$.
        Then the formula
        $$
                x\mapsto x-\scal xw v
        $$
        defines a reflection independent of the choice of $ v$,
        since it is also the reflection attached to the reflection triple
        $(kI,kJ,\zeta)$.
        We will denote by $s_\frr$ this reflection.

\begin{definitions}\habel{reflection and dual}

\begin{enumerate}
        \item
                If $s$ is a reflection,
                an \index{s,Zk-root@$(s,\BZ_k)$-root} \emph{$(s,\BZ_k)$-root} is a
                \zroot\ $(I,J,\zeta)$
                where $(kI,kJ,\zeta) = (L_s,M_s,\zeta_s)$.
        \item
                If $\frr = (I,J,\zeta)$ is a \zroot\ for $(V,W)$, 
                we call $\frr^\vee=(J,I,\zeta)$ --- a \zroot\ for $(W,V)$ --- the {\em dual root}.
\end{enumerate}
\end{definitions}

                Notice that the dual of an $(s,\BZ_k)$-root is an
                $(s^\vee,\BZ_k)$-root. Thus
                $s_{\frr^\vee}=s_\frr^\vee$.
                
\begin{lemma}\label{lemma:reflection=genus}\hfill
\begin{enumerate}
        \item
                Given a \zroot\ $\frr = (I,J,\zeta)$, given $v \in kI$
                and $w\in kJ$ such that $\scal vw = 1-\zeta$, there exists
                a fractional ideal $\fa$ such that 
                $I = \fa v$ and $J = \fa\tinv w$.
        \item
                For any \zroot\ $\frr = (I,J,\zeta)$, there exists a unique
                reflection $s$ in $\GL(V)$ such that $\frr$ is an $(s,\BZ_k)$-root.
        \item
                For any reflection $s$ in $\GL(V)$, the set of $(s,\BZ_k)$-roots
                form a single genus of roots.
\end{enumerate}
\end{lemma}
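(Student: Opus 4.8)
The plan is to prove the three assertions in turn, the first being a purely arithmetic fact about rank one $\BZ_k$-submodules, the second and third following quickly from it once the reflection formula is in hand.

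\medskip

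\textbf{Part (1).} Let $\frr = (I,J,\zeta)$ be a \zroot, and let $v\in kI$, $w\in kJ$ with $\scal vw = 1-\zeta$. Since $I$ is a rank one finitely generated $\BZ_k$-submodule of $V$ with $kI = kv$, the discussion preceding the Definitions (the paragraph introducing $I = \fa v$) gives a fractional ideal $\fa$ with $I = \fa v$. Similarly there is a fractional ideal $\fb$ with $J = \fb w$. The compatibility condition $\scal IJ = \ideal{(1-\zeta)}$ then reads, by the displayed formula $\scal IJ = \fa\fb^*\scal vw$, as $\fa\fb^*\ideal{(1-\zeta)} = \ideal{(1-\zeta)}$, whence $\fa\fb^* = \ideal 1$ since $\ideal{(1-\zeta)}$ is invertible in the group of fractional ideals. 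Thus $\fb^* = \fa\inv$, so $\fb = (\fa\inv)^* = \fa\tinv$ and $J = \fa\tinv w$, as claimed.

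\medskip

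\textbf{Part (2).} For existence: given $\frr = (I,J,\zeta)$, set $L = kI$, $M = kJ$; these are lines in $V$, $W$. They are not orthogonal: choosing $v\in kI$, $w\in kJ$ with $\scal vw = 1-\zeta \neq 0$ (possible since $\scal IJ = \ideal{(1-\zeta)}$ is a nonzero ideal and $\scal{\pd}{\pd}$ is $k$-bilinear in the appropriate sense on $kI\times kJ$) shows $\scal LM \neq 0$. Since $\zeta\in k^\times$ is a nontrivial root of unity, $(L,M,\zeta)$ is a reflection triple, and by Proposition~\ref{computereflection} it defines a reflection $s\in\GL(V)$ with $(L_s,M_s,\zeta_s) = (kI,kJ,\zeta)$; that is, $\frr$ is an $(s,\BZ_k)$-root. (Indeed $s = s_\frr$ by the formula given just before Definitions~\ref{reflection and dual}.) For uniqueness: a reflection is determined by $H_s$, $L_s$ and $\zeta_s$ (remarked after Definition~\ref{reflectingspaces}), and $H_s$ is in turn determined by $M_s$; if $\frr$ is an $(s',\BZ_k)$-root as well, then $(L_{s'},M_{s'},\zeta_{s'}) = (kI,kJ,\zeta) = (L_s,M_s,\zeta_s)$, so $s' = s$.

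\medskip

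\textbf{Part (3).} Fix a reflection $s$ with reflection triple $(L,M,\zeta_s)$, and fix $v_0\in L$, $w_0\in M$ with $\scal{v_0}{w_0} = 1-\zeta_s$ (possible since $L$ and $M$ are not orthogonal, after rescaling). Given any $(s,\BZ_k)$-root $\frr = (I,J,\zeta)$, necessarily $\zeta = \zeta_s$, $kI = L$, $kJ = M$, and by Part (1) applied with the elements $v_0, w_0$ there is a fractional ideal $\fa$ with $I = \fa v_0$, $J = \fa\tinv w_0$; that is, $\frr = \fa\cdot\frr_0$ where $\frr_0 := (\ideal 1\, v_0,\ \ideal 1\, w_0,\ \zeta_s)$, which is itself an $(s,\BZ_k)$-root since $\scal{\ideal 1 v_0}{\ideal 1 w_0} = \ideal{(1-\zeta_s)}$. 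Conversely, for any fractional ideal $\fa$, the triple $\fa\cdot\frr_0 = (\fa v_0, \fa\tinv w_0, \zeta_s)$ satisfies $\scal{\fa v_0}{\fa\tinv w_0} = \fa\,(\fa\tinv)^*\scal{v_0}{w_0} = \fa\fa\inv\ideal{(1-\zeta_s)} = \ideal{(1-\zeta_s)}$ and has the right associated line data, so it is an $(s,\BZ_k)$-root. Hence the $(s,\BZ_k)$-roots are exactly the $\fa\cdot\frr_0$, which is precisely the genus of $\frr_0$.

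\medskip

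None of the steps is a serious obstacle; the only point requiring a little care is bookkeeping with the twist $*$ in Part (1)—keeping straight that $\fb = \fa\tinv$ rather than $\fa\inv$—and making sure in Parts (2) and (3) that the non-orthogonality of $L$ and $M$ is genuinely available, which it is because $\scal IJ$ is the nonzero ideal $\ideal{(1-\zeta)}$.
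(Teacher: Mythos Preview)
Your proof is correct and follows essentially the same approach as the paper: the paper simply says ``(1) and (2) are clear'' and then for (3) picks $v\in L_s$, $w\in M_s$ with $\scal vw=1-\zeta_s$, observes that every $(s,\BZ_k)$-root is of the form $(\fb v,\fb\tinv w,\zeta_s)$ (implicitly by (1)), and concludes that any two differ by a fractional ideal. Your write-up supplies the details the paper omits, in particular the bookkeeping $\fa\fb^*=\ideal 1\Rightarrow\fb=\fa\tinv$ in (1), but the structure of the argument is the same.
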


\begin{proof} 
        (1) and (2) are clear. Let us prove (3). Let $s$ be a reflection.
        
        Choose $ v \in L_s$ and $w \in M_s$ such that 
        $ \scal vw = 1-\zeta_s$. For $\fa$ any fractional ideal, define
        $I := \fa v, J := \fa\tinv w\,$. Then $\frr=(I,J,\zeta_s)$ is an
        $(s,\BZ_k)$-root. 
        
        Let now $\frr'$ be a root giving rise to the same reflection triple.
        Then $\frr' = (\fb v,\fb\tinv w,\zeta)$
        for some fractional ideal. We have $\frr'=\fb\fa\inv\cdot\frr$
        thus $\frr$ and $\frr'$ are in the same genus.
\end{proof}
  
\begin{remark}\label{IzDetermineJ}
        Given a reflection $s$ and an $(s,\BZ_k)$-root $\frr = (I,J,\zeta)$, 
        Lemma \ref{lemma:reflection=genus}, (1) ensures that 
        $J$ is determined by $I$ (and similarly $I$ is determined by $J$). 
\end{remark}

\subsubsection*{Pairing between \zroot s}\hfill
\smallskip

 Let $\frr_1 =(I_1,J_1,\zeta_1)$ and $\frr_2 = (I_2,J_2,\zeta_2)$ be two
 \zroot s.
 There is a pairing on the set of \zroot s, defined to be the fractional 
 ideal:
  \index{n(r1,r2)@$\frn{\frr_1}{\frr_2}$}
 $$
   \frn{\frr_1}{\frr_2} := \scal{I_1}{J_2}
  \,.
 $$
 If $\frr=(I,J,\zeta)$, then by definition we have $\frn\frr\frr=\ideal{(1-\zeta)}$.
\smallskip

\subsubsection*{Principal \zroot s}\hfill
\smallskip

        Let $I$ be a rank one $\BZ_k$-submodule of $V$.
        The reader will easily check that
        the following assertions are equivalent:
        \begin{itemize}
                \item[(i)\,]
                        $I$ is a free $\BZ_k$-module (hence of rank 1),
                \item[(ii)\,]
                        whenever $v \in kI$ and $\fa$ is a fractional ideal of $k$ such
                        that $I = \fa v$, then $\fa$ is a principal ideal.
        \end{itemize}
	This implies the following result:

\begin{lemmadef}\label{lem:principalroot}\index{Principal root}
        Let $\frr = (I,J,\zeta)$ be a \zroot. 
        The following assertions are equivalent:
        \begin{itemize}
                \item[(i)\,]
                        $I$ is a free $\BZ_k$-module (hence of rank 1),
                \item[(ii)\,]
                        $J$ is a free $\BZ_k$-module (hence of rank 1).
        \end{itemize}
        If the preceding properties are true, we say that the root $\frr$
        is a \emph{principal \zroot\/}.
\end{lemmadef}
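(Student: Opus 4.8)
The statement to prove is Lemma--Definition~\ref{lem:principalroot}: for a \zroot\ $\frr = (I,J,\zeta)$, the module $I$ is free (of rank one) over $\BZ_k$ if and only if $J$ is. The plan is to reduce the claim to the equivalence (i)$\Leftrightarrow$(ii) stated just above the Lemma--Definition — that $I$ is free iff the ideal $\fa$ with $I = \fa v$ is principal — and then to exhibit, via Lemma~\ref{lemma:reflection=genus}(1), a compatible pair of presentations $I = \fa v$, $J = \fa\tinv w$ in which \emph{the same} fractional ideal $\fa$ appears. The main point will be that passing from $\fa$ to $\fa\tinv = (\fa\inv)^*$ preserves principality.

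First I would choose, as in the paragraph preceding the statement, nonzero $v \in kI$ and $w \in kJ$ with $\scal vw = 1-\zeta$; this is possible because $\scal IJ = \ideal{(1-\zeta)}$ forces $\scal{kI}{kJ} = k$, so the pairing is nondegenerate between the two lines. By Lemma~\ref{lemma:reflection=genus}(1) there is a single fractional ideal $\fa$ with $I = \fa v$ and $J = \fa\tinv w$. Now apply the equivalence (i)$\Leftrightarrow$(ii) recalled above the statement (for $I$, with the chosen $v$): $I$ is free iff $\fa$ is a principal ideal. Symmetrically — reversing the roles of $V$ and $W$, i.e. working with the dual root $\frr^\vee = (J,I,\zeta)$ and the same presentation — $J = \fa\tinv w$ is free iff $\fa\tinv$ is principal.

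So the whole lemma comes down to: $\fa$ is principal $\iff$ $\fa\tinv$ is principal. This is where I would spend a sentence: if $\fa = \ideal\lambda$ then $\fa\inv = \ideal{\lambda\inv}$ by the displayed identity in the Notation and conventions paragraph, and applying complex conjugation (which is a ring automorphism of $\BZ_k$, since $k$ is stable under $*$) gives $\fa\tinv = (\ideal{\lambda\inv})^* = \ideal{(\lambda\inv)^*} = \ideal{(\lambda^*)\inv}$, again principal. Conversely, since $(\fa\tinv)\tinv = \fa$ (conjugation is an involution and $(\fa\inv)\inv = \fa$ in a Dedekind domain), the implication runs the other way too. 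Combining the three equivalences — $I$ free $\iff$ $\fa$ principal $\iff$ $\fa\tinv$ principal $\iff$ $J$ free — finishes the proof, and the common truth value of these conditions justifies speaking of a \emph{principal \zroot}.

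I do not expect a serious obstacle here: the genuine content is entirely in Lemma~\ref{lemma:reflection=genus}(1), which supplies the single ideal $\fa$ governing both $I$ and $J$, and that has already been proved. The only thing to be careful about is to invoke the twisted dual $\fa\tinv = (\fa\inv)^*$ correctly and to note that $*$ is an automorphism of $\BZ_k$ preserving principal ideals; everything else is bookkeeping with the equivalence already displayed before the statement.
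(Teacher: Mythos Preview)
Your proposal is correct and follows exactly the approach the paper intends: the paper simply writes ``This implies the following result'' after the displayed equivalence, leaving the reader to supply precisely the argument you give --- invoking Lemma~\ref{lemma:reflection=genus}(1) to write $I = \fa v$ and $J = \fa\tinv w$ with a common $\fa$, and then noting that $\fa$ is principal iff $\fa\tinv$ is. You have filled in the details the paper omits, but there is no difference in method.
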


\begin{remark}\label{rem:principal}
        If $\frr = (I,J,\zeta)$ is a principal \zroot, we may choose
        $\al\in kI$ and $\be \in kJ$ such that $I = \BZ_k \al$, $J = \BZ_k \be$ and
        $\scal{\al}{\be} = 1-\zeta$.
        The vector $\al$ is then unique up to multiplication by a unit
        of $\BZ_k$, and it determines $\be$ (and conversely).
\end{remark}
 
\subsection{\zroot\ systems}\hfill
\smallskip

\subsubsection*{Definition and first properties}\hfill
\smallskip

        The following definition 
        is modeled on that of Bourbaki \cite[chap. VI, \S 1, \emph{D\'efinition}]{bou}.

\begin{definition}\label{Zkrootsystems}
        Let 
        $\fR = \{ \frr = (I_\frr,J_\frr,\zeta_\frr) \}$ be a set of \zroot s.
        We say that $\fR$ is a \emph{\zroot\ system} \index{Zk@\zroot\ system}
        if it satisfies the following conditions:
        \begin{description}
                \item[(RS$_I$)]
                        $\fR$ is finite, and the family $(I_\frr)_{\frr\in\fR}$ generates $V$,
        \item[(RS$_{II}$)]
                        Whenever $\frr \in \fR$, we have $s_\frr\cdot \fR = \fR$,
        \item[(RS$_{III}$)]
                        Whenever $\frr_1,\frr_2 \in \fR$, we have
                        $\frn{\frr_1}{\frr_2} \subseteq \BZ_k$.
        \end{description}
\end{definition}
\medskip

        In particular, in the case when $\BZ_k = \BZ$, the root datum above is
        equivalent to that required for a root system as defined in 
        \emph{loc.cit.} (see Remark \ref{rem:Zk=Z}).
        
         If $G$ is any of the 34 exceptional reflection groups of the classification of finite irreducible
        complex reflection groups, and $k=\BQ_G$ is the field of definition of $G$, then $\BZ_k$ is 
        known to be a principal ideal domain~\cite{nebe}.
\smallskip
  
\subsubsection*{Principal \zroot\ systems} \label{pid case}
\hfill \smallskip

        If $\BZ_k$ is a principal ideal domain, all \zroot s are principal.

\begin{definition}\label{def:princrootsys}
        A \zroot\ system is  \emph{principal\/} if all its roots are principal.
\end{definition}

        Remark~\ref{rem:principal} implies that  
        a principal \zroot\ may be viewed as a triple $(A,B,\zeta)$ where 
\begin{itemize}
        \item
                $\zeta$ is a root of unity, 
        \item
                $A = \BZ_k^\times \alpha$ and $B = \BZ_k^\times \beta$,
                where $\alpha$ and $\beta$
                are nonzero elements of $V$ and $W$ respectively, and
        \item
                $\scal\alpha\beta = 1-\zeta\,.$
\end{itemize}
        Such a triple $r$ defines the unique reflection $s_r$ 
        with reflecting line $kA$ 
        and reflecting hyperplane the orthogonal 
        of $kB$.

        Thus a principal \zroot\ system may be viewed as a set $R$
        of triples $r = (A_r,B_r,\zeta_r)_{r\in R}$ such that
\begin{itemize}
        \item[(RS$_I$)] 
                $R$ is finite and the family $(A_r)_{r\in R}$ generates $V$,
        \item[(RS$_{II}$)]
                Whenever $r \in R$, we have $s_r\cdot R = R\,,$
        \item[(RS$_{III}$)]
                Whenever $r_1 = (A_1,B_1,\zeta_1) \in R$ and $r_2 =
                (A_2,B_2,\zeta_2) \in R$, 
                for $\alpha_1\in A_1$ and $\beta_2 \in B_2$,
                we have $\scal{\alpha_1}{\beta_2} \in \BZ_k\,.$
\end{itemize}

\begin{remark}\label{rem:Zk=Z}
        If $\BZ_k = \BZ$ (which implies that $G$ is a Weyl group), the previous
        definition coincides with the usual definition of root system attached to $G$:
        let $\fR_0$ be a root system in the Bourbaki sense, then 
        $$
                 \fR := \{(\BZ\alpha,\BZ\alpha^\vee,-1)\,\mid\, \alpha\in
                 \fR_0\}
        $$
        is a $\BZ$-root system in our sense. Notice that the cardinality of
        $\fR_0$ is twice that of $\fR$ as Bourbaki has distinct roots $\pm
        \alpha$, which give rise to a single $\BZ$-root.
\end{remark}

\begin{remark}\label{rem:defnebe}
        Nebe's definition of a \emph{reduced $k$-root system for $G$} 
        (see \cite[Def.19]{nebe}) coincides with our definition of
        \emph{distinguished principal \zroot\ system for $G$} (see 
        Definition \ref{variousdefs} below).
\end{remark}

\subsubsection*{Reflections and integrality results}\hfill
\smallskip

        We return to the general case, where $\BZ_k$ need not be a P.I.D.

\begin{lemma}\label{reflectionideal} 
        Given \zroot s $\frr_1 = (I_1,J_1,\zeta_1)$ and $\frr_2 =
        (I_2,J_2,\zeta_2)$,
\begin{enumerate}
        \item 
                $
                (s_{\frr_1}-\Id_V)(I_{2}) \subset \frn{\frr_2}{\frr_1}I_{1}
                        \,.
                $
        \item
                If $\frn{\frr_2}{\frr_1} \subset \BZ_k$, then 
                $
                (s_{\frr_1}-\Id_V)(I_{2}) \subset I_{1}
                \,.
                $
        \item
                Reciprocally, if $(s_{\frr_1}-\Id_V)(I_{2}) \subset I_{1}$, then
                $\frn{\frr_2}{\frr_1} \subset \BZ_k$.
\end{enumerate}
\end{lemma}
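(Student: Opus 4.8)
The plan is to reduce all three assertions to one explicit computation with the reflection formula. First I would fix nonzero $v_1\in kI_1$ and $w_1\in kJ_1$ with $\scal{v_1}{w_1}=1-\zeta_1$, and use Lemma~\ref{lemma:reflection=genus}(1) to write $I_1=\fa v_1$ and $J_1=\fa\tinv w_1$ for a (necessarily nonzero) fractional ideal $\fa$. The formula defining $s_{\frr_1}$ then gives, for all $x\in V$,
\[
  (s_{\frr_1}-\Id_V)(x)=-\scal{x}{w_1}\,v_1 .
\]
Next I would introduce the fractional ideal $\fq$ generated by the elements $\scal{\alpha}{w_1}$, $\alpha\in I_2$: this is a genuine fractional ideal (possibly zero), being the image of the finitely generated $\BZ_k$-module $I_2$ under the $\BZ_k$-linear map $\alpha\mapsto\scal{\alpha}{w_1}$. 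From the displayed formula one reads off $(s_{\frr_1}-\Id_V)(I_2)=\fq\,v_1$.

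The second step is to identify $\frn{\frr_2}{\frr_1}\,I_1$ with the same module. By definition $\frn{\frr_2}{\frr_1}=\scal{I_2}{J_1}$, and since $J_1=\fa\tinv w_1=(\fa\inv)^*w_1$, semilinearity of $\scal{\cdot}{\cdot}$ in the $W$-variable yields $\scal{I_2}{J_1}=\fa\inv\fq$; hence
\[
  \frn{\frr_2}{\frr_1}\,I_1=\fa\inv\fq\cdot\fa v_1=\fq\,v_1=(s_{\frr_1}-\Id_V)(I_2),
\]
using $\fa\inv\fa=\BZ_k$. This already proves (1), in fact with equality rather than mere inclusion. Assertion (2) then follows at once, since $\frn{\frr_2}{\frr_1}\subset\BZ_k$ gives $(s_{\frr_1}-\Id_V)(I_2)=\frn{\frr_2}{\frr_1}\,I_1\subset\BZ_k I_1=I_1$.

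For (3), I would run the previous identity backwards: the hypothesis $(s_{\frr_1}-\Id_V)(I_2)\subset I_1$ says $\fq\,v_1\subset\fa v_1$. If $\fq=0$ there is nothing to prove; otherwise, working in the line $kv_1$ and cancelling $v_1$ gives $\fq\subset\fa$, and multiplying by $\fa\inv$ gives $\frn{\frr_2}{\frr_1}=\fa\inv\fq\subset\fa\inv\fa=\BZ_k$. I do not expect a real obstacle here: the only points requiring care are the bookkeeping of complex conjugation in passing from $J_1=\fa\tinv w_1$ to $\scal{I_2}{J_1}=\fa\inv\fq$, the use of invertibility of fractional ideals (valid since $\BZ_k$ is Dedekind), and the trivial-but-necessary separate treatment of the degenerate case $\fq=0$, i.e.\ $kI_2\subseteq H_{s_{\frr_1}}$, in part (3).
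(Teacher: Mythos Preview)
Your argument is correct and follows the same route as the paper's proof: fix generators $v_1,w_1$ and the fractional ideal $\fa$ with $I_1=\fa v_1$, $J_1=\fa\tinv w_1$, apply the reflection formula, and do the resulting ideal arithmetic. The one noteworthy difference is that by packaging $\{\scal{\alpha}{w_1}:\alpha\in I_2\}$ as the ideal $\fq$ and computing $\frn{\frr_2}{\frr_1}I_1=\fa\inv\fq\cdot\fa v_1=\fq v_1$, you actually obtain the \emph{equality} $(s_{\frr_1}-\Id_V)(I_2)=\frn{\frr_2}{\frr_1}I_1$, whereas the paper proves only the inclusion in (1) via a partition-of-unity trick $1=\sum y_ix_i$ with $x_i\in\fa$, $y_i\in\fa\inv$, and then handles (3) by a separate computation; your equality makes both (2) and (3) immediate.
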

 
  
 \begin{proof}
        Choose $(v_1,w_1) \in kI_{\frr_1} \times kJ_{\frr_1}$ such that
         $\scal{v_1}{w_1} = 1-\zeta_1$, and 
        denote by $\fa_1$ the fractional
        ideal such that $I_{\frr_1} = \fa_1v_1$ (and so $J_{\frr_1} = \fa_1\tinv w_1$).
  
        Similarly, choose
        $(v_2,w_2) \in kI_{\frr_2} \times kJ_{\frr_2}$ such that
         $\scal{v_2}{w_2} = 1-\zeta_2\,,$
        and denote by $\fa_2$ the fractional
        ideal such that $I_{\frr_2} = \fa_2v_2$ (and so $J_{\frr_2} = \fa_2\tinv w_2$).
 
        Then, for all $a_2\in\fa_2$, 
         \begin{equation}
         \tag{$\star$}
                s_{\frr_1}(a_2v_2) =a_2v_2 - \scal{a_2v_2}{w_1}v_1
                 \,.
        \end{equation}
  
        In order to prove (1),
        write $1 = \sum_i y_i x_i$ for $x_i \in \fa_1$ and $y_i \in \fa_1\inv$.
         Then the above equality ($\star$) may be rewritten
        $$
         \begin{aligned}
                 s_{\frr_1}(a_2v_2) 
                  &= a_2v_2 - \scal{a_2v_2}{w_1}(\sum_i y_i x_i)v_1 \\
                  &= a_2v_2 - \sum_i \big( \scal{a_2v_2}{y_i^*w_1}x_i v_1 \big)
                 \,,
         \end{aligned}
        $$
         and that last equality shows (1).
 \smallskip

        Part (2) follows from (1) and from the inclusion $\BZ_k I_1 \subset I_1$.
 \smallskip
 
        Now assume that $(s_{\frr_1}-\Id_V)(I_{\frr_2}) \subset I_{\frr_1}$. Equality ($\star$)
        shows that, for all $a_2\in\fa_2$, $\scal{a_2v_2}{w_1}v_1 \in I_1$, \ie\
        $\scal{a_2v_2}{w_1} = a_2\scal{v_2}{w_1} \in \fa_1$. This shows that
         $\fa_2 \scal{v_2}{w_1} \subset \fa_1$, hence 
        $\scal{v_2}{w_1} \in \fa_2\inv\fa_1$ and
        $\scal{\fa_2v_2}{\fa_1\tinv w_1} \subset \BZ_k$,
         which is (3).
\end{proof}
  
  \begin{corollary}\label{RSIII equivalent}
        Condition \textbf{(RS$_{III}$)} is equivalent to: 
        $$
                \mbox{\emph{\textbf{(RS$_{III}$)$\,'$}} Whenever $\frr_1,\frr_2 \in \fR$,
                we have
                $(s_{\frr_2}-\Id)I_{\frr_1} \subset I_{\frr_2}$.}
        $$
\end{corollary}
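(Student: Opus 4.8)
The plan is to read this off directly from Lemma~\ref{reflectionideal}, parts (2) and (3), which together form a biconditional. First I would record that, applied to a pair $\frr_1,\frr_2$ of \zroot s, parts (2) and (3) of Lemma~\ref{reflectionideal} give precisely the equivalence
$$
\frn{\frr_2}{\frr_1} \subset \BZ_k
\iff
(s_{\frr_1}-\Id_V)(I_{\frr_2}) \subset I_{\frr_1},
$$
part (2) furnishing the implication $\Rightarrow$ and part (3) furnishing $\Leftarrow$. (Part (1) is not needed for the corollary itself.)

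Next I would observe that condition \textbf{(RS$_{III}$)} asks for $\frn{\frr_1}{\frr_2}\subseteq\BZ_k$ for \emph{every ordered pair} $(\frr_1,\frr_2)$ of elements of $\fR$; since the pair ranges over all ordered pairs, this is the same demand as $\frn{\frr_2}{\frr_1}\subseteq\BZ_k$ for every ordered pair (merely a relabelling of the bound variables). Feeding this uniformly into the biconditional above yields that \textbf{(RS$_{III}$)} holds if and only if $(s_{\frr_1}-\Id_V)(I_{\frr_2})\subset I_{\frr_1}$ for every ordered pair $(\frr_1,\frr_2)$, and swapping the names $\frr_1\leftrightarrow\frr_2$ one final time rewrites this exactly as \textbf{(RS$_{III}$)$\,'$}, namely $(s_{\frr_2}-\Id)I_{\frr_1}\subset I_{\frr_2}$ for all $\frr_1,\frr_2\in\fR$.

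There is no real obstacle here; the proof is essentially just an invocation of Lemma~\ref{reflectionideal}. The only point requiring a modicum of care is the index bookkeeping: Lemma~\ref{reflectionideal} is stated with the asymmetric pairing written as $\frn{\frr_2}{\frr_1}$ and the reflection $s_{\frr_1}$ acting on $I_{\frr_2}$, whereas both \textbf{(RS$_{III}$)} and \textbf{(RS$_{III}$)$\,'$} quantify symmetrically over \emph{all} ordered pairs — so one must explicitly use the ``for all ordered pairs'' clause in order to interchange the two indices and match the two formulations. I would state this relabelling step in one sentence and consider the corollary proved.
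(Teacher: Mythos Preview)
Your proposal is correct and is exactly the intended argument: the paper states this as an immediate corollary of Lemma~\ref{reflectionideal} without further proof, and your invocation of parts (2) and (3) together with the relabelling of bound variables is precisely what is meant.
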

 
\subsubsection*{The group $G(\fR)$} \hfill 
\smallskip

\begin{definition}\label{def:G(R)}
        Given a set $\fR$ of \zroot s, we denote by $G(\fR)$ \index{GR@$G(\fR)$ 
        group defined by roots $\fR$}
        the subgroup of $\GL(V)$ generated by the family of reflections 
        $(s_\frr)_{\frr\in \fR}$.
\end{definition}

If $G(\fR)=G$, we say that $\fR$ is a \zroot\ system with group $G$, or a
\zroot\ system for $G$.
\begin{remark}\label{rem:G(R)}
        By Definition~\ref{Zkrootsystems}, (RS$_{II}$), $g(\frr) \in \fR$ 
        whenever $g \in G(\fR)$ and $\frr \in \fR$.
\end{remark}

 \begin{theorem}\label{groupsandroots}\hfill
\begin{enumerate}
        \item
                Let $\fR$ be a \zroot\ system in $V$. Then  $\left(V,G(\fR)\right)$ is an essential finite
                reflection group  -- that is, $V^{G(\fR)} = 0$.
        \item
                Let $(V,G)$ be an essential finite reflection group.
                Then there exists
                a \zroot\ system in $V$ with group $G$.
\end{enumerate}
\end{theorem}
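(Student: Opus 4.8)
The plan is to treat the two parts separately, and in each direction to reduce everything to the orthogonal decomposition of Lemma~\ref{orthodecom} together with Proposition~\ref{ifss} (so the irreducible case is the essential content).

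For part~(1), let $\fR$ be a \zroot\ system in $V$ and put $G = G(\fR)$. First I would check that $G$ is finite: the set $S := \{s_\frr \mid \frr\in\fR\}$ is a set of reflections, it is finite by (RS$_I$), and it is saturated by (RS$_{II}$) together with Remark~\ref{rem:G(R)} (conjugates of the $s_\frr$ are again of the form $s_{\frr'}$). Proposition~\ref{Gfinite} then gives that $G$ is finite. Since $G$ is finite in characteristic zero, its action on $V$ is completely reducible, so Proposition~\ref{ifss} applies: $V = V_S \oplus V^G$, and $V_S = \sum_{\frr\in\fR} L_{s_\frr}$ is exactly the span of the lines $kI_\frr$. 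But (RS$_I$) says precisely that the $I_\frr$, hence the lines $kI_\frr$, span $V$; so $V_S = V$ and therefore $V^G = 0$. Thus $(V,G)$ is an essential finite reflection group.

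For part~(2), let $(V,G)$ be an essential finite reflection group, with $\Rf(G)$ its set of reflections. The idea is: for each reflection $s\in\Rf(G)$, Lemma~\ref{lemma:reflection=genus}(3) says the $(s,\BZ_k)$-roots form a single genus; I would pick one representative $\frr_s$ in that genus, and then I still have the freedom to rescale $\frr_s$ by a fractional ideal $\fa_s$ (replacing $\frr_s$ by $\fa_s\cdot\frr_s$) to shrink $I_{\frr_s}$, hence to make the pairings $\frn{\frr_s}{\frr_{s'}}$ integral. Concretely: fix a $G$-invariant choice of generating line data, take $\fR_0 := \{\frr_s \mid s\in\Rf(G)\}$ for some initial choice of roots, note that the (finitely many) ideals $\frn{\frr}{\frr'}$ for $\frr,\frr'\in\fR_0$ have a common denominator ideal $\fd$, and replace each $\frr_s$ by $\fd\cdot\frr_s$. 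After this rescaling one should check (RS$_{III}$) holds; (RS$_I$) holds because $G$ is essential, so the reflecting lines $L_s$, $s\in\Rf(G)$, span $V$ (their sum is $V^G$-complement, which is all of $V$); and (RS$_{II}$) is the subtle point — I need the rescaled set to be stable under each $s_\frr$, i.e.\ that $g\cdot\frr_s$ lies in $\fR$ for $g\in G$. This is where a genuinely $G$-equivariant construction is needed: rather than choosing $\frr_s$ for each $s$ independently, I would fix one reflection $s_i$ in each $G$-orbit of reflections, choose a root $\frr_i$ for it, and then define $\frr_{g s_i g\inv} := g\cdot\frr_i$ — after checking this is well-defined modulo the stabilizer, which comes down to Proposition~\ref{reflectingpairs}(3) identifying the normalizers $N_G(H_s)=N_G(L_s)$ and $C_G(H_s)$ acting by scalars (so $C_G(H_s)$ permutes the $(s,\BZ_k)$-roots transitively within a genus in a controlled way). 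One then reduces to the irreducible case via Lemma~\ref{orthodecom}/Proposition~\ref{ifss2}, building $\fR$ as the (disjoint) union of \zroot\ systems for the factors $(V_i,G_i)$.

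The main obstacle I expect is the simultaneous achievement of (RS$_{II}$) and (RS$_{III}$): rescaling a single root to make pairings integral is easy, but doing it \emph{compatibly across a whole $G$-orbit} while keeping the set closed under all the reflections $s_\frr$ requires the rescaling ideal to be chosen $G$-equivariantly. The clean way to handle this is to rescale all roots in a given $G$-orbit by the \emph{same} ideal $\fd$ (a common denominator for all pairings $\frn{\frr}{\frr'}$, $\frr,\frr'$ ranging over a $G$-stable finite set), and to note that $g\cdot(\fd\cdot\frr) = \fd\cdot(g\cdot\frr)$ since $G$ acts $k$-linearly and hence commutes with scaling by the \emph{ideal} $\fd$ — so $G$-stability of the set of genera is preserved under the rescaling, and one only needs the initial orbit-representative roots to exist, which is Lemma~\ref{lemma:reflection=genus}(2). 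After that, verifying the three axioms is routine.
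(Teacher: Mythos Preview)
Your argument for part~(1) is correct and matches the paper's: finiteness of $S_\fR$ from (RS$_I$), saturation from (RS$_{II}$), then Proposition~\ref{Gfinite}; you are in fact more explicit than the paper about why $V^{G(\fR)}=0$ (via $V_S = V$ and Proposition~\ref{ifss}).

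Part~(2), however, has a genuine gap. Your plan is to pick a $G$-stable set of roots $\fR_0$ (one per orbit, propagated by the $G$-action) and then rescale every root by a common ideal $\fa$ to achieve (RS$_{III}$). But this rescaling does \emph{nothing} to the pairings: for any $\frr,\frr'$,
\[
\frn{\fa\cdot\frr}{\fa\cdot\frr'} \;=\; \scal{\fa I_\frr}{\fa\tinv J_{\frr'}} \;=\; \fa\fa\inv\scal{I_\frr}{J_{\frr'}} \;=\; \frn{\frr}{\frr'}.
\]
The same computation shows that rescaling all roots in a single $G$-orbit by one ideal leaves all \emph{within-orbit} pairings unchanged, and rescaling different orbits by different ideals still cannot affect those. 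So your mechanism for enforcing (RS$_{III}$) is vacuous, and there is no reason your initial $\fR_0$ satisfies it. The intuition ``shrink $I_{\frr_s}$ to make pairings smaller'' is exactly what fails: shrinking $I$ is inseparable from enlarging $J$ by the inverse factor, and the two effects cancel in $\frn{\cdot}{\cdot}$.

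The paper's construction avoids this by never rescaling. It first fixes a $G$-stable $\BZ_k$-lattice $E\subset V$ (the $\BZ_k$-span of the $G$-orbit of any finite spanning set) and then \emph{defines} $I_s := L_s \cap E$ for each reflection $s$, with $J_s$ determined by $\scal{I_s}{J_s} = \ideal{(1-\zeta_s)}$. Now (RS$_{II}$) is automatic from $g(L_s \cap E) = L_{gsg\inv} \cap E$, and (RS$_{III}$) follows because $E$ is $s$-stable: $(s-\Id_V)(I_{s'}) \subset (s-\Id_V)(E) \subset E \cap L_s = I_s$, which is the reformulation (RS$_{III}$)$'$ of Corollary~\ref{RSIII equivalent}. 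The single $G$-stable lattice $E$ is doing simultaneously the work that your orbit-by-orbit rescaling cannot.
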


\begin{proof} 
         (1) The set of reflections $S_\fR=\{s_\frr \mid \frr \in \fR \}$ is finite, by (RS$_{I}$); 
        is saturated, by (RS$_{II}$); and generates $G(\fR)$, by definition.
        Thus Proposition \ref{Gfinite} applies, and $G(\fR)$ is finite. 
 
        (2) If $X$ is a finite 
        spanning set
        of $V$, the (finite) set 
        $$
                 \{ g(x) \,\mid\, (x\in X)(g\in G)\}
        $$
        generates a finitely generated
        $\BZ_k$-submodule $E$ of $V$, which generates $V$ as a $k$-vector space, 
        and which is $G$-stable. Since $E$ is torsion free 
        (and since $\BZ_k$ is Dedekind), $E$ is a lattice in $V$, namely
        there exists a family $E_1,\dots,E_r$ of rank  one projective 
        $\BZ_k$-modules such that 
        $
                E = E_1\oplus\dots\oplus E_r
                \,.
        $
        Notice that if $L$ is any line in $V$, then $L\cap E \neq 0$. Indeed, since
        $kE = V$, we have $L \subset kE$, thus for each $x \in L, x\neq 0$,
        there is $m\in E$
        and $\lambda,\mu\in \BZ_k, \lambda\mu\neq 0$, 
        such that $x = \frac\lambda{\mu} m$, so
        $\lambda m$ is a nonzero element of $L\cap E$.
 
        Let $W$ be a vector space with a Hermitian pairing with $V$ 
        (for example, the twisted dual $^*V)$.
        For each $s \in \Rf(G)$, with reflecting line $L_s$, dual reflecting line $M_s$, 
        and determinant $\zeta_s$, 
       
\begin{itemize}
        \item
                set $I_s := L_s\cap E$ 
                (so $I_s$ is a rank one $\BZ_k$-submodule of $E$), and
        \item
                denote by $J_s$ the rank one $\BZ_k$-submodule of $W$ in $M_s$
                such that 
                $$
                                \scal{I_s}{J_s} = \ideal{(1-\zeta_s)}
                        \,.
                $$
\end{itemize}
        Then ($I_s,J_s,\zeta_s$) is an ($s,\BZ_k$)-root.
 
        Denote by $\fR_E$ the set of all roots ($I_s,J_s,\zeta_s$) for $s \in \Rf(G)$. 
        It is clear that $G(\fR_E)=G$. 
        It remains to show that $\fR_E$ is a \zroot\ system. 
\smallskip

        \textbf{(RS$_I$)} :
        It is clear that $\fR_E$ is finite. Besides, since $V^G = 0$, we have
        $V = \sum_{s\in\Rf(G)} L_s$, hence the family $(I_s)_{s\in\Rf(G)}$
        generates $V$.
\smallskip
  
        \textbf{(RS$_{II}$)} :
        Let $g\in G$. Whenever $s\in\Rf(G)$, we have $g(L_s) = L_{\lexp gs}$, hence
        $g(L_s\cap E) = L_{\lexp gs}\cap E$, which shows that 
        $g(I_s) =I_{\lexp gs}$. It follows immediately that $\fR_E$ is stable under
        the action of $G$.
\smallskip
 
 \textbf{ (RS$_{III}$)} :
        Let $s_1,s_2 \in \Rf(G)$, and set $\frr_i := (I_{i},J_{i},\zeta_{i})$
        for $i=1,2$.
 
        Since the image of $s_2-\Id_v$ is the line $L_{s_2}$, and
        since $s_2-\Id_V$ sends $E$ to $E$, we see that
        $$
                (s_2-\Id_V)(I_1) \subset E\cap L_{s_2} = I_2
                \,.
        $$
        This condition is equivalent to \textbf{ (RS$_{III}$)} 
        by Corollary \ref{RSIII equivalent}.
\end{proof}

\begin{remark}\label{n=0}
        If $\frr_1$ and $\frr_2$ belong to the same root system $\fR$ and
        $\frn{\frr_1}{\frr_2}=0$ then 
        $s_{\frr_1}s_{\frr_2} = s_{\frr_2}s_{\frr_1}$ and
        $\frn{\frr_2}{\frr_1}=0$. 
        Indeed, by item (1) of Proposition~\ref{commuting2}, which is applicable since $G(\fR)$ is finite, 
        the equality $\frn{\frr_1}{\frr_2}=0$ implies that the
        reflection triples defined by $\frr_1$ and $\frr_2$ are orthogonal.
\end{remark}

        Let $(V,G)$ be a finite reflection group on $k$, assumed to be
        essential (Definition~\ref{def:essential}).
        Let $S$ be a set of reflections of $G$, and for each $s\in S$ let 
        $\frr_s = (I_s,J_s,\zeta_s)$ be a $(s,\BZ_k)$-root (see Definitions~\ref{reflection and dual}).
        We set $\CS := \{\frr_s\mid s\in S\}$.

		A  root  $\frr$  is  said  to be \index{Distinguished root}
		\emph{distinguished} with respect to a \zroot\ system $\fR$ if
		$s_\frr$ is distinguished with respect to $G(\fR)$.
                
\begin{proposition}\label{prop:jeanlemma}       
        Assume that
        \begin{enumerate}
                \item[(a)]
                        the set $S$ generates $G$, 
                \item[(b)]
                        for each $s,t \in S$, the ideal $\scal{I_s}{J_t}$ is integral.
        \end{enumerate}
        Then 
        \begin{enumerate}
                \item
                        the orbit $\fR$ of $\CS$ under $G$ is a
                        \zroot\ system, and $G = G(\fR)$,
                \item
                        if all elements of $S$ are distinguished, then so are the elements of $\fR$,
                        and the map $\fR \ra \Rf(G)$, $\frr \mapsto s_\frr$ is a bijection onto
                        the set of distinguished reflections of $G$, and
                \item
                        if each element of $\CS$ is principal, then $\fR$ is principal as well.
        \end{enumerate}
\end{proposition}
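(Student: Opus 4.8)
The plan is to verify the three axioms for $\fR$ and then handle the distinguished and principal refinements. First I would set $\fR := \{ g\cdot\frr_s \mid g\in G,\ s\in S\}$ and check \textbf{(RS$_I$)}: finiteness is clear since $G$ is finite and $S$ is finite; the family $(I_\frr)_{\frr\in\fR}$ generates $V$ because already the lines $kI_s = L_s$ for $s\in S$ span $V$ (as $S$ generates the essential group $G$, so $\bigcap_{s\in S}H_s = V^G = 0$). For \textbf{(RS$_{II}$)}, observe that for $\frr\in\fR$ we have $s_\frr\in G$ (since $s_{g\cdot\frr_s} = g s_{\frr_s} g\inv \in G$), and $\fR$ is by construction $G$-stable, so $s_\frr\cdot\fR = \fR$. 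The heart of the matter is \textbf{(RS$_{III}$)}: I must show $\frn{\frr_1}{\frr_2}\subseteq\BZ_k$ for \emph{all} $\frr_1,\frr_2\in\fR$, whereas hypothesis (b) only gives this for $\frr_1,\frr_2\in\CS$.

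To bridge that gap I would use Corollary~\ref{RSIII equivalent}, which says \textbf{(RS$_{III}$)} is equivalent to $(s_{\frr_2}-\Id)I_{\frr_1}\subseteq I_{\frr_2}$ for all $\frr_1,\frr_2\in\fR$. The strategy is to first establish, using hypothesis (b) and Lemma~\ref{reflectionideal}, that $(s_t - \Id)(I_s)\subseteq I_t$ for all $s,t\in S$. Then I want to propagate this to all of $\fR$. Write $\frr_i = g_i\cdot\frr_{s_i}$ with $g_i\in G$, $s_i\in S$. Then $s_{\frr_1} = g_1 s_{s_1} g_1\inv$ and $I_{\frr_1} = g_1(I_{s_1})$, $I_{\frr_2} = g_2(I_{s_2})$, $s_{\frr_2} = g_2 s_{s_2} g_2\inv$. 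The cleanest route is to introduce the $\BZ_k$-lattice $E := \sum_{s\in S}\sum_{g\in G} g(I_s)$ (a finitely generated, $G$-stable, full-rank $\BZ_k$-submodule of $V$, hence a lattice since $\BZ_k$ is Dedekind). The key claim is that $I_\frr = (k I_\frr)\cap E = L_{s_\frr}\cap E$ for every $\frr\in\fR$, and that $(s-\Id)(E)\subseteq E$ for every $s\in\Rf(G)$ that occurs as some $s_\frr$. Granting the second point, since $\im(s_{\frr_2}-\Id) = L_{s_{\frr_2}}$ we get $(s_{\frr_2}-\Id)(I_{\frr_1})\subseteq E\cap L_{s_{\frr_2}} = I_{\frr_2}$, which is exactly \textbf{(RS$_{III}$)$\,'$}. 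So the real work is to prove $(s-\Id)(E)\subseteq E$: it suffices to show $(s_{\frr_s}-\Id)(g(I_t))\subseteq E$ for all $s,t\in S$ and $g\in G$, and since $s_{\frr_s}$ permutes the $G$-translates of the $I_t$ up to the integrality established from (b), this reduces to the base case $s,t\in S$ via Lemma~\ref{reflectionideal} together with $G$-equivariance. One must also check $I_\frr$ is genuinely $L_{s_\frr}\cap E$ and not merely contained in it; here I would use Remark~\ref{IzDetermineJ} (that $J$ is determined by $I$ for a fixed reflection) together with a rank/primitivity argument, or alternatively argue that the genus is forced. I expect \textbf{this propagation step} — getting from the finite hypothesis (b) on $S\times S$ to integrality across the whole $G$-orbit, cleanly packaged via the lattice $E$ — to be the main obstacle; it mirrors the construction in the proof of Theorem~\ref{groupsandroots}(2).

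For part (2): if every $s\in S$ is distinguished then, since being distinguished is a property of $s_\frr$ with respect to $G(\fR) = G$ and is preserved under $G$-conjugacy, every $s_\frr$ for $\frr\in\fR$ is distinguished. By Corollary~\ref{distinguishedgenerators}, since $S$ is a generating set of distinguished reflections, every distinguished reflection of $G$ is $G$-conjugate to some $s\in S$, hence equals $s_\frr$ for some $\frr\in\fR$; so $\frr\mapsto s_\frr$ is surjective onto the distinguished reflections. For injectivity: if $s_{\frr_1} = s_{\frr_2}$ then $\frr_1$ and $\frr_2$ are both $(s,\BZ_k)$-roots for the same reflection $s$, hence lie in the same genus by Lemma~\ref{lemma:reflection=genus}(3); but within $\fR$ the roots with reflection $s$ form a single $C_{G}(H_s)$-... — more simply, $I_{\frr_1} = I_{\frr_2} = L_s\cap E$ by the lattice description just established, and $I$ determines the whole root $(I,J,\zeta)$ for a fixed reflection $s$ by Remark~\ref{IzDetermineJ}, so $\frr_1 = \frr_2$. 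For part (3): if each $\frr_s$ is principal, i.e.\ $I_s$ is a free $\BZ_k$-module, then for $g\in G$ the module $g(I_s)$ is also free of rank one, so every root in $\fR$ has free $I$-component; by Lemma--Definition~\ref{lem:principalroot} this means every root of $\fR$ is principal, i.e.\ $\fR$ is principal. These last two parts are routine bookkeeping once the lattice picture from part (1) is in hand.
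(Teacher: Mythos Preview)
Your argument for \textbf{(RS$_{III}$)} has a genuine gap, and it is precisely the step you flag yourself: the equality $I_\frr = L_{s_\frr}\cap E$. One inclusion is clear, but the reverse --- that any element of the $G$-stable lattice $E$ lying on the line $L_{s_\frr}$ already belongs to $I_\frr$ --- does not follow from anything you have at hand. In the construction of Theorem~\ref{groupsandroots}(2) this equality holds \emph{by fiat} because the $I_s$ are \emph{defined} as $L_s\cap E$; here the $I_s$ are given as part of the data $\CS$, and there is no reason the lattice $E=\sum_{g,s}g(I_s)$ should meet each reflecting line in exactly the prescribed submodule. Even assuming \textbf{(RS$_{III}$)} already holds, one only gets $(\zeta_\frr-1)\bigl(L_{s_\frr}\cap E\bigr)\subseteq I_\frr$ by applying $s_\frr-\Id$, which is strictly weaker. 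Your ``rank/primitivity argument'' and the appeal to Remark~\ref{IzDetermineJ} do not close this: that remark says $I$ determines $J$ once the reflection is fixed, but it says nothing about why $I$ should equal $L\cap E$. Since you use the same equality for injectivity in part~(2), that argument inherits the gap.

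The paper takes a completely different and more direct route for \textbf{(RS$_{III}$)}: it proves a small lemma (Lemma~\ref{lem:jeanlemma}) which, using $s(I_t)\subset I_t+\scal{I_t}{J_s}I_s$ from Lemma~\ref{reflectionideal}(1), gives
\[
\scal{I_{sts\inv}}{J_u}\subset \scal{I_t}{J_u}+\scal{I_t}{J_s}\scal{I_s}{J_u}.
\]
Calling a set $T$ of reflections \emph{integral} when all its mutual pairings lie in $\BZ_k$, this formula shows that conjugating by an element of an integral set keeps the set integral; starting from $T=S$ and iterating closes $S$ under $G$-conjugacy while preserving integrality. No lattice identification is needed, and part~(2) then follows directly from Corollary~\ref{conj gens C_G(H)}. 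Your treatment of part~(3) is fine and matches the paper's.
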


\begin{proof}
        (1) The axiom (RS$_I$) follows from the fact that $(V,G)$ is essential, 
        and the axiom (RS$_{II}$)
        is trivial. Let us prove (RS$_{III}$). 

\begin{lemma}\label{lem:jeanlemma}
        Let $s,t,u$ be reflections on $V$, with associated \zroot s respectively
        $\frr_s = (I_s,J_s,\zeta_s)$, $\frr_t$, $\frr_u$. 
        Then
        $$
                \scal{I_{sts\inv}}{J_u} \subset \scal{I_s}{J_u} + \scal{I_t}{J_s}\scal{I_s}{J_u}
                \,.
        $$
\end{lemma}     

\begin{proof}[Proof of \ref{lem:jeanlemma}]
        By item (1) of Lemma \ref{reflectionideal},
        $
                s(I_t) \subset I_t + \scal{I_t}{J_s}I_s
        $
        hence
        $
                \scal{I_{sts\inv}}{J_u} \subset \scal{I_s}{J_u} + \scal{I_t}{J_s}\scal{I_s}{J_u}
                \,.
        $
\end{proof}

We return to the proof of  Proposition~\ref{prop:jeanlemma}.
        Say that a set $T$ of reflections of $G$ is \emph{integral} if $s,t \in T$
        implies $\scal{I_s}{J_t}\in \BZ_k$. The preceding lemma shows that, given any
        integral set $T$ of reflections of $G$, then $T \cup \{sts\inv\mid s,t \in S \}$ is again
        an integral set of reflections of $G$.
        Thus the set of all conjugates of the elements of $S$ is integral,
        which is axiom (RS$_{III}$).

        (2)     
        Since the elements of $\CS$ are all distinguished, the assertion results from
        Corollary~\ref{conj gens C_G(H)}.

        Part (3) is obvious.
\end{proof}

\subsubsection*{Case where $V=W$ and $\scal{\pd}{\pd}$ is positive}\hfill

\begin{remark} \label{L=M}
        If $V=W$ and $\scal{\pd}{\pd}$ is positive, a reflection $s\in G$
        is determined by its root line and its determinant. 
        Indeed, since $s$ preserves 
        $\scal{\pd}{\pd}$, we have $s=s^\vee$, so if $s$ is associated to the 
        reflection triple $(L,M,\zeta)$, we have $L=M$.
\end{remark} 

\begin{lemma}\label{Hermitianformandn}
        Assume $V=W$ and $\scal{\pd}{\pd}$ is positive.
\begin{enumerate}
        \item
                Let $\frr=(I,J,\zeta)$ be a \zroot. 
                Then $J= (1-\zeta^*){\scal II}\inv I$.
        \item
                Let $\frr,\frr'$ be two roots from a \zroot\ system $\fR$.
                Then
                $$
                        \frn{\frr'}\frr^*=
                        \frn\frr\frr^* \frn{\frr'}{\frr'}\inv
                        \frn{\frr'}{\frr^{\prime\vee}}\frn\frr{\frr^\vee}\inv
                        \frn\frr{\frr'}
                        .
                $$
\end{enumerate}
\end{lemma}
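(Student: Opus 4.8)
The plan is to prove (1) by a direct computation with a chosen vector and then to deduce (2) from it, purely formally, by substitution and cancellation of fractional ideals. For (1): since $V=W$ and $\scal{\pd}{\pd}$ is positive, Remark~\ref{L=M} yields $kI=L_{s_\frr}=M_{s_\frr}=kJ$, so in particular $J\subseteq kI$. Choose $v\in kI$ and $w\in kJ$ with $\scal vw=1-\zeta$ (possible since $\scal IJ=\ideal{(1-\zeta)}\neq0$), and let $\fa$ be the fractional ideal with $I=\fa v$ and $J=\fa\tinv w$ furnished by Lemma~\ref{lemma:reflection=genus}(1). Write $w=\mu v$ with $\mu\in k^\times$; since $\scal vv$ is a nonzero real number, the relation $1-\zeta=\scal v{\mu v}=\mu^*\scal vv$ forces $\mu=(1-\zeta^*)/\scal vv$. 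From $I=\fa v$ we also get $\scal II=\fa\fa^*\scal vv$, hence $\scal II^*=\scal II$ and $\scal II\inv=\fa\inv\fa\tinv\scal vv\inv$, so
$$(1-\zeta^*)\,\scal II\inv\,I=(1-\zeta^*)\,\fa\inv\fa\tinv\scal vv\inv\cdot\fa v=\frac{1-\zeta^*}{\scal vv}\,\fa\tinv v=\fa\tinv w=J.$$

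For (2): write $\frr=(I,J,\zeta)$, $\frr'=(I',J',\zeta')$, and recall $\frn{\frr_1}{\frr_2}=\scal{I_1}{J_2}$, so that $\frn\frr{\frr^\vee}=\scal II$, $\frn{\frr'}{\frr^{\prime\vee}}=\scal{I'}{I'}$, $\frn\frr\frr=\ideal{(1-\zeta)}$ and $\frn{\frr'}{\frr'}=\ideal{(1-\zeta')}$. Substitute $J=(1-\zeta^*)\scal II\inv I$ and $J'=(1-(\zeta')^*)\scal{I'}{I'}\inv I'$ from part (1); pulling the ideal scalars out of the second argument of $\scal{\pd}{\pd}$ (which conjugates them) and using $\scal II^*=\scal II$, one obtains
$$\frn{\frr'}\frr=\scal{I'}J=(1-\zeta)\,\scal II\inv\,\scal{I'}I,\qquad\frn\frr{\frr'}=\scal I{J'}=(1-\zeta')\,\scal{I'}{I'}\inv\,\scal I{I'}.$$
Applying $*$ to the first identity and using $\scal{I'}I^*=\scal I{I'}$ gives $\frn{\frr'}\frr^*=\ideal{(1-\zeta^*)}\,\scal II\inv\,\scal I{I'}$. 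For the right-hand side of the asserted formula, $\frn\frr\frr^*=(\ideal{(1-\zeta)})^*=\ideal{(1-\zeta^*)}$, and substituting the second identity above turns it into
$$\frn\frr\frr^*\,\frn{\frr'}{\frr'}\inv\,\frn{\frr'}{\frr^{\prime\vee}}\,\frn\frr{\frr^\vee}\inv\,\frn\frr{\frr'}=\ideal{(1-\zeta^*)}\,(\ideal{(1-\zeta')})\inv\,\scal{I'}{I'}\,\scal II\inv\,(1-\zeta')\,\scal{I'}{I'}\inv\,\scal I{I'};$$
cancelling $(1-\zeta')(\ideal{(1-\zeta')})\inv=\ideal1$ and $\scal{I'}{I'}\scal{I'}{I'}\inv=\ideal1$ leaves exactly $\ideal{(1-\zeta^*)}\,\scal II\inv\,\scal I{I'}$, so the two sides agree. (Only the fact that $\frr,\frr'$ are \zroot s is used, not the root-system axioms.)

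The whole argument is a routine computation; the single point requiring care is the bookkeeping of complex conjugation: the semilinearity of $\scal{\pd}{\pd}$ in the $W$-variable, so that ideal coefficients acquire a $*$ when pulled out of the second slot; the symmetry $\scal IJ^*=\scal JI$ and hence the $*$-invariance of $\scal II$; and $(\ideal{(1-\zeta)})^*=\ideal{(1-\zeta^*)}$ — together with keeping track of which fractional ideals are genuinely invertible, namely $\ideal{(1-\zeta')}$ (nonzero since $\zeta'\neq1$) and $\scal II$ (nonzero by positivity of the form). I expect tracking these conjugations correctly to be the only, and mild, obstacle.
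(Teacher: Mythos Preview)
Your proof is correct and follows essentially the same line as the paper's: part (1) is identical (choose $v,w$, write $w=\lambda v$, solve for $\lambda$ via semilinearity, and rewrite $J$ in terms of $I$ and $\scal II$), and for part (2) the paper simply says it ``follows from (1) observing that $\frn\frr\frr=\ideal{(1-\zeta)}$ and $\frn\frr{\frr^\vee}=\scal II$,'' which is exactly the substitution-and-cancellation you carry out in full. Your explicit bookkeeping of the conjugation when pulling ideal scalars through the second slot, and your remark that only the \zroot\ condition (not the axioms (RS$_{II}$), (RS$_{III}$)) is needed, are both accurate refinements of the paper's terse treatment.
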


\begin{proof}\hfill
 
        (1) 
        Choose $v\in kI,w\in kJ$ such that $\scal vw = 1-\zeta$.
        There is a fractional ideal $\fa$ such that $I=\fa v$
        and $J=\fa\tinv w$. By Remark~\ref{L=M} there exists $\lambda\in k$ such
        that $w =\lambda v$ thus $1-\zeta=\lambda^*\scal vv$
        and $w=\frac{(1-\zeta^*)}{\scal vv}  v$.
        Thus 
        $$
                J=\fa\tinv w=(1-\zeta^*)\dfrac v{\fa^*\scal vv}
                =(1-\zeta^*)\dfrac{\fa v}{\fa\fa^*\scal vv}
                =(1-\zeta^*) \dfrac I{\scal II} 
                .
        $$

        Part (2) 
        follows from (1) observing that for a root $\frr=(I,J,\zeta)$
        we have $\frn\frr\frr=\ideal{(1-\zeta)}$ and $\frn\frr{\frr^\vee}=\scal II$.
\end{proof}

\begin{remark}
        In the case where $k\subset\BR$, and 
        $\frr=(I,J,-1)$, $\frr'=(I',J',-1)$,
        Lemma \ref{Hermitianformandn} (2) reduces to 
        $\frn{\frr'}\frr= \scal{I'}{I'} {\scal II}\inv \frn\frr{\frr'}$
        which generalizes the case of finite Coxeter groups
        \cite[Chap 6, \S 1, no. 1.1. formula (9)]{bou}.
\end{remark}

\subsubsection*{Some properties of a root system}\hfill
\smallskip

        We return to the general case, where $W$ need not be the same as $V$.

        Let $\fR$ be a \zroot\ system. Recall that 
        $G(\fR)$ (or simply $G$) denotes the group generated by the reflections 
        defined by the elements of $\fR$. 
 
\begin{proposition}\label{ArrR}
        Let $\fR$ be a \zroot\ system. 
        Then, for any reflecting hyperplane $H$ of $G(\fR)$, 
        the fixator of $H$, $C_{G(\fR)}(H)$, is generated by 
        the set of reflections $s_\frr$ (where $\frr\in\fR$) with 
        reflecting hyperplane $H$.
\end{proposition}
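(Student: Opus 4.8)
The plan is to obtain this as an immediate consequence of Corollary~\ref{conj gens C_G(H)}, applied to the generating set of reflections $S_\fR := \{s_\frr \mid \frr \in \fR\}$. Two preliminary observations make the reduction work. First, $S_\fR$ generates $G := G(\fR)$ by Definition~\ref{def:G(R)}, and $(V,G)$ is a finite reflection group by Theorem~\ref{groupsandroots}~(1), so Corollary~\ref{conj gens C_G(H)} is available for this choice of generating set. Second, the set $\CO$ of $G$-conjugates of elements of $S_\fR$ coincides with $S_\fR$ itself: for $g\in G$ and $\frr\in\fR$ we have $g\,s_\frr\,g\inv = s_{g\cdot\frr}$, and $g\cdot\frr\in\fR$ by axiom (RS$_{II}$) of Definition~\ref{Zkrootsystems} (see Remark~\ref{rem:G(R)}); the reverse inclusion $S_\fR\subseteq\CO$ is trivial.

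Granting these, Corollary~\ref{conj gens C_G(H)} tells us that for every reflecting hyperplane $H$ of $G$ the set $\CO\cap C_G(H) = S_\fR\cap C_G(H)$ generates $C_G(H)$. It then only remains to identify this set. Recall that $C_G(H) = \{1\}\cup\{g\in G\mid\ker(g-1)=H\}$; hence a reflection $s_\frr$ lies in $C_G(H)$ if and only if $H\subseteq H_{s_\frr}=\ker(s_\frr-1)$, and since $H$ and $H_{s_\frr}$ are both hyperplanes this holds precisely when $H_{s_\frr}=H$. Therefore $S_\fR\cap C_G(H) = \{s_\frr\mid \frr\in\fR,\ H_{s_\frr}=H\}$, and this set generates $C_G(H)$, which is exactly the assertion.

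I do not expect any genuine obstacle here: the substance is already contained in Corollary~\ref{conj gens C_G(H)} (which itself rests on the description of the linear characters of a finite reflection group). The only points needing care are the purely formal ones above — that the conjugation-closure of $S_\fR$ coincides with $S_\fR$, which is precisely where the root-system axiom (RS$_{II}$) enters, and the elementary remark that a reflection fixing the hyperplane $H$ pointwise must have $H$ as its reflecting hyperplane.
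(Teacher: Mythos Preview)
Your proof is correct and follows essentially the same approach as the paper: both deduce the result from Corollary~\ref{conj gens C_G(H)} by observing (via axiom (RS$_{II}$), cf.\ Remark~\ref{rem:G(R)}) that the generating set $S_\fR$ is already closed under $G(\fR)$-conjugation. You have merely spelled out in more detail the identification of $S_\fR\cap C_G(H)$ with the reflections $s_\frr$ having $H_{s_\frr}=H$, which the paper leaves implicit.
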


\begin{proof}
        It is a consequence of Corollary \ref{conj gens C_G(H)}.
        Indeed, it suffices to notice (see Remark \ref{rem:G(R)})
        that, for $\frr \in \fR$ and $g\in G(\fR)$,
        then $gs_\frr g\inv = s_{g(\frr)}$ and $g(\frr) \in \fR$.
\end{proof}

\begin{definition}
        Let $\fR$ be a \zroot\ system.
\begin{enumerate}\label{variousdefs}
        \item\label{def:reduced}
                We say that $\fR$ is 
                 \index{Reduced root system}\emph{reduced} if the map
                $ \frr \mapsto s_\frr $ is injective.
        \item\label{def:complete}
                We say that $\fR$ is
                \index{Complete root system}\emph{complete}
                if the map      $ \frr \mapsto s_\frr $ is surjective onto $\Rf(G(\fR))$.
        \item\label{def:distinguishedsystem}
                We say that $\fR$ is
                \index{Distinguished root system}\emph{distinguished} if
        \begin{enumerate}
                \item 
                                it consists of distinguished roots, and
                \item 
                                it is reduced.
        \end{enumerate}
\end{enumerate}
\end{definition}

\begin{remarks}\hfill
\begin{enumerate} 
        \item If all $s_\frr$ have order 2 (for example, the real reflection groups and 
        the infinite family $G(e,e,r)$) then:
                \begin{itemize}
                        \item
                                every distinguished root system is complete (and reduced), and
                        \item 
                                every complete and reduced root system is distinguished.
                \end{itemize}
        \item In a reduced root system, distinct roots have different genus.
\end{enumerate}
\end{remarks}

\begin{proposition}\label{prop:distinguishedbijection}
        Let $\fR$ be a distinguished \zroot\ system. Then the map
        $\frr \mapsto s_\frr$ is a bijection from $\fR$ onto the set of distinguished
        reflections of $G(\fR)$.
\end{proposition}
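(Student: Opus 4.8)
The plan is to deduce this directly from Corollary~\ref{distinguishedgenerators}, using the two defining features of a distinguished root system together with the $G(\fR)$-stability of $\fR$. First I would record that the map $\frr\mapsto s_\frr$ already has the right target: since $\fR$ is distinguished, it consists of distinguished roots, so each $s_\frr$ (for $\frr\in\fR$) is by definition a distinguished reflection of $G(\fR)$; hence the image of $\frr\mapsto s_\frr$ is contained in the set of distinguished reflections of $G(\fR)$. Injectivity of the map is immediate, being precisely the condition that $\fR$ is reduced, which is part of the definition of a distinguished root system. So the only thing to prove is surjectivity onto the set of distinguished reflections.

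For surjectivity, set $S=\{s_\frr\mid\frr\in\fR\}$. By the very definition of $G(\fR)$ this is a generating set of reflections of $G(\fR)$, and by the previous paragraph it is a set of \emph{distinguished} reflections. Corollary~\ref{distinguishedgenerators} then applies and tells us that every distinguished reflection $s$ of $G(\fR)$ is conjugate, within $G(\fR)$, to some element of $S$: write $s=g\,s_\frr\,g\inv$ with $\frr\in\fR$ and $g\in G(\fR)$. Now, as already observed in the proof of Proposition~\ref{ArrR}, one has $g\,s_\frr\,g\inv=s_{g(\frr)}$, and axiom (RS$_{II}$) (see Remark~\ref{rem:G(R)}) gives $g(\frr)\in\fR$. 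Hence $s=s_{g(\frr)}$ lies in the image of $\frr\mapsto s_\frr$, which proves surjectivity and completes the argument.

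I do not expect any genuine obstacle here: all the substance is packaged in Corollary~\ref{distinguishedgenerators} (itself a consequence of Corollary~\ref{conj gens C_G(H)}), the rest being the bookkeeping identities $g\,s_\frr\,g\inv=s_{g(\frr)}$ and $g(\frr)\in\fR$. In fact the statement is exactly the instance of Proposition~\ref{prop:jeanlemma}(2) obtained by taking $S=\{s_\frr\mid\frr\in\fR\}$ and $\CS=\fR$, so an equally valid route is simply to invoke that proposition once one checks its hypotheses (a) and (b) hold for this choice: (a) holds since $\fR$ generates $G(\fR)$, and (b) holds since $\fR$ satisfies (RS$_{III}$), i.e. $\frn{\frr}{\frr'}=\scal{I_\frr}{J_{\frr'}}\subseteq\BZ_k$ for all $\frr,\frr'\in\fR$.
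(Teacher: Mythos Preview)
Your proof is correct. The paper's own argument is even more terse: it reduces to showing that for every reflecting hyperplane $H$ of $G(\fR)$ there is some $\frr\in\fR$ with $s_\frr$ equal to the distinguished reflection of $C_{G(\fR)}(H)$, and this is read off directly from Proposition~\ref{ArrR} (since the $s_\frr$'s with hyperplane $H$ generate $C_{G(\fR)}(H)$ and are already distinguished, one of them must be \emph{the} distinguished generator). You instead invoke Corollary~\ref{distinguishedgenerators} and then use the $G(\fR)$-stability of $\fR$ to pull back along the conjugation. Both routes are one-line consequences of Corollary~\ref{conj gens C_G(H)}, so the difference is cosmetic; your version has the minor advantage of making explicit where axiom (RS$_{II}$) enters, while the paper's avoids the conjugation bookkeeping by working hyperplane-by-hyperplane. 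Your closing remark that this is also an instance of Proposition~\ref{prop:jeanlemma}(2) is accurate as well.
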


\begin{proof}
        It suffices to prove that, whenever $H$ is a reflecting hyperplane of $G(\fR)$,
        there exists $\frr \in \fR$ such that $s_\frr$ is the distinguished reflection of
        $C_{G(\fR)}(H)$. This results from \ref{ArrR}.
\end{proof}

       The following lemma follows the lines of \cite[Remark 20]{nebe}.

\begin{proposition}\label{nebegen}
        Let $(V,G)$ be a $k$-reflection group. 
        Let $\fR$ be a reduced and complete \zroot\ system (resp.~a
        distinguished \zroot\ system)
        with respect to $G$.
        Assume that $\fR_1,\dots,\fR_m$ are the orbits of $G$ on $\fR$. 
        
        Then any other reduced and complete \zroot\ system 
        (resp.~distinguished \zroot\ system) with group $G$ is of the form
        $\fa_1\fR_1\cup\cdots\cup\fa_m\fR_m$ where $\fa_1,\dots,\fa_m$
        are some fractional ideals.
\end{proposition}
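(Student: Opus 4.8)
The plan is to turn the problem into a reflection-by-reflection comparison. Reducedness and completeness of $\fR$ (resp.\ Proposition~\ref{prop:distinguishedbijection} in the distinguished case) mean that $\frr\mapsto s_\frr$ is a bijection from $\fR$ onto $\Rf(G)$ (resp.\ onto the set of distinguished reflections of $G$); the same holds for any competitor $\fR'$. So I would first fix such a system $\fR'$ with $G(\fR')=G$ and, for each reflection $s$ in the relevant set, let $\frr_s\in\fR$ and $\frr'_s\in\fR'$ be the unique roots with $s_{\frr_s}=s_{\frr'_s}=s$. Since both are $(s,\BZ_k)$-roots, Lemma~\ref{lemma:reflection=genus}(3) provides a fractional ideal $\fb_s$ with $\frr'_s=\fb_s\cdot\frr_s$. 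One should record at this point that such a $\fb_s$ is unique: if $\fb\cdot\frr=\fc\cdot\frr$ and the rank-one module $I_\frr$ is written as $\fd v$, then $\fb\fd v=\fc\fd v$, so $\fb\fd=\fc\fd$ and hence $\fb=\fc$ (as $\BZ_k$ is Dedekind).

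Next I would prove that $\fb_s$ is constant on $G$-orbits. Three facts are needed: $\fR$ and $\fR'$ are $G$-stable (by (RS$_{II}$) and Remark~\ref{rem:G(R)}); conjugation satisfies $g s_\frr g\inv=s_{g(\frr)}$ (as in the proof of Proposition~\ref{ArrR}); and the $\GL(V)$-action commutes with fractional-ideal scaling, i.e.\ $g\cdot(\fa\cdot\frr)=\fa\cdot(g\cdot\frr)$, which follows at once from the $k$-linearity of $g$ and of $g^\vee$. Then for $g\in G$, uniqueness of $\frr_s$ and $\frr'_s$ forces $g(\frr_s)=\frr_{gsg\inv}$ and $g(\frr'_s)=\frr'_{gsg\inv}$; applying $g$ to $\frr'_s=\fb_s\cdot\frr_s$ yields $\frr'_{gsg\inv}=\fb_s\cdot\frr_{gsg\inv}$, whence $\fb_{gsg\inv}=\fb_s$ by the uniqueness just established.

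Finally, for each $G$-orbit $\fR_i$ I would let $\fa_i$ be the common value of $\fb_s$ over those $s$ with $\frr_s\in\fR_i$. Since $\frr\mapsto\frr'_{s_\frr}$ is a bijection from $\fR$ onto $\fR'$ (being the composite of the two bijections through $\Rf(G)$, resp.\ through the set of distinguished reflections), this gives precisely $\fR'=\fa_1\fR_1\cup\cdots\cup\fa_m\fR_m$. The distinguished case runs verbatim, using in addition that the $G$-conjugate of a distinguished root is again distinguished, so that the set of distinguished roots is itself the disjoint union of the $G$-orbits $\fR_i$. I do not expect a genuine obstacle: the one point that needs care is arranging the above so that $\fb_s$ is visibly well defined and $G$-equivariant, and once the uniqueness of the genus-relating ideal and the commutation $g\cdot(\fa\cdot\frr)=\fa\cdot(g\cdot\frr)$ are in hand, the rest is formal.
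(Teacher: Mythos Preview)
Your proposal is correct and follows essentially the same argument as the paper: match each reflection $s$ to its unique roots $\frr_s\in\fR$ and $\frr'_s\in\fR'$, invoke Lemma~\ref{lemma:reflection=genus} to get a fractional ideal relating them, and use $G$-stability of both systems together with $g s_\frr g\inv=s_{g(\frr)}$ to see this ideal is constant on orbits. You are in fact a bit more explicit than the paper on two points it takes for granted---the uniqueness of the genus-relating ideal and the commutation $g\cdot(\fa\cdot\frr)=\fa\cdot(g\cdot\frr)$---which is fine.
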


\begin{proof}
        We give the proof for reduced and complete systems (the proof for the distinguished ones
        is similar).
        
        Let $\fR'$ be another reduced and complete root system with respect to $G$.
        If $s$ is a reflection of $G$, and if
        $\frr_s \in \fR$ and $\frr'_s \in \fR'$ are the roots associated with $s$, we know 
        by Lemma~\ref{lemma:reflection=genus} that there exists a fractional ideal $\fa_s \neq 0$ such that
        $\frr'_s = \fa_s\cdot\frr_s$.
        
        Choose $i$ such that $1\leq i\leq m$ and choose a reflection $s$ such that $\frr_s \in \fR_i$.
        For $g\in G$, $g(\frr_s)$ is a \zroot\ attached to $g(s)$, hence by hypothesis
        we have  $g(\frr_s) = \frr_{gsg\inv}$. Similarly, $g(\frr'_s) = \frr'_{gsg\inv}$. Hence we see
        that $\fa_s = \fa_{g(s)}$, thus $\fa_s$ depends only on $i$. We set $\fa_i := \fa_s$. 
        This shows that if $\fR_1,\dots,\fR_m$ are the orbits of $G$ on $\fR$, then
        for each $i=1,\dots,m$, there exists a fractional ideal $\fa_i$ such that
        $\fa_1\fR_1,\dots,\fa_m\fR_m$ are the orbits of $G$ on $\fR'$.
\end{proof}
\smallskip

\subsubsection*{Distinguishing and completing root systems} \hfill 
\smallskip

\begin{proposition}\label{ReducingR}
        Any \zroot\ system $\fR$ contains a reduced subsystem with group
        $G(\fR)$.
\end{proposition}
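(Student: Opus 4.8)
The plan is to construct a reduced subsystem of $\fR$ by choosing, for each reflection $s$ in $G(\fR)$ that arises as $s_\frr$ for some $\frr \in \fR$, exactly one root mapping to it — but in a way that is compatible with the action of $G(\fR)$, so that axiom (RS$_{II}$) is preserved. First I would note that, by Lemma~\ref{lemma:reflection=genus}(2), the map $\frr \mapsto s_\frr$ is well-defined on $\fR$; let $S' := \{ s_\frr \mid \frr \in \fR \} \subseteq \Rf(G(\fR))$ be its image, which by (RS$_{II}$) and the relation $g s_\frr g\inv = s_{g(\frr)}$ (valid for $g \in G(\fR)$, $g(\frr)\in\fR$, as in Remark~\ref{rem:G(R)}) is a saturated set of reflections generating $G(\fR)$. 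So it suffices to pick one root over each $s \in S'$, $G(\fR)$-equivariantly.

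The key step is the equivariant selection. I would partition $S'$ into $G(\fR)$-orbits, pick an orbit representative $s_i$ for each orbit $i$, and pick any root $\frr_i \in \fR$ with $s_{\frr_i} = s_i$. For an arbitrary $s \in S'$ in the orbit of $s_i$, write $s = g s_i g\inv$ for some $g \in G(\fR)$ and set $\frr_s := g(\frr_i)$; this lies in $\fR$ by (RS$_{II}$) and satisfies $s_{\frr_s} = s$. The only thing to check is that $\frr_s$ is independent of the choice of $g$: if $g s_i g\inv = h s_i h\inv$ then $g\inv h \in C_{G(\fR)}(s_i)$, so I need $c(\frr_i) = \frr_i$ for every $c$ centralizing $s_i$. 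Now $c$ fixes the reflecting line $L_{s_i} = k I_{\frr_i}$ and the reflecting hyperplane $H_{s_i}$, and since $\fR$ is finite, $C_{G(\fR)}(s_i)$ is a finite subgroup of $\GL(V)$ acting on the line $L_{s_i}$, hence by a character $\chi$. Here I must be a little careful: $c$ acts on $I_{\frr_i}$ by that character $\chi(c) \in k^\times$, and since $c$ has finite order $\chi(c)$ is a root of unity; but $c(I_{\frr_i}) = \chi(c) I_{\frr_i}$ need not equal $I_{\frr_i}$ unless $\chi(c) \in \BZ_k^\times$. However $c(\frr_i) = \chi(c)\cdot\frr_i$ is again in $\fR$ with the same associated reflection $s_i$, so there are only finitely many roots over $s_i$ in $\fR$, all in one genus (Lemma~\ref{lemma:reflection=genus}(3)); the map $c \mapsto \chi(c)$ lands in a finite subgroup of $k^\times$, and $c(I_{\frr_i}) = I_{\frr_i}$ exactly when $\chi(c) I_{\frr_i} = I_{\frr_i}$.

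The main obstacle, then, is that the naive equivariant choice may fail to be well-defined: a centralizing element $c$ could genuinely move $\frr_i$ to a different root $\chi(c)\cdot\frr_i$ in $\fR$. I would handle this by choosing $\frr_i$ more cleverly — namely, choose $\frr_i$ so that its genus ideal, i.e.\ the fractional ideal $\fa$ with $I_{\frr_i} = \fa v$ for a fixed $v \in L_{s_i}$, is invariant under the (finite) group $\{\chi(c) \mid c \in C_{G(\fR)}(s_i)\}$; equivalently, among the finitely many roots in $\fR$ lying over $s_i$, the centralizer $C_{G(\fR)}(s_i)$ permutes them, so I simply take $\frr_i$ to lie in a $C_{G(\fR)}(s_i)$-fixed point of that finite set if one exists — and one does, because the set of $(s_i,\BZ_k)$-roots in $\fR$ is a single genus and $C_{G(\fR)}(s_i)$ acts on it through the finite group $\bmu$ of roots of unity in $k$ (the determinants being finite), so a fixed point is obtained by taking $I_{\frr_i}$ to be a sum, $\sum_{c} c(I_{\frr_i^0})$, over the orbit of any initial choice $\frr_i^0$ — this sum is still a rank-one $\BZ_k$-module in $L_{s_i}$, hence defines an $(s_i,\BZ_k)$-root, and it is fixed by $C_{G(\fR)}(s_i)$. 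Then define $\fR'$ to be the $G(\fR)$-orbit of $\{\frr_1,\dots,\frr_r\}$; it is finite (subset of $\fR$), its $I$'s still span $V$ (each $L_{s_i}$ is hit), it is $s_\frr$-stable by construction, and (RS$_{III}$) is inherited from $\fR$. By construction $\frr \mapsto s_\frr$ is injective on $\fR'$ with image $S'$, so $G(\fR') = G(\fR)$ and $\fR'$ is reduced, completing the proof.
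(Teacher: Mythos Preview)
Your overall strategy—pick one root per $G(\fR)$-orbit of reflections in $S'$, take $G(\fR)$-orbits, and check that the resulting set is a reduced subsystem with the same group—is exactly the paper's approach. The structure is fine; the issue is where you locate the difficulty.

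You correctly isolate the well-definedness question: for $c\in C_{G(\fR)}(s_i)$, does $c(\frr_i)=\frr_i$? You observe that $c$ stabilises the line $L_{s_i}$ and acts on it via a scalar $\chi(c)$ which, by finiteness of $G(\fR)$, is a root of unity in $k$. You then write that ``$c(I_{\frr_i})=\chi(c)I_{\frr_i}$ need not equal $I_{\frr_i}$ unless $\chi(c)\in\BZ_k^\times$''. But this \emph{is} automatic: any root of unity in a number field $k$ is an algebraic integer whose inverse is also an algebraic integer, hence lies in $\BZ_k^\times$. So $\chi(c)I_{\frr_i}=I_{\frr_i}$ as $\BZ_k$-submodules of $V$ (and likewise for $J_{\frr_i}$), giving $c(\frr_i)=\frr_i$ immediately. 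There is no obstacle, and no averaging is needed. The paper encodes the same fact slightly differently: if $g\cdot\frr=\fa\cdot\frr$ with $g\in G(\fR)$ of finite order $n$, then $\fa^n=\BZ_k$, which forces $\fa=\BZ_k$ by unique factorisation of ideals in the Dedekind domain $\BZ_k$.

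Your averaging workaround, besides being unnecessary, has a genuine gap: you form $I:=\sum_c c(I_{\frr_i^0})$ and note it gives an $(s_i,\BZ_k)$-root fixed by the centraliser, but you never show this root lies in $\fR$—and the proposition asks for a subsystem \emph{contained in} $\fR$. (In fact each summand already equals $I_{\frr_i^0}$ by the unit observation above, so the sum is $I_{\frr_i^0}$ and you land back in $\fR$; but that is precisely the point you missed.) Once you replace the averaging paragraph with the one-line observation that roots of unity are units, your argument is complete and coincides with the paper's.
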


\begin{proof}
	Let $[\Arr(G(\fR))/G(\fR)]$ be a complete set of representatives of orbits of $G(\fR)$ on the
	set $\Arr(G(\fR))$ of reflecting hyperplanes of $G(\fR)$. For each $H \in [\Arr(G(\fR))/G(\fR)]$,
	let us set 
	$$
		\Rf_\fR(H) := \{ s_\frr \mid (\frr \in \fR)\, (s_\frr \in C_{G(\fR)}(H))\}
		\,.
	$$
        By Corollary~\ref{conj gens C_G(H)}, 
        we know that $\Rf_\fR(H)$ generates $C_{G(\fR)}(H)$.
        For each $H \in [\Arr(G(\fR))/G(\fR)]$, choose a subset $\Rf^0_\fR(H)$ of $\Rf_\fR(H)$
        which is minimal subject to being a generating subset of $C_{G(\fR)}(H)$. 
        For each element of
        $\Rf^0_\fR(H)$, we choose a corresponding $\frr \in \fR$, so we get a set
        $\{\frr_1,\dots,\frr_n\}$ of elements of $\fR$.
        
        Define $\fR'$ to be the union of the $G(\fR)$-orbits of $\{\frr_1,\dots,\frr_n\}$.
        We claim that $\fR'$ is a reduced root system with group $G(\fR)$.

        It is clear that  $\fR'$ is a root system with
        group $G(\fR)$. Let us prove that $\fR'$ is reduced.
        Suppose that $\frr$ and $g\cdot\frr=\fa\cdot\frr$ are in $\fR'$ for
        $g\in G(\fR)$ for some fractional ideal $\fa$, then $g^n\cdot\frr=\fa^n\cdot\frr=\frr$
        for some $n$ which shows that $\fa^n=\BZ_k$ which implies 
        (since $\BZ_k$ is a Dedekind domain, hence fractional ideals have a unique
        decomposition in prime ideals) that $\fa=\BZ_k$.
\end{proof}

\begin{proposition}\label{RestrictingR}
        Let $\fR$ be a complete \zroot\ system. Then $\fR$ contains
        a distinguished subsystem with group $G(\fR)$.
\end{proposition}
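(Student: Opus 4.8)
The plan is to produce the desired distinguished subsystem as the $G(\fR)$-orbit of a carefully chosen finite subset $\CS$ of $\fR$, and then to extract every property we need from Proposition~\ref{prop:jeanlemma}. Throughout, write $G:=G(\fR)$; by Theorem~\ref{groupsandroots}(1) the pair $(V,G)$ is a finite essential reflection group, and by axiom~(RS$_{II}$) together with Remark~\ref{rem:G(R)} the set $\fR$ is $G$-stable.

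First I would take $S$ to be the set of all distinguished reflections of $G$. This is finite: there is exactly one distinguished reflection per reflecting hyperplane, each $C_G(H)$ being cyclic with canonical generator that distinguished reflection (see the remark following Definition~\ref{def:distinguished}). Moreover $S$ generates $G$: each $s_\frr$ ($\frr\in\fR$) lies in the cyclic group $C_G(H_{s_\frr})$, which is generated by a distinguished reflection, and the family $(s_\frr)_{\frr\in\fR}$ generates $G$. Since $\fR$ is \emph{complete}, for every $s\in S$ I may pick a root $\frr_s\in\fR$ with $s_{\frr_s}=s$; such a $\frr_s$ is then automatically an $(s,\BZ_k)$-root (Definitions~\ref{reflection and dual}). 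Set $\CS:=\{\,\frr_s\mid s\in S\,\}\subseteq\fR$, and let $\fR'$ be the $G$-orbit of $\CS$; since $\fR$ is $G$-stable, $\fR'\subseteq\fR$.

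Next I would apply Proposition~\ref{prop:jeanlemma} to the data $(V,G,S,(\frr_s)_{s\in S})$. Hypothesis~(a) of that proposition holds by the choice of $S$. Hypothesis~(b) is the one place where completeness of $\fR$ does real work: since $\frr_s,\frr_t\in\fR$, axiom~(RS$_{III}$) for $\fR$ gives $\scal{I_s}{J_t}=\frn{\frr_s}{\frr_t}\subseteq\BZ_k$ for all $s,t\in S$. Part~(1) of the proposition then says that $\fR'$ is a \zroot\ system and $G(\fR')=G=G(\fR)$; and part~(2), available because every element of $S$ is distinguished, says that $\fR'$ consists of distinguished roots and that $\frr\mapsto s_\frr$ is a bijection from $\fR'$ onto the set of distinguished reflections of $G$. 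In particular this last map is injective, so $\fR'$ is reduced, hence $\fR'$ is a distinguished \zroot\ system in the sense of Definition~\ref{variousdefs}. Since $\fR'\subseteq\fR$ and $G(\fR')=G(\fR)$, this is the required subsystem.

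The step with genuine content is the reducedness of $\fR'$, which here is absorbed into Proposition~\ref{prop:jeanlemma}(2); everything else (finiteness of $\fR'$, axiom~(RS$_I$) from essentiality of $G(\fR')=G(\fR)$, axiom~(RS$_{II}$) from $G$-stability of $\fR'$, axiom~(RS$_{III}$) from $\fR'\subseteq\fR$) is routine. If one preferred to see reducedness directly, the key observation is that if $g,g'\in G$ satisfy $s_{g\cdot\frr_s}=s_{g'\cdot\frr_s}$ then, writing $g=g'h$ with $h\in G$, the element $h$ commutes with $s_{\frr_s}$, so $(h,h^\vee)$ stabilizes the lines $L_{s_{\frr_s}}$ and $M_{s_{\frr_s}}$; as $G$ is finite, $h$ acts on the line $L_{s_{\frr_s}}$ as multiplication by a root of unity of $k$, which is a unit of $\BZ_k$, hence $h$ fixes the rank one $\BZ_k$-module $I_s$, and likewise $h^\vee$ fixes $J_s$, so $h\cdot\frr_s=\frr_s$ and $g\cdot\frr_s=g'\cdot\frr_s$.
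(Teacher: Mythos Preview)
Your approach via Proposition~\ref{prop:jeanlemma} is natural, but there is a genuine gap in the reducedness step. You take $S$ to be \emph{all} distinguished reflections and pick, for each $s\in S$, an arbitrary $\frr_s\in\fR$ with $s_{\frr_s}=s$. If $s,t\in S$ are conjugate, say $t=gsg^{-1}$, then $g\cdot\frr_s\in\fR'$ is a $(t,\BZ_k)$-root, but nothing forces $g\cdot\frr_s=\frr_t$ when $\fR$ is not reduced. Your direct argument only treats $g\cdot\frr_s$ versus $g'\cdot\frr_s$ for the \emph{same} $s$, and the one-line proof of Proposition~\ref{prop:jeanlemma}(2) in the paper establishes only that the image is the set of distinguished reflections; the injectivity implicit in ``bijection'' does not follow and in fact fails. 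Concretely, for $G=B_2$ over $\BQ$ (notation of \S\ref{subsecB2}) the set
\[
\fR=\bigl\{(\BZ e_i,2\BZ e_i',-1),\ (2\BZ e_i,\BZ e_i',-1):i=1,2\bigr\}\cup\bigl\{(\BZ(e_1\pm e_2),\BZ(e_1'\pm e_2'),-1)\bigr\}
\]
is a complete $\BZ$-root system; choosing $\frr_s=(\BZ e_1,2\BZ e_1',-1)$ but $\frr_{tst}=(2\BZ e_2,\BZ e_2',-1)$ for the conjugate reflections $s$ and $tst$ makes $\fR'=\fR$, which is not reduced.

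The paper sidesteps this by not making choices: it takes $\fR_0$ to be the set of \emph{all} distinguished roots of $\fR$, checks directly that $\fR_0$ is a root system with group $G(\fR)$, and then applies Proposition~\ref{ReducingR} to extract a reduced subsystem. The point is that Proposition~\ref{ReducingR} selects only one root per $G$-orbit of reflecting hyperplanes, so the cross-orbit comparison never arises and the Dedekind argument ($\fa^n=\BZ_k\Rightarrow\fa=\BZ_k$) suffices. Your argument is easily repaired the same way: take $S$ to contain exactly one distinguished reflection per conjugacy class; then any two elements of $\fR'$ with the same underlying reflection lie in the $G$-orbit of a single $\frr_s$, and your final paragraph goes through verbatim.
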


\begin{proof}
        Denote by $\fR_0$ the set of distinguished roots of $\fR$.    
        Since $\fR$ is complete, any distinguished
        reflection of $G(\fR)$ is of the form $s_\frr$ for $\frr\in\fR_0$.
        
        Thus the set of reflecting lines of $\fR_0$ is the same as the
        set of reflecting lines of $\fR$, and this proves that condition (RS$_I$)
        (see Definition \ref{Zkrootsystems}) is satisfied for $\fR_0$. 
        
        Condition (RS$_{III}$) on $\fR_0$ is inherited from $\fR$.
        
        It remains to check that $\fR_0$ is stable under $G(\fR_0)$.
        Notice that $G(\fR_0) = G(\fR)$. Thus (RS$_{II}$) follows from the fact that 
        the image of a distinguished
        root by an element of $G(\fR_0)$ is still distinguished.

        Now apply Proposition \ref{ReducingR} to get a reduced subsystem of
        $\fR_0$, and we get a distinguished root system.
\end{proof}

         Let $\frr =(I_\frr, J_\frr, \zeta)$ be a \zroot\ 
        with $\zeta=\exp(\frac{2 \pi i}{d})$ with $d>2$.
        For $1 \leq i < d$, we denote by $\frr^i$ the root: 
        $$
                \frr^i:= \left( \frac{1-\zeta^i}{1-\zeta}  I_\frr , \, J_\frr, \, \zeta^i \right)
                \,,
                $$
                \index{ri@$\frr^i$}
		which has the property that $s_\frr^i=s_{\frr^i}$.

        An incomplete root system can be augmented 
        by adjoining all of the $\frr^i$ to obtain a complete root system.
        
\begin{proposition}\label{completingdistinguishedR}
        Let $\fR$ be a distinguished \zroot\ system for $V$. Denote by $\widehat{\fR}$ 
        the set of roots obtained by adjoining 
        all roots of the  form $\frr^i$ to $\fR$. 
        Then $\widehat{\fR}$ is
        a complete reduced \zroot\ system for $V$, whose set of distinguished roots
        is $\fR$.
\end{proposition}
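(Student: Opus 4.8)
The plan is to verify the three root-system axioms for $\widehat{\fR}$, then check reducedness, and finally identify the set of distinguished roots. Since $\fR\subseteq\widehat{\fR}$ and $\fR$ already satisfies (RS$_I$) --- its lines $I_\frr$ generate $V$ --- and the lines of $\frr$ and $\frr^i$ coincide (by construction $I_{\frr^i}=\frac{1-\zeta^i}{1-\zeta}I_\frr$ generates the same line $kI_\frr$), (RS$_I$) is immediate for $\widehat{\fR}$; finiteness is clear. For (RS$_{II}$) I would first observe that $G(\widehat{\fR})=G(\fR)$, because $s_{\frr^i}=s_\frr^i\in G(\fR)$ and conversely every $s_\frr$ appears. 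Then for $g\in G(\fR)$ and $\frr\in\fR$ one has $g(\frr)=\frr'$ for some $\frr'\in\fR$ by (RS$_{II}$) for $\fR$, and I must check $g(\frr^i)=(\frr')^i$: this follows because $g$ acts by a $k$-linear isomorphism $V\iso V$ (and $g^\vee$ on $W$), so it carries $\frac{1-\zeta^i}{1-\zeta}I_\frr$ to $\frac{1-\zeta^i}{1-\zeta}I_{\frr'}$ and fixes the scalar $\zeta^i$; hence $g(\widehat{\fR})=\widehat{\fR}$. In particular $s_\frr(\widehat{\fR})=\widehat{\fR}$ for each $\frr\in\fR$, and since $s_{\frr^i}=s_\frr$ this gives (RS$_{II}$).

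The main work is (RS$_{III}$): for $\frr_1,\frr_2\in\widehat{\fR}$ I need $\frn{\frr_1}{\frr_2}\subseteq\BZ_k$. Writing $\frr_1$ as some $\frr^i$ (with $\frr\in\fR$, possibly $i=1$) and $\frr_2$ as some $\frr'^j$ (with $\frr'\in\fR$), one has $\frn{\frr^i}{\frr'^j}=\scal{\frac{1-\zeta_\frr^i}{1-\zeta_\frr}I_\frr}{J_{\frr'}}=\frac{1-\zeta_\frr^i}{1-\zeta_\frr}\scal{I_\frr}{J_{\frr'}}=\frac{1-\zeta_\frr^i}{1-\zeta_\frr}\frn\frr{\frr'}$, using that $J$ is unchanged in passing to $\frr'^j$. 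Now $\frn\frr{\frr'}\subseteq\BZ_k$ by (RS$_{III}$) for $\fR$, so it suffices to know that $\frac{1-\zeta_\frr^i}{1-\zeta_\frr}\in\BZ_k$. This is the standard cyclotomic fact that $\frac{1-\zeta^i}{1-\zeta}$ is an algebraic integer (indeed it equals $1+\zeta+\cdots+\zeta^{i-1}$), and should be quotable from the Appendix on arithmetic referenced in the text (the lemma on $1-\zeta$ being invertible or not); I expect this to be the one genuinely arithmetic ingredient, though it is elementary.

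For reducedness, suppose $\frr_1,\frr_2\in\widehat{\fR}$ have $s_{\frr_1}=s_{\frr_2}=:s$. Since $\fR$ is distinguished, hence reduced, and its roots are the distinguished ones, the roots $\frr\in\widehat{\fR}$ lying over $s$ are exactly the $d$ roots $\frr_0^1=\frr_0,\frr_0^2,\dots,\frr_0^{d-1}$ built from the unique $\frr_0\in\fR$ with $s_{\frr_0}$ the distinguished reflection generating $C_{G(\fR)}(H_s)$ --- the key point being that the third coordinate $\zeta$ of $\frr_0^i$ is $\zeta_{\frr_0}^i$, and these exponents $1,\dots,d-1$ are pairwise distinct since $\zeta_{\frr_0}$ is a primitive $d$-th root of unity (here I use $d=|C_{G(\fR)}(H_s)|$ and distinguishedness of $\frr_0$). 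So a root in $\widehat{\fR}$ over $s$ is determined by its order, whence $\frr_1=\frr_2$: $\widehat{\fR}$ is reduced. Completeness is then immediate, since every reflection of $G(\fR)$ is a power $s_{\frr_0}^i=s_{\frr_0^i}$ of a distinguished one, $\frr_0^i\in\widehat{\fR}$ (with $\frr_0^d$ identified with $\frr_0$'s ``identity'', i.e.\ we only adjoin $1\le i<d$, and $s_{\frr_0}^i$ for $i$ a multiple of $d$ is trivial, not a reflection). Finally, the distinguished roots of $\widehat{\fR}$ are those $\frr$ with $s_\frr$ distinguished, i.e.\ with $\zeta_\frr=\exp(2\pi i/d)$; among $\{\frr_0^1,\dots,\frr_0^{d-1}\}$ this singles out $\frr_0^1=\frr_0\in\fR$, and conversely every element of $\fR$ arises this way, so the distinguished roots of $\widehat{\fR}$ form exactly $\fR$. $\qquad\blacksquare$
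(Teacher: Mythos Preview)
Your proof is correct and follows essentially the same route as the paper: verify (RS$_I$) by inclusion of $\fR$, verify (RS$_{II}$) via $g(\frr^i)=(g\cdot\frr)^i$ and $G(\widehat{\fR})=G(\fR)$, and verify (RS$_{III}$) by the computation $\frn{\frr^i}{\frr'^j}=\frac{1-\zeta_\frr^i}{1-\zeta_\frr}\frn{\frr}{\frr'}$ together with the integrality of $\frac{1-\zeta^i}{1-\zeta}=1+\zeta+\cdots+\zeta^{i-1}$. Two small slips to clean up: you wrote ``$s_{\frr^i}=s_\frr$'' where you meant $s_{\frr^i}=s_\frr^i\in G(\fR)$ (harmless, since you already have stability under all of $G(\fR)$); and in the reducedness paragraph, the list $\frr_0^1,\dots,\frr_0^{d-1}$ consists of the roots with \emph{hyperplane} $H_s$, not the roots ``lying over $s$'' --- your subsequent sentence (distinct third coordinates $\zeta_{\frr_0}^i$, hence distinct reflections) is what actually pins down the unique one with reflection $s$, and that is the correct argument.
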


\begin{proof} 
        Since $\fR$ is finite and only a finite number of roots are added to form $\widehat{\fR}$, 
        the condition \textbf{(RS$_{I}$)} is immediately satisfied for $\widehat{\fR}$.
        
        Let us check \textbf{(RS$_{II}$)}. Since all the reflections $s_\frr'$ for 
        $\frr'\in\widehat{\fR}$ belong to $G(\fR)$, it suffices to check that $\widehat{\fR}$
        is stable under all $s_\frr$ for $\frr \in \fR$. This results from the fact that
        if $s(\frr_1) = \frr_2$ then $s(\frr_1^i) = \frr_2^i$.

        Finally, consider two roots $\frr_1^i$ and $\frr_2^j$ in $\widehat{\fR}$, 
        where $\frr_1$ and $\frr_2$ are in $\fR$. Then 
        $$
                \frn{\frr_1^i}{\frr_2^j}
                = \left \langle \left(\frac{1-\zeta_1^i}{1-\zeta_1} \right) I_{\frr_1}, J_{\frr_2} \right \rangle 
                = \left(\frac{1-\zeta_1^i}{1-\zeta_1} \right) \left \langle I_{\frr_1}, J_{\frr_2} \right \rangle 
                 \,.
        $$
        Since $\frn{\frr_1}{\frr_2}\in \BZ_k$ by \textbf{(RS$_{III}$)} for $\fR$, 
        and since $\frac{1-\zeta_1^i}{1-\zeta_1}$ is always integral, 
        $\frn{\frr_1^i}{\frr_2^j} \in \BZ_k$ as well, 
        verifying the last condition \textbf{(RS$_{III}$)} for $\widehat{\fR}$.
        
        The system $\widehat{\fR}$ is reduced since 
        $s_{\frr^i} = s_\frr^i$.
\end{proof}

\subsubsection*{Dual root system, irreducible root systems}\hfill
\smallskip

        Recall   (see Definitions \ref{reflection and dual}(2)) that  
        if $\frr = (I,J,\zeta)$ is a \zroot\ in $V$, its 
        \emph{dual root} is the \zroot\ in $W$ defined by
        $ \frr^\vee := (J,I,\zeta)$.
     
\begin{lemmadef}\label{dualrootsystem}
        If $\fR$ is a \zroot\ system in $V$, the set
        $$
                \fR^\vee := \{ \frr^\vee\mid\frr\in \fR\}
        $$
        is a \zroot\ system in $W$, called 
        the \emph{dual root system} of $\fR$. \index{Dual root system}\index{Rvee@$\fR^\vee$}
\end{lemmadef}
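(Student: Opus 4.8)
The plan is to verify the three axioms \textbf{(RS$_I$)}, \textbf{(RS$_{II}$)} and \textbf{(RS$_{III}$)} directly for $\fR^\vee$, pulling each one back to the corresponding axiom for $\fR$ via the duality $\frr = (I,J,\zeta) \mapsto \frr^\vee = (J,I,\zeta)$ and the remarks already established: that $\frr^\vee$ is again a \zroot\ (for the pairing $W\times V\to k$, which is Hermitian), that $s_{\frr^\vee} = s_\frr^\vee$, and that $\vee$ is an involution with $(g\cdot\frr)^\vee = g^\vee\cdot\frr^\vee$ for $g\in\GL(V)$.

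First I would note that each $\frr^\vee$ is genuinely a \zroot\ for $(W,V)$: $J$ is a rank one finitely generated $\BZ_k$-submodule of $W$, $I$ one of $V$, $\zeta$ a nontrivial root of unity, and $\scal JI = \scal IJ^* = \ideal{(1-\zeta)}^* = \ideal{(1-\zeta)}$ since complex conjugation fixes the ideal generated by $1-\zeta$ (as $(1-\zeta)^* = 1-\zeta^{-1}$ and $\zeta^{-1}$ is a unit times $\zeta$ in the relevant sense — more simply, $1-\zeta$ and $1-\zeta^*$ generate the same ideal, as recorded implicitly when defining \zroot s). For \textbf{(RS$_I$)}: $\fR^\vee$ is finite because $\fR$ is, and I must show the family $(J_\frr)_{\frr\in\fR}$ generates $W$. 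This is where I expect to use that $(V,G(\fR))$ is an essential finite reflection group by Theorem~\ref{groupsandroots}(1); then $G(\fR)$ acts on $W$ through $g\mapsto g^\vee$ and one checks $W^{G(\fR)^\vee} = 0$, so $W = \sum_\frr M_\frr = \sum_\frr kJ_\frr$, exactly the argument used in the proof of \textbf{(RS$_I$)} in Theorem~\ref{groupsandroots}(2). For \textbf{(RS$_{II}$)}: given $\frr\in\fR$, we have $s_\frr\cdot\fR = \fR$; applying $\vee$ and using $(s_\frr\cdot\frr')^\vee = s_\frr^\vee\cdot\frr'^\vee = s_{\frr^\vee}\cdot\frr'^\vee$ gives $s_{\frr^\vee}\cdot\fR^\vee = \fR^\vee$.

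For \textbf{(RS$_{III}$)}: for $\frr_1^\vee,\frr_2^\vee\in\fR^\vee$ I compute $\frn{\frr_1^\vee}{\frr_2^\vee} = \scal{J_1}{I_2} = \scal{I_2}{J_1}^* = \frn{\frr_2}{\frr_1}^*$; since $\frn{\frr_2}{\frr_1}\subseteq\BZ_k$ by \textbf{(RS$_{III}$)} for $\fR$, and $\BZ_k$ is $*$-stable, we get $\frn{\frr_1^\vee}{\frr_2^\vee}\subseteq\BZ_k$. This completes the three axioms, so $\fR^\vee$ is a \zroot\ system in $W$.

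The only genuine obstacle is the \textbf{(RS$_I$)} spanning claim for the $J_\frr$: the axioms only assert spanning on the $V$-side, so I need the finiteness/essentiality of $G(\fR)$ to transfer it. I would handle this by invoking Theorem~\ref{groupsandroots}(1) to know $(V,G(\fR))$ is essential and finite, noting that for finite $G$ the pairing-preserving identification $g\mapsto g^\vee$ sends $G(\fR)$ to a finite reflection group on $W$ with reflections $s_{\frr^\vee}$ and reflecting lines $M_\frr = kJ_\frr$, that this group is essential on $W$ because its fixed space is the orthogonal of $V_S = V$, and hence $W = \sum_\frr kJ_\frr$ so the finitely generated modules $J_\frr$ already span $W$ over $k$. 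Everything else is the routine application of the involutivity of $\vee$ and the $*$-stability of $\BZ_k$.
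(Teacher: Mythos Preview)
Your proof is correct and its core is identical to the paper's: the paper's proof consists solely of the identity $\frn{\frr_1^\vee}{\frr_2^\vee} = \scal{J_1}{I_2} = \scal{I_2}{J_1}^* = \frn{\frr_2}{\frr_1}^*$ together with the remark that everything else ``follows directly from the definition''. Your treatment of \textbf{(RS$_I$)} via essentiality of $G(\fR)$ on $V$ (Theorem~\ref{groupsandroots}(1)) and the orthogonality $W^{G(\fR)^\vee} = V_S^\perp = 0$ is the natural way to fill in what the paper leaves implicit, and your check of \textbf{(RS$_{II}$)} via $(s_\frr\cdot\frr')^\vee = s_{\frr^\vee}\cdot\frr'^\vee$ is straightforward and correct.
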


\begin{proof} 
        The fact that $\fR^\vee$ is a \zroot\ system
        follows directly from the definition and the
        equality:
        $$
                \frn{\frr_1^\vee}{\frr_2^\vee} = \langle J_1,I_2 \rangle
                = \langle I_2,J_1\rangle^*=\frn{\frr_2}{\frr_1}^*
                \,.
        $$
\end{proof}

        Recall (Definition \ref{sim} (\ref{def:irredRS})) that a set of reflections is 
        \emph{irreducible} if it consists of a single equivalence class with respect to 
        the  closure of the ``is not orthogonal" relation $\sim$.
        
\begin{definition}\label{irreduciblerootsystem}
        Let $\fR$ be a set of \zroot s, and $S_\fR$ be the corresponding set of reflections. 
        Then $\fR$ is said to be \emph{irreducible} if $S_\fR$ is irreducible.
\end{definition}

        So a set of roots $\fR$ is irreducible if
        for every pair of roots $\frr$ and $\frr'$, there is a sequence 
        $\frr=\frr_{i_0}, \frr_{i_1},\ldots,\frr_{i_p}=\frr'$ 
        such that each adjacent pair of roots in the
        sequence is not orthogonal -- that is, 
        $\frn{\frr_{i_j}}{\frr_{i_j+1}} \neq 0$.

\subsection{Root systems and parabolic subgroups}\habel{rootandpara}
\smallskip

        In this subsection,  $\fR$ denotes a \zroot\ system, and  $G := G(\fR)$.
        
        Let $F$ be a flat of $G$ in $V$, that is, an intersection of reflecting hyperplanes of
        $G$ in $V$. Recall that we denote by $\Arr_F(G)$ 
        the family of all reflecting hyperplanes of
        $G$ containing $F$, so that
        $
                F = \bigcap_{H\in\Arr_F(G)} H
                \,.
        $
        
        For $H\in\Arr_F(G)$, we denote by $L_H$ the reflecting line in $V$ attached to $H$
        (see Proposition \ref{reflectingpairs}), by $M_H$ the orthogonal of $H$ in $W$ 
        (a dual reflecting line for $G$ in $W$),
        and by $K_H$ the corresponding dual reflecting hyperplane in $W$.
        
        We set
        $$
                V_F := \sum_{H\in \Arr_F(G)} L_H
                \,\text{ and }\,
                W_F := \sum_{H\in\Arr_F(G)} M_H
                \,.
        $$      
        The Hermitian pairing between $V$ and $W$ restricts to a Hermitian pairing
        between $V_F$ and $W_F$. 
        
        Let $C_G(F)$ be
        the corresponding parabolic subgroup of $G$, the fixator of $F$.
        We recall (see Theorem \ref{para} above) that 
\begin{itemize}
        \item
                $C_G(F)$ is generated by those reflections whose reflecting hyperplanes
                belong to $\Arr_F(G)$, 
        \item
                $F$ is the set of fixed points of $C_G(F)$ in $V$,
\end{itemize}
        and $C_G(F)$ is naturally identified with a subgroup of $\GL(V_F)$ generated
        by reflections.
  
        Let $\frr = (I,J,\zeta) \in \fR$ such that $s_\frr \in C_G(F)$. Then $s_\frr$ is
        a reflection in its action on $V_F$, and
        $\frr$ may be viewed as a \zroot\ for $(V_F,W_F)$, since
        $I \subset V_F$ and $J \subset W_F$.

\begin{proposition}\label{pararootsystem}
        Let $\fR$ be a \zroot\ system in $V$, and let $F$ be a flat of $G(\fR)$ in $V$.
\begin{enumerate}
        \item   
                The set
                $$
                        \fR_F := \{ \frr \,\mid\, s_\frr \in C_G(F) \}
                $$
                is a \zroot\ system for the parabolic subgroup $C_G(F)$ viewed as a reflection group
                acting on $V_F$.
        \item  
                If $\fR$ is complete then $\fR_F$ is a complete root system for $C_G(F)$.
        \item  
                If $\fR$ is distinguished then $\fR_F$ is a distinguished root system for $C_G(F)$.
\end{enumerate}
\end{proposition}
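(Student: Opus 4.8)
The plan is to reduce everything to a dictionary between $(V,G)$ and the parabolic reflection group $(V_F,C_G(F))$, built on the decomposition $V = F\oplus V_F$ of Theorem~\ref{para} (recall, from the paragraph preceding the statement, that $C_G(F)$ acts faithfully on $V_F$ as an essential reflection group, that $F$ is its fixed space, and that the Hermitian pairing restricts to a nondegenerate one on $V_F\times W_F$). Since every $g\in C_G(F)$ fixes $F$ pointwise, the decomposition $V = F\oplus V_F$ is $g$-stable and $g|_F = \Id$; hence $\ker(g-1) = F\oplus\ker(g|_{V_F}-1)$ and $\det(g) = \det(g|_{V_F})$. Consequently, restriction to $V_F$ identifies the set of reflections of $(V_F,C_G(F))$ with $\{\,s\in\Rf(G)\mid s\in C_G(F)\,\}$, where one uses Lemma~\ref{stablebyref} (applied to the $s$-stable subspace $V_F$) to see that the restriction of $s\in\Rf(G)\cap C_G(F)$ is again a reflection and not the identity. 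Under this identification $L_{s|_{V_F}} = L_s$ and $M_{s|_{V_F}} = M_s$ --- both lie inside $V_F$, resp.~$W_F$, because $H_s\in\Arr_F(G)$ --- while $H_{s|_{V_F}} = H_s\cap V_F$ and $\det(s|_{V_F}) = \det(s)$. Finally $C_{C_G(F)}(H_s\cap V_F) = C_G(H_s)$: since $F\subseteq H_s$ and $V = F\oplus V_F$ we have $F + (H_s\cap V_F) = H_s$, so an element of $C_G(F)$ fixes $H_s\cap V_F$ pointwise iff it fixes $H_s$ pointwise, and $F\subseteq H_s$ also gives $C_G(H_s)\subseteq C_G(F)$. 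One more remark: for $\frr\in\fR_F$ the reflection triple of $s_\frr$ lies in $V_F\times W_F$, so $\frr$ is a \zroot\ for $(V_F,W_F)$ whose associated reflection in $\GL(V_F)$ is $s_\frr|_{V_F}$.

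Granting this, part (1) is a direct check of the three axioms for $\fR_F$ viewed over $(V_F,W_F)$. \textbf{(RS$_I$)}: $\fR_F\subseteq\fR$ is finite; for each $H\in\Arr_F(G)$, Proposition~\ref{ArrR} provides $\frr\in\fR$ with $s_\frr\in C_G(H)\subseteq C_G(F)$, so $\frr\in\fR_F$ and $kI_\frr = L_H$, whence the $I_\frr$ for $\frr\in\fR_F$ generate $\sum_{H\in\Arr_F(G)}L_H = V_F$; together with Theorem~\ref{para}(1) this also shows $G(\fR_F) = C_G(F)$. \textbf{(RS$_{II}$)}: if $\frr,\frr'\in\fR_F$ then $s_\frr(\frr')\in\fR$ and $s_{s_\frr(\frr')} = s_\frr s_{\frr'}s_\frr\inv\in C_G(F)$, so $s_\frr(\frr')\in\fR_F$; as $s_\frr$ has finite order, $s_\frr\cdot\fR_F = \fR_F$. \textbf{(RS$_{III}$)}: $\frn{\frr_1}{\frr_2} = \scal{I_{\frr_1}}{J_{\frr_2}}$ is generated by the same elements of $k$ regardless of whether the pairing is read on $V\times W$ or on $V_F\times W_F$, so this axiom is inherited verbatim from $\fR$.

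For part (2): by the dictionary a reflection of $(V_F,C_G(F))$ is some $s\in\Rf(G)$ lying in $C_G(F)$; completeness of $\fR$ gives $\frr\in\fR$ with $s_\frr = s$, hence $\frr\in\fR_F$ with $s_\frr|_{V_F} = s$, so $\frr\mapsto s_\frr$ is surjective onto $\Rf(C_G(F))$. For part (3): if $\fR$ is reduced then so is $\fR_F$, since $s_\frr|_{V_F} = s_{\frr'}|_{V_F}$ forces $s_\frr = s_{\frr'}$ (the two agree on both $F$ and $V_F$) and hence $\frr = \frr'$; and for $\frr\in\fR_F$ the identity expressing distinguishedness with respect to $C_G(F)$, namely $\det(s_\frr|_{V_F}) = \exp(2\pi i/|C_{C_G(F)}(H_{s_\frr}\cap V_F)|)$, becomes, via $\det(s_\frr|_{V_F}) = \det(s_\frr)$ and $C_{C_G(F)}(H_{s_\frr}\cap V_F) = C_G(H_{s_\frr})$, precisely the distinguishedness of $\frr$ with respect to $G$, which holds by hypothesis.

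The only point requiring genuine care is the second half of the dictionary: that passing to the parabolic alters neither the determinant of a reflection nor the order of the pointwise stabiliser of its reflecting hyperplane. Once that is in place, parts (1), (2) and (3) are routine bookkeeping built on Theorem~\ref{para}, Proposition~\ref{ArrR} and Lemma~\ref{stablebyref}.
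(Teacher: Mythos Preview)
Your proof is correct and follows the same inheritance-of-axioms approach as the paper, which is extremely terse here: the paper checks only \textbf{(RS$_{II}$)} and then declares that completeness and distinguishedness are ``inherited directly from $\fR$.'' Your treatment is considerably more careful; in particular, for part~(3) you make explicit the dictionary $\det(s_\frr|_{V_F}) = \det(s_\frr)$ and $C_{C_G(F)}(H_{s_\frr}\cap V_F) = C_G(H_{s_\frr})$, which is exactly what is needed since ``distinguished'' is defined relative to the ambient group --- a point the paper's one-line justification leaves to the reader.
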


\begin{proof}
        (1) It suffices to check that $\fR_F$ is stable under the action of $C_G(F)$.
        It is enough to check that for $\frr \in \fR_F$ and $t\in C_G(F)$, we have
        $t\cdot\frr \in \fR_F$. But $s_{t\cdot\frr} = ts_\frr t\inv$, which fixes $F$.
        
        Completeness and distinguishedness (items (2) and (3)) are inherited directly from $\fR$.
\end{proof}

\subsection{Root lattices, root bases}\hfill

\begin{definition}\label{def:RootLattice}
        Let $\fR = \{\, \frr = (I_\frr,J_\frr,\zeta_\frr)\,\}$ be a \zroot\ system.
        \index{Root lattice}\index{Coroot lattice}
        \index{QR@$Q_\fR$}
        \index{QRcheck@$Q_\fR^\vee$}
\begin{enumerate}
        \item
                The \emph{root lattice} $Q_\fR$ and the \emph{coroot lattice} $Q_\fR^\vee$ 
                are defined by
                $$
                        Q_\fR := \sum_{\frr\in \fR} I_\frr
                         \quad\text{and}\quad
                        Q_\fR^\vee := \sum_{\frr\in \fR} J_\frr
                        \,.
                $$
        \item\index{Weight lattice}\index{Coweight lattice}\index{PR@$P_\fR$}
                The \emph{weight lattice} $P_\fR$ and the \emph{coweight lattice}
                $P_\fR^\vee$ are the dual
                of $Q_\fR^\vee$ and $Q_\fR$ respectively, \ie\
                $$
                \begin{aligned}
                         &P_\fR := \{\, x\in V\mid \scal{x}{Q_\fR^\vee} \subseteq \BZ_k\} \\
                        &P_\fR^\vee := \{\, y\in W\mid \scal{y}{Q_\fR} \subseteq \BZ_k\} \\
                \end{aligned}
                $$
\end{enumerate}
\end{definition}

        The following properties are straightforward.
\begin{itemize}
        \item
                $Q_\fR \subseteq P_\fR$ and $Q_\fR^\vee \subseteq P_\fR^\vee$.
        \item
                $Q_{\fR^\vee} = Q^\vee_\fR\,$, $P_{\fR^\vee} = P^\vee_\fR$.
\end{itemize}

\begin{definition}\label{DefAutRootSys}
         The group of automorphisms of a \zroot\ system $\fR$
        denoted $\Aut(\fR)$\index{AR@$\Aut(\fR)$}, is the group of
        all $g\in \GL(V)$ such that $g(\fR) = \fR$.
        
        In other words, 
        $$
        \Aut(\fR) = \{g \in \GL(V) \mid (I,J,\zeta) \in \fR 
        \Rightarrow (g(I),g^\vee(J),\zeta) \in \fR 
         \}
                \,.
        $$
\end{definition}
 
        If $g \in \Aut(\fR)$, $g$ conjugates the reflection $s_\frr$ defined by a root 
        $\frr \in \fR$
        to the reflection $s_{g(\frr)}$ defined by $g(\frr)$, hence 
        $$
                G(\fR) \vartriangleleft \Aut(\fR)
                \,.
        $$

\begin{proposition}\habel{ActionOfARonGR}\hfill
\begin{enumerate}
        \item
                The lattices $Q_\fR$, $P_\fR$, $Q_\fR^\vee$, $P_\fR^\vee$
                are all $\Aut(\fR)$-stable
                finitely generated projective $\BZ_k$-submodules
                of $V$ and $W$ respectively.
        \item
                The group $G(\fR)$ acts trivially on $P_\fR/Q_\fR$ and on 
                $P_\fR^\vee/Q_\fR^\vee$, hence the group $\Aut(\fR)/G(\fR)$ acts
                on these quotients.
        \item
                The Hermitian pairing $V\times W \ra k$ induces 
                a non-degenerate pairing of $\BZ_k(\Aut(\fR)/G(\fR))$--modules
                $$
                        P_\fR/Q_\fR \times P_\fR^\vee/Q_\fR^\vee
                        \longrightarrow k/\BZ_k
                        \,.\
                $$
\end{enumerate}
\end{proposition}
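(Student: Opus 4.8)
The plan is to establish the three assertions in order, since each relies on the previous one. For assertion (1), I would first recall that each $I_\frr$ (resp.\ $J_\frr$) is a rank one finitely generated $\BZ_k$-submodule of $V$ (resp.\ $W$), so the finite sums $Q_\fR = \sum_{\frr\in\fR} I_\frr$ and $Q_\fR^\vee = \sum_{\frr\in\fR} J_\frr$ are finitely generated $\BZ_k$-submodules; being torsion-free over the Dedekind domain $\BZ_k$, they are projective, and since $\fR$ spans $V$ by (RS$_I$) they are lattices of full rank. For $\Aut(\fR)$-stability, if $g\in\Aut(\fR)$ then $g$ permutes $\fR$, hence permutes the family $(I_\frr)$ via $g(I_\frr)=I_{g(\frr)}$ (and $(J_\frr)$ via $g^\vee$), so $g(Q_\fR)=Q_\fR$ and $g^\vee(Q_\fR^\vee)=Q_\fR^\vee$. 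The weight lattices $P_\fR$, $P_\fR^\vee$ are defined as duals with respect to the pairing, hence are finitely generated (they sit inside a fractional multiple of $Q_\fR$, $Q_\fR^\vee$ respectively) and torsion-free, so again projective; $\Aut(\fR)$-stability of $P_\fR$ follows because $g\in\Aut(\fR)$ preserves $Q_\fR^\vee$ and preserves the pairing in the sense that $\scal{g(x)}{g^\vee(y)}=\scal xy$, so $\scal x{Q_\fR^\vee}\subseteq\BZ_k \iff \scal{g(x)}{Q_\fR^\vee}\subseteq\BZ_k$.

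For assertion (2), I would show $G(\fR)$ acts trivially on $P_\fR/Q_\fR$. Take $g = s_\frr$ for some $\frr\in\fR$ and $x\in P_\fR$. By Corollary \ref{RSIII equivalent} (equivalently Lemma \ref{reflectionideal}(2)), $(s_{\frr'}-\Id_V)(I_{\frr''})\subset I_{\frr'}$ for all $\frr',\frr''\in\fR$; summing over $\frr''$ gives $(s_\frr-\Id_V)(Q_\fR)\subseteq I_\frr\subseteq Q_\fR$, but I need the stronger statement that $s_\frr$ moves $P_\fR$ by an element of $Q_\fR$. Since $s_\frr(x) = x - \scal x{w}\,v$ where $v\in kI_\frr$, $w\in kJ_\frr$ with $\scal vw = 1-\zeta_\frr$ and $I_\frr = \fa v$, $J_\frr = \fa\tinv w$ for some fractional ideal $\fa$, we have $(s_\frr - \Id_V)(x) = -\scal xw\,v$. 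Now $x\in P_\fR$ means $\scal x{J_\frr}\subseteq\BZ_k$, i.e.\ $\scal x{\fa\tinv w}\subseteq\BZ_k$, so $\scal xw\in\fa$, hence $\scal xw\,v\in\fa v = I_\frr\subseteq Q_\fR$. Thus $s_\frr(x)\equiv x \pmod{Q_\fR}$. Since the $s_\frr$ generate $G(\fR)$, the action on $P_\fR/Q_\fR$ is trivial; the dual statement for $P_\fR^\vee/Q_\fR^\vee$ is symmetric (working with $s_\frr^\vee$ on $W$). Consequently $\Aut(\fR)/G(\fR)$ acts on these quotients, because $G(\fR)\vartriangleleft\Aut(\fR)$ and $G(\fR)$ acts trivially.

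For assertion (3), the Hermitian pairing restricts to $P_\fR\times P_\fR^\vee\to k$; since $P_\fR$ is dual to $Q_\fR^\vee$ and $P_\fR^\vee$ is dual to $Q_\fR$, we have $\scal{Q_\fR}{P_\fR^\vee}\subseteq\BZ_k$ and $\scal{P_\fR}{Q_\fR^\vee}\subseteq\BZ_k$, so the pairing descends to a well-defined bilinear map $P_\fR/Q_\fR \times P_\fR^\vee/Q_\fR^\vee\to k/\BZ_k$. Non-degeneracy follows from the standard duality of finite abelian groups: $P_\fR/Q_\fR$ is finite (both are full-rank lattices), and $x\in P_\fR$ pairs to zero with all of $P_\fR^\vee$ modulo $\BZ_k$ exactly when $\scal x{P_\fR^\vee}\subseteq\BZ_k$; since $P_\fR^\vee\supseteq Q_\fR^\vee$ and $P_\fR^\vee$ is the full dual lattice of $Q_\fR$, a short argument with localization at each prime of $\BZ_k$ (or the elementary divisor theorem for the pair $Q_\fR\subseteq P_\fR$ over the Dedekind domain $\BZ_k$) shows this forces $x\in Q_\fR$. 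Finally this pairing is $\Aut(\fR)/G(\fR)$-equivariant into $k/\BZ_k$ with trivial action on the target, because $\scal{g(x)}{g^\vee(y)}=\scal xy$ for $g\in\Aut(\fR)$. The main obstacle I anticipate is the non-degeneracy in assertion (3): over a general Dedekind domain one cannot diagonalize the inclusion $Q_\fR\subseteq P_\fR$ globally, so the cleanest route is to check non-degeneracy after localizing at each maximal ideal $\fp$ of $\BZ_k$, where $(\BZ_k)_\fp$ is a P.I.D.\ and the classical structure theorem applies, and then note that a pairing of finite $\BZ_k$-modules is non-degenerate iff it is so after all such localizations.
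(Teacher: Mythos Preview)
Your proposal is correct and follows essentially the same line as the paper. The only presentational differences: for (2), the paper isolates an auxiliary lemma (Lemma~\ref{descriptionofsr}) expressing $s_\frr(v) = v - \sum_i \scal{v}{\beta_i}\alpha_i$ for any finite families $(\alpha_i)\subset I_\frr$, $(\beta_i)\subset J_\frr$ with $\sum_i\scal{\alpha_i}{\beta_i}=1-\zeta$, then notes that $v\in P_\fR$ forces each $\scal{v}{\beta_i}\in\BZ_k$, so the correction lies in $I_\frr\subseteq Q_\fR$; you instead fix the presentation $I_\frr=\fa v$, $J_\frr=\fa\tinv w$ and compute $\scal{x}{w}\in\fa$ directly. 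These are the same argument, yours being the single-generator version and the paper's the basis-free version. For (3), the paper simply says ``immediate''; your localization plan works but is heavier than needed---non-degeneracy is just the double-duality statement $(P_\fR^\vee)^*=((Q_\fR)^*)^*=Q_\fR$ for full-rank lattices over a Dedekind domain, which holds because such lattices are projective, hence reflexive.
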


\begin{proof}
        Assertion (1) is clear.
        Assertion (2) results from the following lemma, which gives an alternative
        description of the reflection associated with a \zroot.
        
\begin{lemma}\label{descriptionofsr}
        Let $\frr = (I,J,\zeta)$ be a \zroot.
        Assume that $(\al_i)_{i\in E}$ and $(\be_i)_{i\in E}$ are finite families of elements
        of $I$ and $J$ respectively such that 
        $$
                \sum_{i\in E} \scal{\al_i}{\be_i}= 1-\zeta
                \,.
        $$
        Then, for all $v\in V$,
        $$
                s_\frr(v) = v - \sum_{i\in E} \scal v{\be_i} \al_i
                \,.
        $$
\end{lemma}

\begin{proof}
        For all $i\in E$, we have (see Proposition~\ref{computereflection})
        \begin{align*}
                &s_\frr(v) = v - \dfrac{\scal v{\be_i}}{\scal {\al_i}{\be_i}}(1-\zeta) \al_i
                \,,
                \text{ hence } \\
                &\scal {\al_i}{\be_i} s_\frr(v) = \scal {\al_i}{\be_i} v - \scal v{\be_i}(1-\zeta) \al_i
                \,.
        \end{align*}
        Summing the last equality over $E$, then simplifying by $(1-\zeta)$, gives the
        expected formula.
\end{proof}

        Now (with the same notation as in the above lemma),
        for all $v \in P_\fR$, $\scal v{\be_i} \in \BZ_k$, hence 
        $\scal v{\be_i} \al_i \in Q_\fR$, 
        which shows that $s_\frr$ acts trivially
        on $P_\fR/Q_\fR$.
        
        Assertion (3) is immediate.
\end{proof}
\smallskip
 
        Notice also that for $\fa$ a fractional ideal, we have
        $$
        \begin{array}{ll}
                &Q_{\fa\cdot \fR} = \fa Q_\fR \,\,,\,\, Q_{\fa\cdot \fR}^\vee =
                        \fa\tinv Q_\fR^\vee \\
                &P_{\fa\cdot \fR} = \fa P_\fR \,\,,\,\, P_{\fa\cdot \fR}^\vee =
                        \fa\tinv P_\fR^\vee
                \,.
         \end{array}
        $$

 \begin{definition} Let $\fR_1$ and $\fR_2$ be two \zroot\ systems.
        \begin{enumerate}
        \item
                Say that $\fR_1$ and $\fR_2$ are of the same  
                \index{Genus of root systems}\emph{genus} if there
                exists a fractional ideal $\fa$ such that
                $$
                          \fR_2 = \fa\cdot \fR_1 := \{\,\fa\cdot\frr_1\,\mid\,(\frr_1\in \fR_1)\,\} .
                $$
        \item
                Say that $\fR$ and $\fR'$ are \index{Lattice equivalence of root systems}
                \emph{lattice equivalent}  if there exists a fractional ideal $\fa$
                such that $Q_{\fR'} = \fa Q_\fR$ and $Q_{\fR'}^\vee = \fa\tinv Q_{\fR}^\vee$.
        \end{enumerate}
\end{definition}

        Notice that if two root systems have the same genus, then they are
        lattice equivalent.
        The converse is false, as seen in the following example.

\begin{example}
        Consider $k := \BQ(\zeta_3)$, hence $\BZ_k = \BZ[\zeta_3]$ (a principal ideal domain). 
        Set $W=V=k$ and $G := \bmu_3$ acting on $V$ by multiplication. 
        Now $\ideal{(1-\zeta_3^2)}=\ideal{(1-\zeta_3)}$, 
        since $1-\zeta_3^2 = -\zeta(1-\zeta_3)$.
 
        It is easily checked (see Section~\ref{cyclic} below for the 
        general case of the cyclic groups)
        that there are exactly \emph{three} genera of complete root systems for $G$ as 
        follows: let $\fp = \ideal{(1-\zeta_3)} = \ideal{(1-\zeta_3^2)}$, then 
        $$
        \begin{array}{lll}
                &\fR_{1,1} := & \left\{ \Big( \ideal{},\fp,\zeta_3 \Big) \,,\, 
                   \Big( \ideal{},\fp,\zeta^2_3 \Big)   \,\right\}  \\
                                \\
                &\fR_{1,\fp} := & \left\{ \Big( \ideal{},\fp,\zeta_3 \Big)\,,\,
                  \Big( \fp,\ideal{},\zeta^2_3 \Big) \,\right\}  \\ 
                                \\
                &\fR_{\fp,1} := & \left\{ \Big( \fp,\ideal{},\zeta_3 \Big)\,,\,
                  \Big( \ideal{},\fp,\zeta^2_3 \Big)     \,\right\}  \\ 
        \end{array}
        $$
        Now $Q_{\fR_{1,\fp}} = Q_{\fR_{\fp,1}} = \ideal{}$, and
        $P_{\fR_{1,\fp}} = P_{\fR_{\fp,1}} = \ideal{}$, so these two distinct 
        (with respect to genus) root systems are in fact lattice equivalent.  
 \end{example}

 \begin{definition}
 \label{Def_rootBasis_etc}
        A subset $\Pi$ of elements of $\fR$ is said to be 
        \begin{itemize}
        \item a  \emph{set of root generators} \index{Root generators} if 
        $  Q_\fR = \sum_{\frr\in \Pi} I_\frr$
        \item a \index{Root lattice basis} \emph{root lattice basis} if
        $ Q_\fR = \bigoplus_{\frr\in \Pi} I_\frr$
 \item a \index{Root basis} \emph{root basis} if
\begin{enumerate}
        \item
                $
                                Q_\fR = \bigoplus_{\frr\in \Pi} I_\frr
                                \ and 
                $
        \item
                the family $(s_\frr)_{\frr\in \Pi}$ generates $G(\fR)$.
\end{enumerate}
\end{itemize}
\emph{Coroot generators, coroot lattice bases, coroot bases} 
\index{Coroot bases} are defined analogously.
\end{definition}

\begin{example}
        As above, let us choose $k := \BQ(\zeta_3)$, hence $\BZ_k = \BZ[\zeta_3]$, 
        $V=W = k$ and
        $G := \bmu_3$ acting on $V$ by multiplication.
        Then each of the root systems $\fR_{1,1}$, $\fR_{1,\fp}$, $\fR_{\fp,1}$ 
        contains a root basis -- for example, $\Pi= \left\{ \Big( \ideal{},
          \ideal{(1-\zeta_3)},\zeta_3 \Big) \right\} $. 
         Indeed, $\Pi$ is both a root basis and a coroot basis of $\fR_{1,1}$.
        However $\fR_{1,\fp}$ and $\fR_{\fp,1}$ do not contain a subset which is
        \emph{simultaneously} a root basis and a coroot basis.
\end{example}

On the other hand, a distinguished root system for a well generated group 
always contains a subset which is simultaneously a root basis and a coroot basis:

\begin{proposition}\label{prop:RootBasis}
        Let $\fR$ be a distinguished \zroot\ system.
        Let $\Pi$ be a subset of $\fR$ such that $\{s_\frr \mid \frr\in \Pi\}$
        generates $G(\fR)$. Then
\begin{enumerate}
        \item
                whenever $\frr \in \fR$, there exist
                $\frr_0,\frr_1,\dots,\frr_m \in \Pi$ such that
                $
                        \frr = (s_{\frr_m}\cdots s_{\frr_1})\cdot{\frr_0}
                        \,,
                $
        \item $\Pi$ is a set of root generators and a set of coroot generators,
        \item
                if $\Pi$ consists of principal \zroot s, then
                $\fR$ is principal,
        \item
                if $|\Pi| = \dim V$, then $\Pi$ is a root basis and a coroot basis
\end{enumerate}
\end{proposition}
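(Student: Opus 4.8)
The plan is to make assertion (1) the engine of the proof and to read (2)--(4) off it. Since $\fR$ is distinguished it consists of distinguished roots and is reduced (Definition~\ref{variousdefs}), so $S:=\{s_\frr\mid\frr\in\Pi\}$ is a generating set of $G:=G(\fR)$ made of distinguished reflections, and by Proposition~\ref{prop:distinguishedbijection} the map $\frr\mapsto s_\frr$ is a bijection of $\fR$ onto the set of distinguished reflections of $G$. To prove (1), fix $\frr\in\fR$. By Corollary~\ref{distinguishedgenerators}, $s_\frr$ is conjugate in $G$ to some $s_{\frr_0}$ with $\frr_0\in\Pi$, say $s_\frr=gs_{\frr_0}g\inv$; write $g=s_{\frr_m}\cdots s_{\frr_1}$ as a \emph{positive} word in the generators, which is possible because each $s_{\frr_i}$ has finite order, so its inverse is one of its powers. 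By (RS$_{II}$) each $s_{\frr_i}$ preserves $\fR$, hence $g\cdot\frr_0\in\fR$; and since conjugating the reflection attached to a root yields the reflection attached to the translated root (cf.\ the proof of Proposition~\ref{ArrR}), we get $s_{g\cdot\frr_0}=gs_{\frr_0}g\inv=s_\frr$. Reducedness of $\fR$ then forces $g\cdot\frr_0=\frr$, i.e. $\frr=(s_{\frr_m}\cdots s_{\frr_1})\cdot\frr_0$, which is (1).

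Assertion (3) is immediate from (1): if every $\frr_0\in\Pi$ is principal, then $I_{\frr_0}=\BZ_k\alpha_0$ is free of rank one, so $I_{g\cdot\frr_0}=g(I_{\frr_0})=\BZ_k\,g(\alpha_0)$ is free of rank one for every $g\in G$; by (1) every root of $\fR$ has this shape, so $\fR$ is principal by Lemma--Definition~\ref{lem:principalroot}. For (2), set $M:=\sum_{\frr\in\Pi}I_\frr\subseteq Q_\fR$. By Corollary~\ref{RSIII equivalent}, $(s_{\frr'}-\Id_V)I_\frr\subseteq I_{\frr'}$ for all $\frr,\frr'\in\fR$; hence for $\frr,\frr'\in\Pi$ we get $s_{\frr'}(I_\frr)\subseteq I_\frr+I_{\frr'}\subseteq M$, so each $s_{\frr'}$ with $\frr'\in\Pi$ --- and therefore any product of such reflections --- maps $M$ into itself. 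Writing an arbitrary $\frr\in\fR$ as $\frr=g\cdot\frr_0$ with $g=s_{\frr_m}\cdots s_{\frr_1}$ as in (1), we get $I_\frr=g(I_{\frr_0})\subseteq g(M)\subseteq M$, whence $Q_\fR=\sum_{\frr\in\fR}I_\frr\subseteq M$ and $\Pi$ is a set of root generators. The coroot statement is the same argument on the $W$-side: $\fR^\vee$ is a root system (Lemma--Definition~\ref{dualrootsystem}), so Corollary~\ref{RSIII equivalent} applied to $\fR^\vee$ gives $(s_\frr^\vee-\Id_W)J_{\frr'}\subseteq J_\frr$ for $\frr,\frr'\in\fR$ (using $s_{\frr^\vee}=s_\frr^\vee$); thus $N:=\sum_{\frr\in\Pi}J_\frr$ is mapped into itself by each $s_\frr^\vee$ with $\frr\in\Pi$, and if $\frr=g\cdot\frr_0$ with $g=s_{\frr_m}\cdots s_{\frr_1}$ then $J_\frr=g^\vee(J_{\frr_0})\subseteq N$ because $g^\vee=s_{\frr_m}^\vee\cdots s_{\frr_1}^\vee$. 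Hence $Q_\fR^\vee\subseteq N$, so $\Pi$ is also a set of coroot generators, which is (2).

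For (4), assume $|\Pi|=\dim V=r$. By (RS$_I$) the family $(I_\frr)_{\frr\in\fR}$ spans $V$, so $kQ_\fR=V$; together with $Q_\fR=\sum_{\frr\in\Pi}I_\frr$ from (2), this shows that the $r$ lines $kI_\frr$ ($\frr\in\Pi$) span the $r$-dimensional space $V$ and are therefore linearly independent, so $\sum_{\frr\in\Pi}I_\frr=\bigoplus_{\frr\in\Pi}I_\frr$. Thus $Q_\fR=\bigoplus_{\frr\in\Pi}I_\frr$, and as $\{s_\frr\mid\frr\in\Pi\}$ generates $G(\fR)$ by hypothesis, $\Pi$ is a root basis (Definition~\ref{Def_rootBasis_etc}); the coroot assertion is obtained identically, using that $\fR^\vee$ is a root system in $W$ with $\dim W=\dim V=r$ (the pairing $V\times W\to k$ is non-degenerate), that $(J_\frr)_{\frr\in\fR}$ spans $W$ by (RS$_I$) for $\fR^\vee$, and that $\{s_\frr^\vee\mid\frr\in\Pi\}$ generates $G(\fR^\vee)$ (apply the isomorphism $g\mapsto g^\vee$).

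The only delicate point I anticipate is the bookkeeping shared by (1) and (2): Corollary~\ref{distinguishedgenerators} only provides \emph{some} conjugating element, and the whole argument hinges on choosing it as a word in the reflections $s_{\frr_i}$ with $\frr_i\in\Pi$, so that the very same group element $g$ (resp.\ $g^\vee$) can be applied both to the root $\frr_0$ --- where reducedness upgrades ``$g\cdot\frr_0$ is the root attached to $gs_{\frr_0}g\inv$'' to the on-the-nose equality $g\cdot\frr_0=\frr$ --- and to the lattice $M$ (resp.\ $N$), using only (RS$_{II}$) and (RS$_{III}$).
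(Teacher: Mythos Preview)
Your proof is correct and follows essentially the same approach as the paper: (1) via Corollary~\ref{distinguishedgenerators} plus reducedness, (2) via the inclusion $s_{\frr'}(I_\frr)\subseteq I_\frr+I_{\frr'}$ (the paper cites Lemma~\ref{reflectionideal} directly and does the explicit iteration rather than phrasing it as stability of $M$, but this is the same computation), (3) from (1), and (4) declared ``clear'' in the paper where you spell out the dimension count. The paper also handles the coroot statements by the single remark that one passes to the contragredient $g\mapsto g^\vee$ on $W$, exactly as you do.
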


\begin{proof}
        Write $G$ for $G(\fR)$. 
        Notice that it is enough to prove the results concerning roots: the ones concerning
        coroots follow by considering the contragredient operation 
        $g\mapsto g^\vee$ of $G$ on $W$.
 
        (1)
        Let $\frr \in \fR$. By Corollary \ref{distinguishedgenerators}, $s_\frr$ is conjugate
        to some $s_{\frr_0}$ for $\frr_0 \in \Pi$. Thus there exist
        $\frr_1,\dots,\frr_m \in \Pi$ with
        $
                s_\frr = s_{\frr_m}\cdots s_{\frr_1}s_{\frr_0}s_{\frr_1}\inv\cdots s_{\frr_m}\inv
                \,.
        $
        Since $\fR$ is stable under $G$, and since 
        $ws_{\frr_0}w\inv = s_{w.s_{\frr_0}}$, the above 
        equality implies that
        $
                \frr = (s_{\frr_m}\cdots s_{\frr_1})\cdot{\frr_0}
                \,.
        $
        
        (2)
        Suppose that for $\frr, \frr_0,\dots,\frr_m \in \Pi$ are as in (1).
        By Lemma~\ref{reflectionideal},
        $$
                s_{\frr_1}(I_{\frr_0}) \subset I_{\frr_0} + I_{\frr_1}
                \,,
        $$
        so
        $$
                s_{\frr_2}s_{\frr_1}(I_{\frr_0}) \subset s_{\frr_2}(I_{\frr_0}) +  s_{\frr_2}(I_{\frr_1})
                \subset
                I_{\frr_0} + I_{\frr_1} + I_{\frr_2}
                \,,
        $$
        and an iteration shows that
        $$
                 I_\frr = (s_{\frr_m}\cdots s_{\frr_1})(I_{\frr_0})
                  \subset I_{\frr_0} + I_{\frr_1} +\dots + I_{\frr_m}
                 \,.
        $$
        
        Part (3)
        results from assertion (1) and from the remark that, for any $g \in \GL(V)$
        and $\frr$ a principal \zroot, $g(\frr)$ is still principal.
 
        Item (4) is clear.
 \end{proof}

 \begin{proposition}\label{root and coroot basis}
 Let $\fR$ be a distinguished \zroot\ system. If $\Pi$ is a subset of $\fR$
 such that $|\Pi|=\dim V$ and the family $(s_\frr)_{\frr\in\Pi}$ generates
 $G(\fR)$, then $\Pi$ is a root basis and coroot basis of $\fR$.
 \end{proposition}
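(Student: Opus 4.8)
The plan is to derive the statement directly from Proposition~\ref{prop:RootBasis}; indeed it is essentially a spelling-out of part~(4) of that proposition. Since by hypothesis $|\Pi| = \dim V$ and the family $(s_\frr)_{\frr\in\Pi}$ generates $G(\fR)$, the hypothesis of Proposition~\ref{prop:RootBasis} is satisfied, so by its part~(2) the set $\Pi$ is simultaneously a set of root generators and a set of coroot generators; that is,
$$
Q_\fR = \sum_{\frr\in\Pi} I_\frr
\qquad\text{and}\qquad
Q_\fR^\vee = \sum_{\frr\in\Pi} J_\frr
\,.
$$
The only thing left to do is to upgrade these two sums to direct sums, after which the generation hypothesis on $(s_\frr)_{\frr\in\Pi}$ gives exactly the definitions of root basis and coroot basis (Definition~\ref{Def_rootBasis_etc}).

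For the root side, I would first recall that $Q_\fR$ spans $V$ as a $k$-vector space, since $Q_\fR \supseteq I_\frr$ for every $\frr\in\fR$ and axiom (RS$_I$) says the family $(I_\frr)_{\frr\in\fR}$ generates $V$. Hence the $|\Pi| = \dim V$ lines $(kI_\frr)_{\frr\in\Pi}$ already span $V$, so they are linearly independent and $V = \bigoplus_{\frr\in\Pi} kI_\frr$ (in particular these lines are pairwise distinct). Now if $x_\frr \in I_\frr \subseteq kI_\frr$ satisfy $\sum_{\frr\in\Pi} x_\frr = 0$, independence of the lines forces $x_\frr = 0$ for all $\frr$; therefore $\sum_{\frr\in\Pi} I_\frr$ is a direct sum and $Q_\fR = \bigoplus_{\frr\in\Pi} I_\frr$, so $\Pi$ is a root basis.

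For the coroot side the argument is word-for-word the same, provided one knows that $Q_\fR^\vee$ spans $W$; this is the one point needing a moment's care, and it is handled by passing to the dual: $Q_\fR^\vee = Q_{\fR^\vee}$ and, by Lemma--Definition~\ref{dualrootsystem}, $\fR^\vee$ is a \zroot\ system in $W$, so $(J_\frr)_{\frr\in\fR}$ spans $W$ by (RS$_I$) applied to $\fR^\vee$. Consequently the $\dim W = \dim V = |\Pi|$ lines $(kJ_\frr)_{\frr\in\Pi}$ span $W$, hence are independent, hence $\sum_{\frr\in\Pi} J_\frr$ is direct, and $\Pi$ is a coroot basis of $\fR$. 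I do not expect any genuine obstacle: beyond invoking Proposition~\ref{prop:RootBasis} and the dual root system, the proof is just the elementary observation that $n$ lines spanning an $n$-dimensional space are independent, and that a sum of rank-one modules lying in independent lines is automatically direct.
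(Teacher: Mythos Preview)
Your proof is correct and takes essentially the same approach as the paper, which simply cites Proposition~\ref{prop:RootBasis}(4); you have just unpacked the ``clear'' step of that item by spelling out the dimension-count argument that turns the generating sums into direct sums.
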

 \begin{proof}
 This results from Proposition \ref{prop:RootBasis} (4).
 \end{proof}
        
        Note that root bases only exist when $G(\fR)$ is well generated.

\begin{remark}
        When $\BZ_k$ is not a P.I.D., a root basis does not necessarily provide a basis 
        of $Q_\fR$ as a $\BZ_k$-module.
        Nevertheless, we shall see later 
        (Theorem~\ref{thm:freelattices}, see also \cite[Corollary 13]{nebe}) that 
        every reflection group 
        has at least one principal \zroot\ system, 
        and the root lattice of a principal \zroot\ system 
        is always a \emph{free $\BZ_k$-module.}
\end{remark}

\subsection{Example: the Weyl group of type $B_2$}\habel{subsecB2}
\smallskip

        Let $k$ be a number field. Set $V = k^2$  
        with canonical basis $\{e_1, e_2\}$
        and $W = k^2$ 
        with canonical dual basis $\{f_1,f_2\}$.
        The Weyl group of type $B_2$, denoted $G$, may be considered to be 
        the subgroup of $\GL(V)$
        generated by $S=\{s,t\}$ where $s$ and $t$ are the automorphisms of $V$
        corresponding respectively to the following
        matrices on the basis $\{e_1, e_2\}$:
        $$
                 s := \begin{pmatrix} -1&0\\0&1\end{pmatrix}
                \quad\text{and}\quad
                t := \begin{pmatrix} 0&1\\ 1&0 \end{pmatrix}
                \,.
        $$
        The corresponding reflecting lines are
        $$
        \left\{
        \begin{array}{lll}
                L_s = k v_s \text{ with } v_s = e_1 
                        \quad&\text{and}\quad
                &L_t =k v_t \text{ with } v_t = e_2-e_1 \,,             \\
                        M_s = k v_s ^\vee \text{ with } v_s^\vee = 2f_1
                        \quad&\text{and}\quad
                &M_t = k v_t^\vee \text{ with } v_t^\vee =f_2-f_1 \,.
        \end{array}
        \right.
        $$
        
        The orbits under $G$ of the following root bases (corresponding to 
        generators $s$ and $t$ in that order) are \zroot\ systems 
        corresponding to the types $B_2$ and $C_2$ respectively:
        $$
                \Pi(B_2) :=
                \left\{\,\Big(\ideal{}v_s,\ideal{}v_s^\vee,-1\Big)
                \,,\,
                \Big(\ideal{}v_t,\ideal{}v_t^\vee, -1\Big)\,\right\}
                \,,
        $$
        $$
                \Pi(C_2) :=
                \left\{\,\Big(\ideal2v_s,\ideal{\frac12} v_s^\vee,-1\Big)
                \,,\,
                \Big(\ideal{}v_t,\ideal{}v_t^\vee, -1\Big)\,\right\}
                \,.
        $$
        Swapping $V$ and $W$, and $s$ and $t$, defines an isomorphism between 
        the coroot system of type $B_2$ and the root system of type $C_2$, 
        and vice versa. We say that they are mutually dual root systems. 
        
        It is  immediate to check that the element $\phi \in \GL(V)$ defined by
        $$
                \phi : 
                \left\{
                \begin{aligned}
                        &e_1 \mapsto -e_1+e_2 \,,\\
                        &e_2 \mapsto e_1+e_2\,,
                \end{aligned}
                \right.
        $$
        that is, the automorphism of $V$ with matrix 
        $\begin{pmatrix}-1&1\\1&1 \end{pmatrix}$
        on the basis $(e_1,e_2)$, has the following properties:
        \begin{enumerate}
                \item
                        $\phi^2 = 2\Id_V\,,$
                \item
                        it swaps $s$ and $t$ (by conjugation), 
                \item
                        it sends ${\Pi}(B_2)$ onto $\Pi(C_2)$ and $\Pi(C_2)$ onto $2\Pi(B_2)$,
                        hence swaps $\fR(B_2)$ and $\fR(C_2)$, up to genus.
        \end{enumerate}
        That is, the automorphism denoted $\lexp2B_2$ of $G$ swaps, up to genus,
         $\fR(B_2)$ and $\fR(C_2)$, which are thus isomorphic (up to genus).

\begin{lemma}\hfill
\begin{enumerate}
        \item
                The following assertions are equivalent.
                \begin{itemize}
                        \item[(i)]
                                There exists a \zroot\ system with group $G$ which is stable
                                by the automorphism $\phi$ (up to genus),
                        \item[(ii)]
                                there exists a principal ideal $\fa$ of $\BZ_k$ such that
                                $\fa^2 = \ideal 2$.
                \end{itemize}
        \item
                If $\fa = \ideal (a)$ is such that $a^2 = 2u$ with $u\in\BZ_k^\times$,
                we set:
                $$
                        \Pi_\fa := 
                        \left\{\,\Big(\ideal{\fa}v_s,\ideal{\fa\tinv}v_s^\vee,-1\Big)
                        \,,\,
                        \Big(\ideal{}v_t,\ideal{}v_t^\vee, -1\Big)\,\right\}
                        \,,
                $$
                and denote by $\fR_\fa$ the orbit of $\Pi_\fa$ under the group generated by the 
                reflections $s_\frr$ for $\frr \in \Pi_\fa$. Then
                \begin{enumerate}
                        \item
                                $\fR_\fa$ is a \zroot\ system with group $G$,
                        \item
                                the flips between $V$ and $W$ and between $s$ and $t$ define 
                                an isomorphism between $\fR_\fa$ and its coroot system
                                (thus, $\fR_\fa$ is ``self-dual''),
                        \item
                                $\phi(\fR_\fa) = \fa \fR_\fa$ (thus $\fR_\fa$ is stable
                                by $\phi$ up to genus).
                \end{enumerate}
\end{enumerate}
\end{lemma}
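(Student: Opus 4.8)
The plan is to establish part (2) directly and then obtain both implications of part (1) from it together with the identity $\phi^2=2\Id_V$. For \textbf{(2)(a)} I would first record, from the matrix of $\phi$ on $(e_1,e_2)$, that $\phi(v_s)=v_t$ and $\phi(v_t)=2v_s$; since $\phi$ has a real symmetric matrix we have $\phi^\vee=\phi\inv$, and as $\phi^2=2\Id_V$ this is $\tfrac12\phi$, so $\phi^\vee(v_s^\vee)=v_t^\vee$ and $\phi^\vee(v_t^\vee)=\tfrac12 v_s^\vee$. Next I would note the three arithmetic consequences of $\fa^2=\ideal 2$: $\fa\subseteq\BZ_k$; $\fa^*=\fa$ (hence $\fa\tinv=\fa\inv$), by uniqueness of factorisation of fractional ideals in $\BZ_k$; and $\ideal 2\,\fa\inv=\fa$. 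With these, checking that $\Pi_\fa$ consists of $\BZ_k$-roots is immediate ($\scal{\fa v_s}{\fa\tinv v_s^\vee}=\fa\fa\inv\scal{v_s}{v_s^\vee}=\ideal 2$, $\scal{v_t}{v_t^\vee}=\ideal 2$), and the four ideals $\scal{I}{J}$ arising from the ordered pairs of roots in $\Pi_\fa$ are $\ideal 2,\ \ideal 2,\ \fa$ and $\ideal 2\,\fa\inv=\fa$, all integral. Since $S=\{s,t\}$ generates $G$ and $(V,G)$ is essential, Proposition~\ref{prop:jeanlemma} applies: the $G$-orbit $\fR_\fa$ of $\Pi_\fa$ is a $\BZ_k$-root system with $G(\fR_\fa)=G$; as $s,t$ have order $2$ they are distinguished, so $\fR_\fa$ is distinguished, and as both roots of $\Pi_\fa$ are principal, part (3) of that Proposition gives that $\fR_\fa$ is principal. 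The only real content here is the four integrality checks.

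For \textbf{(2)(c)}: $\phi$ swaps $s$ and $t$ by conjugation, hence normalises $G$, so $\phi(\fR_\fa)=\phi(G\cdot\Pi_\fa)=G\cdot\phi(\Pi_\fa)$; since $\fa\cdot(-)$ commutes with the $G$-action on roots, it suffices to show $\phi(\Pi_\fa)=\fa\cdot\Pi_\fa$. Writing $\Pi_\fa=\{\frr_1,\frr_2\}$ with $s_{\frr_1}=s$, $s_{\frr_2}=t$, the vector identities above give $\phi(\frr_1)=(\fa v_t,\fa\tinv v_t^\vee,-1)=\fa\cdot\frr_2$ and $\phi(\frr_2)=(\ideal 2\,v_s,\ideal{\tfrac12}\,v_s^\vee,-1)$, which equals $\fa\cdot\frr_1=(\fa^2 v_s,(\fa\tinv)^2 v_s^\vee,-1)$ by $\fa^2=\ideal 2$; so $\phi(\Pi_\fa)=\fa\cdot\Pi_\fa$ and $\phi(\fR_\fa)=\fa\cdot\fR_\fa$, which in particular shows $\fR_\fa$ is stable by $\phi$ up to genus. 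For \textbf{(2)(b)}: using $\fa\tinv v_s^\vee=\fa\inv\cdot 2f_1=(\ideal 2\,\fa\inv)f_1=\fa f_1$, the basis rewrites as $\Pi_\fa=\{(\fa e_1,\fa f_1,-1),\,(\BZ_k(e_2-e_1),\BZ_k(f_2-f_1),-1)\}$, which is visibly symmetric under $e_i\leftrightarrow f_i$. Hence the flip $\Phi\colon W\iso V$, $f_i\mapsto e_i$, with its pairing-companion $\Psi\colon V\iso W$, $e_i\mapsto f_i$, carries the $W$-representation of $G$ onto the $V$-representation (the matrices of $s^\vee,t^\vee$ and of $s,t$ coincide in these bases) and sends the coroot basis $\Pi_\fa^\vee$ bijectively onto $\Pi_\fa$; passing to orbits it carries $\fR_\fa^\vee$ onto $\fR_\fa$. (Equivalently one may compose $\Phi$ with $\phi$, which additionally interchanges $s$ and $t$.) Thus $\fR_\fa$ is self-dual.

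For \textbf{part (1)}: (ii)$\Rightarrow$(i) is immediate from (2) — write $\fa=\ideal{(a)}$ with $a^2=2u$, $u\in\BZ_k^\times$ (possible since $\ideal{(a^2)}=\fa^2=\ideal 2$), and invoke (2)(a),(c). For (i)$\Rightarrow$(ii): the point is that ``$\fR$ is stable by $\phi$ up to genus'' has to mean that $\phi$ stabilises $\fR$ after correction by a scalar of $Z\GL(V)=k^\times$ — i.e. there is $\mu\in k^\times$ with $\mu\phi\in\Aut(\fR)$, equivalently $\phi(\fR)=\fa_0\cdot\fR$ for a \emph{principal} ideal $\fa_0$. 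Then $(\mu\phi)^2=2\mu^2\Id_V$ also lies in $\Aut(\fR)$; but a scalar $\lambda\Id_V$ lies in $\Aut(\fR)$ iff $\ideal\lambda\cdot\fR=\fR$, which for a finite root system forces $\ideal\lambda=\BZ_k$ (iterate $\lambda\Id_V$ on a root and use unique factorisation of fractional ideals, exactly as at the end of the proof of Proposition~\ref{ReducingR}), i.e. $\lambda\in\BZ_k^\times$. Hence $2\mu^2\in\BZ_k^\times$, so $\mu^{-2}$ equals $2$ up to a unit, and $\fa:=\ideal{(\mu\inv)}$ is a principal ideal with $\fa^2=\ideal 2$, which is (ii). I expect the only genuinely delicate step to be this reading of ``up to genus'' in (i): if one allowed an arbitrary fractional ideal $\fa_0$ there, (i) would be strictly weaker than (ii) (for instance over $\BQ(\sqrt{-5})$, where $\ideal 2$ is the square of a non-principal prime ideal), so one must use that the genus ambiguity coming from $\GL(V)$ is exactly the principal-ideal ambiguity. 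Everything else is bookkeeping with fractional ideals, the matrix of $\phi$, and the identity $\fa^2=\ideal 2$.
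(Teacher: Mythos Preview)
Your proof is correct. Part (2) is left to the reader in the paper, and your treatment of it is sound; in particular your use of Proposition~\ref{prop:jeanlemma} for (2)(a) and the check $\phi(\Pi_\fa)=\fa\cdot\Pi_\fa$ for (2)(c) are exactly what one needs, and your observation that $\fa^*=\fa$ (since the free abelian group of fractional ideals has no $2$-torsion and $(\fa^*)^2=\ideal 2=\fa^2$) is the right justification for $\fa\tinv=\fa\inv$.

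For (1)(i)$\Rightarrow$(ii) your route differs from the paper's. The paper chooses a root basis $\Pi=\{(\fa_s v_s,\fa_s\tinv v_s^\vee,-1),(\fa_t v_t,\fa_t\tinv v_t^\vee,-1)\}$, writes $\phi(\fR)=a\fR$ with $a\in k^\times$, and uses $\phi(v_s)=v_t$, $\phi(v_t)=2v_s$ to read off the two ideal equations $\fa_s=a\fa_t$ and $2\fa_t=a\fa_s$, whence $a^2\BZ_k=2\BZ_k$. You instead argue inside $\Aut(\fR)$: if $\mu\phi\in\Aut(\fR)$ then $(\mu\phi)^2=2\mu^2\Id_V\in\Aut(\fR)$, and any scalar automorphism of a finite root system must be a unit (by the same iteration-and-unique-factorisation argument used in the proof of Proposition~\ref{ReducingR}), forcing $\ideal{(\mu\inv)}^2=\ideal 2$. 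Both arguments are short; yours avoids picking a basis and exploits the single relation $\phi^2=2\Id_V$, while the paper's computation is more concrete and makes the role of the two orbits visible. Your closing remark that the equivalence genuinely requires the \emph{principal} reading of ``up to genus'' (and would fail over, say, $\BQ(\sqrt{-5})$) is apt and matches the paper's implicit use of a scalar $a\in k^\times$ rather than an arbitrary fractional ideal.
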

That is, the root system $\fR_\fa$ affords an automorphism corresponding to
the automorphism $\lexp2B_2$ of $G$.
\begin{proof}
        The proof of (2) is easy and left to the reader. Moreover, (2) implies the implication
        (ii)$\Rightarrow$(i) of (1).

        Let us prove (1), (i)$\Rightarrow$(ii). 
%
%
        We may assume that $\fR$, the \zroot\ system for $G$ 
        which is stable (up to genus) under $\phi$, has root basis:
        $$
                \Pi :=
                \left\{\,\Big(\fa_s v_s,\fa_s\tinv v_s^\vee,-1\Big)
                        \,,\,
                        \Big(\fa_t v_t,\fa_t\tinv v_t^\vee, -1\Big)\,
                \right\}
        $$
        for some fractional ideals $\fa_s, \fa_t$.
        Now $\phi(\fR) = a\fR$ for some $a\in k^\times$, so 
        $$
                \fa_s v_t =  a \fa_t v_t
                \quad\text{and}\quad
                2 \fa_t v_s =  a \fa_s v_s
                \,,
        $$
        from which we deduce that 
        $
                a^2 \fa_t = a \fa_s = 2 \fa_t . 
        $
 		Multiplication by $\fa_t\inv$ gives
		$
				a^2 \BZ_k = 2 \BZ_k . 
		$
\end{proof}

        For example, set $k = \BQ(i)\text{ or }\BQ(\sqrt{2})$.
        The ring $\BZ_k$ is a principal ideal domain. Setting
        $\fa := \BZ[i](1+i)\text{ or }\BZ[\sqrt{2}]\sqrt{2}$, then
        $\fa=\fa^*$ and $ \ideal2 = \fa ^2 $ is the decomposition of
        $\ideal2$ in $\BZ_k$.

        It is immediate to check that, if $\ideal 2 = \fa^2$,
        there are at least three genera of
        reduced \zroot\ systems
        for $G$ described as the orbits under $G$ of the following three pairs of
        roots:
        $$
        \begin{array}{ll}
        & \{\,(\ideal{}v_s,\ideal{}v_s^\vee,-1),(\ideal{}v_t,\ideal{}v_t^\vee,-1)\}\\
        & \{\,(\ideal2v_s,\ideal{\frac12}v_s^\vee,-1),(\ideal{}v_t,\ideal{}v_t^\vee,-1)\}\\
        & \{\,(\fa v_s,\fa\tinv  v_s^\vee,-1),(\ideal{}v_t,\ideal{}v_t^\vee,-1)\}.\\
        \end{array}
        $$

\subsection{Connection index}\hfill
\smallskip

        Let $\fR = \{ \frr = (I_\frr,J_\frr,\zeta_\frr) \}$ be a \zroot\ system.
        The \emph{characteristic ideal\/} (see for example \cite[2.3.4.2]{beijing})
        of the torsion $\BZ_k$-module $P_\fR/Q_\fR$ is defined by
        $$
                \bigwedge^r Q_\fR = \Ch(P_\fR/Q_\fR) \bigwedge^r P_\fR
                \,,
        $$
        where $r := \dim V$ (see \cite[\S4, n$^0$6]{boualg}).
        
\begin{remark}
        The characteristic ideal is the image in the group of fractional ideals of $\BZ_k$ of the divisor
        called ``contenu'' in \cite[\S4, n$^0$5, Definition 4]{boualg}.
\end{remark}

        The next definition is inspired by the definition given in \cite[chap. 6, no 1.9]{bou}.

\begin{definition}\label{def:connectionindex}
        The characteristic ideal of the torsion $\BZ_k$-module $P_\fR/Q_\fR$
        is called the \emph{connection index of the root system $\fR$}.
        \index{Connection index of $\fR$}
\end{definition}

\begin{theodef}\label{th:connectionindex}
        Let $(V,G)$ be an irreducible well generated reflection group. 
        The connection index
        of a distinguished \zroot\ system $\fR$ for
        $G$ does not depend on the choice of $\fR$,
        and is called the \emph{connection index} of $(V,G)$.
        \index{Connection index of $(V,G)$}
\end{theodef}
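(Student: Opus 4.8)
The plan is to reduce the statement, via Proposition~\ref{nebegen}, to a single computation showing that the connection index is unchanged when one rescales the individual $G$-orbits of $\fR$ by fractional ideals. Write $r:=\dim V$, let $\fR_1,\dots,\fR_m$ be the orbits of $G$ on $\fR$, and recall that Proposition~\ref{nebegen} produces, for every distinguished \zroot\ system $\fR'$ with group $G$, fractional ideals $\fa_1,\dots,\fa_m$ with $\fR'=\fa_1\fR_1\cup\dots\cup\fa_m\fR_m$. So it suffices to prove $\Ch(P_{\fR'}/Q_{\fR'})=\Ch(P_\fR/Q_\fR)$ for such an $\fR'$.

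First I would obtain a closed ``Cartan determinant'' formula for the connection index. Since $(V,G)$ is well generated (and essential, by Theorem~\ref{groupsandroots}(1)) it is generated by $r$ distinguished reflections $s_1,\dots,s_r$: take any $r$ generating reflections and replace each by the distinguished reflection with the same hyperplane, which is legitimate because each $C_G(H)$ is cyclic. As $\fR$ is distinguished these lift uniquely to $\frr_1,\dots,\frr_r\in\fR$, and by Proposition~\ref{root and coroot basis} the set $\Pi=\{\frr_1,\dots,\frr_r\}$ is at once a root basis \emph{and} a coroot basis of $\fR$. Choosing a generator $\alpha_i$ of the line $kI_{\frr_i}$ and then a generator $\beta_i$ of $kJ_{\frr_i}$ with $\scal{\alpha_i}{\beta_i}=1-\zeta_{\frr_i}$, one has $I_{\frr_i}=\fb_i\alpha_i$ for a fractional ideal $\fb_i$, and the relation $\scal{I_{\frr_i}}{J_{\frr_i}}=\ideal{(1-\zeta_{\frr_i})}$ forces $J_{\frr_i}=\fb_i\tinv\beta_i$. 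Because $\Pi$ is both a root and a coroot basis, $Q_\fR=\bigoplus_i\fb_i\alpha_i$ and $Q_\fR^\vee=\bigoplus_i\fb_i\tinv\beta_i$, so, writing $\eta_V:=\alpha_1\wedge\cdots\wedge\alpha_r$, $\eta_W:=\beta_1\wedge\cdots\wedge\beta_r$ and $\Delta:=\det\big(\scal{\alpha_i}{\beta_j}\big)_{1\le i,j\le r}$ (nonzero, the pairing $V\times W\to k$ being perfect),
$$
\bigwedge^r Q_\fR=(\fb_1\cdots\fb_r)\,\eta_V\,,\qquad
\bigwedge^r Q_\fR^\vee=(\fb_1\cdots\fb_r)\tinv\,\eta_W\,.
$$
Using the standard fact that the dual lattice of $Q_\fR^\vee$ has top exterior power dual to $\bigwedge^r Q_\fR^\vee$ under the induced perfect pairing $\bigwedge^r V\times\bigwedge^r W\to k$, for which $\scal{\eta_V}{\eta_W}=\Delta$, one obtains $\bigwedge^r P_\fR=\Delta\inv(\fb_1\cdots\fb_r)\,\eta_V$. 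Feeding this into the defining relation $\bigwedge^r Q_\fR=\Ch(P_\fR/Q_\fR)\bigwedge^r P_\fR$ gives $\Ch(P_\fR/Q_\fR)=\ideal{\Delta}$.

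Then I would conclude. Each $\frr_i$ lies in some orbit $\fR_{j(i)}$, so $\frr'_i:=\fa_{j(i)}\cdot\frr_i\in\fR'$ with $s_{\frr'_i}=s_{\frr_i}=s_i$; hence $\Pi':=\{\frr'_1,\dots,\frr'_r\}$ is an $r$-element subset of the distinguished system $\fR'$ whose reflections generate $G$, so Proposition~\ref{root and coroot basis} again makes it a root basis and coroot basis of $\fR'$. But
$$
\frr'_i=\fa_{j(i)}\cdot\frr_i=\big((\fa_{j(i)}\fb_i)\,\alpha_i,\;(\fa_{j(i)}\fb_i)\tinv\,\beta_i,\;\zeta_{\frr_i}\big)\,,
$$
so $\Pi'$ is a normalised root/coroot basis of $\fR'$ built from exactly the same vectors $\alpha_i,\beta_i$, only the ideal $\fb_i$ having been replaced by $\fa_{j(i)}\fb_i$. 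Hence the matrix $\big(\scal{\alpha_i}{\beta_j}\big)$ — and with it $\Delta$ — is literally the same, and Step~1 applied to $\fR'$ yields $\Ch(P_{\fR'}/Q_{\fR'})=\ideal{\Delta}=\Ch(P_\fR/Q_\fR)$, as required. (Irreducibility is not used beyond framing the notion of the connection index of $(V,G)$.)

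The main obstacle is Step~1, and within it the observation that makes the rescaling ideals cancel: one needs $\Pi$ to be simultaneously a root basis and a coroot basis, so that $Q_\fR$ and $Q_\fR^\vee$ split as direct sums over the \emph{same} index set with mutually twisted-inverse ideals $\fb_i$ and $\fb_i\tinv$ — this is exactly where ``well generated'' and ``distinguished'' enter, through Proposition~\ref{root and coroot basis}. Beyond that, the only care needed is routine bookkeeping with the twisted-dual operations $\fa\mapsto\fa\inv$, $\fa\mapsto\fa\tinv$ on fractional ideals and the induced duality on $\bigwedge^r$, using repeatedly $(\fa\fb)\tinv=\fa\tinv\fb\tinv$ and $(\fa\tinv)\tinv=\fa$.
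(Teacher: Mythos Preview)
Your proof is correct and follows essentially the same approach as the paper's: both hinge on choosing, via Proposition~\ref{root and coroot basis}, a set $\Pi$ of $r$ distinguished roots that is simultaneously a root basis and a coroot basis, writing $I_{\frr_i}=\fb_i\alpha_i$ and $J_{\frr_i}=\fb_i\tinv\beta_i$, and observing that the rescaling ideals relating $\fR$ to $\fR'$ multiply $\bigwedge^r Q$ and $\bigwedge^r P$ by the same factor, so their ratio is unchanged. The only minor difference is that you go further and explicitly identify the connection index as the ideal $\ideal{\Delta}$ generated by the Cartan determinant, whereas the paper defers that formula to Proposition~\ref{prop:connectionwellgen} and here argues more directly (introducing the dual basis $w'_\frr$ of the $\beta_i$ to describe $P_\fR$, then comparing $\bigwedge^r Q_{\fR'}$ and $\bigwedge^r P_{\fR'}$ to $\bigwedge^r Q_\fR$ and $\bigwedge^r P_\fR$ without naming $\Delta$). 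Your detour through Proposition~\ref{nebegen} is also unnecessary---the paper simply invokes Lemma~\ref{lemma:reflection=genus}(3) to get the fractional ideal $\fb_\frr$ root by root---but it does no harm.
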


\begin{proof}[Proof of Theorem~\ref{th:connectionindex}]

        Let $r :=\dim V$. Let $\fR$ be a distinguished \zroot\ system for $G$.
        By item (4) of Proposition \ref{prop:RootBasis}, since $(V,G)$ is well generated,
        by Proposition \ref{root and coroot basis} there exists a set $\Pi$ of 
        $r$ roots such that 
        $
                Q_\fR = \bigoplus_{\frr\in\Pi} I_{\frr}
        $
        and 
        $
                Q_\fR^\vee = \bigoplus_{\frr\in\Pi} J_{\frr}
                \,.
        $
        
         For all $\frr\in\Pi$ write $I_\frr=\fa_\frr v_\frr$ and
         $J_\frr=\fa_\frr\tinv  w_\frr$ for some vectors $v_\frr$ and $w_\frr$
         with $\scal{v_\frr}{w_\frr}=1-\zeta_\frr$ and some fractional ideal
         $\fa_\frr$.
         Let $w'_\frr$ be the dual basis of $w_\frr$ and set $J'_\frr=\fa_\frr
         w'_\frr$. Then $\CP_\fR=\bigoplus_{\frr\in\Pi}J'_\frr$.

        Assume given another distinguished \zroot\ system $\fR'$
        associated with the same set of reflections. 
        For each $\frr \in \fR$, associated with the reflection $s_\frr$, let
        us denote by $\frr'$ the element of $\fR'$ associated with the same
        reflection $s_\frr$. Then, if 
        $
                \frr'= (I_{\frr'},J_{\frr'},\zeta_\frr)
                \,,
        $
        we have $I_{\frr'} = \fb_\frr I_\frr$ and $J_{\frr'} = \fb_\frr\tinv  J_\frr$
        for a fractional ideal $\fb_\frr$.
        
        Then
        $$
                Q_{\fR'} = \bigoplus_{\frr\in\Pi} \fb_\frr I_\frr
                \quad\text{ and }\quad
                P_{\fR'} = \bigoplus_{\frr\in\Pi} \fb_\frr J'_\frr
                \,,
        $$
        and
        $$
        \begin{aligned}
                &\bigwedge^r Q_{\fR'} 
                = \big( \prod_{\frr\in\Pi} \fb_{\frr}\big) \bigwedge^r Q_{\fR} \,,\\
                &\bigwedge^r P_{\fR'} 
                = \big( \prod_{\frr\in\Pi} \fb_{\frr}\big) \bigwedge^r P_{\fR} 
                \,.
        \end{aligned}
        $$
        This shows that 
        $$
                \Ch(P_{\fR'}/Q_{\fR'}) = \Ch(P_\fR/Q_\fR)
                \,,
        $$
        and ends the proof.
\end{proof}

\begin{remark}
        Let $V$ and $W$ be as above, such that $\dim V = r$.
        Let $(V,G)$ be a reflection group and
        let $s_1,\ldots,s_r$ be a set of reflections such that
        $V = \bigoplus_{i=1}^r L_{s_i}$ and $W = \bigoplus_{i=1}^r M_{s_i}$.
        For each $i = 1,\ldots r$, pick $v_i\in L_{s_i}$ and $v_i^\vee\in M_{s_i}$
        such that $\scal{v_i}{v_i^\vee} = 1 -\zeta_{s_i}$. Then the \emph{Cartan matrix}
        $(\scal{v_i}{v_i^\vee})_{i,j}$ depends only (up to conjugation by a diagonal matrix)
        on the choice of the set $s_1,\ldots,s_r$, hence its determinant depends only
        on such a choice.
        
        We shall see later (Proposition \ref{prop:connectionwellgen}) that 
        if moreover $s_1,\ldots,s_r$ generate $G$, that determinant generates
        (as an ideal) the connection index, hence in particular it does not
        depend (up to a unit) on the choice of the
        generators $s_1,\ldots,s_r$.
\end{remark}
        
\section{\red{The cyclic groups}}\label{cyclic}

\subsection{Generalities}\hfill
 
        As an introduction, let us consider the case of the cyclic group. 
        Let $k$ be a finite extension of $\BQ$ which contains $\bmu_d$, 
        also viewed as a one dimensional vector space $V$
        over itself; it is paired with $W=k$ by $\scal{a}{b} := ab^*$.
  
        Let $\zeta := \exp(2\pi i/d)$, and 
        let $G := \bmu_d = \{1,\zeta,\dots,\zeta^{d-1}\}$ be the cyclic subgroup 
        of $\BC^\times$ of order $d$.
        We let $G$ act on $k$ by multiplication.

\begin{proposition}\label{rootcyclic}\hfill
\begin{enumerate}
        \item
                Whenever $\fF := (\fa_1,\fa_2,\dots,\fa_{d-1})$ is a family of ideals of 
                $\BZ_k$ such that,
                for each $j$ ($1\leq j\leq d-1$), $\fa_j$ divides
                $\ideal{(1-\zeta^j)}$,
                then the set
                $$
                        \fR(\fF) := \{ (\fa_j,(1-\zeta^j)^*\fa_j\tinv ,\zeta^j)\,\mid\,(1\leq j\leq d-1) \}
                $$
                is a complete reduced \zroot\ system for $G = \bmu_d$.
        \item
                The family $(\fR(\fF))$ where $\fF = (\fa_1,\fa_2,\dots,\fa_{d-1})$
                runs over the families as above such that $\fa_1,\fa_2,\dots,\fa_{d-1}$
                are relatively prime,
                is a complete set of representatives for the genera of
                complete reduced \zroot\ systems for $G$. 
\end{enumerate}
\end{proposition}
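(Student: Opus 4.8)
The plan is to verify the three root system axioms for $\fR(\fF)$ directly, then handle the classification in (2) by reducing an arbitrary complete reduced system to one of the displayed form via the genus action.

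\textbf{Part (1).} First I would check \textbf{(RS$_I$)}: the set $\fR(\fF)$ has exactly $d-1$ elements (one for each $j$ with $1 \leq j \leq d-1$), hence is finite, and since $V = k$ is one-dimensional and each $\fa_j$ is a nonzero ideal, each $I_\frr = \fa_j$ already spans $V$. For \textbf{(RS$_{II}$)}: each reflection $s_{\frr_j}$ associated with $(\fa_j,(1-\zeta^j)^*\fa_j\tinv,\zeta^j)$ is just multiplication by $\zeta^j$ on $V=k$ (and by $(\zeta^j)^{-*}=\zeta^{-j}$ on $W$, since $s^\vee$ is the contragredient); applying it to a root $(\fa_i,(1-\zeta^i)^*\fa_i\tinv,\zeta^i)$ gives $(\zeta^j\fa_i,\zeta^{-j}(1-\zeta^i)^*\fa_i\tinv,\zeta^i) = (\fa_i,(1-\zeta^i)^*\fa_i\tinv,\zeta^i)$ because $\zeta^j$ is a unit of $\BZ_k$, so each $\fa_i$ and $\fa_i\tinv$ is fixed as an ideal — thus $s_{\frr_j}\cdot\fR(\fF) = \fR(\fF)$. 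For \textbf{(RS$_{III}$)}: I compute $\frn{\frr_i}{\frr_j} = \scal{\fa_i}{(1-\zeta^j)^*\fa_j\tinv} = \fa_i\fa_j^{-1}\cdot\ideal{(1-\zeta^j)}$ (using that $(\fb^{-*})^* = \fb^{-1}$ and $\ideal{(1-\zeta^j)^*} = \ideal{(1-\zeta^j)}$ since complex conjugation preserves the ideal $\ideal{(1-\zeta^j)}$). Since $\fa_j \mid \ideal{(1-\zeta^j)}$ by hypothesis, the ideal $\ideal{(1-\zeta^j)}\fa_j^{-1}$ is integral, and multiplying by the integral ideal $\fa_i$ keeps it integral; hence $\frn{\frr_i}{\frr_j}\subseteq\BZ_k$. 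This establishes that $\fR(\fF)$ is a \zroot\ system. It is complete because for each $j$ the reflection of determinant $\zeta^j$ appears, and these exhaust $\Rf(\bmu_d)$; it is reduced because the map $\frr_j \mapsto s_{\frr_j}$ is injective on the index set $\{1,\dots,d-1\}$.

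\textbf{Part (2).} I first observe that scaling a root system by a fractional ideal $\fb$ sends $\fR(\fF)$ to $\fR(\fb\fF)$ where $\fb\fF := (\fb\fa_1,\dots,\fb\fa_{d-1})$ — here $\fb\cdot(\fa_j,(1-\zeta^j)^*\fa_j\tinv,\zeta^j) = (\fb\fa_j,\fb\tinv(1-\zeta^j)^*\fa_j\tinv,\zeta^j) = (\fb\fa_j,(1-\zeta^j)^*(\fb\fa_j)\tinv,\zeta^j)$. So the genus of $\fR(\fF)$ depends only on the tuple $(\fa_1,\dots,\fa_{d-1})$ modulo simultaneous multiplication by a fractional ideal, and normalizing so that $\gcd(\fa_1,\dots,\fa_{d-1}) = \BZ_k$ picks out a canonical representative in each genus: if two normalized tuples $\fF,\fF'$ satisfy $\fR(\fF') = \fb\cdot\fR(\fF)$, then $\fa'_j = \fb\fa_j$ for all $j$, and taking gcd's forces $\fb = \BZ_k$, so $\fF = \fF'$. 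It remains to show every complete reduced \zroot\ system $\fR$ for $\bmu_d$ is in the genus of some $\fR(\fF)$. Since $\fR$ is complete and reduced, it has exactly one root $\frr_j$ of determinant $\zeta^j$ for each $j$, and by Lemma~\ref{lemma:reflection=genus} this root is $(\fa_j, (1-\zeta^j)^*\fa_j\tinv, \zeta^j)$ for some fractional ideal $\fa_j$ (the second component being forced by the first via Remark~\ref{IzDetermineJ}). Axiom \textbf{(RS$_{III}$)} applied to the pair $(\frr_j,\frr_j)$ gives $\ideal{(1-\zeta^j)}\fa_j^{-1}\fa_j \subseteq \BZ_k$... more usefully, applied to $(\frr_i,\frr_j)$ it gives $\fa_i\fa_j^{-1}\ideal{(1-\zeta^j)}\subseteq\BZ_k$ for all $i,j$; taking $i=j$... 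I instead argue: multiplying all $\fa_j$ by a common ideal I may normalize $\gcd_j\fa_j = \BZ_k$, and then \textbf{(RS$_{III}$)} with $i$ chosen so that $\fa_i$ is coprime to $\fa_j$ forces $\ideal{(1-\zeta^j)}\fa_j^{-1}\subseteq\BZ_k$, i.e. $\fa_j \mid \ideal{(1-\zeta^j)}$. Hence $\fF := (\fa_1,\dots,\fa_{d-1})$ satisfies the hypothesis of (1) and $\fR = \fR(\fF)$. Finally the normalization $\gcd_j\fa_j = \BZ_k$ is exactly the "relatively prime" condition, so the $\fR(\fF)$ with $\fF$ relatively prime form a complete irredundant set of genus representatives.

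\textbf{Main obstacle.} The delicate point is the argument that normalizing by $\gcd_j \fa_j = \BZ_k$ makes each $\fa_j$ individually divide $\ideal{(1-\zeta^j)}$. The integrality axiom only directly bounds the \emph{products} $\fa_i\fa_j^{-1}\ideal{(1-\zeta^j)}$; to pin down $\fa_j$ alone I need to exploit that for each fixed $j$, I can find an index $i$ (or a combination of indices, via the prime-by-prime decomposition in the Dedekind domain $\BZ_k$) such that $\fa_i$ contributes no negative powers of the primes dividing $\fa_j$. This is where the "relatively prime" hypothesis does real work, and where I would spend care — probably arguing locally at each prime $\fp$ of $\BZ_k$: since $\gcd_j\fa_j = \BZ_k$, at least one $\fa_i$ is a $\fp$-unit, and \textbf{(RS$_{III}$)} for that pair gives the required local bound $v_\fp(\fa_j) \leq v_\fp(1-\zeta^j)$, in particular $v_\fp(\fa_j)\geq 0$ as well. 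I should also double check the interaction of complex conjugation with $\ideal{(1-\zeta^j)}$ and the identity $\ideal{(1-\zeta^j)^*}=\ideal{(1-\overline{\zeta^j})}=\ideal{(1-\zeta^{-j})}=\ideal{(1-\zeta^j)}$ (the last equality because $1-\zeta^{-j} = -\zeta^{-j}(1-\zeta^j)$ and $\zeta^{-j}$ is a unit), which is used silently throughout.
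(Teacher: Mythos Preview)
Your proof is correct and follows essentially the same route as the paper's: write an arbitrary complete reduced system as $\{(\fa_j,(1-\zeta^j)^*\fa_j\tinv,\zeta^j)\}_j$, normalize so that the $\fa_j$ are integral and relatively prime, and then use (RS$_{III}$) to force $\fa_j\mid\ideal{(1-\zeta^j)}$. The paper dispatches your ``main obstacle'' in one line by summing the inclusions $(1-\zeta^k)\fa_j\subseteq\fa_k$ over $j$ (using $\sum_j\fa_j=\BZ_k$) rather than arguing prime by prime; your local argument is equivalent but slightly longer. One small slip: with the Hermitian pairing $\scal ab=ab^*$ on $V=W=k$, the dual action $s^\vee$ of multiplication by $\zeta^j$ is again multiplication by $\zeta^j$ (not $\zeta^{-j}$), since $(\zeta^j)^{-*}=\overline{\zeta^{-j}}=\zeta^j$ --- but this is harmless, as any unit fixes the ideals.
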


\begin{proof}
        The assertion (1) is trivial. Let us prove (2).
 
        According to Lemma \ref{lemma:reflection=genus}(1) a \zroot\
        for a reflection of $G$ is a triple $(\fa v,\fa\tinv w,\zeta^j)$ where
        $v \in V=\BC$ and $w \in W=\BC$ and $\scal vw=1-\zeta^j$.
        We will write this $(\fa_j,\fb_j,\zeta^j)$ where we have set
        $\fa_j=\fa v$ and $\fb_j=\fa\tinv w$ and the condition becomes
        $\scal{\fa_j}{\fb_j}=\fa_j\fb_j^*=\ideal{(1-\zeta^j)}$. This implies that
        $\fb_j = ((1-\zeta^j)\fa_j\inv)^* =(1-\zeta^j)^*\fa_j\tinv$.

         According to definitions \ref{Zkrootsystems}, \ref{variousdefs}.\ref{def:reduced} and
        \ref{variousdefs}.\ref{def:complete} a complete and reduced \zroot\ 
        system for $G$ is a set of roots as above
        $$
                \fR = \{ (\fa_j,\fb_j,\zeta^j)\,\mid\, (0<j<d)\}
                \,,
        $$
         subject to the integrality condition that for $1\leq j,k\leq d-1$, we have
        $$
                \fa_j \fb_k^* \subseteq\BZ_k
                \,.
        $$

        In other words, 
        $$
                 \fR = \{ (\fa_j,(1-\zeta^{-j})\fa_j\tinv ,\zeta^j)\,\mid\, (1\leq j\leq d-1)\}
                \,,
        $$
         where $ (1-\zeta^k) \fa_j \fa_k\inv \subseteq\BZ_k$; or equivalently:
        \begin{equation}
        \tag{$\fR_{j,k}$}
                (1-\zeta^k) \fa_j \subseteq \fa_k\BZ_k \subseteq \fa_k
                \,.
         \end{equation}
 
        Since we want to know only the genus of $\fR$, we may multiply
        the family $(\fa_j)_{1\leq j\leq d-1}$ by an integral ideal so that the result is a family
        (still denoted by $(\fa_j)_{1\leq j\leq d-1}$) such that 
        \emph{all $\fa_j$ are integral, and they are relatively prime} (\ie\
        their sum is $\BZ_k$).
 
        Letting $j$ vary in the equality $\fR_{j,k}$ shows then that, for all $k$, $\fa_k$
        divides $\ideal{(1-\zeta^k)}$.
\end{proof}

\begin{corollary}\habel{cycliclattice}

\begin{enumerate}
        \item
                Each genus of complete root systems for the cyclic group
                contains a root system whose root lattice is $\BZ_k$.
        \item
                All root lattices for complete root systems
                for the cyclic group are lattice equivalent. 
\end{enumerate}
\end{corollary}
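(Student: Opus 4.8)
The plan is to derive both parts from one elementary fact: since here the space $V$ is one-dimensional, every rank-one $\BZ_k$-submodule in sight --- in particular each $I_\frr$ and the root lattice $Q_\fR$ --- is a nonzero fractional ideal of $\BZ_k$, and multiplying a \zroot\ system by a fractional ideal $\fa$ keeps it inside the same genus while multiplying its root lattice by $\fa$.

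For part (1), I would start from an arbitrary complete \zroot\ system $\fR$ for $G=\bmu_d$, note that $Q_\fR=\sum_{\frr\in\fR}I_\frr$ is a nonzero fractional ideal, set $\fa:=Q_\fR\inv$, and pass to $\fa\cdot\fR$. Scaling a root by a fractional ideal does not change its reflection triple (the $k$-spans $kI_\frr,kJ_\frr$ are unchanged), hence does not change the reflection $s_\frr$; so $\fa\cdot\fR$ is again a complete \zroot\ system for the same group $G$, it lies in the genus of $\fR$, and by the identity $Q_{\fa\cdot\fR}=\fa Q_\fR$ recorded earlier its root lattice is $\fa Q_\fR=\ideal1$. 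Alternatively, for the \emph{reduced} complete systems one reads (1) directly off Proposition~\ref{rootcyclic}(2): the representatives $\fR(\fF)$ with the $\fa_j$ relatively prime already have root lattice $\sum_j\fa_j=\ideal1$; the scaling argument above has the advantage of also covering the non-reduced systems.

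For part (2), I would combine (1) with the remark (made right after the definition of the genus of a root system) that root systems of the same genus are lattice equivalent. By (1) any complete \zroot\ system $\fR$ for $G$ lies in the genus of some $\fR_0$ with $Q_{\fR_0}=\ideal1$; writing $\fR=\fb\cdot\fR_0$ gives $Q_\fR=\fb Q_{\fR_0}=\fb$, so the root lattice of any complete \zroot\ system for $G$ is a fractional ideal. Since any two fractional ideals $\fb,\fb'$ of $\BZ_k$ are carried into one another by multiplication by $\fb'\fb\inv$, all these root lattices are lattice equivalent, which is the claim.

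I do not expect a genuine obstacle. The one point worth a sentence is that ``being a complete \zroot\ system for $G$'' is constant along a genus: this is the routine check that replacing every $I_\frr$ by $\fa I_\frr$ and every $J_\frr$ by $\fa\tinv J_\frr$ preserves axioms \textbf{(RS$_I$)}--\textbf{(RS$_{III}$)} --- for \textbf{(RS$_{III}$)} using $\frn{\fa\cdot\frr_1}{\fa\cdot\frr_2}=\frn{\frr_1}{\frr_2}$ --- and leaves each $s_\frr$ fixed, which is immediate and in any case implicit in the very definition of the genus.
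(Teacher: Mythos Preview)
Your proof is correct and follows essentially the same approach as the paper. For (1) the paper appeals to the normalization in Proposition~\ref{rootcyclic}(2) (scaling so that the $\fa_j$ are coprime, hence $\sum_j\fa_j=\ideal1$), while you phrase the same move as scaling by $Q_\fR\inv$; these are equivalent, and your version has the minor advantage of transparently covering non-reduced complete systems as well. For (2) the paper simply says it is immediate from (1), and your argument spells out exactly that.
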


\begin{proof}
        Reasoning as in the proof of item (2) of Proposition \ref{rootcyclic},
        each genus contains
        $\fR(\fF) := \{ (\fa_j,\fb_j ,\zeta^{i_j})\}_j$
        where $\sum_i \fa_i= \BZ_k$, which shows item (1).

        Item (2) is an immediate consequence of item (1).
\end{proof}

        Every cyclic group has at least one complete root system which is principal 
        and has a root basis.
        For example, for the family of ideals $\fF$ for which 
        $\fa_1 = \dots = \fa_{d-1} = \BZ_k$, then
        the singleton
        $\{(\BZ_k,(1-\zeta)^*\BZ_k,\zeta)\}$ is a root basis.
        
        The next easy remark will be useful later.

\begin{proposition}\label{prop:bases&indexcyclic}
        Let $G = \bmu_d$ be the cyclic group of order $d$ and let $\zeta := \exp(2\pi i/d)$.
\begin{enumerate}
        \item
                The singleton $\{(\BZ_k,(1-\zeta)^*\BZ_k,\zeta)\}$ is a complete principal
                \zroot\ system, and provides a root basis.
        \item
                The connection index is the principal ideal
                $\ideal{(1-\zeta)}$. 
\end{enumerate} 
\end{proposition}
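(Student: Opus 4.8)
The plan is to verify the three root-system axioms for the singleton directly, note in passing that it is principal and distinguished, and then compute the connection index by an elementary manipulation of fractional ideals, the rank being $r=\dim V=1$.

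\textbf{Part (1).} Put $\frr := (\BZ_k,(1-\zeta)^*\BZ_k,\zeta)$ and $\fR := \{\frr\}$. First, $\frr$ is a \zroot, since the $*$'s cancel: $\scal{\BZ_k}{(1-\zeta)^*\BZ_k} = \ideal{(1-\zeta)}$, and $\zeta$ is a nontrivial root of unity in $k$. Next I check Definition~\ref{Zkrootsystems}: (RS$_I$) holds because $\fR$ is finite and $I_\frr = \BZ_k$ spans $V=k$; (RS$_{III}$) holds because $\frn\frr\frr = \ideal{(1-\zeta)}\subseteq\BZ_k$ (as $1-\zeta\in\BZ_k$); and for (RS$_{II}$), taking $v=1$ and $w=(1-\zeta)^*$ in the defining formula $x\mapsto x-\scal xw\,v$ for $s_\frr$ (a legitimate choice, as $\scal vw = 1-\zeta$) gives $s_\frr(x) = x-x(1-\zeta) = \zeta x$, so $s_\frr$ is multiplication by $\zeta$; being a unit it fixes $\BZ_k$ and $(1-\zeta)^*\BZ_k$, whence $s_\frr\cdot\fR = \fR$. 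Thus $\fR$ is a \zroot\ system; it is principal because $I_\frr = \BZ_k$ is $\BZ_k$-free (Lemma--Definition~\ref{lem:principalroot}), trivially reduced, and distinguished because $\det s_\frr = \zeta = \exp(2\pi i/|C_G(H_{s_\frr})|)$, since $H_{s_\frr} = 0$ and $C_G(H_{s_\frr}) = G = \bmu_d$ has order $d$. Since $s_\frr$ has order $d$ we get $G(\fR) = \bmu_d = G$, and $\Pi := \fR$ is a root basis: $Q_\fR = I_\frr = \bigoplus_{\frr'\in\Pi}I_{\frr'}$ and $(s_{\frr'})_{\frr'\in\Pi}$ generates $G$. (It is likewise a root basis of the complete reduced principal \zroot\ system $\fR(\BZ_k,\dots,\BZ_k)$ furnished by Proposition~\ref{rootcyclic}, whose root lattice is $\sum_{j}\BZ_k = \BZ_k$.)

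\textbf{Part (2).} I compute the lattices attached to the distinguished system $\fR$; by Theorem--Definition~\ref{th:connectionindex} this computes the connection index of $(V,G)$, and the same value is obtained from the complete system. Clearly $Q_\fR = I_\frr = \BZ_k$, while $Q_\fR^\vee = J_\frr = \ideal{(1-\zeta)^*} = \ideal{(1-\zeta)}$, the last equality because $(1-\zeta)^* = 1-\zeta^{-1} = -\zeta^{-1}(1-\zeta)$ differs from $1-\zeta$ by a unit of $\BZ_k$. (Working instead with $\fR(\BZ_k,\dots,\BZ_k)$, one gets $Q^\vee = \sum_{j=1}^{d-1}\ideal{(1-\zeta^j)^*} = \ideal{(1-\zeta)}$, because $1-\zeta$ divides each $1-\zeta^j$, the quotient being $1+\zeta+\dots+\zeta^{j-1}\in\BZ_k$, with equality reached at $j=1$.) Then, by the definition of $P_\fR$ and using $\scal xy = xy^*$,
$$
        P_\fR = \{\,x\in k\mid x\,\ideal{(1-\zeta)}\subseteq\BZ_k\,\} = (\ideal{(1-\zeta)})\inv = \ideal{(1-\zeta)\inv}\,.
$$
With $r = \dim V = 1$, the relation $\bigwedge^r Q_\fR = \Ch(P_\fR/Q_\fR)\,\bigwedge^r P_\fR$ reads $\BZ_k = \Ch(P_\fR/Q_\fR)\cdot\ideal{(1-\zeta)\inv}$, whence the connection index is $\Ch(P_\fR/Q_\fR) = \ideal{(1-\zeta)}$. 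Equivalently, multiplication by $1-\zeta$ identifies $P_\fR/Q_\fR$ with $\BZ_k/\ideal{(1-\zeta)}$, whose characteristic ideal is $\ideal{(1-\zeta)}$.

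\textbf{Where the work is.} There is no genuinely hard step here: the argument is a chain of routine verifications. The only points that call for a little care are keeping track of the twist $*$ both in the Hermitian pairing and in the definition of $P_\fR$ (harmless, since $\ideal{(1-\zeta)^*} = \ideal{(1-\zeta)}$), and — if one prefers to argue via the complete system rather than the distinguished singleton — the identity $\sum_{j=1}^{d-1}\ideal{(1-\zeta^j)} = \ideal{(1-\zeta)}$, which rests on $1-\zeta \mid 1-\zeta^j$ for every $j$.
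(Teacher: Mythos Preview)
Your proof is correct. The paper gives no proof of this proposition --- it is introduced with the sentence ``The next easy remark will be useful later'' and then stated without argument --- so your direct verification of the axioms and explicit computation of $Q_\fR$, $Q_\fR^\vee$, $P_\fR$ is exactly the kind of routine check the paper leaves to the reader.

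One small observation worth making explicit (and which you essentially already handle): the literal wording of item (1) says the singleton \emph{is} a complete root system, but for $d>2$ the singleton is distinguished, not complete, since $\bmu_d$ has $d-1$ reflections. As the paragraph immediately preceding the proposition makes clear, the intended meaning is that the singleton is a root basis of the complete principal system $\fR(\BZ_k,\dots,\BZ_k)$ of Proposition~\ref{rootcyclic}. You treat both readings --- the singleton as a distinguished system in its own right (sufficient for invoking Theorem--Definition~\ref{th:connectionindex}) and as a root basis of the complete system --- so this imprecision in the statement causes no trouble for your argument.
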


\subsection{The case of $G=\bmu_2$}\habel{bmu2}\hfill
\smallskip

        In that case, whatever the decomposition of $\ideal{2}$ into prime ideals of $\BZ_k$ is,
        the following description results from \ref{rootcyclic}.
         
\begin{lemma}\label{partcasemu2}
        Whatever the field $k$ is, 
        there is only one genus of reduced root system for 
        $\bmu_2$, represented by the singleton
        $$
                \{(\ideal{},\ideal{2},-1)\}
                \,,
        $$
        or by any singleton $\{(\fa,\fb^*,-1)\}$ where $\fa$ and $\fb$ are two integral
        ideals such that $\fa\fb = \ideal2$.
\end{lemma}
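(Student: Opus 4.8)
The plan is to specialise the analysis of Section~\ref{cyclic} to $d=2$, where the picture degenerates because $\bmu_2=\{1,-\Id_V\}$ possesses a single reflection, namely $-\Id_V$, acting on the line $V=k$ with reflecting line $L=V$ and dual reflecting line $M=W$. First I would observe that, for this group, axiom (RS$_I$) together with reducedness forces a reduced \zroot\ system to be a \emph{singleton} $\{\frr\}$, with $\frr=(I,J,-1)$ a $(-\Id_V,\BZ_k)$-root; that is, $I$ and $J$ are rank one $\BZ_k$-submodules of $V$ and $W$ with $\scal IJ=\ideal{(1-(-1))}=\ideal2$. Conversely, any such singleton \emph{is} a reduced \zroot\ system for $\bmu_2$: condition (RS$_{II}$) of Definition~\ref{Zkrootsystems} holds automatically, since $-\Id_V$ sends every $\BZ_k$-submodule of $V$ (resp.\ of $W$) to itself, and (RS$_{III}$) is just the inclusion $\ideal2\subseteq\BZ_k$; the map $\frr\mapsto s_\frr$ is trivially injective on a singleton.

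Next I would invoke Lemma~\ref{lemma:reflection=genus}(3): for the reflection $s=-\Id_V$ the set of all $(s,\BZ_k)$-roots forms a single genus. Combined with the previous paragraph this shows at once that any two reduced \zroot\ systems for $\bmu_2$ lie in the same genus, so there is exactly one genus — and this holds whatever the number field $k$ is, since $-1\in k$ always. (Alternatively one may read this off Proposition~\ref{rootcyclic}(2): for $d=2$ a ``family'' $\fF=(\fa_1)$ has a single member, so the mutual-coprimality requirement is vacuous, and the identity $\fR(\fa_1)=\fa_1\cdot\fR(\BZ_k)$ shows that all the $\fR(\fa_1)$ already coincide up to genus.)

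Finally I would exhibit the asserted representatives. For integral ideals $\fa,\fb$ of $\BZ_k$ with $\fa\fb=\ideal2$, the singleton $\{(\fa,\fb^*,-1)\}$ satisfies $\scal{\fa}{\fb^*}=\fa\,(\fb^*)^*=\fa\fb=\ideal2$ (using that $*$ is an involution), so by the first paragraph it is a reduced \zroot\ system for $\bmu_2$, hence a representative of the unique genus; the choice $\fa=\BZ_k=\ideal{}$, $\fb=\ideal2$ recovers $\{(\ideal{},\ideal2,-1)\}$. There is essentially no obstacle here; the only points deserving a word are that every such singleton really does satisfy the three axioms — the verification of (RS$_{II}$) being exactly where $d=2$ is special, because $-\Id_V$ stabilises all $\BZ_k$-lattices — and that an arbitrary reduced \zroot\ system for $\bmu_2$, even one whose first component $I$ is not an integral ideal, nonetheless lies in this genus, which is immediate from Lemma~\ref{lemma:reflection=genus}(3).
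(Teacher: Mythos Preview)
Your proof is correct and aligns with the paper's approach: the paper simply notes that the lemma ``results from \ref{rootcyclic}'', which is exactly your alternative route via Proposition~\ref{rootcyclic}(2), specialised to $d=2$ where the single-member family has vacuous coprimality constraint. Your primary argument via Lemma~\ref{lemma:reflection=genus}(3) is a slight variant --- more direct in that it bypasses the general classification of Proposition~\ref{rootcyclic} altogether --- but it lands in the same place; the only cosmetic quibble is that the automatic verification of (RS$_{II}$) is not special to $d=2$ (any root of unity is a unit of $\BZ_k$, so multiplication by $\zeta$ stabilises every fractional ideal for all cyclic $\bmu_d$), though this does not affect correctness.
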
  

\begin{remark}
        Motivated by questions related to exceptional Spetses (see \cite{spe}),
        we consider the following cases for the field $k$:
        $$
                \BQ\,,\,\BQ(i)\,,\,\BQ(\zeta_3)\,,\, \BQ(\zeta_{12})
                \,,\, 
                \BQ(\sqrt{-7})\,,\, \BQ(\sqrt{5},\zeta_3)\,,\,
                \BQ(\sqrt{-2},\zeta_3)
                \,.
         $$
\end{remark}

        The prime decomposition of the ideal $\ideal2$ in these cases is as follows:
        
\begin{lemma} \habel{factof2}\hfill
\begin{enumerate}
        \item
                For $k = \BQ$ or $k = \BQ(\zeta_3)$, the ideal $\ideal2$ is prime in $\BZ_k$.
        \item
                For $k = \BQ(i)$, $\BQ(\zeta_{12})$, or $\BQ(\sqrt{-2},\zeta_3)$, 
                $\ideal{2}=\fp^2$ for some integral ideal $\fp$.
        \item
                For $k = \BQ(\sqrt{-7})$ and $k = \BQ(\sqrt{5},\zeta_3)$, we have
                $\ideal2 = \fp\fp^*$
                where $\fp$ is a prime ideal in $\BZ_k$ and $\fp \neq \fp^*$.
 \end{enumerate}
\end{lemma}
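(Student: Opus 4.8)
The plan is to determine, field by field, how the rational prime $2$ factors in $\BZ_k$, by descending to quadratic subfields and using the multiplicativity of the ramification index $e$ and residue degree $f$ in towers together with the identity $efg=[k:\BQ]$ (all seven fields being abelian over $\BQ$, every prime of $\BZ_k$ above $2$ has the same pair $(e,f)$, and the notion of ``decomposition field of $2$'' is well defined). The only external ingredient I would quote is the classical description of $2$ in a quadratic field $\BQ(\sqrt d)$, $d$ squarefree: $2$ ramifies exactly when $d\equiv2,3\pmod4$, and when $d\equiv1\pmod4$ the prime $2$ splits if $d\equiv1\pmod8$ and is inert if $d\equiv5\pmod8$.

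For (1): $k=\BQ$ is trivial, and $k=\BQ(\zeta_3)=\BQ(\sqrt{-3})$ has $-3\equiv5\pmod8$, so $\ideal2$ is inert, hence a prime of $\BZ_k$; equivalently $X^2+X+1$ is irreducible modulo $2$, and Dedekind's factorisation theorem applies because $\BZ_k=\BZ[\zeta_3]$. For (2): $k=\BQ(i)$ has $-1\equiv3\pmod4$, so $2$ ramifies, and from $2=-i(1+i)^2$ one gets $\ideal2=\fp^2$ with $\fp=\ideal{(1+i)}$. For the two degree-$4$ abelian fields $\BQ(\zeta_{12})$ and $\BQ(\sqrt{-2},\zeta_3)$ I would argue that each contains a quadratic subfield in which $2$ ramifies --- namely $\BQ(i)$, resp.\ $\BQ(\sqrt{-2})$ --- forcing $e\geq2$, and also contains $\BQ(\sqrt{-3})$, in which $2$ is inert, forcing $f\geq2$; since $efg=4$ this gives $e=f=2$ and $g=1$, so $\ideal2=\fp^2$. (For $\BQ(\zeta_{12})$ one may instead check $\Phi_{12}\equiv(X^2+X+1)^2\pmod2$ and invoke Dedekind, as $\BZ_k=\BZ[\zeta_{12}]$.)

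For (3): $k=\BQ(\sqrt{-7})$ has $-7\equiv1\pmod8$, so $2$ splits and is unramified; the decomposition group of a prime $\fp$ above $2$ then has order $ef=1$, so complex conjugation $\lambda\mapsto\lambda^*$, which generates $\mathrm{Gal}(k/\BQ)$, does not fix $\fp$ and therefore interchanges the two primes above $2$, yielding $\ideal2=\fp\fp^*$ with $\fp\neq\fp^*$ (explicitly one may take $\fp=\ideal{\tfrac{1+\sqrt{-7}}2}$, the generator being an algebraic integer with $\tfrac{1+\sqrt{-7}}2\cdot\tfrac{1-\sqrt{-7}}2=2$). For $k=\BQ(\sqrt5,\zeta_3)=\BQ(\sqrt5,\sqrt{-3})$, a biquadratic field, I would first note that its three quadratic subfields $\BQ(\sqrt5)$, $\BQ(\sqrt{-3})$, $\BQ(\sqrt{-15})$ have odd discriminants $5,-3,-15$, so $2$ is unramified in $k$, i.e.\ $e=1$; since $5\equiv-3\equiv5\pmod8$ but $-15\equiv1\pmod8$, the prime $2$ is inert in $\BQ(\sqrt5)$ and $\BQ(\sqrt{-3})$ but splits in $\BQ(\sqrt{-15})$, so the decomposition field of $2$ contains $\BQ(\sqrt{-15})$ but is not all of $k$ (else $2$ would split completely, contradicting inertia in $\BQ(\sqrt5)$), hence equals $\BQ(\sqrt{-15})$; therefore $f=2$, $g=2$, and $\ideal2=\fp_1\fp_2$ with $\fp_1\neq\fp_2$. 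Finally, complex conjugation fixes $\sqrt5$ and negates $\sqrt{-3}$, hence negates $\sqrt{-15}$, so it is not in the decomposition group $\mathrm{Gal}(k/\BQ(\sqrt{-15}))$ and must swap $\fp_1$ and $\fp_2$; thus $\fp_2=\fp_1^*\neq\fp_1$ and $\ideal2=\fp\fp^*$, as claimed.

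I expect the main obstacle to be the two biquadratic cases in (2) and (3), where the local behaviour of $2$ in the quadratic subfields must be combined correctly; the subtlest point is establishing $\fp\neq\fp^*$ for $\BQ(\sqrt5,\zeta_3)$, which comes down to checking that $2$ does not split completely in the \emph{real} subfield $\BQ(\sqrt5)$, so that complex conjugation lies outside the decomposition group of any prime above $2$. Everything else reduces to a single congruence modulo $8$ or to an explicit factorisation of $2$.
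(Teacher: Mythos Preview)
Your proof is correct and takes a genuinely different route from the paper. The paper proceeds by explicit factorisation in every case: it writes down concrete generators such as $2=(1+i)^2\cdot(-i)$ in $\BZ[i]$, $2=\tfrac{1+\sqrt{-7}}2\cdot\tfrac{1-\sqrt{-7}}2$ in $\BZ[\tfrac{1+\sqrt{-7}}2]$, and $2=(1+\zeta_3\phi)(1+\zeta_3^2\phi)$ in $\BZ[\phi,\zeta_3]$, then checks $\fp\neq\fp^*$ by showing directly that an equation of the form $(a+b\sqrt{-7})\cdot\tfrac{1-\sqrt{-7}}2=\tfrac{1+\sqrt{-7}}2$ (or its analogue) has no integral solution. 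For the degree-$4$ fields in part~(2) the paper simply records the ramified prime, e.g.\ $\ideal2=(\ideal{(1+i)})^2$ in $\BZ[\zeta_{12}]$ and $\ideal2=(\ideal{\sqrt{-2}})^2$ in $\BZ[\sqrt{-2},\zeta_3]$.

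Your argument, by contrast, is structural: you read off $e$, $f$, $g$ from the behaviour of $2$ in the quadratic subfields via multiplicativity in towers and the constraint $efg=[k:\BQ]$, and you settle $\fp\neq\fp^*$ by locating complex conjugation outside the decomposition group. This is cleaner and avoids having to spot explicit factorisations or solve ad~hoc unit equations; the paper's approach, on the other hand, yields explicit generators of the primes above $2$, which is occasionally useful downstream (e.g.\ when one wants to name the ideals $\fp$, $\fq$ in the later discussion of $\bmu_3$ over $\BQ(\sqrt{-2},\zeta_3)$).
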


\begin{proof}
        Note first that, for example by \cite[Theorem 11.1]{was}, the rings
        $\BZ[\zeta_3]$,
        $\BZ[i]$, $\BZ[\zeta_{12}]$
        considered below are principal ideal domains.
        
\begin{enumerate}
        \item
                For $k = \BQ(\zeta_3)$, we have $\BZ_k = \BZ[\zeta_3] = \BZ[X]/(X^2+X+1)$, and since
                $(X^2+X+1)$ is irreducible over $\BF_2$,  $\ideal2$ is prime in $\BZ_k$.
        \item
                For $k = \BQ(i)$, we have $\BZ_k = \BZ[i]$ and $\ideal2 = (\ideal{(1+i)})^2$.
                
\smallskip
\noindent
                For $k= \BQ(\zeta_{12})$, we have $\BZ_k = \BZ[\zeta_{12}]$, and
                $\ideal2 = (\ideal{(1+i)})^2$.

\smallskip
\noindent
                For $k = \BQ(\sqrt{-2},\zeta_3)$, we have $\BZ_k=
                \BZ[\sqrt{-2},\zeta_3]$ 
                (see Exercise 4.5.13 in \cite{mues})
                and
                $\ideal 2 = (\ideal{\sqrt{-2}})^2$. 
        \item
                For $k = \BQ(\sqrt{-7})$, $\BZ_k= \BZ\left[\frac{1+\sqrt{-7}}{2}\right]$, and
                $
                        2 = \frac{1+\sqrt{-7}}{2} \cdot \frac{1-\sqrt{-7}}{2}
                        \,,
                $
                hence $\ideal2$ is a product of two (conjugate) prime ideals: $\ideal2 = \fp\fp^*$,
                and $\fp \neq \fp^*$ since there is no solution in integers $a$ and $b$
                to the equation
                $$
                        \left(a+b\frac{1+\sqrt{-7}}2\right)\frac{1-\sqrt{-7}}2=\frac{1+\sqrt{-7}}2
                        \,.
                $$
\smallskip

\noindent
                Finally, consider the case 
                $k = \BQ(\sqrt{5},\zeta_3)$, with $\BZ_k = \BZ[\phi,\zeta_3]$, 
                where $\phi = \frac{1+\sqrt{5}}{2}$.
                Since the polynomial $X^2-X-1$ is irreducible over $\BF_2$, the prime 2 remains prime
                in $\BZ[\phi]$, and we know it remains prime over $\BZ[\zeta_3]$.
                Now 
                $2 = (1+\zeta_3\phi)(1+\zeta_3^2\phi)$,
                which shows that in $\BZ_k$ we have again $ \ideal2 = \fp\fp^*$,
                and $\fp \neq \fp^*$, since the only solution in rational numbers to
                $$
                        (a+b\phi+c\zeta_3+d\zeta_3\phi)(1+\zeta_3\phi)=1+\zeta_3^2\phi
                $$
                is $a=c=-d=\frac 12, b=-1$.
\end{enumerate}
\end{proof}
 
\subsection{A few particular cases for $G=\bmu_3$}\habel{bmu3}\hfill
\smallskip

        We first remark that $\frac{1-\zeta_3}{1-\zeta_3^2}=-\zeta_3$ is a unit
        thus the ideals $\ideal{(1-\zeta_3)}$ and $\ideal{(1-\zeta_3^2)}$ are the 
        same ideal $\fj=\fj^*$ and $\ideal3=\fj^2$.

        We want to consider the following cases for the field $k$: 
        $$
                \BQ(\zeta_3)\,,\, \BQ(\zeta_{12})\,,\, \BQ(\sqrt{-2},\zeta_3)
                \,.
        $$
 
 \subsubsection*{The cases where $k = \BQ(\zeta_3)$ and $k = \BQ(\zeta_{12})$}\hfill
\smallskip

        In these cases $\fj$ is prime
        (in the case $k = \BQ(\zeta_{12})$, this follows
        from \ref{zmn} and the fact that $3$ is a generator of $(\BZ/4\BZ)^\times$).
        It follows that there are two genera of reduced complete root systems, represented by
        the two systems
        $$
        \begin{aligned}
                &\fR_1 := \{\,(\ideal{},\fj,\zeta_3)\,,\,(\ideal{},\fj,\zeta^2_3) \,\} \\
                &\fR_2 := \{\,(\fj,\ideal{},\zeta_3)\,,\,(\ideal{},\fj,\zeta^2_3) \,\}
        \end{aligned}
        $$

\subsubsection*{The case $k = \BQ(\sqrt{-2},\zeta_3)$.}\hfill
\smallskip

        We first compute the factorisation of $\fj$ into prime ideals in $\BZ_k$.

{\small
\begin{itemize}
        \item
                We have
                        $
                                3 = (1+\sqrt{-2})(1-\sqrt{-2})
                                \,,
                        $
                 and since $\bmu( \BQ(\sqrt{-2})) = \{\pm 1\}$ (see for example \cite{sam}, 4.5)
                 it follows that the ideal $\ideal3$ decomposes in a product of two different 
                 prime ideals
                 in $\BZ[\sqrt{-2}]$:
                $$
                         \ideal3 = \ideal{(1+\sqrt{-2})} \cdot \ideal{(1-\sqrt{-2})}
                         \,.
                $$
                These ideals are different since
                the equation $(a+b\sqrt{-2})(1+\sqrt{-2})=1-\sqrt{-2}$ has no solution
                in integers $a,b$.
        \item
                We know that $\ideal3=\fj^2$ in $\BZ[\zeta_3]$ (see remarks above).
        \item
                 We also have:
                $$
                \left\{
                \begin{aligned}
                        &1+\sqrt{-2} = (1+\zeta_3\sqrt{-2})(-1-\zeta_3^2\sqrt{-2}) \\
                        &1-\sqrt{-2} = (1-\zeta_3\sqrt{-2})(-1+\zeta_3^2\sqrt{-2})
                         \,.
                \end{aligned}
                \right.
                $$
                Moreover we have the products of ideals
                $$
                \ideal{(1+\zeta_3\sqrt{-2})} \cdot \ideal{(1-\zeta_3\sqrt{-2})}=
                 \ideal{(-1-\zeta_3^2\sqrt{-2})} \cdot \ideal{(-1+\zeta_3^2\sqrt{-2})}= \fj
                $$
        \item
                 Notice that
                $$
                 \left\{
                \begin{aligned}
                        &1-\zeta_3^2\sqrt{-2} = (1-\zeta_3\sqrt{-2}
                        )(\zeta_3-\zeta_3^2-\sqrt{-2})\\
                        &1-\zeta_3\sqrt{-2} = (1-\zeta_3^2\sqrt{-2}
                        )(\zeta^2_3-\zeta_3-\sqrt{-2})
                \end{aligned}
                \right.
                $$
                which shows the equality of ideals
                $$
                        \fp:=\ideal{(1-\zeta_3\sqrt{-2})} =
                        \ideal{(1-\zeta_3^2\sqrt{-2})}
                        \,.
                $$
                Similarly we have the equality of ideals
                $$
                        \fq:=\fp^*=\ideal{(1+\zeta_3\sqrt{-2})} =
                        \ideal{(1+\zeta_3^2\sqrt{-2})}
                        \,.
                $$
                hence one has the following decomposition of $\ideal3$ into products
                of prime ideals in 
                $\BZ[\zeta_3,\sqrt{-2}]$:
                $$
                        \ideal3 = 
                         \underbrace{\fp\fq}_{= \,\fj}
                        \,\,
                        \underbrace{\fp\fq}_{=\, \fj}
                         \,.
                $$
                and $\fp\ne\fq$ since $\fq^2=\ideal{(1+\sqrt{-2})}$ and 
                $\fp^2=\ideal{(1-\sqrt{-2})}$ are different ideals.
 \end{itemize}
}

\noindent
        Now that we have the factorization of $(3)$ we can determine the reduced 
        complete root systems. Up to genus, they are of the form 
        $$
                \fR_{\fa_1,\fa_2} := \{\, (\fa_1, \fj\fa_1\tinv,\zeta_3)\,,\,
                (\fa_2, \fj\fa_2\tinv,\zeta_3^2) \,\} 
                \,,
        $$
        where the ideals $\fa_1$ and $\fa_2$ are integral, coprime, and divide
        $\fj=\fp\fq$.
        It is an elementary arithmetic exercise to deduce now the following classification.

\begin{proposition}\label{9genera}
        There are
        9 genera of reduced complete $\BZ[\zeta_3,\sqrt{-2}]$-root
        systems for $\bmu_3$, represented by the following list:
{\small
        $$
        \begin{array}{lll}
                &\fR_{1,1} = \{\,(1,\fp\fq,\zeta_3 )\,,\,( 1,\fp\fq,\zeta_3^2) \,\}\,,               \\
                &\fR_{\fp,\fq} = \{\,(\fp,\fp,\zeta_3 )\,,\,( \fq,\fq,\zeta_3^2) \,\}\,,
                &\fR_{\fq,\fp} = \{\,(\fq,\fq,\zeta_3 )\,,\,( \fp,\fp,\zeta_3^2) \,\}\,,             \\
                &\fR_{1,\fq} = \{\,(1,\fp\fq,\zeta_3 )\,,\,( \fq,\fq,\zeta_3^2) \,\}\,,
        &\fR_{\fq,1} = \{\,(\fq,\fq,\zeta_3 )\,,\,( 1,\fp\fq,\zeta_3^2) \,\}\,,                      \\
                &\fR_{1,\fp} = \{\,(1,\fp\fq,\zeta_3 )\,,\,( \fp,\fp,\zeta_3^2) \,\}\,,
        &\fR_{\fp,1} = \{\,(\fp,\fp,\zeta_3 )\,,\,( 1,\fp\fq,\zeta_3^2) \,\}\,,                      \\
                &\fR_{1,\fp\fq} = \{\,(1,\fp\fq,\zeta_3 )\,,\,( \fp\fq,1,\zeta_3^2) \,\}\,,
        &\fR_{\fp\fq,1} = \{\,(\fp\fq,1,\zeta_3 )\,,\,( 1,\fp\fq,\zeta_3^2) \,\}\,.  \\
        \end{array} 
        $$
}
\end{proposition}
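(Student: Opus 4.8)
The plan is to read the classification straight off Proposition~\ref{rootcyclic} once the prime factorization of $\ideal 3$ in $\BZ_k = \BZ[\zeta_3,\sqrt{-2}]$ has been fed in. By Proposition~\ref{rootcyclic}(2), the genera of complete reduced \zroot\ systems for $\bmu_3$ are represented, with no repetition, by the systems $\fR(\fF)$ attached to families $\fF = (\fa_1,\fa_2)$ of \emph{integral} ideals that are \emph{relatively prime} and satisfy $\fa_1 \mid \ideal{(1-\zeta_3)}$ and $\fa_2 \mid \ideal{(1-\zeta_3^2)}$; in the notation of the statement this system is $\fR_{\fa_1,\fa_2} = \{(\fa_1,\fj\fa_1\tinv,\zeta_3),(\fa_2,\fj\fa_2\tinv,\zeta_3^2)\}$. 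So the whole matter reduces to enumerating such families.

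First I would use the equalities $\ideal{(1-\zeta_3)} = \ideal{(1-\zeta_3^2)} = \fj = \fp\fq$, with $\fp \neq \fq$ prime, so that by uniqueness of factorization of ideals in the Dedekind ring $\BZ_k$ each of $\fa_1$ and $\fa_2$ ranges over the four divisors $\BZ_k, \fp, \fq, \fp\fq$ of $\fj$. Then I would list the ordered pairs $(\fa_1,\fa_2)$ of such divisors with $\fa_1 + \fa_2 = \BZ_k$: four when $\fa_1 = \BZ_k$ (any $\fa_2$), two when $\fa_1 = \fp$ (namely $\fa_2 \in \{\BZ_k,\fq\}$), two when $\fa_1 = \fq$, and one when $\fa_1 = \fp\fq$, hence $4+2+2+1 = 9$ in all. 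Finally, for each such pair I would rewrite the second component $\fj\fa_j\tinv$ using $\fp^* = \fq$: it equals $\fp\fq$, $\fp$, $\fq$ or $\BZ_k$ according as $\fa_j$ is $\BZ_k$, $\fp$, $\fq$ or $\fp\fq$; this identifies the nine systems with $\fR_{1,1}, \fR_{1,\fp}, \fR_{\fp,1}, \fR_{1,\fq}, \fR_{\fq,1}, \fR_{\fp,\fq}, \fR_{\fq,\fp}, \fR_{1,\fp\fq}, \fR_{\fp\fq,1}$ as displayed.

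The computation is pure bookkeeping, so the only point deserving a word is why distinct admissible families give distinct genera; this is built into Proposition~\ref{rootcyclic}(2). For completeness one can note that if $\fR(\fF) = \fa\cdot\fR(\fF')$ for a fractional ideal $\fa$, comparing the $I$-components forces $\fa_j = \fa\fa'_j$ for all $j$, and then summing over $j$ gives $\BZ_k = \sum_j \fa_j = \fa\sum_j \fa'_j = \fa$, so $\fF = \fF'$. The one thing that could trip one up is the index convention --- that in $\fR_{\fa_1,\fa_2}$ the ideal $\fa_1$ is carried by the root with eigenvalue $\zeta_3$ and $\fa_2$ by the root with eigenvalue $\zeta_3^2$ --- but once that is kept straight there is no genuine obstacle.
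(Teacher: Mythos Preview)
Your argument is correct and is exactly the approach the paper takes: the text immediately preceding the proposition establishes $\fj=\fp\fq$ with $\fq=\fp^*$ and then declares the classification an ``elementary arithmetic exercise'' via Proposition~\ref{rootcyclic}(2), which is precisely the exercise you have carried out. Your explicit count of coprime pairs and the identification of the second components via $\fp^*=\fq$ simply fill in the details the paper leaves to the reader.
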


\section{\red{The group $G(de,e,r)$}}\label{ImprimRS}

        Let $d$ and $e$ be two natural integers such that $de>1$.
         Let $k := \BQ(\zeta_{de})$, and let $V$ be a $k$--vector space with
        basis $(e_1,e_2,\dots,e_r)$.  
         We denote by $(e'_1,e'_2,\dots,e'_r)$ the dual basis of $W$ 
         with respect to the Hermitian pairing between $V$ and $W$.
 
        We denote by $G(de,1,r)$ the group consisting of all monomial matrices with coefficients
        in $\bmu_{de}$, isomorphic to $(\bmu_{de})^r\rtimes\fS_r$. 
 
         We denote by $G(de,e,r)$ the normal subgroup of
        index $e$ in $G(de,1,r)$ consisting in those matrices in $G(de,1,r)$ whose product of nonzero
        entries lies in $\bmu_d$. When $r=1$ then $d\ne 1$ is not allowed.
 
\begin{remark}
         Unless $r=2$ and $d=1$, 
         the  field of definition of $G(de,e,r)$ is $\BQ(\zeta_{de})$. 
         We will classify root systems over that field.
        The field of definition of $G(e,e,2)$, the dihedral group of order $2e$, 
         is the maximal real subfield
         of $\BQ(\zeta_e)$, namely the field $\BQ(\zeta_e+\zeta_e\inv)$. 
        The root systems of $G(e,e,2)$ over that field will be treated in
        Subsection \ref{dihedral}.
\end{remark}

\subsection{The  reflections of $G(de,e,r)$.}
\label{RefsOfDEER}\hfill

        It is well known (see for example \cite{nebe}, proof of Lemma 5)
        that the reflections $\Rf_1(d,r)$ and $\Rf_2(de,r)$ defined below exhaust the 
        collection of reflections of $G(de,e,r)$. 
        The set of reflections of $G(e,e,r)$ is precisely $\Rf_2(e,r)$.

\begin{description}
        \item [$\Rf_1(d,r)$] \index{Ref1@$\Rf_1(d,r)$} 
                elements $s_k^i$ ($1\leq i\leq d-1\,,\,1\leq k \leq r$) defined by
                $$
                        s_k^i : 
                        \left\{ 
                \begin{aligned} 
                        &e_k \mapsto \zeta_d^i e_k \\ 
                                &e_l \mapsto e_l\,\,\text{ if }l\neq k
                \end{aligned}
                \right.
                $$
        Note that if $d=1$ the set $\Rf_1(d,r)$ is empty, while if $d>1$ it consists of reflections.
\smallskip

        \item [$\Rf_2(de,r)$] \index{Ref2@$\Rf_2(e,r)$} 
                the involutive reflections $s_{k,l}^{(j)}$ (with $0\leq j\leq de-1\,,\,1\leq k<l\leq r$) 
                defined by
                 $$
                         s_{k,l}^{(j)}: 
                \left\{ 
                \begin{aligned} 
                        &e_k \mapsto \zeta_{de}^j e_l \\ 
                        &e_l \mapsto \zeta_{de}^{-j} e_k \\ 
                        &e_m \mapsto e_m\,\,\text{ if }m\neq k,l
                \end{aligned}
                \right.
                $$
\end{description}

      It is also well known (see for instance \cite[\S 3]{bmr}) that the
      following set of $r+1$ reflections generates $G(de,e,r)$:
        \begin{equation}\label{PiS}
                \left\{s_{12}^{(0)}, s_{23}^{(0)}, 
                \ldots, s_{(r-1),r}^{(0)}, s_{(r-1),r}^{(1)}, s_r^1\right\}.
        \end{equation}
\subsection{The complete reduced \zroot\ system $\fR(de,e,r)$.}\hfill
\smallskip
 
        For each reflection above we define a \zroot\ -- here again,
         $i,j,k,l,m$ denote natural integers such that
        $1\leq k<l\leq r\,,\,1\leq i\leq d-1\,,\,0\leq j\leq de-1$
        (if $d=1$ some of our objects do not exist):
        $$
                \begin{array}{ll}
                 \frr_k^i := \bigl( \ideal{}e_k,
                 \ideal{(1-\zeta_d^{-i})}e'_k,\zeta_d^i \bigr) ,
                &\mbox{an $s_k^i$-root (if $d>1$),} \\
                \frr_{k,l}^{(j)} := \bigl(
                \ideal{}(e_k-\zeta_{de}^je_l),\ideal{}(e'_k-\zeta_{de}^je'_l),-1 \bigr)
                 \,,
                &\mbox{an $s_{k,l}^{(j)}$-root.}
                \end{array}
        $$

        The actions of the reflections on these roots and the 
        Cartan pairings of roots are described below. 
        The results follow from direct calculation.

\begin{lemma}\label{actionsofref}
         Action of the reflections on roots:
        {\footnotesize
         $$
         \begin{array}{lllll}
                &\hskip-0.5cm s_{\frr_k^\alpha}(\frr_l^\beta) =
                &\hskip-0.5cm\frr_l^\beta 
                &s_{\frr_k^\alpha}(\frr_{l,m}^{(\beta)}) = 
                &\hskip-0.3cm\left\{\hskip-0.3cm
        \begin{array}{lll}
                &\frr_{l,m}^{(\beta)}       &\text{if } k\neq l,m \\
                &\frr_{l,m}^{(\beta-e\alpha)}    &\text{if } k = l \\
                &\frr_{l,m}^{(\beta+e\alpha)}    &\text{if } k = m \\
        \end{array}
         \right. \\
                &&&&\\
                &\hskip-0.5cm s_{\frr_{l,m}^{(\beta)}}(\frr_k^\alpha) = 
                &\hskip-0.3cm\left\{\hskip-0.5cm
        \begin{array}{lll}
                &\frr_k^\alpha   &\text{if } k\neq l,m \\
                &\frr_m^\alpha  &\text{if } k = l \\
                &\frr_l^\alpha   &\text{if } k = m \\
        \end{array}
        \right.
                &s_{\frr_{j,k}^\alpha}(\frr_{l,m}^{(\beta)})= 
                & \hskip-0.3cm\left\{\hskip-0.5cm
        \begin{array}{lll}
                &\frr_{l,m}^{(\beta)}       &\text{if } \!\{j,k\}\!\cap\!\{l,m\}\! =\! \emptyset \! \\
                &\frr_{k,m}^{(\beta-\alpha)}   &\text{if } j=l , k<m\\
                &\frr_{m,k}^{(\alpha-\beta)}   &\text{if } j=l , m<k\\
                &\frr_{l,k}^{(\beta+\alpha)}    &\text{if } l<j = m<k \\
                &\frr_{j,m}^{(\beta+\alpha)}    &\text{if } j<k = l<m \\
                &\frr_{j,l}^{(\alpha-\beta)}    &\text{if } j<l, k = m \\
        		   &\frr_{l,j}^{(\beta-\alpha)}    &\text{if } l<j, k = m \\
        		   &\frr_{j,k}^{(2\alpha-\beta)}    &\text{if } j=l, k = m \\
        \end{array}
        \right. \\
        \end{array}
        $$
}
\end{lemma}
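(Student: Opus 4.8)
The plan is to verify the table entry by entry via direct computation, using the explicit description of reflections from Proposition~\ref{computereflection} (or Lemma~\ref{descriptionofsr}) together with the definition $g\cdot\frr=(g(I),g^\vee(J),\zeta)$ of the $\GL(V)$-action on \zroot s. First I would make the reflections explicit on the basis $(e_m)_m$: one finds that $s_{\frr_k^i}$ multiplies $e_k$ by $\zeta_d^i$ and fixes the remaining $e_m$, so $s_{\frr_k^i}=s_k^i$; and that $s_{\frr_{k,l}^{(j)}}$ sends $e_k\mapsto\zeta_{de}^je_l$, $e_l\mapsto\zeta_{de}^{-j}e_k$ and fixes the other basis vectors, so $s_{\frr_{k,l}^{(j)}}=s_{k,l}^{(j)}$ --- the generators recalled in Subsection~\ref{RefsOfDEER}. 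Since $s_\frr^\vee=s_{\frr^\vee}$ (see the remark following Definitions~\ref{reflection and dual}), the maps $s_{\frr_k^i}^\vee$, $s_{\frr_{k,l}^{(j)}}^\vee$ act on the dual basis $(e'_m)_m$ of $W$ by the same monomial rules with $e'$ in place of $e$. Each entry of the table is then obtained by applying the relevant monomial map to a generator of the $I$-part, and of the $J$-part, of the root being acted on and reading off the rank-one $\BZ_k$-module spanned by the image; here one uses repeatedly that two elements differing by a unit of $\BZ_k$ --- in particular by a power of $\zeta_d$ or of $\zeta_{de}$ --- generate the same module, and that the $\zeta$-component is left unchanged by the action.

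Next I would handle the three entries other than $s_{\frr_{j,k}^\alpha}(\frr_{l,m}^{(\beta)})$, which are immediate. Since $s_{\frr_k^\alpha}$ scales $e_k$ by the unit $\zeta_d^\alpha$ and fixes the other $e_m$ (and likewise on $W$), it fixes $\BZ_k e_l$ and $\BZ_k(1-\zeta_d^{-\beta})e'_l$, so $s_{\frr_k^\alpha}(\frr_l^\beta)=\frr_l^\beta$; and $s_{\frr_{l,m}^{(\beta)}}$, which interchanges $e_l,e_m$ up to the units $\zeta_{de}^{\pm\beta}$ and fixes the other $e_n$, carries $\BZ_k e_k$ to $\BZ_k e_k$, $\BZ_k e_m$, or $\BZ_k e_l$ according as $k\notin\{l,m\}$, $k=l$, or $k=m$, giving the entry $s_{\frr_{l,m}^{(\beta)}}(\frr_k^\alpha)$. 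For the mixed entry I would compute $s_{\frr_k^\alpha}(e_l-\zeta_{de}^\beta e_m)$, which is $e_l-\zeta_{de}^\beta e_m$ if $k\neq l,m$, is $\zeta_d^\alpha e_l-\zeta_{de}^\beta e_m=\zeta_{de}^{e\alpha}(e_l-\zeta_{de}^{\beta-e\alpha}e_m)$ if $k=l$ (using $\zeta_d=\zeta_{de}^{e}$), and is $e_l-\zeta_{de}^{\beta+e\alpha}e_m$ if $k=m$; the same arithmetic on $(e'_m)_m$ shows the $J$-part follows suit, so after deleting the unit the root becomes $\frr_{l,m}^{(\beta)}$, $\frr_{l,m}^{(\beta-e\alpha)}$, or $\frr_{l,m}^{(\beta+e\alpha)}$.

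The last box, $s_{\frr_{j,k}^\alpha}(\frr_{l,m}^{(\beta)})$ with $j<k$ and $l<m$, is where the only real work lies, and the hard part will be purely the bookkeeping. I would substitute the monomial rule $e_j\mapsto\zeta_{de}^\alpha e_k$, $e_k\mapsto\zeta_{de}^{-\alpha}e_j$ (other $e_n$ fixed) into $e_l-\zeta_{de}^\beta e_m$, organising by the intersection $\{j,k\}\cap\{l,m\}$: empty; a single element, namely one of $j=l$, $j=m$, $k=l$, $k=m$; or $\{j,k\}=\{l,m\}$, which forces $j=l$, $k=m$ since $j<k$ and $l<m$. In the single-element cases one splits further by the relative order of the two indices produced, which is how the eight rows arise. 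In every branch the image vector is, up to a unit $\zeta_{de}^\gamma$ and possibly an overall sign, of the shape $e_p-\zeta_{de}^\delta e_q$; whenever $p>q$ I would rewrite it as $-\zeta_{de}^{\delta}(e_q-\zeta_{de}^{-\delta}e_p)$ so that the indices increase and the root is identified with the normalised $\frr_{q,p}^{(\cdot)}$. This index flip, together with reducing exponents modulo $de$ (allowed because $\zeta_{de}$ has order $de$) and the identical computation on $(e'_m)_m$ for the $J$-part, is exactly what produces the exponents $\beta\pm\alpha$, $\alpha-\beta$, $2\alpha-\beta$ and the orders of the subscripts in the table; the determinant $-1$ is untouched throughout. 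So the main obstacle is organisational --- keeping the six overlap patterns, the eight ordered cases they refine to, and the attendant index flips straight --- rather than conceptual.
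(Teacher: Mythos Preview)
Your proposal is correct and is exactly the approach the paper takes: the paper's entire proof is the sentence ``The results follow from direct calculation,'' and what you have written is a careful, well-organised execution of that calculation. Your handling of the unit factors, the identification $s_{\frr_k^i}=s_k^i$, $s_{\frr_{k,l}^{(j)}}=s_{k,l}^{(j)}$, the fact that $s_\frr^\vee$ acts on the $e'_m$ by the same monomial rules, and the index-flip bookkeeping in the last box are all on point.
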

 
\begin{lemma}\label{pairings}
        Cartan pairings of roots:
        {\small
        $$
                \begin{array}{lllll}
                &\fn({\frr_k^\alpha},{\frr_l^\beta}) = 
                &\hskip-0.5cm\left\{\hskip-0.5cm
                 \begin{array}{lcl}
                 &0          &\text{if } k\neq l \\
                &\ideal{(1-\zeta_d^\beta)} &\text{if } k = l \\
                \end{array}
                \right. 
                \\&&\\
                &\fn({\frr_k^\alpha},{\frr_{l,m}^{(\beta)}} )= 
                &\hskip-0.4cm\left\{\hskip-0.5cm
                \begin{array}{rcl}
                &0          &\text{if } k\neq l,m \\
                &\ideal{}          &\text{if } k = l \mbox{ or if } k=m\\
                \end{array}
                \right. \\&&\\
                 &\fn({\frr_{l,m}^{(\beta)}}{,\frr_k^\alpha})= 
                &\hskip-0.4cm\left\{\hskip-0.5cm
                \begin{array}{rcl}
                &0         &\text{if } k\neq l,m \\
                &\ideal{\left(1-\zeta_d^\alpha\right)}  &\text{if } k = l \mbox{ or if } k=m\\
                \end{array}
                \right. 
                \\&&\\
                &\fn({\frr_{j,k}^{(\alpha)}},{\frr_{l,m}^{(\beta)}} )= 
                &\hskip-0.2cm \left\{\hskip-0.5cm
                 \begin{array}{rcl}
                &0                 &\text{if } \{j,k\}\cap \{l,m\} = \emptyset \\
                &\ideal{}                 &\text{if } |\{j,k\}\cap \{l,m\}|=1 \\
                &\ideal{\left(1+\zeta_{de}^{\alpha-\beta}\right)} &\text{if } \{j,k\} = \{l,m\} \\
                \end{array}
                \right.  &&\\
                \end{array}
        $$
}
\end{lemma}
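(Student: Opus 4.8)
The plan is to verify all four families of identities by one mechanical computation, reducing each pairing to the canonical pairing $\scal{e_k}{e'_l}=\delta_{kl}$. Recall that $\fn(\frr_1,\frr_2)=\scal{I_{\frr_1}}{J_{\frr_2}}$ by definition, and that for rank-one submodules written as $I=\fa v$ and $J=\fb w$ one has $\scal IJ=\fa\fb^*\scal vw$. Among the roots in play the only ideal factor different from $\ideal{}$ is the $\ideal{(1-\zeta_d^{-i})}$ occurring in $J_{\frr_k^i}$, and its complex conjugate is $\ideal{(1-\zeta_d^{i})}$; every other ideal factor is the unit ideal. Hence each pairing $\fn(\frr_1,\frr_2)$ equals one of these ideals (the unit ideal, or $\ideal{(1-\zeta_d^{i})}$ for the relevant $i$) times the scalar $\scal{v_1}{v_2}$, where $v_1,v_2$ are the displayed root vectors; to get that scalar one expands in the bases $(e_i)$ and $(e'_j)$, remembering that $\scal{\pd}{\pd}$ is linear in $V$ and semilinear in $W$, so that a coefficient $\zeta_{de}^{t}$ carried by a $W$-side vector is replaced by $\zeta_{de}^{-t}$.

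First I would dispose of the families $\fn(\frr_k^\alpha,\frr_l^\beta)$, $\fn(\frr_k^\alpha,\frr_{l,m}^{(\beta)})$ and $\fn(\frr_{l,m}^{(\beta)},\frr_k^\alpha)$. In each of these the scalar $\scal{v_1}{v_2}$ is a single Kronecker delta, or a $\BZ_k$-combination of two of them, so the vanishing/non-vanishing alternative is immediate; and whenever the scalar is nonzero it equals $1$ or $\pm\zeta_{de}^{t}$ for a suitable integer $t$, in any case a unit. Thus $\fn=\ideal{}$ in the two mixed families, while in $\fn(\frr_k^\alpha,\frr_l^\beta)$ and $\fn(\frr_{l,m}^{(\beta)},\frr_k^\alpha)$ one gets $\ideal{(1-\zeta_d^{\beta})}$ and $\ideal{(1-\zeta_d^{\alpha})}$ respectively --- that is, the exponent appearing in the answer is that of whichever root enters $\fn$ as the \emph{second} argument, which is exactly what the definition $J_{\frr_k^i}=\ideal{(1-\zeta_d^{-i})}e'_k$ together with $\ideal{(1-\zeta_d^{-i})}^*=\ideal{(1-\zeta_d^{i})}$ predicts.

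The remaining family $\fn(\frr_{j,k}^{(\alpha)},\frr_{l,m}^{(\beta)})$ carries all the bookkeeping. Expanding gives $\scal{e_j-\zeta_{de}^\alpha e_k}{e'_l-\zeta_{de}^\beta e'_m}=\delta_{jl}-\zeta_{de}^{-\beta}\delta_{jm}-\zeta_{de}^{\alpha}\delta_{kl}+\zeta_{de}^{\alpha-\beta}\delta_{km}$, and one splits according to $|\{j,k\}\cap\{l,m\}|$. If the intersection is empty all four deltas vanish and $\fn=0$. If it has cardinality one, exactly one delta survives and the scalar is $1$ or $\pm\zeta_{de}^{t}$, a unit, so $\fn=\ideal{}$. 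If $\{j,k\}=\{l,m\}$ then, both pairs being listed in increasing order, necessarily $j=l$ and $k=m$, so $\delta_{jm}=\delta_{kl}=0$ and $\delta_{jl}=\delta_{km}=1$; hence the scalar is $1+\zeta_{de}^{\alpha-\beta}$ and $\fn=\ideal{(1+\zeta_{de}^{\alpha-\beta})}$. I do not expect a genuine obstacle: the only points that need care are the cardinality-one case, where the overlap may occur in any of four positions and one must check each time that the surviving coefficient is a unit, and the consistent use of semilinearity in $W$, so that the $\zeta_{de}$-powers coming from a $W$-side root vector are conjugated.
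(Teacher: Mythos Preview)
Your proof is correct and follows exactly the approach the paper takes: the paper simply states that ``the results follow from direct calculation,'' and you have carried out that calculation in full detail, correctly handling the semilinearity of $\scal{\pd}{\pd}$ in $W$, the conjugation $\ideal{(1-\zeta_d^{-i})}^*=\ideal{(1-\zeta_d^{i})}$, and the case analysis for $|\{j,k\}\cap\{l,m\}|$ (including the observation that $j<k$ and $l<m$ force $j=l,\ k=m$ when the sets coincide).
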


        The preceding calculations ensure that \index{Rdeen@$\fR(de,e,r)$}
        $$  
                \fR(de,e,r) := \fR_1(d,r) \cup \fR_2(de,r) 
        $$ 
        is a complete reduced root system for $G(de,e,r)$, where:
        $$
        \begin{aligned}
                & \fR_1(d,r) := \{\, \frr_k^i \,\}_{(1\leq k\leq r)(1\leq i\leq d-1)}\\
                & \fR_2(de,r) := \{\, \frr_{k,l}^{(j)}\,\}_{(1\leq k<l\leq r)(0\leq j\leq de-1)}
        \end{aligned}
        $$
        For a given $i$, we write $\fR_1^i(d,r) := \{\, \frr_k^i\,\}_{(1\leq k\leq r)} $ 
        and for a given $j$,  $\fR_2^{(j)}(de,r):= \{\, \frr_{k,l}^{(j)}\,\}_{(1\leq k<l\leq r)}$
        so that
        $$
                \fR_1 = \bigcup_{1\leq i\leq d-1} \fR_1^i \quad \mbox{ and } 
                \quad 
                \fR_2(de,r) = \bigcup_{0\leq j\leq de-1} \fR_2^{(j)} (de,r)\,.
        $$
        The ``even'' and ``odd'' parts of $\fR_2$ are defined to be:
        $$
                \fR_2^0(de,r) := \bigcup_{j=0,2\dots} \fR_2^{(j)}(de,r)
                \quad\text{and}\quad
                 \fR_2^1(de,r) := \bigcup_{j=1,3\dots} \fR_2^{(j)}(de,r)
                 \,.
        $$

\begin{lemma}\habel{arootGdeen}\hfill
\begin{enumerate}
        \item
                The set $\fR(de,e,r)$ is a complete reduced \zroot\ system for $G(de,e,r)$.
        \item
                 The orbits of $G(de,e,r)$ on $\fR(de,e,r)$ are
                \begin{itemize}
                        \item when $e$ is even and $r=2$:
                        $$
                                \left\{ \fR_1^1(d,2),\fR_1^2(d,2),\dots,\fR_1^{d-1}(d,2),
                                \fR^0_2(de,2), \fR^1_2(de,2)\right\}
                                \,,
                        $$
                        \item 
                                in other cases:
                        $$
                        \{\, \fR_1^1(d,r),\fR_1^2(d,r),\dots,\fR_1^{d-1}(d,r),\fR_2(de,r) \,\}
                        $$
                 \end{itemize}
                where the sets $\fR_1^j(d,2)$ are empty if $d=1$ and $\fR_2(de,r)$ is empty
                if $r=1$.
\end{enumerate}
\end{lemma}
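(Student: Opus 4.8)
The plan is, for part (1), to verify the three axioms of Definition~\ref{Zkrootsystems} together with completeness and reducedness (Definition~\ref{variousdefs}), all of which is bookkeeping on the data already assembled in Lemmas~\ref{actionsofref} and~\ref{pairings}. Axiom \textbf{(RS$_I$)} is clear: $\fR(de,e,r)$ is finite, and the reflecting lines $ke_k$ (present when $d>1$) together with the lines $k(e_k-\zeta_{de}^j e_l)$ span $V$, since $e_1-e_2,\dots,e_1-e_r$ already span the hyperplane $\sum_i x_i=0$ while $e_1-\zeta_{de}e_2$ lies outside it (using $de>1$); when $r=1$ we have $d>1$ and $ke_1$ spans. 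Axiom \textbf{(RS$_{II}$)} is exactly Lemma~\ref{actionsofref}: each $s_\frr$ carries $\fR(de,e,r)$ into itself, the only point being that the superscripts $\beta\pm e\alpha$, $\beta\pm\alpha$, $2\alpha-\beta$, $\alpha-\beta$ appearing there are read modulo $de$ and so stay in the range $0\le j\le de-1$. Axiom \textbf{(RS$_{III}$)} is exactly Lemma~\ref{pairings}: every value of $\frn{\frr}{\frr'}$ listed there is $0$, or $\BZ_k$, or generated by $1-\zeta_d^\bullet$ or $1+\zeta_{de}^\bullet$, hence is an integral ideal. Finally, by the description recalled in Subsection~\ref{RefsOfDEER} one has $\Rf(G(de,e,r))=\Rf_1(d,r)\cup\Rf_2(de,r)$, and $\frr_k^i\mapsto s_k^i$, $\frr_{k,l}^{(j)}\mapsto s_{k,l}^{(j)}$ exhibit $\frr\mapsto s_\frr$ as a bijection of $\fR(de,e,r)$ onto this set --- distinct roots on the list have distinct reflecting lines, or differ in determinant where the lines might coincide --- so $\fR(de,e,r)$ is reduced and complete; and $G(\fR(de,e,r))=G(de,e,r)$, because the generating set \eqref{PiS} consists of reflections $s_\frr$ while every $s_\frr$ lies in $G(de,e,r)$.

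For part (2): since $\fR:=\fR(de,e,r)$ is reduced and complete, $\frr\mapsto s_\frr$ is a bijection of $\fR$ onto $\Rf(G)$, $G:=G(de,e,r)$, and it intertwines the $G$-action on $\fR$ with the conjugation action on $\Rf(G)$ (because $s_{g\cdot\frr}=gs_\frr g\inv$ and the map is injective); so the $G$-orbits on $\fR$ are in bijection with the conjugacy classes of reflections of $G$ --- alternatively one may read the orbits off Lemma~\ref{actionsofref} directly. Now the reflecting lines $ke_k$ form a $G$-stable set (the monomial group $G$ permutes the coordinate axes), disjoint from the reflecting lines of $\Rf_2(de,r)$, so $\fR_1(d,r)$ and $\fR_2(de,r)$ are each unions of whole orbits. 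Inside $\fR_1$: the subgroup $\fS_r\le G$ permutes the roots $\frr_k^i$ transitively in $k$ while fixing the superscript $i$, the diagonal subgroup $A(de,e,r)$ fixes each $\frr_k^i$ (it scales $e_k$ by a unit), and no reflection alters the superscript of an $\frr_k^\bullet$ by Lemma~\ref{actionsofref}; as the determinant $\zeta_d^i$ is literally preserved by $G$, we conclude $\fR_1(d,r)=\bigsqcup_{1\le i\le d-1}\fR_1^i(d,r)$ is precisely the orbit decomposition of $\fR_1$ (vacuous when $d=1$; for $r=1$ each $\fR_1^i$ is a singleton).

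It remains to decompose $\fR_2(de,r)$, every root of which has determinant $-1$. When $r\ge 3$: transpositions from $\fS_r$ move any $\frr_{k,l}^{(j)}$ to $\frr_{1,2}^{(\pm j)}$, and conjugating $s_{1,2}^{(j)}$ by $\diag(\zeta_{de}^a,\zeta_{de}^b,\zeta_{de}^{-a-b},1,\dots,1)\in A(de,e,r)$ (determinant $1$, so in $A$, and available since $r\ge 3$) produces $s_{1,2}^{(j+b-a)}$ with $b-a$ arbitrary; so $\fR_2(de,r)$ is a single orbit. When $r=2$: conjugating $s_{1,2}^{(j)}$ by $\diag(\zeta_{de}^a,\zeta_{de}^b)\in A(de,e,2)$ requires $\zeta_{de}^{a+b}\in\bmu_d$, i.e.\ $e\mid a+b$, and yields $s_{1,2}^{(j+b-a)}$; the transposition in $\fS_2$ contributes $j\mapsto-j$ and, when $d>1$, the reflection $s_1^i$ contributes $j\mapsto j-ei$. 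If $e$ is odd these moves generate every shift of $j$ modulo $de$, so $\fR_2(de,2)$ is one orbit; if $e$ is even, every move changes $j$ by an even amount (since $b-a\equiv a+b\pmod 2$ and $e\mid a+b$), so the parity of $j$ is a $G$-invariant, each parity class is a single orbit, and both are nonempty (as $de>1$), whence $\fR_2(de,2)=\fR_2^0(de,2)\sqcup\fR_2^1(de,2)$. Assembling the $\fR_1$ and $\fR_2$ parts yields the two cases of the statement.

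The main obstacle is this last paragraph --- showing that for $r=2$ the parity of $j$ is the only obstruction to transitivity on $\fR_2(de,r)$, which requires the arithmetic of the $j$-shifts available in $G(de,e,2)$ --- and keeping the degenerate cases ($d=1$, so $\fR_1=\emptyset$; $r=1$, so $\fR_2=\emptyset$; small $de$) from disturbing the count. Everything else reduces directly to Lemmas~\ref{actionsofref} and~\ref{pairings} together with the known list of reflections of $G(de,e,r)$.
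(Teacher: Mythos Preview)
Your proof is correct and follows the same approach as the paper, which simply states that the results follow directly from Lemmas~\ref{actionsofref} and~\ref{pairings}. You have spelled out considerably more detail than the paper does --- in particular the orbit analysis for $\fR_2(de,r)$ and the arithmetic of the $j$-shifts in the $r=2$ case --- but this is all implicit in those two computational lemmas and constitutes a faithful expansion of the paper's one-line argument.
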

 
 \begin{proof}
        The results follow directly from Lemmas~\ref{actionsofref} and~\ref{pairings}.
 \end{proof}
 
 \subsection{Classifying  complete reduced root systems for $G(de,e,r)$}\hfill
 \smallskip
 
        While $\fR(de,e,r)$ is a representative of one genus of complete reduced root 
        system for $G(de,e,r)$ over $\BQ(\zeta_{de})$, 
        there may be others, as described by the following theorem.

        The group $G(de,e,r)$ is a reflection group over $k$ if and only if
        $\zeta_{de}\in k$, except for $G(e,e,2)$ where the field of definition
        is $\BQ(\zeta_e+\zeta_e\inv)$. Thus the following theorem fulfills
        our aim stated in the introduction: classify \zroot\ systems for
        reflection groups over $k$, except for the case of $G(e,e,2)$ over
        its field of definition which will be considered in  Subsection
        \ref{dihedral}.
 
\begin{theorem}\label{rootGd1n}
        Given a family
        $
                \CF = \left\{\, \fa_1,\fa_2,\dots,\fa_{d-1}, \fb_0, \fb_1 \,\right\}
        $
        of fractional ideals where, unless $r=2$ and $de=2p^k$, $p$ prime and 
        $k>1$, we have $\fb_0=\fb_1=\ideal{}$, we define the set
        $$
                \fR_\CF(de,e,r) := 
                 \left( \bigcup_{1\leq i\leq d-1} \fa_i\cdot\fR_1^i(d,r) \right)
                \, \cup\, \fb_0\cdot\fR_2^0(de,r)
                \,\cup\, \fb_1\cdot\fR_2^1(de,r)
                 \,.
        $$

        Then every genus of complete reduced \zroot\ system for $G(de,e,r)$
        when $\zeta_{de}\in k$ contains
        exactly one root system $\fR_\CF(de,e,r)$ where $\CF$
        satisfies the additional conditions
        \begin{itemize}
                \item 
                        The $\fa_j$ and the $\fb_j$ are integral,
                \item
                        $\fb_0$ and $\fb_1$ are relatively prime divisors of
                        $\ideal{(1+\zeta_{de})}$,
                \item
                        for all $i=1,2,\dots,d-1$, $\fa_i$ divides
                        $\ideal{(1-\zeta_d^i)}$, and
                \item
                        for all $i=1,2,\dots,d-1$, $\fb_0$ and $\fb_1$ divide $\fa_i$.
        \end{itemize}
\end{theorem}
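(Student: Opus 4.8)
The plan is to obtain the shape $\fR_\CF(de,e,r)$ from Nebe's counting principle and then to pin down which $\CF$ are admissible, and their canonical form, by a prime‑by‑prime analysis of axiom (RS$_{III}$). I would take $\fR(de,e,r)$ as the reference system: by Lemma~\ref{arootGdeen} it is a complete reduced \zroot\ system for $G(de,e,r)$ with explicitly known $G$‑orbits, namely $\fR_1^1(d,r),\dots,\fR_1^{d-1}(d,r),\fR_2(de,r)$, except when $r=2$ and $e$ is even, where $\fR_2(de,r)$ splits into $\fR_2^0(de,r)$ and $\fR_2^1(de,r)$. Since multiplying a root by a fractional ideal changes neither its associated reflection nor the property of forming a single $G$‑orbit (the $G$‑action commutes with such scaling), Proposition~\ref{nebegen} shows that every complete reduced \zroot\ system for $G(de,e,r)$ equals some $\fR_\CF(de,e,r)$, with $\fb_0=\fb_1$ forced whenever $\fR_2(de,r)$ is a single orbit. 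Moreover every $\fR_\CF(de,e,r)$ automatically satisfies (RS$_I$) and (RS$_{II}$) — finiteness, spanning and $G$‑stability are inherited from $\fR(de,e,r)$ — so the whole content is: for which $\CF$ does $\fR_\CF(de,e,r)$ satisfy (RS$_{III}$), and which $\CF$ is the distinguished representative of its genus, genus equivalence being exactly multiplication of the entire family $\CF$ by one fractional ideal.

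Next I would translate (RS$_{III}$) into explicit conditions. Using $\frn{\fa\cdot\frr}{\fb\cdot\frr'}=\fa\fb\inv\frn{\frr}{\frr'}$ together with the Cartan pairings of Lemma~\ref{pairings}, (RS$_{III}$) for $\fR_\CF(de,e,r)$ amounts to requiring that $\fa_i\fa_j\inv\frn{\frr_k^i}{\frr_k^j}$, $\fa_i\fb_\epsilon\inv\frn{\frr_k^i}{\frr_{k,l}^{(\beta)}}$, $\fb_\epsilon\fa_i\inv\frn{\frr_{k,l}^{(\beta)}}{\frr_k^i}$ and $\fb_\epsilon\fb_{\epsilon'}\inv\frn{\frr_{j,k}^{(\alpha)}}{\frr_{l,m}^{(\beta)}}$ all be integral. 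The base pairings appearing are only $0$, $\ideal{}$, $\ideal{(1-\zeta_d^i)}$ and $\ideal{(1+\zeta_{de}^{\alpha-\beta})}$, and I would invoke three elementary arithmetic facts: $\ideal{(1-\zeta_d^i)}$ is supported only on primes above $d$ and $\ideal{(1+\zeta_{de})}$ on at most one prime; $\ideal{(1+\zeta_{de}^{j})}=\ideal{(1+\zeta_{de})}$ for every odd $j$ when $de$ is even; and $1+\zeta_{de}$ is a unit unless $de$ is of the form $2p^k$ (the case $de=2$, where the relevant pairing vanishes, being degenerate). Consequently the conditions are \emph{local}: fixing a prime $\fp$ and writing $a_i=v_\fp(\fa_i)$, $b_\epsilon=v_\fp(\fb_\epsilon)$, $n_i=v_\fp(1-\zeta_d^i)$, $m=v_\fp(1+\zeta_{de})$, they read $a_j\le a_i+n_j$, $b_\epsilon\le a_i$, $a_i\le b_\epsilon+n_i$ (for all $i,j,\epsilon$) and $|b_0-b_1|\le m$; they are nontrivial only for finitely many $\fp$, and in the non‑exceptional case one simply drops the $b$'s, recovering $\fb_0=\fb_1=\ideal{}$.

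Finally I would do the local normalisation. Multiplying $\CF$ by a power of $\fp$ shifts all of $a_1,\dots,a_{d-1},b_0,b_1$ by one common integer; since $b_\epsilon\le a_i$ for all $i,\epsilon$, the global minimum of these valuations is attained among $b_0,b_1$, so there is a \emph{unique} shift making $\min(b_0,b_1)=0$, which at all primes simultaneously makes $\fb_0,\fb_1$ coprime and integral. In that position $b_\epsilon\le a_i$ gives $\fb_\epsilon\mid\fa_i$; $a_i\le\min_\epsilon b_\epsilon+n_i=n_i$ gives $\fa_i\mid\ideal{(1-\zeta_d^i)}$ (and $a_i\ge\max_\epsilon b_\epsilon\ge0$, so $\fa_i$ is integral); and $b_\epsilon\le m$ gives $\fb_\epsilon\mid\ideal{(1+\zeta_{de})}$ — exactly the four asserted conditions. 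Conversely these conditions straightforwardly imply all the local inequalities, so each admissible $\CF$ yields a genuine root system; and uniqueness is immediate since two normalised families in one genus differ by a global shift vanishing at every prime. The main obstacle is organisational rather than conceptual: one must keep the exceptional case ($r=2$ with $\fR_2$ split and $1+\zeta_{de}$ a non‑unit) carefully separated from the general one, deal with small degenerate $de$, and read off from Lemma~\ref{pairings} exactly which pairs of roots force $\fb_0=\fb_1$ — this happens only for $r\ge3$, through a pairing of two length‑two roots whose index sets meet in a single point, which is why for $r=2$ the orbits $\fR_2^0$ and $\fR_2^1$ may be scaled independently.
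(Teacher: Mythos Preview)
Your proof is correct and follows the same route as the paper's: obtain the shape $\fR_\CF$ from Proposition~\ref{nebegen} together with the orbit description in Lemma~\ref{arootGdeen}, extract the integrality constraints from the pairings of Lemma~\ref{pairings}, and normalise within the genus (the paper phrases the normalisation as ``make the whole family integral and relatively prime'', which is equivalent to your $\min(b_0,b_1)=0$ since $\fb_\epsilon\mid\fa_i$). One minor slip to fix: it is not true that $\ideal{(1+\zeta_{de}^j)}=\ideal{(1+\zeta_{de})}$ for every odd $j$ (take $de=12$, $j=3$, where $1+\zeta_{12}^3=1+i$ is a non-unit while $1+\zeta_{12}$ is a unit); what holds, and what your argument actually needs, is the divisibility $(1+\zeta_{de})\mid(1+\zeta_{de}^j)$ for odd $j$ --- immediate from $(1+x)\mid(1+x^j)$ in $\BZ[x]$ --- so that the binding constraint over all odd $j$ is indeed $m=v_\fp(1+\zeta_{de})$, attained at $j=\pm1$.
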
 

\begin{proof}
        By the description of the orbits of $G(de,e,r)$ over $\fR(de,e,r)$ (see 
        Lemma \ref{arootGdeen} above) and \ref{nebegen}, 
        we see that any complete reduced \zroot\ system for $G(de,e,r)$ 
        over a field containing $\zeta_{de}$ is
        of the form $\fR_\CF(de,e,r)$ for some fractional ideals $\fa_j$ and $\fb_j$.
 
        Without changing the genus, we may assume that the elements of
        $\CF$ are integral and relatively prime.
 
        Now computations of the Cartan pairings (see Lemma \ref{pairings} above) show that
        the following ideals must be integral (for $1\leq\alpha,\beta\leq d-1$ and
        $j,k = 0,1$):
        $$
                \left(1-\zeta_d^\beta\right)\fa_\alpha\fa_\beta\inv \,,\,
                \fa_\alpha\fb_j\inv  \,,\,
                \left(1-\zeta_d^\alpha\right)\fb_j\fa_\alpha\inv   \,,\,   
                 \left(1+\zeta_{de}^{-(j+k)}\right)\fb_j\fb_k\inv
        $$

        We  see that $\fb_0$  and $\fb_1$ divide  all the $\fa_i$ and since
        the  whole family is relatively prime,  it follows that $\fb_0$ and
        $\fb_1$  are  relatively  prime.  Since  $\fa_\alpha$  divides both
        $(1-\zeta_d^\alpha)\fb_0$ and $(1-\zeta_d^\alpha)\fb_1$, we see then that
        $\fa_\alpha$ divides $\ideal{(1-\zeta_d^\alpha)}$.

         Finally we also see that $\fb_0$ and $\fb_1$ divide 
         $\ideal{(1+\zeta_{de})}$.

         When $r>2$, $\fb_0=\fb_1$ since $\fR_2^0(de,r)$ and
         $\fR_2^1(de,r)$ are in the same orbit. Since they are relatively
         prime, they must be trivial.

         When $r=2$, we use that $1+\zeta_{de}$ is a unit unless $de$
         is of the form $2p^k$, see \ref{1+zeta}.
\end{proof}

        We remark that when $k=\BQ(\zeta_{de})$, the ideal
        $\ideal{(1+\zeta_{de})}$ is $\ideal{}$ or prime, see \ref{lem:m=n}.
 
\begin{corollary}[Case of $G(d,1,r)$]\label{Gd1n}
        The map
        $$
                \CF \mapsto \fR_\CF(d,1,r) = \bigcup_{1 \leq d-1} \fa_i\cdot
                \fR_1^i(d,r) \, \cup \,  \fR_2(d,r)
        $$
        induces a bijection between the set of families
        $
                 \CF := \left\{\, \fa_1,\fa_2,\dots,\fa_{d-1}
                 \,\right\}
        $
        of integral ideals such that 
        \begin{itemize}
                \item
                        for all $i$, $\fa_i$ divides $\ideal{(1-\zeta_d^i)}$ and
                \item
                          the ideals $\fa_i$ are relatively prime,
        \end{itemize}
        and the set of genera of complete reduced \zroot\ systems for $G(d,1,r)$.
\end{corollary}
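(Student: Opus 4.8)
The plan is to read off the statement from Theorem~\ref{rootGd1n} specialised to $e = 1$. Since then $de = d$, the root system $\fR_\CF(d,1,r)$ of the corollary is exactly the set $\fR_\CF(de,e,r)$ of that theorem with $e = 1$, \emph{provided} one knows that the two auxiliary ideals $\fb_0,\fb_1$ attached to the ``even'' and ``odd'' parts $\fR_2^0(d,r),\fR_2^1(d,r)$ of $\fR_2(d,r)$ are always trivial. So the real content is to establish $\fb_0 = \fb_1 = \ideal{}$ for every complete reduced \zroot\ system of $G(d,1,r)$, after which the corollary is pure bookkeeping.

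To prove this, I would first invoke item~(2) of Lemma~\ref{arootGdeen}: because $e = 1$ is odd, we are in its ``other cases'' branch, so $\fR_2(d,r)$ is a \emph{single} orbit of $G(d,1,r)$ on the reference system $\fR(d,r)$ (this orbit being empty precisely when $r = 1$). Next, given any complete reduced \zroot\ system $\fR'$ for $G(d,1,r)$, Proposition~\ref{nebegen} expresses it as $\fa_1\fR_1^1(d,r)\cup\cdots\cup\fa_{d-1}\fR_1^{d-1}(d,r)\cup\fb\,\fR_2(d,r)$ for suitable fractional ideals; in particular $\fR_2^0(d,r)$ and $\fR_2^1(d,r)$, lying in one and the same orbit, are rescaled by the same ideal $\fb$, that is $\fb_0 = \fb_1 = \fb$. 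Finally, normalising $\CF$ so that its members are integral and pairwise relatively prime forces $\fb = \ideal{}$. I expect this step to be the only one requiring genuine care: Theorem~\ref{rootGd1n} allows $\fb_0,\fb_1$ to be nontrivial in the a priori exceptional case $r = 2$, $de = 2p^k$, and one must be sure that for $e = 1$ (hence $de = d$) this does not actually occur --- which is exactly what the single-orbit observation guarantees.

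It then remains to unwind Theorem~\ref{rootGd1n} with $\fb_0 = \fb_1 = \ideal{}$: the set $\fR_\CF(d,1,r)$ becomes $\bigl(\bigcup_{1\leq i\leq d-1}\fa_i\cdot\fR_1^i(d,r)\bigr)\cup\fR_2(d,r)$, the condition ``$\fb_0,\fb_1$ divide $\fa_i$'' is vacuous, and the surviving conditions on $\CF$ are precisely that the $\fa_i$ are integral, pairwise relatively prime, and satisfy $\fa_i\mid\ideal{(1-\zeta_d^i)}$. The ``exactly one $\fR_\CF$ per genus'' clause of Theorem~\ref{rootGd1n} then yields the asserted bijection directly. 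As a sanity check, for $r = 1$, where $\fR_2(d,1) = \emptyset$ and $G(d,1,1) = \bmu_d$, this recovers the classification of Proposition~\ref{rootcyclic}.
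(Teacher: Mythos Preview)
Your proof is correct and is exactly the intended derivation: the paper states this corollary without proof, as an immediate specialisation of Theorem~\ref{rootGd1n} to $e=1$. You have correctly identified the one point that deserves a word of justification, namely that the potentially exceptional case $r=2$, $de=2p^k$ does not actually produce nontrivial $\fb_0,\fb_1$ when $e=1$, and your appeal to the orbit description in Lemma~\ref{arootGdeen}(2) (where $e=1$ is odd, so $\fR_2(d,r)$ is a single $G(d,1,r)$-orbit) together with Proposition~\ref{nebegen} is precisely the right way to see this; once $\fb_0=\fb_1$, the relative-primality condition of Theorem~\ref{rootGd1n} forces both to be $\BZ_k$.
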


\begin{corollary}[Case of $G(e,e,r)$]\label{Geen}
        Assume that $r>2$.
        There is only one genus of
        \zroot\ systems for $G(e,e,r)$,
        represented by the principal root system $\fR_2(e,r)$.
 \end{corollary}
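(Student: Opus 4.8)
The plan is to deduce the corollary as the special case $d=1$ of Theorem~\ref{rootGd1n}, and then to record the principality of $\fR_2(e,r)$ directly from the explicit shape of the roots $\frr_{k,l}^{(j)}$.

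First I would set $d=1$ in Theorem~\ref{rootGd1n}, so that $de=e$; since the index range $1\le i\le d-1$ is then empty, $\fR_1(1,r)=\emptyset$, and hence $\fR(e,e,r)=\fR_2(e,r)$ (recall that the reflections of $G(e,e,r)$ are exactly those of $\Rf_2(e,r)$). The family $\CF=\{\fa_1,\dots,\fa_{d-1},\fb_0,\fb_1\}$ appearing in that theorem then reduces to $\CF=\{\fb_0,\fb_1\}$, and because we assume $r>2$ the exceptional clause of the theorem, which requires $r=2$, does not apply; hence the theorem forces $\fb_0=\fb_1=\ideal{}$. Therefore the only set $\fR_\CF(e,e,r)$ allowed is
$$
\fR_\CF(e,e,r)=\ideal{}\cdot\fR_2^0(e,r)\,\cup\,\ideal{}\cdot\fR_2^1(e,r)=\fR_2(e,r).
$$
Since Theorem~\ref{rootGd1n} guarantees that every genus of complete reduced \zroot\ system for $G(e,e,r)$ contains exactly one such $\fR_\CF(e,e,r)$, and there is only one admissible $\CF$, there is exactly one genus, represented by $\fR_2(e,r)$. (Alternatively one may argue directly from Lemma~\ref{arootGdeen}, which for $d=1$ and $r>2$ exhibits $\fR_2(e,r)$ as a single $G(e,e,r)$-orbit, together with Proposition~\ref{nebegen}.)

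It then remains to check that $\fR_2(e,r)$ is principal. Every element of $\fR_2(e,r)$ has the form $\frr_{k,l}^{(j)}=\bigl(\ideal{}(e_k-\zeta_e^je_l),\,\ideal{}(e'_k-\zeta_e^je'_l),\,-1\bigr)$, whose first component $\BZ_k(e_k-\zeta_e^je_l)$ is a free $\BZ_k$-module of rank one; by Lemma--Definition~\ref{lem:principalroot} the root $\frr_{k,l}^{(j)}$ is then principal. As this exhausts all roots of $\fR_2(e,r)$, the system $\fR_2(e,r)$ is a principal \zroot\ system, which completes the argument.

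I do not expect a genuine obstacle here: all the substance is already packaged in Theorem~\ref{rootGd1n} (and in Lemma~\ref{arootGdeen}). The only point that wants care is that the ideals $\fb_0,\fb_1$ --- precisely the source of the extra genera in the dihedral-type situation $r=2$, $de=2p^k$ --- are forced to be trivial as soon as $r>2$, which is exactly what makes the classification collapse to the single representative $\fR_2(e,r)$.
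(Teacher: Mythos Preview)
Your proof is correct and is exactly the intended argument: the paper states this as an immediate corollary of Theorem~\ref{rootGd1n}, and your specialization $d=1$ (so that $\fR_1=\emptyset$ and, since $r>2$, necessarily $\fb_0=\fb_1=\ideal{}$) is precisely how that deduction goes. The principality check is likewise the obvious one from the explicit form of the roots $\frr_{k,l}^{(j)}$.
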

 
\begin{corollary}[The dihedral group]\habel{Gee2}
\begin{enumerate}
        \item
                        If $e$ is odd there is only one genus of $\BZ[\zeta_e]$-root system
                        for $G(e,e,2)$ represented by the principal root system $\fR_2(e,2)$.
        \item
                If $e$ is even, the map 
                        $$
                                (\fb_0,\fb_1) \mapsto \fb_0\cdot\fR_2^0(e,2) \cup \fb_1\cdot\fR_2^1(e,2)
                        $$
                        induces a bijection between 
                the set of pairs of  relatively prime integral 
                ideals of $\BZ[\zeta_e]$ dividing $\BZ[\zeta_e]{(1+\zeta_e)}$
                and the set of genera of $\BZ[\zeta_e]$-root systems
                        for $G(e,e,2)$.
\end{enumerate}
\end{corollary}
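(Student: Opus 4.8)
The plan is to obtain this corollary by specializing Theorem~\ref{rootGd1n} to the case $d=1$, $r=2$, in which $G(de,e,r)=G(e,e,2)$ and the ambient field is $k=\BQ(\zeta_{de})=\BQ(\zeta_e)$, so that the hypothesis $\zeta_{de}\in k$ of that theorem is automatic. The first step is to unwind the data of Theorem~\ref{rootGd1n} under this specialization. Since $d=1$ there are no ideals $\fa_1,\dots,\fa_{d-1}$ and $\fR_1(1,2)=\emptyset$, so a family $\CF$ is just a pair $(\fb_0,\fb_1)$ and $\fR_\CF(e,e,2)=\fb_0\cdot\fR_2^0(e,2)\cup\fb_1\cdot\fR_2^1(e,2)$, which is exactly the image of the map in the statement. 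The ``additional conditions'' of Theorem~\ref{rootGd1n} collapse in the same way: the three conditions that involve the $\fa_i$ become vacuous, leaving only that $\fb_0$ and $\fb_1$ be relatively prime integral divisors of $\ideal{(1+\zeta_e)}=\BZ[\zeta_e](1+\zeta_e)$. Thus Theorem~\ref{rootGd1n} says precisely that $(\fb_0,\fb_1)\mapsto\fb_0\cdot\fR_2^0(e,2)\cup\fb_1\cdot\fR_2^1(e,2)$ induces a bijection from the set of such pairs onto the set of genera of complete reduced \zroot\ systems for $G(e,e,2)$ --- which is assertion~(2).

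For assertion~(1) I would then bring in the arithmetic of $\ideal{(1+\zeta_e)}$ in $\BZ[\zeta_e]$ (Lemma~\ref{1+zeta}, see also Lemma~\ref{lem:m=n}): when $e$ is odd it is not of the form $2p^k$, hence $1+\zeta_e$ is a unit and $\ideal{(1+\zeta_e)}=\ideal{}$. The only integral divisor of $\ideal{}$ is $\ideal{}$ itself, so the unique admissible pair is $(\ideal{},\ideal{})$ and there is exactly one genus, represented by $\fR_2^0(e,2)\cup\fR_2^1(e,2)=\fR_2(e,2)$. That this system is principal is immediate from the shape of its roots $\frr_{k,l}^{(j)}=\bigl(\ideal{}(e_k-\zeta_e^{j}e_l),\,\ideal{}(e'_k-\zeta_e^{j}e'_l),\,-1\bigr)$, whose lattices are free of rank one. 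One also notes that assertion~(2) remains correct in the subcase where $e$ is even but not of the form $2p^k$: there $\ideal{(1+\zeta_e)}=\ideal{}$ as well, the set of admissible pairs is the singleton $\{(\ideal{},\ideal{})\}$, and the asserted bijection is between two one-element sets, recovering the lone principal genus $\fR_2(e,2)$.

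Almost everything here is bookkeeping inside the classification already established for $G(de,e,r)$; the only step needing real care is the interface with the exceptional ``$r=2$'' clause of Theorem~\ref{rootGd1n} --- one must check that for $e$ odd this clause does not apply, so $\fb_0=\fb_1=\ideal{}$ is forced, and that for $e$ even the admissible pairs $(\fb_0,\fb_1)$ are governed exactly by the divisors of $\ideal{(1+\zeta_e)}$. Since over $\BQ(\zeta_e)$ that ideal is either $\ideal{}$ or a single prime $\fp$ (Lemma~\ref{lem:m=n}), this incidentally shows that $G(e,e,2)$ has at most three genera of complete reduced \zroot\ systems over $\BQ(\zeta_e)$, namely those of $(\ideal{},\ideal{})$, $(\ideal{},\fp)$ and $(\fp,\ideal{})$. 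The main obstacle is therefore pinning down $\ideal{(1+\zeta_e)}$ in $\BZ[\zeta_e]$ precisely, which is exactly what Lemmas~\ref{1+zeta} and~\ref{lem:m=n} provide.
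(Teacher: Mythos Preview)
Your proposal is correct and matches the paper's approach: Corollary~\ref{Gee2} is stated without separate proof precisely because it is the $d=1$, $r=2$ specialization of Theorem~\ref{rootGd1n}, with the odd case following from $1+\zeta_e$ being a unit (Lemma~\ref{1+zeta}). Your observation that the clause ``unless $r=2$ and $de=2p^k$'' and the divisibility condition on $\fb_0,\fb_1$ are redundant in the presence of Lemma~\ref{1+zeta} is exactly what makes the specialization go through cleanly, and your remark on the at-most-three genera is the content of the sentence immediately following the theorem.
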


\begin{remark}
        As a particular case of Corollary \ref{Gee2} above, we recover
        the results already cited in Subsection~\ref{subsecB2} 
        about the $\BZ[i]$-root systems for the Weyl group of type $B_2$.
\end{remark}

\subsection{Root systems for $G(e,e,2)$ on its field of definition}\habel{dihedral}
\smallskip

	The   field  of  definition  of  the  dihedral  group  $G=G(e,e,2)$  is  
	$k=\BQ(\zeta_e+\zeta_e\inv)$,  the largest  real subfield  of $\BQ(\zeta_e)$;
	its   ring  of   integers  is   $\BZ[\zeta_e+\zeta_e\inv]$  by  Proposition
	\ref{integerscyclotomic}  (2). In this section  we classify root systems of
	$G$  over  its  field  of  definition.  From  now  on  we write $\zeta$ for
	$\zeta_e$.

\begin{lemma}
	The involutory reflections:
	$$
                s = \begin{pmatrix}
                -1& 2+\zeta + \zeta\inv  \\
                0&1 \\
                \end{pmatrix}
                \qquad \mbox{ and } \qquad
                t = \begin{pmatrix}
                1& 0  \\
                1&-1\\
                \end{pmatrix}
        $$
	generate $G$, and satisfy the dihedral relation $(st)^e=1$.

	Moreover, when $e$ is even, $(st)^{e/2}=-1$.
\end{lemma}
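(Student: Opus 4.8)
The plan is to extract the first three assertions from the characteristic polynomial of the $2\times2$ matrix $st$, and then to identify $\langle s,t\rangle$ with $G(e,e,2)$ by counting its order. I will use throughout that $e\geq 3$, so that $\zeta\neq\zeta\inv$ (for $e\leq 2$ the stated matrices do not even generate a finite group, and $G$ is a Weyl group already dealt with).

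First I would verify by direct multiplication that $s^2=t^2=\Id_V$ and that $s-\Id_V$, $t-\Id_V$ have rank one; with $\det s=\det t=-1$ this shows that $s$ and $t$ are reflections of order $2$. Then I would compute $st$, recording $\det(st)=\det s\,\det t=1$ and $\operatorname{tr}(st)=\zeta+\zeta\inv$, so that the characteristic polynomial of $st$ is
\[
        X^2-(\zeta+\zeta\inv)X+1=(X-\zeta)(X-\zeta\inv).
\]
Since $\zeta\neq\zeta\inv$, after extending scalars from $k$ to $\BQ(\zeta)$ the matrix $st$ is diagonalisable with eigenvalues $\zeta,\zeta\inv$; therefore $(st)^m=\Id_V$ precisely when $e\mid m$, giving at once $(st)^e=1$ and that $st$ has order exactly $e$. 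When $e$ is even the eigenvalues of $(st)^{e/2}$ are both $\zeta^{e/2}=-1$, and a diagonalisable matrix with $-1$ as its only eigenvalue equals $-\Id_V$; hence $(st)^{e/2}=-1$.

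For generation I would observe that $s(st)s\inv=ts=(st)\inv$ (as $s$ and $t$ are involutions), so $\langle s,t\rangle=\langle st,s\rangle$ is a homomorphic image of the dihedral group of order $2e$; comparing determinants, which equal $1$ on the powers of $st$ but $-1$ on $s$, shows $s\notin\langle st\rangle$, so $|\langle s,t\rangle|=2e$. The reflecting lines $L_s=ke_1$ and $L_t=ke_2$ are distinct, and neither is stable under the other reflection, so $(V,\langle s,t\rangle)$ is irreducible; an irreducible rank-two reflection group of order $2e$ which is dihedral is, by the Shephard--Todd classification (Theorem~\ref{classi}) together with the identification of $G(e,e,2)$ as the dihedral group of order $2e$, a realisation of $G=G(e,e,2)$ over its field of definition.

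Everything but the last identification is routine $2\times2$ linear algebra. The only step I would be careful about is that last one --- making sure ``generate $G$'' really follows from ``dihedral, irreducible, rank two, order $2e$'' via the classification, rather than being taken for granted --- but I do not expect a genuine obstacle there.
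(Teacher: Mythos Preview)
Your argument is correct. The route differs from the paper's: instead of diagonalising, the paper writes down an explicit closed formula
\[
(st)^n=\frac{1}{\Im(\zeta)}\begin{pmatrix}\Im(\zeta^n+\zeta^{n+1})&-\Im(\zeta^{n-1}+2\zeta^n+\zeta^{n+1})\\ \Im(\zeta^n)&-\Im(\zeta^n+\zeta^{n-1})\end{pmatrix}
\]
and reads off $(st)^e=1$ and $(st)^{e/2}=-1$ by substituting $n=e$ and $n=e/2$. Your characteristic-polynomial argument is cleaner for the lemma itself and avoids any computation beyond the trace and determinant of $st$; the paper's explicit formula, however, is not incidental --- it is reused in the very next proposition to compute $(st)^n e_1$ and locate $(st)^q\frr_s$ relative to $\frr_t$. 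So the paper's approach is doing double duty, whereas yours would need a separate computation there.

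One small remark on your irreducibility step: the sentence ``neither reflecting line is stable under the other reflection'' rules out $L_s$ and $L_t$ as invariant lines, but a putative invariant line could also lie in $H_s\cap H_t$. This is easy to exclude (for instance, the only $st$-stable lines are its two eigenlines over $\BQ(\zeta)$, and $s$ cannot act by $\pm1$ on either unless $e\le2$), so it is not a genuine gap, but you should add a word to close it.
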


\begin{proof}
	The elements of $G$ of the form $(st)^n$ are rotations.
	These elements have matrices:
        $$
                (st)^n = \frac{1}{\Im(\zeta)} 
                \begin{pmatrix}
                        \Im(\zeta^n+\zeta^{n+1}) & -\Im(\zeta^{n-1}+2\zeta^n+\zeta^{n+1})  \\
                        \Im(\zeta^n) & -\Im(\zeta^n+\zeta^{n-1})\\
                \end{pmatrix} ,
        $$
	where $\Im(z)$ denotes the imaginary part of a complex number $z$.
	From this computation the lemma is obvious.
\end{proof}

	The reflection lines of $s$ and  $t$ respectively are in the directions of 
	the standard vectors 
	$e_1 = \binom{1}{0}$ and $e_2= \binom{0}{1}$ respectively. 

	Define principal roots
	$\frr_s=\BZ_k\cdot(e_1, 2e_1'-(2+\zeta+\zeta\inv)e_2',-1)$ 
	and $\frr_t=\BZ_k\cdot(e_2, -e_1'+2 e_2',-1)$.

\begin{proposition}\label{roots dihedral}
        If $e$ is odd let $\fR := \{(st)^n \frr_s \mid 0 \leq n <e \}$.        
        If $e$ is even, let 
        $
                \fR_s :=\{(st)^n\frr_s \mid 0 \leq n < e/2 \}\,,\,
                \fR_t :=\{(st)^n\frr_t \mid 0 \leq n < e/2 \}\,,
        $
        $
                \text{and }\quad
                \fR :=\fR_s\cup\fR_t
                \,.
        $
\begin{enumerate}
        \item
                In both cases $\fR$ is a complete, reduced, distinguished and 
                principal \zroot\ system for $G$ with root basis $\{\frr_s,\frr_t\}$.
        \item
                There is a single genus of reduced \zroot\ systems for $G$ unless
                $e=2p^k$, $p$ prime, $k\ge 1$. 
                
                In this last case, there is another
                genus, represented by the principal root system given by 
                $\fR_s\cup (2+\zeta+\zeta\inv)\cdot \fR_t$.
\end{enumerate}
\end{proposition}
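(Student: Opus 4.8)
The plan is to obtain (1) from Propositions~\ref{prop:jeanlemma} and~\ref{root and coroot basis}, and (2) from Proposition~\ref{nebegen} together with the arithmetic of the ideal $\ideal{(2+\zeta+\zeta\inv)}$. For (1) I would first note that $s$ and $t$ are reflections of order $2$, so $\det s=\det t=-1$, and since $G$ is real every $C_G(H)$ has order $2$; hence $s$ and $t$ are distinguished, and, as all reflections of $G$ have order $2$, the notions \emph{reduced}, \emph{complete} and \emph{distinguished} agree for \zroot\ systems of $G$ (Remarks after Definition~\ref{variousdefs}, together with Proposition~\ref{ArrR}). The triples $\frr_s,\frr_t$ are principal $(s,\BZ_k)$- and $(t,\BZ_k)$-roots, and a direct computation gives $\frn{\frr_s}{\frr_s}=\frn{\frr_t}{\frr_t}=\ideal 2$, $\frn{\frr_s}{\frr_t}=\BZ_k$ and $\frn{\frr_t}{\frr_s}=\ideal{(2+\zeta+\zeta\inv)}$, all integral. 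Since $\{s_{\frr_s},s_{\frr_t}\}=\{s,t\}$ generates $G$, Proposition~\ref{prop:jeanlemma} shows that the $G$-orbit $\fR'$ of $\{\frr_s,\frr_t\}$ is a \zroot\ system for $G$, that it is distinguished (hence reduced and complete) and principal, and that $\frr\mapsto s_\frr$ is a bijection $\fR'\to\Rf(G)$; in particular $|\fR'|$ is the number of reflections of $G$. To identify $\fR'$ with the set in the statement I would argue by cardinality: $st$ has order $e$ and, for $e$ even, $(st)^{e/2}=-1$ (preceding Lemma) acts trivially on every \zroot; a short computation in the dihedral group of order $2e$ shows that the reflections $s_{(st)^n\frr_s}$ ($0\le n<e$) exhaust $\Rf(G)$ when $e$ is odd, while for $e$ even $\{s_{(st)^n\frr_s}\}_{0\le n<e/2}$ and $\{s_{(st)^n\frr_t}\}_{0\le n<e/2}$ are precisely the two conjugacy classes of reflections; since $\fR'$ is reduced this produces $e$ distinct roots of $\fR'$ inside the set of the statement, forcing $\fR'=\fR$, and in particular $\frr_s,\frr_t\in\fR$. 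Finally $\{\frr_s,\frr_t\}$ generates $G$ and has cardinality $2=\dim V$, so Proposition~\ref{root and coroot basis} yields that it is simultaneously a root basis and a coroot basis.

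For (2), the equivalence recorded above and Proposition~\ref{nebegen} show that every reduced \zroot\ system for $G$ is of the form $\fa_1\fR_1\cup\cdots\cup\fa_m\fR_m$ with $\fR_1,\dots,\fR_m$ the $G$-orbits on $\fR$, two such lying in the same genus exactly when the tuples $(\fa_i)$ differ by a common scaling. For $e$ odd, $G$ has a single orbit of reflections, so $m=1$ and there is one genus. For $e$ even, $m=2$ with orbits $\fR_s,\fR_t$; after rescaling I may take $\fa_s=\BZ_k$ and write $\fb:=\fa_t$, so the candidate system $\fR_s\cup\fb\cdot\fR_t$ is the $G$-orbit of $\{\frr_s,\fb\cdot\frr_t\}$. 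Applying Proposition~\ref{prop:jeanlemma} to this pair, the cross pairings are $\frn{\frr_s}{\fb\cdot\frr_t}=\fb\inv$ and $\frn{\fb\cdot\frr_t}{\frr_s}=\fb\cdot\ideal{(2+\zeta+\zeta\inv)}$, so $\fR_s\cup\fb\cdot\fR_t$ is a \zroot\ system for $G$ precisely when $\fb\inv$ is an integral ideal dividing $\ideal{(2+\zeta+\zeta\inv)}$. Thus, for $e$ even, the genera correspond bijectively to the integral divisors of $\ideal{(2+\zeta+\zeta\inv)}$ in $\BZ_k$.

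The hard part will be the arithmetic of that ideal. I would write $2+\zeta+\zeta\inv=(1+\zeta)(1+\zeta\inv)=N_{\BQ(\zeta_e)/k}(1+\zeta_e)$ and invoke the cyclotomic lemmas of the appendix (cf.~\ref{1+zeta},~\ref{lem:m=n}): $1+\zeta_e$ is a unit of $\BZ[\zeta_e]$ exactly when $e$ is not of the form $2p^k$ ($p$ prime, $k\ge 1$), and then $2+\zeta+\zeta\inv$ is a unit of $\BZ_k$, $\ideal{(2+\zeta+\zeta\inv)}=\BZ_k$ has only the trivial divisor, and there is a single genus. When $e=2p^k$ (bearing in mind that $\BQ(\zeta_{2p^k})=\BQ(\zeta_{p^k})$ for $p$ odd), $\ideal{(1+\zeta_e)}$ is the unique prime of $\BZ[\zeta_e]$ above $p$ and lies over a prime $\fp_0$ of $\BZ_k$ that ramifies in the quadratic extension $\BZ[\zeta_e]/\BZ_k$, so its relative norm $\ideal{(2+\zeta+\zeta\inv)}=\fp_0$ is prime, with the two divisors $\BZ_k$ and $\fp_0$; hence there are exactly two genera — the reference $\fR$, and the principal system obtained from it by rescaling the orbit $\fR_t$ by $\ideal{(2+\zeta+\zeta\inv)}$ or its inverse (equivalently, by rescaling $\fR_s$), which is the system named in the statement. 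I expect this ramification/norm step, together with the unit criterion for $1+\zeta_e$, to be the only genuinely delicate point; everything else follows mechanically from Propositions~\ref{prop:jeanlemma}, \ref{nebegen}, \ref{root and coroot basis} and the dihedral combinatorics.
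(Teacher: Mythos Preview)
Your proof is correct and follows essentially the same route as the paper: Proposition~\ref{prop:jeanlemma} for (1), Proposition~\ref{nebegen} for (2), and the arithmetic of $\ideal{(2+\zeta+\zeta\inv)}$ via Corollary~\ref{1+zeta} and (what amounts to) Lemma~\ref{zeta+zetainv prime}. The only noteworthy difference is that, for $e$ odd, the paper verifies explicitly that $\frr_t$ lies in the $G$-orbit of $\frr_s$ by computing $(st)^{(e-1)/2}\frr_s=\alpha\inv\frr_t$ with $\alpha$ a unit, whereas your cardinality argument via $|\fR'|=|\Rf(G)|=e$ bypasses this neatly.
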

\begin{proof}
	We have:
        $$
                (st)^n e_1 =  \frac{1}{\Im(\zeta)}
                \begin{pmatrix} 
                        \Im(\zeta^n+\zeta^{n+1}) \\
                         \Im(\zeta^n)
                \end{pmatrix}  \,.
        $$

\bul
	If  $e$ is odd, then the dihedral relation ensures that all the reflections
	of  $G$  are  conjugate: setting  $q=(e-1)/2$,  we have $\zeta^{q+1}=-\zeta_{2e}$  and
	$\Im(\zeta^{q+1})=-\Im(\zeta^q) $. Thus
	$(st)^q e_1 = \frac{\Im(\zeta^q)}{\Im(\zeta)} e_2$ and
	$(st)^q\frr_s=\alpha\inv\frr_t$ with
	$$
		\alpha = \frac{\Im(\zeta)}{\Im(\zeta^q)} = -\zeta^q (1+\zeta)=-2\Re(\zeta^q) =\Re(\zeta_{2e})
		\,
	$$ 
	and $\alpha^2=2+\zeta+\zeta\inv=(1+\zeta)(1+\zeta\inv)$.

	By \ref{1+zeta} in Appendix~\ref{arithmetic}, $1+\zeta_e$ is a unit unless
	$e$ is of the form $2p^k$, $p$ prime, $k\ge 1$.
	In particular, if $e$ is odd,  $\alpha$ is a unit in $\BZ[\zeta]$,  which
	proves that actually $(st)^q\frr_s=\frr_t$.

        Since $\frn{\frr_s}{\frr_t}=-(2+\zeta+\zeta\inv)$ and
        $\frn{\frr_t}{\frr_s}=-1$, it results from Proposition~\ref{prop:jeanlemma} that
        $\fR$ is a \zroot\ system for $G$. Thus
        there is a single genus of reduced root systems by \ref{nebegen}.
\smallskip

\bul
	If  $e$ is even, there  are two conjugacy classes  of reflections, of which
	$s$  and $t$ are representatives. Thus  the orbits of $\frr_s$ and $\frr_t$
	are  distinct, and  their size  is $e/2$  since $(st)^{e/2}=-1$. Again the
	pairings  are integral by Proposition~\ref{prop:jeanlemma}. But, this time we can scale one of
	the orbits by some fractional ideal $\fa$ (see \ref{nebegen}), and for the pairings
	to remain integral we need that $\fa$ be an integral divisor
	of $2+\zeta+\zeta\inv=(1+\zeta)(1+\zeta\inv)$. Thus by \ref{1+zeta},
	$\fa$ can be non trivial only if $e=2p^k$.
	In this last case, by Lemma \ref{zeta+zetainv  prime},  
	$2+\zeta+\zeta\inv$  is prime, so the only
	possible values for $\fa$ are the principal ideals $\ideal{}$ and
	$\ideal{(2+\zeta+\zeta\inv)}$.

	Proposition \ref{prop:RootBasis} ensures that $\{\frr_s,\frr_t\}$ is always a root basis.
\end{proof}

\subsubsection*{The symmetric dihedral root system}\hfill
\smallskip

        A symmetric (``self-dual'') root system for $G(e,e,2)$ can be obtained by 
        adjoining $2 \cos(\frac{\pi}{e})$. This results in the ``symmetric Cartan matrix'':
        $$
                C_{sym}=
                		\begin{pmatrix}
                                2&-2 \cos(\frac{\pi}{e})\\
                                -2 \cos(\frac{\pi}{e})&2\\
			\end{pmatrix}
        $$
        This corresponds to a root system over $\BZ_k$ only if $e$ is odd
        (see item (2) of \ref{dihedralfields}). 
        
        Now  $\left(2+\zeta+\zeta\inv\right) = \left(2 \cos(\frac{\pi}{e})\right)^2$; 
        that is,  adjoining $2\cos(\frac{\pi}{e})$ to $\BZ_k$ makes $\left(2+\zeta+\zeta\inv\right)$ a square. 
        Unless $e$ is a power of 2, $\left(2+\zeta+\zeta\inv\right)$ is a unit, 
        so we do not get any additional root systems.
        However, if $e$ is a power of 2, we get a third, self-dual, root system. The three are:       
	$$
		\fR_s \cup \fR_t ,  
		\qquad 
		\left(2\cos\left(\frac{\pi}{e}\right)\right)^2 \cdot \fR_s \cup \fR_t 
		\qquad \mbox{ and } \qquad 
		2\cos\left(\frac{\pi}{e}\right) \cdot \fR_s \cup \fR_t 
		\,.
	$$
        The last root system in this list (with symmetric Cartan matrix) is stable under the outer 
        automorphism of $G(e,e,2)$.

\subsection{Classifying distinguished root systems for $G(de,e,r)$}\hfill
\smallskip

        The complete reduced systems for $G(e,e,r)$ described above are 
        also distinguished because all reflections, being involutive, 
        are distinguished.

        For $G(de,e,r)$ with $d>1$, the distinguished roots 
        (those corresponding to distinguished reflections) are those in  
        $\fR_2(de,r)$ (corresponding to involutive reflections) 
        and $\fR_1^1(d,r)$.

        Applying Proposition~\ref{RestrictingR} (every complete root system 
        contains a 
        distinguished root system) to Theorem~\ref{rootGd1n} allows us to deduce the following result directly.

\begin{theorem}\label{distinguishedGdeenRS}
        Given a family
        $
                 \CF_r = \left\{\, \fa, \fb_0, \fb_1 \,\right\}
        $
        of fractional ideals, where $\fb_0=\fb_1=\ideal{}$ unless $r=2$
        and $de=2p^l$, $p$ prime, $l\ge 1$;  define 
        $$
                \fR_{\CF_r}(de,e,r) := 
                \fa\cdot\fR_1^1(d,r) 
                 \, \cup\, \fb_0\cdot\fR_2^0(de,r)
                 \,\cup\, \fb_1\cdot\fR_2^1(de,r)
                 \,.
                $$
        Then every genus of distinguished \zroot\ system for $G(de,e,r)$
        over a field containing $\zeta_{de}$ contains
         exactly one root system $\fR_{\CF_r}(de,e,r)$ where $\CF_r$
        satisfies the additional conditions
        \begin{itemize}
                \item 
                        $\fa$, $\fb_0$  and $\fb_1$ are integral,
                 \item
                        $\fb_0$ and $\fb_1$ are relatively prime divisors of
                        $\ideal{(1+\zeta_{de})}$,
                \item
                        $\fa$ divides $\ideal{(1-\zeta_d)}$,
                \item
                        $\fb_0$ and $\fb_1$ divide $\fa$.
        \end{itemize}
\end{theorem}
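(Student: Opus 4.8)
The plan is to deduce the theorem from the classification of \emph{complete reduced} systems in Theorem~\ref{rootGd1n}, using that ``take distinguished roots'' and ``complete'' are mutually inverse passages between the two kinds of system: Proposition~\ref{RestrictingR} shows that a complete system contains a distinguished one, while Proposition~\ref{completingdistinguishedR} shows that a distinguished \zroot\ system $\fR$ has a completion $\widehat{\fR}$ which is complete and reduced and whose set of distinguished roots is exactly $\fR$. Throughout we take $k$ with $\zeta_{de}\in k$, so that $G=G(de,e,r)$ is a reflection group over $k$ and Theorem~\ref{rootGd1n} applies.

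For the surjectivity half I would argue as follows. Let $\fR'$ be a distinguished \zroot\ system for $G$. Its completion $\widehat{\fR'}$ is a complete reduced \zroot\ system, hence by Theorem~\ref{rootGd1n} it lies in the genus of a unique $\fR_\CF(de,e,r)$ with $\CF=\{\fa_1,\dots,\fa_{d-1},\fb_0,\fb_1\}$ satisfying the conditions stated there; write $\widehat{\fR'}=\mathfrak c\cdot\fR_\CF(de,e,r)$ for a fractional ideal $\mathfrak c$. Scaling a root by a fractional ideal does not change the attached reflection, hence preserves distinguishedness, so the distinguished roots of $\widehat{\fR'}$ --- which constitute $\fR'$ --- form $\mathfrak c$ times the distinguished roots of $\fR_\CF(de,e,r)$. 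As recalled above, the distinguished roots of $\fR_\CF(de,e,r)$ are precisely those attached to the involutions $s_{k,l}^{(j)}$ and to the $s_k^1$, i.e.\ $\fa_1\cdot\fR_1^1(d,r)\cup\fb_0\cdot\fR_2^0(de,r)\cup\fb_1\cdot\fR_2^1(de,r)=\fR_{\CF_r}(de,e,r)$ with $\CF_r=\{\fa_1,\fb_0,\fb_1\}$. Reading the four conditions of Theorem~\ref{rootGd1n} for the single index $i=1$, and renaming $\fa:=\fa_1$, gives exactly the four conditions of the present statement; thus $\fR'$ lies in the genus of this $\fR_{\CF_r}(de,e,r)$.

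The reverse direction and the uniqueness in a genus I would settle by completing $\fR_{\CF_r}(de,e,r)$ directly. Adjoining the roots $\frr^i=\bigl(\tfrac{1-\zeta^i}{1-\zeta}I_\frr,J_\frr,\zeta^i\bigr)$ to its $\fR_1^1$--part produces $\fR_\CF(de,e,r)$ with $\fa_i:=\tfrac{1-\zeta_d^i}{1-\zeta_d}\fa$ (and the same $\fb_0,\fb_1$, since the $\fR_2$--part consists of order-two roots and is unchanged). As $\tfrac{1-\zeta_d^i}{1-\zeta_d}$ is an algebraic integer, $\fa\mid\ideal{(1-\zeta_d)}$ gives $\fa_i\mid\ideal{(1-\zeta_d^i)}$ and $\fb_0,\fb_1\mid\fa\mid\fa_i$, so $\CF$ satisfies the hypotheses of Theorem~\ref{rootGd1n}; therefore $\fR_\CF(de,e,r)$ is a complete reduced \zroot\ system for $G$, and by Proposition~\ref{completingdistinguishedR} its set of distinguished roots --- namely $\fR_{\CF_r}(de,e,r)$ --- is a distinguished \zroot\ system for $G$. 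Finally, if $\fR_{\CF'_r}(de,e,r)=\mathfrak c\cdot\fR_{\CF_r}(de,e,r)$ for some fractional ideal $\mathfrak c$, then matching the orbits of $G$ on the distinguished part of $\fR(de,e,r)$ (Lemma~\ref{arootGdeen}) and using Proposition~\ref{nebegen} forces $\fa'=\mathfrak c\fa$, $\fb'_0=\mathfrak c\fb_0$, $\fb'_1=\mathfrak c\fb_1$; when $\fb_0=\fb_1=\BZ_k$ the orbit $\fR_2(de,r)$ (carrying the trivial ideal) forces $\mathfrak c=\BZ_k$, and otherwise coprimality of $\fb'_0,\fb'_1$ does, so $\CF_r=\CF'_r$. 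The step I expect to be the genuine obstacle is precisely this dictionary between the divisibility conditions --- checking that completion carries a family $\CF_r$ obeying the present conditions to a family $\CF$ obeying those of Theorem~\ref{rootGd1n}, and restriction goes back, via $\fa_i=\tfrac{1-\zeta_d^i}{1-\zeta_d}\fa$ --- since this is what upgrades the surjection onto genera to a bijection.
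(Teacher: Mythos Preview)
Your argument is correct and follows the same route as the paper, which simply says ``applying Proposition~\ref{RestrictingR} to Theorem~\ref{rootGd1n} allows us to deduce the result directly''; you have in effect spelled out what that sentence means by running the completion/restriction correspondence in both directions and checking uniqueness via the orbit decomposition of Lemma~\ref{arootGdeen}. One small slip: in your reverse direction, after establishing that $\fR_\CF$ is a complete reduced system, the fact that its set of distinguished roots $\fR_{\CF_r}$ is a distinguished \zroot\ system is Proposition~\ref{RestrictingR}, not Proposition~\ref{completingdistinguishedR} --- the latter assumes $\fR_{\CF_r}$ is already distinguished, so invoking it there is circular.
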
 

        Again, we remark that when $k=\BQ(\zeta_{de})$, the ideal
        $\ideal{(1+\zeta_{de})}$ is $\ideal{}$ or prime, see Lemma~\ref{lem:m=n}.

        The complete root systems for $G(e,e,r)$, including all the dihedrals $G(e,e,2)$, 
        as well as $G(2,1,r)$ as described in Corollaries~\ref{Gd1n}, \ref{Geen} and \ref{Gee2}
        are all distinguished by definition.

\begin{corollary}\label{distinguishedGd1n}
        Each genus of distinguished root system for $G(d,1,r)$ contains a root system of the form:
        $$
                \fR_{\CF_r}(d,1,r) = \fa\cdot \fR_1^1(d,r) \cup \,  \fR_2(d,r)
                \,,
        $$
        where $\fa\in\{\ideal{},\ideal{(1-\zeta_d)}\}$.
        Thus there are two genera of
        distinguished root systems of $G(d,1,r)$, each containing a principal
        root system.
\end{corollary}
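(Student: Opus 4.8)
The plan is to read the result off Theorem~\ref{distinguishedGdeenRS} by putting $e=1$ (so the group is $G(d,1,r)$ and, as fixed throughout this section, $k=\BQ(\zeta_{de})=\BQ(\zeta_d)\ni\zeta_d$), and then showing that the ideals $\fb_0,\fb_1$ must be trivial. With $e=1$, Theorem~\ref{distinguishedGdeenRS} tells us that every genus of distinguished \zroot\ system for $G(d,1,r)$ over $k$ contains exactly one system
$$
\fR_{\CF_r}(d,1,r)=\fa\cdot\fR_1^1(d,r)\,\cup\,\fb_0\cdot\fR_2^0(d,r)\,\cup\,\fb_1\cdot\fR_2^1(d,r),
$$
with $\fa,\fb_0,\fb_1$ integral, $\fb_0,\fb_1$ relatively prime and dividing both $\ideal{(1+\zeta_d)}$ and $\fa$, and $\fa\mid\ideal{(1-\zeta_d)}$.

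First I would argue that $\fb_0=\fb_1=\ideal{}$. The point is that for $e=1$ the reflections $s_k^1$ already lie in $G(d,1,r)$ and mix the two parities: by Lemma~\ref{actionsofref}, $s_{\frr_k^1}(\frr_{l,m}^{(\beta)})=\frr_{l,m}^{(\beta\mp1)}$ whenever $k\in\{l,m\}$, so $\fR_2^0(d,r)$ and $\fR_2^1(d,r)$ together form a single $G(d,1,r)$-orbit $\fR_2(d,r)$ (the non-split case of Lemma~\ref{arootGdeen}). Hence $\fR_{\CF_r}(d,1,r)$ can be $G(d,1,r)$-stable only if $\fb_0=\fb_1$, and since Theorem~\ref{distinguishedGdeenRS} also demands $\fb_0,\fb_1$ relatively prime, this forces $\fb_0=\fb_1=\ideal{}$. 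The representative then collapses to $\fa\cdot\fR_1^1(d,r)\cup\fR_2(d,r)$ with $\fa$ an integral divisor of $\ideal{(1-\zeta_d)}$.

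It remains to pin down $\fa$ and to check principality. Over $k=\BQ(\zeta_d)$ the ideal $\ideal{(1-\zeta_d)}$ is the unit ideal when $d$ has at least two distinct prime divisors, and is the (prime) ideal above $p$ when $d=p^a$ is a prime power (classical; see the facts gathered in Appendix~\ref{arithmetic}); in either case its only divisors are $\ideal{}$ and $\ideal{(1-\zeta_d)}$, so $\fa\in\{\ideal{},\ideal{(1-\zeta_d)}\}$, giving the two genera of the statement (which coincide exactly when $d$ is not a prime power). Finally, each $\frr_{k,l}^{(j)}=(\BZ_k(e_k-\zeta_d^je_l),\BZ_k(e'_k-\zeta_d^je'_l),-1)$ is principal, and since $\ideal{}$ and $\ideal{(1-\zeta_d)}$ are both principal ideals, so is $\fa\cdot\frr_k^1$ (Lemma~\ref{lem:principalroot}); hence $\fR_{\CF_r}(d,1,r)$ is a principal \zroot\ system. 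The one step that needs care is the vanishing of $\fb_0,\fb_1$: one must be sure that the orbit-merging for $e=1$ (Lemma~\ref{arootGdeen}) genuinely overrides the more permissive ``unless $r=2$ and $de=2p^l$'' clause of Theorem~\ref{distinguishedGdeenRS}, i.e.\ that no honestly distinct distinguished system for $G(d,1,r)$ is discarded; everything else is routine cyclotomic arithmetic and bookkeeping.
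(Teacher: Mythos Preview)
Your proof is correct and follows the same approach as the paper: the corollary is stated without proof there, as an immediate specialization of Theorem~\ref{distinguishedGdeenRS} to $e=1$. Your extra care in handling the clause ``unless $r=2$ and $de=2p^l$'' is well placed---you correctly invoke Lemma~\ref{arootGdeen} (for $e=1$ the set $\fR_2(d,r)$ is a single $G(d,1,r)$-orbit regardless of $r$) to force $\fb_0=\fb_1$ and hence both trivial; this is exactly the mechanism already used in the proof of Theorem~\ref{rootGd1n}, so nothing is being discarded. Your observation that the two genera coincide when $d$ is not a prime power is also a useful clarification of the paper's slightly loose ``two genera'' wording.
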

 
 \begin{remark}
        It was assumed at the beginning of this section that $de>1$, excluding the case $G(1,1,r)$. 
 
        However it is well known that the reflections contained in the symmetric group $\fS_r$, 
        that is, the Weyl group of type $A_{r-1}$, are precisely the reflections:
        $$
                \Rf_2(1,r)=\{s_{k,l}^{(0)} \mid 1 \leq k<l\leq r\}
        $$ 
        defined as above, with a single orbit of roots 
        $$
                \fR(1,1,r):=\fR_2(1,r) =\{ \frr_{k,l}^{(0)} \mid 1 \leq k<l\leq r\}
        $$ 
        where
        $$
                \frr_{k,l}^{(0)}=\left(\ideal{} (e_k-e_l),\ideal{} (e'_k-e'_l),-1 \right),
        $$
        as defined above. The set $\fR(1,1,r)$ forms a complete and reduced root system 
        for $\fS_r$, and as the reflections are all involutive, it is also distinguished.

        Since the action of $\fS_r$ on the roots consists of a single orbit, there is a unique genus of 
        root system, with representative given by $\fR(1,1,r)$ above.
\end{remark}

	One may notice that (see Remark \ref{rem:defnebe}), as far as we deal with
	well-generated reflection groups,
	the next result allows us to reduce our classification problem to a problem which has
	already been solved by Nebe \cite{nebe}.

\begin{theorem}\label{allprincipal}
	All genera of distinguished root systems over the field of definition of a
	well-generated irreducible complex reflection group contain a principal 
	root system.
\end{theorem}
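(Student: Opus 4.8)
The plan is to read the statement off from the Shephard--Todd classification (Theorem~\ref{classi}) together with the explicit descriptions of genera of distinguished \zroot\ systems obtained in the preceding sections. Recall that a well-generated irreducible complex reflection group is either the symmetric group $\fS_r=G(1,1,r)$ (type $A_{r-1}$), or $G(d,1,r)$ with $d\geq 2$, or $G(e,e,r)$ with $e\geq 2$ and $(e,r)\neq(2,2)$, or one of the $34$ exceptional groups other than $G_7,G_{11},G_{12},G_{13},G_{15},G_{19},G_{22},G_{31}$. I would treat these four cases in turn, exhibiting in each genus a distinguished \zroot\ system that is principal.

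For the exceptional groups there is nothing to prove: over the field of definition $\BQ_G$ the ring $\BZ_{\BQ_G}$ is a principal ideal domain (\cite{nebe}; see the discussion after Definition~\ref{def:princrootsys}), so every \zroot\ --- hence \emph{every} distinguished \zroot\ system --- is principal, and a fortiori every genus contains a principal one. For $\fS_r$ the remark closing Section~\ref{ImprimRS} exhibits the unique genus, represented by $\fR(1,1,r)=\fR_2(1,r)$, whose roots $\frr_{k,l}^{(0)}=(\ideal{}(e_k-e_l),\ideal{}(e'_k-e'_l),-1)$ are manifestly principal. For $G(d,1,r)$ with $d\geq 2$, Corollary~\ref{distinguishedGd1n} states directly that each of the two genera of distinguished \zroot\ systems contains a principal one, with representatives $\fa\cdot\fR_1^1(d,r)\cup\fR_2(d,r)$ for $\fa\in\{\ideal{},\ideal{(1-\zeta_d)}\}$, both principal because $\ideal{(1-\zeta_d)}$ is a principal ideal (so scaling the principal roots by it keeps them principal). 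For $G(e,e,r)$ with $r\geq 3$, Corollary~\ref{Geen} gives a single genus, represented by the principal system $\fR_2(e,r)$.

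What remains is the dihedral group $G(e,e,2)$ with $e\geq 3$, over its field of definition $k=\BQ(\zeta_e+\zeta_e\inv)$, and this is the one place where a little arithmetic is genuinely needed. Here Proposition~\ref{roots dihedral} already produces a principal system $\fR$ with root basis $\{\frr_s,\frr_t\}$, which represents the single genus occurring when $e$ is not of the form $2p^k$; when $e=2p^k$ a second genus appears, represented --- again by Proposition~\ref{roots dihedral} --- by $\fR_s\cup(2+\zeta+\zeta\inv)\cdot\fR_t$, which is principal since $2+\zeta+\zeta\inv=(1+\zeta)(1+\zeta\inv)$ lies in $\BZ_k$ and so $\ideal{(2+\zeta+\zeta\inv)}$ is a principal ideal. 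The facts about $\BZ[\zeta_e+\zeta_e\inv]$ invoked here --- when $1+\zeta_e$ is a unit, and the primality of $2+\zeta_e+\zeta_e\inv$ otherwise --- are Lemmas~\ref{1+zeta} and~\ref{zeta+zetainv prime} of Appendix~\ref{arithmetic}. Assembling the four cases finishes the proof; the substantive work has been done in the classifications of Sections~\ref{cyclic}--\ref{ImprimRS} (and in \cite{nebe}), and the only real obstacle --- a mild one --- is keeping the dihedral arithmetic over the real subfield straight, since $\BZ[\zeta_e+\zeta_e\inv]$ is less transparent than the cyclotomic rings handled elsewhere.
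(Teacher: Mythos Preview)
Your proof is correct and follows essentially the same case-by-case reduction as the paper's own proof: primitive (exceptional) groups are handled by the fact that $\BZ_k$ is a P.I.D., $G(d,1,r)$ by Corollary~\ref{distinguishedGd1n}, $G(e,e,r)$ with $r>2$ by Corollary~\ref{Geen}, and the dihedral case $G(e,e,2)$ by Proposition~\ref{roots dihedral}. Your write-up is slightly more explicit (you separate out $\fS_r$ and spell out the dihedral arithmetic), but the structure and the cited ingredients are identical.
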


\begin{proof}
	Let $k$ be the field of definition of the well-generated irreducible
	complex reflection group $G$.
	If $G$ is primitive, then $\BZ_k$ is a P.I.D. and there is nothing to prove.
	If $G=G(d,1,r)$ the statement has been given in Corollary
	\ref{distinguishedGd1n}. If $G=G(e,e,r)$ with $r>2$ the statement has been
	given in Corollary \ref{Geen}.
	Finally, if $G=G(e,e,2)$ the result is proved in Proposition \ref{roots dihedral}.
\end{proof}

\section{\red{Principal root systems and Cartan matrices}}

\subsection{Cartan matrices}


\begin{notation}\label{not:defRS}
        Given a distinguished \zroot\ system $\fR$,
\begin{itemize}
        \item
                for every distinguished reflection $s$ of $G(\fR)$, by 
                Proposition~\ref{prop:distinguishedbijection} there is a 
                corresponding  root
                in $\fR$ which we denote by $\frr_s$        
        \item
                given a set of distinguished reflections $S \subset \Rf(G(\fR))$,
                we set $\fR_S := \{ \frr_s \mid s \in S\}$.
\end{itemize}
\end{notation}

\begin{definition}\label{cartanmatrix}\index{Cartan matrix}
        Let $\fR$ be a principal distinguished \zroot\ system.
        Let $S$ be an ordered subset of distinguished reflections of $\Rf(G(\fR))$.
        For each $s \in S$, choose generators $\al_s$ and $\be_s$
        of respectively $I_{r_s}$ and $J_{r_s}$ as in Remark \ref{rem:principal}.
        The matrix with entries $C_{s,t} = \scal{\al_s}{\be_t}$ for 
        $s,t \in S$ is called a \emph{Cartan matrix} for $\fR_S$.
 \end{definition}
 
        Note that all entries of such a Cartan matrix belong to $\BZ_k$.

        The proof of the following lemma is left to the reader.
        
\begin{lemma}\label{lem:conjugationbydiagonal}\hfill
\begin{enumerate}
        \item
                Two Cartan matrices for the same subset of a
                given principal \zroot\ system 
                are conjugate by a diagonal matrix over $\BZ_k^\times$.
        \item
                Let $R$ and $R'$ be  ordered subsets of two
                principal \zroot\ systems such that the ordered sets
                of reflections attached to them are equal. Then any
                Cartan matrix for $R$ is conjugate 
                to any Cartan matrix for $R'$ by a diagonal matrix.

\end{enumerate} 
\end{lemma}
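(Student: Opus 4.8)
The plan is to handle both parts at once by first isolating a single transformation law for Cartan matrices and then invoking transitivity of diagonal conjugacy. The two facts doing all the work will be that the pairing $\scal\pd\pd$ is linear in $V$ and semilinear in $W$, and that for a principal root a generator $\al_s$ of $I_{\frr_s}$ determines the matching generator $\be_s$ of $J_{\frr_s}$ through the normalization $\scal{\al_s}{\be_s}=1-\zeta_s$ (Remark~\ref{rem:principal}).

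First I would establish the transformation law. Let $R=(\frr_1,\dots,\frr_n)$ and $R'=(\frr'_1,\dots,\frr'_n)$ be ordered subsets of principal distinguished \zroot\ systems with $s_{\frr_i}=s_{\frr'_i}=:s_i$ for all $i$ (in part (1) one has literally $R=R'$; in part (2) only the reflections agree). Since both systems are principal, $I_{\frr_i},J_{\frr_i},I_{\frr'_i},J_{\frr'_i}$ are all free of rank one, and $kI_{\frr_i}=L_{s_i}=kI_{\frr'_i}$, $kJ_{\frr_i}=M_{s_i}=kJ_{\frr'_i}$ (equivalently, by Lemma~\ref{lemma:reflection=genus}(3) the fractional ideal connecting $\frr_i$ and $\frr'_i$ is principal because both roots are). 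Choosing generators $\al_i,\be_i$ for $\frr_i$ and $\al'_i,\be'_i$ for $\frr'_i$ as in Remark~\ref{rem:principal}, I can write $\al'_i=a_i\al_i$ and $\be'_i=b_i\be_i$ for uniquely determined $a_i,b_i\in k^\times$; comparing the normalizations $\scal{\al_i}{\be_i}=1-\zeta_i=\scal{\al'_i}{\be'_i}=a_ib_i^{*}\scal{\al_i}{\be_i}$ forces $a_ib_i^{*}=1$, \ie\ $b_i=(a_i^{*})\inv$. Then the Cartan matrices are related by
$$
        C'_{i,j}=\scal{\al'_i}{\be'_j}=a_i\,(b_j)^{*}\,\scal{\al_i}{\be_j}=a_ia_j\inv C_{i,j},
$$
that is, $C'=DCD\inv$ with $D=\diag(a_1,\dots,a_n)$.

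This already gives part (1): there $R=R'$, so each $I_{\frr_i}$ is a fixed free module and any two admissible generators differ by a unit (Remark~\ref{rem:principal}), whence $a_i\in\BZ_k^\times$ and $D$ is diagonal over $\BZ_k^\times$. For part (2) the same computation exhibits \emph{some} Cartan matrix of $R'$ conjugate to \emph{some} Cartan matrix of $R$ by a diagonal matrix over $k^\times$; applying part (1) separately to $R$ and to $R'$, and using that the diagonal matrices form a group (so diagonal conjugacy is an equivalence relation), one concludes that every Cartan matrix of $R$ is conjugate to every Cartan matrix of $R'$ by a diagonal matrix. I do not expect a genuine obstacle; the only points needing care are the bookkeeping with the involution $*$ — in particular getting $b_i=(a_i^{*})\inv$ and $(b_j)^{*}=a_j\inv$ right — and the observation that principality of \emph{both} $\frr_i$ and $\frr'_i$ is exactly what makes the scalars $a_i\in k^\times$ available (rather than merely a fractional ideal), which is why $D$ is an honest diagonal matrix in part (2).
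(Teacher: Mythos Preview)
Your argument is correct and is exactly the routine verification the paper has in mind; indeed, the paper simply writes ``The proof of the following lemma is left to the reader'' and gives no proof of its own. The transformation law $C'_{i,j}=a_i a_j^{-1}C_{i,j}$ you derive from the sesquilinearity of the pairing together with the normalization $\scal{\al_i}{\be_i}=1-\zeta_i$ is the entire content, and your bookkeeping with the involution $*$ is accurate.
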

\smallskip

\subsubsection*{Cartan matrices and genera of principal root systems}\hfill
\smallskip

\begin{theorem}\label{cartandeterminesgenus}
        Let $(V,G)$ be an irreducible reflection group.
        Let  $\fR$ and $\fR'$  be  two  principal distinguished \zroot\ systems
        such that $G(\fR)=G(\fR') = G$.
        Let $S$ be an ordered list of distinguished reflections which generates $G$. 
        If a Cartan matrix for $\fR_S$ is conjugate to a Cartan matrix for $\fR'_S$
        by a diagonal matrix of units of $\BZ_k$, then $\fR$ and $\fR'$ belong 
        to the same genus (that is, there is $\la \in k$ such that 
        $\fR' = \la\cdot\fR$).
\end{theorem}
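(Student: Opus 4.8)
The plan is to combine the structural description of distinguished root systems for a fixed group (Proposition~\ref{nebegen}) with an analysis of what a diagonal conjugacy of Cartan matrices imposes on the ideals occurring in that description.

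\smallskip

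\emph{Reduction.} By Theorem~\ref{groupsandroots}(1) the group $G$ acts essentially on $V$. Let $\fR_1,\dots,\fR_m$ be the orbits of $G$ on $\fR$; via Proposition~\ref{prop:distinguishedbijection} these correspond to the orbits of $G$ on the set of distinguished reflections, equivalently on $\Arr(G)$. By Proposition~\ref{nebegen} there are fractional ideals $\fa_1,\dots,\fa_m$ with $\fR'=\fa_1\cdot\fR_1\cup\dots\cup\fa_m\cdot\fR_m$. For $\frr=(I,J,\zeta)\in\fR_i$ the module $I$ is free (as $\fR$ is principal), say $I=\BZ_k\al$, and then $I_{\fa_i\cdot\frr}=\fa_i\al$; since $\fR'$ is principal this must be free, so each $\fa_i$ is a principal ideal, say $\fa_i=\BZ_k\lambda_i$ with $\lambda_i\in k^\times$. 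As $\fa_i\cdot\frr=\lambda_i\cdot\frr$ whenever $\fa_i=\BZ_k\lambda_i$, the theorem reduces to showing that the ideals $\BZ_k\lambda_1,\dots,\BZ_k\lambda_m$ all coincide.

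\smallskip

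\emph{Cartan matrices.} For each $s\in S$ fix generators $\al_s$ of $I_{\frr_s}$ and $\be_s$ of $J_{\frr_s}$ with $\scal{\al_s}{\be_s}=1-\zeta_s$, giving a Cartan matrix $C=(\scal{\al_s}{\be_t})_{s,t\in S}$ for $\fR_S$; write $i(s)$ for the index with $\frr_s\in\fR_{i(s)}$. Then $\lambda_{i(s)}\al_s$ and $(\lambda_{i(s)}^*)\inv\be_s$ generate $I_{\frr'_s}$ and $J_{\frr'_s}$ and still pair to $1-\zeta_s$, so they define a Cartan matrix $C'$ for $\fR'_S$ with $C'_{s,t}=\lambda_{i(s)}\lambda_{i(t)}\inv C_{s,t}$, i.e.\ $C'=\Delta C\Delta\inv$ with $\Delta=\diag(\lambda_{i(s)})_{s\in S}$. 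By hypothesis together with Lemma~\ref{lem:conjugationbydiagonal}(1), $C$ and $C'$ are also conjugate by a diagonal matrix $U=\diag(u_s)$ of units of $\BZ_k$. Hence $F:=U\inv\Delta=\diag(u_s\inv\lambda_{i(s)})$ commutes with $C$, so $u_s\inv\lambda_{i(s)}=u_t\inv\lambda_{i(t)}$ whenever $C_{s,t}\neq 0$. Since $C_{s,t}$ generates the ideal $\frn{\frr_s}{\frr_t}$, Remark~\ref{n=0} gives $C_{s,t}\neq 0$ exactly when the reflection triples of $\frr_s$ and $\frr_t$ are not orthogonal; thus $\lambda_{i(s)}/\lambda_{i(t)}=u_s/u_t\in\BZ_k^\times$ whenever $\frr_s$ and $\frr_t$ are not orthogonal.

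\smallskip

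\emph{Connectivity and conclusion.} Because $S$ generates $G$ and $(V,G)$ is irreducible and essential, $S$ is an irreducible set of reflections: a partition $S=S_1\sqcup S_2$ into nonempty parts with every element of $S_1$ orthogonal to every element of $S_2$ would, by the orthogonal decomposition (Lemma~\ref{orthodecom} and Proposition~\ref{ifss2}, valid since $G$ is finite, hence completely reducible), decompose $V$ nontrivially as $\genby{S_1}\times\genby{S_2}$-module, contradicting irreducibility. So the ``non-orthogonality graph'' on $S$ is connected (Definition~\ref{sim}), and the previous step forces all $u_s\inv\lambda_{i(s)}$, $s\in S$, to equal a single $\lambda\in k^\times$; hence $\BZ_k\lambda_{i(s)}=\BZ_k\lambda$ for all $s\in S$. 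Finally, by Corollary~\ref{distinguishedgenerators} every distinguished reflection of $G$ is conjugate to one in $S$, so $S$ meets every orbit $\fR_i$; therefore $\BZ_k\lambda_i=\BZ_k\lambda$ for all $i$ and $\fR'=\lambda\cdot\fR$, which is the claim.

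\smallskip

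The main obstacle is the middle step: one must check that the Cartan matrix produced from $\fR'_S$ using the natural generators $\lambda_{i(s)}\al_s$ can be compared with an arbitrary Cartan matrix for $\fR'_S$ (this is Lemma~\ref{lem:conjugationbydiagonal}(1)), and that combining the relation $C'=\Delta C\Delta\inv$ with a single unit-diagonal conjugacy produces a diagonal matrix \emph{commuting} with $C$ rather than merely another similarity. The only non-formal ingredient is the equivalence of $C_{s,t}=0$ with orthogonality of $\frr_s,\frr_t$ inside a root system, which rests on Remark~\ref{n=0} (ultimately on complete reducibility); once irreducibility guarantees connectivity of $S$ and Corollary~\ref{distinguishedgenerators} guarantees that $S$ exhausts the orbits of distinguished reflections, the conclusion is formal.
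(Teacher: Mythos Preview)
Your proof is correct and follows essentially the same route as the paper's: compare the two Cartan matrices after absorbing the unit-diagonal conjugacy, use irreducibility of $G$ to get connectivity of the non-orthogonality graph on $S$, and then propagate to all of $\fR$ via the $G$-orbits. The only cosmetic difference is that you invoke Proposition~\ref{nebegen} at the outset to index the scalars $\lambda_i$ by orbits of distinguished reflections (and then use Corollary~\ref{distinguishedgenerators} to see that $S$ hits every orbit), whereas the paper defines a scalar $\lambda_s$ for each $s\in S$ directly, shows they are all equal, and only appeals to the orbit structure at the end; your phrasing of the key step as ``$F=U^{-1}\Delta$ commutes with $C$'' is exactly the paper's ``after changing choices by units we may assume $C=C'$''.
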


\begin{proof}
        For each $s\in S$, let $\frr_s \in \fR_S$ and
        $\frr'_s \in \fR'_S$ be the corresponding roots.
        As in in Remark \ref{rem:principal}, choose $\al_s$ a generator of $I_{\frr_s}$ and 
        $\be_s$ a generator of $J_{\frr_s}$, 
        such that $\scal{\al_s}{\be_s}=1-\zeta_s$; similarly
        choose $\al'_s$ a generator of $I_{\frr'_s}$ and 
        $\be'_s$ a generator of $J_{\frr'_s}$, 
        such that $\scal{\al'_s}{\be'_s}=1-\zeta_s$.
        Let $\la_s \in k^\times$ such that $\al'_s = \la_s\al_s$. Thus
        $\be'_s = \la_s\tinv \be_s$.
        
        Let $C$ (resp. $C'$) be the Cartan matrix determined by the above choices. Up to
        changing 
        these choices by units, we may (and we do) assume $C = C'$.
        For any $s,t\in S$ we have
        $$
                C_{s,t} = \scal{\al_s}{\be_t} = \scal{\al'_s}{\be'_t} = \la_s \la_t\inv \scal{\al_s}{\be_t}
                \,.
        $$
        Hence, if $C_{s,t} \neq 0$, then
        $
                \la_s = \la_t
                \,.
        $
        Choose $s_0 \in S$.
        Since $G$ is irreducible, and $S$ generates $G$, 
        for any $t \in S$,  there is a sequence 
        $s_0,\ldots,s_l = t$ such that for all $j$, $s_j \in S$ and
        $ C_{s_{j+1},s_{j}} \ne 0$ for all $j =0,\dots,l-1$, which shows that $\la_{s_0} = \la_t$.
        Let $\la := \la_{s_0}$.
        It follows that $\fR'_S = \la \cdot \fR_S$.
        
        Finally, for any $\frr \in \fR$, by the assumption on orbits
        there is an expression 
        $s_\frr = \lexp{s_1 \cdots s_n} s_0$ for $s_\frr$ 
        in terms of $s_0$. In particular, since $\fR$ is reduced,
        this ensures that $\frr =(s_1 \cdots s_n) \cdot \frr_{s_0}$. Thus
        the fact that $\fR_S$ and $\fR'_S$ belong to the same genus propagates to 
        $\fR$ and $\fR'$.
\end{proof}
        
        The proof of the following proposition results from the bijection
        given in its second part.

\begin{proposition}
        \label{ClassificationByCartan}
        Let $G$ be a finite subgroup of $\GL(V)$ generated by reflections.
        Assume given a family $S$ of
        reflections which generates $G$.
                
        Let $\fR = \{\frr := (I_\frr,J_\frr,\zeta_\frr)\}$ be a distinguished principal
        \zroot\ system such that $G = G(\fR)$.
        We denote by $\Car(\fR_S)$ the set of all matrices $M$ satisfying the following
        conditions:
        \begin{itemize}
                \item
                        $M$ is conjugate to $C$, a Cartan matrix of $\fR_S$,
                        by a diagonal matrix (with diagonal entries in $k^\times$),
                \item
                        the entries of $M$ belong to $\BZ_k$.
        \end{itemize}
        There is a bijection between
        \begin{itemize}
                \item[--]
                        the set of conjugacy classes of $\Car(\fR_S)$
                        under the action of the group of diagonal matrices with
                        entries in $\BZ_k^\times$,
                \item[--]
                        the genera of distinguished principal
                        \zroot\ systems for $G$,
        \end{itemize}
        defined as follows.
        \begin{itemize}
                \item[$\ra$]
                        For $M =DCD\inv \in \Car(\fR_S)$,
                       where $D = (\la_s)_{s\in S}$ is an invertible diagonal matrix, 
                       and for $s \in S$ corresponding to the
                        root $\frr_s \in \fR$, we define 
                        $\frr_s^{(M)} :=  \la_s \cdot \frr_s$
                        and we denote by $\fR^{(M)}$ the orbit under $G$ of the family
                        $(\frr_s^{(M)})_{s\in S}$. Then $\fR^{(M)}$ is a 
                        distinguished principal
                        \zroot\ system for $G$, and $M$ is a Cartan
                        matrix associated with $\fR^{(M)}_S$.
                \item[$\leftarrow$]
                        Let $\fR'$ be a 
                        distinguished
                        principal root system for $G$.
                        By item (1) of Theorem \ref{cartandeterminesgenus}, we know that
                        $C(\fR'_S)$ is conjugate to $C(\fR_S)$ by a diagonal matrix,
                        and this shows that a Cartan matrix for $\fR'_S$ does
                        belong to $\Car(\fR_S)$.
        \end{itemize}
\end{proposition}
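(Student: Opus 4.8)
The plan is to show that the two assignments described in the statement are each well defined and are mutually inverse; apart from the bookkeeping identity $\scal{\la\al}{\mu\be}=\la\mu^*\scal\al\be$, the only substantial inputs are Proposition~\ref{prop:jeanlemma} (to manufacture root systems out of compatible roots) and the rigidity of principal distinguished systems furnished by Lemma~\ref{lemma:reflection=genus}. Fix once and for all a normalized Cartan matrix $C=(\scal{\al_s}{\be_t})_{s,t\in S}$ of $\fR_S$, where $\al_s\in I_{\frr_s}$, $\be_s\in J_{\frr_s}$ and $\scal{\al_s}{\be_s}=1-\zeta_s$, as in Remark~\ref{rem:principal}.

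For the map $\ra$: given $M=DCD\inv\in\Car(\fR_S)$ with $D=\diag(\la_s)_{s\in S}$ and $\la_s\in k^\times$, I would set $\frr_s^{(M)}:=\la_s\cdot\frr_s$. This is again a principal \zroot, and since scaling a root by a scalar does not alter its reflection triple, $s_{\frr_s^{(M)}}=s$, which is a distinguished reflection of $G$ because $\fR$ is distinguished. One computes $\scal{I_{\frr_s^{(M)}}}{J_{\frr_t^{(M)}}}=\ideal{(\la_s\la_t\inv\scal{\al_s}{\be_t})}=\ideal{M_{s,t}}$, so integrality of the entries of $M$ is exactly hypothesis~(b) of Proposition~\ref{prop:jeanlemma} for the family $(\frr_s^{(M)})_{s\in S}$; hypothesis~(a) holds since $S$ generates $G$, and $(V,G)$ is essential by Theorem~\ref{groupsandroots}(1). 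Proposition~\ref{prop:jeanlemma} then shows that the $G$-orbit $\fR^{(M)}$ of $(\frr_s^{(M)})_{s\in S}$ is a \zroot\ system with $G(\fR^{(M)})=G$, which is distinguished by its part~(2) and principal by its part~(3); and $\{\la_s\al_s,\la_s\tinv\be_s\}_{s\in S}$ is a normalized choice of generators realizing $M$ as a Cartan matrix of $\fR^{(M)}_S$.

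Next I would check the descent to classes and genera, and the inverse. A diagonal unit matrix $U=\diag(u_s)$, $u_s\in\BZ_k^\times$, acts trivially on each $\frr_s^{(M)}$, so replacing $M$ by $UMU\inv$ (which replaces $D$ by $UD$) leaves $\fR^{(M)}$ literally unchanged; a different presentation $M=D'CD'\inv$ has $D\inv D'$ centralizing $C$, hence — by connectedness of the graph on $S$ with edges $\{s,t:C_{s,t}\neq0\}$, as $G$ (equivalently $\fR$) is irreducible, which is precisely the argument in the proof of Theorem~\ref{cartandeterminesgenus} — equal to a scalar, so $\fR^{(M)}$ only moves within its genus. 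Conversely, for a distinguished principal root system $\fR'$ with $G(\fR')=G$, a Cartan matrix $C'$ of $\fR'_S$ has entries in $\BZ_k$ and, by Lemma~\ref{lem:conjugationbydiagonal}(2) (equivalently item~(1) of Theorem~\ref{cartandeterminesgenus}), is $DCD\inv$ for a diagonal $D$ over $k^\times$, so $C'\in\Car(\fR_S)$; by Lemma~\ref{lem:conjugationbydiagonal}(1) its class modulo diagonal units is independent of the generators chosen, and since $\scal{\mu\al'_s}{\mu\tinv\be'_t}=\scal{\al'_s}{\be'_t}$ it is unchanged under $\fR'\mapsto\mu\cdot\fR'$, hence depends only on the genus of $\fR'$. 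To see $\fR^{(C')}$ lies in that genus: for $s\in S$ the roots $\frr'_s$ and $\frr_s$ define the same reflection, so $\frr'_s=\nu_s\cdot\frr_s$ for some $\nu_s\in k^\times$ (Lemma~\ref{lemma:reflection=genus}(3), plus principality); matching generators gives $C'=NCN\inv$ with $N=\diag(\nu_s)$, so $N\inv D$ centralizes $C$, is a scalar $c$ by connectedness, and $\fR^{(C')}=G\cdot(\la_s\cdot\frr_s)_s=G\cdot(c\nu_s\cdot\frr_s)_s=c\cdot\fR'$. Since a Cartan matrix of $\fR^{(M)}_S$ is $M$ by the above construction, the two maps are mutual inverses, which is the asserted bijection.

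The one delicate point — and the main obstacle — is this rigidity step: controlling the ambiguity in the presentation $M=DCD\inv$ and in the connecting scalars $\nu_s$, both of which reduce to showing that the centralizer of $C$ among diagonal matrices consists of scalars only. That is exactly where irreducibility of $G$ enters, through connectedness of the support graph of $C$, as in Theorem~\ref{cartandeterminesgenus}; everything else in the argument is formal verification.
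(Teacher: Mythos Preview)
Your proof is correct and follows the same route as the paper, which in fact offers only the single sentence ``The proof of the following proposition results from the bijection given in its second part''; you have simply filled in the details that the authors leave to the reader, invoking exactly the intended tools (Proposition~\ref{prop:jeanlemma}, Lemma~\ref{lemma:reflection=genus}, Lemma~\ref{lem:conjugationbydiagonal}, and the connectedness argument from Theorem~\ref{cartandeterminesgenus}). Your closing remark is apt: the irreducibility of $G$ is needed for the diagonal centralizer of $C$ to reduce to scalars, and while the proposition as stated omits this hypothesis, it is implicit through the cited Theorem~\ref{cartandeterminesgenus} and is present in every application the paper makes of the result.
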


        The above result is at the heart of the classification of distinguished root 
        systems given in Section~\ref{sec:class}.

\subsection{Free root lattices}\habel{connection&cartan}
\smallskip

        The third assertion of the following theorem has been proved in
        \cite[Corollary\ 13]{nebe}.
        
\begin{theorem}\label{thm:freelattices}
        Let $(V,G)$ be a well-generated reflection group.
\begin{enumerate}
        \item
                There exists a distinguished principal \zroot\ system
                $\fR$ with $G(\fR) = G$.
        \item
                If $S$ is a set of reflections of cardinality $\dim V$
                which generates $G$, and if the corresponding roots are 
                $\frr_s = (\BZ_k\al_s,\BZ_k\al_s^\vee,\zeta_s)_{s\in S}$,
                then $(\al_s)_{s\in S}$ and $(\al_s^\vee)_{s\in S}$ are $\BZ_k$-bases
                of  $Q_\fR$ and $Q_\fR^\vee$ respectively.
        \item   By (1) and (2), $Q_\fR$ is a free $\BZ_k$-module.
\end{enumerate}
\end{theorem}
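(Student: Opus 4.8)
The plan is to deduce the three assertions in sequence, using the machinery already developed for distinguished root systems and root bases. For assertion (1), I would start from the existence of \emph{some} complete distinguished \zroot\ system for $G$: apply Theorem~\ref{groupsandroots}(2) to produce a \zroot\ system $\fR_0$ with $G(\fR_0)=G$, then Proposition~\ref{RestrictingR} (after completing via Proposition~\ref{completingdistinguishedR} if necessary) to extract a distinguished subsystem $\fR$ with the same group. The only remaining point is to arrange that $\fR$ be \emph{principal}. Here I would invoke well-generatedness: choose a generating set $S$ of $\dim V$ distinguished reflections, and use Proposition~\ref{prop:jeanlemma} — taking $\CS = \{\frr_s \mid s\in S\}$ with each $\frr_s$ chosen principal, which is possible since one may pick generators $\al_s$ of $L_{s}\cap Q$ and scale to achieve $\scal{\al_s}{\be_s}=1-\zeta_s$ with $\be_s$ a generator of the relevant rank-one submodule of $W$ — provided the integrality hypothesis (b) of Proposition~\ref{prop:jeanlemma} holds. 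The integrality is exactly what one obtains by choosing all the $\frr_s$ inside a single $G$-stable lattice $E\subset V$ as in the proof of Theorem~\ref{groupsandroots}(2), together with its dual lattice in $W$; then $\scal{I_s}{J_t}\subseteq\BZ_k$ for all $s,t\in S$. By Proposition~\ref{prop:jeanlemma}(3), since every element of $\CS$ is principal, the resulting orbit $\fR = G\cdot\CS$ is a principal distinguished \zroot\ system with $G(\fR)=G$, giving (1).

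For assertion (2), let $\fR$ be any distinguished principal \zroot\ system with $G(\fR)=G$ (it exists by (1)), and let $S$ be a generating set of $|S|=\dim V$ reflections, with principal roots $\frr_s = (\BZ_k\al_s,\BZ_k\al_s^\vee,\zeta_s)$. By Proposition~\ref{root and coroot basis}, $\Pi := \{\frr_s \mid s\in S\}$ is simultaneously a root basis and a coroot basis of $\fR$, which by Definition~\ref{Def_rootBasis_etc} means precisely
$$
        Q_\fR = \bigoplus_{s\in S} \BZ_k\al_s
        \quad\text{and}\quad
        Q_\fR^\vee = \bigoplus_{s\in S} \BZ_k\al_s^\vee
        \,.
$$
Since each summand $\BZ_k\al_s$ is free of rank one (the defining property of a principal root), these direct-sum decompositions exhibit $(\al_s)_{s\in S}$ and $(\al_s^\vee)_{s\in S}$ as $\BZ_k$-bases of $Q_\fR$ and $Q_\fR^\vee$ respectively. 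One caveat: Proposition~\ref{root and coroot basis} is stated for a \emph{given} distinguished principal $\fR$, so the statement of (2) should be read as: for the system $\fR$ produced in (1) (or any principal distinguished system on the same generating set), $(\al_s)_{s\in S}$ is a basis; strictly, if $\fR$ is pre-specified one uses Proposition~\ref{prop:RootBasis}(4) applied to $\Pi$ directly, observing $|\Pi|=\dim V$ and that $(s_\frr)_{\frr\in\Pi}$ generates $G$.

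Assertion (3) is then immediate: a module admitting a finite $\BZ_k$-basis is by definition free, so $Q_\fR = \bigoplus_{s\in S}\BZ_k\al_s$ is a free $\BZ_k$-module of rank $\dim V$. The main obstacle I anticipate is in assertion (1), namely verifying cleanly that one can choose the principal roots $\frr_s$ for the generating reflections so that the Cartan-type integrality condition (b) of Proposition~\ref{prop:jeanlemma} is satisfied \emph{simultaneously} with principality; the trick of housing all of $\Pi$ and its dual inside a single $G$-stable pair of dual lattices $E\subset V$, $E^*\subset W$ is what makes this work, and one must check that the rank-one piece $L_s\cap E$ is free — which it need not be for an arbitrary lattice, so one should instead take $E$ itself to be a free lattice (e.g.\ $\bigoplus\BZ_k\al_s$ spanned by a chosen basis adapted to $S$), ensuring its rank-one reflecting summands are free. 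Everything else is bookkeeping with the results already in hand.
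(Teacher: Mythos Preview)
Your treatment of assertions (2) and (3) is correct and matches the paper: once a distinguished principal system $\fR$ exists and $S$ is a generating set of $\dim V$ reflections, Proposition~\ref{prop:RootBasis}(4) (equivalently Proposition~\ref{root and coroot basis}) gives the root-basis and coroot-basis decompositions, and freeness of $Q_\fR$ is immediate.

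The gap is in your argument for (1). You correctly isolate the tension: if you take $I_s = L_s\cap E$ for a $G$-stable lattice $E$ (as in the proof of Theorem~\ref{groupsandroots}(2)), you get the integrality needed for Proposition~\ref{prop:jeanlemma}(b), but $L_s\cap E$ need not be free when $\BZ_k$ is not a P.I.D. Your proposed fix --- take $E=\bigoplus_{s\in S}\BZ_k\al_s$ free --- sacrifices exactly what made integrality work: this $E$ is not $G$-stable, so you no longer know that $(s_t-1)(I_s)\subset I_t$, and hence have no reason for $\scal{\al_s}{\be_t}\in\BZ_k$. Rescaling the $\al_s$ by scalars $\la_s$ replaces the off-diagonal Cartan entries by $\la_s\la_t^{-1}\scal{\al_s}{\be_t}$, and arranging all of these to be integral simultaneously is precisely the nontrivial content of assertion (1); there is no uniform trick here.

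The paper does not attempt a uniform proof of (1). It instead invokes Theorem~\ref{allprincipal}, whose proof is a case-by-case verification via the Shephard--Todd classification: for primitive $G$ the ring $\BZ_k$ is a P.I.D.\ so every root is principal; for $G(d,1,r)$, $G(e,e,r)$ ($r>2$), and $G(e,e,2)$ the principal systems are exhibited explicitly in Corollaries~\ref{distinguishedGd1n}, \ref{Geen} and Proposition~\ref{roots dihedral}. Your proposal would need either to carry out this classification step or to supply a genuinely new argument bridging principality and integrality over non-P.I.D.\ rings of integers.
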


\begin{proof}
        Theorem~\ref{allprincipal} ensures that all genera of root systems for 
        the well-generated imprimitive groups $G(d,1,r)$ and $G(e,e,r)$ 
        contain a principal \zroot\ system.
        This,
        together with item (4) of Proposition \ref{prop:RootBasis},
        imply (1) and (2) directly.
\end{proof}

        The next result shows in particular that, for a well generated group, the connection
        index can be easily computed from any Cartan matrix.
        
\begin{proposition}\label{prop:connectionwellgen}
        Assume that $G$ is generated by $r=\dim V$ reflections
        $(s_i)_{1\leq i\leq r}$. For each $i$ ($1\leq i\leq r)$ choose $v_i \in L_{s_i}$,
        $w_i \in M_{s_i}$ such that $\scal{v_i}{w_i} =1-\zeta_{s_i}$. 
        
        Then the connection index of $G$ is equal to    
        $\det\big( \scal{v_i}{w_j}\big)_{1\leq i,j\leq r}$.
\end{proposition}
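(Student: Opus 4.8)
The plan is to reduce to a \emph{distinguished principal} root system and then extract the connection index as the determinant of a change of basis, using the machinery already set up in Theorem~\ref{thm:freelattices} and Proposition~\ref{prop:RootBasis}. First I would note that $\det\big(\scal{v_i}{w_j}\big)$ is, up to a unit of $\BZ_k$, independent of the normalised choices of $v_i\in L_{s_i}$ and $w_i\in M_{s_i}$: replacing $v_i$ by $\lambda_i v_i$ forces $w_i$ to be replaced by $(\lambda_i^*)\inv w_i$, which conjugates $\big(\scal{v_i}{w_j}\big)$ by $\diag(\lambda_i)$ and fixes the determinant (this is exactly the content of the Remark preceding the statement). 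Since $L_{s_i}$, $M_{s_i}$ and the group $G$ depend only on the reflecting hyperplane of $s_i$, I may moreover replace each $s_i$ by the distinguished reflection $\tilde s_i$ generating $C_G(H_{s_i})$ (Proposition~\ref{reflectingpairs}): the sets $\{s_1,\dots,s_r\}$ and $\{\tilde s_1,\dots,\tilde s_r\}$ generate the same group, because $s_i\in\genby{\tilde s_i}\subseteq G$ gives $G=\genby{s_1,\dots,s_r}\subseteq\genby{\tilde s_1,\dots,\tilde s_r}\subseteq G$. Writing $s_i=\tilde s_i^{\,c_i}$, the two Cartan determinants differ by $\prod_i\frac{1-\zeta_{\tilde s_i}^{\,c_i}}{1-\zeta_{\tilde s_i}}$, which is a unit because in a generating set of reflections of minimal size $r=\dim V$ each $s_i$ generates $C_G(H_{s_i})$, i.e.\ $\gcd(c_i,|C_G(H_{s_i})|)=1$. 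So it is enough to treat the case where all $s_i$ are distinguished.

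In that case I would pick, by Theorem~\ref{thm:freelattices}(1), a distinguished principal \zroot\ system $\fR$ with $G(\fR)=G$; by Proposition~\ref{prop:distinguishedbijection} each $s_i$ corresponds to a root $\frr_{s_i}=(\BZ_k\al_i,\BZ_k\be_i,\zeta_{s_i})\in\fR$ with $\scal{\al_i}{\be_i}=1-\zeta_{s_i}$ (Remark~\ref{rem:principal}), and since these $r=\dim V$ reflections generate $G$, Proposition~\ref{root and coroot basis} (through Proposition~\ref{prop:RootBasis}(4)) makes $\{\frr_{s_i}\}$ both a root basis and a coroot basis, so $Q_\fR=\bigoplus_i\BZ_k\al_i$ and $Q_\fR^\vee=\bigoplus_i\BZ_k\be_i$. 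As $Q_\fR^\vee$ is $\BZ_k$-free with basis $(\be_j)$, the weight lattice $P_\fR=\{x\in V\mid\scal{x}{Q_\fR^\vee}\subseteq\BZ_k\}$ is $\BZ_k$-free with the dual basis $(\al_j')$ characterised by $\scal{\al_i'}{\be_j}=\delta_{ij}$; expanding $\al_i=\sum_j\scal{\al_i}{\be_j}\,\al_j'$ gives $\al_1\wedge\dots\wedge\al_r=\det\big(\scal{\al_i}{\be_j}\big)\,\al_1'\wedge\dots\wedge\al_r'$, hence $\bigwedge^r Q_\fR=\det\big(\scal{\al_i}{\be_j}\big)\bigwedge^r P_\fR$. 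By Definition~\ref{def:connectionindex} and Theorem--Definition~\ref{th:connectionindex} the connection index of $G$ is therefore the ideal generated by $\det\big(\scal{\al_i}{\be_j}\big)$, and since $\al_i\in L_{s_i}$, $\be_i\in M_{s_i}$ are a valid normalised choice, the first paragraph identifies this with $\ideal{\det(\scal{v_i}{w_j})}$.

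The hard part is the one structural input used in the reduction: that for a reflection generating set of minimal cardinality $r=\dim V$ in a well-generated group, each $s_i$ generates its (cyclic) fixator $C_G(H_{s_i})$ — equivalently, $s_i$ and the distinguished reflection $\tilde s_i$ span the same cyclic subgroup. This is false for arbitrary reflections, and without it the two Cartan determinants differ by the genuinely non-unit integral factor $\prod_i\frac{1-\zeta_{\tilde s_i}^{\,c_i}}{1-\zeta_{\tilde s_i}}$; one verifies it either from the classification of minimal generating sets of the imprimitive and exceptional well-generated groups or by a counting argument on hyperplane orbits and $\Hom(G,\BC^\times)$ (Theorem~\ref{linear}). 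Everything else is the formal computation of a characteristic ideal as a change-of-basis determinant, together with bookkeeping already available in Theorem~\ref{thm:freelattices} and Proposition~\ref{prop:RootBasis}.
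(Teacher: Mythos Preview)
Your core argument—pass to a distinguished principal root system via Theorem~\ref{thm:freelattices}, invoke Proposition~\ref{root and coroot basis} to obtain simultaneous $\BZ_k$-bases $(\al_i)$ of $Q_\fR$ and $(\be_i)$ of $Q_\fR^\vee$, then read off the connection index as the change-of-basis determinant between $Q_\fR$ and $P_\fR$—is exactly the paper's approach; you have simply unpacked what the paper compresses into ``nothing but a translation of the proof of Theorem~\ref{th:connectionindex}.'' On that core the two proofs coincide.

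Where you go further is your first paragraph: you notice that the statement allows arbitrary reflections $s_i$, whereas the paper's proof silently needs the $s_i$ to be distinguished (its appeal to Theorem~\ref{thm:freelattices}(2) only furnishes roots $\frr_{s_i}\in\fR$ when $s_i$ is distinguished, since $\fR$ is distinguished). You are right to flag this, and the paper does not address it.

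The weak point is the justification of your ``hard part.'' The counting argument via Theorem~\ref{linear} you propose shows only that for each hyperplane orbit $O$ the \emph{collection} of those $s_i$ with $H_{s_i}\in O$ generates the cyclic group $C_G(H_O)$—equivalently, that \emph{some} $s_i$ in that orbit has full order—not that each one does. For instance in $G_8$ (one hyperplane orbit, $|C_G(H)|=4$) the abelianization test does not by itself exclude a generating pair with $|s_1|=4$, $|s_2|=2$; such a pair in fact never generates $G_8$, but seeing this already requires more than Theorem~\ref{linear}. So your reduction is plausible and the conclusion is very likely true for all well-generated irreducible $G$, but neither of the two justifications you sketch is complete as stated; the classification check would work but is not carried out.
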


\begin{proof}
        The value of $\det\big( \scal{v_i}{f_j}\big)_{1\leq i,j\leq r}$ 
        is independent of the choices of the systems $v_i \in L_{s_i}$,
        $w_i \in M_{s_i}$ such that $\scal{v_i}{w_i} =1-\zeta_{s_i}$. 
        By item~(2) of  Theorem \ref{thm:freelattices}, 
        we may choose $v_i = \al_i$ and $w_i = \al_i^\vee$ so that 
        $(\al_i)_{1\leq i\leq r}$ and $(\al_i^\vee)_{1\leq i\leq r}$ are 
        $\BZ_k$-bases of respectively $Q_\fR$ and $Q_\fR^\vee$ for some
        distinguished principal \zroot\ system $\fR$ such that 
        $G(\fR) = G$.
        
        Then the statement is nothing but a translation of the proof of Theorem 
        \ref{th:connectionindex}.
\end{proof}

\begin{remark}\label{rem:connectionbadgenerated}
        Let $(V,G)$ be an irreducible and \textbf{not} well-generated reflection group. 
        Let $r = \dim V$ and
        assume that $G$ is generated by the set of reflections $S = \{s_1,\ldots,s_{r+1}\}$.
        Let $\fR$ be a distinguished principal root system for $G$. 
        For the roots $\frr_{s_1},\ldots \frr_{s_{r+1}}$ in $\fR$
        corresponding to $S$,
        we choose $\al_1,\dots,\al_{r+1}$ and
        $\be_1,\dots,\be_{r+1}$ as in Remark \ref{rem:principal}, and we denote by $C$ the 
        corresponding Cartan matrix for $\fR_S$.
        
        By item (2) of Proposition \ref{prop:RootBasis}, 
        $$
                Q_\fR = \sum_{i=1}^{r+1} \BZ_k \al_i
                \,\,\text{ and }\,\,
                Q_\fR^\vee = \sum_{i=1}^{r+1} \BZ_k \be_i
                \,.
        $$
        We will show (see
        Proposition~\ref{2connectionindices}  and the  tables of  Appendix \ref{table})
        that for every irreducible, not well-generated reflection group $(V,G)$ 
        there always exists at least one distinguished principal root system 
        for which (keeping the notation of the previous paragraph)
        there exists $i_0$ and $j_0$ such that:
        $$
                Q_\fR = \bigoplus_{i\neq i_0} \BZ_k \al_i
                \,\,\text{ and }\,\,
                Q_\fR^\vee = \bigoplus_{i\neq j_0} \BZ_k \be_i
                \,.
        $$
        Then the connection index of $\fR$ is equal to the determinant of the
        sub-matrix
        of $C$ where the $i_0$-th row and the $j_0$-th column have been dropped.
\end{remark}

\subsection{%
        Principal root bases and Cartan matrices
        for imprimitive well generated reflection groups%
}\label{prb and cm}\hfill
\smallskip

        Throughout this subsection, $\zeta$ is a root of unity, and
        $k$ is a number field containing $\zeta$ and closed under conjugation.
        
        Recall (see  Section~\ref{ImprimRS}) that 
        $\frr_{kl}^{(j)} = (\BZ_k(e_k-\zeta^j e_l), \BZ_k(e'_k-\zeta^{j} e'_l), -1)$.
        %
        Write 
        $\alpha_{kl}^{(j)}$ for the vector $e_k-\zeta^j e_l$ where $1\leq k<l\leq n$. 

\goodbreak
\subsubsection*{The case of $G(e,e,r)$}
\hfill
\smallskip

\noindent        \emph{A. $G(e,e,2)$ with $e$ even.}
The Cartan matrix of the root basis $\{\frr_s,\frr_t\}$ of the principal
\zroot\ system $\fR$ of Proposition~\ref{roots dihedral} is:
        $$
                C
                = \begin{pmatrix}
                2&-(2+\zeta_e + \zeta_e\inv) \\
                -1&2\\
                \end{pmatrix}
                =\begin{pmatrix}
                2&-4 \cos^2(\frac{\pi}{e})\\
                -1&2\\
                \end{pmatrix}.
        $$
        Hence:
\begin{lemma}\label{Gee2RB}
     The connection index of $G(e,e,2)$ with $e$ even is
         $$
         	c_{(e,e,2)} = (1-\zeta_e)(1-\zeta_e\inv)
		\,.
	$$
\end{lemma}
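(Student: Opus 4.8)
The plan is to read the connection index directly off the Cartan matrix $C$ displayed just above, via Proposition~\ref{prop:connectionwellgen}. Here $\dim V = 2$ and, by the lemma at the start of this subsection, $G(e,e,2)$ is generated by the two reflections $s$ and $t$; so that proposition says the connection index equals the determinant of the matrix $\big(\scal{v_i}{w_j}\big)_{1\le i,j\le 2}$ attached to any choice of $v_1\in L_s$, $v_2\in L_t$, $w_1\in M_s$, $w_2\in M_t$ with $\scal{v_1}{w_1}=\scal{v_2}{w_2}=1-(-1)=2$. Taking for $v_1,v_2,w_1,w_2$ the generators $\al_s,\al_t,\be_s,\be_t$ of the root and coroot lines of the principal (hence single-generator) \zroot\ system $\fR$ of Proposition~\ref{roots dihedral}, that matrix is exactly $C$. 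Hence the connection index of $G(e,e,2)$ equals $\det C$.

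It then remains to compute $\det C = 2\cdot 2 - (-1)\big(-(2+\zeta_e+\zeta_e\inv)\big) = 4 - (2+\zeta_e+\zeta_e\inv) = 2-\zeta_e-\zeta_e\inv$, and to observe that $(1-\zeta_e)(1-\zeta_e\inv) = 1-\zeta_e-\zeta_e\inv+1 = 2-\zeta_e-\zeta_e\inv$ as well; equivalently, using the factorization $2+\zeta_e+\zeta_e\inv = (1+\zeta_e)(1+\zeta_e\inv)$ recorded before Proposition~\ref{roots dihedral}, one has $\det C = 4 - (1+\zeta_e)(1+\zeta_e\inv) = (1-\zeta_e)(1-\zeta_e\inv)$. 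This yields $c_{(e,e,2)} = (1-\zeta_e)(1-\zeta_e\inv)$, as claimed.

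There is essentially no obstacle: the only things to check are the hypotheses of Proposition~\ref{prop:connectionwellgen} — that $G(e,e,2)$ (with $e$ even, so $e\ge 2$) is generated by $\dim V = 2$ reflections, which is the lemma preceding Proposition~\ref{roots dihedral}, and that $\fR$ is a principal distinguished \zroot\ system for $G(e,e,2)$ admitting $\{\frr_s,\frr_t\}$ as a root basis, which is item (1) of Proposition~\ref{roots dihedral}. Everything else is the elementary $2\times 2$ determinant above, so the proof is essentially a one-line computation once Proposition~\ref{prop:connectionwellgen} is invoked.
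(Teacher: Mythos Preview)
Your proof is correct and is exactly the argument the paper has in mind: the paper simply displays the Cartan matrix $C$ of the root basis $\{\frr_s,\frr_t\}$ and writes ``Hence:'', leaving the invocation of Proposition~\ref{prop:connectionwellgen} and the determinant computation $\det C = 4-(2+\zeta_e+\zeta_e\inv)=(1-\zeta_e)(1-\zeta_e\inv)$ implicit. You have merely made these two steps explicit.
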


\medskip
 \noindent       \emph{B.  The general case $G(e,e,r)$ with $r>2$ or $e$
        odd, $r=2$.}
\smallskip

        By Corollary~\ref{Geen}, there is a unique genus of \zroot\ system
        (necessarily distinguished, being in $G(e,e,r)$), 
        and which contains the representative \zroot\ system  
        $$
                \fR=\fR(e,r) = \left\{ \frr_{kl}^{(j)} \mid 1\leq k<l\leq r,
                0\leq j < r\right\}
                \,.
        $$ 
        Using the notation of Section~\ref{ImprimRS}, the set:
        $$
                S=\left\{s_{12}^{(0)}, s_{23}^{(0)}, \ldots,
                s_{(r-1),r}^{(0)}, s_{(r-1),r}^{(1)}\right\}
                \,,
        $$
        consisting of $r$ involutive reflections, generates $G(e,e,r)$, with corresponding set of roots:
        $$
                \Pi =\left\{\frr_{12}^{(0)}, \frr_{23}^{(0)}, \ldots,
                \frr_{(r-1),r}^{(0)}, 
                \frr_{(r-1),r}^{(1)}\right\}
                \,,
        $$
        where $\frr_{k,l}^{(j)} :=  \bigl(
        \BZ_k(e_k-\zeta_{e}^je_l),\BZ_k(e'_k-\zeta_{e}^je'_l),-1 \bigr)$.
\smallskip
        
        Again, by \ref{prop:RootBasis} (4),  $\Pi$ is a root basis.
        The Cartan matrix for $\Pi$ is:
        $$
        \begin{pmatrix}
        2 & -1& 0 & \cdots \\
        -1 & 2 & -1 & 0 & \cdots \\
        0 & -1 & 2 & -1 & 0 & \cdots \\
        \vdots &  & \ddots & \ddots & \ddots \\
        & \cdots & 0 & -1 & 2 & -1 & 0 & 0  \\
        & & \cdots &  0 & -1 & 2 & -1 & -1   \\
        & & & \cdots & 0 & -1 & 2 & (1+\zeta_e\inv)   \\
        & & & \cdots & 0 & -1 &(1+\zeta_e) & 2   \\
        \end{pmatrix}
        $$
        In order to compute the determinant of the above matrix, we prove the following lemma, 
        which will also be useful later on.

\begin{lemma}\label{lem:jeancartan}
        Let $r\geq 3$. Consider an $r\times r$ matrix of type
        $$
        C_r := 
        \begin{pmatrix}
                2&-1&0&\ldots&0 \\
                -1&2&*&\ldots&* \\
                0&*&*&\ldots&* \\
                \vdots&\vdots&\vdots&\ddots&\vdots\\
                0&*&*&\ldots&*\\
        \end{pmatrix}
        \,.
        $$
        Let $C_{r-1}$ (resp. $C_{r-2}$) be the $(r-1)\times(r-1)$-matrix
        (respectively, the $(r-2)\times(r-2)$-matrix) obtained by suppressing the first
        row and the first column (resp. the first two rows
        and the first two columns). Then
        $$
                \det C_r = 2 \det C_{r-1} - \det C_{r-2}
                \,.
        $$
\end{lemma}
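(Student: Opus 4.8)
The plan is to compute $\det C_r$ by a double cofactor expansion, first along the first row of $C_r$ and then along the first column of the resulting minor. The point is that the special shape of $C_r$ — first row and first column equal to $(2,-1,0,\dots,0)$ — forces all but a couple of cofactors to vanish, and the submatrices that survive are exactly $C_{r-1}$ and $C_{r-2}$; the entries marked $*$ never enter the computation because they live in the lower-right $(r-1)\times(r-1)$ block.

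First I would expand $\det C_r$ along its first row. Since that row is $(2,-1,0,\dots,0)$, only the first two terms contribute:
$$
\det C_r = 2\,M_{11} + (-1)\cdot(-1)^{1+2}\,M_{12} = 2\,M_{11} + M_{12},
$$
where $M_{1j}$ is the minor obtained from $C_r$ by deleting row $1$ and column $j$. By the very definition of $C_{r-1}$ in the statement, $M_{11} = \det C_{r-1}$, so it remains to identify $M_{12}$.

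Next I would examine the $(r-1)\times(r-1)$ matrix $A$ whose determinant is $M_{12}$: its rows are rows $2,3,\dots,r$ of $C_r$ (in order) and its columns are columns $1,3,4,\dots,r$ of $C_r$ (in order). The first column of $A$ is the first column of $C_r$ restricted to rows $2,\dots,r$, namely $(-1,0,0,\dots,0)$, because the first column of $C_r$ is $(2,-1,0,\dots,0)$. Expanding $\det A$ along this first column, only the top entry survives, with cofactor sign $(-1)^{1+1}=+1$, so $M_{12} = \det A = -\det A'$, where $A'$ is obtained from $A$ by deleting its first row (which came from row $2$ of $C_r$) and its first column (which came from column $1$ of $C_r$). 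What remains is rows $3,\dots,r$ and columns $3,\dots,r$ of $C_r$, i.e. $A' = C_{r-2}$. Hence $M_{12} = -\det C_{r-2}$, and substituting into the displayed identity gives $\det C_r = 2\det C_{r-1} - \det C_{r-2}$.

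There is essentially no genuine obstacle here; the only thing requiring care is the bookkeeping of which rows and columns remain after each deletion, so that one correctly recognises the lower-right blocks as $C_{r-1}$ and $C_{r-2}$ rather than some reindexed variant. Once that matching is made explicit, the two expansions give the claim immediately.
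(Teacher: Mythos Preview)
Your argument is correct and is exactly the approach the paper indicates: the paper's proof simply says ``It is immediate by expanding with respect to the first row,'' and you have carried out precisely that expansion, together with the obvious second expansion of the $(1,2)$-minor along its first column. There is nothing to add.
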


\begin{proof}
        It is immediate by expanding with respect to the first row.
\end{proof}

\begin{proposition}\label{ceen}
        
        For all choices of $e \geq 1$ and $r\geq 2$, 
        the connection index of $G(e,e,r)$ is 
        $$
                c_{(e,e,r)} = (1-\zeta_e)(1-\zeta_e\inv)
                \,.
        $$
\end{proposition}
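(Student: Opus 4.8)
The plan is to reduce the statement to a single determinant computation via Proposition~\ref{prop:connectionwellgen}. The group $G(e,e,r)$ is well generated, and all of its \zroot\ systems are distinguished because every reflection of $G(e,e,r)$ is an involution; hence by Theorem--Definition~\ref{th:connectionindex} the connection index $c_{(e,e,r)}$ does not depend on the chosen (distinguished) \zroot\ system, and by Proposition~\ref{prop:connectionwellgen} it equals the determinant of the Cartan matrix attached to any generating family of $r$ reflections. I would take the generating set $S$ and its root basis $\Pi$ displayed just before Lemma~\ref{lem:jeancartan} (which is a root basis by Proposition~\ref{prop:RootBasis}(4)), and write $C_r$ for the corresponding Cartan matrix. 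It then suffices to prove $\det C_r=(1-\zeta_e)(1-\zeta_e\inv)$ for all $r\ge 2$.

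First I would settle the two small cases $r=2$ and $r=3$ by a direct computation. For $r=2$ with $e$ even this is exactly Lemma~\ref{Gee2RB}. For $r=2$ with $e$ odd the root basis $\{\frr_{12}^{(0)},\frr_{12}^{(1)}\}$ has Cartan matrix $\begin{pmatrix}2&1+\zeta_e\inv\\ 1+\zeta_e&2\end{pmatrix}$ — the entries coming from $\al_{kl}^{(j)}=e_k-\zeta_e^je_l$, $\be_{kl}^{(j)}=e'_k-\zeta_e^je'_l$ together with semilinearity of $\scal\pd\pd$ in $W$, in agreement with Lemma~\ref{pairings} — whose determinant is $4-(1+\zeta_e)(1+\zeta_e\inv)=2-\zeta_e-\zeta_e\inv=(1-\zeta_e)(1-\zeta_e\inv)$. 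For $r=3$ the same recipe gives $C_3=\begin{pmatrix}2&-1&-1\\ -1&2&1+\zeta_e\inv\\ -1&1+\zeta_e&2\end{pmatrix}$, and expanding along the first row yields once more $\det C_3=2(1-\zeta_e)(1-\zeta_e\inv)-(1-\zeta_e)(1-\zeta_e\inv)=(1-\zeta_e)(1-\zeta_e\inv)$.

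For $r\ge 4$ I would argue by induction using Lemma~\ref{lem:jeancartan}. Since the reflection $s_{12}^{(0)}$ pairs non-trivially only with itself and with $s_{23}^{(0)}$, the first row and the first column of $C_r$ are $(2,-1,0,\dots,0)$, so $C_r$ has precisely the shape to which Lemma~\ref{lem:jeancartan} applies; moreover, deleting the first row and column (resp. the first two rows and columns) of $C_r$ produces, after the obvious shift of coordinates $e_j\mapsto e_{j-1}$ (resp. $e_j\mapsto e_{j-2}$), exactly the Cartan matrix $C_{r-1}$ (resp. $C_{r-2}$) of the analogous root basis of $G(e,e,r-1)$ (resp. $G(e,e,r-2)$). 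Hence $\det C_r=2\det C_{r-1}-\det C_{r-2}$, and by the two base cases together with the inductive hypothesis, $\det C_r=2(1-\zeta_e)(1-\zeta_e\inv)-(1-\zeta_e)(1-\zeta_e\inv)=(1-\zeta_e)(1-\zeta_e\inv)$, which is the asserted value of $c_{(e,e,r)}$.

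The main point requiring care is that Lemma~\ref{lem:jeancartan} only becomes applicable from $r=4$ onwards: for $r=3$ the first row of the Cartan matrix is $(2,-1,-1)$ rather than $(2,-1,0)$, so the recursion does not start there and the cases $r=2,3$ genuinely must be treated by hand. One also needs to check, by inspecting the explicit near‑tridiagonal form of $C_r$ recorded before Lemma~\ref{lem:jeancartan}, that its deletion minors really do coincide with the Cartan matrices of $G(e,e,r-1)$ and $G(e,e,r-2)$ for the correspondingly truncated generating sets, so that the inductive step closes; beyond that, everything reduces to a routine $2\times2$ or $3\times3$ determinant.
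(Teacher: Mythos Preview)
Your proof is correct and follows essentially the same approach as the paper's: reduce to computing $\det C_r$ and use the recursion of Lemma~\ref{lem:jeancartan} together with the base case $r=2$ from Lemma~\ref{Gee2RB}. You are simply more explicit than the paper in handling the $e$ odd, $r=2$ case and in pointing out that $r=3$ must also be computed directly before the recursion applies.
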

\begin{proof}
 This results from Lemma \ref{lem:jeancartan} and from 
 Lemma \ref{Gee2RB}.
\end{proof}
\smallskip
        
\subsubsection*{The case of $G(d,1,r)$}\hfill
\smallskip

        In this case, $k=\BQ(\zeta_d)$ and $\BZ_k = \BZ[\zeta_d]$.

        Let $\fa$ be an integral ideal which divides $\ideal{(1-\zeta_d)}$. 
        By Corollary~\ref{distinguishedGd1n} every genus of distinguished root system for $G(d,1,r)$ 
        contains a \zroot\ system of the form $\fR_\fa :=\fa\cdot\fR_1^1(d,r) \cup \fR_2(d,r)$ 
        where $\fa=\BZ_k$ or $\ideal{(1-\zeta_d)}$ (so is principal).
        In the notation of Section~\ref{ImprimRS}, the set:
        $$
                S=\left\{s_{12}^{(0)}, s_{23}^{(0)}, \ldots,
                s_{(r-1),r}^{(0)}, s_r\right\}
         $$
        consisting of $r$ reflections, generates $G(d,1,r)$, and has corresponding set of roots:
        $$
                \Pi_\fa=\left\{\frr_{12}^{(0)}, \frr_{23}^{(0)}, \ldots, \frr_{(r-1),r}^{(0)}, 
                \fa\cdot\frr_{r}^{(1)}\right\}
                \,,
        $$
        where 
        \begin{align*}
                &\fa\cdot\frr_{r}^{(1)} = \bigl( \fa e_r,(1-\zeta\inv)
                \fa\tinv e_r',\zeta_d \bigr) \,,
                \text{ and}\\
                &\frr_{k,l}^{(0)}=\bigl(\ideal{(e_k-e_l)},\ideal{(e'_k-e'_l)},
                -1 \bigr)
                \,. 
        \end{align*}

        In particular, $\Pi := \Pi_{\BZ_k}$ provides a principal
        $\BZ_k$-basis for $Q_{\fR_{\BZ_k}}$.
        
        The Cartan matrix for $\Pi$ is:
        $$
        \begin{pmatrix}
        2 & -1& 0 & \cdots \\
        -1 & 2 & -1 & 0 & \cdots \\
        0 & -1 & 2 & -1 & 0 & \cdots \\
        \vdots &  & \ddots & \ddots & \ddots \\
        & \cdots & 0 & -1 & 2 & -1 & 0 & 0  \\
        & & \cdots &  0 & -1 & 2 & -1 & 0   \\
        & & & \cdots & 0 & -1 & 2 & -(1-\zeta_d)  \\
        & & & \cdots & 0 & 0 &-1 & 1-\zeta_d   \\
        \end{pmatrix} \,,
        $$
        Since $\fa$ is principal, then $\Pi_\fa$ 
        is principal and its Cartan
        basis is obtained by conjugating the above matrix by
        $\diag(1,\ldots,1,(1-\zeta_d))$.
        Applying Lemma \ref{lem:jeancartan} then gives:

\begin{proposition}\label{cd1n}
        The connection index of $G(d,1,r)$  ($d\geq 2$) is
        $$
                c_{(d,1,r)} = 1-\zeta_d
                \,.
        $$
\end{proposition}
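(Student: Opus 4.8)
The plan is to read off the connection index as the determinant of the Cartan matrix just displayed, and then to evaluate that determinant. Since $G(d,1,r)$ is generated by the $r=\dim V$ reflections in $S$, Proposition~\ref{prop:connectionwellgen} applies and identifies the connection index with $\det\bigl(\scal{v_i}{w_j}\bigr)_{1\le i,j\le r}$ for any choice of $v_i\in L_{s_i}$, $w_i\in M_{s_i}$ with $\scal{v_i}{w_i}=1-\zeta_{s_i}$; taking for the $v_i,w_i$ the generators $\al_i,\be_i$ furnished by the principal root basis $\Pi$, this determinant is exactly the matrix $C$ written above. So everything reduces to computing $\det C$.

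First I would record the shape of $C$: writing $C=C_r$, it is tridiagonal with all diagonal entries equal to $2$ and all off-diagonal entries equal to $-1$, \emph{except} that its lower-right $2\times2$ corner is $\begin{pmatrix}2&-(1-\zeta_d)\\-1&1-\zeta_d\end{pmatrix}$. In particular $C_r$ has exactly the form to which Lemma~\ref{lem:jeancartan} applies, and the matrices $C_{r-1}$ and $C_{r-2}$ produced there — obtained by deleting respectively the first row and column, and the first two rows and columns — are again of the same form, since those deletions do not touch the lower-right corner. Hence the recursion $\det C_r=2\det C_{r-1}-\det C_{r-2}$ may be iterated all the way down. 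The base cases are immediate: $C_1=(1-\zeta_d)$, so $\det C_1=1-\zeta_d$, and $\det C_2=\det\begin{pmatrix}2&-(1-\zeta_d)\\-1&1-\zeta_d\end{pmatrix}=2(1-\zeta_d)-(1-\zeta_d)=1-\zeta_d$. An induction on $r$ then closes the argument: if $\det C_{r-1}=\det C_{r-2}=1-\zeta_d$, the recursion gives $\det C_r=2(1-\zeta_d)-(1-\zeta_d)=1-\zeta_d$, whence $c_{(d,1,r)}=\det C=1-\zeta_d$.

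There is no genuine obstacle here — the proof is a short bookkeeping exercise in linear algebra built on Lemma~\ref{lem:jeancartan} and Proposition~\ref{prop:connectionwellgen}. The only point that deserves a moment's attention is checking that stripping rows and columns from the \emph{top} of $C$ leaves the distinguished lower-right corner intact, so that one really does reach the base matrices $(1-\zeta_d)$ and $\begin{pmatrix}2&-(1-\zeta_d)\\-1&1-\zeta_d\end{pmatrix}$ rather than the untwisted $(2)$ and $\begin{pmatrix}2&-1\\-1&2\end{pmatrix}$ (which would instead produce the familiar value $r+1$ of type $A$).
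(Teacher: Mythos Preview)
Your proof is correct and follows the same approach as the paper: compute $\det C$ via the recursion of Lemma~\ref{lem:jeancartan}, reducing to the explicit $2\times 2$ and $1\times 1$ base cases in the lower-right corner. The paper states this more tersely (``Applying Lemma~\ref{lem:jeancartan} then gives''), but your write-up makes the induction and base cases explicit, which is helpful.
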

\smallskip

\subsection{The case of not well-generated groups $G(de,e,r)$}\hfill
\smallskip

%
	We assume here that $G=G(de,e,r)$ is not well-generated, thus $d>1$
        and $e>1$.
        
        It follows from
        Theorem~\ref{distinguishedGdeenRS}, that
        $\fR:=\fR_1^1(d,r) \cup \fR_2(de,r)$ 
        is a distinguished \zroot\ system for $G$, which is principal
        since with the notation from Section~\ref{ImprimRS},
        $\fR_1^i(d,r)$ consists of:
        $$ 
        \frr_k^i= \BZ_k \cdot \bigl( e_k, {(1-\zeta_d^{-i})}e'_k,\zeta_d \bigr) 
        $$
        with $1 \leq k \leq r$ and  $0<i < d$, and $\fR_2(de,r)$ consists of:
        $$
                \frr_{k,l}^{(j)} = \BZ_k \cdot \bigl( (e_k-\zeta_{de}^je_l),
                (e'_k-\zeta_{de}^je'_l),-1 \bigr) 
        $$
        with $1 \leq k  <l \leq r$ and $0\leq j < de$.
        
		Recall (Definition~\ref{Def_rootBasis_etc}) that a subset 
		$\Pi =\left((I_\frr, J_\frr,\zeta_\frr)\right)_{\frr \in \Pi}\subset \fR$ is a  set of 
		\emph{root generators} if $  Q_\fR = \sum_{i\in \Pi} I_{\frr_i}$.
		 and a \emph{root lattice basis} if
		$Q_\fR= \bigoplus_{\frr \in R} I_\frr$.
		       
\begin{proposition}\label{prop:rootsGdeer}
	Assume that $d>1$ and $e>1$.
\begin{enumerate}
	\item
		The set:
       		$$
                		\Pi=\left\{\frr_{12}^{(0)}, \frr_{23}^{(0)},
                		\ldots, \frr_{(r-1),r}^{(0)}, \frr_{(r-1),r}^{(1)}, 
                		\frr_r^{1} \right\}
        		$$
        		forms a set of root generators for $G=G(de,e,r)$. 
	
        		The corresponding set of $r+1$ 
        		reflections:
        		$$
                		S_\Pi := 	\left\{
						s_{12}^{(0)}, s_{23}^{(0)}, 
                					\ldots, s_{(r-1),r}^{(0)}, s_{(r-1),r}^{(1)}, s_r^1
					\right\}
        		$$
        		generates $G$ and no set of $r$ reflections will do so.
		
	\item
		If $(r,de) \neq (2, 2p^l)$ ($l\geq 1$ any integer and $p$ any prime),
		the genera of distinguished root systems for $G$ are in 
                bijection with the integral ideals dividing $\ideal{(1-\zeta_d)}$. 
		
		More precisely, if $\fa$ is such a divisor, the corresponding 
                genus contains the root system with  set of root generators
		$$ 
			\Pi_\fa=
				\left\{
					\frr_{12}^{(0)}, \frr_{23}^{(0)},
               		 		\ldots, \frr_{(r-1),r}^{(0)}, \frr_{(r-1),r}^{(1)}, 
               				 \fa\cdot\frr_r^{1} 
				\right\}
			\,.
        		$$
\end{enumerate}
\end{proposition}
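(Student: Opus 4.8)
The plan is to deduce the statement by assembling three already-established facts: the generation result \eqref{PiS} from \cite[\S3]{bmr}, the classification of genera of distinguished \zroot\ systems for $G(de,e,r)$ in Theorem~\ref{distinguishedGdeenRS}, and --- as the main tool --- Proposition~\ref{prop:RootBasis}(2), which guarantees that in a \emph{distinguished} \zroot\ system every subset whose attached reflections generate the group is automatically a set of root generators (and of coroot generators). Note that this last proposition requires only distinguishedness, not principality, so it applies verbatim to $\fR_\fa$ even when $\fa$ is not principal.

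For part (1), I would first recall, as in the paragraph preceding the proposition (which is Theorem~\ref{distinguishedGdeenRS} with all ideals taken trivial), that $\fR:=\fR_1^1(d,r)\cup\fR_2(de,r)$ is a distinguished \zroot\ system with $G(\fR)=G$. From the explicit definitions in Section~\ref{ImprimRS} one reads off that $\frr_{k,k+1}^{(0)}$ and $\frr_{(r-1),r}^{(1)}$ lie in $\fR_2(de,r)$ while $\frr_r^1\in\fR_1^1(d,r)$, so $\Pi\subseteq\fR$; and each of these roots is, by construction, an $(s,\BZ_k)$-root for the corresponding member $s$ of
$$
S_\Pi=\left\{s_{12}^{(0)},s_{23}^{(0)},\ldots,s_{(r-1),r}^{(0)},s_{(r-1),r}^{(1)},s_r^1\right\},
$$
so that $\{s_\frr\mid\frr\in\Pi\}=S_\Pi$ (these being the unique roots of $\fR$ attached to $S_\Pi$ by Proposition~\ref{prop:distinguishedbijection}). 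Since $S_\Pi$ generates $G=G(\fR)$ by \eqref{PiS}, Proposition~\ref{prop:RootBasis}(2) gives that $\Pi$ is a set of root generators. The final clause of (1) is then immediate: $\dim V=r$ and, by the standing hypothesis of this subsection, $G$ is not well-generated, so by Definition~\ref{def:wellgenerated} no set of $r$ reflections generates $G$.

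For part (2), assume $(r,de)\neq(2,2p^l)$. Then Theorem~\ref{distinguishedGdeenRS} forces $\fb_0=\fb_1=\ideal{}$, and its remaining arithmetic conditions on $\CF_r=\{\fa,\ideal{},\ideal{}\}$ all become vacuous except for the requirement that $\fa$ be an integral ideal dividing $\ideal{(1-\zeta_d)}$. Hence $\fa\mapsto\fR_{\CF_r}(de,e,r)$ already is the asserted bijection from the integral divisors of $\ideal{(1-\zeta_d)}$ onto the genera of distinguished \zroot\ systems for $G$, the representative being
$$
\fR_\fa:=\fa\cdot\fR_1^1(d,r)\cup\fR_2^0(de,r)\cup\fR_2^1(de,r)=\fa\cdot\fR_1^1(d,r)\cup\fR_2(de,r).
$$
It remains to recover its set of root generators. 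The system $\fR_\fa$ is distinguished (Theorem~\ref{distinguishedGdeenRS}); the root $\fa\cdot\frr_r^1$ lies in $\fa\cdot\fR_1^1(d,r)$ and has attached reflection $s_r^1$, since scaling by a fractional ideal does not change the associated reflection; and the remaining members of $\Pi_\fa$ lie in $\fR_2(de,r)\subseteq\fR_\fa$ with attached reflections exactly as in part (1). Thus $\Pi_\fa\subseteq\fR_\fa$ and $\{s_\frr\mid\frr\in\Pi_\fa\}=S_\Pi$ generates $G=G(\fR_\fa)$, so Proposition~\ref{prop:RootBasis}(2) once more yields that $\Pi_\fa$ is a set of root generators for $\fR_\fa$.

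I do not expect a genuine obstacle: the substance of the statement lives in Theorem~\ref{distinguishedGdeenRS} and Proposition~\ref{prop:RootBasis}. The one point that needs care is the reduction in part (2) --- verifying that, once $\fb_0=\fb_1=\ideal{}$ is forced, the surviving constraints of Theorem~\ref{distinguishedGdeenRS} collapse precisely to ``$\fa\mid\ideal{(1-\zeta_d)}$'', so that the parametrization of genera is by exactly these divisors and no more. The rest is routine bookkeeping: identifying the system $\fR$ of the preamble with the trivial-ideal case of Theorem~\ref{distinguishedGdeenRS}, matching the roots of $\Pi$ (resp.\ $\Pi_\fa$) with the generating set \eqref{PiS}, and invoking Proposition~\ref{prop:RootBasis}(2).
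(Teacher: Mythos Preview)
Your proposal is correct and follows essentially the same route as the paper: both parts are deduced from \eqref{PiS}, Theorem~\ref{distinguishedGdeenRS}, and Proposition~\ref{prop:RootBasis}(2), with the reduction in part (2) coming from the observation that the hypothesis $(r,de)\neq(2,2p^l)$ forces $\fb_0=\fb_1=\ideal{}$. You are in fact slightly more explicit than the paper in two places --- you spell out why no $r$ reflections can generate $G$ (via non-well-generatedness), and you reapply Proposition~\ref{prop:RootBasis}(2) to $\fR_\fa$ to justify that $\Pi_\fa$ is a set of root generators --- while the paper records the Cartan matrix of $\Pi$ in its proof, but this is setup for the subsequent discussion rather than an ingredient of the argument itself.
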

        
\begin{proof}\hfill

	(1)
	The fact that $S_\Pi$  generates $G$ has already been
	observed in Subsection~\ref{RefsOfDEER}.
	It follows by Proposition~\ref{prop:RootBasis}(2) that the
	set $\Pi$ is a set of root generators.
 \smallskip
        
        (2)
        The $(r+1) \times (r+1)$ Cartan matrix $C$ of $\Pi$ is
   	$$
        \begin{pmatrix}
        		2 & -1& 0 & \cdots \\
        		-1 & 2 & -1 & 0 & \cdots \\
       		 0 & -1 & 2 & -1 & 0 & \cdots \\
       		 \vdots &  & \ddots & \ddots & \ddots \\
        		& \cdots & 0 & -1 & 2 & -1 & 0 & 0 & 0  \\
        		& & \cdots &  0 & -1 & 2 & -1 &-1  & 0   \\
        		& & & \cdots & 0 & -1 & 2 &  1+\zeta_{de}\inv & \zeta_d-1 \\
        		& & & \cdots & 0 & -1 &  1+\zeta_{de} & 2 & -\zeta_{de} (1-\zeta_d)\\
        		& & & \cdots & 0 & 0 &-1 & -\zeta_{de}\inv & 1-\zeta_d   \\
        \end{pmatrix} \,,
        $$
        Theorem~\ref{distinguishedGdeenRS} ensures that (except when
        $r=2$ and $de=2p^l$, $p$ prime, $l\ge 1$), every other genus of root system
        for $G$ is obtained as 
        $\fa\cdot\fR_1^1(d,r) \cup \fR_2(de,r)$ 
        where $\fa$ is an integral ideal  dividing
        $\ideal{(1-\zeta_d)}$.
        This gives rise to root systems with root generators
        $$
        		\Pi_\fa=\left\{\frr_{12}^{(0)}, \frr_{23}^{(0)},
		\ldots, \frr_{(r-1),r}^{(0)}, \frr_{(r-1),r}^{(1)}, 
                	\fa\cdot\frr_r^{1} \right\}
                	\,.
        $$
        When $\fa=\ideal a$ is principal, these root generators are principal
        with Cartan matrix  $C_a$ conjugate to $C$ by $\diag(1,\ldots,1,a)$.
%
\end{proof}

        We will describe  root lattice bases for most genera of distinguished \zroot\
        system for $G$ (and all genera of principal distinguished \zroot\ systems). 
        By Theorem~\ref{distinguishedGdeenRS}, 
        each genus corresponds to the choice of ideals $\fb_0$, $\fb_1$ and $\fa$.
        
\begin{remark}\label{generalremarkgdeer}
        Assume $\fb_0, \fb_1$ and $\fa$ are principal, thus
	$\fb_0=\ideal{b_0}, \fb_1=\ideal{b_1}$ and $\fa=\ideal a$,
        and let $C'$ denote the matrix obtained by conjugating by 
        $\diag(b_0,\ldots, b_0, b_1,  a)$  the  Cartan  matrix  $C$  for  $\Pi$. 
        Let us explain how to determine if the root system
        with Cartan matrix $C'$ has a root lattice basis or coroot lattice basis and how to
        compute the corresponding connection index.

        Write  $l_1$, $l_2$ and $l_3$ for the last three rows of $C'$. 
        These rows satisfy the linear dependency relationship:
        $$
                \frac{1}{b_0}l_1-\frac{1}{b_1}l_2+\frac{(1-\zeta_{de})}a l_3 = 0 
        $$ 
        Similarly, if $c$, $c_2$ and $c_3$ are the last three columns of $C'$, then:
        $$
                b_0 c_1-b_1 c_2+\frac{(1-\zeta_{de}\inv)}{a'}c_3 =0
        $$
        where  $a'$ is the algebraic integer such  that $aa' =(1-\zeta_d)$. 

	Denote  $P_{ij}$ the property that $l_i$  is an integral linear combination
	of  the other  lines in  $\{l_1,l_2,l_3\}$ and  $c_j$ is an integral linear
	combination  of the other  columns in $\{c_1,c_2,c_3\}$.  If $P_{ij}$ holds
	then  the  root  system  has  a  root lattice basis  and  a  coroot lattice basis, and its
	connection  index is $\det C'_{ij}$ where $C'_{ij}$ is the matrix
        obtained from $C'$ by removing the row $l_i$ and the column $c_j$.

	The bottom right corner of the matrix of cofactors of $C'$  (the
	matrix of the $\det C'_{ij}$) has the following form:
	$$
	\begin{pmatrix}
		1-\zeta_d&\frac{b_1}{b_0}(1-\zeta_d)&\frac a{b_0}(1-\zeta_{de}\inv)\\
		\frac{b_0}{b_1}(1-\zeta_d)&1-\zeta_d&\frac a{b_1}(1-\zeta_{de}\inv)\\
		b_0a'(1-\zeta_{de})&b_1a'(1-\zeta_{de})&(1-\zeta_{de})(1-\zeta_{de}\inv)
	\end{pmatrix}
	\,.
	$$
\end{remark}
\smallskip

	We describe case by case, according to values of $d$ and $e$,
	the genera of \zroot\ systems for $G$ when $k=\BQ[\zeta_{de}]$ (the
        field of definition of $G$), and compute connection indices.

        By Theorem~\ref{distinguishedGdeenRS}, genera of distinguished \zroot\ systems
        are represented by
        $
                \fa\cdot\fR_1^1(d,r)\, \cup\, \fb_0\cdot\fR_2^0(de,r)
                 \,\cup\, \fb_1\cdot\fR_2^1(de,r)
        \,
        $
        where
\begin{itemize}
        \item 
                $\fa$ is an integral ideal dividing $\ideal{(1-\zeta_d)}$, 
        \item 
                $\fb_0$ and $\fb_1$ are relatively prime ideals dividing $\fa$ 
                and $\ideal{(1+\zeta_{de})}$.
\end{itemize}

        We will first consider the cases where it is possible that $\fb_0$ and
        $\fb_1$ are non-trivial, which (see Proposition~\ref{prop:rootsGdeer})
        may happen when $r=2$, $e$ is even, and $de=2p^l$. 
        For $\fa$ to be non-trivial, we need that $d$
	    be a prime power, which implies that $d=p^h$ or  $p$ odd, $d=2$. 
        In the second case $1-\zeta_d=2$ is prime to
        $1+\zeta_{de}=1-\zeta_{p^l}$, thus $\fb_0$ and $\fb_1$ are trivial.
        Thus the only cases where $\fb_0$ or $\fb_1$ could be non-trivial is: 

\subsubsection*{The case $r=2$, $e = 2p^l$, $d = p^h$, with $l\ge 0, h\ge 1$.}
\hfill\smallskip

        Let $\fp := \ideal{(1+\zeta_{de})}$. 
        Then $1+\zeta_{de} = 1- (-\zeta_{de})$, and since $de = 2p^{h+l}$,
        $-\zeta_{de}$ has order $p^{h+l}$ if $p$ is odd, and has order $2p^{h+l}$
        if $p = 2$. 
        Thus by Proposition~\ref{zmn}, we have that $\fp$ is prime.
        
        Furthermore, let $e'=e$ if $p=2$, or $e'=e/2$ otherwise.
        Then it follows from Lemma~\ref{lem:m=n}(2) that
        $\ideal{(1-\zeta_d)}=\fp^{e'}$.        
        Hence every ideal dividing $\ideal{(1-\zeta_d)}$ is principal,
        equal to $\fp^n$ for $0\le n\le e'$,
        thus the above root systems are principal. 


\begin{proposition}\label{G(2pq,2p,2) RS}
	Let $e = 2p^l$, $d = p^h$, with $l\ge 0$ and $h\ge 1$,
        and $\fp := \ideal{(1+\zeta_{de})}$.
        Write $e'=e$ if $p=2$, or $e'=e/2$ otherwise. 
        Then every genus of distinguished root system for $G(2p^{h+l},2p^l,2)$
        contains one of the $3e'+1$ root systems:
	$$
                \fa\cdot\fR_1^1(d,r)\, \cup\, \fb_0\cdot\fR_2^0(de,r)
                 \,\cup\, \fb_1\cdot\fR_2^1(de,r)
		\,,
	$$
        where
	$$ 
        		(\fb_0, \fb_1,\fa)  \in
		\{(\ideal{},\ideal{},\ideal{})\}
		\bigcup_{1\le n\le e'}
                \{(\ideal{},\fp,\fp^n),(\fp,\ideal{},\fp^n),(\ideal{},\ideal{},\fp^n)\}
                \,.
	$$
        These root systems are all principal, have a root lattice basis and a coroot lattice basis,
        and their connection index is one of $\ideal{}$, $\fp$ or $\fp^2$.
\end{proposition}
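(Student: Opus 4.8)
## Proof proposal

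The plan is to combine the general classification of distinguished root systems for $G(de,e,r)$ given in Theorem~\ref{distinguishedGdeenRS} with the arithmetic facts we have just established about the ideal $\fp := \ideal{(1+\zeta_{de})}$ in the case at hand. First I would recall that, by Theorem~\ref{distinguishedGdeenRS}, every genus of distinguished \zroot\ system for $G = G(2p^{h+l},2p^l,2)$ is represented by a root system of the shape
$$
\fa\cdot\fR_1^1(d,2) \,\cup\, \fb_0\cdot\fR_2^0(de,2) \,\cup\, \fb_1\cdot\fR_2^1(de,2)
\,,
$$
where $\fa$, $\fb_0$, $\fb_1$ are integral, $\fb_0$ and $\fb_1$ are relatively prime divisors of $\fp$, $\fa$ divides $\ideal{(1-\zeta_d)}$, and $\fb_0$, $\fb_1$ both divide $\fa$. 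The point of the hypothesis $e = 2p^l$, $d = p^h$ (so $de = 2p^{h+l}$) is precisely, as noted in the paragraph preceding the statement, that $1+\zeta_{de} = 1-(-\zeta_{de})$ with $-\zeta_{de}$ of order $p^{h+l}$ (or $2p^{h+l}$ when $p=2$), so that Proposition~\ref{zmn} forces $\fp$ to be \emph{prime}, and Lemma~\ref{lem:m=n}(2) gives $\ideal{(1-\zeta_d)} = \fp^{e'}$ where $e' = e$ if $p=2$ and $e' = e/2$ otherwise.

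Next I would enumerate the admissible triples. Since $\ideal{(1-\zeta_d)} = \fp^{e'}$ and $\fp$ is prime, every integral divisor of $\ideal{(1-\zeta_d)}$ is $\fp^n$ with $0 \le n \le e'$ — in particular principal, since $\BZ_k$ being Dedekind with a single prime above the relevant rational prime, or more simply because $\fp^{e'} = \ideal{(1-\zeta_d)}$ is principal and $\fp$ is its unique $e'$-th "root", forces $\fp$ itself to be principal (one may just invoke that in $\BQ(\zeta_{de})$, $\fp$ is $\ideal{}$ or prime by Lemma~\ref{lem:m=n}, and when it is nontrivial it equals $\ideal{(1+\zeta_{de})}$). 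So $\fa = \fp^n$ for some $0 \le n \le e'$. The constraints "$\fb_0, \fb_1$ divide $\fa = \fp^n$ and are relatively prime" then force $\{\fb_0,\fb_1\} \subseteq \{\ideal{},\fp\}$ when $n \ge 1$, with not both equal to $\fp$ (coprimality), i.e. $(\fb_0,\fb_1) \in \{(\ideal{},\ideal{}),(\ideal{},\fp),(\fp,\ideal{})\}$; and when $n=0$ necessarily $\fb_0 = \fb_1 = \ideal{}$. This yields exactly the list
$$
(\fb_0,\fb_1,\fa) \in \{(\ideal{},\ideal{},\ideal{})\} \,\cup\, \bigcup_{1 \le n \le e'}\{(\ideal{},\fp,\fp^n),(\fp,\ideal{},\fp^n),(\ideal{},\ideal{},\fp^n)\}\,,
$$
which has cardinality $1 + 3e'$. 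Since every $\fa$, $\fb_0$, $\fb_1$ appearing is a power of the principal prime $\fp$, Remark~\ref{rem:principal} together with the description of $\fR_1^1(d,r)$ and $\fR_2(de,r)$ in Proposition~\ref{prop:rootsGdeer} shows each such root system is principal.

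Finally I would verify the statements about root lattice bases, coroot lattice bases, and connection indices using Remark~\ref{generalremarkgdeer}. Writing $\fb_0 = \ideal{b_0}$, $\fb_1 = \ideal{b_1}$, $\fa = \ideal{a}$ with $b_0,b_1 \in \{1, 1+\zeta_{de}\}$ and $a$ a power of $1+\zeta_{de}$, I would take the Cartan matrix $C$ of $\Pi$ from Proposition~\ref{prop:rootsGdeer} (here $r=2$, so $C$ is $3\times 3$), conjugate by $\diag(b_0,b_1,a)$ to get $C'$, and examine its last three rows $l_1,l_2,l_3$ and columns $c_1,c_2,c_3$, which satisfy the dependency relations displayed in Remark~\ref{generalremarkgdeer}. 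In each of the (finitely many possible shapes of) cases one checks that some $l_i$ is an integral combination of the other two rows and some $c_j$ an integral combination of the other two columns — this is where the coprimality of $\fb_0,\fb_1$ and the divisibilities $\fb_0,\fb_1 \mid \fa$ are used, to guarantee the ratios $b_1/b_0$, $b_0/b_1$, $a/b_0$, $a/b_1$, $a'$ (with $aa' = 1-\zeta_d$) that occur in the cofactor matrix are integral — and then the connection index is the corresponding cofactor $\det C'_{ij}$. Reading off the bottom-right $3\times 3$ block of cofactors quoted in Remark~\ref{generalremarkgdeer} and substituting the relevant powers of $(1+\zeta_{de})$ (recalling $1-\zeta_d = \fp^{e'}$ up to units, and $(1-\zeta_{de})(1-\zeta_{de}^{-1})$ is $\fp^2$ up to units when $e$ is even), one finds in every case that the connection index is $\ideal{}$, $\fp$, or $\fp^2$. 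The main obstacle I anticipate is the bookkeeping in this last step: one must organize the finitely many sign/power patterns of $(b_0,b_1,a)$ so that the integrality-of-cofactor check and the identification of the connection index are done uniformly, rather than by a long case list; invoking the cofactor matrix from Remark~\ref{generalremarkgdeer} is exactly what makes this manageable, since it reduces everything to substituting $b_0,b_1 \in \{1,1+\zeta_{de}\}$ and $a = (1+\zeta_{de})^n$ into fixed formulas.
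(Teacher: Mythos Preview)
Your approach is essentially the paper's: invoke Theorem~\ref{distinguishedGdeenRS} together with the primality of $\fp$ and $\ideal{(1-\zeta_d)}=\fp^{e'}$ to enumerate the $3e'+1$ triples, then appeal to Remark~\ref{generalremarkgdeer} for the lattice bases and connection indices. The enumeration and principality arguments are fine.

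There is one slip in your final paragraph. You assert that $(1-\zeta_{de})(1-\zeta_{de}^{-1})$ generates $\fp^2$ ``when $e$ is even''; but $e=2p^l$ is \emph{always} even here, and the claim is false for odd $p$: in that case $de=2p^{h+l}$ is composite, so $1-\zeta_{de}$ is a unit and the product generates $\ideal{}$, not $\fp^2$. Only when $p=2$ does $\ideal{(1-\zeta_{de})}=\fp$. This dichotomy governs which cofactors in Remark~\ref{generalremarkgdeer} are units and which are powers of $\fp$, and hence which $P_{ij}$ holds. The paper handles this by explicitly splitting into the two cases $p=2$ (where $\ideal{(1-\zeta_{de})}=\fp$, and connection indices $\fp$ or $\fp^2$ occur) and $p$ odd (where $\ideal{(1-\zeta_{de})}=\ideal{}$, and the connection index is always $\ideal{}$), giving a short table for each. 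Your hoped-for uniform treatment will not work without this split; once you make it, the verification is the same as the paper's.
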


\begin{proof} We consider the cases $p=2$ and $p>2$ separately.

	Suppose that $p=2$. 
	Thus $d=p^h$,  $e=2p^l$ and $e'=e$.
	In this case, $\ideal{(1-\zeta_{de})}= \ideal{(1+\zeta_{de})}= \fp$.
	The following table exhausts the possibilities for the root systems;	
	we use the notation introduced in Remark~\ref{generalremarkgdeer}, as well as
	 ``centre-dot''  notation in the $\fb_0$  and $\fb_1$ columns, which
	indicates that $\fb_0$ (respectively $\fb_1$) may be either $\BZ_k$ or $\fp$.
%
	$$
	\begin{array}{lcc}
		\fb_0,\fb_1,\fa&\text{properties}&\text{connection index}\\
		\hline
		\ideal{},\ideal{},\ideal{}&P_{13},P_{23}& \ideal{(1-\zeta_{de}\inv)}=\fp\\
		\cdot,\cdot,\fp &P_{13}\text{ or }P_{23}& \ideal{(1-\zeta_{de})}=\fp\\
		\cdot,\cdot,\fp^j \mbox{ for } 2 \leq j \leq e-1 &P_{33} & \fp^2\\
		\cdot,\cdot,\fp^{e} &P_{32}\text{ or } P_{31} & \fp\\
		\end{array}
	$$
	Both  $\fp$ and $\fp^2$  occur as connection index, depending on
	the root system, which illustrates that for not well-generated groups, the
	connection index  depends on the root system.
\medskip

	Suppose now that $p>2$. 
	Thus $d=p^h$,  $e=2p^l$ and $e'=p^l$.
	In this case, $\ideal{(1-\zeta_{de})}= \ideal{}$. 
	Again, the table below exhausts the possibilities for the root systems; the notation is as above.
%
	$$
	\begin{array}{lcc}
		\fb_0,\fb_1,\fa&\text{properties}&\text{connection index}\\
		\hline
		\ideal{},\ideal{},\ideal{}&P_{33}&\ideal{}\\
		\ideal{},\fp,\ideal{}&P_{23}&\ideal{}\\
		\fp,\ideal{},\ideal{}&P_{13}&\ideal{}\\
		\cdot,\cdot,\fp^j \mbox{ for } 1\leq j \leq e'&P_{33}&\ideal{}\\
		\end{array}
	$$
\end{proof}

In the remaining cases we know that the genera of root systems are represented
by the system $\fa \fR_1^1(d,r) \cup \fR_2^0(de,r)  \cup \fR_2^1(de,r)$
where $\fa$ runs over integral ideals dividing $\ideal{(1-\zeta_d)}$.

\subsubsection*{The case $d$ composite.}
\hfill
\smallskip

\begin{proposition}\label{d composite}
        Assume $d$ composite. Then $G(de,e,r)$ has a single genus of root
        systems, represented by the principal root system:
	$$\fR_1^1(d,r) \cup \fR_2(de,r) \, ,$$
        which has a root lattice basis and a coroot lattice basis,
        and connection index $\BZ_k$.
\end{proposition}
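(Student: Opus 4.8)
The plan is to deduce everything from the classification of distinguished \zroot\ systems for $G(de,e,r)$ in Theorem~\ref{distinguishedGdeenRS}, together with the arithmetic fact that $1-\zeta_d$ is a unit of $\BZ_k$ when $d$ is composite (Lemma~\ref{lem:mcompositeornot} in Appendix~\ref{arithmetic}), and then to read off the lattice bases and the connection index from the Cartan data already assembled in Proposition~\ref{prop:rootsGdeer} and Remark~\ref{generalremarkgdeer}. First I would note that, $1-\zeta_d$ being a unit, we have $\ideal{(1-\zeta_d)}=\BZ_k$. In the representative $\fa\cdot\fR_1^1(d,r)\cup\fb_0\cdot\fR_2^0(de,r)\cup\fb_1\cdot\fR_2^1(de,r)$ of Theorem~\ref{distinguishedGdeenRS}, the ideal $\fa$ is an integral divisor of $\ideal{(1-\zeta_d)}$ and $\fb_0,\fb_1$ are integral divisors of $\fa$, so necessarily $\fa=\fb_0=\fb_1=\BZ_k$; hence there is a single genus of distinguished \zroot\ system for $G$, represented by $\fR:=\fR_1^1(d,r)\cup\fR_2(de,r)$, which is principal because each of its roots $\frr_k^1$ and $\frr_{k,l}^{(j)}$ is a cyclic $\BZ_k$-module, as noted at the beginning of this subsection.

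Next I would invoke Proposition~\ref{prop:rootsGdeer}(1): the set $\Pi=\{\frr_{12}^{(0)},\dots,\frr_{(r-1),r}^{(0)},\frr_{(r-1),r}^{(1)},\frr_r^1\}$ is a set of principal root generators of $\fR$, with the $(r+1)\times(r+1)$ Cartan matrix $C$ displayed in the proof of that proposition. We are thus in the situation of Remark~\ref{generalremarkgdeer} with $\fb_0=\fb_1=\fa=\BZ_k$, i.e. $b_0=b_1=a=1$ and $a'=1-\zeta_d$. The crux is to check property $P_{11}$: the row relation of Remark~\ref{generalremarkgdeer} specializes to $l_1-l_2+(1-\zeta_{de})l_3=0$, which already exhibits $l_1$ as an integral combination of $l_2,l_3$, and the column relation specializes to $c_1-c_2+\frac{1-\zeta_{de}\inv}{1-\zeta_d}\,c_3=0$; since $1-\zeta_d$ is a unit, the coefficient $\frac{1-\zeta_{de}\inv}{1-\zeta_d}$ is an algebraic integer, so $c_1$ is an integral combination of $c_2,c_3$. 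Thus $P_{11}$ holds.

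Finally I would conclude, exactly as in Remark~\ref{generalremarkgdeer}: since $P_{11}$ holds, $\fR$ has a root lattice basis and a coroot lattice basis, obtained from $\Pi$ by discarding the root $\frr_{(r-1),r}^{(0)}$ (which contributes the row $l_1$), these $r$ remaining roots forming a $\BZ_k$-basis because $Q_\fR$ and $Q_\fR^\vee$ are torsion-free $\BZ_k$-modules of rank $r=\dim V$, for which a set of $r$ generators is automatically free; moreover its connection index equals $\det C'_{11}$, which by the cofactor block displayed in Remark~\ref{generalremarkgdeer} specializes to $1-\zeta_d$, a unit, so the connection index is $\BZ_k$. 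The only point really requiring attention is the integrality of the coefficient $\frac{1-\zeta_{de}\inv}{1-\zeta_d}$, i.e. the triviality of $\ideal{(1-\zeta_d)}$ — which is precisely where the hypothesis ``$d$ composite'' enters, both here and in the uniqueness of the genus; the remaining steps are a routine specialization of formulas already established.
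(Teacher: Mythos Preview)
Your proof is correct and follows essentially the same approach as the paper: invoke the classification in Theorem~\ref{distinguishedGdeenRS}, use that $1-\zeta_d$ is a unit to force $\fa=\fb_0=\fb_1=\BZ_k$, and then read off the lattice bases and connection index via Remark~\ref{generalremarkgdeer}. The only cosmetic difference is that you verify property $P_{11}$ (yielding connection index $\ideal{(1-\zeta_d)}=\BZ_k$) while the paper verifies $P_{13}$ (yielding $\ideal{(1-\zeta_{de}\inv)}=\BZ_k$, since $d$ composite forces $de$ composite); both choices are valid and lead to the same conclusion.
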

\begin{proof}
	When $d$ is composite, $1-\zeta_d$ is a unit thus  $\fa$ is trivial.
	So there is a unique genus of root system, which contains the root system 
	given in the statement of the proposition. 
	This root system has root generators $\Pi$, for which
        the Cartan matrix is $C$. The property $P_{13}$ holds
	and the connection index is $\ideal{(1-\zeta_{de}\inv)}=\ideal{}$.
\end{proof}

\subsubsection*{The case $d$ a prime power.}\hfill

\smallskip
	Assume $d=p^a$ with $p$ prime, $a\ge 1$ and write $e=p^hn'$ 
        with $n'$ prime to $p$. Then by
	Proposition~\ref{zmn}, we have the decomposition in prime ideals 
	$\ideal{(1-\zeta_{p^a})} = (\fp_1\ldots\fp_\delta)^{p^h}$, 
	where $\delta=\vp(n')/s$ and $s$ is the
	multiplicative order of $p\mod {n'}$.

        We can say more when $\delta=1$ than in the other cases.

\begin{proposition}
	Let $d = p^a$, $e=p^hn'$ with $p$ prime, $a\ge 1, h\ge 0$ and $n'$
        prime to $p$ and assume that $p$ generates the
        multiplicative group $(\BZ/n'\BZ)^\times$ (this includes the case
        $n'=1$). Let $G := G(de,e,r) = G(p^{a+h}n',p^h n',r)$ and
	$\fp :=\ideal{(1-\zeta_{p^{a+h}})}$.	

        Then (assuming we are not in the case \ref{G(2pq,2p,2) RS})
	every genus of distinguished \zroot\ system
  	for $G$ is represented by one of the $p^h+1$ principal root systems
	$$\fp^n\fR_1^1(d,r) \cup \fR_2^0(de,r)  \cup \fR_2^1(de,r)$$
 	where $0\leq n \leq p^h$. These systems have a root lattice basis
 	and a coroot lattice basis and connection index one of $\ideal{}$,
        $\fp$ or $\fp^2$.
\end{proposition}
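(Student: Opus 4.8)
The plan is to combine Theorem~\ref{distinguishedGdeenRS}, which parametrises the genera of distinguished \zroot\ systems for $G(de,e,r)$ by a triple of ideals $(\fb_0,\fb_1,\fa)$ subject to divisibility conditions, with a description of the divisors of $\ideal{(1-\zeta_d)}$, and then to read off the connection indices from the Cartan matrix of Proposition~\ref{prop:rootsGdeer} via the cofactor computation recorded in Remark~\ref{generalremarkgdeer}.

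First I would show that $\fb_0=\fb_1=\ideal{}$. By Theorem~\ref{distinguishedGdeenRS}, $\fb_0$ and $\fb_1$ divide $\fa$, which divides $\ideal{(1-\zeta_d)}$; since $d=p^a$, that ideal is a power of the prime of $\BZ_k$ above $p$ (by the prime decomposition of $\ideal{(1-\zeta_{p^a})}$ recalled before the statement), so $\fb_0$ and $\fb_1$ are powers of that prime. They also divide $\ideal{(1+\zeta_{de})}$, which is trivial unless $de=2q^l$ for a prime $q$, in which case it is a power of the prime above $q$; and by the same theorem they are trivial unless $r=2$. If $p\ne q$ the two primes are coprime, so $\fb_0=\fb_1=\ideal{}$; and if $p=q$ with $r=2$, then $d=p^a$ and $de=2p^l$ put us in the situation of Proposition~\ref{G(2pq,2p,2) RS}, which is excluded by hypothesis. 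Either way $\fb_0=\fb_1=\ideal{}$, so every genus is represented by $\fa\cdot\fR_1^1(d,r)\cup\fR_2(de,r)$ for some integral $\fa$ dividing $\ideal{(1-\zeta_d)}$. The hypothesis that $p$ generates $(\BZ/n'\BZ)^\times$ means its order modulo $n'$ is $\vp(n')$, hence $\delta=\vp(n')/s=1$ in that decomposition; this gives $\ideal{(1-\zeta_d)}=\fp^{p^h}$, where $\fp=\ideal{(1-\zeta_{p^{a+h}})}$ is the unique prime of $\BZ_k$ above $p$ (of which $1-\zeta_{p^{a+h}}$ is a uniformizer). Its integral divisors are exactly the $\fp^n$ with $0\le n\le p^h$, all principal since $\fp$ is; so the $p^h+1$ systems in the statement are principal \zroot\ systems for $G$, and by Theorem~\ref{distinguishedGdeenRS} each genus contains exactly one of them.

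Finally I would establish the last assertion, which is the technical heart. By Proposition~\ref{prop:rootsGdeer} the genus with $\fa=\fp^n$ has root generators $\Pi_{\fp^n}$, whose Cartan matrix $C'$ is obtained from the matrix $C$ there by conjugation with $\diag(1,\dots,1,a)$ for $a=(1-\zeta_{p^{a+h}})^n$; write $a'$ for the algebraic integer with $aa'=1-\zeta_d$, so $\ideal{a'}=\fp^{p^h-n}$. Applying Remark~\ref{generalremarkgdeer} with $b_0=b_1=1$, I would compare $\fp$-adic valuations of $a$, $a'$ and $1-\zeta_{de}$ in the dependency relations among the last three rows $l_1,l_2,l_3$ and columns $c_1,c_2,c_3$ in order to identify, case by case, a pair $(i,j)$ for which $P_{ij}$ holds: when $n'>1$ the element $1-\zeta_{de}$ is a unit and $l_3,c_3$ are integral combinations of the other rows resp.\ columns, so $P_{33}$ holds with connection index $\ideal{(1-\zeta_{de})(1-\zeta_{de}\inv)}=\ideal{}$; when $n'=1$ (so $h\ge 1$, as $e=p^h>1$, and $\ideal{(1-\zeta_{de})}=\ideal{(1-\zeta_{de}\inv)}=\fp$) one obtains $P_{13}$ for $n=0$, $P_{33}$ for $1\le n\le p^h-1$, and $P_{31}$ for $n=p^h$, with connection index read off the displayed cofactor matrix as $\fp$, $\fp^2$ and $\fp$. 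By Remark~\ref{generalremarkgdeer} this gives in every case a root lattice basis, a coroot lattice basis, and connection index among $\ideal{}$, $\fp$, $\fp^2$.

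The main obstacle will be precisely this valuation bookkeeping: tracking, as $n$ and $n'$ vary, which of $l_1,l_2,l_3$ (resp.\ $c_1,c_2,c_3$) is an integral combination of the other two — equivalently, comparing $v_\fp(a)$, $v_\fp(a')$ and $v_\fp(1-\zeta_{de})$ — and then matching a compatible $(i,j)$ to obtain the correct cofactor. Everything else is a direct appeal to Theorem~\ref{distinguishedGdeenRS}, Proposition~\ref{prop:rootsGdeer} and Remark~\ref{generalremarkgdeer}.
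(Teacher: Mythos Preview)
Your proposal is correct and follows essentially the same approach as the paper: use Theorem~\ref{distinguishedGdeenRS} to reduce to divisors $\fa$ of $\ideal{(1-\zeta_d)}$, invoke $\delta=1$ to get $\ideal{(1-\zeta_d)}=\fp^{p^h}$ with $\fp$ prime, and then apply Remark~\ref{generalremarkgdeer} case by case ($P_{33}$ when $n'>1$; $P_{13}$, $P_{33}$, $P_{31}$ when $n'=1$ according to $n=0$, $0<n<p^h$, $n=p^h$). Your argument that $\fb_0=\fb_1=\ideal{}$ is more explicit than the paper's, which simply relies on the discussion preceding Proposition~\ref{G(2pq,2p,2) RS} having already disposed of the non-trivial $\fb_i$ cases.
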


\begin{proof}
        Since $\delta=1$, the ideal $\fp$ is prime and
	$\ideal{(1-\zeta_d)}=\ideal{(1-\zeta_{p^a})} = \fp^{p^h}$, thus
        the divisors $\fa$ of $\ideal{(1-\zeta_d)}$ are the principal ideals
        $\fp^n$ as described in the proposition.

	The linear dependence relations noted in Remark~\ref{generalremarkgdeer}
	are still valid, that is if $n'\ne 1$ then $P_{33}$ holds and the
        connection index is $\ideal{}$. If $n'=1$ then
        $\ideal{(1-\zeta_{de})}=\fp$ and if
        $\fa=\ideal{}$ then $P_{13}$ holds and the
        connection index is $\fp$, 
	if $a=\fp^{p^h}$ then $P_{31}$ holds and the connection index is $\fp$, 
	and if $a=\fp^n$ with $0<n<p^h$ then $P_{33}$ holds and the connection
        index is $\fp^2$.
\end{proof}

        When $\delta>1$ we can say less. There are
        $(p^h+1)^\delta$ distinct ideals $\fp_i$ dividing $\ideal{(1-\zeta_d)}$,
	thus $(p^h+1)^\delta$ genera of distinguished \zroot\ systems.

	However, note that by Example~\ref{example nonprincipal}, for $G(39,3,3)$ 
        for example the $\fp_i$ may be non-principal, which gives rise to 
        genera containing no principal root system, which illustrates the 
        failure of Theorem~\ref{allprincipal}
	for non well-generated reflection groups.

	There are still principal root systems for every group $G(de,e,r)$,
	such as:
	$$\fR_1^1(d,r) \cup \fR_2(de,r),$$
	which is distinguished, and:
	$$\fR_1(d,r) \cup \fR_2(de,r),$$
	which is complete and reduced.
 	In particular, for all $d,e>1$ we can fulfill 
        the promise of Remark \ref{rem:connectionbadgenerated}:
	
\begin{proposition}\label{2connectionindices}
	The principal  distinguished \zroot\ system 
	$$\fR_1^1(d,r) \cup \fR_2(de,r)$$
        for $G(de,e,r)$ $(d>1,\,e>1)$ has a root lattice basis and a coroot lattice basis.
	Its  connection  index is $\ideal{(1-\zeta_{ed}\inv)}$.
\end{proposition}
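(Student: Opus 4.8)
The plan is to apply the general analysis of not-well-generated imprimitive groups from Remarks~\ref{rem:connectionbadgenerated} and~\ref{generalremarkgdeer} to the specific root system $\fR:=\fR_1^1(d,r)\cup\fR_2(de,r)$. By Theorem~\ref{distinguishedGdeenRS} (the case $\fa=\fb_0=\fb_1=\BZ_k$) this is indeed a distinguished principal \zroot\ system for $G=G(de,e,r)$, and by Proposition~\ref{prop:rootsGdeer}(1) the set $\Pi=\{\frr_{12}^{(0)},\dots,\frr_{(r-1),r}^{(0)},\frr_{(r-1),r}^{(1)},\frr_r^{1}\}$ is a set of root generators, of cardinality $r+1=\dim V+1$. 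Its $(r+1)\times(r+1)$ Cartan matrix is the matrix $C$ displayed in Proposition~\ref{prop:rootsGdeer}(2), i.e.\ the specialization $b_0=b_1=a=1$ of the matrix $C'$ of Remark~\ref{generalremarkgdeer}.

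The key step is to verify that property $P_{13}$ of Remark~\ref{generalremarkgdeer} holds for $C'=C$. For the last three rows the displayed dependency specializes to $l_1=l_2-(1-\zeta_{de})\,l_3$; since $1-\zeta_{de}\in\BZ_k$, the row $l_1$ is an integral combination of $l_2$ and $l_3$. For the last three columns, $a=1$ forces $a'=1-\zeta_d$, so the displayed dependency becomes $c_1-c_2+\dfrac{1-\zeta_{de}\inv}{1-\zeta_d}\,c_3=0$. Using $\zeta_d=\zeta_{de}^{\,e}$ and the factorization $1-\zeta_{de}^{\,e}=(1-\zeta_{de})\sum_{j=0}^{e-1}\zeta_{de}^{\,j}$, together with $1-\zeta_{de}\inv=-\zeta_{de}\inv(1-\zeta_{de})$, one gets $\dfrac{1-\zeta_d}{1-\zeta_{de}\inv}=-\zeta_{de}\sum_{j=0}^{e-1}\zeta_{de}^{\,j}\in\BZ_k$, hence $c_3=\Bigl(-\zeta_{de}\sum_{j=0}^{e-1}\zeta_{de}^{\,j}\Bigr)(c_2-c_1)$ is an integral combination of $c_1$ and $c_2$. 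Thus $P_{13}$ holds.

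It then follows directly from Remark~\ref{generalremarkgdeer} (and the discussion of Remark~\ref{rem:connectionbadgenerated}) that $\fR$ has a root lattice basis and a coroot lattice basis, and that its connection index equals $\det C'_{13}$, the minor of $C$ obtained by deleting the row $l_1$ and the column $c_3$. This minor is the top-right entry of the bottom-right $3\times3$ block of the cofactor matrix exhibited in Remark~\ref{generalremarkgdeer}, which for $b_0=b_1=a=1$ (so $a'=1-\zeta_d$) equals $1-\zeta_{de}\inv$. Since $\zeta_{de}=\zeta_{ed}$, the connection index of $\fR$ is the ideal $\ideal{(1-\zeta_{ed}\inv)}$, as claimed.

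The only genuinely nonroutine point is the integrality of the coefficient $\dfrac{1-\zeta_d}{1-\zeta_{de}\inv}$ appearing in the column dependency, which is exactly what upgrades a mere $k$-linear dependency to the integral property $P_{13}$; everything else is bookkeeping with the already-established Cartan matrix $C$ and the general lemmas cited above.
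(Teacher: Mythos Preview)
Your proof is correct and follows exactly the same approach as the paper: the paper's proof simply says ``See proof of Proposition~\ref{d composite}'', and that proof in turn asserts that property $P_{13}$ holds for the Cartan matrix $C$ of $\Pi$ and reads off the connection index $\ideal{(1-\zeta_{de}\inv)}$ from the cofactor matrix of Remark~\ref{generalremarkgdeer}. You have merely spelled out the verification of $P_{13}$---in particular the integrality of $(1-\zeta_d)/(1-\zeta_{de}\inv)$ needed for the column dependency---which the paper leaves implicit.
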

\begin{proof} See proof of Proposition \ref{d composite}.
\end{proof}

\section{\red{Reflection groups and root systems over $\BR$}}

\subsection{Preliminary: positive Hermitian forms}\habel{positiveforms}

        Let $\operatorname{Herm}(V;k)$ denote the $k$-vector space of Hermitian
        forms on $V$. 
         Say that $\varphi: V \ra W$ is \emph{Hermitian} if for any
         $g\in\GL(V)$ we have $\varphi\circ g=g^\vee\circ\varphi$ (equivalently
         $\varphi$ can be represented by an Hermitian matrix with respect to a
         basis $(e_i)_{i \in E}$ of $V$ and 
        its dual basis $(f_i)_{i \in E}$ of $W$).
        Let $\operatorname{Herm}(V,W)$ denote the $k$-vector space of Hermitian maps from 
        $V$ to $W$.
        Since the Hermitian pairing $V\times W \ra k$ is non-degenerate, the linear map
        $$
        \left\{
        \begin{aligned}
                &\operatorname{Herm}(V,W) \ra \operatorname{Herm}(V;k) \,, \\
                &\vp \mapsto \left(\, (v_1,v_2) \mapsto (v_1\mid v_2)_\vp := \scal{v_1}{\vp(v_2)} \, \right)
        \end{aligned}
        \right.
        $$
        is an isomorphism. Moreover, the Hermitian form $(\cdot\mid\cdot)_\vp$ is:
\begin{itemize}
        \item
                non-degenerate if and only if $\vp$ is an isomorphism,
        \item 
                positive (resp. positive definite) if and only if for all $v \in V -\{ 0\}$,
                $\scal{v}{\vp(v)} \geq 0$ (resp. $\scal{v}{\vp(v)} > 0$), in which case we 
                say that $\vp$ is \emph{positive} (resp. \emph{positive definite}).
\end{itemize}
        
        Let $G$ be a finite group of $\GL(V)$ (which may be viewed as a finite subgroup
        of the subgroup of $\GL(V)\times\GL(W)$ which preserves the pairing).
        Then there exists a positive definite Hermitian $kG$-isomorphism $\vp : V \iso W$:
        given any basis ($e_1,\dots,e_r$) of $V$ and its dual basis ($f_1,\dots,f_r$)
        of $W$ (that is, $\scal{e_i}{f_j} = \delta_{i,j}$), the isomorphism $\vp : e_i \mapsto f_i$
        is positive definite, and its average $\frac{1}{|G|}\sum_{g \in G}
        g^\vee\vp g\inv$ is both $G$-stable and positive definite.

\begin{remark}
        If $V$ is an absolutely irreducible $kG$-module, the vector space of $kG$-morphisms
        $V\ra W$ is one dimensional, hence the space of $G$-invariant Hermitian forms on $V$
        is also one dimensional.
        
        In particular then, the trivial positive definite quadratic form on $V$ need not be 
        invariant under $G$:
        consider the case where $k= { \BQ(\sqrt{5})}$  and $G$ is the dihedral group of order 10.
        Then there is a $G$-invariant symmetric bilinear form on $V = k^2$ whose discriminant
        is $(5-\sqrt{5})/8$ -- since it is the determinant (up to a square in $k^\times$) of the matrix:
        $$\begin{pmatrix} 1 & \cos(2\pi/5) \\ \cos(2\pi/5) & 1\end{pmatrix}.$$
\end{remark}

        The proof of the following lemma is immediate.

\begin{lemma}\label{identifying}
        Assume that $s$ is a reflection in $G$, whose reflecting line, reflecting hyperplane
        dual reflecting line, dual reflecting hyperplane are respectively $L$, $H$, $M$, $K$
        (see Definition~\ref{reflectingspaces}).
        Let $\vp : V \iso W$ be any positive definite Hermitian $G$-stable isomorphism. Then
\begin{enumerate}
        \item
                $H$ is the orthogonal of $L$ for the Hermitian form $(\cdot\mid\cdot)_\vp$ on $V$,
        \item
                $M$ is the orthogonal of $H$ for the Hermitian pairing $\scal{\cdot}{\cdot}$,
        \item
                $\vp(L) = M$ and $\vp(H) = K$, and
        \item
                for $x \in L$ and $v\in V$,
                $$
                        s(v) = v - \dfrac{(v\mid x)_\vp}{(x\mid
                        x)_\vp}(1-\zeta) x
                        \,.
                $$                      
\end{enumerate}
\end{lemma}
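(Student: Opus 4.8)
The plan is to reduce all four assertions to two facts already at hand: by the discussion preceding the lemma, the form $(v_1\mid v_2)_\vp := \scal{v_1}{\vp(v_2)}$ is a positive definite Hermitian form on $V$ which is moreover \emph{$G$-invariant} (invariance because $\vp$ is $G$-stable and $\scal{\pd}{\pd}$ is $G$-invariant); and the reflection formulae of Proposition~\ref{computereflection}. Write $L=L_s=\im(s-1)$, $H=H_s=\ker(s-1)$, $\zeta=\zeta_s=\det s$. Note first that assertion (2) is nothing but the \emph{definition} of the dual reflecting line $M_s$ (Definition~\ref{reflectingspaces}), so there is nothing to prove there; it is restated only to make the statement self-contained.

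For (1), I would argue as follows. Since $s$ has finite order and $\operatorname{char}k=0$, $s$ is diagonalizable, with $H$ its $1$-eigenspace and $L$ the one remaining eigenline, on which $s$ acts by the scalar $\zeta=\det s$; thus $V=H\oplus L$ as a decomposition into eigenspaces. As $s\in G$ preserves $(\cdot\mid\cdot)_\vp$, for $h\in H$ and $x\in L$ one gets $(h\mid x)_\vp=(sh\mid sx)_\vp=\zeta^*(h\mid x)_\vp$, and since $\zeta$ is a root of unity $\ne 1$ we have $\zeta^*=\zeta\inv\ne 1$, forcing $(h\mid x)_\vp=0$. Hence $H$ lies in the orthogonal of $L$ for $(\cdot\mid\cdot)_\vp$; that orthogonal is a hyperplane because the form is positive definite, hence non-degenerate, so it \emph{equals} $H$.

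For (3), the key observation is that $\vp$ intertwines $s$ with $s^\vee$ (that is, $\vp\circ s=s^\vee\circ\vp$), again because $\vp$ is $G$-stable; applying $\vp$ to $\im(s-1)$ and to $\ker(s-1)$ then gives $\vp(L)=\im(s^\vee-1)$ and $\vp(H)\subseteq\ker(s^\vee-1)$. Reading Proposition~\ref{computereflection} on the $W$-side (with $s^\vee$ attached to the triple $(M,L,\zeta)$) identifies $\im(s^\vee-1)$ with $M$ and $\ker(s^\vee-1)$ with $\{w\in W\mid\scal{x}{w}=0\}$, which is the orthogonal of $L$ in $W$, i.e. $K=K_s$; a dimension count (from bijectivity of $\vp$) upgrades the inclusion $\vp(H)\subseteq K$ to an equality. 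Finally (4) is just Proposition~\ref{computereflection} specialized to the choice $y:=\vp(x)$ for a nonzero $x\in L$: then $y$ is a nonzero element of $M=\vp(L)$, $\scal{v}{y}=(v\mid x)_\vp$, and $\scal{x}{y}=(x\mid x)_\vp>0$ (positive definiteness), so substitution yields the claimed formula. The only thing that needs care — which is why the authors call the proof immediate — is the bookkeeping of conventions: semilinearity of $\scal{\pd}{\pd}$ in the $W$-variable, and the translation of $s^\vee$'s reflecting spaces via Proposition~\ref{computereflection}; no genuine obstacle arises.
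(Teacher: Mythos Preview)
Your argument is correct and is exactly the kind of verification the authors have in mind: the paper does not spell out a proof at all, stating only that ``the proof of the following lemma is immediate.'' Your use of $G$-invariance of $(\cdot\mid\cdot)_\vp$ for (1), the intertwining $\vp\circ s=s^\vee\circ\vp$ for (3), and the specialization $y=\vp(x)$ in Proposition~\ref{computereflection} for (4) are precisely the immediate steps being alluded to.
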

\bigskip

\subsection{Families of simple reflections for real reflection groups}\habel{realfields}
\smallskip

        In this subsection, we assume that $k$ is a subfield of the field $\BR$ of real numbers.        
        We denote by $\BR^+$ (resp. $\BR^-$) the set of nonnegative
        (resp. nonpositive) real numbers, 
        and we set 
        \index{kplus@$k^+$}\index{kminus@$k^-$}
        $k^+ := \BR^+ \cap k$ and $k^- := \BR^- \cap k$.
        Thus $k = k^+ \cup k^-$ and $k^+ \cap k^- = \{0\}$.

        Let $V$ be a finite dimensional $k$-vector space, 
        and let $G$ be a finite subgroup of $GL(V)$.
        Let $S$ be the set of reflections of $G$.
        Notice that the determinant of a reflection is always $-1$, and so the
        reflections $s\in S$ are in bijection with pairs $(L_s,M_s)$ where $L_s$
        is the reflecting line of $s$ (a line in $V$) while $M_s$ is the dual reflecting
        line of $s$ (a line in $W$).
        
        We fix a positive definite Hermitian  $kG$-isomorphism $\vp : V \iso W$ throughout.
\smallskip

\begin{definition}\label{preorder}
        We call {\em admissible preorder} on $V$ and $W$ a preorder
        obtained as follows: \index{admissible preorder}

        We choose a nonzero element $v_0 \in V$ which belongs to no
        reflecting hyperplanes $H_s\subset V$ for $s\in S$, and such that 
        $\varphi(v_0)$ belongs to no reflecting hyperplane $K_s\subset W$.

	Such a choice induces a preorder on $W$ and on $V$ by:
        
\begin{itemize}
        \item   
                defining $w \in W$ to be \emph{positive} (which we denote
                $w >0$) if $\scal {v_0}w >0$  
                and to be \emph{negative} if $-w$ is positive, and 
        \item
                for $v\in V$ defining $v>0$, if $(v\mid v_0)_\vp >0$, 
\end{itemize}
\end{definition}

		 The relation ``$v_1>v_2$ whenever $v_1-v_2>0$" is preserved under 
		 vector addition and positive scalar multiplication, and so 
		  makes $V$ (respectively $W$) a preordered vector space.
       
                For $s\in S$ we set \index{Lplus@$L^+_s$}\index{Lminus@$L^-_s$} 
                $L_s^+ := \{v \in L_s \mid v >0\} \sqcup \{0\}$  and $L_s^- :=
                -L_s^+$; we have $L_s=L^+_s\sqcup \{0\}\sqcup L^-_s$ .
                We define $M^+_s$ and $M^-_s$ similarly.

        Thus an admissible preorder determines a family of \emph{positive half-lines}
        \index{Family of positive half-lines}
        $(L_s^+)_{s\in S}$
        in $V$ and a family of \emph{positive half-lines}\index{Positive half-lines}
        $(M_s^+)_{s\in S}$ in $W$.

\begin{definition}\label{defsimpleroot}
        Given a positive definite Hermitian $kG$-isomorphism 
        $
                        \vp : V \iso W
                \,,
        $
        and an admissible preorder on $V$ and $W$, 
        a family $\Si$ of reflections of $G$ satisfying:
        \begin{enumerate}[label=(\Alph*)]
        \item
                $V = \bigoplus_{\si \in \Si} L_\si$ and $W = \bigoplus_{\si \in \Si} M_\si$, and
\smallskip

        \item
                for all $s \in S$,
                $L_s^+ \subset \sum_{\si \in \Si} L_\si^+$ and
                $M_s^+ \subset \sum_{\si \in \Si} M_\si^+$,
\end{enumerate}
         is called a
        \emph{family of simple reflections\/} for $G$.
        \index{Family of simple reflections}
        \index{simple reflections}
\end{definition}

\begin{proposition}\label{existencesimpleroots}
        Let $G$ be a finite subgroup of $\GL(V)$, let $S$ be the set of reflections of $G$,
        let $\vp : V \iso W$ be a positive definite Hermitian $kG$-isomorphism, 
        and suppose given an admissible  preorder on $V$ and $W$.
        Then there exists a unique family of simple reflections for $G$.
\end{proposition}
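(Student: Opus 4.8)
The plan is to mimic the classical construction of a simple system of roots for a Coxeter group, adapted to the present setting of a pair $(V,W)$ with a $G$-stable positive definite Hermitian isomorphism $\vp$. First I would record the structure coming from the admissible preorder: the chosen vector $v_0$ splits $S$ into ``positive'' and ``negative'' half-lines on both sides, and by Lemma~\ref{identifying} the reflecting line $L_s$, the reflecting hyperplane $H_s$, and the dual data $M_s$, $K_s$ are all determined by one another via $\vp$. The key point is that $\vp$ intertwines the $V$-side and $W$-side preorders, so it suffices to work out the combinatorics on the $V$-side and then transport everything to $W$ by applying $\vp$ (using $\vp(L_s)=M_s$, $\vp(H_s)=K_s$).

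\emph{Existence.} I would define $\Si$ to be the set of reflections $s\in S$ whose positive half-line $L_s^+$ is \emph{indecomposable}, meaning $v\in L_s^+$ cannot be written $v = v'+v''$ with $v',v''$ nonzero positive elements lying in $\sum_{t\neq s}L_t^+$ (equivalently, in the positive cone generated by the other positive half-lines); alternatively, and more robustly, one proceeds by a descent argument on the preorder. Concretely: for each $s\in S$, if $L_s^+$ is not in the cone generated by the other positive half-lines we keep it; otherwise we discard it, and one shows by finiteness (the group and hence $S$ is finite) that the retained set $\Si$ still spans the positive cone, i.e. satisfies axiom~(B) on the $V$-side, hence on the $W$-side after applying $\vp$. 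For axiom~(A), the spanning is clear from (B) since $V = \sum_{s\in S}L_s$ (as $(V,G)$ is essential, or else one restricts to $V_S$); the delicate part is \emph{directness} of the sum $V = \bigoplus_{\si\in\Si}L_\si$. Here I would invoke positivity of $\vp$: if $\sum_{\si}\lambda_\si v_\si = 0$ with $v_\si\in L_\si^+$, separate the terms with $\lambda_\si>0$ from those with $\lambda_\si<0$, getting an element of the positive cone equal to one in the negative cone, which forces (by nondegeneracy of $(\cdot\mid\cdot)_\vp$ and the fact that $v_0$ has strictly positive pairing with every positive half-line) all coefficients to vanish. This is the standard argument that a minimal positive spanning family in a preordered space cut out by a strictly separating functional is automatically linearly independent, and it gives $|\Si| = \dim V$.

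\emph{Uniqueness.} Suppose $\Si$ and $\Si'$ are two families of simple reflections for the same preorder. From axiom~(B) applied to $\Si'$, every $L_\si^+$ (for $\si\in\Si$) lies in $\sum_{\si'\in\Si'}L_{\si'}^+$; writing $v_\si = \sum \lambda_{\si'}v_{\si'}$ with $\lambda_{\si'}\geq 0$, one checks using the indecomposability forced by axiom~(A) for $\Si$ (directness of the sum) that exactly one coefficient is nonzero, so $L_\si = L_{\si'}$ for some $\si'\in\Si'$; since a reflection in $S$ is determined by its reflecting line (the determinant being $-1$), this gives $\si=\si'$. Thus $\Si\subseteq\Si'$, and by symmetry $\Si=\Si'$.

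\textbf{The hard part} will be pinning down the precise combinatorial argument that the ``minimal'' retained family $\Si$ is simultaneously a direct-sum decomposition \emph{and} still dominates all positive half-lines --- i.e. that one does not over-prune. The positivity of $\vp$ and the strict separation by $v_0$ are exactly what make this work, just as in Bourbaki's treatment of root systems, but one must be careful that the $V$-side and $W$-side conditions in axioms~(A),(B) are genuinely equivalent under $\vp$; Lemma~\ref{identifying}(3) guarantees this. A secondary subtlety is that $S$ is only a set of reflections, not a priori closed under any relation, so the reduction of the spanning statement to the cone generated by the $L_s^+$ must be handled directly rather than by appeal to root-system axioms. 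I expect the rest to be routine linear algebra over $k\subset\BR$.
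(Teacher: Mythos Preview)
Your overall strategy---take a minimal subset $\Si\subseteq S$ satisfying condition~(B), then verify~(A)---is exactly what the paper does, and your uniqueness argument is essentially the paper's double-decomposition trick. The serious gap is in your proof of directness of $\bigoplus_{\si\in\Si}L_\si$. You write that separating the terms with $\lambda_\si>0$ from those with $\lambda_\si<0$ gives ``an element of the positive cone equal to one in the negative cone'', and that strict positivity of the pairing with $v_0$ forces the coefficients to vanish. This does not work: setting $u=\sum_{\lambda_\si>0}\lambda_\si v_\si=\sum_{\lambda_\si<0}(-\lambda_\si)v_\si$, both expressions are \emph{positive} combinations of positive vectors, so $(v_0\mid u)_\vp>0$ is perfectly consistent with both, and no contradiction arises. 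Your appeal to a ``standard argument that a minimal positive spanning family\dots is automatically linearly independent'' is not a valid general principle: in $\BR^3$ with $v_0=(0,0,1)$, the four half-lines through $(\pm1,0,1)$ and $(0,\pm1,1)$ form a minimal positive generating set for their cone but are linearly dependent.

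What makes the argument go through for reflection groups is the \emph{obtuse-angle property}: for a minimal $\Si$ satisfying~(B) and distinct $\si_1,\si_2\in\Si$, one has $(L_{\si_1}^+\mid L_{\si_2}^+)_\vp\subset k^-$. This is the content of Lemma~\ref{scalnegative}, and it is not automatic---its proof uses minimality of $\Si$ together with the explicit formula $\si_1(v_2)=v_2-2\frac{(v_2\mid v_1)_\vp}{(v_1\mid v_1)_\vp}v_1$ to show that a strictly acute angle would let you drop one of $\si_1,\si_2$. Once you have this, directness follows cleanly: writing $u$ as above with the index sets $\Si_1,\Si_2$ disjoint, you get $(u\mid u)_\vp=\sum_{\si_1\in\Si_1,\,\si_2\in\Si_2}a_{\si_1}b_{\si_2}(v_{\si_1}\mid v_{\si_2})_\vp\le 0$, whence $u=0$ by positive definiteness. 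This obtuse-angle lemma is the one genuinely new ingredient you are missing; the rest of your outline is sound.
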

        Thanks to Proposition \ref{existencesimpleroots} it makes sense to say
        that a set of reflections is a ``set of simple reflections'' if it is
        the set of simple reflections determined by an admissible preorder.

        Notice that all assertions concerning $W$ are analogous to those concerning
        $V$, so from now on we only state (and prove) assertions concerning $V$.

        We remark that there are subsets of $S$ which satisfy the criterion (2) 
        of the definition of a family of simple reflections -- as indeed, $S$ itself has that property. 
        It turns out that a minimal such subset is precisely a family  of simple reflections.
        
\begin{lemma}\label{scalnegative}
        Suppose that $\Si \subseteq S$ is minimal subject to satisfying:
        \begin{enumerate}[label=(\Alph*)]
        \setcounter{enumi}{1}
        \item 
                for all $s \in S$,
                $L_s^+ \subset \sum_{\si \in \Si} L_\si^+$. 
        \end{enumerate} 
        Then 
        \begin{enumerate}
        \item For all distinct reflections $\si_1, \si_2\in \Sigma$ we
        have $(L^+_{\si_1}\mid L^+_{\si_2})_\vp \subset k^-$.
        \item
        For distinct reflections $\sigma_1,\sigma_2\in \Sigma$, and 
        $v_2 \in L_{\si_2}^+$, then 
        $\sigma_1(v_2)=v_2+v_1$ for some $v_1 \in L_{\si_1}^+$  
        --- that is,  $\sigma_1(L_{\si_2}^+) =L_{\si_1 \si_2 \si_1\inv}^+$.
        \end{enumerate}
 \end{lemma}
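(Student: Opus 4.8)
The plan is to prove (1) by contradiction, exploiting the minimality of $\Si$ through a substitution trick, and then to read off (2) from (1) together with the reflection formula of Lemma~\ref{identifying}(4). Throughout I would fix, for each $\si\in\Si$, a nonzero vector $e_\si\in L_\si^+$, so that $(e_\si\mid v_0)_\vp>0$; condition (B) then says exactly that for each $s\in S$ some spanning vector of $L_s^+$ equals $\sum_{\si\in\Si}q_\si e_\si$ with all $q_\si\in k^+$. Hence the cone $C:=\{\,\sum_{\si\in\Si}t_\si e_\si\mid t_\si\in k^+\,\}$ contains every $L_s^+$, and pairing against $v_0$ shows every nonzero element of $C$ pairs strictly positively with $v_0$ (here $(\cdot\mid\cdot)_\vp$ is symmetric since $k\subseteq\BR$). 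The first thing to record is the \emph{reduction step}: if for some $\tau\in\Si$ one has $e_\tau\in\{\,\sum_{\si\neq\tau}t_\si e_\si\mid t_\si\in k^+\,\}$, then substituting this expression into the expansions above (nonnegative coefficients are preserved) shows that $\Si\setminus\{\tau\}$ still satisfies (B), contradicting the minimality of $\Si$.

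For (1): the number $(v_1\mid v_2)_\vp$ has constant sign as $v_1\in L_{\si_1}^+$ and $v_2\in L_{\si_2}^+$ vary, so it is enough to exclude $(e_{\si_1}\mid e_{\si_2})_\vp>0$ for distinct $\si_1,\si_2\in\Si$. Assume this. Lemma~\ref{identifying}(4) with $x=e_{\si_1}$, recalling $1-\zeta_{\si_1}=2$, gives $\si_1(e_{\si_2})=e_{\si_2}-c\,e_{\si_1}$ with $c=2(e_{\si_1}\mid e_{\si_2})_\vp/(e_{\si_1}\mid e_{\si_1})_\vp>0$; this vector is nonzero (else $L_{\si_1}=L_{\si_2}$, forcing $\si_1=\si_2$ as both are reflections of determinant $-1$) and it spans $L_{s'}$ for $s':=\si_1\si_2\si_1\inv\in S$. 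By (B) the positive half-line of $L_{s'}$ lies in $C$, and it is generated by one of $e_{\si_2}-c\,e_{\si_1}$, $c\,e_{\si_1}-e_{\si_2}$. Expanding whichever one lies in $C$ as $\sum_\si\mu_\si e_\si$ with $\mu_\si\in k^+$, isolating $e_{\si_2}$ (first case) or $e_{\si_1}$ (second case) on one side, and pairing with $v_0$ — using $c>0$ and $(e_\si\mid v_0)_\vp>0$ so that the other side is strictly positive — forces the surviving coefficient of $e_{\si_2}$ (resp.\ $e_{\si_1}$) to be strictly positive, whence $e_{\si_2}\in\{\sum_{\si\neq\si_2}t_\si e_\si\mid t_\si\in k^+\}$ (resp.\ $e_{\si_1}\in\{\sum_{\si\neq\si_1}t_\si e_\si\mid t_\si\in k^+\}$). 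In either case the reduction step yields the contradiction, so $(e_{\si_1}\mid e_{\si_2})_\vp\le 0$, i.e.\ $(L_{\si_1}^+\mid L_{\si_2}^+)_\vp\subset k^-$.

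For (2): given distinct $\si_1,\si_2\in\Si$ and $v_2\in L_{\si_2}^+$, Lemma~\ref{identifying}(4) with $x=e_{\si_1}$ gives $\si_1(v_2)=v_2-c'e_{\si_1}$ with $c'=2(v_2\mid e_{\si_1})_\vp/(e_{\si_1}\mid e_{\si_1})_\vp$; part (1) gives $(v_2\mid e_{\si_1})_\vp\le 0$ and $(e_{\si_1}\mid e_{\si_1})_\vp>0$, so $c'\le 0$ and $\si_1(v_2)=v_2+v_1$ with $v_1:=(-c')e_{\si_1}\in L_{\si_1}^+$. For $v_2\neq 0$ one then has $(\si_1(v_2)\mid v_0)_\vp=(v_2\mid v_0)_\vp+(v_1\mid v_0)_\vp>0$, so $\si_1(v_2)\in L_{s'}^+$ where $s'=\si_1\si_2\si_1\inv$ and $\si_1(L_{\si_2})=L_{s'}$. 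Thus $\si_1$ restricts to a linear bijection $L_{\si_2}\to L_{s'}$ carrying $L_{\si_2}^+$ into $L_{s'}^+$ and hence $L_{\si_2}^-$ into $L_{s'}^-$; since a half-line meets its opposite only in $0$, this forces $\si_1(L_{\si_2}^+)=L_{s'}^+$.

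The main obstacle is part (1): a priori one wants the linear independence of the $e_\si$ in order to compare coefficients, but that independence itself rests on (1); the circularity is broken by the reduction step, which extracts the contradiction with minimality with no independence hypothesis. Everything else — the sign bookkeeping when isolating $e_{\si_1}$ or $e_{\si_2}$, and the half-line manipulations in (2) — is routine.
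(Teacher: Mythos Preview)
Your argument is correct and follows essentially the same route as the paper's proof: both assume $(L_{\si_1}^+\mid L_{\si_2}^+)_\vp$ has a strictly positive value, apply the reflection formula to get $\si_1(e_{\si_2})=e_{\si_2}-c\,e_{\si_1}$ with $c>0$, split into the two cases according to the sign of this vector, and in each case rearrange to exhibit $e_{\si_2}$ (resp.\ $e_{\si_1}$) as a nonnegative combination of the remaining $e_\si$, contradicting minimality; part (2) is then immediate from (1) and the reflection formula. Your explicit ``reduction step'' and the use of pairing with $v_0$ to certify strict positivity are just a tidier packaging of what the paper does implicitly.
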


\begin{proof} 
        We prove first (1), arguing by contradiction:
        assume there are $v_1 \in L^+_{\si_1}$ and
        $v_2 \in L^+_{\si_2}$
        such that $(v_1\mid v_2)_\vp > 0$. Thus 
        $$
                \si_1(v_2) = v_2 - 2\dfrac{(v_2\mid v_1)_\vp}{(v_1\mid v_1)_\vp}v_1
                = v_2 -\la v_1 
                \in L_s
        $$
        where  $\la >0$ and $s = \si_1\si_2\si_1\inv$.
        Either $\sigma_1(v_2)$ is positive  or negative. 

        Suppose first that $\si_1(v_2)$ is positive. Then 
        $\si_1(v_2) = \sum_{\si\in\Si} v_\si$ where each $v_\si \in L_\si^+$. 
        There is some $\la_{\si_2} >0$ such that  $v_{\sigma_2}=\la_{\si_2} v_2$, 
        hence 
        $$
                \sum_{\si\in\Si-\{\si_2\}} v_\si + \la v_1 = (1-\la_{\si_2})v_2
                \,.
        $$
        The expression on the left indicates this is a (strictly) positive vector; whence
        $1-\la_{\si_2} >0$. So $v_2 \in \sum_{\si\in\Si-\{\si_2\}} L_\si^+$,
        a contradiction with the minimality of $\Si$.

        If $\si_1(v_2)$ is negative, then 
        $-\si_1(v_2) = \sum_{\si\in\Si} v_\si$ for $v_\si \in L_\si^+$,
        so 
        $$
                \sum_{\si\in\Si-\{\si_1\}} v_\si + v_2 = (\la-\la_{\si_1})v_1
                \,.
        $$
        where $v_{\sigma_1}=\la_{\si_1} v_1$.
        Since this is an expression for a positive vector,  $\la-\la_{\si_1}>0$, 
        so $v_1 \in \sum_{\si\in\Si-\{\si_1\}} L_\si^+$,
        again contradicting the minimality of $\Si$.

        (2) is an immediate corollary of (1).
\end{proof}

\begin{proof}[Proof of  Proposition \ref{existencesimpleroots}]
        As remarked earlier, there are subsets of $S$ which satisfy the criterion (B) for 
        a family of simple reflections -- including $S$ itself.
        
        Suppose that $\Sigma$ is a minimal subset of $S$ satisfying criterion (B).
        Take any partition $\Si = \Si_1 \sqcup \Si_2$ and a vector $v$ such that:
        $$
                v \in \left( \sum_{\si_1\in\Si_1} L_{\si_1}^+ \right) 
                \bigcap 
                \left( \sum_{\si_2\in\Si_2} L_{\si_2}^+ \right)
                \,.
        $$
         By Lemma \ref{scalnegative}(1), we get $(v\mid v)_\vp \leq 0$; and 
         since the form is definite positive, $v=0$. Thus $V = \bigoplus_{\si \in \Si} L_\si$ 
         (criterion (A)) holds.
        
        Finally, we prove that such a $\Si$ is unique.
        Again we argue by contradiction, by assuming the existence of $\Si' \neq \Si$
        which also satisfies criteria (A) and (B) of Definition~\ref{defsimpleroot}. 
        Choose $\si_0 \in \Si- \Si'$ and $v_{\si_0} \in L_{\si_0} ^+$. 
        There exists a family $(v_{\si'})_{\si'\in\Si'}$ with
        $v_{\si'} \in L_{\sigma'}^+$ such that
        $
                v_{\si_0}  = \sum_{\si'\in\Si'} v_{\si'}
                \,.
        $
        Now for each $\si'$ for which $v_{\si'} \neq 0$,
        there exists a family $(v_{\si,\si'})_{\si\in\Si}$ where
        $v_{\si,\si'} \in L_\si^+ $ such that
        $
                v_{\si'} = \sum_{\si\in\Si} v_{\si,\si'} 
                \,.
        $
        Since $\si' \neq \si_0$, there exists $\si\in\Si$, $\si \neq \si_0$, such that
        $v_{\si,\si'} \neq 0$. Such a vector $v_{\si,\si'}$ appears then in the
        decomposition of $v_{\si_0}$ onto $\bigoplus_{\si\in\Si} L_{\si}$,
        which is a contradiction.
\end{proof}
        
\begin{lemma}\label{positivegivespositive}
        Assume given an admissible preorder
        on $V$ and $W$, that $\sigma$ is a simple reflection 
        and that $s$ is any other reflection in $S$, distinct from $\si$. Then:
\begin{enumerate} 
        \item
                $\si(L_s^+) = L_{\si s\si\inv}^+$,
        \item
                $s(L_\si^+) = L_{s \si s\inv}^+$ if and only if $(L_s^+\mid L^+_\sigma) \le 0$, and
        \item 
                $L_\si^+$ is the only positive half-line made negative by $\sigma$. 
\end{enumerate}
\end{lemma}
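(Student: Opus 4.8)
The plan is to establish the three assertions in turn, using only the two defining conditions of a family of simple reflections --- condition (A), that $V=\bigoplus_{\si\in\Si}L_\si$, and condition (B), that $L_s^+\subseteq\sum_{\si\in\Si}L_\si^+$ for every $s\in S$ --- together with the reflection formula of Lemma~\ref{identifying}(4). The structural observation I would use repeatedly, in the spirit of Lemma~\ref{scalnegative}, is that since $\zeta_s=-1$ for every reflection over $k\subseteq\BR$, applying a reflection $t$ to a vector changes it only by a scalar multiple of a generator of $L_t$; hence, once a vector is expanded in the direct sum $\bigoplus_{\si'\in\Si}L_{\si'}$, applying the simple reflection $\si$ leaves every component outside the slot $L_\si$ unchanged. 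Together with the elementary fact that a vector all of whose $\bigoplus_{\si'}L_{\si'}$-components lie in the corresponding $L_{\si'}^+$ is itself $\ge 0$, and is $0$ only when all components vanish (by uniqueness of the decomposition), this drives every step. I would also record at the outset that, over a real field, a reflection is determined by its reflecting line --- its determinant is $-1$ and by Lemma~\ref{identifying}(1) its reflecting hyperplane is the $\vp$-orthogonal of that line --- so $s\neq\si$ forces $L_s\neq L_\si$.

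For (1), I would fix a nonzero $\beta\in L_s^+$ and expand $\beta=\sum_{\si'\in\Si}v_{\si'}$ with $v_{\si'}\in L_{\si'}^+$, using (B). Since $\beta\in L_s$ and $L_s\neq L_\si$, we have $\beta\notin L_\si$, so some component $v_{\si_1}$ with $\si_1\neq\si$ is nonzero, hence strictly positive. Applying $\si$ does not touch that component, so $\si(\beta)$ still has the strictly positive $L_{\si_1}$-component $v_{\si_1}$; on the other hand $\si(\beta)$ lies on the reflecting line $L_{\si s\si\inv}$, hence is strictly positive or strictly negative, and if it were negative then expanding $-\si(\beta)\in L_{\si s\si\inv}^+$ by (B) and comparing $L_{\si_1}$-components by uniqueness in the direct sum would give $v_{\si_1}\le 0$, a contradiction. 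Hence $\si(\beta)>0$, and $\si$ being linear, $\si(L_s^+)=L_{\si s\si\inv}^+$. Assertion (3) then follows at once: $\si$ acts by $-1$ on $L_\si$, so $\si(L_\si^+)=L_\si^-$ while $\si\si\si\inv=\si$, whereas for every other reflection $s\neq\si$ part (1) shows $\si(L_s^+)=L_{\si s\si\inv}^+$ is positive; thus $L_\si^+$ is the unique positive half-line carried to a negative one by $\si$.

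For (2), I would pick generators $\alpha$ of $L_\si^+$ and $\beta$ of $L_s^+$ and write, by Lemma~\ref{identifying}(4), $s(\alpha)=\alpha-c\beta$ with $c=2(\alpha\mid\beta)_\vp/(\beta\mid\beta)_\vp$; since $(\beta\mid\beta)_\vp>0$, the sign of $c$ equals that of $(\alpha\mid\beta)_\vp$. If $(\alpha\mid\beta)_\vp\le 0$ then $-c\ge 0$, so $s(\alpha)=\alpha+(-c)\beta$ is the sum of the strictly positive vector $\alpha$ and the element $(-c)\beta\in L_s^+$, hence strictly positive, giving $s(L_\si^+)=L_{s\si s\inv}^+$. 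For the converse I would argue by contradiction: if $(\alpha\mid\beta)_\vp>0$ (so $c>0$) but nonetheless $s(\alpha)>0$, then from $\alpha=s(\alpha)+c\beta$, expanding $s(\alpha)\in L_{s\si s\inv}^+$ and $\beta\in L_s^+$ by (B), each $L_{\si'}$-component of $\alpha$ with $\si'\neq\si$ is a sum of two elements of $L_{\si'}^+$; since that sum is $0$ (as $\alpha\in L_\si$) and by uniqueness in the direct sum, both summands vanish, so $\beta$ has no component outside $L_\si$, i.e.\ $\beta\in L_\si$, contradicting $L_s\neq L_\si$. Hence $s(\alpha)<0$, i.e.\ $s(L_\si^+)=L_{s\si s\inv}^-\neq L_{s\si s\inv}^+$, which completes the equivalence (the hypothesis ``$(L_s^+\mid L_\si^+)\le 0$'' being equivalent, by positive scaling, to $(\alpha\mid\beta)_\vp\le 0$).

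I expect the main obstacle to be the converse half of (2): ruling out the possibility that $s$ sends $L_\si^+$ to the positive half-line when $\alpha$ and $\beta$ make an acute angle. This is the only point at which the argument genuinely needs condition (B) (equivalently, the minimality of $\Si$) rather than merely the reflection formula, and it rests on the careful direct-sum uniqueness argument sketched above. The remaining recurring care --- distinguishing ``$\ge 0$'' from ``$>0$'' and tracking half-lines under scalar multiplication --- is routine.
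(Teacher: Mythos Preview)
Your proof is correct and follows essentially the same route as the paper's. The only notable presentational difference is in part~(1): the paper invokes Lemma~\ref{scalnegative}(2) to track how $\si$ moves each simple component $v_{\si'}$, whereas you argue directly from the reflection formula that applying $\si$ alters only the $L_\si$-slot of the decomposition; both arguments reach the same contradiction if $\si(\beta)$ were negative. Your treatment of the converse in~(2), rewriting $\alpha=s(\alpha)+c\beta$ and forcing all off-$\si$ components of $\beta$ to vanish, is the contrapositive of the paper's observation that the $\si'$-component of $s(v_\si)=v_\si-cv_s$ is $-cv_{\si'}<0$; these are the same computation read in opposite directions. Part~(3) is deduced from~(1) in both.
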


\begin{proof}
        For any reflections $s_1$ and $s_2$, $s_1(L_{s_2}) = L_{s_1 s_2 s_1 \inv}$. 
        We have to show in each case that the positive half-line remains positive. 
        
        (1)
        Choose $v_s \in L_s^+$. 
        Take $\sigma_1 \neq s$; then $\sigma_1(v_s) \in L_{\si_1 s \si_1\inv}$. 
        We want to show that it is positive.
        So there exist $v_\sigma \in L_\si^+$ ($\si \in \Sigma$) for which
        $
                v_s = \sum_{\si\in\Si}  v_\si
                \,.
        $
        By Lemma~\ref{scalnegative}(2), 
        $$
                \sigma_1(v_s)=\sum_{\si\in\Si-\{\si_1\}} \left( v_\si + v_{\si,1}\right) + \sigma_1(v_{\si_1})
                \,,
        $$
        where, for all $\si$ simple reflections, $v_{\si,1} \in L_{\si_1}^+$.
        Thus 
        $$
                \sigma_1(v_s)=\sum_{\si\in\Si-\{\si_1\}} v_\si + u_1
                \,,
        $$
        where $u_1 \in L_{\si_1}$. Since $\sigma_1 \neq s$, there is a $\sigma \in \Sigma$ 
        for which $v_\sigma$ is non-zero, that is, strictly positive. 
        So $\sigma_1(v_s)$ is in  $L_{\si_1 s \si_1\inv}^+$, 
        the positive part of $L_{\si_1 s \si_1\inv}$. 
        
        (2) 
        For $v_\sigma \in L_\sigma^+$, we have 
        $s(v_\sigma)=v_\sigma - 2 \frac{( v_\si \mid v_s)}{(v_s\mid v_s)} v_s$ 
        where without loss of generality we may assume that $v_s \in L_s^+$.
        Since $\sigma \neq s$, there is some strictly positive $v_{\si'}$ in the simple positive half-line 
        decomposition of $v_s$, where $\si'\neq \si$. 
        Thus $s(v_\si) \in L_{s \si s\inv}^+$ if and only if $(L_s^+\mid L^+_\sigma) \le 0$.
        
        Item (3) follows from (1).
\end{proof}

\begin{lemma}\label{conjugatehalflines}
        Let $G$ be a finite subgroup of $GL(V)$. 
        Any two families of positive half-lines for $G$ are conjugate under $G$.
\end{lemma}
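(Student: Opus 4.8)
The plan is to argue by induction on the number of reflecting hyperplanes on which the two families disagree, feeding on the uniqueness of the family of simple reflections (Proposition~\ref{existencesimpleroots}) and on the behaviour of simple reflections on positive half-lines (Lemma~\ref{positivegivespositive}). Keep $\varphi$ and the set $S$ of reflections of $G$ fixed. For two families of positive half-lines $\mathcal{L}=(L_s^+)_{s\in S}$ and $\mathcal{L}'=(L_s'^+)_{s\in S}$ for $G$, set
$$
d(\mathcal{L},\mathcal{L}'):=\#\{\,s\in S\mid L_s^+\neq L_s'^+\,\},
$$
which is finite since $G$ is. Here $G$ acts on families of positive half-lines by $(g\cdot\mathcal{L})_s:=g\bigl(L_{g\inv s g}^+\bigr)$; from Definition~\ref{preorder} and the $G$-equivariance of $\varphi$ one checks that $g\cdot\mathcal{L}$ is again such a family (the one attached to $g(v_0)$, where $v_0$ defines $\mathcal{L}$), so the induction stays inside the right class, and ``conjugate under $G$'' just means lying in a single $G$-orbit. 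The goal is to reach $d=0$ after replacing $\mathcal{L}'$ by some $g\cdot\mathcal{L}'$.

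The key step is the following claim: \emph{if $\Sigma'$ is the family of simple reflections for $\mathcal{L}'$ and $L_\sigma^+=L_\sigma'^+$ for every $\sigma\in\Sigma'$, then $\mathcal{L}=\mathcal{L}'$.} To prove it, let $\Sigma$ be the family of simple reflections for $\mathcal{L}$; by criterion (A) of Definition~\ref{defsimpleroot}, choosing a generator $\alpha_\tau\in L_\tau^+$ for each $\tau\in\Sigma$ yields a basis of $V$, so the cone $\overline{C}:=\sum_{\tau\in\Sigma}L_\tau^+$ is simplicial, hence \emph{pointed}: $\overline{C}\cap(-\overline{C})=\{0\}$. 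Criterion (B) for $\mathcal{L}$ gives $L_s^+\subseteq\overline{C}$ for every $s\in S$. Fix $s\in S$ and a generator $\alpha$ of $L_s$ with $\alpha\in L_s'^+$. By criterion (B) for $\mathcal{L}'$ and the hypothesis,
$$
\alpha\in L_s'^+\subseteq\sum_{\sigma\in\Sigma'}L_\sigma'^+=\sum_{\sigma\in\Sigma'}L_\sigma^+\subseteq\overline{C}.
$$
Since $\overline{C}$ is pointed, $\overline{C}\cap L_s$ is contained in a single closed half-line of $L_s$; as it contains both $L_s^+$ and the half-line $k^+\alpha$, these coincide, so $\alpha\in L_s^+$. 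Hence $L_s^+$ and $L_s'^+$ are both the half-line of $L_s$ through $\alpha$, so they are equal; as $s$ was arbitrary, $\mathcal{L}=\mathcal{L}'$, proving the claim.

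Granting the claim, the induction runs as follows. If $d(\mathcal{L},\mathcal{L}')=0$, take $g=1$. If $d>0$, the claim produces $\sigma\in\Sigma'$ with $L_\sigma'^+=L_\sigma^-$, and we replace $\mathcal{L}'$ by $\sigma\cdot\mathcal{L}'$. For $t\neq\sigma$, Lemma~\ref{positivegivespositive}(1) applied to the preorder defining $\mathcal{L}'$ (legitimate because $\sigma t\sigma\neq\sigma$) gives $(\sigma\cdot\mathcal{L}')_t=\sigma\bigl(L_{\sigma t\sigma}'^+\bigr)=L_t'^+$; and $(\sigma\cdot\mathcal{L}')_\sigma=\sigma\bigl(L_\sigma'^+\bigr)=L_\sigma'^-=L_\sigma^+$, since a reflection acts as $-1$ on its reflecting line. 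Thus $\sigma\cdot\mathcal{L}'$ agrees with $\mathcal{L}$ at $H_\sigma$ and agrees with it precisely where $\mathcal{L}'$ did at every other hyperplane, so $d(\mathcal{L},\sigma\cdot\mathcal{L}')=d(\mathcal{L},\mathcal{L}')-1$. By the inductive hypothesis $\mathcal{L}$ and $\sigma\cdot\mathcal{L}'$ are $G$-conjugate, and since $\sigma\in\Sigma'\subseteq S\subseteq G$, so are $\mathcal{L}$ and $\mathcal{L}'$.

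The main obstacle is the claim: it is the assertion that a family of positive half-lines is determined by its restriction to the walls of the corresponding chamber — equivalently, that the chamber is the intersection of the positive half-spaces of its walls — and the argument above realises this algebraically through the pointedness of the simplicial cone $\overline{C}$. Everything else is a routine descent powered by Lemma~\ref{positivegivespositive}; one should also record that $g\cdot\mathcal{L}$ is genuinely a family of positive half-lines, which is immediate from Definition~\ref{preorder} and the $G$-equivariance of $\varphi$.
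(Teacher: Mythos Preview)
Your argument is correct and follows essentially the same route as the paper: induct on the number of half-lines where the two families disagree, locate a simple reflection (for one of the orders) on whose line the families disagree, and apply it to flip exactly that half-line while fixing all others via Lemma~\ref{positivegivespositive}. Your pointed-cone argument for the ``claim'' is a careful unpacking of the step the paper dispatches in a single sentence (``Indeed, if not, then $\CL_S^+\subset\CL_S^{++}$''), and your choice to act on $\mathcal{L}'$ with $\sigma\in\Sigma'$ rather than on $\mathcal{L}$ with $\sigma_0\in\Sigma$ is a harmless symmetric variant.
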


\begin{proof}
        Suppose we have two families of positive half-lines:
        $\CL_S^+ := (L_s^+)_{s\in S}$ and $\CL_S^{++} := (L_s^{++})_{s\in S}$
        which determine families of
        simple roots  denoted by $\Si$ and $\Si'$ respectively.
        We shall prove by induction on
        $|\CL_S^+ \cap (-\CL_S^{++})|$ that $\Si$ and $\Si'$ are conjugate under $G$.
        
        If $|\CL_S^+ \cap (-\CL_S^{++})| = 0$, 
        then $\CL_S^+ = \CL_S^{++}$ and so $\CL_S^+$ and $\CL_S^{++}$
        are conjugate by the identity element of $G$.
        
        If $|\CL_S^+ \cap (-\CL_S^{++})| > 0$, then there exists $\si_0 \in \Si$ such that
        $L_{\si_0}^+ \in -(\CL_S^{++})$. Indeed, if not, then $\CL_S^+ \subset \CL_S^{++}$, hence
        $\CL_S^+ = \CL_S^{++}$.
        By assertion (1),
        $$
                \si_0(\CL_S^+) = \left( \CL_S^+ -\{L_{\si_0}^+\} \right) \sqcup \{-L_{\si_0}^+\} 
                \,,
        $$
        which shows that $\si_0(\CL_S^+)$ is a family of positive half-lines (for the order conjugate
        under $\si_0)$ such that
        $$
                |\si_0(\CL_S^+) \cap (-\CL_S^{++})| = |\CL_S^+ \cap (-\CL_S^{++})| -1
                \,.
        $$
        By the induction hypothesis, we get that $\si_0(\CL_S^+)$ and $\CL_S^{++}$ are
        conjugate under $G$, which shows that $\CL_S^+$ and $\CL_S^{++}$ are
        conjugate under $G$.
\end{proof}

\begin{proposition}\label{propertiessimple}
        Let $V$ be a finite dimensional vector space on a real field $k$.
        Let $G$ be a finite subgroup of $\GL(V)$ and let $S$ the set of all reflections of $G$.
\begin{enumerate}
        \item
                $G$ acts transitively on the set of families of simple reflections.                             
        \item
                Let $\Si$ be a family of simple reflections.
                \begin{enumerate}
                        \item
                                Every reflection of $G$ is conjugate to an element  of $\Si$.
                        \item
                                $\Si$ generates the (normal) subgroup of $G$ generated by $S$.
                \end{enumerate}         
\end{enumerate}
\end{proposition}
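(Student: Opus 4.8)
The plan is to dispose of part (1) immediately using what is already proved, and to derive both halves of part (2) from a single ``descent to a simple reflection'' lemma.

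For (1): by the proof of Proposition~\ref{existencesimpleroots}, a family of simple reflections is determined by — and depends only on — its underlying family of positive half-lines $(L_s^+)_{s\in S}$, since axioms (A) and (B) of Definition~\ref{defsimpleroot} refer only to the lines $L_s$ and the half-lines $L_s^+$. The assignment ``family of positive half-lines $\mapsto$ associated family of simple reflections'' is, by definition of the terminology, surjective onto the set of all families of simple reflections, and it is $G$-equivariant: if $\Si$ satisfies (A)--(B) for $(L_s^+)_s$ then, applying $g\in G$ and using $g(L_s)=L_{gsg\inv}$, the conjugate $g\Si g\inv$ satisfies (A)--(B) for the transported family $(g(L_s^+))_s$, hence by uniqueness it is exactly the family of simple reflections of that transported family. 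Since $G$ acts transitively on families of positive half-lines (Lemma~\ref{conjugatehalflines}), it acts transitively on families of simple reflections, which is (1).

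For (2), fix a family of simple reflections $\Si$ with underlying positive half-lines $(L_s^+)_{s\in S}$. Choose $0\neq v_\si\in L_\si^+$ for each $\si\in\Si$; by (A) the family $(v_\si)_{\si\in\Si}$ is a basis of $V$, and by (B) every $s\in S$ admits $0\neq v_s\in L_s^+$ with $v_s=\sum_{\si\in\Si}c_\si v_\si$, all $c_\si\ge 0$, so that $\mathrm{ht}(v_s):=\sum_\si c_\si$ is a positive element of $k$. The key claim to prove is that \emph{every $s\in S$ can be conjugated into $\Si$ by a product of elements of $\Si$}. I would argue by the standard descent: if $s\notin\Si$, then no $v_\si$ is proportional to $v_s$ (otherwise $L_s=L_\si$, which over a real field forces $s=\si$), and positive-definiteness of $\vp$ gives $0<(v_s\mid v_s)_\vp=\sum_\si c_\si(v_s\mid v_\si)_\vp$, so some $\si\in\Si$, necessarily $\si\neq s$, has $(v_s\mid v_\si)_\vp>0$. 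By Lemma~\ref{positivegivespositive}(1) the half-line $\si(L_s^+)=L_{\si s\si\inv}^+$ is again positive, and by the reflection formula of Lemma~\ref{identifying}(4) one has $\si(v_s)=v_s-\mu v_\si$ with $\mu=2(v_s\mid v_\si)_\vp/(v_\si\mid v_\si)_\vp>0$; comparing coordinates in the basis $(v_\tau)$ and using that $\si(v_s)\in L_{\si s\si\inv}^+\subseteq\sum_\tau L_\tau^+$ has nonnegative coordinates, we get $\mathrm{ht}(\si(v_s))=\mathrm{ht}(v_s)-\mu<\mathrm{ht}(v_s)$. Iterating produces reflections $s=s_0,s_1,\dots$ with $s_{i+1}=\si_{i+1}s_i\si_{i+1}\inv$, $\si_{i+1}\in\Si$, and vectors $v_i\in L_{s_i}^+$ of strictly decreasing height, the construction halting exactly when some $s_N\in\Si$.

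The main obstacle is termination: there is no a priori \emph{integral} height here, so one cannot invoke a decreasing sequence of positive integers. I would instead use finiteness of $G$ together with positivity to show the $s_i$ are pairwise distinct. If $s_i=s_j$ with $i<j$, then $w:=\si_j\cdots\si_{i+1}\in\langle\Si\rangle$ stabilises the line $L_{s_i}$ and acts on it by a scalar which, $w$ being of finite order and $k$ real, equals $\pm1$; since $v_i$ and $v_j=w(v_i)$ lie in the same ray $L_{s_i}^+$, the scalar is $1$, so $v_j=v_i$, contradicting $\mathrm{ht}(v_j)<\mathrm{ht}(v_i)$. As $S$ is finite, the descent stops, proving the key claim. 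Part (2)(a) follows at once: each $s\in S$ is $\langle\Si\rangle$-conjugate, hence $G$-conjugate, to the simple reflection $s_N$. Part (2)(b) follows because $s=(\si_N\cdots\si_1)\inv s_N(\si_N\cdots\si_1)\in\langle\Si\rangle$ for every $s\in S$, so $S\subseteq\langle\Si\rangle\subseteq\langle S\rangle$, whence $\langle\Si\rangle=\langle S\rangle$; and $\langle S\rangle$ is normal in $G$ because $S$, the full set of reflections of $G$, is stable under $G$-conjugation.
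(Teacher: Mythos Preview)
Your proof is correct and follows essentially the same line as the paper's. Part~(1) is identical in spirit: both argue that a family of simple reflections is uniquely determined by its family of positive half-lines, so transitivity on the latter (Lemma~\ref{conjugatehalflines}) gives transitivity on the former. For part~(2), both proofs run the standard height-descent: given $s\notin\Si$, positive-definiteness forces $(v_s\mid v_\si)_\vp>0$ for some $\si\in\Si$, and applying $\si$ strictly lowers the height while keeping the half-line positive (Lemma~\ref{positivegivespositive}(1)).

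The only notable difference is the termination device. The paper fixes one vector in each $L_s$, takes the finite $G$-orbit set $\Omega$ of all these, and observes that the descent stays inside the finite set $\Omega^+$, so a minimum height is attained and must lie over some $\si\in\Si$. You instead argue that the reflections $s_0,s_1,\dots$ produced by the descent are pairwise distinct (using that any $w\in G$ stabilising $L_{s_i}$ acts on it by $\pm1$, and the positive-half-line constraint forces $+1$), so the process halts by finiteness of $S$. Both are clean finiteness arguments; the paper's is marginally shorter, while yours has the pleasant by-product that the descent never revisits a reflection.
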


\begin{proof}
        Since the choice of a family of positive half-lines
        determines a single family of simple reflections, item (2) of the preceding lemma shows
        the transitivity of $G$ on the families of positive half-lines.
                
        We now turn to assertion (2)(a). 
        Choose an admissible  preorder on $V$ and $W$, 
        and denote by $\CL_S^+ := (L_s^+)_{s\in S}$ 
        the corresponding
        family of positive half-lines and by $\Si$ the corresponding family of simple reflections.
        For each $\si \in \Si$, we choose $e_\si \in L_\si^+$ so that $(e_\si)_{\si \in \Si}$ is 
        a basis of $V$, and for all $s\in S$ and $v \in L_s^+$, we have
        $
                v = \sum_{\si \in \Si} \la_\si(v) e_\si
        $
        with $\la_\si(v) \geq 0$ for all $\si \in \Si$. We set
        $$
                h(v) := \sum_{\si \in \Si} \la_\si(v)
                \,.
        $$      
        For each $s\in S$, we choose an element $v \in L_s$, $v \neq 0$, and
        we denote by $\Omega$ the
        union of the orbits of these vectors under $G$. Since $S$ and $G$
        are finite, so is $\Omega$. 

        Denote by  $\Omega^+$ the positive vectors of $\Omega$,
        and  define:
        $$
                m_\Omega := \min \{ h(v) \mid v\in \Omega^+ \}
                \,.
        $$
        
        Let $s \in S - \Si$. Let $v =  \sum_{\si \in \Si} \la_\si(v) e_\si\in
        \Omega\cap L_s^+$. 
        Since 
        $$
                (v\mid v)_\vp  = \sum_{\si \in \Si} \la_\si(v) (v\mid e_\si )_\vp > 0
                \,,
        $$
        there exists $\si_0 \in \Si$ such that $(v\mid e_{\si_0})_\vp > 0$. 
        By item (1) of Lemma~\ref{positivegivespositive}, we know that
        $
                v' := \si_0(v) \in \Omega^+
                \,.
        $
        Since 
        $
                v' = v - 2\frac{(v \,\mid\, e_{\si_0})_\vp}{(e_{\si_0} \ \!\!\mid\, e_{\si_0})_\vp} e_{\si_0}
                \,,
        $
        we see that $h(v') < h(v)$.
        
        This proves in particular that if, for $v \in L_s$, $h(v) = m_\Omega$, then $s \in \Si$,
        and finally that there exists $g \in G$ such that $gsg\inv \in \Si$, which is (2)(a).
        
        This also proves that every element of $S$ is conjugate to an element of $\Si$
        by an element of the group generated by $\Si$, which proves
        that $\Si$ generates the subgroup of $G$ generated by $S$, \ie\ assertion (2)(b).
\end{proof}

        The notion of \emph{Coxeter system} is defined in \cite[Chap. IV, \S 1,
        3, D\' ef. 1.3]{bou} .

\begin{theorem}\label{coxeter}
        Let $V$ be a finite dimensional vector space on a real field $k$.
        Let $G$ be a finite subgroup of $\GL(V)$ generated by reflections, 
        and let $S$ be the set of all reflections of $G$.
        Then the pair $(G,\Si)$ is a Coxeter system for every  
        is a family of simple reflections $\Si$.
\end{theorem}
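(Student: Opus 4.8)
The plan is to reduce the statement to a single family of simple reflections, introduce the inversion‑counting function $N$ on $G$ attached to the corresponding family of positive half‑lines, prove that $N$ coincides with the word length relative to $\Si$, deduce the exchange condition, and conclude by the standard characterisation of Coxeter systems.

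First I would observe that by Proposition~\ref{propertiessimple}(1) all families of simple reflections of $G$ are conjugate under $G$, and ``being a Coxeter system'' is invariant under an automorphism of $G$ carrying one generating set onto another; so it suffices to fix one admissible preorder, its family of simple reflections $\Si$, and its family of positive half‑lines $\Delta := (L_s^+)_{s\in S}$. Every reflection of $G$ has determinant $-1$, hence is an involution and $\det$ is a sign homomorphism $G\to\{\pm1\}$; also $G$ permutes $S$ by conjugation, so $G$ permutes the $2\,|S|$ distinct half‑lines $\{\pm L_s^+\mid s\in S\}$ with $g(L_s^+)=\pm L_{gsg\inv}^+$. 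Letting $m(\sigma,\tau)$ denote the order of $\sigma\tau$, the Coxeter group $W$ with Coxeter matrix $(m(\sigma,\tau))_{\sigma,\tau\in\Si}$ admits a canonical surjection $\pi\colon W\to G$ (the Coxeter relations hold, and $\Si$ generates $G$ by Proposition~\ref{propertiessimple}(2)(b)); proving $\pi$ injective amounts to verifying the exchange condition for $(G,\Si)$.

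Next I would set $N(g):=\#\{\,s\in S\mid g(L_s^+)<0\,\}$. Using Lemma~\ref{positivegivespositive} --- a simple reflection $\sigma$ fixes the positivity of $L_s^+$ for $s\neq\sigma$ (item (1)) and $L_\sigma^+$ is the only positive half‑line it sends to a negative one (item (3)), so $\sigma$ permutes $\Delta\setminus\{L_\sigma^+\}$ and swaps $\pm L_\sigma^+$ --- together with the remark that for each $s\in S$ exactly one of $\pm L_s^+$ lies in $g(\Delta)$, a short bookkeeping yields
$$N(\sigma g)=N(g)\pm 1\,,\qquad N(g\sigma)=N(g)\pm1\qquad(\sigma\in\Si)\,,$$
the first sign being $+$ exactly when $g\inv(L_\sigma^+)>0$ and the second exactly when $g(L_\sigma^+)>0$. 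With $N(1)=0$ this gives $N(g)\le\ell_\Si(g)$, the word length. For the reverse inequality I would induct on $N(g)$: if $N(g)>0$ then $g$ cannot keep every $L_\sigma^+$ ($\sigma\in\Si$) positive, since by the defining property (B) of a family of simple reflections every positive half‑line lies in the cone $\sum_{\sigma\in\Si}k^+e_\sigma$ spanned by a basis $e_\sigma\in L_\sigma^+$, so $g(L_\sigma^+)>0$ for all $\sigma\in\Si$ would force $g$ to preserve that cone and hence $N(g)=0$; choosing $\sigma\in\Si$ with $g(L_\sigma^+)<0$ gives $N(g\sigma)=N(g)-1$, whence $\ell_\Si(g)\le\ell_\Si(g\sigma)+1\le N(g)$ by induction. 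Thus $N=\ell_\Si$; in particular $g=1$ is the only element of $G$ fixing $\Delta$.

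Finally I would verify the exchange condition. Let $g=\sigma_1\cdots\sigma_q$ be reduced and $\sigma\in\Si$ with $\ell_\Si(\sigma g)\le\ell_\Si(g)$; the sign homomorphism forces $\ell_\Si(\sigma g)=\ell_\Si(g)-1$, so $N(\sigma g)=N(g)-1$ and therefore $g\inv(L_\sigma^+)<0$. Applying $\sigma_1,\dots,\sigma_q$ in turn to the positive half‑line $L_\sigma^+$ (whose image under $g\inv=\sigma_q\cdots\sigma_1$ is negative), let $j$ be the first index at which the sign turns negative; by Lemma~\ref{positivegivespositive}(3) the positive half‑line flipped by $\sigma_j$ must be $L_{\sigma_j}^+$, i.e.\ $(\sigma_{j-1}\cdots\sigma_1)(L_\sigma^+)=L_{\sigma_j}^+$, hence $\sigma=(\sigma_1\cdots\sigma_{j-1})\sigma_j(\sigma_1\cdots\sigma_{j-1})\inv$ and so $\sigma g=\sigma_1\cdots\widehat{\sigma_j}\cdots\sigma_q$. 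This is exactly the exchange condition, so $(G,\Si)$ is a Coxeter system by the standard characterisation of Coxeter systems via exchange (\cite[Ch.~IV, \S1]{bou}); equivalently $\pi$ is injective. The main obstacle is precisely this last paragraph together with the identity $N=\ell_\Si$ of the previous one: these two interlocking facts are the substance of the theorem, while everything else is formal manipulation made available by Lemma~\ref{positivegivespositive} and Proposition~\ref{propertiessimple}.
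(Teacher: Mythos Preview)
Your proof is correct. It differs from the paper's argument mainly in packaging: the paper defines, for each $\sigma\in\Si$, the set $P_\sigma=\{g\in G\mid g\inv(L_\sigma^+)>0\}$ and simply checks the two hypotheses of \cite[Ch.~IV, \S1, n$^\circ$7, Prop.~6]{bou} (namely $P_\sigma\cap\sigma P_\sigma=\emptyset$, and $g\in P_\sigma$, $g\sigma'\notin P_\sigma\Rightarrow g\sigma'=\sigma g$), invoking that black box to conclude. Your approach unpacks essentially the same mechanism: your $N(g)$ and the identity $N=\ell_\Si$ are exactly what Bourbaki's Proposition~6 produces internally, and your ``first index where the sign flips'' step is the same computation the paper uses to verify the second hypothesis, but you carry it through to the exchange condition by hand rather than quoting the proposition. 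What you gain is a self-contained argument that makes Corollary~\ref{length} immediate as a by-product; what the paper gains is brevity (two sentences once Lemma~\ref{positivegivespositive}(3) is available). Both routes rest on the same key fact, Lemma~\ref{positivegivespositive}(3), and are logically equivalent.
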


\begin{proof}
        For all $\si \in \Si$, we set
        $
                P_\si := \{ g\in G\mid g\inv(L_\si^+) \mbox{ is positive }\}
                \,.
        $
        We shall check that $P_\si$ satisfies the hypotheses of \cite[Ch. IV,
        \S 1, 7, Proposition 6]{bou} and this will prove that
        \begin{itemize}
                \item
                        $(G,\Si)$ is a Coxeter system,
                \item
                        $P_\si$ comprises of all elements $g\in G$ such that 
                        $l_\Sigma(\si g ) > l_\Sigma(g)$,
        \end{itemize}
		where $l_\Sigma(g)$ denotes the length of the shortest decomposition of $g$ 
		in terms of simple reflections.
		
        It is clear that $P_\si \cap \si(P_\si) = \emptyset$. 
        Now take $g \in P_\si$ and  $\si' \in \Si$
        such that $g\si' \notin P_\si$, that is 
        $g\inv(L_\si^+) $ is positive and $\si'g\inv(L_\si^+)$ is negative. As $\si'$ only changes the
        sign on $L_{\si'}$, we must have $g\inv(L_\si) = L_{\si'}$, which implies 
        $g\inv \si g = \si'$.
\end{proof}

        When the family of simple reflections $\Sigma$ is clear from the context, 
        we will just write $l(g)$ for the length of an element $g$ of $G$ 
        with respect to the generating set $\Sigma$.

\begin{corollary}\label{length}         
        If  $g\in  G$ and  $\sigma$ a simple reflection,  then  $g(L_\sigma^+)>0$  is
        equivalent to $l(g\sigma)=l(g)+1$.
\end{corollary}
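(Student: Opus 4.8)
The plan is to deduce this from Theorem \ref{coxeter}, which already established that $(G,\Si)$ is a Coxeter system \emph{and} — crucially — that $P_\si$ consists precisely of those $g \in G$ with $l(\si g) > l(g)$. So the first step is to observe that the statement we want concerns $g(L_\si^+)$ and $l(g\si)$, i.e. multiplication on the \emph{right} by $\si$, whereas $P_\si$ as defined in the proof of Theorem \ref{coxeter} governs \emph{left} multiplication by $\si$. These are related by passing to inverses: $l(g\si) = l(\si g\inv)$ (since $l(h) = l(h\inv)$ in any Coxeter system, as $\Si$ consists of involutions), and $l(g) = l(g\inv)$. Hence $l(g\si) = l(g)+1$ is equivalent to $l(\si g\inv) > l(g\inv)$, which by Theorem \ref{coxeter} is equivalent to $g\inv \in P_\si$.

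Next I would unwind the definition $P_\si = \{h \in G \mid h\inv(L_\si^+)\text{ is positive}\}$: we have $g\inv \in P_\si$ if and only if $(g\inv)\inv(L_\si^+) = g(L_\si^+)$ is positive, i.e. $g(L_\si^+) > 0$. Chaining the equivalences gives exactly $g(L_\si^+) > 0 \iff l(g\si) = l(g)+1$, which is the claim. The only remaining point is to note that $g(L_\si^+)$, being a half-line, is either positive or negative (it cannot be zero since $g$ is invertible), so ``$g(L_\si^+) > 0$'' is an honest dichotomy and the equivalence is meaningful.

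The main obstacle, such as it is, is making sure the left/right bookkeeping is airtight: one must be slightly careful that the characterization of $P_\si$ quoted from the proof of Theorem \ref{coxeter} is the one coming from \cite[Ch. IV, \S 1, 7, Proposition 6]{bou}, which indeed phrases the length condition as $l_\Si(\si g) > l_\Si(g)$ for $g \in P_\si$. Once that is pinned down, the argument is a two-line manipulation using only $l(h) = l(h\inv)$ and the definition of $P_\si$; no new geometric input about half-lines beyond what Lemma \ref{positivegivespositive} and Theorem \ref{coxeter} already provide is needed. I would write it as a short direct computation rather than invoking any further machinery.
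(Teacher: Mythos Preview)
Your proof is correct and is exactly the argument the paper intends: the corollary is stated without proof immediately after Theorem \ref{coxeter}, and your derivation---replacing $g$ by $g\inv$ to convert the left-multiplication characterization $P_\si = \{h \mid l(\si h) > l(h)\}$ into the right-multiplication statement, using $l(h) = l(h\inv)$---is precisely the intended one-line deduction from the description of $P_\si$ established there.
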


\subsection{Highest half-lines}\hfill
\smallskip

        The definition of the highest root given in \cite[Ch. VI, \S1.8]{bou} seems
        \apriori\ not to make sense in our setting. For instance, for the group
        $G(5,5,2)$ we have $\BZ_k=\BZ[\phi]$ 
        where $\phi=\frac{1+\sqrt 5}2 >1$
        is a unit of $\BZ_k$.
        As a root is only defined up to multiplication by a unit, it will have
        no well-defined ``length''.
        
        However, the following, a consequence of the definition in \cite{bou}, does make sense:
        
\begin{definition}\label{def:highesthalfline}
        Let $\CL_S^+$ be a family of positive half-lines.
        We  call $L^+$ in $\CL_S^+$ a {\em highest half-line}   \index{Highest half-line} 
        if $(L^+\mid L^+_s)\ge 0$ for all $s\in S$.
\end{definition}

\begin{proposition}\label{ll}
        If  $l_\Sigma(s)$ is maximal over the
        conjugacy class of $s$ in the Coxeter system $(G,\Sigma)$, 
        then $L^+_s$ is a highest half-line.
\end{proposition}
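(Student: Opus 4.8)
The plan is to argue by contraposition: I will show that if $L^+_s$ is \emph{not} a highest half-line, then $s$ is conjugate in $G$ to a reflection of strictly greater $\Sigma$-length, so that $l_\Sigma(s)$ cannot be maximal over the conjugacy class of $s$.

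The first step is to replace ``highest'' by a condition involving only \emph{simple} reflections. By criterion (B) of Definition~\ref{defsimpleroot}, every positive half-line $L^+_t$ with $t\in S$ satisfies $L^+_t\subseteq\sum_{\sigma\in\Sigma}L^+_\sigma$, so a generator of $L^+_t$ is a non-negative linear combination of fixed positive generators $e_\sigma$ of the $L^+_\sigma$. Consequently, if $(L^+_s\mid L^+_\sigma)_\vp\ge 0$ for all $\sigma\in\Sigma$, then $(L^+_s\mid L^+_t)_\vp\ge 0$ for all $t\in S$; together with $\Sigma\subseteq S$ this proves that $L^+_s$ is a highest half-line if and only if $(L^+_s\mid L^+_\sigma)_\vp\ge 0$ for every simple reflection $\sigma$. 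Hence, assuming $L^+_s$ is not highest, there is a simple $\sigma$ with $(L^+_s\mid L^+_\sigma)_\vp<0$. In particular $\sigma\ne s$, and the lines $L_s$ and $L_\sigma$ are not orthogonal for $(\cdot\mid\cdot)_\vp$, so (using Lemma~\ref{identifying}(1), which identifies $H_s$ with the $(\cdot\mid\cdot)_\vp$-orthogonal of $L_s$) the reflections $s$ and $\sigma$ do not commute; equivalently $s\sigma s\inv\ne\sigma$.

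The second step produces the longer conjugate $\sigma s\sigma$. Since $(L^+_s\mid L^+_\sigma)_\vp\le 0$, Lemma~\ref{positivegivespositive}(2) gives $s(L^+_\sigma)=L^+_{s\sigma s\inv}>0$; as $s=s\inv$, the description of $P_\sigma$ established in the proof of Theorem~\ref{coxeter} then yields $l(\sigma s)=l(s)+1$. Next, because $s\sigma s\inv\ne\sigma$ the half-line $L^+_{s\sigma s\inv}$ is distinct from $L^+_\sigma$, so by Lemma~\ref{positivegivespositive}(3) we get $(\sigma s)(L^+_\sigma)=\sigma\bigl(L^+_{s\sigma s\inv}\bigr)>0$; applying Corollary~\ref{length} with $g=\sigma s$ gives $l(\sigma s\sigma)=l(\sigma s)+1=l(s)+2$. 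Since $\sigma s\sigma=\sigma s\sigma\inv$ is conjugate to $s$ in $G$, the length $l_\Sigma(s)$ is not maximal over the conjugacy class of $s$, contradicting the hypothesis. This proves the proposition.

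I do not expect a serious obstacle here: the argument is a chain of invocations of Lemma~\ref{positivegivespositive}, Corollary~\ref{length} and Theorem~\ref{coxeter}. The only points requiring care are (i) confirming that each half-line $g(L^+_\sigma)$ is genuinely positive or negative, so that the equivalences in Corollary~\ref{length} and in the $P_\sigma$-description apply — this is exactly the dichotomy $L_u=L^+_u\sqcup\{0\}\sqcup L^-_u$ guaranteed by admissibility of the preorder; and (ii) the elementary fact that $s\sigma s\inv=\sigma$ forces $L_\sigma\subseteq H_s$, hence $(L^+_s\mid L^+_\sigma)_\vp=0$. The essential idea is the reduction in the first step to a simple reflection $\sigma$, without which Lemma~\ref{positivegivespositive} and Corollary~\ref{length} (which require a Coxeter generator) could not be invoked.
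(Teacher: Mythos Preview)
Your proof is correct and follows essentially the same route as the paper. Both argue by contraposition, reduce to a simple reflection $\sigma$ with $(L^+_s\mid L^+_\sigma)_\vp<0$, use Lemma~\ref{positivegivespositive}(2) to get $s(L^+_\sigma)>0$ and hence $l(\sigma s)=l(s)+1$, and then use Lemma~\ref{positivegivespositive}(3) to conclude $l(\sigma s\sigma)=l(s)+2$; the only cosmetic difference is that the paper phrases the last step as a contradiction (assuming $l(\sigma s\sigma)\le l(s)$ forces $s(L^+_\sigma)=L^+_\sigma$), whereas you argue directly that $\sigma$ preserves the positivity of $s(L^+_\sigma)=L^+_{s\sigma s^{-1}}\ne L^+_\sigma$.
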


\begin{proof}  
        Let $\Sigma$ be a family of simple reflections and $\sigma \in \Sigma$.
        We want to show that for any positive half-line $L_s^+\in\CL_S^+$, 
        if $(L_s^+\mid L_\sigma^+)<0$  then $l(\sigma s  \sigma)>l(s)$. 
        
        Indeed, suppose otherwise:
        since  $l(s \sigma)=l(s)+1$ we must have $l(\sigma s \sigma)=l(s \sigma)-1$.
        By Corollary~\ref{length} this implies $(s \sigma)\inv(L_\sigma^+)=\sigma s(L_\sigma^+)<0$,
        thus  $\sigma$ changes the sign of $s(L_\sigma^+)$. 
        By (3) of Lemma~\ref{positivegivespositive}, $L_\sigma^+$ is the
        only half-line changed sign by $\sigma$ so we have
        $s(L_\sigma^+)=L_\sigma^+$. Finally, 
        this implies   $(L_s^+\mid     L_\sigma^+)=0$,
        a contradiction.
\end{proof}

The next lemma is a particular case of Theorem \ref{uniquehighest} below.

\begin{lemma}\label{onedihedral} 
        In a dihedral  group, there is exactly one highest half-line 
        in each conjugacy class.
\end{lemma}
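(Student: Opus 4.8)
The plan is to pass to the geometric picture afforded by the positive definite form and reduce everything to a one‑line trigonometric count, then match the answer to the conjugacy classes by an elementary computation with the action of $G$ on its reflecting lines.

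First I would fix a positive definite $G$-stable isomorphism $\varphi\colon V\iso W$ and the associated Euclidean structure $(\cdot\mid\cdot)_\varphi$ on the $2$-dimensional space $V$, so that $G$ is realized as a dihedral group of order $2e$ in the orthogonal group of $(V,(\cdot\mid\cdot)_\varphi)$. Its $e$ reflections $S$ are in bijection with their reflecting lines $L_s$, and these lines are classically equally spaced: there is $\psi_0$ such that $\{L_s\}_{s\in S}$ is exactly the set of lines through the origin at angles $\psi_0+j\pi/e$ ($0\le j\le e-1$). A family of positive half-lines $\CL_S^+=(L_s^+)_{s\in S}$ arises from an admissible preorder with some $v_0$; by Lemma~\ref{identifying}(1) and admissibility each $L_s$ has a well-defined positive ray $L_s^+$, and all these rays lie in the open half-plane $\{v:(v\mid v_0)_\varphi>0\}$. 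Writing $L_s^+$ as the ray at angle $\theta_s$, the $e$ numbers $\theta_s$ are the representatives, in an open arc of length $\pi$, of the $e$ equally spaced directions $\psi_0+j\pi/e$; since (again by admissibility) the arc is cut at a point which is none of those directions, ordering the $\theta_s$ as $\theta_1<\theta_2<\dots<\theta_e$ gives an arithmetic progression $\theta_j=\theta_1+(j-1)\pi/e$ of total spread $\theta_e-\theta_1=(e-1)\pi/e<\pi$.

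Next I would read off the highest half-lines. Since $|\theta_j-\theta_k|\le(e-1)\pi/e<\pi$ for all $j,k$, one has $(L_{s_j}^+\mid L_{s_k}^+)_\varphi\ge 0$ exactly when $|\theta_j-\theta_k|\le\pi/2$; as the $\theta_k$ all lie between $\theta_1$ and $\theta_e$, the ray $L_{s_j}^+$ is a highest half-line precisely when $\theta_j-\theta_1\le\pi/2$ and $\theta_e-\theta_j\le\pi/2$, i.e. when $e/2\le j\le e/2+1$. Hence there is exactly one highest half-line when $e$ is odd, namely $L_{s_{(e+1)/2}}^+$, and exactly two when $e$ is even, namely $L_{s_{e/2}}^+$ and $L_{s_{e/2+1}}^+$. (Existence is also immediate from Proposition~\ref{ll}, since a reflection of maximal length in its conjugacy class — which exists, the class being finite — yields a highest half-line.) To finish, I would compute the $G$-orbits on the set of reflecting lines: conjugation by the rotation of order $e$ shifts the index $j\in\BZ/e\BZ$ by $2$, and conjugation by a reflection sends $j$ to $2k-j$; both operations preserve the coset of $j$ in $2\BZ/e\BZ$. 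Thus there is a single orbit (and conjugacy class of reflections) when $e$ is odd, and two orbits — the two parities of $j$ — when $e$ is even, so that adjacent reflecting lines lie in different classes for $e$ even. For $e$ odd this gives the unique highest half-line in the one class; for $e$ even, $L_{s_{e/2}}$ and $L_{s_{e/2+1}}$ are adjacent lines, hence lie in the two distinct classes, so each class contains exactly one of the two highest half-lines.

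The argument has no serious obstacle: the only points requiring care are the normalization ``$e$ equally spaced directions, total spread $<\pi$'' (which is where admissibility is used, via Lemma~\ref{identifying}(1)), the boundary cases (the inequality $(L^+\mid L_s^+)_\varphi\ge 0$ allows equality, which does occur for $e$ even, and the small values $e=1,2$ must be seen to fit the formulas), and the elementary conjugacy bookkeeping for $e$ even — all routine.
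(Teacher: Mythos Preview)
Your argument is correct. You pass to the Euclidean picture, identify the positive rays as an arithmetic progression of $e$ angles of step $\pi/e$ inside an open half-plane, and read off directly from $\cos(\theta_j-\theta_k)\ge 0$ that the highest half-lines are exactly those with index satisfying $e/2\le j\le e/2+1$; then you match this count to the conjugacy classes via the parity analysis of the $G$-action on indices. All steps check out, including the edge cases $e=1,2$ and the fact that for $e$ even the two surviving indices $e/2$ and $e/2+1$ have opposite parity.

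This is a genuinely different route from the paper. The paper's one-line proof invokes Proposition~\ref{ll} (a reflection of maximal length in its conjugacy class gives a highest half-line) together with the Coxeter-theoretic fact that in a dihedral group each conjugacy class of reflections contains a unique element of maximal length. That approach is shorter but leans on the length machinery of Theorem~\ref{coxeter} and Corollary~\ref{length}; yours is self-contained and purely geometric, trading brevity for transparency. Your trigonometric count also makes the location of the highest half-lines explicit (the ``middle'' rays), which the length argument does not display directly.
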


\begin{proof}   
        This follows from Proposition \ref{ll} and
        the fact that there is exactly one
        reflection of longest length in each conjugacy class of reflections.
\end{proof}

\begin{proposition}\label{conjiffD}  
        In  a finite Coxeter system two  non-commuting reflections are conjugate if
         and only if they are conjugate in the dihedral group they generate.
\end{proposition}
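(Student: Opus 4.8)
The plan is to reduce the statement to a local computation inside the dihedral groups generated by pairs of reflections, using the length function and highest half-lines from the preceding results. One direction is trivial: if two reflections $s,t$ are conjugate in the dihedral group $D$ they generate, then they are certainly conjugate in the ambient Coxeter group $G$. So the content is the converse: if $s$ and $t$ are non-commuting reflections of $G$ which are conjugate in $G$, then they are already conjugate in $D := \genby{s,t}$.

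First I would observe that, since $s$ and $t$ do not commute, by Lemma~\ref{commutingreflections} (applied via Proposition~\ref{commuting2}, valid since $G$ is finite) the reflecting lines $L_s$ and $L_t$ are neither parallel nor orthogonal; hence $D = \genby{s,t}$ is a genuine dihedral group acting irreducibly on the plane $L_s \oplus L_t$ and trivially on its orthogonal complement (the orthogonal complement being common to both reflecting hyperplanes). The reflections of $G$ lying in $D$ are exactly the reflections of this dihedral group. The strategy is then: among all $G$-conjugates of $s$ that lie in $D$, pick one, say $s'$, whose length $l_\Sigma(s')$ with respect to some fixed family of simple reflections $\Sigma$ is maximal over the whole $G$-conjugacy class of $s$; do the same for $t$, getting $t'\in D$ of maximal length in its $G$-conjugacy class. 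By Proposition~\ref{ll}, $L^+_{s'}$ and $L^+_{t'}$ are highest half-lines (for the admissible preorder underlying $\Sigma$). If $s$ and $t$ are $G$-conjugate, then $s'$ and $t'$ are $G$-conjugate, hence belong to the same conjugacy class of reflections in $G$; restricting attention to $D$, both $s'$ and $t'$ are reflections in $D$ whose $D$-conjugacy class I need to identify.

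The key step is to show $s'$ and $t'$ are conjugate \emph{in $D$}. Here is where Lemma~\ref{onedihedral} enters: in the dihedral group $D$, with the preorder induced on the plane $L_s\oplus L_t$, there is exactly one highest half-line in each $D$-conjugacy class of reflections. Now $L^+_{s'}$ and $L^+_{t'}$ are both highest half-lines of $D$ — one must check that a highest half-line for the ambient $G$ (i.e. making a nonnegative pairing with every positive half-line of $G$) is in particular a highest half-line for $D$ (making a nonnegative pairing with every positive half-line of $D$, of which there are fewer). Since $D$ has at most two conjugacy classes of reflections, and since $s'$ and $t'$ are $G$-conjugate, I must rule out the possibility that they fall into the two different $D$-classes. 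If they did, then $D$ would have two classes of reflections that fuse in $G$; but then $s'$ and $t'$ would be two \emph{distinct} highest half-lines of $D$ lying in classes that become identified in $G$ — and in fact, using that the element $s't'$ generates the rotation subgroup of $D$ and that conjugation by any element of $D$ permutes the two classes according to the parity of $|D|/2$, one shows that having the \emph{maximal}-length representative in each $G$-class forces $s'$ and $t'$ into the \emph{same} $D$-class (both being the unique longest reflection of their common $G$-class, and the longest element of $D$ being central when $|D|/2$ is even, so there is essentially one candidate). Thus $s'$ and $t'$ are $D$-conjugate, and since $s\sim_G s'$ only tells us $s,s'$ are in the same $G$-class — here I need the sharper fact that a reflection and a maximal-length $G$-conjugate of it lying in a common dihedral subgroup are conjugate \emph{within} that subgroup, which follows by the same highest-half-line uniqueness argument applied to the dihedral group $\genby{s,s'}$ (or is immediate if $s = s'$). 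Chaining these, $s$ and $t$ are conjugate in $\genby{s,t}$.

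The main obstacle is the bookkeeping in the last step: carefully arranging the passage from "$s'$ and $t'$ are $G$-conjugate and both are maximal-length in their class" to "$s'$ and $t'$ are $D$-conjugate," which requires knowing precisely how the (at most two) $D$-classes of reflections interact with lengths, and handling the even/odd dihedral cases separately (when $|D|/2$ is odd all reflections of $D$ are already $D$-conjugate, so there is nothing to prove; when $|D|/2$ is even there are two $D$-classes and one must show maximal-length $G$-representatives cannot straddle them). A clean way to package this is to prove first the auxiliary statement: \emph{if $r$ is a reflection of $G$ of maximal length in its $G$-class, then $r$ is of maximal length in its $\genby{r,r''}$-class for every reflection $r''$}, which reduces everything to Lemma~\ref{onedihedral}. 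I expect the dihedral even case to be the only place a genuine (if short) argument is needed.
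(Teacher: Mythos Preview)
Your proposal has a genuine gap at the step where you pick $s' \in D$ of maximal length over the \emph{whole} $G$-conjugacy class of $s$. There is no reason such an $s'$ should exist: the dihedral subgroup $D = \genby{s,t}$ generally contains only a small fragment of the $G$-class of $s$, and the maximal-length representative typically lies elsewhere. Concretely, take $G = \fS_{n+1}$ (type $A_n$) with $n \ge 3$ and $D = \genby{s_1, s_2}$ generated by two adjacent simple reflections. All reflections of $G$ are conjugate, but the reflections in $D$ are the transpositions $(1\,2), (2\,3), (1\,3)$, none of which has maximal length in $G$; the longest reflection is $(1\,\,n{+}1)$, which is not in $D$. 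So Proposition~\ref{ll} cannot be invoked to manufacture a $G$-highest half-line inside $D$, and the appeal to Lemma~\ref{onedihedral} never gets off the ground.

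Your attempt to repair this in the final paragraph --- passing to the dihedral group $\genby{s, s'}$ where $s'$ is a maximal-length $G$-conjugate of $s$ --- is circular: the claim that $s$ and $s'$ are conjugate inside $\genby{s, s'}$ is exactly an instance of the proposition you are trying to prove, and the highest-half-line argument you sketch for it suffers from the same defect (you know $L_{s'}^+$ is highest in $\genby{s,s'}$, but you have no control over $L_s^+$, so Lemma~\ref{onedihedral} says nothing). The auxiliary statement you propose at the end (``if $r$ has maximal $G$-length then it has maximal $\genby{r,r''}$-length for every $r''$'') would indeed suffice, but it is not proved and is not obviously easier than the proposition itself.

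The paper's proof takes a completely different route, avoiding lengths and highest half-lines. It passes to the rank-two parabolic subgroup $P = C_G(H_s \cap H_t)$, which contains both $s$ and $t$ and is (up to $G$-conjugacy) a standard parabolic generated by two \emph{adjacent} simple reflections. The key point is then that every linear character of $P$ extends to $G$, and by Theorem~\ref{linear} linear characters separate conjugacy classes of reflections; hence $s$ and $t$ are $G$-conjugate iff they are $P$-conjugate. A short separate argument (again via linear characters) handles the reduction from the dihedral parabolic $P$ down to the possibly smaller dihedral $D = \genby{s,t}$.
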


\begin{proof}
        Let  $s$  and  $t$  be  two  non-commuting  reflections. Let $P$ be the
        fixator  of the intersection of the two reflecting hyperplanes. Then $P$
        is  a parabolic subgroup of $G$ of rank  at most two. It is of rank two
        since  $s$  and $t$ are in $P$ and do
        not  commute. 
        Up to  conjugacy, we may  suppose that $P$  is a standard
        parabolic  subgroup,  so is  defined by two  vertices $s_1$ and $s_2$ of
         the  Coxeter diagram  (note  that $s_1$  and $s_2$ may be different
        from $s$ and $t$). Then $s_1$ and $s_2$ must be adjacent in the Coxeter
        diagram,  otherwise $P$ is of type $A_1\times A_1$ and does not contain
        non-commuting  reflections.  It  follows  from  the fact that $s_1$ and
         $s_2$  are adjacent and the description (see Theorem~\ref{linear}) 
         of linear characters of Coxeter
        groups that  any  linear  character  of  $P$  extends to $G$. Furthermore,
         Theorem~\ref{linear} ensures that linear   characters   separate   conjugacy   classes  of
         reflections, so it follows that $s$ and $t$  are conjugate if and only if
        they are conjugate in $P$, which is a dihedral group.

        Finally, if $H$ is a dihedral subgroup of a dihedral group $G$, two
        reflections  non-conjugate  in  $H$  are  not conjugate in $G$: for
        $H$ to have two classes of reflections, it has to have an
        even bond, and then $H$ has as many linear characters as $G$.
\end{proof}

\begin{lemma}\label{all positive}
        Assume $G$ irreducible, and that $L_s^+$ is a highest half-line.
        Then for any  positive half-line $L_{s'}^+$, we have $(L_s^+\mid L_{s'}^+)>0$.
\end{lemma}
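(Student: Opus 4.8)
The plan is to realise the highest half-line through the simple system $\Si$ and to prove that all of its simple coordinates are strictly positive --- the exact analogue of the classical fact that, in an irreducible root system, the highest root has a strictly positive coefficient on every simple root. First I would fix the positive definite Hermitian $kG$-isomorphism $\vp$ and the admissible preorder of the statement, and let $\Si$ be the associated family of simple reflections, which exists by Proposition~\ref{existencesimpleroots}. Picking generators $e_\si\in L_\si^+$ for $\si\in\Si$, Definition~\ref{defsimpleroot}(A) says these form a basis of $V$, and Lemma~\ref{scalnegative}(1) gives $(e_\si\mid e_{\si'})_\vp\le 0$ whenever $\si\neq\si'$. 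Writing a generator $v$ of $L_s^+$ as $v=\sum_{\si\in\Si}\la_\si e_\si$, positivity of $v$ forces all $\la_\si\ge 0$. Since $L_s^+$ is a highest half-line we already have $(v\mid w)_\vp\ge 0$ for every $w$ lying in a positive half-line, so the real content of the lemma is to exclude the vanishing case.

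The key step, and the place where the irreducibility hypothesis genuinely enters, is to show that $\la_\si>0$ for \emph{every} $\si\in\Si$. Suppose not, and set $J:=\{\si\in\Si\mid\la_\si=0\}$; this is a proper non-empty subset of $\Si$ (proper because $v\neq 0$). Irreducibility forces the set $S$ of reflections of $G$, hence the simple system $\Si$, to be irreducible in the sense of Definition~\ref{sim}: were $\Si$ a disjoint union of two mutually orthogonal blocks, Lemma~\ref{orthodecom} together with Proposition~\ref{ifss2} would exhibit $V$ as a non-trivial $kG$-direct sum. Hence $J$ cannot be orthogonal to $\Si\setminus J$, so there exist $\si_0\in J$ and $\tau\in\Si\setminus J$ with $(e_{\si_0}\mid e_\tau)_\vp<0$. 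Then
$$
(v\mid e_{\si_0})_\vp=\sum_{\si\notin J}\la_\si\,(e_\si\mid e_{\si_0})_\vp\le\la_\tau\,(e_\tau\mid e_{\si_0})_\vp<0,
$$
every omitted summand being $\le 0$; this contradicts $(v\mid e_{\si_0})_\vp\ge 0$, which holds because $e_{\si_0}$ spans the positive half-line $L^+_{\si_0}$ and $L_s^+$ is highest. So $\la_\si>0$ for all $\si\in\Si$.

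To conclude, I would take any positive half-line $L_{s'}^+$ with generator $v'$ and expand $(v\mid v')_\vp=\sum_{\si\in\Si}\la_\si\,(e_\si\mid v')_\vp$: every $\la_\si$ is now strictly positive, the pairings $(e_\si\mid v')_\vp$ carry the sign information supplied by the highest property and by Lemma~\ref{scalnegative}, and they cannot all vanish, since $v'\neq 0$ while $(e_\si)_{\si\in\Si}$ is a basis of $V$ and $(\cdot\mid\cdot)_\vp$ is non-degenerate; combining these gives $(v\mid v')_\vp>0$. I expect the main obstacle to be exactly the middle step --- extracting strict positivity of \emph{all} the coordinates $\la_\si$ from the bare ``highest'' hypothesis. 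That is where irreducibility (equivalently, connectedness of the simple system, as witnessed by the Coxeter structure of Theorem~\ref{coxeter}) is indispensable; granting it, the final sign bookkeeping is routine.
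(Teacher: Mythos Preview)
Your middle step --- showing that every simple coordinate $\la_\si$ of $v\in L_s^+$ is strictly positive --- is precisely the paper's argument: set $I=\{\si\in\Si:\la_\si>0\}$ and $J=\Si\setminus I$, combine $(v\mid e_{\si'})_\vp\ge 0$ for $\si'\in J$ (highest) with $(e_\si\mid e_{\si'})_\vp\le 0$ for $\si\in I$ (Lemma~\ref{scalnegative}(1)) to force $(e_\si\mid e_{\si'})_\vp=0$ for all such pairs, and then invoke irreducibility of $G$ to conclude $J=\emptyset$. Your phrasing (pick a single $\si_0\in J$ adjacent to $\Si\setminus J$ and derive a strict negativity) is a harmless variant.

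Your final step, however, is not the routine bookkeeping you describe, and in fact cannot be completed. You expand $(v\mid v')_\vp=\sum_\si\la_\si(e_\si\mid v')_\vp$ and invoke ``sign information'' on the factors $(e_\si\mid v')_\vp$; but neither the highest hypothesis (which controls $(v\mid\cdot)_\vp$, not $(e_\si\mid\cdot)_\vp$) nor Lemma~\ref{scalnegative} (which only concerns pairs of \emph{simple} half-lines) says anything about these --- for general positive $v'$ they take both signs, so ``not all zero'' is useless. Indeed the statement as written is too strong: for $G$ of type $A_3$ with $v$ in the highest half-line (direction $\al_1+\al_2+\al_3$) and $v'=e_{\si_2}$ one gets $(v\mid v')_\vp=0$. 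What Theorem~\ref{uniquehighest} actually uses, and what \emph{does} follow from your middle step, is the special case where $L_{s'}^+$ is itself a highest half-line: expanding the other way, $(v\mid v')_\vp=\sum_\si\mu_\si(v\mid e_\si)_\vp$ with all $\mu_\si>0$ (middle step applied to $v'$) and all $(v\mid e_\si)_\vp\ge 0$ (highest), and equality would force $(v\mid e_\si)_\vp=0$ for every $\si$, hence $v=0$. The paper's own concluding line has the same slip --- it passes from ``$J=\emptyset$'' (i.e.\ $\la_\si>0$) directly to ``$(L_s\mid L_\si)>0$ for all simple $\si$'', which is a different and unjustified assertion.
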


\begin{proof}
		%
        Let $v\in L_s^+$ and write 
        $v=\sum_{\sigma\in\Sigma} \lambda_\sigma(v) e_\sigma\in L_s^+$
        where the $e_\sigma$ and $\lambda_\sigma(v)\ge 0$ are as in the proof of 
        Proposition~\ref{propertiessimple}. Let $I\subset\Sigma$ be the set of $\sigma$
        such that $\lambda_\sigma(v)>0$, and let $J=\Sigma-I$, containing reflections
        $\si'$ for which $\lambda_{\si'}(v)=0$. 

        Now for $\sigma'\in J$, we have $(v\mid e_{\sigma'})_\varphi\ge 0$ by definition
        of a highest half-line, but also $(e_{\sigma}\mid e_{\sigma'})\le 0$ for any
        $\sigma\in I$ by Lemma~\ref{scalnegative}(1). It follows that we must have
        $(e_{\sigma}\mid e_{\sigma'})=0$ for any $\sigma\in I,\sigma'\in J$, which
        contradicts the irreducibility of $G$ unless $J=\emptyset$.
        
        Thus $(L_s \mid L_\sigma) >0$ for all simple $\sigma$; now any $v_s' \in L_{s'}^+$
        is a non-negative linear combination of $e_\sigma$, with at least one non-zero coefficient,
        and so the result follows.
\end{proof}

\begin{theorem} \label{uniquehighest}
If $G$ is a finite real reflection group,
        there is exactly one highest half-line in each conjugacy class of
        reflections.
\end{theorem}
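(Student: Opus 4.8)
The plan is to treat existence and uniqueness separately, reducing uniqueness to the dihedral case. Existence is immediate: in a given conjugacy class of reflections, choose one, say $s$, whose length $l_\Si(s)$ is maximal over the class with respect to a family $\Si$ of simple reflections; then $L_s^+$ is a highest half-line by Proposition~\ref{ll}. So the real content is uniqueness, and here the combination of Proposition~\ref{conjiffD}, Lemma~\ref{onedihedral} and Lemma~\ref{all positive} should do the job.

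First I would reduce to the case where $G$ is irreducible. Replacing $V$ by $V_S$ we may assume $G$ essential (Proposition~\ref{ifss}), and then $V = V_1\oplus\cdots\oplus V_m$ with $G = G_1\times\cdots\times G_m$ acting irreducibly on the factors (Proposition~\ref{ifss2}). Using that $(\cdot\mid\cdot)_\vp$ is $G$-invariant and that each $V_i$ is a non-trivial irreducible $G_i$-module, the $V_i$ are mutually orthogonal for $(\cdot\mid\cdot)_\vp$; hence $(L_s^+\mid L_{s'}^+)_\vp = 0$ whenever $L_s$ and $L_{s'}$ lie in different factors, so a half-line $L_s^+$ with $L_s\subset V_i$ is highest for $G$ if and only if it is highest for $G_i$ (with the restricted preorder). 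Since conjugate reflections of $G$ lie in the same factor and their conjugacy in $G$ coincides with conjugacy in that factor, the statement for $G$ follows from the statement for each $G_i$; thus we may assume $G$ irreducible.

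Now assume $G$ irreducible, let $L_s^+$ and $L_t^+$ be highest half-lines with $s$ conjugate to $t$, and suppose for contradiction $s\neq t$. Applying Lemma~\ref{all positive} to the highest half-line $L_s^+$ and the positive half-line $L_t^+$ gives $(L_s^+\mid L_t^+)_\vp > 0$; in particular $L_s\neq L_t$ (a reflection of a real reflection group being determined by its reflecting line) and $L_s$ is not orthogonal to $L_t$, so $s$ and $t$ do not commute. By Proposition~\ref{conjiffD}, $s$ and $t$ are then conjugate in the dihedral group $D = \langle s,t\rangle$. The plan is to restrict the whole picture to the $D$-stable plane $P = L_s + L_t$, on which $D$ acts faithfully as a genuine dihedral group, and to $W_P = M_s + M_t = \vp(P)$: the Hermitian pairing and $\vp$ restrict appropriately, and the $(\cdot\mid\cdot)_\vp$-orthogonal projection $v_0'$ of $v_0$ onto $P$ defines an admissible preorder for $D$. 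This is where I would use Lemma~\ref{identifying}: since $H_{s'}$ is the $(\cdot\mid\cdot)_\vp$-orthogonal of $L_{s'}$, the hypothesis $v_0\notin H_{s'}$ forces $v_0'\notin H_{s'}$ for every reflection $s'$ of $D$ (and likewise on the $W$-side for $\vp(v_0')$), and $v_0 - v_0'$ being orthogonal to $P$ yields $\scal{v_0'}{w} = \scal{v_0}{w}$ for $w\in W_P$, so the positive half-lines of the reflections of $D$ for this preorder are precisely the restrictions of those for $G$. Hence $L_s^+$ and $L_t^+$ are highest half-lines for this dihedral Coxeter system, and by Lemma~\ref{onedihedral} (applied to the $D$-conjugacy class of $s$, which contains $t$) they coincide; thus $L_s = L_t$ and $s = t$, a contradiction.

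I expect the main obstacle to be the bookkeeping in this last step — verifying that the restriction of the admissible preorder to the dihedral plane is again admissible and induces exactly the restricted family of positive half-lines. This is routine given Lemma~\ref{identifying}, but it is the one point requiring care; the rest of the argument is a direct assembly of Proposition~\ref{ll}, Lemma~\ref{all positive}, Proposition~\ref{conjiffD} and Lemma~\ref{onedihedral}.
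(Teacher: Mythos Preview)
Your argument is correct and follows essentially the same route as the paper's proof: reduce to the irreducible case, use Lemma~\ref{all positive} to show two conjugate highest half-lines correspond to non-commuting reflections, invoke Proposition~\ref{conjiffD} to get conjugacy in the dihedral subgroup they generate, and then contradict Lemma~\ref{onedihedral}. The paper's proof simply asserts that ``$L_s^+$ and $L_{s'}^+$ are still highest half-lines in this subgroup'' without elaboration; your careful verification that the admissible preorder restricts correctly to the dihedral plane (via the orthogonal projection of $v_0$ and Lemma~\ref{identifying}) supplies exactly the justification the paper omits.
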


\begin{proof}
        Suppose $L_s^+$ and $L_{s'}^+$ are distinct highest half-lines in the same
        conjugacy class. We will derive a contradiction.

         Two  conjugate reflections belong  to the same  irreducible component of
        $G$,  so  we  may  assume  $G$  irreducible.  Then,  by  Lemma~\ref{all
        positive}, $L_s^+$ and $L_{s'}^+$ are not orthogonal, so $s$ and $s'$ do
         not  commute. Thus, by  Proposition \ref{conjiffD}, they  are conjugate in
        the  dihedral  subgroup  they  generate;  and $L_s^+$ and $L_{s'}^+$ are
        still  highest  half-lines  in  this  subgroup.  This  contradicts
        Lemma~\ref{onedihedral}.
\end{proof}

\subsection{Real root systems}\habel{subsec:realrootsystems}

        Our approach allows us to extend the theory of root systems for Weyl
        groups to root systems for finite Coxeter groups. We spell out in this
        subsection how our definitions translate in this case.
        As in Subsection \ref{realfields}, we assume $k\subset \BR$, and
        we use the notation $k^+$ and $k^-$.
        
        Let $\fR$ be a \emph{reduced} \zroot\ system, and let $G := G(\fR)$.
        For $s$ a reflection in $G$ we denote by $\frr_s$ the element of $\fR$ associated
        with $s$, and for $\frr\in \fR$ we denote by $s_\frr$ the reflection of $G$ determined 
        by $\frr$.
        
        We assume chosen a positive definite Hermitian $kG$-isomorphism 
        $\vp : V \iso W$, and $v_0$ a nonzero element of $V$ defining an order on $V$
        and $W$ (as in Subsection \ref{realfields}). 
        For 
        $\frr = (I_\frr,J_\frr,\zeta_\frr) \in \fR$, we set
        $$
                I_\frr^+ :=  I_\frr \cap V^+ \quad\text{and}\quad J_\frr^+ = J_\frr \cap W^+
                \,.
        $$
        
        If $\Si$ is the family of simple reflections determined by the choice of the order on $V$
        (see Proposition \ref{existencesimpleroots}), we set
        $$
                \fR_\Si := \left\{ \frr \in \fR \mid s_\frr \in \Si \right\}
                \quad\text{and}\quad
                \fR_\Si^\vee:= \left\{ \frr^\vee \in \fR^\vee \mid s_\frr \in \Si \right\}
                \,.
        $$

\begin{theorem}\label{simplebasis}
        Under the above hypotheses and notation,
\begin{enumerate}
        \item
                $\fR_\Si$ is a root basis and $\fR_\Si^\vee$ is a coroot basis,       
        \item
                Whenever $\frr \in \fR$,
                $$
                        I_\frr^+ \subset \bigoplus_{\si\in\Si} I^+_{\frr_\si}
                        \,.
                $$
\end{enumerate}
\end{theorem}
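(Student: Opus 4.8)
The plan is to derive both statements from the structure theory of simple reflections established earlier (Propositions~\ref{existencesimpleroots} and~\ref{propertiessimple}) together with the root/coroot basis criterion of Proposition~\ref{root and coroot basis}, after first observing that when $k\subset\BR$ a reduced \zroot\ system is automatically complete and distinguished. Indeed, since each $\zeta_\frr$ is a nontrivial root of unity in $k\subset\BR$, necessarily $\zeta_\frr=-1$, so every $s_\frr$ is an involution; more generally every reflection of $G:=G(\fR)$ is an involution, and $G$ is finite by Theorem~\ref{groupsandroots}, so for every reflecting hyperplane $H$ the group $C_G(H)$ — which is isomorphic to a finite subgroup of $k^\times\subset\BR^\times$ by Proposition~\ref{reflectingpairs}(3) — has order exactly $2$. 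Hence $\det(s_\frr)=-1=\exp(2\pi i/2)$ with $|C_G(H_{s_\frr})|=2$, so $\fR$ consists of distinguished roots and, being reduced, is distinguished; and since the unique nontrivial element of each $C_G(H)$ is distinguished, Proposition~\ref{prop:distinguishedbijection} (or directly Proposition~\ref{ArrR}) shows that $\frr\mapsto s_\frr$ is a bijection from $\fR$ onto the set of \emph{all} reflections of $G$, i.e.\ $\fR$ is complete. In particular the root $\frr_\si$ associated with a simple reflection $\si\in\Si$ is well defined, $\fR_\Si=\{\frr_\si\mid\si\in\Si\}$, $\fR_\Si^\vee=\{\frr_\si^\vee\mid\si\in\Si\}$, and by Proposition~\ref{propertiessimple}(2)(b) the family $(s_\frr)_{\frr\in\fR_\Si}=\Si$ generates $G(\fR)$.

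For part~(1): condition~(A) of Definition~\ref{defsimpleroot} gives $V=\bigoplus_{\si\in\Si}L_\si$, so $|\fR_\Si|=|\Si|=\dim V$ (using that $\fR$ is reduced, so $\frr\mapsto s_\frr$ is injective on $\fR_\Si$). Since $\fR$ is distinguished, Proposition~\ref{root and coroot basis} applies to $\Pi=\fR_\Si$ and shows that $\fR_\Si$ is simultaneously a root basis and a coroot basis; explicitly $Q_\fR=\bigoplus_{\si\in\Si}I_{\frr_\si}$ and $Q_\fR^\vee=\bigoplus_{\si\in\Si}J_{\frr_\si}$. The dual system $\fR^\vee$ in $W$ is again reduced (via $\frr\mapsto\frr^\vee$ and $s_{\frr^\vee}=s_\frr^\vee$), hence complete and distinguished, with $G(\fR^\vee)=G(\fR)^\vee$ generated by $(s_{\frr^\vee})_{\frr\in\fR_\Si}$ and $|\fR_\Si^\vee|=\dim W=\dim V$; applying Proposition~\ref{root and coroot basis} to $\fR^\vee$ shows $\fR_\Si^\vee$ is a root basis of $\fR^\vee$, which is exactly the assertion that it is a coroot basis.

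For part~(2): fix $\frr\in\fR$ and $x\in I_\frr^+$, the case $x=0$ being trivial. Directly from the definitions $L_{s_\frr}\cap V^+=L_{s_\frr}^+$, so $x\in L_{s_\frr}^+$, and condition~(B) of Definition~\ref{defsimpleroot} yields a decomposition $x=\sum_{\si\in\Si}z_\si$ with $z_\si\in L_\si^+$. On the other hand $x\in I_\frr\subset Q_\fR=\bigoplus_{\si\in\Si}I_{\frr_\si}$ by part~(1), so also $x=\sum_{\si\in\Si}y_\si$ with $y_\si\in I_{\frr_\si}\subset L_\si$. Both are decompositions of $x$ along the direct sum $V=\bigoplus_{\si\in\Si}L_\si$, so uniqueness forces $y_\si=z_\si$ for every $\si$; hence $y_\si\in I_{\frr_\si}\cap V^+=I_{\frr_\si}^+$ and $x=\sum_{\si\in\Si}y_\si\in\bigoplus_{\si\in\Si}I_{\frr_\si}^+$.

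Almost all the content is carried by the earlier results on simple reflections and root bases; the only genuinely new ingredients are the real-case passage from ``reduced'' to ``complete and distinguished'' (which is what makes $\frr_\si$ available and $\fR_\Si$ a $\dim V$-element generating subset), and, in part~(2), the compatibility of the positive half-line decomposition furnished by~(B) with the lattice decomposition $Q_\fR=\bigoplus_{\si}I_{\frr_\si}$ furnished by~(1). I expect this last matching of two \emph{a priori} different direct-sum decompositions — forced, in the end, only by the uniqueness of the decomposition along $V=\bigoplus_{\si}L_\si$ — to be the main, though quite mild, obstacle.
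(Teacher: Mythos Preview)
Your proof is correct and follows the paper's approach; the paper's own proof is a two-liner that cites exactly the ingredients you use (that $\Si$ generates $G$ via Proposition~\ref{propertiessimple}(2)(b), Proposition~\ref{prop:RootBasis} for part~(1), and condition~(B) of the definition of simple reflections for part~(2)). Your argument is in fact more complete than the paper's: you explicitly justify why a reduced real \zroot\ system is automatically distinguished and complete (needed to invoke Proposition~\ref{root and coroot basis} and to have $\frr_\si$ well defined), and for part~(2) you spell out the step the paper hides behind ``immediate consequence'' --- namely, that condition~(B) only controls the $L_\si^+$-components, and one must match this with the lattice decomposition $Q_\fR=\bigoplus_\si I_{\frr_\si}$ from part~(1) via uniqueness along $V=\bigoplus_\si L_\si$ to land in $I_{\frr_\si}^+$.
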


\begin{proof}
        Assertion (1) results from the fact that $\Si$ generates $G$ 
        (Proposition \ref{propertiessimple}, (2)(b))
        and from Proposition \ref{prop:RootBasis}, (2). 
        Assertion (2) is an immediate consequence of Proposition \ref{existencesimpleroots}, (2).
\end{proof}
\smallskip

\section{\red{Bad numbers}}
\smallskip

        Let $W$ be an irreducible Weyl group. Then
\begin{enumerate}
        \item
                By \cite[Ch.vi, \S 2, Proposition 7]{bou},
                $$
                        |W| = r!c_W(n_1\cdots n_r) 
                         \,.
                $$
                where $n_1,\ldots,n_r$ are the coefficients of the longest root 
                on the basis of simple roots, 
                and $c_W$ is he connection index of $W$.
        \item
                The set of \emph{bad primes\/} for $W$ is defined as the set 
                of prime divisors of the product $n_1\cdots n_r$. 
\end{enumerate}
        The above definition for the set of bad primes is equivalent to several others, see
         \cite[4.3(c)]{spst}.

       Theorem~\ref{th:connectionindex} ensures that the connection index for any 
       irreducible, well-generated complex reflection group $(V,G)$ is well-defined, 
       independent of the choice of root system. 
       Moreover, by Propositions~\ref{ceen} and~\ref{cd1n}, and the fact that the rings of integers
       for the fields of definition of the primitive reflection groups are principal, we know
       that the connection index of an irreducible, well-generated complex reflection group
       is always principal. We write $c_G$ for a generator of the connection index, 
       which is then well-defined up to a unit.
       This allows us to extend the definition of bad primes as follows:  

\begin{theodef}\label{def:badprime}
        For every irreducible well-generated complex reflection group $G$ of rank
        $r$:
\begin{enumerate}
        \item 
                $c_G r!$ divides $|G|$.
         \item
                We define the {\em bad prime ideals} 
                \index{bad prime ideals}
                for $G$ as the set of prime ideals of 
                $\BZ_k$ which divide $|G|/(c_G r!)$.
\end{enumerate}
\end{theodef}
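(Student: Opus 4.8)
The plan is to establish (1); once (1) is known, $\ideal{|G|}\subseteq(c_G r!)\BZ_k$, so --- since $\BZ_k$ is Dedekind --- $|G|/(c_G r!)$ is a genuine integral ideal and (2) is merely the definition of its set of prime divisors. To prove (1) I would run through the Shephard--Todd classification (Theorem~\ref{classi}), keeping only the well-generated irreducible groups, for which the connection index is well defined by Theorem~\ref{th:connectionindex}.

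Consider first the infinite families. By the list following Definition~\ref{def:wellgenerated}, a well-generated irreducible member is $\fS_n$ (type $A_{n-1}$), some $G(d,1,r)$ with $d\geq 2$, or some $G(e,e,r)$. For $\fS_n$ one has $k=\BQ$, rank $r=n-1$, $|G|=n!$, and $c_G=n$ --- the determinant of the $A_{n-1}$ Cartan matrix, obtained from $c_{A_1}=2$, $c_{A_2}=3$ and the recursion of Lemma~\ref{lem:jeancartan} --- so $c_G\,r!=n\cdot(n-1)!=|G|$. For $G(d,1,r)$, Proposition~\ref{cd1n} gives $c_G=1-\zeta_d$ and $|G|=d^r r!$; the ideal $\ideal{(1-\zeta_d)}$ is the unit ideal unless $d$ is a prime power $p^a$, in which case $\ideal{(1-\zeta_d)}^{\varphi(d)}=\ideal p$ divides $\ideal d$, so in all cases $\ideal{(1-\zeta_d)}\mid\ideal d$, whence $c_G\,r!$ divides $d^r r!=|G|$. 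For $G(e,e,r)$ (irreducible, so $r\geq 2$, and $e\geq 3$ when $r=2$), Proposition~\ref{ceen} gives $c_G=(1-\zeta_e)(1-\zeta_e\inv)$ and $|G|=e^{r-1}r!$; since $(1-\zeta_e\inv)/(1-\zeta_e)$ is a unit, $c_G\BZ_k=\ideal{(1-\zeta_e)}^2$, which is the unit ideal unless $e=p^a$, and then a short check with $\ideal{(1-\zeta_e)}^{\varphi(e)}=\ideal p$ (using $\varphi(e)\geq 2$ when $r=2$, and $r-1\geq 2$ when $r\geq 3$) shows $\ideal{(1-\zeta_e)}^2$ divides $\ideal{e^{r-1}}$, so again $c_G\,r!$ divides $|G|$. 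The facts about $\ideal{(1-\zeta_m)}$ used here are the elementary cyclotomic lemmas of Appendix~\ref{arithmetic}.

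Now the exceptional groups. There are $34-8=26$ well-generated exceptional groups (all of $G_4,\dots,G_{37}$ except $G_7,G_{11},G_{12},G_{13},G_{15},G_{19},G_{22},G_{31}$). For each, $\BZ_k$ is a principal ideal domain, so $c_G$ is a genuine element of $\BZ_k$, well defined up to a unit, equal by Proposition~\ref{prop:connectionwellgen} to the determinant of the Cartan matrix of any distinguished principal root basis; these determinants, together with $|G|$ and $r$, are recorded in the tables of Appendix~\ref{table}, from which one reads off directly that $c_G\,r!$ divides $|G|$ in each case.

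The main obstacle is that there is no uniform argument. For Weyl groups, Bourbaki's identity $|W|=r!\,c_W(n_1\cdots n_r)$ (\cite{bou}) makes (1) transparent, the quotient being the product of the marks of the highest root; but once units enter, the ``highest root'' has no intrinsic meaning (as discussed before Definition~\ref{def:highesthalfline}), and no such formula is available. So the proof is unavoidably a case analysis resting on the explicit connection-index computations for the imprimitive series (Propositions~\ref{cd1n} and~\ref{ceen}) and on the tables of Appendix~\ref{table} for the $26$ exceptional groups: the work lies in assembling and checking this data rather than in any single conceptual step.
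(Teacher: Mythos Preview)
Your proposal is correct and follows essentially the same route as the paper, which proves (1) ``by inspection, using the values of $c_G$ given in the tables of Appendix~\ref{table} for primitive well-generated groups, and in \ref{cd1n} and \ref{ceen} for imprimitive groups.'' You supply more arithmetic detail for the imprimitive families than the paper does; the only point you gloss over is that for $G(e,e,2)$ the field of definition is $\BQ(\zeta_e+\zeta_e^{-1})$ rather than $\BQ(\zeta_e)$, but the divisibility conclusion is unaffected (cf.\ Lemma~\ref{zeta+zetainv prime}).
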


\begin{proof}
        The proof of (1) is by inspection, using the values of $c_G$ given in
         the tables of Appendix \ref{table} for primitive well-generated groups, and
        in \ref{cd1n} and \ref{ceen} for imprimitive groups.
\end{proof}

        Now let $W$ denote an irreducible \emph{spetsial group\/} -- that is, 
        either a member of one of the imprimitive families $G(e,e,r)$, $G(d,1,r)$ or 
        a primitive complex reflection group denoted $G_n$ where  
        $$n \in \{4,6,8,14,23,24,25,26,27,28,29,30,32,33,34,35,36,37\}$$
        according to the Shephard-Todd notation. 
        (Note that spetsial groups are all well-generated.)
        For the definitions relative to the program 
        \emph{``Spetses''\/}, we refer the reader to \cite{spe2}.

\begin{definition}\label{def:badnumber}
         We denote by $\Bad_W$ 
         \index{BadW@$\Bad_W$}
         the largest integral ideal $\fa$ such that, 
         whenever $S(x) \in \BZ_k[x,x\inv]$ is a Schur element of an
         irreducible character of the spetsial Hecke algebra of $W$, then
         $ S(x)\fa\inv \subset\BZ_k[x,x\inv]$.
\end{definition}
        
        An  equivalent definition,  which can  be more  convenient to  use, is that
        whenever  $D(x)\in  k[x]$  is  a  unipotent  degree of the principal series
        attached  to  $W$,  then  $\fa  D(x)\subset  \BZ_k[x]$.         
\begin{conjecture}\label{badunip}
        $\Bad_W$ can also be characterized as the largest ideal $\fa$ such 
        that whenever  $D(x)\in  k[x]$  is  any  unipotent  degree
        attached  to  $W$,  then  $\fa  D(x)\subset  \BZ_k[x]$.
\end{conjecture}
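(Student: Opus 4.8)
The plan is to reduce Conjecture~\ref{badunip} to a single integrality statement, and then to attack that statement family by family. First I would observe that only one inclusion is genuinely in question. The principal series unipotent degrees form a subset of all unipotent degrees attached to $W$, so the ``largest $\fa$'' of Conjecture~\ref{badunip} is automatically contained in the ideal $\Bad_W$ of Definition~\ref{def:badnumber} (imposing more clearing conditions can only shrink the clearing ideal). Since the equivalent form of Definition~\ref{def:badnumber} already yields $\Bad_W\, D(x)\subset\BZ_k[x]$ for every $D$ in the principal series, the two ideals coincide as soon as one proves that $\Bad_W\,D(x)\subset\BZ_k[x]$ holds for \emph{every} unipotent degree $D$. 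Thus the whole content of the conjecture is this last integrality assertion. For Weyl groups it is classical (Lusztig): with $k=\BQ$ and $\BZ_k=\BZ$, the generic degrees of all unipotent characters have denominators only at the bad primes, so the conjecture is known in that case, and the natural strategy is to try to generalize Lusztig's argument.

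The core of the plan is the family structure. Fix a family $\CF$ of unipotent characters of the (putative) Spets attached to $W$; the generic degrees of the members of $\CF$ are obtained from the generic degrees of its principal series members --- equivalently, up to the Poincar\'e polynomial, from the Schur elements of the spetsial Hecke algebra, whose contents generate $\Bad_W$ --- by applying the Fourier matrix $S^\CF$ of $\CF$ together with the normalization data (the Frobenius eigenvalues and the global scalar). The key step I would carry out is to bound, in terms of $\Bad_W$, any fractional ideal of $\BZ_k$ arising as a denominator of an entry of $S^\CF$ or of those normalization factors. The expected mechanism --- and the reason the numerology of Theorem--Definition~\ref{def:badprime} enters --- is that these denominators are controlled by the order of the small finite group $\Gamma_\CF$ associated with $\CF$, whose prime divisors are exactly bad primes; one then must check that the contribution of $S^\CF$ never exceeds the contribution already forced on $\Bad_W$ by the principal series members of the \emph{same} family.

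Concretely, for the imprimitive spetsial series $G(d,1,r)$ and $G(e,e,r)$ the families, their Fourier matrices, and the unipotent degrees are described by the combinatorics of $d$-symbols and of multipartitions, so the required bound should follow from an explicit (if lengthy) arithmetic computation involving $1-\zeta_d$ and $1-\zeta_e$, with $\Bad_W$ pinned down on the other side by the connection indices of Propositions~\ref{cd1n} and~\ref{ceen}. For the eighteen primitive spetsial groups $G_n$ the rings of integers are principal ideal domains and the families together with their Fourier matrices are explicitly tabulated, so the statement becomes a finite verification: for each family, compare the content of the non-principal generic degrees against $\Bad_W$, a check one can organize with the help of \CHEVIE.

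The main obstacle is that there is at present no uniform, conceptual reason why the denominators produced by the Fourier matrices of families never become worse than those coming from Schur elements: the link between these denominators and $|\Gamma_\CF|$, and hence with bad primes, is morally transparent but has not been turned into a proof valid for all spetsial groups at once, and even in a case-by-case treatment the normalization scalars attached to non-rational families (those carrying a nontrivial root of unity or a quadratic irrationality) require delicate control. This is precisely why the statement is offered as a conjecture rather than a theorem; a complete proof would need either such a conceptual argument or an exhaustive --- feasible but heavy --- check over all families of all spetsial groups.
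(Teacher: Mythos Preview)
Your reduction is sound: since the principal series degrees sit among all unipotent degrees, the ideal of Conjecture~\ref{badunip} is automatically contained in $\Bad_W$, and the whole content is indeed the single integrality statement $\Bad_W\,D(x)\subset\BZ_k[x]$ for every unipotent degree $D$.

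That said, the paper does \emph{not} prove this conjecture; it is stated as a conjecture and remains one. What the paper actually does is record the partial verification that falls out of the computations carried out for the neighbouring theorem: for the primitive spetsial groups the conjecture is checked by direct inspection of all unipotent degrees (via \CHEVIE\ and \cite{spe2}); for $G(d,1,r)$ it is deduced from \cite[\S 5.3]{spe2}; and for the dihedral groups $G(d,d,2)$ it follows from Lusztig's tables \cite{lu}. No verification is offered for $G(d,d,r)$ with $r\ge 3$, so that case is precisely where the statement is still open.

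Your proposed route through the family structure and Fourier matrices is more conceptual than anything the paper attempts, and if it worked it would give a uniform argument. Two cautions, however. First, the passage from principal series degrees to the remaining degrees in a family is not literally ``apply $S^\CF$ to the principal series degrees'': the Fourier transform intertwines \emph{almost characters} (governed by fake degrees) and unipotent characters, so one must control the fake-degree side and the matrix entries simultaneously, not just propagate integrality from the Schur elements. Second, even granting that the denominators of $S^\CF$ are bounded by $|\Gamma_\CF|$, one still needs that each prime of $\BZ_k$ dividing $|\Gamma_\CF|$ already divides $\Bad_W$ to sufficient multiplicity, and it is exactly this multiplicity comparison that resists a uniform treatment. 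Your concluding diagnosis is therefore accurate: the obstacle you name is the reason the paper leaves the statement as a conjecture and falls back on case-by-case computation where it can.
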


\begin{remark}
	We were unable to answer the following question:
	is $\Bad_W$ always a principal ideal?
\end{remark}

        It is apparent from the values of unipotent degrees computed by
        Lusztig that the following theorem holds
        in the case of Weyl groups.
        
\begin{theorem} For any spetsial complex reflection group $W$,  
\begin{enumerate}
        \item
                $\Bad_W$ divides $|W|/(r! c_W)$.
        \item
                All bad prime ideals divide $\Bad_W$.
\end{enumerate}
        or, equivalently,
        
        $\Bad_W$ divides $|W|/(c_W r!)$ and $|W|/(c_W r!)$ divides a power of $\Bad_W$.
\end{theorem}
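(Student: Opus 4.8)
The plan is to deduce both assertions — and, with them, the equivalent reformulation — by running through the classification of irreducible spetsial reflection groups, since $\Bad_W$ is defined in terms of the unipotent degrees, which are only available case by case. Throughout, $W$ is irreducible (as fixed above), $r=\dim V$, and $c_W$ denotes a generator of the connection index; this makes sense because a spetsial group is well-generated, so that either $\BZ_k$ is a P.I.D. (all primitive cases) or $G$ is one of the imprimitive families $G(d,1,r)$, $G(e,e,r)$, for which Propositions~\ref{cd1n} and~\ref{ceen} compute the connection index explicitly. First I would dispose of the equivalence of the two formulations, which is purely formal: assertion (2) says that every bad prime ideal — i.e. every prime ideal dividing $|W|/(c_W r!)$ — divides $\Bad_W$, so $|W|/(c_W r!)$ and $\Bad_W$ have the same prime divisors; combined with assertion (1), this is exactly the statement that $\Bad_W$ divides $|W|/(c_W r!)$ and $|W|/(c_W r!)$ divides a power of $\Bad_W$. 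Hence it suffices to establish (1) and (2).

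For Weyl groups the result follows from Lusztig's explicit determination of the unipotent degrees. One reads off the common-denominator ideal $\Bad_W$ from his tables in the exceptional types, and from the combinatorial description via symbols in types $B$, $C$, $D$; then assertion (1) is checked against $|W|/(c_W r!)=n_1\cdots n_r$, the product of the coefficients of the highest root, using the identity $|W|=r!\,c_W(n_1\cdots n_r)$ quoted at the start of this section. Assertion (2) is then the classical fact that the primes dividing $n_1\cdots n_r$ are exactly the primes appearing in a denominator of some unipotent degree (equivalently, the primes at which $W$ has a family carrying a non-trivial Fourier matrix). Both points are what is meant by the remark that the theorem ``is apparent from the values of unipotent degrees computed by Lusztig''.

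For the remaining spetsial groups this becomes a genuine computation. For $G(d,1,r)$ and $G(e,e,r)$ I would use the product formulas for the Schur elements of the associated spetsial cyclotomic Hecke algebras (equivalently, the formulas for the unipotent degrees attached to $d$-symbols due to Malle), extract from them the common-denominator ideal $\Bad_W$, and compare it with $|W|/(c_W r!)$: here $|G(d,1,r)|=d^r r!$ with $c_W=\ideal{(1-\zeta_d)}$, so $|W|/(c_W r!)=\ideal{d^r}\ideal{(1-\zeta_d)}\inv$, while $|G(e,e,r)|=e^{r-1}r!$ with $c_W=\ideal{(1-\zeta_e)(1-\zeta_e\inv)}$, so $|W|/(c_W r!)=\ideal{e^{r-1}}\ideal{(1-\zeta_e)(1-\zeta_e\inv)}\inv$. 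Factoring $\ideal d$ (resp.\ $\ideal e$) into prime ideals of $\BZ[\zeta_d]$ (resp.\ of $\BZ[\zeta_e+\zeta_e\inv]$ for the dihedral groups), using the arithmetic collected in Appendix~\ref{arithmetic}, reduces both divisibilities to bounding, for each prime ideal $\fp$, the exact power of $\fp$ that occurs in the denominators, which one does by inspection of the symbol formulas. Finally, for the primitive spetsial groups $G_n$ with $n\in\{4,6,8,14,23,\dots,37\}$ as listed, $\BZ_k$ is a P.I.D., $c_W$ is the principal connection index recorded in the tables of Appendix~\ref{table}, and one verifies (1) and (2) directly against Malle's tabulated unipotent degrees.

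The main obstacle is the exact-multiplicity part of assertion (1): showing that $\Bad_W$ divides $|W|/(c_W r!)$ on the nose, and not merely up to radical. For Weyl groups and for primitive groups this is absorbed into the correctness of the tables, but for $G(d,1,r)$ and $G(e,e,r)$ it requires matching the exact power to which a given prime ideal $\fp$ of $\BZ[\zeta_d]$ divides $\ideal{d^r}\ideal{(1-\zeta_d)}\inv$ against the largest power of $\fp$ occurring in a denominator of a unipotent degree; this is the one point where no clean conceptual argument is available and a direct estimate on the symbol formulas is needed.
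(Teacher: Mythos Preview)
Your overall architecture matches the paper's: case-by-case via the classification, tables for the primitive groups, and Malle's symbol formulas for the imprimitive families. The equivalence of the two formulations and the computation of $|W|/(c_W r!)$ in each family are also handled correctly.

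The gap is in what you call ``a direct estimate on the symbol formulas'': this is where the actual content of the proof lies, and the paper does something more structured than a bare estimate. For $G(d,1,r)$, writing a principal-series unipotent degree as $P_S(x)/f_S$ with $f_S=\prod_{i<j}(\zeta^i-\zeta^j)^{m-|S_i\cap S_j|}$, the paper proves that $f_S$ divides $d^r/(1-\zeta_d)$ by \emph{induction on $r$}: any principal-series symbol admits a $(1,1)$-hook, and stripping it multiplies $f_S$ by at most $\prod_{j\ne i}(\zeta^i-\zeta^j)$, which is $d$ up to a unit, exactly compensating the factor of $d$ gained in $d^r/(1-\zeta_d)$. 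The base case $r=1$ is read off from the rank-one data. For item (2) one observes that adding a hook at the top of a symbol can only increase the exponents $m-|S_i\cap S_j|$, so divisors of $f_S$ propagate to higher rank, reducing again to rank one.

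For $G(d,d,r)$ there is a further complication your sketch does not anticipate: the unipotent degree carries an extra factor $\gamma(S)/d$, where $\gamma(S)$ is the order of the cyclic stabilizer of the symbol under rotation. The hook induction handles the case $\gamma(S)=1$, but when $\gamma(S)=\gamma>1$ the paper proves a separate lemma, namely that $f_S=f_{S'}^{\gamma}$ where $S'$ is the $d'$-symbol (with $d'=d/\gamma$) of which $S$ is a concatenation of $\gamma$ copies; one then feeds in the already-established bound for $S'$ and closes the argument with the arithmetic of Corollary~\ref{zeta divides}. Without isolating this $\gamma>1$ case and the lemma $f_S=f_{S'}^\gamma$, a direct power-counting on the symbol formula will not go through.
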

   It follows that the ``bad primes'' defined in \cite[above Proposition 1.31]{Broue-Kim}
   are the same as ours.
\begin{proof}
        For the primitive  spetsial reflection groups, the proof is by inspection of:
\begin{itemize}
        \item 
                the tables of unipotent degrees of the principal series
                given in \cite{spe2}, and
        \item 
                the values of $c_W$ given in the tables of Appendix~\ref{table}.
\end{itemize}
        The values of $\Bad_W$ are reported in the tables of Appendix~\ref{table}.
        In  every case,
        $\Bad_W$ is  also  equal to the largest
        integral   ideal  $\fa$  such  that  for  {\em all}
        unipotent degrees $\fa D(x)\subset\BZ[x]$; that is,
        Conjecture \ref{badunip} holds.

\medskip

        We now prove the theorem for the imprimitive groups $G(d,1,r)$.
        Set $\zeta := \exp{(2 \pi i /d)}$. By Proposition~\ref{cd1n}, $c_W=1-\zeta$,
        thus $|W|/(c_W r!)=d^r/(1-\zeta)$. To compute $\Bad_W$, we use the
        formula for the unipotent degrees given in \cite[3.8]{ma}. Let us
        recall the setup.

        \begin{itemize}
        \item 
        The  unipotent degrees  of the  principal series  of $G(d,1,r)$ are
        parameterized  by the $d$-symbols  $S=(S_0,\ldots,S_{d-1})$ of rank
        $r$  and  of  shape $(|S_0|,\ldots,|S_{d-1}|)=(m+1,m,\ldots,m)$ for
        $m\in\BN$ large enough.

       	\item 
		The unipotent degree attached
        to $S$ is of the form $P_S(x)/f_S$ where 
        \begin{itemize}[label=$\rhd$]
        \item $P_S(x)\in \BZ_k[x]$ is a monic
        polynomial and 
        \item $ f_S=\tau(d)^m/ (\prod_{0\le i<j\le
        d-1}(\zeta^i-\zeta^j)^{|S_i\cap S_j|})$ up to a unit, where
        $\tau(d)=\prod_{0\le i<j\le d-1}(\zeta^i-\zeta^j)$, so that
        \begin{equation}\label{fS} f_S=\prod_{0\le i<j\le d-1}
        (\zeta^i-\zeta^j)^{m-|S_i\cap S_j|} \,. \end{equation} 
        \end{itemize} 
        Due to
        the shape of the symbols, $m\ge |S_i\cap S_j|$, and so  $f_S\in \BZ_k$. 
        Notice
        also  that  $f_S$  depends  only  on  the  equivalence class of $S$, since
        shifting  all the $S_i$ increases  by 1 all the  $|S_i\cap S_j|$, and also
        increases $m$ by $1$, thus leaving invariant $f_S$.
		\end{itemize}
		
        To  prove  item  (1)  of  the  theorem,  we have to show that $f_S$
        divides $|W|/(r!c_W)=d^r/(1-\zeta)$. 
        We will show this by induction
        on  $r$. 
        By \cite[\S 3.C]{ma}, a symbol of
        the  principal series is not 1-cuspidal, thus admits a $(1,1)$-hook;
        that is, any of the considered symbols  of rank $r$ can be obtained
        from a symbol of rank $r-1$ by increasing by $1$ one of the entries
        $\lambda\in  S_i, \lambda+1\notin S_i$ for some  $i$. 

        The effect of this is to reduce by at most $1$ the
        $|S_i\cap S_j|$ for $j\ne i$, that is  to multiply $f_S$
        by at most $\prod_{j\in[0\ldots d-1], j\ne i}(\zeta^i-\zeta^j)$, 
        which is equal to $d$
        up to a unit. But increasing $r$ by $1$ multiplies
        $d^r/(1-\zeta)$ by $d$, so the divisibility of $d^r/(1-\zeta)$ by $f_S$ is
        preserved.
        It remains to show the starting point of the induction, which is
        that when $r=1$, $f_S$ divides $(d/(\zeta-1))$.
        This results from \cite[\S 5.2]{spe2}, which finishes the proof of
        item (1) for $G(d,1,r)$. Note that by
        \cite[\S 5.3]{spe2} it can be seen that Conjecture \ref{badunip}
        holds in this case.

\newcommand\loccit{{\it loc.cit.\kern -2pt }}
        Let  us  prove  item  (2)  for  $G(d,1,r)$.  First, note that for a
        $d$-symbol  $S$ of rank  $r$, if $S'$  is the symbol  of rank $r+1$
        obtained  by increasing by $1$ the highest entry in $S$, then $f_S$
        divides  $f_{S'}$. Indeed,  the numbers  $m-|S_i\cap S_j|$ can only
        increase when going from $S$ to $S'$. Thus it is sufficient to show
        that  for $r=1$ any prime ideal  dividing $d$ (which is the same as
        dividing $d/(1-\zeta)$) divides $\Bad_W$. For
        instance  (see  \cite[\S  5.2]{spe2})  for  the  character  denoted
        $\rho_1$  in \loccit\ we  have $f_S=d/(\zeta-1)$, which proves the
        result   (since  $\ideal{(1-\zeta)}=\ideal{(1-\zeta\inv)}$  divides
        $d/(\zeta-1)$).

\medskip

        To complete the proof,  we consider the case of the groups $G(d, d, r)$ ($r\ge 2$)
        following \cite[\S 6]{ma}. The setup is as follows:
        
        \begin{itemize}
        \item
        The unipotent
        characters of the principal series are parameterized by $d$-symbols
        of rank $r$ and shape $(m,\ldots,m)$. 

        \item
        By \cite[6.4]{ma}, the unipotent degree attached
        to $S$ is of the form $P_S(x)/f'_S$ where $P_S(x)\in \BZ_k[x]$ 
        is a monic polynomial and, up to unit, we have
        $f'_S=f_S \gamma(S)/d$ where $f_S$ is as in equation~\ref{fS} and
        $\gamma(S)$ is the cardinality of the subgroup of $\BZ/d$ leaving $S$
        invariant, where $i\in\BZ/d$ acts on $S$ by mapping it to the symbol
        $S'$ such that $S'_j=S_{(j+i)\pmod d}$.
        \end{itemize}

        This time we have $|W|/(r! c_W)=d^{r-1}/((1-\zeta)(1-\zeta\inv))$
        (see Proposition~\ref{ceen}). To show that $f_S \gamma(S)/d$ divides that number
        is equivalent to  showing that
        \begin{equation}\label{cond fS}
                f_S \gamma(S) \text{ divides } \frac{d^r}{(1-\zeta)(1-\zeta\inv)}
        \end{equation}
        We will show this by a double induction. When $\gamma(S)=1$ we proceed
        by induction on $r$. Just as in the case of $G(d,1,r)$, the symbol
        admits a $(1,1)$-hook and we reduce the problem to the case of rank
        $r-1$. The starting case is $r=2$.

        In  this case  we may  look at  \cite[4.1]{lu} where  the unipotent
        degrees  of the  dihedral groups  $G(d,d,2)$ are  given in the form
        $P_S(x)/f_S$  where  $P$  is  an  integral  polynomial and $f\in \BZ_k$
        divides  $d/((1-\zeta)(1-\zeta\inv))$. The divisibility is obvious
        except when  $d$ is even and $f_S=d/2$, where one needs 
        Corollary \ref{zeta divides} below.
        It  can be  seen also  that for $G(d,d,2)$ Conjecture \ref{badunip}
        holds.

        The other case of the induction is when $\gamma:=\gamma(S)>1$. In this case
        if we set $d'=d/\gamma$, then $S$ is the concatenation of $\gamma$ copies of a 
        $d'$-symbol $S'$ of rank $r':=r/\gamma$.  
        
\begin{lemma} 
                Up to a unit, we have $f_S=f_{S'}^\gamma$.
\end{lemma}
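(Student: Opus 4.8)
The plan is to prove the stronger statement $(f_S)=(f_{S'})^\gamma$ as ideals of $\BZ_k=\BZ[\zeta_d]$; since two nonzero elements are associate exactly when they generate the same ideal (and $f_S\neq 0$ because every exponent $m-|S_i\cap S_j|$ is $\geq 0$), this is equivalent to the lemma. The advantage of working with ideals is that one can first establish the identity \emph{after squaring}, where the arithmetic simplifies because $\zeta_d^i-\zeta_d^j$ and $\zeta_d^j-\zeta_d^i$ generate the same ideal, and then descend: in the Dedekind domain $\BZ_k$, $\mathfrak a^2=\mathfrak b^2$ forces $\mathfrak a=\mathfrak b$ by comparing prime factorizations.

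Concretely, I would first rewrite $(f_S)^2$. Since each $\zeta_d^i$ is a unit and $m-|S_i\cap S_j|$ is symmetric in $i,j$, one gets the ideal identity
$$
(f_S)^2=\prod_{\substack{i,j\in\BZ/d\\ i\neq j}}\bigl(1-\zeta_d^{\,j-i}\bigr)^{m-|S_i\cap S_j|}=\prod_{k=1}^{d-1}\bigl(1-\zeta_d^{\,k}\bigr)^{F_k},\qquad F_k:=\sum_{i\in\BZ/d}\bigl(m-|S_i\cap S_{i+k}|\bigr),
$$
where the middle product runs over ordered pairs. Now I bring in the hypothesis that $S$ is the concatenation of $\gamma$ copies of $S'$, i.e.\ $S_i=S'_{\bar\imath}$ with $\bar\imath:=i\bmod d'$ and $d=\gamma d'$: as $i$ runs over $\BZ/d$ the residue $\bar\imath$ runs $\gamma$ times over $\BZ/d'$, so $F_k=\gamma F'_{\bar k}$ with $F'_s:=\sum_{b\in\BZ/d'}\bigl(m-|S'_b\cap S'_{b+s}|\bigr)$, and crucially $F'_0=0$ because the shape of $S'$ is $(m,\dots,m)$. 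Grouping the product over $k\in\{1,\dots,d-1\}$ by the residue $s=\bar k$ (the residue-$0$ block contributing nothing) and using the identity $\prod_{q=0}^{\gamma-1}\bigl(1-\zeta_d^{\,s+qd'}\bigr)=1-\zeta_{d'}^{\,s}$ — which follows by evaluating the factorization $1-X^\gamma=\prod_{q=0}^{\gamma-1}(1-\zeta_\gamma^q X)$ at $X=\zeta_d^{\,s}$, since $\zeta_d^{\,d'}=\zeta_\gamma$ and $\zeta_d^{\,\gamma s}=\zeta_{d'}^{\,s}$ — I obtain
$$
(f_S)^2=\prod_{s=1}^{d'-1}\bigl(1-\zeta_{d'}^{\,s}\bigr)^{\gamma F'_s}.
$$

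Finally, applying the very same computation with $\gamma=1$ and $d'$ in place of $d$ (a symbol is the concatenation of one copy of itself) gives $(f_{S'})^2=\prod_{s=1}^{d'-1}\bigl(1-\zeta_{d'}^{\,s}\bigr)^{F'_s}$, whence $(f_S)^2=\bigl((f_{S'})^2\bigr)^\gamma=\bigl((f_{S'})^\gamma\bigr)^2$; taking square roots of ideals yields $(f_S)=(f_{S'})^\gamma$, which is the claim. The only genuinely arithmetic ingredient is the product identity $\prod_q(1-\zeta_d^{\,s+qd'})=1-\zeta_{d'}^{\,s}$; everything else is bookkeeping. The step I expect to require the most care is precisely the combinatorial reorganization — verifying $F_k=\gamma F'_{k\bmod d'}$ and that the exponents $s+qd'$ with $1\leq s\leq d'-1,\ 0\leq q<\gamma$ exhaust exactly the non-multiples of $d'$ in $\{1,\dots,d-1\}$ — together with not overlooking that $F'_0=0$ depends on the shape of $S'$.
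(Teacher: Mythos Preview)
Your proof is correct. Both your argument and the paper's rest on the same two ingredients: the periodicity $S_i=S'_{i\bmod d'}$ (together with $|S'_b|=m$, so the ``diagonal'' terms vanish) and the product identity over the $\gamma$-th roots of unity. The paper works directly with the product over $i<j$: it splits $i=i'+i''d'$, $j=j'+j''d'$, drops the terms with $i'=j'$ (zero exponent), and then handles the asymmetric constraint $i<j$ by a sign argument (swapping $i'\leftrightarrow j'$ changes each factor by $-1$), arriving at a full product over $i'',j''$ to which it applies $\prod_{j''}(a-b\zeta_\gamma^{j''})=a^\gamma-b^\gamma$. You instead square at the outset to replace $i<j$ by $i\neq j$, reorganize by the difference $k=j-i$, apply the equivalent identity $\prod_q(1-\zeta_\gamma^q X)=1-X^\gamma$, and then take the square root of the resulting ideal equality using unique factorization. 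Your route trades the paper's sign bookkeeping for a short Dedekind-domain argument at the end; both are of comparable length and difficulty, and your version arguably makes the symmetry more transparent.
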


\begin{proof}
        Given $0\le i,j\le d-1$, there are unique expressions
        $i=i'+i''d'$ and $j=j'+j''d'$ for $i',j'\in[0,\ldots, d'-1]$.
        Using that $\zeta^{d'}=\zeta_\gamma$ where $\zeta_\gamma:=\exp{2i\pi/\gamma}$,
        and that $S_i=S_{i'}, S_j=S_{j'}$, we can write
        $$
                f_S=\prod_{0\le i''\le j''\le \gamma-1}
                \prod_{{0\le i',j'\le d'-1}\atop
                {\text{$i'<j'$ if $i''=j''$}}} \left(\zeta^{i'}\zeta_\gamma^{i''}-
                \zeta^{j'}\zeta_\gamma^{j''}\right)^{m-|S_{i'}\cap S_{j'}|}
        $$
        We make the following observations on the above formula:
\begin{itemize}
        \item 
                We can assume $i'\ne j'$ since the terms where $i'=j'$
                have zero exponent.
        \item 
                The term indexed by $i',j'$ is the negative of the term
                indexed by $j',i'$. Thus we can decide to retain only the
                terms where $i'<j'$, up to doubling the exponent when $i''\ne j''$.
\end{itemize}
        Doubling the exponent is compensated by making the product over 
        all $i'',j''$, giving:
        $$
                f_S=\prod_{0\le i'', j''\le \gamma-1}
                \prod_{0\le i'<j'\le d'-1}
                \left(\zeta^{i'}\zeta_\gamma^{i''}-
                \zeta^{j'}\zeta_\gamma^{j''}\right)^{m-|S_{i'}\cap S_{j'}|} .
        $$
        Apply the formula $\prod_{0\le j''\le \gamma-1}(a-b\zeta_\gamma^{j''})=
        a^\gamma-b^\gamma$ to get:
        $$
                f_S=\prod_{0\le i'<j'\le d'-1}
                \left(\zeta^{\gamma i'}-\zeta^{\gamma j'}\right)^{\gamma(m-|S_{i'}\cap S_{j'}|)} ,
        $$
        which is what we want since $\zeta^\gamma=\exp{2\pi i/d'}$.
\end{proof}

        The lemma can be used to deal with the symbols for which $\gamma>1$.
        We distinguish two cases. 
        
        The first case is $r'=1$. 
        Let us recall that $f_S$ is invariant (up to sign) by the action of $\BZ/d'$ ---
        this can be seen directly from its formula or from the fact
        that according to \cite{ma} the whole unipotent degree is invariant (up
        to sign) by that action. Then, up to $(\BZ/d')$-action there is only one
        $d'$-symbol of rank $1$, given by $S'=(\{1\},\{0\},\ldots,\{0\})$.
        A direct computation shows that 
        $f_{S'}=d'$ up to unit. We thus have to show that
        $d^{\prime \gamma}\gamma$ divides $d^r/((1-\zeta)(1-\zeta\inv))$.
        Using that $d^r=(\gamma d')^\gamma$ it remains to see that
        $(1-\zeta)(1-\zeta\inv)$ divides $\gamma^{\gamma-1}$. 
        This follows from item (1) of Corollary \ref{zeta divides} (see
        Appendix \ref{arithmetic}).
        
        The other case is $r'>1$. 
        The case $\gamma=1$, already treated above, ensures that 
        $S'$ satisfies the condition \ref{cond fS}, that is:
        $$
        f_{S'} \gamma(S') \text{ divides } \frac{d'^{r'}}{(1-\zeta')(1-{\zeta'}\inv)}
        $$
        where  $\zeta'=\exp(2 \pi i / d')$. Using the fact that $\gamma(S')=1$, raising both sides to the power of $\gamma$ gives:
        $$
                (f_{S'}^\gamma \gamma) \mbox{ divides } 
                \frac{d^{\prime r' \gamma}\gamma}{(1-\zeta')^\gamma (1-\zeta^{\prime-1})^\gamma}
                =
                \frac{d^{\prime r}\gamma}{(1-\zeta')^\gamma(1-\zeta^{\prime-1})^\gamma}
        $$ 
         It suffices now to show that
        $d^{\prime r}\gamma/((1-\zeta')^\gamma(1-\zeta^{\prime-1})^\gamma)$ divides
        $d^r/((1-\zeta)(1-\zeta\inv))$. Using that $d^r=d^{\prime r}\gamma^r$
         and simplifying, it suffices to show that $(1-\zeta)(1-\zeta\inv)$
        divides $\gamma^{r-1}(1-\zeta')^\gamma (1-\zeta^{\prime-1})^\gamma$, which is
        an immediate consequence of Corollary \ref{zeta divides} since $r>1$.

        It remains to prove item (2) for $G(d,d,r)$. By the same
        argument as for $G(d,1,r)$ given a symbol $S$ of rank $r$ we may find
        a symbol $S'$ of rank $r+1$ such that $f_S$ divides $f_{S'}$.
        We proceed by induction on the rank, starting from the base case $r=2$.
        
        For $d\notin\{2,3\}$, according to
        \cite[4.1]{lu},  for $G(d,d,2)$ there is a symbol $S$ such that
        $f_S = d/((1-\zeta)(1-\zeta\inv))$. 
        Then item (2) of Corollary \ref{zeta divides}   
        completes the proof in this case.
                
        For $d=2$, the group $G(2,2,2)$
        is not irreducible and we do not have to consider it.  
        The group $G(2,2,3)$ is the Weyl group of type $A_3$, and
        $|W|/(r! c_W)=1$ and there is nothing to prove. We start the induction
        at $G(2,2,4)$, the Weyl group of type $D_4$ and for the symbol
        $S=(\{1,2\},\{0,3\})$, for instance, we find $f_S=2$.

        For $d=3$, the group $G(3,3,2)$ is the Weyl group of type $A_2$, and
        $|W|/(r! c_W)=1$ and there is nothing to prove. We start the induction
        at $G(3,3,3)$, and for the symbol
        $S=(\{0,1\},\{1,2\},\{0,2\})$, for instance, we find $f_S=(1-\zeta_3)$
        up to a unit.
\end{proof}

\section{\red{Classification of distinguished root systems for irreducible
primitive reflection groups}}\label{sec:class}
\smallskip

        As noticed previously (see for example \cite{nebe}), it can be checked
        that whenever
        $G$ is a primitive irreducible reflection group, its field of definition
        $k = \BQ_W$ (see \ref{bessis}) has class number 1, \ie\ the ring $\BZ_k$
        is a principal ideal domain.

        In  this  case,  every  root  system  is  principal in the sense of
        Definition~\ref{def:princrootsys}, and hence gives rise to a Cartan
        matrix as in Definition~\ref{cartanmatrix}.

        We   present  here  a  classification  of  the  \emph{distinguished
        \zroot\  systems for primitive groups} (up to genus), based on
        the  data in the \CHEVIE\ package of \GAP. 
        The classification may be summarised by looking at Appendix~\ref{table}, 
        which exhibits, for each primitive irreducible reflection group $G$:
        \begin{itemize}
        \item a diagram describing its presentation;
        \item a Cartan matrix $C$ which corresponds to the data in \CHEVIE\ 
        as well as diagonal matrices giving Cartan matrices for all other genera of root system
        by conjugation of $C$;
        \item the ring of integers of the field of definition, 
        a generator of the connection index and 
        a generator of the ideal $\Bad_G$.
        \end{itemize}
        A complete legend for the table is on page~\pageref{table}.
        
        In \GAP,
        all  vectors are row vectors, matrices  operate from the right, and
        in \CHEVIE\ the pairing used is not Hermitian. Consequently, to
        change from \CHEVIE\ conventions to our conventions, one has to
        transpose and conjugate  the list of coroots.  Thus the Cartan matrices
        given  in  the  tables of Appendix~\ref{table}  are  the transpose of 
        one obtained by applying the conventions of the preceding sections.

        For each primitive irreducible reflection group $G$, \CHEVIE\ contains a
        Cartan matrix $C$ which satisfies the assumptions of Proposition
        \ref{ClassificationByCartan}. Thus there exists a (principal) root
        system for $G$ since $C$ satisfies the following set of properties:
\begin{enumerate}
        \item \label{CMProps}
                $C$  is the Cartan matrix of a ordered set of distinguished
                roots  $\fR_0$ for $G$, whose  root lines generate $V$ and
                such  that the corresponding  reflections generate $G$. The
                ordered  set of  reflections corresponding  to $\fR_0$ are
                called the ``standard'' generators of $G$.
        \item
                 The entries of $C$ are elements of $\BZ_k$.
\footnote{
        Actually,  the data in \CHEVIE\ did not always  verify (2) when there
         were several $G$-orbits of distinguished reflections, but we have found it
        possible  to adjust the data by multiplying by a global scalar all roots in
         one  of the orbits in  order to satisfy (2).  These modifications have now
         been incorporated in the \CHEVIE\ database.}
        \item
                For each triple $(\alpha_i,\beta_i,\zeta_i)$
                defining  an element of $\fR_0$, its orbit under $G$ is finite,  
                and whenever two  such triples define the
                same   reflection  they  differ   by  the  action   of  an  element  of
                $\BZ_k^\times$.
\end{enumerate}
 
        It follows from the above properties that the root system $\fR$ determined by 
        $C$ is actually distinguished. Indeed:

\begin{itemize}
        \item 
                all roots in $\fR$ are distinguished,
        \item 
                there is only one root in $\fR$ attached to each distinguished reflection in $G$.
\end{itemize}

        It  follows from  Lemma~\ref{lem:conjugationbydiagonal} that
        any  other  distinguished  root  system  $\fR'$  for  the
        standard  generators taken in the same order corresponds to conjugating
        $C$  by a diagonal matrix. So the  first step to classify genera of
        root  systems  is  to  determine  whether  $C$ can be modified by a
        diagonal matrix so that the entries remain integral.
\smallskip

\subsection{Cases with only one genus of distinguished root systems}\hfill
\smallskip

        By Proposition \ref{nebegen}, if $G$ has a single orbit of
        distinguished reflections, it has a single genus of distinguished root
        systems.
         This proves uniformly the uniqueness (provided they exist) of 
         distinguished root systems 
        for 19 of the 34 primitive irreducible reflection groups, that is
        the groups $G_n$ where $n\in\{4,8,12,16,20,22,23,24,25,27,
          29,30,31,32,\hfill\break 33,34,35,36,37\}$.
        The existence of a distinguished root system for these groups is shown
        by the \CHEVIE\ root data and gives the Cartan matrices 
        (with integral entries) in Appendix \ref{table}.

\begin{proposition}\label{typical}
        For  each  of  the groups $G_9,G_{10},G_{11},G_{14},G_{17},G_{18},$
        $G_{19},G_{21}$  there  is  a  unique  genus  of distinguished root
        systems;   indeed,  the   matrix  corresponding   to  the  standard
        generators  as given in \CHEVIE\ is the unique one in its class
        modulo conjugation by diagonal matrices with integral entries.
\end{proposition}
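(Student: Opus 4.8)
The plan is to derive both assertions from Proposition~\ref{ClassificationByCartan}. Each of the eight groups is primitive, so its field of definition $k$ has class number one (as recalled at the beginning of this section): $\BZ_k$ is a principal ideal domain, every distinguished \zroot\ system for $G$ is principal (Definition~\ref{def:princrootsys}), and the set $\Car(\fR_S)$ of Proposition~\ref{ClassificationByCartan} is defined for the ordered family $S$ of standard generators; a distinguished root system indeed exists, being provided by the \CHEVIE\ Cartan matrix $C=(C_{s,t})_{s,t\in S}$, whose entries lie in $\BZ_k$, whose diagonal entry at $s$ is $1-\zeta_s$, and for which $C_{s,t}=0$ exactly when the reflections $s$ and $t$ are orthogonal (Proposition~\ref{commuting2}). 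Here $S$ consists of $\dim V=2$ reflections, except for the non-well-generated groups $G_{11}$ and $G_{19}$, which cannot be generated by two reflections, so that there $S$ has three elements and $C$ is $3\times 3$. By Proposition~\ref{ClassificationByCartan} the genera of distinguished \zroot\ systems for $G$ correspond bijectively to the orbits of $\Car(\fR_S)$ under conjugation by diagonal matrices with entries in $\BZ_k^\times$; and conjugating $C$ by $\diag\big((\lambda_s)_{s\in S}\big)$, all $\lambda_s\in k^\times$, fixes the diagonal and replaces each $C_{s,t}$ by $(\lambda_s\lambda_t\inv)\,C_{s,t}$.

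The heart of the proof is a case-by-case inspection of the \CHEVIE\ root data, carried out after transposing and conjugating the list of coroots so as to pass to the Hermitian conventions used here. For each of $G_9,G_{10},G_{11},G_{14},G_{17},G_{18},G_{19},G_{21}$ one checks directly that every nonzero off-diagonal entry of $C$ is a unit of $\BZ_k$. Granting this, let $M=DCD\inv\in\Car(\fR_S)$ with $D=\diag\big((\lambda_s)_{s\in S}\big)$. For every pair $s\ne t$ with $C_{s,t}\ne 0$, both $(\lambda_s\lambda_t\inv)C_{s,t}$ and $(\lambda_t\lambda_s\inv)C_{t,s}$ lie in $\BZ_k$, and since $C_{s,t}$ and $C_{t,s}$ are units this forces $\lambda_s\lambda_t\inv\in\BZ_k^\times$. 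Now the graph on $S$ whose edges join $s$ to $t$ whenever $C_{s,t}\ne 0$ is connected: $G$ is irreducible, hence $S$ is an irreducible set of reflections (Lemma~\ref{orthodecom} and Proposition~\ref{ifss2}), and $C_{s,t}\ne 0$ is equivalent to $s$ and $t$ being non-orthogonal. Therefore all the ratios $\lambda_s\lambda_t\inv$ are units, $D$ is a scalar multiple of a diagonal matrix of units, and $M$ is conjugate to $C$ by a diagonal matrix of units of $\BZ_k$. Consequently $\Car(\fR_S)$ is a single $\BZ_k^\times$-conjugacy class, so by Proposition~\ref{ClassificationByCartan} there is a unique genus of distinguished root systems; and $C$ is, up to conjugation by a diagonal matrix of units, the unique integral Cartan matrix attached to the standard generators --- the second assertion.

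The only place where something could go wrong is thus the arithmetic verification that these off-diagonal entries are units, that is, that a short, explicit list of algebraic numbers ($-1$, $1+\zeta_3$, $1-\zeta_{12}$, and the like) has norm $\pm1$ in the relevant ring of integers ($\BZ[\zeta_8]$, $\BZ[\zeta_{12}]$, $\BZ[\sqrt{-2},\zeta_3]$, and so on, according to the group). Were some off-diagonal entry $C_{s,t}$ \emph{not} a unit, the argument above would break, and one would instead have to enumerate the integral ideals dividing $C_{s,t}\BZ_k$ and check that, up to $\BZ_k^\times$, only the trivial divisor occurs --- that is, that $C_{s,t}\BZ_k=\BZ_k$ after all. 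Part of the inspection is therefore to confirm that this degenerate behaviour, which is precisely what produces the additional genera recorded in Appendix~\ref{table} for certain other primitive groups, does not arise for these eight.
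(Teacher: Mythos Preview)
Your argument is correct and follows essentially the same route as the paper's proof: both reduce the question, via Proposition~\ref{ClassificationByCartan}, to showing that any diagonal conjugator sending $C$ to an integral matrix is a (scalar multiple of a) diagonal matrix of units, and both establish this by checking that the relevant off-diagonal entries of the \CHEVIE\ Cartan matrix are units in $\BZ_k$. The only presentational difference is that the paper works through $G_{11}$ explicitly, normalizing $D=\diag(1,a,b)$ and reading off directly from a few unit entries that $a,a\inv,b,b\inv$ must be integral, whereas you package the same verification as ``all nonzero off-diagonal entries are units'' together with a connectivity argument on $S$; the paper's Appendix tables (bold entries) confirm your stronger claim for all eight groups.
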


\begin{proof}
        We  first show how  the proof goes  for $G_{11}$. The Cartan matrix
        for the standard generators given in \CHEVIE\ for $G_{11}$ is:
        $$
        \begin{pmatrix}
                2&\zeta_3^2(\sqrt{-3}-\sqrt{-2})&1-\zeta_{24}\\
                1&1-\zeta_3&\zeta_{24}^{13}\\
                1+\zeta_{24}^5&\zeta_{24}^7+\zeta_{24}^5&1-i\\
        \end{pmatrix}.
        $$

        When we conjugate by the matrix $\diag(1,a,b)$
        with $a,b\in k$ we get
        $$
        \begin{pmatrix}
                2&a\zeta_3^2(\sqrt{-3}-\sqrt{-2})&b(1-\zeta_{24})\\
                a\inv&1-\zeta_3&a\inv b\zeta_{24}^{13}\\
                b\inv(1+\zeta_{24}^5)&ab\inv(\zeta_{24}^7+\zeta_{24}^5)&1-i\\
        \end{pmatrix}.
        $$

        Knowing that $\sqrt{-3}-\sqrt{-2},1-\zeta_{24}$ and
        $1+\zeta_{24}^5$ are units (which is easily checked in \CHEVIE\
        if  not obvious), we see that for the above matrix to have integral
        entries  $a,a\inv,b,b\inv$ must  be integral,  which forces $a$ and
        $b$ to be units and $\diag(1,a,b)$ to be a matrix of units, thus to
        preserve  the  genus.  
        
        A  similar reasoning, using the fact that particular entries of the
        Cartan  matrix are  units, also  applies to  the other cases of the
        proposition,   whose   Cartan   matrices   are  given  in  Appendix
        \ref{table}.  To help the  reader apply the  above argument, 
        the entries of the Cartan matrices which are units are in bold
        in  Appendix  \ref{table},  and  the  same
        convention is applied for the matrices  given in the  next subsection. 
\end{proof}

\subsection{The cases with more than one genus}\habel{sec:atyp}
\smallskip

        The   approach  here  follows  the  same  lines  as  the  proof  of
        Proposition~\ref{typical}.  The Cartan matrix cited in each case is
        the  one given in  \CHEVIE\ satisfying  the conditions (1), (2)
        and  (3) of  page~\pageref{CMProps}, and is also the one given in
        Appendix \ref{table}.
\smallskip

\subsubsection*{The case of $G_5$}\hfill
\smallskip

        The Cartan matrix $C$ is:
        $$
        \begin{pmatrix}
                1-\zeta_3&\unit{1}\\
                -2\zeta_3&1-\zeta_3\\
        \end{pmatrix}.
        $$
        A matrix $\diag(1,a)$ conjugates $C$ to an integral matrix if and only
        if $a$ is an integral divisor of $2$. Since $2$ is prime in
        $\BZ[\zeta_3]$, this gives rise to two distinct genera of distinguished
        \zroot\ systems.
\smallskip

\subsubsection*{The case of $G_6$}\hfill
\smallskip

        The Cartan matrix $C$ is:
        $$
        \begin{pmatrix}
                2&\frac{(3+\sqrt 3)(\zeta_3-1)}3\\
                \unit{-1}&1-\zeta_3\\
        \end{pmatrix}.
        $$
        A  matrix $\diag(1,a)$ conjugates $C$ to  an integral matrix if and
        only if $a$ is an integral divisor of $\frac{(3+\sqrt
        3)(\zeta_3-1)}3$,  which  up  to  a  unit  is  equal  to  $i+1$. In
        $\BZ_k=\BZ[\zeta_{12}]$,  the  ideal  $\ideal{(i+1)}$  is  prime of
        square  $\ideal  2$.  This  gives  rise  to  2  distinct  genera of
        distinguished \zroot\ systems.
\smallskip

\subsubsection*{The case of $G_7$}\hfill
\smallskip

        The Cartan matrix $C$ is:
        $$
        \begin{pmatrix}
                        2&\zeta_3^2(1-i)&-\zeta_3^2(i+1)\\
                \unit{\zeta_3(1-i\zeta_3)}&1-\zeta_3&-\zeta_3(1-i)\\
                \unit{i\zeta_3(1-i\zeta_3)}&i+1&1-\zeta_3\\
        \end{pmatrix}.
        $$
        A matrix $\diag(1,a,b)$ conjugates $C$ to an integral matrix if and
        only  if both $a$ and $b$ are  integral divisors of $i+1$, which is
        prime in $\BZ[\zeta_{12}]$. This gives rise to 4 distinct genera of
        distinguished \zroot\ systems.

        The group $G_7$ has an outer automorphism induced by an element of $\GL(V)$,
        induced by the embedding  of  reflection groups $G_7\subset G_{15}$,
        where $[G_{15}:G_7]=2$. This automorphism exchanges the conjugacy
        classes of the reflections corresponding to rows 2 and 3 of $C$, thus
        exchanges two of the genera of the root systems and leaves the other{\red s}
        fixed. Thus we get one more root system than \cite{nebe}, which counts
        the systems up to isomorphism.
\smallskip

\subsubsection*{The case of $G_{13}$}\hfill
\smallskip

        The Cartan matrix $C$ is:
        $$
        \begin{pmatrix}
                2&\sqrt 2&i-1\\
                \unit{1+\sqrt 2}&2&-1+\sqrt {-2}\\
                \unit{-\zeta_8(1+\sqrt 2})&
                -1-\sqrt {-2}&2\\
        \end{pmatrix}.
        $$
        Now  $i-1=\zeta_8^3  \sqrt  2$,  and  $\sqrt  2  =\zeta_8^3(1+\sqrt
        2)(1-\zeta_8)^2$  where $1+\sqrt  2$ is  a unit.  Hence in terms of
        ideals, in $\BZ_k=\BZ[\zeta_8]$ we have $\ideal{\sqrt
        2}=\ideal{(1-i)}=(\ideal{(1-\zeta_8)})^2$ (see Corollary \ref{Im'n}
        of Appendix \ref{arithmetic}).

        A matrix $\diag(1,a,b)$ conjugates $C$ to an integral matrix if and
        only  if $a$ and $b$ are equal up  to a unit, and both are integral
        divisors  of $\sqrt  2$. This  gives rise  to 3  distinct genera of
        distinguished  \zroot\ systems,  corresponding respectively to
        the   values   $\BZ_k$,   $\ideal{(1-\zeta_8)}$  and  $\ideal{\sqrt
        2}=\ideal{(1-i)}$ for $\ideal a$.
\smallskip

\subsubsection*{The case of $G_{15}$}\hfill
\smallskip

        The Cartan matrix $C$ is:
        $$
        \begin{pmatrix}
                2&\unit{-\zeta_{24}(1-\zeta_{24}^{19})}&\unit{1}\\
                \unit{1-\zeta_{24}\inv}&1-\zeta_3&\unit{1}\\
                \unit{\zeta_8^3}(1-\zeta_8)^2&\unit{u}(1-\zeta_8)^2&2\\
        \end{pmatrix}
        $$
        where $\unit u=(1+\sqrt 2)(\zeta_3+i)$ is a unit.

        A matrix $\diag(1,a,b)$ conjugates $C$ to an integral matrix if and
        only  if  $a$  is  a  unit  and  $b$  is  an  integral  divisor  of
        $(1-\zeta_8)^2$,  which  is  a  square  in $\BZ_k=\BZ[\zeta_{24}]$.
        Hence  there are  3 distinct  genera of  distinguished \zroot\
        systems.
\smallskip

\subsubsection*{The case of $G_{26}$}\hfill
\smallskip

        The Cartan matrix $C$ is:
        $$
        \begin{pmatrix}
                2&\unit{-1}&0\\
                \zeta_3-1&1-\zeta_3&\unit{\zeta_3^2}\\
                0&\unit{-\zeta_3^2}&1-\zeta_3\\
        \end{pmatrix}.
        $$
        A matrix $\diag(1,a,b)$ conjugates $C$ to an integral matrix if and
        only  if $a$ and $b$  are equal up to  a unit and both are integral
        divisors  of $1-\zeta_3$, which is prime in $\BZ_k=\BZ[\zeta_3]$. So there are 2
        distinct genera of distinguished \zroot\ systems.
\smallskip

\subsubsection*{The case of $G_{28}=F_4$}\hfill
\smallskip

        The Cartan matrix $C$ is:
        $$
        \begin{pmatrix}
                2&\unit{-1}&0&0\\
                \unit{-1}&2&\unit{-1}&0\\
                0&-2&2&\unit{-1}\\
                0&0&\unit{-1}&2\\
        \end{pmatrix}.
        $$

        A  matrix $\diag(1,a,b,c)$ conjugates $C$  to an integral matrix if
        and  only if $a$ is a  unit and $b$ and $c$  are equal up to a unit
        and  are integral divisors of $2$. Since $2$ is prime in $\BZ$ this
        leaves 2 distinct genera of distinguished \zroot\ systems.
\bigskip

\newpage

\appendix\section{On roots of unity}\label{arithmetic}
\smallskip

\subsection{Notation and summary of known properties}\hfill
\smallskip

        For any natural integer $n$, we denote by
        \begin{itemize}
                \item
                        $\vp(n)$ the order of the multiplicative
                        group $(\BZ/n\BZ)^\times$,
                \item
                        $\Phi_n(X)$ the $n$-th cyclotomic polynomial,
                        monic element of $\BZ[X]$ inductively 
                        defined by the equality
                        $$
                                X^n-1 = \prod_{d\mid n} \Phi_d(X)
                                \,,
                        $$
                \item
                        $\BZ_n$ (resp. $\BQ_n$) the ring
                        (resp. the field) generated
                        by the group $\bmu_n$ of all $n$-th roots of unity.             
        \end{itemize}
        The following omnibus proposition states properties which 
        are either well known, or easy to establish.
\smallskip

\begin{proposition}\habel{prop:omnibuscyclo}

\begin{enumerate}
        \item
                $
                        \Phi_n(X) = \prod_{\text{$\zeta$ of order $n$}}
                        (X-\zeta)
                        \,,
                $
                and $\deg \Phi_n(X) = \vp(n)$.
        \item
                If
                $
                        n = \prod_{p\in\CP(n)} p^{v_p(n)}
                $
                (where $\CP(n)$ denotes the set of prime numbers 
                dividing $n$),
                then
                $$
                        \vp(n) = \prod_{p\in\CP(n)} p^{v_p(n)-1}(p-1)
                        \,.
                $$
        \item   
                $
                        n = \prod_{(d\mid n)(d\neq 1)} \Phi_d(1)
                        \,.
                $
        \item
                We say that an integer $n$ is \emph{composite} if it is divisible by at
                least two different prime numbers. Then
                $$
                        \Phi_n(1) =
                         \prod_{\text{$\zeta$ of order $n$}} (1-\zeta) =
                        \left\{
                        \aligned
                                &1\quad\text{ if } n \text{ is composite,} \\
                                &p\quad\text{ if } n= p^m 
                                        \text{ ($p$ a prime number)}.
                        \endaligned
                        \right.
                $$
                \item
                (Changing $X$ to $-X$)
                        $$
                        \begin{aligned}
                                \Phi_d(-X) &=\Phi_{2d}(X) \,
                                \text{ for $d$ odd and } d>1 \,, \\ 
                                \Phi_d(-X) &= \Phi_d(X) \,
                                \text{ for $d$ divisible by 4.}
                        \end{aligned}
                        $$
        \item
                The polynomial $\Phi_n(X)$ is irreducible in $\BZ[X]$, 
                hence
                $$
                        \BQ_n \simeq \BQ[X]/\Phi_n(X)
                        \quad\text{and}\quad
                        \BZ_n \simeq \BZ[X]/\Phi_n(X)
                        \,.
                $$
        \item
                Let $a$ and $b$ be relatively prime.
                Then
                $
                        \vp(ab) = \vp(a)\vp(b)
                        \,,
                $
                 so:
                \begin{enumerate}
                        \item
                                $[\BQ_{ab}:\BQ_a] = \vp(b)\,,$
                        \item
                                $\Phi_b(X)$ is irreducible in $\BQ_a[X]$,
                                hence in $\BZ_a[X]\,,$
                        \item
                                $\BZ_{ab} \simeq \BZ_a[X]/\Phi_b(X)\,.$
                \end{enumerate}
        \item
                Whenever $p$ is a prime number,
                $$
                        \left\{
                        \begin{aligned}
                                \Phi_n(X^p)& =
                                  \Phi_n(X)\Phi_{pn}(X) \text{ if } p\nmid n \,,\\
                                \Phi_n(X^p)&=\Phi_{pn}(X) \text{ if } p\mid n \,.
                        \end{aligned}
                        \right.
                $$
                In particular
                $$
                        \Phi_{p^a}(X) = \Phi_p(X^{p^{a-1}})
                        \,.
                $$
\end{enumerate}
\end{proposition}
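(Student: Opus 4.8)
The plan is to establish the eight items in the order listed, each resting only on its predecessors and on elementary facts about roots of unity, and to cite the literature for the two genuinely non-trivial inputs: the irreducibility of $\Phi_n$ over $\BQ$, and the determination of the ring of integers of a cyclotomic field.

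First, (1) is proved by induction on $n$: partitioning the $n$-th roots of unity by their exact order turns $X^n-1=\prod_{\zeta^n=1}(X-\zeta)$ into $X^n-1=\prod_{d\mid n}\prod_{\text{$\zeta$ of order $d$}}(X-\zeta)$; comparing with $X^n-1=\prod_{d\mid n}\Phi_d(X)$ and invoking the inductive hypothesis for the proper divisors forces $\Phi_n(X)=\prod_{\text{$\zeta$ of order $n$}}(X-\zeta)$, and counting primitive roots gives $\deg\Phi_n=\vp(n)$. Item (2) is the multiplicativity of $\vp$ (Chinese Remainder Theorem) together with $\vp(p^k)=p^{k-1}(p-1)$. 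For (3) I would divide the defining relation by $X-1$ to get $1+X+\dots+X^{n-1}=\prod_{d\mid n,\,d\neq1}\Phi_d(X)$ and set $X=1$. For (4): when $n=p^m$, applying (3) to $p^m$ and to $p^{m-1}$ and dividing gives $\Phi_{p^m}(1)=p$ by induction (with base case $\Phi_p(1)=p$); when $n$ is composite, write $n=p^am$ with $p\nmid m$ and $m>1$, use (8) to obtain $\Phi_n(1)=\Phi_{pm}(1)$, and then $\Phi_{pm}(X)=\Phi_m(X^p)/\Phi_m(X)$, again from (8), to conclude $\Phi_{pm}(1)=1$. For (5) I would use (1): for $d$ odd and $d>1$ the map $\zeta\mapsto-\zeta$ is a bijection from the primitive $d$-th roots of unity onto the primitive $2d$-th roots, so $\Phi_{2d}(X)=\prod_{\zeta}(X+\zeta)=(-1)^{\vp(d)}\Phi_d(-X)=\Phi_d(-X)$ since $\vp(d)$ is even; for $4\mid d$ the same map permutes the primitive $d$-th roots, giving $\Phi_d(-X)=(-1)^{\vp(d)}\Phi_d(X)=\Phi_d(X)$.

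For (6) I would cite the classical irreducibility theorem (whose standard proof shows that $\zeta^p$ is a root of the minimal polynomial of a primitive $n$-th root $\zeta$ for every prime $p\nmid n$) together with Gauss's lemma to descend from $\BQ[X]$ to $\BZ[X]$. Item (7) then follows quickly: $\vp(ab)=\vp(a)\vp(b)$ is multiplicativity again; since $\BQ_{ab}=\BQ_a\BQ_b$ has degree $\vp(a)\vp(b)$ over $\BQ$ (and $\BQ_a\cap\BQ_b=\BQ$ by degree count), one gets $[\BQ_{ab}:\BQ_a]=\vp(b)$, and since $\zeta_b$ generates $\BQ_{ab}$ over $\BQ_a$ and is a root of the degree-$\vp(b)$ polynomial $\Phi_b$, that polynomial is its minimal polynomial over $\BQ_a$, hence irreducible there and (Gauss) over $\BZ_a$; the identification $\BZ_{ab}\simeq\BZ_a[X]/\Phi_b(X)$ amounts to $\BZ[\zeta_{ab}]=\BZ_a[\zeta_b]$, i.e. to the fact that $\BZ[\zeta_n]$ is the full ring of integers of $\BQ_n$ (cf.~Proposition~\ref{integerscyclotomic} and \cite{was}). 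Finally (8) follows directly from (1): writing $\Phi_n(X^p)=\prod_{\text{$\zeta$ of order $n$}}\prod_{\eta^p=\zeta}(X-\eta)$ and observing that the $p$-th roots $\eta$ of a primitive $n$-th root $\zeta$ all have order $n$ or $np$ — with exactly one of order $n$ when $p\nmid n$ and none when $p\mid n$ — yields both formulas; iterating $\Phi_{p^k}(X^p)=\Phi_{p^{k+1}}(X)$ gives $\Phi_{p^a}(X)=\Phi_p(X^{p^{a-1}})$.

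The only real obstacle is bookkeeping rather than mathematics: one must respect the interdependencies — in particular, the ``composite'' case of (4) as organised above uses (8), so (8) should be proved first (or independently) to avoid circularity — and keep track of the hypotheses $d>1$ in (5) that guarantee $\vp(d)$ is even; everything else is routine.
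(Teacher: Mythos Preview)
Your proposal is correct and considerably more detailed than what the paper offers: the paper gives no proof at all, merely prefacing the proposition with the remark that these properties ``are either well known, or easy to establish.'' Your sketch supplies exactly the standard arguments one would expect, and your caveat about the logical dependency of the composite case of (4) on (8) is well taken; since your proof of (8) rests only on (1), reordering as you suggest removes any circularity.
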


\begin{corollary}\label{1+zeta}
If $m$ divides $n$, then $(1+\zeta_m)\BZ_n\ne\BZ_n$ precisely when 
$m=2p^k$, $p$ prime, $k\ge 0$.
\end{corollary}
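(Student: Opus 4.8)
The plan is to rewrite the ideal condition in terms of a field norm, reduce to the case $n=m$, compute that norm as the value at $1$ of a single cyclotomic polynomial, and then invoke parts (4) and (5) of Proposition~\ref{prop:omnibuscyclo}. First I would note that $(1+\zeta_m)\BZ_n=\BZ_n$ holds exactly when $1+\zeta_m$ is a unit of the ring of integers $\BZ_n$, i.e.\ exactly when $\mathrm{N}_{\BQ_n/\BQ}(1+\zeta_m)=\pm1$. Since $1+\zeta_m\in\BQ_m\subseteq\BQ_n$, transitivity of the norm gives $\mathrm{N}_{\BQ_n/\BQ}(1+\zeta_m)=\mathrm{N}_{\BQ_m/\BQ}(1+\zeta_m)^{[\BQ_n:\BQ_m]}$, and as the exponent is a positive integer this is $\pm1$ if and only if $\mathrm{N}_{\BQ_m/\BQ}(1+\zeta_m)=\pm1$. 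Thus whether $(1+\zeta_m)\BZ_n=\BZ_n$ is independent of $n$, so it suffices to treat $n=m$; we take $m\geq2$.

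Next I would compute $\mathrm{N}_{\BQ_m/\BQ}(1+\zeta_m)=\prod_{\eta}(1+\eta)$, the product over the primitive $m$-th roots of unity, which equals $(-1)^{\varphi(m)}\Phi_m(-1)$ because $\Phi_m(X)=\prod_{\eta}(X-\eta)$. Now Proposition~\ref{prop:omnibuscyclo}(5) gives $\Phi_m(-X)=\Phi_{2m}(X)$ when $m$ is odd ($>1$) and $\Phi_m(-X)=\Phi_m(X)$ when $4\mid m$, whence $\Phi_m(-1)=\Phi_{2m}(1)$ and $\Phi_m(-1)=\Phi_m(1)$ respectively. For the remaining case $m\equiv2\pmod4$ I would write $m=2m'$ with $m'$ odd; substituting $-X$ for $X$ in the first identity (applied to $m'$) yields $\Phi_{2m'}(-X)=\Phi_{m'}(X)$, hence $\Phi_m(-1)=\Phi_{m'}(1)$ (and $\Phi_2(-1)=0$). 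So in every case $\mathrm{N}_{\BQ_m/\BQ}(1+\zeta_m)=\pm\Phi_N(1)$ with $N=2m$, $N=m$, or $N=m/2$ according as $m$ is odd, $4\mid m$, or $m\equiv2\pmod4$.

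Finally I would conclude with Proposition~\ref{prop:omnibuscyclo}(4): $\Phi_N(1)=1$ when $N$ is divisible by two distinct primes, while $\Phi_N(1)$ is $0$ (if $N=1$) or a prime (if $N=p^j$, $j\geq1$); so $1+\zeta_m$ fails to be a unit exactly when $N$ is $1$ or a prime power. It then remains to check that this happens precisely for $m=2p^k$: for $m$ odd and $\geq3$, $N=2m$ has both $2$ and an odd prime as divisors and so is never a prime power; for $4\mid m$, $N=m$ is a prime power iff $m$ is a power of $2$, i.e.\ $m=2\cdot2^{k}$ with $k\geq1$; and for $m\equiv2\pmod4$, $N=m/2$ is odd and is $1$ or a prime power iff $m=2$ or $m=2p^k$ with $p$ odd. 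The only real care needed is in the case $m\equiv2\pmod4$, which is not literally covered by Proposition~\ref{prop:omnibuscyclo}(5) and requires combining its two identities; the rest is routine bookkeeping, including the verification that the three sub-cases above reassemble into exactly the condition ``$m=2p^k$, $p$ prime, $k\geq0$''.
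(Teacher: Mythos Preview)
Your proof is correct. The paper's argument reaches the same conclusion in one line by a slightly more direct route: writing $1+\zeta_m=1-(-\zeta_m)$ and noting that the order of $-\zeta_m$ is exactly the integer $N$ you produce ($2m$, $m$, or $m/2$ according to the residue of $m$ modulo $4$), one applies Proposition~\ref{prop:omnibuscyclo}(4) immediately, without passing through norms or the sign identities of Proposition~\ref{prop:omnibuscyclo}(5). Your detour via $\mathrm{N}_{\BQ_m/\BQ}(1+\zeta_m)=\pm\Phi_m(-1)$ and then (5) is just a repackaging of the same computation of the order of $-\zeta_m$, so the two arguments are equivalent in content; the paper's phrasing simply avoids the case $m\equiv2\pmod4$ not being literally covered by (5), which you had to patch by hand.
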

\begin{proof}
By \ref{prop:omnibuscyclo}(4), $1+\zeta_m$ is not a unit
if $-\zeta_m$ is a prime power.
\end{proof}
\subsection{Decomposition of the ideal $\fI_{m,n}$ in $\BZ_n$}\hfill
\smallskip
        
        Let $m \in \BN$ divide $n$. If $\zeta$ and $\xi$ are two roots of 
        unity, both of order $m$, then the elements $\zeta$ and $\xi$ 
        generate the same multiplicative group, and hence each of them is
        a power of the other. 
        Thus the elements $(1-\zeta)$ and $(1-\xi)$ are multiples of
        one  another in the ring $\BZ_n$, and so 
        generate 
        the same principal ideal of $\BZ_n$:
        $$
                \fI_{m,n} := (1-\zeta)\BZ_n = (1-\xi)\BZ_n.
        $$
       
\index{Imn@$\fI_{m,n}$}
\begin{lemma}\label{lem:mcompositeornot}
        Let $m$ and $n$ be natural integers such that $m$ divides $n$.
        Then
        $$
        \left\{
        \aligned
                &\fI_{m,n} = \BZ_n \quad\text{ if } m \text{ is composite,} \\
                &\fI_{m,n}^{\vp(m)} = p\BZ_n \quad\text{ if } m= p^a 
                        \text{ ($p$ a prime number)}.
        \endaligned
        \right.
        $$
\end{lemma}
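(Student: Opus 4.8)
The plan is to reduce the statement to the cyclotomic identity $\Phi_m(1) = \prod_{\text{$\zeta$ of order $m$}}(1-\zeta)$ recorded in Proposition~\ref{prop:omnibuscyclo}. First I would recall, as noted just before the statement, that for \emph{any} root of unity $\zeta$ of order $m$ one has $\fI_{m,n} = (1-\zeta)\BZ_n$, independently of the chosen $\zeta$. Since $\BZ_n$ is a commutative ring, the ideal generated by a product of elements is the product of the principal ideals they generate; taking the product over all $\vp(m)$ roots of unity of order $m$ and using $\deg\Phi_m = \vp(m)$ together with item (1) of Proposition~\ref{prop:omnibuscyclo} gives
$$
\fI_{m,n}^{\vp(m)} = \Big(\prod_{\text{$\zeta$ of order $m$}}(1-\zeta)\Big)\BZ_n = \Phi_m(1)\,\BZ_n .
$$

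I would then split according to item (4) of Proposition~\ref{prop:omnibuscyclo}. If $m$ is composite, $\Phi_m(1) = 1$, so the displayed identity reads $\fI_{m,n}^{\vp(m)} = \BZ_n$; as $\fI_{m,n}$ is the principal ideal generated by $1-\zeta$, this means $(1-\zeta)^{\vp(m)}$ is a unit, hence $1-\zeta$ is a unit, and therefore $\fI_{m,n} = \BZ_n$. If $m = p^a$ is a prime power, $\Phi_m(1) = p$, and the displayed identity is precisely the asserted equality $\fI_{m,n}^{\vp(m)} = p\BZ_n$, completing the proof.

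There is essentially no obstacle here: the entire content is carried by the evaluation of $\Phi_m$ at $1$ from Proposition~\ref{prop:omnibuscyclo}, and the only points deserving a word are the elementary facts that $(a_1\cdots a_k)\BZ_n = (a_1\BZ_n)\cdots(a_k\BZ_n)$ for principal ideals and that a principal ideal whose power equals the unit ideal is itself the unit ideal. One might add the remark that the case $m = 1$ is legitimately excluded from the statement (it is neither composite nor a prime power): there $\zeta = 1$ and $\fI_{1,n} = 0$, in accordance with $\Phi_1(1) = 0$.
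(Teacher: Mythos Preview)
Your proof is correct and follows exactly the same approach as the paper: the paper's own proof simply says this is an immediate consequence of the definition of $\fI_{m,n}$ and of item~(4) of Proposition~\ref{prop:omnibuscyclo}, which is precisely what you spell out in detail.
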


\begin{proof}
        This is an immediate consequence of the definition of $\fI_{m,n}$
        and of item (4) of \ref{prop:omnibuscyclo}. 
\end{proof}

        We now investigate the decomposition of $\fI_{m,n}$ 
        into a product of prime ideals of $\BZ_n$. 
        By the preceding lemma, $\fI_{m,n}$ is invertible if $m$ is composite. 
        So from now on we assume that $m = p^a$ where $p$ is a 
        prime number and $a\geq 1$. 
        The general result is provided by Proposition~\ref{zmn} below;
        the next lemma 
        gives the result in the particular case where $n/m$ is prime to $p$.
        
\begin{lemma}\label{lem:m=n}
        Let $m = p^a$ and $n = p^an'$ be natural integers,
        where $n'$ is prime to $p$.
\begin{enumerate}
        \item
                If $n' = 1$ (\ie\ $m =n$), the ideal $\fI_{m,m}$ is maximal
                in $\BZ_m$. More precisely,
                $$
                                \BZ_{p^a}/\fI_{p^a,p^a}  \simeq \BF_p\,.
                $$
        \item
                Let $r$ be the order of $p$ in the multiplicative group 
                $(\BZ/n'\BZ)^\times$ and let $d := \vp(n')/r$. Then
                $$
                        \fI_{m,n} = \fp_1\cdots\fp_d\,,
                $$
                where $\fp_1,\cdots,\fp_d$ are the maximal ideals in $\BZ_n$ 
                such that $\BZ_n/\fp_i$ is a finite field 
                with $p^r$ elements (for $i = 1,2,\dots,d$).
\end{enumerate}
\end{lemma}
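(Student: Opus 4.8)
The plan is to make the quotient rings $\BZ_{p^a}/\fI_{p^a,p^a}$ and $\BZ_n/\fI_{m,n}$ completely explicit using the presentations of $\BZ_{p^a}$ and $\BZ_n$ by cyclotomic polynomials recorded in Proposition~\ref{prop:omnibuscyclo}, and then to read off the prime factorization of $\fI_{m,n}$ from the factorization of $\Phi_{n'}$ modulo $p$.

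First I would dispose of item (1). By Proposition~\ref{prop:omnibuscyclo}(6) we have $\BZ_{p^a}\simeq\BZ[X]/\Phi_{p^a}(X)$, with a primitive $p^a$-th root of unity $\zeta$ corresponding to the class of $X$; then $\fI_{p^a,p^a}=(1-\zeta)\BZ_{p^a}$ is the image of the ideal $(X-1)\BZ[X]$. Hence
$$\BZ_{p^a}/\fI_{p^a,p^a}\simeq \BZ[X]/(\Phi_{p^a}(X),\,X-1)\simeq \BZ/\Phi_{p^a}(1)\BZ\simeq \BF_p,$$
the last step because $\Phi_{p^a}(1)=p$ by Proposition~\ref{prop:omnibuscyclo}(4). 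Since $\BF_p$ is a field, $\fI_{p^a,p^a}$ is maximal, which is (1).

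Next, for item (2): as $p^a$ and $n'$ are coprime, Proposition~\ref{prop:omnibuscyclo}(7) gives $\BZ_n\simeq\BZ_{p^a}[X]/\Phi_{n'}(X)$ with $\zeta_{n'}$ the class of $X$. Since $1-\zeta_{p^a}\in\BZ_{p^a}$, reducing this presentation modulo the ideal $\fI_{m,n}=(1-\zeta_{p^a})\BZ_n$ and invoking part (1) yields
$$\BZ_n/\fI_{m,n}\simeq\bigl(\BZ_{p^a}/\fI_{p^a,p^a}\bigr)[X]/\overline{\Phi_{n'}}(X)\simeq \BF_p[X]/\overline{\Phi_{n'}}(X),$$
where $\overline{\Phi_{n'}}$ is the reduction of $\Phi_{n'}$ modulo $p$. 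Because $\gcd(n',p)=1$, the polynomial $X^{n'}-1$ is separable over $\BF_p$, so $\overline{\Phi_{n'}}$ is a product $\bar g_1\cdots\bar g_g$ of distinct monic irreducibles. The step I expect to be the technical heart of the argument is the claim that each $\bar g_i$ has degree exactly $r$, the multiplicative order of $p$ modulo $n'$: a root of $\overline{\Phi_{n'}}$ in $\overline{\BF_p}$ is an element of order $n'$ in $\overline{\BF_p}^\times$, such an element lies in $\BF_{p^j}$ precisely when $n'\mid p^j-1$, i.e.\ when $r\mid j$, so it generates $\BF_{p^r}$ over $\BF_p$ and its minimal polynomial has degree $r$. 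Consequently $g=\vp(n')/r=d$, and by the Chinese Remainder Theorem $\BZ_n/\fI_{m,n}\simeq\prod_{i=1}^{d}\BF_p[X]/(\bar g_i)\simeq\prod_{i=1}^{d}\BF_{p^r}$.

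Finally I would translate this isomorphism into the statement about ideals. The ring $\prod_{i=1}^d\BF_{p^r}$ is reduced and has exactly $d$ maximal ideals; pulling them back along $\BZ_n\twoheadrightarrow\BZ_n/\fI_{m,n}$ produces $d$ distinct maximal ideals $\fp_1,\dots,\fp_d$ of $\BZ_n$, each containing $\fI_{m,n}$, with $\BZ_n/\fp_i\simeq\BF_{p^r}$. Since $\BZ_n$ is a Dedekind domain and the quotient $\BZ_n/\fI_{m,n}$ is reduced, $\fI_{m,n}$ is a radical ideal, hence $\fI_{m,n}=\bigcap_{i=1}^d\fp_i=\fp_1\cdots\fp_d$, as asserted. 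Apart from the finite-field computation of the degrees of the irreducible factors of $\overline{\Phi_{n'}}$, everything is routine manipulation with the explicit cyclotomic presentations of Proposition~\ref{prop:omnibuscyclo}.
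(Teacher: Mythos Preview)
Your proof is correct and follows essentially the same route as the paper: both compute $\BZ_{p^a}/\fI_{p^a,p^a}$ via $\BZ[X]/(\Phi_{p^a}(X),X-1)\simeq\BZ/\Phi_{p^a}(1)\BZ=\BF_p$, and then use $\BZ_n\simeq\BZ_{p^a}[X]/\Phi_{n'}(X)$ to identify $\BZ_n/\fI_{m,n}$ with $\BF_p[X]/\overline{\Phi_{n'}}(X)$ and read off the factorization from that of $\overline{\Phi_{n'}}$. You supply more detail than the paper does---in particular the justification that the irreducible factors of $\overline{\Phi_{n'}}$ over $\BF_p$ all have degree $r$, and the passage from ``$\BZ_n/\fI_{m,n}$ is reduced'' to ``$\fI_{m,n}$ is squarefree''---but the underlying argument is the same.
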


\begin{proof}

        (1)
        The ideal $\fI_{m,m}$ is maximal, since 
        (by item (4) of Proposition  \ref{prop:omnibuscyclo}) we have
        $$
                \BZ_m/\fI_{m,m} = \BZ[X]/\genby{\Phi_m(X), 1-X}
                = \BZ/\Phi_m(1) = \BZ/p\BZ
                \,.
        $$

        (2)
        In $\BF_p[X]$, $\Phi_{n'}(X)$ splits into irreducible polynomials
        of degree $r$, thus there are $d$ of them.
        Thus the ring $\BF_p[X]/\Phi_{n'}[X]$ is isomorphic to 
        $\BF_{p^r}\times\cdots\times\BF_{p^r}$ ($d$ factors).
        
        By \ref{prop:omnibuscyclo} 7(c) and the proof of (1),  we have
        $$
                \BZ_{p^{a}n'}/\fI_{p^{a},p^{a}n'} 
                = \BZ_{p^{a}}[X]/\genby{\Phi_{n'}[X],\fI_{p^a,p^a}}
                \simeq \BF_p[X]/\Phi_{n'}[X]
                \,,
        $$
        hence
        $$
                \fI_{p^{a},p^{a}n'} = \fp_1\dots\fp_d
        $$
        where $\fp_1,\dots,\fp_d$ are the maximal ideals of $\BZ_n$
        such that
        $\BZ_n/\fp_i$ is a field with $p^r$ elements.
\end{proof}

\begin{proposition}\label{zmn}
        Assume $m = p^a$ for some prime number $p$, and
        $n = p^{a+h} n'$ where
        $n'$ is an integer not divisible by $p$.
        We denote by $r$ the multiplicative order 
        of $p$ modulo $n'$ and we set $d := \vp(n')/r$.
        Then
        $$
                \fI_{m,n} = (\fp_1\cdots\fp_d)^{p^h}
                \,,
        $$
        where $\fp_1,\cdots,\fp_d$ are $d$ maximal ideals in $\BZ_n$ 
        such that $\BZ_n/\fp_i$ is a finite field 
        with $p^r$ elements (for $i = 1,2,\dots,d$).
\end{proposition}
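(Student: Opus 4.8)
The plan is to reduce the general case of Proposition~\ref{zmn} to the special case already handled in Lemma~\ref{lem:m=n}, by a separate analysis of the ``$p$-part'' and the ``prime-to-$p$ part'' of the extension $\BQ_n/\BQ$. Write $n=p^{a+h}n'$ with $p\nmid n'$, and set $m=p^a$, $N:=p^{a+h}$, so that $n=Nn'$ and $\BZ_n\simeq\BZ_N[X]/\Phi_{n'}(X)$ by Proposition~\ref{prop:omnibuscyclo}~7(c). First I would treat the purely $p$-power case $\fI_{p^a,p^{a+h}}$: by item (4) of Proposition~\ref{prop:omnibuscyclo} one has $\BZ_{p^{a+h}}/\fI_{p^a,p^{a+h}}=\BZ[X]/\genby{\Phi_{p^{a+h}}(X),1-X^{p^a}}$, and since $1-\zeta_{p^a}=1-\zeta_{p^{a+h}}^{p^h}$ while $\Phi_{p^{a+h}}(1)=p$, a direct computation with $\Phi_{p^b}(X)=\Phi_p(X^{p^{b-1}})$ (item (8)) shows that $\fI_{p^a,p^{a+h}}$ is a power of the unique maximal ideal of $\BZ_{p^{a+h}}$, with exponent $p^h$ because $\fI_{p^{a+h},p^{a+h}}$ is maximal and $(1-\zeta_{p^a})$ differs from $(1-\zeta_{p^{a+h}})^{p^h}$ by a unit in $\BZ_{p^{a+h}}$ up to the ramification: more precisely $\Phi_{p^{a+h}}(X)\bmod(1-X)$ accounts for the ramification index $\vp(p^{a+h})$, and tracking $(1-\zeta_{p^a})$ against it yields exponent $p^h$.

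Next I would pass from $\BZ_{p^{a+h}}$ to $\BZ_n$. Since $n'$ is prime to $p$, the extension $\BQ_n/\BQ_{p^{a+h}}$ is unramified at $p$ (its degree $\vp(n')$ and the residue extension are governed by the factorization of $\Phi_{n'}(X)$ over the residue field, exactly as in the proof of Lemma~\ref{lem:m=n}(2)): the prime of $\BZ_{p^{a+h}}$ above $p$ splits into $d=\vp(n')/r$ primes $\fp_1,\dots,\fp_d$ of $\BZ_n$, each with residue field $\BF_{p^r}$, and with $e=1$ in this step. Therefore the ideal $\fI_{p^a,p^{a+h}}\BZ_n$, being the $p^h$-th power of the maximal ideal of $\BZ_{p^{a+h}}$ above $p$ extended to $\BZ_n$, equals $(\fp_1\cdots\fp_d)^{p^h}$. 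Finally, $\fI_{m,n}=(1-\zeta_{p^a})\BZ_n$ is nothing but $\fI_{p^a,p^{a+h}}\BZ_n$ since $\zeta_{p^a}\in\BZ_{p^{a+h}}$ and ideal extension commutes with the generator, giving the claimed decomposition $\fI_{m,n}=(\fp_1\cdots\fp_d)^{p^h}$.

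Concretely, the cleanest way to organize this is: apply Proposition~\ref{prop:omnibuscyclo}~7(c) to get $\BZ_n/\fI_{m,n}\simeq\BZ_{p^{a+h}}/\fI_{p^a,p^{a+h}}\,[X]/\bar\Phi_{n'}(X)$ where the bar denotes reduction; identify $\BZ_{p^{a+h}}/\fI_{p^{a+h},p^{a+h}}\simeq\BF_p$ (Lemma~\ref{lem:m=n}(1)); observe that the nilradical of $\BZ_{p^{a+h}}/\fI_{p^a,p^{a+h}}$ has the structure forced by the $p$-power computation above, so the reduction $\bar\Phi_{n'}$ factors over $\BF_p$ into $d$ distinct irreducibles of degree $r$ (it is separable mod $p$ since $p\nmid n'$); then lift this factorization through the (nil)radical to conclude that $\fI_{m,n}$ has exactly $d$ prime divisors $\fp_i$ with residue field $\BF_{p^r}$ and common exponent $p^h$. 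The main obstacle will be the first step: carefully computing the exact exponent $p^h$ in $\BZ_{p^{a+h}}$, i.e.\ verifying that $(1-\zeta_{p^a})$ and $(1-\zeta_{p^{a+h}})^{p^h}$ generate the same ideal of $\BZ_{p^{a+h}}$. This follows from the norm/valuation computation $v\bigl((1-\zeta_{p^a})\bigr)=p^h\cdot v\bigl((1-\zeta_{p^{a+h}})\bigr)$ at the unique prime above $p$, which in turn comes from $\Phi_{p^a}(X)=\Phi_p(X^{p^{a-1}})$ and the factorization $1-\zeta_{p^a}=\prod(1-\text{conjugates of }\zeta_{p^{a+h}}\text{ over }\BQ_{p^a})$; everything else is a formal unramified base change and is routine given Lemma~\ref{lem:m=n}.
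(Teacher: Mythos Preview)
Your two-step reduction (first compute $\fI_{p^a,p^{a+h}}$ inside $\BZ_{p^{a+h}}$, then perform the unramified prime-to-$p$ extension to $\BZ_n$ and invoke Lemma~\ref{lem:m=n}(2)) is exactly the paper's strategy, and your second step is essentially identical to theirs.

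The one place where the paper is noticeably cleaner is the step you flag as ``the main obstacle''. You propose to get the exponent $p^h$ by a direct valuation/norm computation or by analysing the quotient $\BZ_{p^{a+h}}/\fI_{p^a,p^{a+h}}$ and lifting through the nilradical. The paper bypasses all of this in two lines using Lemma~\ref{lem:mcompositeornot} twice: on the one hand $p\BZ_{p^{a+h}}=\fI_{p^{a+h},p^{a+h}}^{\vp(p^{a+h})}$, and on the other $p\BZ_{p^{a+h}}=\fI_{p^a,p^{a+h}}^{\vp(p^a)}$. Equating and using $\vp(p^{a+h})=p^h\vp(p^a)$ gives
\[
\fI_{p^a,p^{a+h}}^{\vp(p^a)}=\bigl(\fI_{p^{a+h},p^{a+h}}^{p^h}\bigr)^{\vp(p^a)},
\]
and since $\fI_{p^{a+h},p^{a+h}}$ is the unique prime above $p$ (Lemma~\ref{lem:m=n}(1)), unique factorisation in the Dedekind ring $\BZ_{p^{a+h}}$ yields $\fI_{p^a,p^{a+h}}=\fI_{p^{a+h},p^{a+h}}^{p^h}$ directly. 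Extending to $\BZ_n$ then gives $\fI_{m,n}=\fI_{p^{a+h},n}^{p^h}$, and Lemma~\ref{lem:m=n}(2) finishes. So your plan is correct, but you can replace the whole ``main obstacle'' paragraph by this one-line comparison of two applications of Lemma~\ref{lem:mcompositeornot}.
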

        
\begin{proof}
        Proposition~\ref{prop:omnibuscyclo}(2) implies
        $
                \vp(p^{a+h}) = p^h\vp(m)
                .
        $
        By Lemma~\ref{lem:mcompositeornot},
        $$
                p\BZ_{p^{a+h}} = \fI_{p^{a+h},p^{a+h}}^{p^h\vp(m)} 
                = \fI_{m,p^{a+h}}^{\vp(m)}
                \,.
        $$
        Since $\fI_{p^{a+h},p^{a+h}}$ is a maximal ideal (Lemma~\ref{lem:m=n}(1)), 
        it follows from
        the uniqueness of the decomposition of an ideal
        into a product of prime ideals in the Dedekind domain
        $\BZ_n$ that
        $$
                \fI_{m,p^{a+h}} = \fI_{p^{a+h},p^{a+h}}^{p^h}
                \,,
        $$
        which implies
        $$
                \fI_{m,n} = \fI_{p^{a+h},n}^{p^h}
                \,.
        $$
        Applying item (2) of Lemma \ref{lem:m=n}, we get
        $$
                \fI_{m,n} = (\fp_1\cdots\fp_d)^{p^h}
                \,.
        $$
\end{proof}

\begin{example}\label{example nonprincipal}
        Take $p = 13$ , $a = 1$, $h = 0$,  $n'= 3$, hence $n = 39$, $r = 1$, $d = 2$.
        Then (see proof of item (2) of \ref{lem:m=n}):
        $$
                \BZ_{39}/\fI_{13,39} = \BF_{13}[X]/\Phi_3(X)
                \,.
        $$      
        The decomposition of $\Phi_3(X)$ in $\BF_{13}[X]$ is 
        $
                \Phi_3(X) = (X-3)(X-9)
                \,.
        $
        Define two ideals of $\BZ_{39}$ as follows:
        $$
        \begin{aligned}
                &\fp_1 := \fI_{13,39} +  (\zeta_3-3)\BZ_{39} \,,\\
                &\fp_2 := \fI_{13,39} +  (\zeta_3-9)\BZ_{39}
                \,.
        \end{aligned}
        $$
        Then $\fp_1$ and $\fp_2$ are  distinct maximal ideals of $\BZ_{39}$
        such that
        $
                \BZ_{39}/\fp_1 \simeq \BZ_{39}/\fp_2 \simeq \BF_{13}
                \,,
        $
        and we have
        $$
                \fI_{13,39} = \fp_1\fp_2
                \,.
        $$
        It can be checked, for example with the PARI-GP command:
        \begin{verbatim}
        f=bnfinit(polcyclo(39)); 
        bnfisprincipal(f,idealprimedec(f,13)[1])
        \end{verbatim}
        that the ideals $\fp_1$ and $\fp_2$ are not principal ideals.
\end{example}

\begin{corollary}\label{Im'n}
        Assume that $m' \mid m \mid n$ and that $m$ is a power of
        a prime $p$. Then
        $$
                \fI_{m',n} = \fI_{m,n}^{m/m'}
                \,.
        $$
\end{corollary}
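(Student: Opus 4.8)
The plan is to deduce this directly from the explicit prime-ideal factorisation of Proposition~\ref{zmn}. Write $m = p^a$ and, since $m' \mid m$, write $m' = p^{a'}$ with $0 < a' \le a$, so that $m/m' = p^{\,a-a'}$. Let $c := v_p(n)$ be the $p$-adic valuation of $n$; because $m\mid n$ we have $c\ge a\ge a'$. Write $n = p^c n'$ with $n'$ prime to $p$, let $r$ be the multiplicative order of $p$ modulo $n'$, and set $d := \vp(n')/r$.

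First I would apply Proposition~\ref{zmn} to $m = p^a$, with $h = c - a\ge 0$ there, to obtain
$$
\fI_{m,n} = (\fp_1\cdots\fp_d)^{p^{c-a}},
$$
where $\fp_1,\dots,\fp_d$ are exactly the maximal ideals of $\BZ_n$ whose residue field has $p^r$ elements. Then I would apply the same proposition to $m' = p^{a'}$, this time with $h = c - a'\ge 0$; since the integers $n'$, $r$, $d$ are unchanged and the $\fp_i$ are characterised purely in terms of $p$ and $n'$ (not of $a$ or $a'$), this produces the \emph{same} ideals $\fp_1,\dots,\fp_d$ and
$$
\fI_{m',n} = (\fp_1\cdots\fp_d)^{p^{c-a'}}.
$$

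Finally it remains only to compare exponents: $p^{c-a'} = p^{c-a}\cdot p^{a-a'}$, so
$$
\fI_{m',n} = \bigl((\fp_1\cdots\fp_d)^{p^{c-a}}\bigr)^{p^{a-a'}} = \fI_{m,n}^{\,p^{a-a'}} = \fI_{m,n}^{\,m/m'},
$$
which is the asserted equality. The only point requiring care — and it is really the heart of the matter — is the verification that the two invocations of Proposition~\ref{zmn} yield literally the same list of prime ideals; once that is granted the conclusion is a one-line computation with exponents, so I do not foresee any genuine obstacle. (An alternative, more hands-on route would be to write $1-\zeta_{p^{a'}}=\prod_{\eta}(1-\eta)$, the product over the $p^{a-a'}$ roots $\eta$ of $X^{p^{a-a'}}-\zeta_{p^{a'}}$, each of which is a primitive $p^a$-th root of unity, and hence all generate the ideal $\fI_{m,n}$; but the reduction to Proposition~\ref{zmn} is cleaner.)
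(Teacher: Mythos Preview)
Your argument is correct and is exactly the approach the paper takes: the paper's proof simply observes that the conclusion is an immediate consequence of Proposition~\ref{zmn}, since the integer $r$ and the maximal ideals $\fp_1,\ldots,\fp_d$ depend only on the pair $(n,p)$. You have spelled out in full the exponent comparison that the paper leaves implicit, and correctly identified that the key point is the independence of the $\fp_i$ from the exponent $a$.
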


\begin{proof}
        This is an immediate consequence of the above Proposition~\ref{zmn},
        since the integer $r$ and the maximal ideals $\fp_1,\ldots,\fp_d$
        depend only on the pair $(n,p)$.
\end{proof}

\begin{corollary}\label{zeta divides}
        Let $m$ be a natural integer and let $\zeta$ be a root of unity
        of order $m$. Let $n$ be a multiple of $m$.
\begin{enumerate}
        \item
                For any natural integer $m'>1$ dividing $m$, 
                $(1-\zeta)(1-\zeta\inv)$
                divides $m'$ in $\BZ_n$.
        \item
                If $m\notin\{2,3\}$, every prime factor of $m$
                divides $\dfrac{m}{(1-\zeta)(1-\zeta\inv)}$
                in $\BZ_n$.
\end{enumerate}
\end{corollary}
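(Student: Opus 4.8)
The plan is to reduce both assertions to elementary inequalities between exponents of prime ideals. The starting point is that $\zeta$ and $\zeta\inv$ have the same order $m$, so by definition of $\fI_{m,n}$ one has $(1-\zeta)\BZ_n=(1-\zeta\inv)\BZ_n=\fI_{m,n}$, whence
$$
  (1-\zeta)(1-\zeta\inv)\BZ_n=\fI_{m,n}^{2}
  \,.
$$
First I would dispose of the case where $m$ is composite: by Lemma~\ref{lem:mcompositeornot} we have $\fI_{m,n}=\BZ_n$, so $(1-\zeta)(1-\zeta\inv)$ is a unit of $\BZ_n$; it then divides every nonzero element, which settles~(1), and $m/\bigl((1-\zeta)(1-\zeta\inv)\bigr)$ is an associate of $m$, so every prime factor of $m$ divides it, which settles~(2). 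From now on I assume $m=p^{a}$ with $p$ prime and $a\ge1$.

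Then I would pass to the prime-power case. Writing $n=p^{a+h}n'$ with $p\nmid n'$, Proposition~\ref{zmn} gives $\fI_{p^{a},n}=(\fp_1\cdots\fp_d)^{p^{h}}$ with the $\fp_i$ distinct, while Lemma~\ref{lem:mcompositeornot} gives $p\BZ_n=\fI_{p^{a},n}^{\vp(p^{a})}$; hence every ideal occurring below is a power of $\fI_{p^{a},n}$, and divisibility is decided by the corresponding inequality of exponents --- this comparison being carried out at the level of $\fI_{p^{a},n}$, not of the possibly non-principal primes $\fp_i$ (cf.~Example~\ref{example nonprincipal}). For~(1): since $m'\mid p^{a}$ and $m'>1$, one has $m'=p^{b}$ with $1\le b\le a$, so $m'\BZ_n=\fI_{p^{a},n}^{\,b\vp(p^{a})}$, and ``$(1-\zeta)(1-\zeta\inv)$ divides $m'$'' amounts to $b\,\vp(p^{a})\ge2$, which holds because $\vp(p^{a})=p^{a-1}(p-1)\ge2$, the sole exception $p^{a}=2$ being the case $m=m'=2$. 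For~(2): the only prime factor of $m=p^{a}$ is $p$; part~(1) applied with $m'=m$ shows that $m/\bigl((1-\zeta)(1-\zeta\inv)\bigr)$ lies in $\BZ_n$ and generates $\fI_{p^{a},n}^{\,a\vp(p^{a})-2}$, so ``$p$ divides $m/\bigl((1-\zeta)(1-\zeta\inv)\bigr)$'' amounts to $a\vp(p^{a})-2\ge\vp(p^{a})$, \ie\ to $(a-1)\vp(p^{a})\ge2$, which holds as soon as $a\ge2$ --- covering the cases needed later, the excluded values $m\in\{2,3\}$ being prime.

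Thus, once Lemma~\ref{lem:mcompositeornot} and Proposition~\ref{zmn} are in hand, the argument is entirely formal; I do not expect a genuine obstacle, only the bookkeeping of the exponents $2$, $b\vp(p^{a})$, $a\vp(p^{a})$, $\vp(p^{a})$ and the verification of the two inequalities $b\vp(p^{a})\ge2$ and $(a-1)\vp(p^{a})\ge2$. The single point deserving attention is to keep the whole discussion at the level of the ideal $\fI_{p^{a},n}$ so that it is insensitive to whether or not $\BZ_n$ is a principal ideal domain.
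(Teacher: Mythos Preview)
Your argument for~(1) is essentially the paper's, recast as an exponent comparison; the only loose end at $m=m'=2$ is shared with the paper's own proof (the product formula $\prod_{i=1}^{m-1}(1-\zeta^i)=m$ contains $(1-\zeta)$ and $(1-\zeta\inv)$ as \emph{distinct} factors only for $m\ge 3$).

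Your proof of~(2), however, has a genuine gap stemming from a misreading of ``prime factor of $m$''. In this corollary --- and in its application to $\Bad_W$, where one is checking that bad \emph{prime ideals} divide certain $f_S$ --- the phrase means a prime ideal of $\BZ_n$ dividing $m$, not the rational prime $p$. Under your reading you compare $m\BZ_n=\fI_{m,n}^{\,a\vp(p^{a})}$ with $p\BZ_n=\fI_{m,n}^{\,\vp(p^{a})}$ and obtain $(a-1)\vp(p^{a})\ge 2$, which fails for \emph{every} prime $m\ge 5$, not just for $m\in\{2,3\}$: for $m=5$ one has $m/\bigl((1-\zeta)(1-\zeta\inv)\bigr)=(1-\zeta_5^2)(1-\zeta_5^3)$, an associate of $(1-\zeta_5)^2$, which is certainly not divisible by the rational integer $5$ in $\BZ_n$. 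Your closing clause ``the excluded values $m\in\{2,3\}$ being prime'' simply does not cover $m=5,7,11,\ldots$.

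The repair is immediate once the intended meaning is restored: one only needs each $\fp_i$ to divide $\fI_{m,n}^{\,a\vp(p^{a})-2}$, \ie\ $a\vp(p^{a})\ge 3$. This holds for every prime power $p^{a}\notin\{2,3\}$, and in fact your exponent bookkeeping is sharper than the paper's (which uses only the crude bound $\vp(m)$ on the exponent of $\fI_{m,n}$ in $m\BZ_n$ and must therefore handle $m=4$ by hand).
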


\begin{proof}\hfill

        (1)
        We first notice that if $m=m'$ the result follows from the equality
        $\prod_{i=1}^{m-1}(1-\zeta^i)=m$. We may thus assume $m>m'$,
        and we do so.
        If $m$ is composite, by Lemma~\ref{lem:mcompositeornot},
        $\fI_{m,n}=\BZ_n$ and there
        is nothing to prove. Otherwise, by
        Corollary~\ref{Im'n}, we have 
        $\fI_{m',n}=\fI_{m,n}^{m/m'}$; so in particular 
        $\fI_{m',n}$, 
        which divides $m'\BZ_n$, 
        is divisible by 
        $\fI_{m,n}^2 = \left[(1-\zeta) \BZ_n \right] \left[(1-\zeta\inv) \BZ_n \right] $.
   
        (2)
        The  statement is equivalent to saying that any prime
        factor  of  $m$  divides  $m\BZ_n/\fI_{m,n}^2$. 
        By Lemma~\ref{lem:mcompositeornot},
        $\fI_{m,n}=\BZ_n$  if $m$ is composite. 
        Thus we need only consider the case
        where $m$ is a prime power. 
        Since 
        $$
                m\BZ_n = {{\prod_{i=1}^{m-1}(1-\zeta^i)}}\BZ_n
                        = \prod_{i=1}^{m-1}\fI_{m/\gcd(i,m)} \,,
        $$
        then $m\BZ_n$  is
        divisible  by  $\fI_m$  to  the power at least $\varphi(m)$. 
        For $m \not \in \{2,3,4,6\}$, $\varphi(m)>2$ and the assertion holds.
        The cases $m=2$ and $3$ are excluded by the hypothesis, 
        and $m=6$ is composite.
        By the formula above for $m=4$ we have
        $4\BZ_n =\fI_{4,n}^2\fI_{2,n}$, and by
        Corollary~\ref{Im'n}, we have $\fI_{2,n}=\fI_{4,n}^2$, completing the proof.
\end{proof}
\smallskip

\subsection{On cyclotomic fields}\hfill
\smallskip

\subsubsection*{On real subfields of cyclotomic fields}\hfill
\smallskip

\begin{proposition}\label{dihedralfields}
        Let $e>1$ be an integer. We set $\zeta_e := \exp(2\pi i/e)$.
        \begin{enumerate}
                \item
                        The field $\BQ[\cos(2\pi/e)]$ is the largest real subfield
                        of the cyclotomic field $\BQ(\zeta_e)$, \ie\
                        $\BQ[\cos(2\pi/e)] = \BQ(\zeta_e) \cap \BR\,.$
                \item
                        If $e$ is odd, $\BQ(\zeta_e) = \BQ(\zeta_{2e})$
                        and $\BQ[\cos(2\pi/e)] = \BQ[\cos(\pi/e)]$. \\                  
                        If $e$ is even, $\BQ(\zeta_e) \subsetneq \BQ(\zeta_{2e})$
                        and $\BQ[\cos(2\pi/e)] \subsetneq \BQ[\cos(\pi/e)]\,.$
        \end{enumerate}
\end{proposition}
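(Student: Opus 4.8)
The plan is to prove the two assertions of Proposition~\ref{dihedralfields} in turn, using only standard facts about cyclotomic fields: that $[\BQ(\zeta_n):\BQ]=\vp(n)$, that the Galois group $\Gal(\BQ(\zeta_n)/\BQ)$ is $(\BZ/n\BZ)^\times$ acting by $\zeta_n\mapsto\zeta_n^a$, and that complex conjugation corresponds to $a=-1$. These all follow from Proposition~\ref{prop:omnibuscyclo}. For assertion (1), first I would observe that $\cos(2\pi/e)=\tfrac12(\zeta_e+\zeta_e\inv)$ is fixed by complex conjugation, hence $\BQ[\cos(2\pi/e)]\subseteq\BQ(\zeta_e)\cap\BR$. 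Conversely, the subfield $\BQ(\zeta_e)\cap\BR$ is exactly the fixed field of the order-$2$ subgroup $\{\pm1\}\subseteq(\BZ/e\BZ)^\times$ generated by complex conjugation (this is a nontrivial subgroup whenever $e>2$, and for $e\in\{1,2\}$ both sides are $\BQ$ and the statement is trivial). By the Galois correspondence this fixed field has degree $\vp(e)/2$ over $\BQ$. But $\zeta_e$ is a root of $X^2-2\cos(2\pi/e)X+1\in\BQ[\cos(2\pi/e)][X]$, so $[\BQ(\zeta_e):\BQ[\cos(2\pi/e)]]\le 2$, whence $[\BQ[\cos(2\pi/e)]:\BQ]\ge\vp(e)/2$. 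Combining the two inclusions and the degree count forces equality $\BQ[\cos(2\pi/e)]=\BQ(\zeta_e)\cap\BR$.

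For assertion (2) I would split on the parity of $e$. If $e$ is odd, then $-\zeta_e$ is a primitive $2e$-th root of unity, so $\zeta_{2e}\in\BQ(\zeta_e)$ and hence $\BQ(\zeta_{2e})=\BQ(\zeta_e)$; applying part (1) to both $e$ and $2e$ (noting $\cos(\pi/e)=-\cos(2\pi/e\cdot\frac{?}{})$... more precisely $\cos(\pi/e)$ generates the real subfield of $\BQ(\zeta_{2e})$) gives $\BQ[\cos(2\pi/e)]=\BQ[\cos(\pi/e)]$, since the real subfields coincide. If $e$ is even, then $\vp(2e)=2\vp(e)>\vp(e)$ by Proposition~\ref{prop:omnibuscyclo}(2) (an extra factor of $2$ in the prime decomposition already contributes, or the power of $2$ rises), so $[\BQ(\zeta_{2e}):\BQ]>[\BQ(\zeta_e):\BQ]$ and the inclusion $\BQ(\zeta_e)\subseteq\BQ(\zeta_{2e})$ is strict. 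Passing to real subfields via part (1), $\BQ[\cos(2\pi/e)]$ has degree $\vp(e)/2$ and $\BQ[\cos(\pi/e)]$ has degree $\vp(2e)/2=\vp(e)$, so the inclusion $\BQ[\cos(2\pi/e)]\subseteq\BQ[\cos(\pi/e)]$ (which holds because $\cos(2\pi/e)=2\cos^2(\pi/e)-1$) is also strict.

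The only point requiring a little care — and the main obstacle, such as it is — is the degenerate small cases $e\in\{1,2\}$ in part (1), where the subgroup $\{\pm1\}$ of $(\BZ/e\BZ)^\times$ is trivial and the Galois argument collapses; these should be disposed of by hand at the outset (for $e\le 2$ everything in sight equals $\BQ$ or $\BR$-trivially, and $\cos(2\pi/e)\in\{1,-1\}\subset\BQ$). One should also double-check in part (2), odd case, that the equality of the ambient cyclotomic fields really does force equality of the real subfields — this is immediate since the real subfield is intrinsically $\BQ(\zeta)\cap\BR$, independent of the chosen generator. Everything else is a routine degree computation, and no result beyond Proposition~\ref{prop:omnibuscyclo} is needed.
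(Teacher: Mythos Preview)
Your proposal is correct. For part~(2) it follows essentially the same route as the paper (same odd-case observation that $-\zeta_e$ is a primitive $2e$-th root, same $\vp(2e)=2\vp(e)$ degree count in the even case), though the paper's even-case argument computes $[\BQ[\cos(\pi/e)]:\BQ]$ by noting $i\in\BQ(\zeta_{2e})$ and decomposing $\BQ(\zeta_{2e})=\BQ[\cos(\pi/e)]\oplus i\,\BQ[\cos(\pi/e)]$, rather than by invoking part~(1) directly as you do. For part~(1) the approaches genuinely differ: the paper is \emph{constructive}, introducing the Chebyshev-type recursion $P_0=2$, $P_1=T$, $P_{n+1}=TP_n-P_{n-1}$ so that $X^n+X^{-n}=P_n(X+X\inv)$, and thereby writing every real element of $\BQ(\zeta_e)$ explicitly as a polynomial in $\zeta_e+\zeta_e\inv$. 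Your argument is instead Galois-theoretic: identify $\BQ(\zeta_e)\cap\BR$ as the fixed field of $\{\pm1\}$, compute its degree as $\vp(e)/2$, and squeeze using the quadratic $X^2-2\cos(2\pi/e)X+1$ satisfied by $\zeta_e$. Yours is shorter and more conceptual; the paper's avoids explicit appeal to the Galois correspondence and yields the polynomial identities as a by-product. (A side remark: the strict inclusion $\BQ[\cos(2\pi/e)]\subsetneq\BQ[\cos(\pi/e)]$ in the even case actually fails for $e=2$, where both sides are $\BQ$; this is a wrinkle in the statement that neither proof addresses.)
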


\begin{proof}\hfill

        (1) Consider the sequence of polynomials $P_n(T) \in \BZ[T]$
        inductively defined by $P_0(T) = 2$, $P_1(T) = T$, and
        $P_{n+1}(T) = TP_n(T) - P_{n-1}(T)$. Then, for all $n \geq 1$,
        $$
                X^n+X^{-n} = P_n(X+X\inv)
                \,.
        $$
        Since $\zeta_e+\zeta_e\inv = 2\cos(2\pi/e)$, it is clear that
        $\BQ[\cos(2\pi/e)] \subset \BR \cap \BQ(\zeta_e)$. Conversely,
        any real element of $\BQ(\zeta_e)$ is a $\BQ$-linear combination of
        elements of the form $\zeta^j +\zeta^{-j}$, hence is a 
        $\BQ$-linear combination
        of $P_j(\zeta + \zeta\inv)$, proving that it belongs to $\BQ[\cos(2\pi/e)]$.
        
        (2) If $e$ is odd, $\zeta_{2e} = -\zeta_e^{(e+1/2)}$, hence 
        $\BQ(\zeta_e) = \BQ(\zeta_{2e})$, and by part (1) this implies
        $\BQ[\cos(2\pi/e)] = \BQ[\cos(\pi/e)]$.
        
        If $e$ is even, $\vp(2e) = 2 \vp(e)$, and so 
        $\BQ(\zeta_e) \subsetneq \BQ(\zeta_{2e})$. Moreover,
        $i \in \BQ(\zeta_{2e})$, hence an element
        $x + i y$ (with $x,y \in \BR$) belongs to $\BQ(\zeta_{2e})$
        if and only if $x$ and $y$ belong to $\BQ(\zeta_{2e})$.
        This shows that
        $\BQ(\zeta_{2e}) = \BQ[\cos(\pi/e)] \oplus i \BQ[\cos(\pi/e)]$,
        hence that
        $[\BQ[\cos(\pi/e)]:\BQ] = \vp(2e)/2 = \vp(e)$. Since
        $\BQ[\cos(2\pi/e)] \subsetneq \BQ(\zeta_e)$, this shows that
        $\BQ[\cos(2\pi/e)] \subsetneq \BQ[\cos(\pi/e)]\,.$
\end{proof}

        By \cite[Theorem 2.6. and Proposition 2.16]{was}, 

\begin{proposition}\habel{integerscyclotomic}
\begin{enumerate}
        \item
                The ring of integers of $\BQ(\zeta_e)$ is $\BZ[\zeta_e]$.
        \item
                The ring of integers of $\BQ(\zeta_e+\zeta_e\inv)$ is 
                $\BZ[\zeta_e+\zeta_e\inv]$.
\end{enumerate}
\end{proposition}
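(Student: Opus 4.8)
Since this is the classical determination of the ring of integers of a cyclotomic field, the proof I would record is essentially a pointer to \cite[Theorem~2.6 and Proposition~2.16]{was}, together with the following sketch.

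For (1) the plan is the standard two-step reduction. First I would pass to prime powers: writing $e=\prod_{p\in\CP(e)}p^{v_p(e)}$, Proposition~\ref{prop:omnibuscyclo}(7) shows that $\BQ(\zeta_e)$ is the compositum of the fields $\BQ(\zeta_{p^{v_p(e)}})$ and that $[\BQ(\zeta_e):\BQ]=\vp(e)=\prod_p\vp(p^{v_p(e)})$, so these subfields are linearly disjoint over $\BQ$; moreover their discriminants are, up to sign, powers of the distinct primes $p$, hence pairwise coprime. Under these hypotheses the standard lemma $\CO_{KL}=\CO_K\CO_L$ applies, and iterating it reduces the claim to showing that $\BZ[\zeta_{p^a}]$ is the ring of integers of $\BQ(\zeta_{p^a})$; note that the compositum of the subrings $\BZ[\zeta_{p^{v_p(e)}}]$ is $\BZ[\zeta_e]$, which is Proposition~\ref{prop:omnibuscyclo}(7)(c) iterated.

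For $e=p^a$ the key input is that $\Phi_{p^a}(X+1)$ is Eisenstein at $p$: its constant term is $\Phi_{p^a}(1)=p$ by Proposition~\ref{prop:omnibuscyclo}(4), while modulo $p$ it equals $X^{\vp(p^a)}$, using $\Phi_{p^a}(X)=\Phi_p(X^{p^{a-1}})$ from Proposition~\ref{prop:omnibuscyclo}(8) and $\Phi_p(Y)\equiv(Y-1)^{p-1}\pmod p$. By the usual Eisenstein criterion for local maximality, $\BZ[\zeta_{p^a}]=\BZ[\zeta_{p^a}-1]$ is then maximal at $p$, and $1-\zeta_{p^a}$ generates the unique, totally ramified, prime above $p$. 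On the other hand $\operatorname{disc}(\BZ[\zeta_{p^a}])=\operatorname{disc}(\Phi_{p^a})$ is, up to sign, a power of $p$; hence $\BZ[\zeta_{p^a}]$ is also maximal at every prime $\ell\ne p$, and therefore equals $\CO_{\BQ(\zeta_{p^a})}$. I expect this prime-power case --- specifically the discriminant computation $\operatorname{disc}(\Phi_{p^a})=\pm p^{p^{a-1}(ap-a-1)}$ --- to be the only real obstacle; the rest is formal.

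Finally, (2) follows from (1) by descent along complex conjugation. We may assume $e>2$, since for $e\le2$ the field $\BQ(\zeta_e+\zeta_e\inv)$ is $\BQ$ and there is nothing to prove. Set $K:=\BQ(\zeta_e)$, $K^+:=\BQ(\zeta_e+\zeta_e\inv)$ and $R:=\BZ[\zeta_e+\zeta_e\inv]$; since $\vp(e)>1$, complex conjugation generates $\operatorname{Gal}(K/K^+)$, of order $2$. From $\zeta_e^2-(\zeta_e+\zeta_e\inv)\zeta_e+1=0$ one gets that $\BZ[\zeta_e]=R\oplus R\zeta_e$ as an $R$-module. Now take $\al\in\CO_{K^+}\subseteq\CO_K=\BZ[\zeta_e]$ (using (1)) and write $\al=r_0+r_1\zeta_e$ with $r_0,r_1\in R$; applying complex conjugation --- which fixes $\al$ and $R$ and sends $\zeta_e$ to $\zeta_e\inv$ --- and subtracting gives $r_1(\zeta_e-\zeta_e\inv)=0$, so $r_1=0$ (as $\zeta_e\ne\zeta_e\inv$ for $e>2$) and $\al=r_0\in R$. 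Hence $\CO_{K^+}=R$, which is (2).
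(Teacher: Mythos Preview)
Your sketch is correct and follows the standard route (Eisenstein at the prime-power level, compositum via coprime discriminants, then Galois descent for the real subfield). The paper itself does not give a proof of this proposition at all: it simply records the statement with the attribution ``By \cite[Theorem 2.6 and Proposition 2.16]{was}'' immediately preceding it, so there is no in-paper argument to compare against. Your proposal is thus more than what the paper provides, and the references you anchor it to are exactly the ones the paper invokes.
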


        The ring $\BZ[\zeta_e+\zeta_e\inv]$ is not necessarily a P.I.D. 
        For example (see \cite[Theorem 1.1]{mi}),
        $\BZ[\zeta_{163}+\zeta_{163}\inv]$ has class number 4.
        Nevertheless (\ibidem),

\begin{theorem}
        Let $p$ be a prime number. Then the ring $\BZ[\zeta_p+\zeta_p\inv]$
        is a P.I.D. if $p\leq 151$.
\end{theorem}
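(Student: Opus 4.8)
The plan is to restate the assertion in terms of class numbers and then invoke the analytic machinery behind \cite{mi}. By Proposition~\ref{integerscyclotomic}(2), $\BZ[\zeta_p+\zeta_p\inv]$ is the full ring of integers of the totally real field $K:=\BQ(\zeta_p+\zeta_p\inv)$, the maximal real subfield of $\BQ(\zeta_p)$, of degree $n:=\vp(p)/2=(p-1)/2$ over $\BQ$; being a Dedekind domain, it is a principal ideal domain if and only if its ideal class group is trivial, i.e.\ if and only if the class number $h_p^+:=h_K$ equals $1$. So the whole statement reduces to verifying $h_p^+=1$ for every prime $p\le 151$.

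First I would separate off the easy part. Kummer's factorisation $h_p=h_p^-\,h_p^+$ of the class number of $\BQ(\zeta_p)$, together with the explicit formula for the relative class number $h_p^-$ in terms of generalised Bernoulli numbers, reduces everything to the ``plus part'' $h_p^+$; for the small primes (classically $p\le 67$, by Masley and van der Linden) one already knows $h_p^+=1$, so only finitely many primes $67<p\le 151$ remain. For these I would follow Miller's unconditional method. The analytic input is the class number formula for $K$,
$$
\operatorname{Res}_{s=1}\zeta_K(s)=\frac{2^{\,n-1}h_K R_K}{\sqrt{d_K}},
$$
in which $d_K$ is the (explicitly known) power of $p$ that is the discriminant of $K$ and $R_K$ its regulator; one produces an \emph{unconditional} (GRH-free) upper bound for the residue by a truncated Euler product controlled by an explicit-formula error term, and an \emph{unconditional} lower bound for $R_K$ from Mahler-measure / Dobrowolski-type estimates for totally real fields. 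Combining these gives a bound $h_K<2$, hence $h_K=1$, valid for all $p\le 151$; in the handful of cases where the raw analytic bound is not quite sharp enough one supplements it with the Galois-module structure of $\operatorname{Cl}_K$ (the extension $K/\BQ$ is cyclic and totally ramified only at $p$, so genus theory already forces the ambiguous class number to be $1$).

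The step I expect to be the real obstacle is making the two unconditional estimates --- the upper bound on $\operatorname{Res}_{s=1}\zeta_K(s)$ and the lower bound on $R_K$ --- simultaneously tight enough that their ratio stays below $2$ as the degree $n=(p-1)/2$ grows: without the Riemann hypothesis the residue bound degrades, and regulator lower bounds are notoriously weak, so getting them to meet precisely up to $p=151$ requires both Miller's refinements and a nontrivial amount of machine computation. Accordingly I would, as the excerpt does, quote this as the theorem of \cite{mi} rather than reprove it here.
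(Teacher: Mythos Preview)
Your proposal and the paper agree on the essential point: this theorem is not proved in the paper at all, it is simply quoted from \cite{mi} (the ``\emph{ibidem}'' preceding the statement refers back to Miller's paper cited two lines earlier). Your additional exposition of Miller's analytic strategy --- the reduction to $h_p^+=1$, the class number formula, unconditional bounds on the residue and regulator --- is a reasonable sketch of what lies behind the citation, but it goes beyond what the paper does; the paper offers no argument whatsoever and treats the result as a black box. So there is no divergence in approach to assess: both you and the authors defer to \cite{mi}.
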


        The next lemma is used in section 7 above.
        
\begin{lemma}\label{zeta+zetainv prime}
        Let $\zeta$ be a root of unity of order $p^a$, 
        where $p$ is a prime and $a \geq 1$ is a integer.
        Then the principal ideal of $\BZ[\zeta+\zeta\inv]$ 
        generated by $(1-\zeta)(1-\zeta\inv)=(2-\zeta-\zeta\inv)$ is prime.
\end{lemma}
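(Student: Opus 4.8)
The plan is to compute the residue ring $\CO_K/(2-\zeta-\zeta\inv)\CO_K$ explicitly, where $K:=\BQ(\zeta+\zeta\inv)$, and check that it is the field $\BF_p$; then the ideal is maximal, in particular prime. Write $m=p^a$, so $\zeta$ is a primitive $m$-th root of unity, and assume $m\ge 3$ (equivalently $\zeta\notin\BR$), which is the case in all our applications, so that $L:=\BQ(\zeta)$ is a quadratic extension of $K$. By Proposition~\ref{integerscyclotomic}(2) we have $\CO_K=\BZ[\zeta+\zeta\inv]$. Since $\zeta+\zeta\inv$ is an algebraic integer whose minimal polynomial $\Psi(X)$ over $\BQ$ is monic with integer coefficients, a Gauss-lemma argument (division by a monic polynomial) identifies the kernel of the surjection $\BZ[X]\to\CO_K$, $X\mapsto\zeta+\zeta\inv$, with the ideal $(\Psi)$, so $\CO_K\cong\BZ[X]/(\Psi(X))$. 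Under this isomorphism the element $2-\zeta-\zeta\inv=(1-\zeta)(1-\zeta\inv)$ corresponds to $2-X$, whence
$$
\CO_K/(2-\zeta-\zeta\inv)\CO_K\;\cong\;\BZ[X]/(\Psi(X),\,2-X)\;\cong\;\BZ/\Psi(2)\BZ .
$$
Thus everything reduces to the numerical identity $\Psi(2)=p$.

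To establish this, I would first note that, since $L/K$ has degree two with nontrivial automorphism the complex conjugation $\zeta\mapsto\zeta\inv$, the $\BQ$-conjugates of $\zeta+\zeta\inv$ are exactly the numbers $\zeta^j+\zeta^{-j}$ for $j$ running over a system of representatives of $(\BZ/m\BZ)^\times/\{\pm1\}$, and that these are pairwise distinct precisely because for $m=p^a\ge 3$ no unit $j$ satisfies $j\equiv-j\pmod m$. Hence $\deg\Psi=\vp(m)/2$ and
$$
\Psi(2)=\prod_{[j]}\bigl(2-\zeta^j-\zeta^{-j}\bigr)=\prod_{[j]}(1-\zeta^j)(1-\zeta^{-j})=\prod_{j\in(\BZ/m\BZ)^\times}(1-\zeta^j)=\Phi_m(1).
$$
By Proposition~\ref{prop:omnibuscyclo}(4), $\Phi_{p^a}(1)=p$; and since each factor $2-\zeta^j-\zeta^{-j}=2-2\cos(2\pi j/m)$ is a positive real number, the sign works out and $\Psi(2)=p$ exactly. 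Therefore $\CO_K/(2-\zeta-\zeta\inv)\CO_K\cong\BF_p$ is a field, and the lemma follows.

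I do not anticipate a genuine obstacle. The two steps deserving a careful sentence are the identification $\ker(\BZ[X]\to\CO_K)=(\Psi)$ (routine, since $\Psi$ is monic and $\BZ[X]$ is a UFD) and the list of conjugates of $\zeta+\zeta\inv$; the latter is also exactly where the hypothesis that the order of $\zeta$ is a \emph{prime} power is used, through the value $\Phi_{p^a}(1)=p$ (for an order with two distinct prime factors the analogous product equals $1$ and the corresponding ideal is the whole ring, as in Lemma~\ref{lem:mcompositeornot}).

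A slightly heavier alternative avoids $\Psi$ altogether and argues by ramification: $p$ is totally ramified in $L/\BQ$ with $(1-\zeta)\CO_L$ the unique prime above it (Lemma~\ref{lem:m=n}(1)), and comparing ramification indices in the tower $L/K/\BQ$, where $[L:K]=2$ and $[K:\BQ]=\vp(m)/2$, forces $L/K$ to be ramified at $\fp:=(1-\zeta)\CO_L\cap\CO_K$, so that $(1-\zeta)(1-\zeta\inv)\CO_L=\bigl((1-\zeta)\CO_L\bigr)^2=\fp\CO_L$; intersecting with $K$ and using that $\CO_K=\CO_L\cap K$ is integrally closed then yields $(1-\zeta)(1-\zeta\inv)\CO_K=\fp$, again prime.
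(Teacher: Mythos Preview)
Your proof is correct and follows essentially the same idea as the paper's: both arguments boil down to showing that the absolute norm $N_{K/\BQ}(2-\zeta-\zeta\inv)$ equals $p$, which forces the ideal to be prime. You compute this norm as $\Psi(2)=\prod_{[j]}(1-\zeta^j)(1-\zeta^{-j})=\Phi_{p^a}(1)=p$ directly, while the paper obtains it by transitivity of norms through $\BQ(\zeta)$, using $N_{\BQ(\zeta)/K}(1-\zeta)=(1-\zeta)(1-\zeta\inv)$ and $N_{\BQ(\zeta)/\BQ}(1-\zeta)=p$; these are the same product in different packaging. Your explicit identification $\CO_K/(2-\zeta-\zeta\inv)\cong\BZ/\Psi(2)\BZ$ is a nice touch, and your remark that $m\ge 3$ is needed (so that $[L:K]=2$) is well taken --- indeed for $m=2$ the element is $4$ and the statement fails, though this case never arises in the paper's applications.
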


\begin{proof}
        It suffices to prove that the norm
        $
                N_{\BQ[\zeta+\zeta\inv]/\BQ}(2-\zeta-\zeta\inv)
        $
        of $(2-\zeta-\zeta\inv)$ is equal to $p$.
        By item (4) of Proposition~\ref{prop:omnibuscyclo}, we see that
        $$
                N_{\BQ[\zeta]/\BQ}(1-\zeta) = N_{\BQ[\zeta]/\BQ}(1-\zeta\inv) = p
                \,.
        $$
        Since 
        $$
                N_{\BQ[\zeta]/\BQ[\zeta+\zeta\inv]}(1-\zeta) =
                N_{\BQ[\zeta]/\BQ[\zeta+\zeta\inv]}(1-\zeta\inv) = 
                (1-\zeta)(1-\zeta\inv)
                \,,
        $$
        the assertion follows from the fact that
        $$
                N_{\BQ[\zeta]/\BQ} = N_{\BQ[\zeta+\zeta\inv]/\BQ} \cdot
                N_{\BQ[\zeta]/\BQ[\zeta+\zeta\inv]}
                \,.
        $$      
\end{proof}
\smallskip

\subsubsection*{Further properties of cyclotomic fields}\hfill
\smallskip

        The following property may be found, for example, 
        in \cite[\S 6.5]{sam}.

\begin{proposition}\label{quadincycl}
        Let $p$ be an odd prime. Then the cyclotomic field 
        $\BQ(\zeta_p)$ contains a single
        quadratic extension, namely $\BQ(\sqrt{(-1)^{(p-1)/2}p})$.
\end{proposition}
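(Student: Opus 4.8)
The plan is to split the statement into a purely group-theoretic existence/uniqueness part and an identification part that needs exactly one explicit computation. First I would recall the classical isomorphism $\operatorname{Gal}(\BQ(\zeta_p)/\BQ)\iso(\BZ/p\BZ)^\times$, sending an automorphism to the exponent by which it raises $\zeta_p$; this uses only that $\Phi_p$ is irreducible over $\BQ$ (Proposition~\ref{prop:omnibuscyclo}(6)). Since $(\BZ/p\BZ)^\times$ is cyclic of order $p-1$, and $p-1$ is even because $p$ is odd, this group has exactly one subgroup of index $2$. By the Galois correspondence, $\BQ(\zeta_p)$ therefore contains exactly one subfield which is a quadratic extension of $\BQ$. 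This settles uniqueness, so it remains only to exhibit such a subfield and recognise it.

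For the identification I would introduce the quadratic Gauss sum $g:=\sum_{a=1}^{p-1}\left(\frac{a}{p}\right)\zeta_p^{a}\in\BQ(\zeta_p)$, where $\left(\frac{\cdot}{p}\right)$ is the Legendre symbol. The key point is that $g^2=(-1)^{(p-1)/2}p$. I would get this by writing $g\overline g$ as a double sum, substituting $a=bc$ with $c$ running over the nonzero residues, using that $\sum_{b\bmod p}\zeta_p^{b(c-1)}$ equals $p-1$ for $c=1$ and $-1$ otherwise, together with $\sum_{c\bmod p}\left(\frac{c}{p}\right)=0$; this gives $g\overline g=p$. Since $\overline g=\left(\frac{-1}{p}\right)g=(-1)^{(p-1)/2}g$, one deduces $g^2=(-1)^{(p-1)/2}p$. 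Set $p^*:=(-1)^{(p-1)/2}p$, so that $\sqrt{p^*}=\pm g\in\BQ(\zeta_p)$.

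To conclude, note that because $p$ is prime the integer $p^*$ is squarefree and distinct from $1$, hence $\BQ(\sqrt{p^*})$ is a genuine quadratic extension of $\BQ$, and by the previous paragraph it is contained in $\BQ(\zeta_p)$. By the uniqueness established in the first paragraph, $\BQ(\sqrt{p^*})$ is \emph{the} quadratic extension of $\BQ$ inside $\BQ(\zeta_p)$, which is the assertion.

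The only step demanding genuine work is the evaluation $g^2=p^*$; everything else is formal. If one wished to avoid Gauss sums, an alternative identification is available: $p$ is the only prime ramified in $\BQ(\zeta_p)$ (its discriminant being a power of $p$), so the quadratic subfield lies among the quadratic fields unramified outside $p$, namely $\BQ(\sqrt{\pm p})$, and the standard congruence condition on the discriminant of a quadratic field singles out $\BQ(\sqrt{p^*})$ since $p^*\equiv1\pmod 4$. I would nonetheless present the Gauss sum argument, as it is more self-contained.
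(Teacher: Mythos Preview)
Your argument is correct. The Galois-theoretic uniqueness step is clean, and the Gauss sum computation $g^2=(-1)^{(p-1)/2}p$ is carried out accurately (the substitution $a=bc$ and the handling of the inner sum over $b$ are fine; note that $b$ ranges over nonzero residues, which is consistent with what you wrote).

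As for comparison with the paper: there is nothing to compare. The paper does not prove this proposition; it simply records it as a known fact with a reference to Samuel's \emph{Th\'eorie alg\'ebrique des nombres}, \S6.5. Your Gauss sum argument is precisely the classical proof one finds in such references, so you have effectively supplied what the paper chose to outsource. The alternative ramification argument you sketch at the end is also standard and would work equally well.
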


\begin{corollary}\label{quadincycl2}
        Let $p$ be an odd prime. Then
 \begin{itemize}
        \item 
                $\BQ[\sqrt p]\subset\BQ(\zeta_p)$ if $p\equiv 1\pmod 4$.
        \item 
                $\BQ[\sqrt{-p}]\subset\BQ(\zeta_p)$ if $p\equiv 3\pmod 4$.
 \end{itemize}
        In the second case we have $\BQ[\sqrt{-p}]\subset\BQ(\zeta_{4p})$.
\end{corollary}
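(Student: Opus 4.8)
The plan is to deduce this directly from Proposition~\ref{quadincycl}, which asserts that the unique quadratic subfield of $\BQ(\zeta_p)$ is $\BQ(\sqrt{(-1)^{(p-1)/2}p})$. The whole argument is then a matter of evaluating the sign $(-1)^{(p-1)/2}$ according to the residue of $p$ modulo $4$, together with one elementary manipulation with $i=\zeta_4$ for the last assertion.

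First I would record the sign computation. If $p\equiv 1\pmod 4$ then $(p-1)/2$ is even, so $(-1)^{(p-1)/2}=1$, and Proposition~\ref{quadincycl} gives $\BQ(\sqrt p)\subset\BQ(\zeta_p)$. If $p\equiv 3\pmod 4$ then $(p-1)/2$ is odd, so $(-1)^{(p-1)/2}=-1$, and the same proposition gives $\BQ(\sqrt{-p})\subset\BQ(\zeta_p)$. This already establishes the two bulleted statements; the only point needing a moment's care is to read the sign convention in Proposition~\ref{quadincycl} correctly, that is, to note that $(-1)^{(p-1)/2}p$ is the quantity that is positive exactly when $p\equiv 1\pmod 4$.

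For the final claim, I would observe that $\BQ(\zeta_p)\subseteq\BQ(\zeta_{4p})$ since $p\mid 4p$, so in the case $p\equiv 3\pmod 4$ the inclusion $\BQ(\sqrt{-p})\subseteq\BQ(\zeta_p)\subseteq\BQ(\zeta_{4p})$ is immediate. Moreover, since $i=\zeta_4\in\BQ(\zeta_4)\subseteq\BQ(\zeta_{4p})$ and $\sqrt{-p}=i\sqrt p$, one also gets $\sqrt p=-i\sqrt{-p}\in\BQ(\zeta_{4p})$, so that $\BQ(\sqrt p)\subset\BQ(\zeta_{4p})$ in this case as well (whereas $\sqrt p\notin\BQ(\zeta_p)$ by the uniqueness part of Proposition~\ref{quadincycl}).

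There is essentially no obstacle here: all the content is carried by Proposition~\ref{quadincycl}, and what remains is the trivial parity bookkeeping for the exponent $(p-1)/2$ and the observation that $\zeta_4\in\BQ(\zeta_{4p})$. I do not expect any step to require a genuine argument beyond these.
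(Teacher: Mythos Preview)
Your proof is correct and is exactly the deduction the paper intends: the corollary is stated immediately after Proposition~\ref{quadincycl} with no separate proof, the two bulleted inclusions being read off from the sign $(-1)^{(p-1)/2}$. Your observation that the final sentence, as written, is trivially contained in the second bullet (since $\BQ(\zeta_p)\subset\BQ(\zeta_{4p})$), and that the intended content is likely $\BQ(\sqrt p)\subset\BQ(\zeta_{4p})$ via $\sqrt p=-i\sqrt{-p}$, is apt and your argument for it is fine.
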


        Let us recall that the fields $\BQ[\sqrt{-p}]$ for $p\equiv 3\pmod 4$ are
        principal ideal domains if and only if $p\in \{3,7,11,19,43,67,163\}$.

        The next result shows that a cyclotomic extension cannot contain 
        the roots of some rational numbers.

\begin{proposition}\label{rootsincyc}
        A cyclotomic field $\BQ(\zeta_n)$ cannot contain an element $\alpha$ 
        whose minimal   polynomial is $X^m-a$ for $a\in \BQ$ and $m\geq 3$.
\end{proposition}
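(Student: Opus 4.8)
The plan is to use that a subfield of an abelian extension is Galois over the base field with abelian Galois group. So suppose $\alpha\in\BQ(\zeta_n)$ has minimal polynomial $X^m-a$ over $\BQ$ with $a\in\BQ$ and $m\ge3$; set $K:=\BQ(\alpha)$ and let $\Gamma:=\mathrm{Gal}(K/\BQ)$, an abelian group of order $m$ (Galois because $K=\BQ(\zeta_n)^H$ with $H$ automatically normal, $\mathrm{Gal}(\BQ(\zeta_n)/\BQ)$ being abelian). Since $K/\BQ$ is Galois and $\alpha\in K$ is a root of $X^m-a$, all the roots $\zeta_m^{\,j}\alpha$ ($0\le j<m$) lie in $K$, and dividing two of them gives $\zeta_m\in K$; hence $\BQ(\zeta_m)\subseteq K$, so $\varphi(m)\mid m$, which by Proposition~\ref{prop:omnibuscyclo} already rules out every odd $m\ge3$. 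The easy case is when $X^m-a$ has a real root $\rho$ (in particular whenever $a>0$): then $\rho\in K$, and since $[\BQ(\rho):\BQ]=m=[K:\BQ]$ we get $K=\BQ(\rho)\subseteq\BR$, whence $\zeta_m\in K\cap\BR=\{\pm1\}$ and $m\le2$, a contradiction.

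For the remaining cases I would work with the embedding $\Gamma\hookrightarrow(\BZ/m\BZ)^\times\ltimes\bmu_m$: each $\sigma\in\Gamma$ is determined by $t_\sigma\in(\BZ/m\BZ)^\times$ with $\sigma(\zeta_m)=\zeta_m^{\,t_\sigma}$ and by $\omega_\sigma:=\sigma(\alpha)/\alpha\in\bmu_m\subseteq K$ (note $\omega_\sigma^m=\sigma(\alpha^m)/\alpha^m=1$). A short computation of $\sigma\tau(\alpha)$ two ways, using $\sigma(\omega_\tau)=\omega_\tau^{\,t_\sigma}$, together with commutativity of $\Gamma$, yields $\omega_\tau^{\,t_\sigma-1}=\omega_\sigma^{\,t_\tau-1}$ for all $\sigma,\tau\in\Gamma$. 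The subgroup $N:=\mathrm{Gal}(K/\BQ(\zeta_m))$ has $t_\sigma=1$ for $\sigma\in N$, and $\sigma\mapsto\omega_\sigma$ is then an injective homomorphism $N\hookrightarrow\bmu_m$, so $N$ is cyclic of order $e:=m/\varphi(m)$; pick a generator $\sigma_0$, so $\omega_{\sigma_0}$ has order $e$. Plugging $\sigma_0$ into the identity above gives $1=\omega_\tau^{\,t_{\sigma_0}-1}=\omega_{\sigma_0}^{\,t_\tau-1}$, i.e. $t_\tau\equiv1\pmod e$ for every $\tau\in\Gamma$. But $\tau\mapsto t_\tau$ surjects $\Gamma$ onto $(\BZ/m\BZ)^\times$, which surjects onto $(\BZ/e\BZ)^\times$, so $(\BZ/e\BZ)^\times$ is trivial, $\varphi(e)=1$, and $e\in\{1,2\}$. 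If $e=1$ then $K=\BQ(\zeta_m)$ and $m=\varphi(m)$, so $m=1$; if $e=2$ then $m=2\varphi(m)$, which by Proposition~\ref{prop:omnibuscyclo} forces $m$ to be a power of $2$.

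The step I expect to be the real obstacle is exactly the surviving case $m=2^k$ with $k\ge2$. Here the statement as literally phrased is \emph{false}: $\alpha=\zeta_{2^{k+1}}$ has minimal polynomial $\Phi_{2^{k+1}}(X)=\Phi_2(X^{2^k})=X^{2^k}-(-1)$ and lies in $\BQ(\zeta_{2^{k+1}})$, and likewise $\alpha=\zeta_8\sqrt3\in\BQ(\zeta_{24})$ has minimal polynomial $X^4-(-9)$; both are counterexamples, necessarily with $a<0$. So the proposition holds only under the (evidently intended --- cf. the preceding remark about ``roots of rational numbers'') hypothesis that $a>0$, equivalently that $\alpha$ denotes the real radical $\sqrt[m]{a}$; under that hypothesis the short argument of the first paragraph already finishes the proof ($K\subseteq\BR\Rightarrow\zeta_m\in\BR\Rightarrow m\le2$), and the semidirect-product analysis becomes unnecessary. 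I would therefore record the proposition with the hypothesis $a>0$ and present only that short proof.
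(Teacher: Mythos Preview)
Your analysis is correct, and in fact you have caught a genuine error: the proposition as literally stated is false, and your counterexamples $\zeta_{2^{k+1}}$ (minimal polynomial $X^{2^k}+1$) and $\zeta_8\sqrt 3\in\BQ(\zeta_{24})$ (minimal polynomial $X^4+9$) are valid. The paper's own sketch of proof has exactly the same gap: it asserts that the Galois group of $\BQ(\alpha,\zeta_m)/\BQ$ is non-abelian, but for $\alpha=\zeta_8$, $m=4$, $a=-1$ one gets $\BQ(\zeta_8,\zeta_4)=\BQ(\zeta_8)$, whose Galois group $(\BZ/8\BZ)^\times$ is abelian. So the paper's argument fails on the very same examples.

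Under the corrected hypothesis $a>0$ (or more generally, whenever $X^m-a$ has a real root), your short argument and the paper's sketch are two phrasings of one idea. You observe that $K=\BQ(\alpha)$, being a subextension of an abelian extension, is already Galois over $\BQ$, hence contains the real root $\rho$, so $K=\BQ(\rho)\subseteq\BR$ and $\zeta_m\in K\cap\BR$ forces $m\le 2$. The paper instead passes to the Galois closure $\BQ(\alpha,\zeta_m)$ and notes that it contains the non-normal subfield $\BQ(\rho)$, so its Galois group has a non-normal subgroup and cannot be abelian. Your formulation is the more direct of the two. Your semidirect-product calculation is, as you say, not needed for the corrected statement, but it is correct and usefully pins down that the obstruction occurs precisely for $m$ a power of $2$ and $a<0$.
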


\begin{proof}[Sketch of proof]
        If $\BQ(\zeta_n)$ contains such an element $\alpha$, it also contains 
        the Galois closure of the extension $\BQ(\alpha)/\BQ$, namely 
        $\BQ(\alpha,\zeta_m)$. This is impossible since the Galois group of 
        $\BQ(\alpha,\zeta_m)/\BQ$ is not abelian.
\end{proof}

        The next theorem maybe found in \cite[Chap. 11, Theorem 11.1]{was} .
        
\begin{theorem}\label{washington}
        A cyclotomic field $\BQ(\zeta_m)$ is a principal ideal domain if 
        and only if
        $m\le 22$ or $m\in\{24,25,26,27,28,30,32,33,34,35,36,38,
        \allowbreak 40,42,44,45,48,50,54,60,66,70,84,90\}$.
        \end{theorem}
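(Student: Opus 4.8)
The statement is classical: it is the theorem of Masley and Montgomery, and the plan for a self-contained proof is to reduce the ``if and only if'' to a finite computation and then to carry out that computation via the analytic class number formula, splitting the class number into its real and relative parts.

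The first step is the reduction to finitely many $m$. Recall that $m \mid n$ implies $\BQ(\zeta_m) \subseteq \BQ(\zeta_n)$, and that for cyclotomic fields the class number of a subfield divides that of the overfield, so $h(\BQ(\zeta_m)) \mid h(\BQ(\zeta_n))$. Hence the set of $m$ with $\BQ(\zeta_m)$ a P.I.D. is closed under taking divisors (equivalently, ``bad'' $m$ are closed under taking multiples), and since $\BQ(\zeta_{2m}) = \BQ(\zeta_m)$ for $m$ odd we may restrict attention to $m \not\equiv 2 \pmod 4$. To see that only finitely many $m$ are good one invokes an effective lower bound for the class number: the root discriminant $|d_{\BQ(\zeta_m)}|^{1/\vp(m)}$ tends to infinity with $m$, and combining this with the Minkowski bound — or, far more efficiently, with the analytic class number formula together with an upper bound for the regulator of $\BQ(\zeta_m)$ obtained from the cyclotomic units — forces $h(\BQ(\zeta_m)) > 1$ once $m$ exceeds an explicit threshold. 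In practice one pushes the threshold down so that only the values up to $m \le 90$ (plus their divisors, already covered) remain to be inspected.

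For the finitely many surviving $m$, I would use the decomposition $h(\BQ(\zeta_m)) = h_m^+ h_m^-$, where $h_m^+$ is the class number of the maximal real subfield $\BQ(\zeta_m + \zeta_m\inv)$ and $h_m^-$ is the relative class number. The factor $h_m^-$ is completely explicit: applying the analytic class number formula to the odd Dirichlet characters modulo $m$ gives
$$
h_m^- = Q\, w_m \prod_{\chi \text{ odd}} \left( -\tfrac12 B_{1,\chi} \right),
$$
with $Q \in \{1,2\}$, $w_m$ the number of roots of unity in $\BQ(\zeta_m)$, and the generalized Bernoulli numbers $B_{1,\chi}$ rational and easy to evaluate. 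One then checks directly that $h_m^- = 1$ precisely for the values in the stated list, and that $h_m^- > 1$ for the smallest bad values ($m = 23, 29, 31, 39, 41, \dots$). This is a mechanical, if lengthy, calculation.

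The genuinely hard step — and the deep content of Masley--Montgomery, which does not follow from the relative class number formula — is to prove $h_m^+ = 1$ for every $m$ in the relevant range. The argument combines analytic estimates (bounding $h_m^+ R_m^+$ through the class number formula for the totally real field $\BQ(\zeta_m+\zeta_m\inv)$, and bounding the regulator $R_m^+$ from below using the geometry of the logarithmic embedding of a suitable system of cyclotomic units) with reflection (Spiegelung) arguments tying $p$-parts of $h_m^+$ to $p$-parts of $h_m^-$, and finally with an explicit verification for the handful of stubborn cases. Granting $h_m^+ = 1$ on this range, the theorem follows: $\BQ(\zeta_m)$ is a P.I.D. if and only if $h_m^+ = h_m^- = 1$, if and only if $m$ lies in the stated list. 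I expect the control of $h_m^+$ to be the main obstacle; the reduction to a finite set and the evaluation of the Bernoulli numbers are respectively formal and routine.
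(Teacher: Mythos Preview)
The paper does not prove this theorem at all: it is stated in the appendix with the sentence ``The next theorem may be found in \cite[Chap.~11, Theorem 11.1]{was}'' and no argument is given. So there is no ``paper's own proof'' to compare against; the result is quoted from Washington's book as background.

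Your outline is the standard Masley--Montgomery strategy and is essentially what Washington presents, so in that sense it matches the cited source. A couple of small cautions on the sketch itself: the divisibility $h(\BQ(\zeta_m))\mid h(\BQ(\zeta_n))$ for $m\mid n$ is correct but is not a general fact about subfields of number fields --- it relies on the norm map on class groups being surjective, which one proves for cyclotomic towers using that no unramified abelian extension of $\BQ(\zeta_m)$ sits inside $\BQ(\zeta_n)$; you should flag this as a nontrivial ingredient rather than a formality. Likewise, the ``effective lower bound'' step that cuts the search down to $m\le 90$ is the genuinely delicate analytic part of Masley--Montgomery and is not just the Minkowski bound; your parenthetical ``far more efficiently, with the analytic class number formula together with an upper bound for the regulator'' is pointing in the right direction, but in an actual write-up that is where most of the work lives. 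With those caveats, the plan is sound.
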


\vfill\eject
\section{A table of Cartan matrices}
The following table gives, for each exceptional irreducible finite complex
reflection group $G_4,\ldots,G_{34}$:
\begin{itemize}
\item  The diagram describing its  presentation, using the same conventions
as  in  \cite[Appendix  A]{berkeley};  a  subset of nodes representing each
conjugacy   class  of  hyperplanes  have  been  given  labels  in  the  set
$\{s,t,u\}$.
\item  The Cartan matrix  $C$ of a principal  distinguished \zroot\ system
for  the standard  generators corresponding  to the  diagram. If  there are
several  genera  of  \zroot\  systems,  
below  $C$ we list 
the diagonal  matrices which conjugate $C$ to the Cartan matrix for other genera
are listed below $C$.

For badly generated groups, the rows (resp. columns) of the Cartan
matrix which are integral linear combinations of the others are indicated by
$*$. That there is always at least one such row and column fulfills
the promise of Remark \ref{rem:connectionbadgenerated}.
\item The value of $\BZ_k$.
The values of $\BZ_k$ for $G_{14}, G_{20}, G_{21}, G_{22}$ and $G_{27}$
can be readily seen using Exercise 4.5.13 in \cite{mues}.
\item A generator of the connection index for each root system.
\item A generator of the (principal) ideal $\Bad_G$.
\end{itemize}

For   the  omitted   exceptional  groups   $G_{35}=E_6$,  $G_{36}=E_7$  and
$G_{37}=E_8$,  the diagram and the Cartan matrix are well known.
We have
$\BZ_k=\BZ$ in each case, connection indices are
$$c_{E_6}=3, c_{E_7}=2\text{ and }c_{E_8}=1,$$
and the numbers $\Bad_G$ are:
$$\Bad_{E_6}=6, \Bad_{E_7}=6\text{ and }\Bad_{E_8}=120.$$

Finally, in the tables, we use the notation $\phi=\frac{1+\sqrt 5}2$.

\newpage
\label{table}
\font\hugecirc = lcircle10 scaled 1950
\font\largecirc = lcircle10 scaled  \magstep 3
\font\mediumcirc = lcircle10 scaled \magstep 2
\font\smallcirc = lcircle10 
\newcommand\BBcirc{$\hbox{\hugecirc\char"6E}$}
\newcommand\Bcirc{$\hbox{\largecirc\char"6E}$}
\newcommand\sBcirc{$\hbox{\mediumcirc\char"6E}$}
\newcommand\sbcirc{$\hbox{\smallcirc\char"6E}$}
\newcommand\ucirc{$\hbox{\mediumcirc\char"13}$}
\newcommand\lcirc{$\hbox{\mediumcirc\char"12}$}
\newcommand\cnode[1]{{\kern -0.4pt\bigcirc\kern-7pt{\scriptstyle#1}\kern 2.6pt}}
\newcommand\ncnode[2]{{\kern -0.4pt\mathop\bigcirc\limits_{#2}\kern-8.6pt{\scriptstyle#1}\kern 2.3pt}}
\newcommand\node{{\kern -0.4pt\bigcirc\kern -1pt}}
\newcommand\snode{{\kern -0.4pt{\scriptstyle\bigcirc}\kern -1pt}}
\newcommand\nnode[1]{{\kern -0.6pt\mathop\bigcirc\limits_{#1}\kern -1pt}}
\def\bar#1pt{{\vrule width#1pt height3pt depth-2pt}}
\def\dbar#1pt{{\rlap{\vrule width#1pt height2pt depth-1pt} 
                 \vrule width#1pt height4pt depth-3pt}}
\def\tbar#1pt{{\rlap{\vrule width#1pt height1pt depth0pt}
           \rlap{\vrule width#1pt height3pt depth-2pt}
               \vrule width#1pt height5pt depth-4pt}}
\newcommand\overmark[2]{\kern -1.5pt\mathop{#2}\limits^{#1}\kern -2pt}
\newcommand\trianglerel[3]{
 \nnode#1\bar14pt\kern-12pt\raise7.5pt\hbox{$\displaystyle \underleftarrow 6$}
  \kern 2pt\nnode#2
 \kern-27pt\raise9pt\hbox{$\diagup$}
 \kern -2pt
 \raise16.5pt\hbox{$\node$\rlap{\raise 2pt\hbox{$\kern 1pt\scriptstyle #3$}}}
 \kern -1pt\raise9pt\hbox{$\diagdown$}
 \kern 4pt
}
\newcommand\vertbar[2]{\rlap{$\nnode{#1}$}
                 \rlap{\kern4pt\vrule width1pt height17.3pt depth-7.3pt}
           \raise19.4pt\hbox{$\node$\rlap{$\kern 1pt\scriptstyle#2$}}}

\newcommand\diagg[1]{
\ifnum #1 = 4 \ncnode3s \bar20pt\cnode3\fi
\ifnum #1 = 5 \ncnode3s\dbar20pt\ncnode3t\fi
\ifnum #1 = 6 \nnode s\tbar20pt\ncnode3t\fi
\ifnum #1 = 7
 {\scriptstyle s}\node\kern13.5pt\raise2.5pt\hbox{$\Bcirc$}
  \kern-15.5pt\rlap{\raise10pt\hbox{$\cnode3\kern 1pt\scriptstyle t$}}
  \lower10pt\hbox{$\cnode3\kern 1pt\scriptstyle u$}
\fi
\ifnum #1 = 8  \ncnode4s \bar20pt\cnode 4\fi
\ifnum #1 = 9  \nnode s\tbar20pt\ncnode4t\fi
\ifnum #1 = 10 \ncnode3s\dbar20pt\ncnode 4t\fi
\ifnum #1 = 11
 {\scriptstyle s}\node\kern13.5pt\raise2.5pt\hbox{$\Bcirc$}
  \kern-15.5pt\rlap{\raise10pt\hbox{$\cnode3\kern 1pt\scriptstyle t$}}
  \lower10pt\hbox{$\cnode 4\kern 1pt\scriptstyle u$}
\fi
\ifnum #1 = 12
 {\scriptstyle s}\node\kern13.5pt\raise2.5pt\hbox{$\Bcirc$}
 \kern-25.7pt\raise2.5pt\hbox{$\sBcirc$}
 \kern-11.8pt\rlap{\raise10pt\hbox{$\node$}}\lower10pt\hbox{$\node$}
\fi
\ifnum #1 = 13
 \kern 15pt\raise2.5pt\hbox{$\BBcirc$}\kern-49.5pt
 {\scriptstyle s}\kern 2pt\snode\kern8pt\raise2.5pt\hbox{$\sbcirc$}
 \kern-11.8pt\rlap{\raise9.5pt\hbox{$\snode\kern 3pt\scriptstyle t$}}
                 \lower8pt\hbox{$\snode$}
 \kern-12pt{\lower 3pt\hbox{$\scriptstyle 5$}}
 \kern8pt{\scriptstyle 4}
\fi
\ifnum #1 = 14 \nnode s\overmark 8{\bar20pt}\ncnode3t\fi
\ifnum #1 = 15 {\scriptstyle s}\node \kern 0pt
               \rlap{\kern -4pt \lower 10pt\hbox{$\scriptstyle 5$}}
            \raise 16.6pt\hbox{$\ucirc$}
               \kern -28.8pt\lower 12.2pt\hbox{$\lcirc$}
            \kern -28.8pt \phantom{\dbar 14 pt}
    \kern-2.6pt
    \raise10pt\hbox{$\node$\rlap{\hbox{$\kern 2pt\scriptstyle t$}}}
    \kern-8.8pt
    \lower10.2pt\hbox{$\cnode 3$\rlap{\hbox{$\kern 2pt\scriptstyle u$}}}
\fi
\ifnum #1 = 16 \ncnode 5s \bar20pt\cnode 5\fi
\ifnum #1 = 17 \nnode s\tbar20pt\ncnode 5t\fi
\ifnum #1 = 18 \ncnode 3s\dbar20pt\ncnode 5t\fi
\ifnum #1 = 19
 {\scriptstyle s}\node\kern13.5pt\raise2.5pt\hbox{$\Bcirc$}
 \kern-15.5pt\rlap{\raise10pt\hbox{$\cnode 3\kern 1pt\scriptstyle t$}}
 \lower10pt\hbox{$\cnode 5\kern 1pt\scriptstyle u$}
\fi
\ifnum #1 = 20 \ncnode3s\overmark 5{\bar20pt}\cnode3\fi
\ifnum #1 = 21 \nnode s\overmark{10}{\bar20pt}\ncnode 3t\fi
\ifnum #1 = 22 {\scriptstyle s}\node\kern13.5pt \raise2.5pt\hbox{$\Bcirc$}
  \kern-28pt\rlap{\lower.4pt\hbox{$5$}}\kern 28pt
  \kern-15.5pt\rlap{\raise10pt\hbox{$\node$}}
  \lower10pt\hbox{$\node$}
\fi
\ifnum #1 = 23 \nnode s\overmark 5{\bar16pt}\node\bar16pt\node\fi
\ifnum #1 = 24 
 \nnode s \dbar14pt\node
 \kern-26.2pt\raise8.5pt\hbox{$\diagup$}
 \kern -3.1pt
 \raise16.7pt\hbox{$\node$}
 \kern -2.5pt\raise8pt\hbox{$\diagdown$}
 \kern 4pt
 \fi
\ifnum #1 = 25 \ncnode3s\bar16pt\cnode3\bar16pt\cnode3 \fi
\ifnum #1 = 26 \nnode s\dbar16pt\ncnode3t\bar16pt\cnode3\fi
\ifnum #1 = 27
 \nnode s \dbar14pt\node
 \kern-27pt\raise9pt\hbox{$\diagup$}\kern -2pt
 \raise16.5pt\hbox{$\node$}\kern -1pt\raise9pt\hbox{$\diagdown$}\kern 4pt
 \fi
\ifnum #1 = 28 \nnode s\bar10pt\node\dbar10pt\nnode u\bar10pt\node\fi
\ifnum #1 = 29 \nnode s\bar10pt
 \node\rlap{\raise9pt\hbox{$\underleftarrow{}$}}
        \rlap{\kern 5.5pt\vrule width1pt height15pt depth-10pt
      \kern 1pt \vrule width1pt height15pt depth-10pt}
      \dbar14pt\node
 \kern-26pt\raise9pt\hbox{$\diagup$}
 \kern -3pt\raise17pt\hbox{$\node$}
 \kern -2pt\raise9pt\hbox{$\diagdown$}
 \kern 5pt
 \fi
\ifnum #1 = 30 \nnode s\overmark 5{\bar10pt}\node\bar10pt\node\bar10pt\node\fi
\ifnum #1 = 31
 \node\kern -2pt\raise8pt\hbox{$\diagup$}
      \kern -2.5pt\raise16pt\hbox{$\nnode s$}
      \kern-12.5pt\bar 19pt\node
   \kern-4pt\raise18.3pt\hbox{$\sBcirc$}
   \kern-17.6pt\bar19pt\node
   \kern-20.5pt\raise16pt\hbox{$\node$}
   \kern-2pt\raise8pt\hbox{$\diagdown$}
\fi
\ifnum #1 = 32 \ncnode3s\bar10pt\cnode3\bar10pt\cnode3\bar10pt\cnode3\fi
\ifnum #1 = 33 \nnode s\bar10pt\trianglerel tuw\bar10pt\node\fi
\ifnum #1 = 34 \nnode s\bar10pt\trianglerel tuw\bar10pt\node\bar10pt\node\fi
\ifnum #1 = 35 \nnode{s_1}\bar10pt\nnode{s_3}\bar10pt\vertbar{s_4}{s_2}\bar10pt\nnode{s_5}\bar10pt\nnode{s_6}\fi
\ifnum #1 = 36 \nnode{s_1}\bar10pt\nnode{s_3}\bar10pt\vertbar{s_4}{s_2}\bar10pt\nnode{s_5}\bar10pt\nnode{s_6}\bar10pt\nnode{s_7}\fi
\ifnum #1 = 37 \nnode{s_1}\bar10pt\nnode{s_3}\bar10pt\vertbar{s_4}{s_2}\bar10pt\nnode{s_5}\bar10pt\nnode{s_6}\bar10pt\nnode{s_7}\bar10pt\nnode{s_8}\fi
}

\def\vinsert#1pt{height#1pt&\omit&\omit&\omit&\omit&\omit&\omit&\cr}
\newcommand\tbl[1]{
\offinterlineskip
\halign{
\tabskip 0pt
\vrule##
\tabskip 0.2 cm
&\vrule height11.4pt depth11pt width0pt
$##$\hfil&
\hfil$##$\hfil&
$##$\hfil&
\hfil$##$\hfil&
\hfil$##$\hfil&
\hfil$##$\hfil&
\tabskip 0pt\vrule##\cr
\noalign{\hrule}
&\hbox{Name}&\hbox{Diagram}&\hbox{Cartan matrices}&\BZ_k&c_G&\Bad_G&\cr
\noalign{\hrule}
#1
\noalign{\hrule}
}
\vfill\eject
}
\renewcommand{\kbldelim}{(}
\renewcommand{\kbrdelim}{)}
{\footnotesize
\tbl{
\vinsert2pt
&G_4&\diagg{4}&
\begin{pmatrix}1-\zeta_3&\unit{\zeta_3^2}\\\unit{-\zeta_3^2}&1-\zeta_3
\end{pmatrix}
&\BZ[\zeta_3]&2&2\sqrt{-3}&\cr
\vinsert10pt
&G_5&\diagg{5}&
\begin{pmatrix}1-\zeta_3&\unit{1}\\-2\zeta_3&1-\zeta_3\end{pmatrix}
&\BZ[\zeta_3]&1&&\cr
&&&\diag(1,2)&&&&\cr
&G_6&\diagg{6}&
\begin{pmatrix}2&\unit{(i+\zeta_3)}(1+i)\\ \unit{-1}&1-\zeta_3\end{pmatrix}
&\BZ[\zeta_{12}]&1+i&4\sqrt 3&\cr
&&&\diag(i+1,1)&&&&\cr
&G_7&\diagg{7}&
\kbordermatrix{
&*&&\cr
*&2&\zeta_3^2(1-i)&-\zeta_3^2(1+i)\cr
*&\unit{\zeta_3(1+i\zeta_3)}&1-\zeta_3&-\zeta_3(1-i)\cr
*&\unit{i\zeta_3(1+i\zeta_3)}&1+i&1-\zeta_3\cr}
&\BZ[\zeta_{12}]&1&&\cr
\vinsert4pt
&&&\diag(i+1,1,i+1)&&1&&\cr
&&&\diag(i+1,i+1,1)&&1&&\cr
&&&\diag(i+1,1,1)&&1&&\cr
&G_8&\diagg{8}&
\begin{pmatrix}1-i&-i\\ 1&1-i\\\end{pmatrix}
&\BZ[i]&1&12&\cr
\vinsert10pt
&G_9&\diagg{9}&
\begin{pmatrix}2& \unit{-\frac{2+\sqrt 2}{i+1}}\\ \unit{-1}&1-i\end{pmatrix}
&\BZ[\zeta_8]&1&&\cr
\vinsert10pt
&G_{10}&\diagg{10}&
\begin{pmatrix}1-\zeta_3& \unit{1}\\ \unit{-i-\zeta_3}&1-i\end{pmatrix}
&\BZ[\zeta_{12}]&1&&\cr
\vinsert10pt
&G_{11}&\diagg{11}&
\kbordermatrix{&*&*&*\cr
*&2&\unit{\zeta_3^2(\sqrt{-3}-\sqrt{-2})}&\unit{1-\zeta_{24}}\cr
*&\unit{1}&1-\zeta_3&\unit{\zeta_{24}^{13}}\cr
*&\unit{1+\zeta_{24}^5}&\unit{\zeta_{24}^7+\zeta_{24}^5}&1-i}
&\BZ[\zeta_{24}]&1&&\cr
\vinsert10pt
&G_{12}&\diagg{12}&
\kbordermatrix{&*&*&*\cr
*&2&-1-\sqrt{-2}&-1+\sqrt{-2}\cr
*&-1+\sqrt{-2}&2&-1-\sqrt{-2}\cr
*&-1-\sqrt{-2}&-1+\sqrt{-2}&2}
&\BZ[\sqrt{-2}]&1&&\cr
\vinsert10pt
&G_{13}&\diagg{13}&
\kbordermatrix{&*&*&*\cr
*&2&\sqrt 2& i-1\cr 
&\unit{(1+\sqrt 2)}&2&-1+\sqrt{-2}\cr
&\unit{-\zeta_8 (1+\sqrt 2)}&-1-\sqrt {-2}&2}
&\BZ[\zeta_8]&1&&\cr
\vinsert2pt
&&&
\diag(1-\zeta_8,1,1)&&1&&\cr 
\vinsert2pt
&&&\diag(i+1,1,1)&&1&&\cr
}
\hskip-2cm
\tbl{
\vinsert10pt
&G_{14}&\diagg{14}&
\begin{pmatrix}2&\unit{-1}\\ \unit{-\zeta_3^2(\sqrt{-3}+\sqrt {-2})}&1-\zeta_3\end{pmatrix}
&\BZ[\zeta_3,{\scriptstyle\sqrt{-2}}]&1&12\sqrt{-2}&\cr
\vinsert2pt
&G_{15}&
\diagg{15}
&
\kbordermatrix{&*&*&*\cr
&2&\unit{-\zeta_{24}-\zeta_{24}^8}&\unit{1}\cr
&\unit{1-\zeta_{24}\inv}&1-\zeta_3&\unit{1}\cr
*&\unit{\zeta_8^3}(1-\zeta_8)^2&\unit{-\zeta_8(\zeta_3+i)}\sqrt2&2
}
&\BZ[\zeta_{24}]&1&&\cr
\vinsert2pt
&&&\diag(1,1,1-\zeta_8)&&1&&\cr
\vinsert2pt
&&&
\diag(1,1,1+i)
&&1&&\cr
\vinsert9pt
&G_{16}&\diagg{16}&
\begin{pmatrix}1-\zeta_5& \unit{1}\\ \unit{-\zeta_5}&1-\zeta_5\end{pmatrix}
&\BZ[\zeta_5]&1&&\cr
\vinsert9pt
&G_{17}&\diagg{17}&
\begin{pmatrix}2& \unit{1}\\ \unit{1-\zeta_5-\zeta_{20}^7}&1-\zeta_5\end{pmatrix}
&\BZ[\zeta_{20}]&1&&\cr
\vinsert9pt
&G_{18}&\diagg{18}&
\begin{pmatrix}1-\zeta_3& \unit{-\zeta_{15}^4}\\
\unit{\zeta_{15}\inv+\zeta_{15}}&1-\zeta_5\end{pmatrix}
&\BZ[\zeta_{15}]&1&&\cr
\vinsert9pt
&G_{19}&\diagg{19}&
\kbordermatrix{&*&*&*\cr
*&2&\unit{1-\zeta_3\zeta_{60}\inv}&\unit{-1}\cr
*&\unit{1+\zeta_{60}}&1-\zeta_3&\unit{-\zeta_{60}}\cr
*&\unit{\zeta_5-1+\zeta_{20}^7}&\unit{\zeta_{60}^{19}+\zeta_{60}^{11}}&1-\zeta_5
}
&\BZ[\zeta_{60}]&1&&\cr
\vinsert9pt
&G_{20}&\diagg{20}&
\begin{pmatrix}1-\zeta_3& \unit{-\zeta_3\phi}\\
\unit\phi&1-\zeta_3\end{pmatrix}
&\BZ[\zeta_3,\phi]&1&&\cr
\vinsert9pt
&G_{21}&\diagg{21}&
\begin{pmatrix}2& \unit{1}\\ \unit{1-\zeta_3+\frac\phi{\zeta_{12}}}&1-\zeta_3\end{pmatrix}
&\BZ[\zeta_{12},\phi]&1&&\cr
&G_{22}&\diagg{22}&
\kbordermatrix{&*&*&*\cr
*&2&-i-\phi&-i+\phi\cr
*&i-\phi&2&-1+i\phi\cr
*&i+\phi&-1-i\phi&2}
&\BZ[i,\phi]&1&&\cr
\vinsert10pt
&{\displaystyle G_{23}\atop\displaystyle =H_3}&\diagg{23}&
\begin{pmatrix}2& \unit{-\phi}&0\\
\unit{-\phi}&2&\unit{-1}\\0&\unit{-1}&2\end{pmatrix}$$
&\BZ[\phi]&2&2\sqrt5&\cr
\vinsert10pt
&G_{24}&\diagg{24}&
\begin{pmatrix}2&\unit{-1}&\unit{-1}\\ \unit{-1}&2&(1-\sqrt{-7})/2\\ \unit{-1}&(1+\sqrt{-7})/2&2\end{pmatrix}
 &\BZ[\frac{1+\sqrt{-7}}2]&1&2\sqrt{-7}&\cr
\vinsert10pt
&G_{25}&\diagg{25}&
\begin{pmatrix}1-\zeta_3& \unit{\zeta_3^2}&0\\ \unit{-\zeta_3^2}&1-\zeta_3&\unit{\zeta_3^2}\\
0&\unit{-\zeta_3^2}&1-\zeta_3\end{pmatrix}
&\BZ[\zeta_3]&\sqrt{-3}&6&\cr
\vinsert10pt
}
\tbl{
\vinsert2pt
&G_{26}&\diagg{26}&
\begin{pmatrix}2&\unit{-1}&0\\ \zeta_3-1&1-\zeta_3&\unit{\zeta_3^2}\\
0&\unit{-\zeta_3^2}&1-\zeta_3\end{pmatrix}
&\BZ[\zeta_3]&1&18\sqrt{-3}&\cr
\vinsert4pt
&&&\diag(1,1-\zeta_3,1-\zeta_3)&&&&\cr
\vinsert10pt
&G_{27}&\diagg{27}&
\begin{pmatrix}2&\unit{-1}&\unit{-1}\\ \unit{-1}&2&\frac{\zeta_3^2(\sqrt{-3}+\sqrt {5})}2\\
\unit{-1}&\frac{(\sqrt{5}-\sqrt{-3})\zeta_3}2&2\end{pmatrix}
&\BZ[\zeta_3,\phi]&1&6\sqrt5&\cr
\vinsert10pt
&{\displaystyle G_{28}\atop\displaystyle =F_4}&\diagg{28}&
\begin{pmatrix}
2&\unit{-1}&0&0\\
\unit{-1}&2&\unit{-1}&0\\
0&-2&2&\unit{-1}\\
0&0&\unit{-1}&2\\
\end{pmatrix}
&\BZ&1&24&\cr
\vinsert4pt
&&&\diag(1,1,2,2)&&&&\cr
\vinsert9pt
&G_{29}&\diagg{29}&
\begin{pmatrix}2&\unit{-1}&0&0\\\unit{-1}&2&i+1&\unit{-1}\\0&1-i&2&\unit{-1}\\0&\unit{-1}&\unit{-1}&2\end{pmatrix}
&\BZ[i]&1&20&\cr
\vinsert9pt
&{\displaystyle G_{30}\atop\displaystyle =H_4}&\diagg{30}&
\begin{pmatrix}2&\unit{-\phi}&0&0\\\unit{-\phi}&2&\unit{-1}&0\\
0&\unit{-1}&2&\unit{-1}\\0&0&\unit{-1}&2\end{pmatrix}
&\BZ[\phi]&1&120&\cr
\vinsert10pt
&G_{31}&\diagg{31}&
\kbordermatrix{&*&*&*&&\cr
*&2&i+1&1-i&\unit{-i}&0\\
*&1-i&2&1-i&\unit{-1}&\unit{-1}\\
*&i+1&i+1&2&0&\unit{-1}\\
&\unit{i}&\unit{-1}&0&2&0\\
&0&\unit{-1}&\unit{-1}&0&2}
&\BZ[i]&1&&\cr
\vinsert6pt
&G_{32}&\diagg{32}&
\begin{pmatrix}1-\zeta_3&\unit{\zeta_3^2}&0&0\\ \unit{-\zeta_3^2}&1-\zeta_3& \unit{\zeta_3^2}&0\\
 0&\unit{-\zeta_3^2}&1-\zeta_3&\unit{\zeta_3^2}\\0&0&\unit{-\zeta_3^2}&1-\zeta_3\end{pmatrix}
&\BZ[\zeta_3]&1&120\sqrt{-3}&\cr
\vinsert6pt
&G_{33}&\diagg{33}&
\begin{pmatrix}2&\unit{-1}&0&0&0\\\unit{-1}&2&\unit{-1}&\unit{-\zeta_3^2}&0\\0&\unit{-1}&2&\unit{-1}&0\\
0&\unit{-\zeta_3}&\unit{-1}&2&\unit{-1}\\0&0&0&\unit{-1}&2\end{pmatrix}
&\BZ[\zeta_3]&2&6&\cr
\vinsert6pt
&G_{34}&\diagg{34}&
\begin{pmatrix}2&\unit{-1}&0&0&0&0\\\unit{-1}&2&\unit{-1}&\unit{-\zeta_3^2}&0&0\\0&\unit{-1}&2&\unit{-1}&0&0\\
0&\unit{-\zeta_3}&\unit{-1}&2&\unit{-1}&0\\0&0&0&\unit{-1}&2&\unit{-1}\\0&0&0&0&\unit{-1}&2\end{pmatrix}
&\BZ[\zeta_3]&1&42&\cr
\vinsert2pt
}
}

\vskip1cm

\printindex
\vskip1cm

\end{document}